\newcommand{\R}{\ifmmode{\mathbb R}\else{$\mathbb R$}\fi} 
\newcommand{\Q}{\ifmmode{\mathbb Q}\else{$\mathbb Q$}\fi} 
\newcommand{\C}{\ifmmode{\mathbb C}\else{$\mathbb C$}\fi} 
\newcommand{\Z}{\ifmmode{\mathbb Z}\else{$\mathbb Z$}\fi} 
\newcommand{\ben}{\begin{enumerate}}
\newcommand{\een}{\end{enumerate}}
\newcommand{\be}{\begin{equation}}
\newcommand{\ee}{\end{equation}}
\newcommand{\bea}{\begin{eqnarray}}
\newcommand{\eea}{\end{eqnarray}}
\newcommand{\beastar}{\begin{eqnarray*}}
\newcommand{\eeastar}{\end{eqnarray*}}
\newcommand{\bc}{\begin{center}}
\newcommand{\ec}{\end{center}}
\newcommand{\del}{\partial}
\newcommand{\delbar}{\overline \partial}
\newcommand{\supp}{\operatorname{supp}}
\newcommand{\Exp}{\operatorname{Exp}}
\newcommand{\Pal}{\text{\rm Pal}}
\newcommand{\MM}{{\mathcal M}}
\newcommand{\dudtau}{\frac{\partial u}{\partial \tau}}
\newcommand{\dudt}{\frac{\partial u}{\partial t}}
\newcommand{\Int}{\operatorname{Int}}
\newtheorem{thm}{Theorem}[section]
\newtheorem{lem}[thm]{Lemma}
\newtheorem{sublem}[thm]{Sublemma}
\newtheorem{prop}[thm]{Proposition}
\theoremstyle{definition}
\newtheorem{defn}[thm]{Definition}
\newtheorem{rem}[thm]{Remark}
\newtheorem{conds}[thm]{Condition}
\newtheorem{defnlem}[thm]{Definition-Lemma}
\newtheorem{assump}[thm]{Assumption}
\newtheorem{conv}[thm]{Convention}
\newtheorem{shitu}[thm]{Situation}
\numberwithin{equation}{section}
\numberwithin{figure}{section}
\begin{document}

\title[Exponential decay estimates and smoothness]
{Exponential decay estimates and smoothness of the moduli space of pseudoholomorphic curves}


\author{Kenji Fukaya}
\address{Simons Center for Geometry and Physics, State University of New York, Stony Brook, NY 11794-3636, USA}
\curraddr{}
\email{kfukaya@scgp.stonytbrook.edu}
\thanks{Supported partially by JSPS Grant-in-Aid for Scientific Research
No. 23224002, NSF grant \# 1406423 and 
Simons Collaboration on homological Mirror symmetry.}

\author{Yong-Geun Oh}
\address{Center for Geometry and Physics, Institute for Basic Sciences (IBS), Pohang 37673, Korea
\& Department of Mathematics, POSTECH, Pohang 37673, Korea}
\curraddr{}
\email{yongoh1@postech.ac.kr}
\thanks{Supported partially by IBS project \# IBS-R003-D1 in Korea}

\author{Hiroshi Ohta}
\address{Graduate School of Mathematics,
Nagoya University, Nagoya, Japan}
\curraddr{}
\email{ohta@math.nagoya-u.ac.jp}
\thanks{Supported partially by JSPS Grant-in-Aid
for Scientific Research Nos.19340017, 23340015, 15H02054, 21H00983,
21K18576,
21H00985,
23K20796.
}

\author{Kaoru Ono}
\address{Research Institute for Mathematical Sciences, Kyoto University, Kyoto, Japan}
\curraddr{}
\email{ono@kurims.kyoto-u.ac.jp}
\thanks{Supported partially by JSPS Grant-in-Aid for
Scientific Research, Nos. 18340014, 21244002, 26247006, 19H00636.}





\begin{abstract}
The compacified moduli space of  bordered stable maps carries a
Kuranishi structure with boundary. Smoothness of Kuranishi structure along the boundary 
requires smoothness of coordinate changes along the boundary.
The  proof of smoothness is written by the authors in [FOOO1], [FOOO3]
based on some uniform exponential decay estimates of the stable maps
with respect to a parameter $T$, the length of the gluing cylindrical neck,
near the boundary of the moduli space. In this paper we establish this exponential
decay estimates in a precise manner by carefully examining the dependence on the
parameter $T$ of the gluing construction in the setting of bordered Riemann surfaces with boundary punctures.
We also show that  the smoothness of the collar follows from the aforementioned
exponential estimates by taking $s = 1/T$ as the radial coordinate of the collar
\end{abstract}

\maketitle

\tableofcontents


\section{Introduction}

The theory of Kuranishi structures is introduced by the first and the {fourth} named authors in
\cite{FOn}, and further amplified by the authors in \cite{fooo:book1}, \linebreak
\cite{fooo:techI}, \cite{fooonewbook}.
To implement the abstract theory of Kuranishi structure in the study of concrete moduli problem,
especially that of the moduli space of pseudoholomorphic curves, one
issue is to establish smoothness of the Kuranishi map $\mathfrak s$ and of the coordinate change of the Kuranishi
neighborhoods.
\par
Let us first describe the problem in a bit more detail in the context of bordered
stable pseudoholomorphic curves attached to Lagrangian submanifolds.\footnote{We
can generalize it to the case of pseudoholomorphic curves attached to
totally real submanifolds for given almost complex structure $J$ on $X$.
We do not discuss it here since we do not know its application.}
{Note that an object representing a point near the boundary of our moduli space 
contains a neck region, which is a
long strip $[-T,T] \times [0,1]$, and the boundary point corresponds to the case where the source curve
is singular, that is, the case when $T = \infty$.}

Note $\infty$ is included in $(T_0,\infty]$.
As a topological space $(T_0,\infty]$ has unambiguous meaning.
On the other hand there is no obvious choice of its smooth structure as a manifold with boundary.
Moreover for several relevant maps such as the Kuranishi map in the definition of Kuranishi structure,
it is not obvious whether they are smooth for the given
coordinate of $(T_0,\infty]$ naturally arising from the standard gluing construction.
This issue has recently been raised and asked to the present
authors by several mathematicians.
\footnote{Among others, Y. Ruan, C.C. Liu, J. Solomon, I. Smith
and H. Hofer asked the question. We thank them for asking this question.}

There are several ways to resolve this problem. One approach is rather topological
which uses the fact that the gluing chart is smooth
in the $T$-slice on which the gluing parameter $T$ above is fixed.
This approach is strong enough to establish all the results of \cite{FOn} for which
the method used in \cite{McSa94} was good enough to work out the analytic details along this approach.
However it is not clear to the authors whether this approach is good enough to establish
smoothness of the Kuranishi map or of the coordinate
changes at $T=\infty$. This point was mentioned by the first and the fourth named
authors themselves in \cite[Remark 13.16]{FOn}.
To prove an existence of the Kuranishi structure that literally satisfies its axioms,
we take a gluing method different from
the ones employed in \cite{McSa94,FOn}, called the alternating method, in this article.
(See Remark \ref{rem12} for a discussion about several other differences on the
analytic points.)
\par
Using the alternating method described in \cite[Section A1.4]{fooo:book1},
we can find an appropriate coordinate chart at
$T=\infty$ so that the Kuranishi map and the coordinate changes of our Kuranishi neighborhoods are
of $C^{\infty}$ class. For this purpose, we take the parameter $s = 1/T$.
As we mentioned in \cite[page 771]{fooo:book1}
this parameter $s$ is different from the one taken in algebraic geometry when the
target $X$ is projective. The parameter used in algebraic geometry corresponds to $z = e^{-{\rm const}\cdot T}$.
See Chapter \ref{sec:smoothness of coordinate change}.
It seems likely that in our situation where either
almost complex structure is  non-integrable and/or the Lagrangian submanifold enters
as the boundary condition (the source being a bordered stable curve) the
Kuranishi map or the coordinate changes is {\it not} smooth with respect to the parameter $z = e^{-{\rm const \cdot}T}$.
But it is smooth in our parameter $s = 1/T$, as is proved in
\cite[Proposition A1.56]{fooo:book1}.
\par
The proof of this smoothness relies on some
exponential decay estimate of the solution of the equation
(\ref{mainequation00}) with respect to $T$, that is, the length of the neck.
An outline of the proof of this exponential decay is given
in \cite[Section A1.4, Lemma A1.58]{fooo:book1}.
Because of the popular demand of more details of this smoothness proof,
we provide such details in the present paper by polishing the presentation
given in  \cite[Part 3]{fooo:techI}.
The exponential decay estimate is proved as Theorem \ref{exdecayT}.
We then show how it enables us to prove the smoothness of the
coordinate change of Kuranishi structure in Chapter \ref{sec:smoothness of coordinate change}.
See {\bf Theorems \ref{them81} and \ref{thm822}}.
\par
In the present paper, we restrict ourselves to the case where we glue
two (bordered) stable maps whose domain curves
(without considering the maps) are already
stable. By restricting ourselves to this case we can
address all the analytic issues needed to work out the
general case also without making the notations so much heavy.
\par
In case when the element is a stable map from a curve which has a 
single nodal singularity, its neighborhood still contains a
stable map from a nonsingular curve.
So we need to study the problem of gluing or of stretching the neck.
Such a problem of gluing solutions of non-linear elliptic partial differential equation
has been studied extensively in gauge theory and in symplectic
geometry during the last decade of the 20th century.
Several methods had been developed to solve the problem which
are also applicable to our case. In this article, following \cite[Section A1.4]{fooo:book1},
we employ the alternating method, \index{alternating method} which was first exploited by Donaldson \cite{Don86I} in gauge theory.
\begin{rem}
In \cite{Don86I} Donaldson used {the} alternating method by inductively solving {\it nonlinear} equation
(ASD-equation) on each of the two pieces {into} which he divided {the} given 4 manifold.
In this paper we will solve {the} {\it linearized} version of Cauchy-Riemann equation
inductively on each of the divided pieces.
So our proof is a mixture of the alternating method and Newton's iteration.\index{Newton's iteration} 
\end{rem}
\par
In this method, one solves the linearization of the given equation on each
piece of the domain (that is the completion of the complement
of the neck region of the source of our pseudoholomorphic curve.)
Then we construct a sequence of maps that converges to a 
solution of a version of the Cauchy-Riemann equation, that is,
\begin{equation}\label{mainequation00}
\overline{\partial} u' \equiv 0  \mod \mathcal E(u')
\end{equation}
and which are parameterized by a manifold (or an orbifold).
Here $\mathcal E(u')$ is a family of finite dimensional vector spaces
of smooth sections of an appropriate vector bundle
depending on $u'$.
\par
We provide the relevant analytic details using the same induction scheme as
\cite[Section A1.4, page 773-776]{fooo:book1}. The only difference is that we use $L^2_{m}$ space (the space of maps
whose derivative up to order $m$ are of $L^2$ class) here,
while we used $L^p_{1}$ space following the tradition of
symplectic geometry community in \cite[Section A1.4]{fooo:book1}.
It is well-known that the shift of $T$ causes loss of differentiability of the maps
in terms of the order of Sobolev spaces. However by considering various {\it weighted}
Sobolev spaces with various $m$ simultaneously and using the definition of
$C^\infty$-topology, which is a Frech\^et topology, we can still get the differentiability
of $C^{\infty}$ order and its exponential decay.
See Remark \ref{Abremark}.
\par
In Chapter \ref{alternatingmethod} we provide the details of the estimate and show that
the induction scheme of \cite[Section A1.4]{fooo:book1} provides
a convergent family of solutions of our equation (\ref{mainequation00}).
This estimate is actually a fairly straightforward application of the
(improved) Newton's iteration scheme although its detail is tedious to write down.
\par
We then prove the exponential decay estimate of the derivative of the
gluing map with respect to the gluing parameter in Chapter \ref{subsecdecayT}.
See {\bf Theorem \ref{exdecayT}}.
\par
We remark that in this paper we intentionally avoid using
the framework of
Hilbert or Banach manifold in our gluing analysis. Certainly one
can use such a framework to interpret the proof given in Chapter
\ref{alternatingmethod} as a kind of implicit function theorem.
(Recall that Newton's iteration, which we use therein, is one of the ways to
prove the implicit function theorem.)
The reason  we avoid using such an infinite dimensional manifold framework is
because in the proof of Chapter \ref{subsecdecayT} where we study $T$-derivatives,
we need to deal with the situation where domains of the maps vary.
Handling with a family of Sobolev spaces of maps with
varying domain raises a nontrivial issue especially when one tries
to regard the total space of the whole family as a certain
version of Hilbert or Banach manifolds.  Such a fact has been recognized by many
researchers in various branches of mathematics. For example,
the domain rotation on the Sobolev loop space is just continuous not smooth \cite{Klingenberg}.
For an infinite dimensional representation, say the regular representation, of a Lie group
$G$, the representation space (which is a space of maps from $G$)
is not acted by the Lie algebra of $G$, since $G$ action is not differentiable.
This is the reason  the notion of ``smooth vector'' is introduced.
Another example is the issue of smoothness of the determinant line bundle for a smooth family of operators
of Dirac type, for which the relevant family of Sobolev spaces of sections do not make a smooth Banach bundle
over the parameter space (see, e.g., \cite{Quillen}, \cite{Bismut-Freed}).  It may be also worth mentioning
that the method of finite dimensional approximations is used in such situations.
\par
In the study of pseudoholomorphic curves  non-smoothness
of the action of the group of diffeomorphism
of the domain on the Sobolev spaces  is emphasized by
H. Hofer, K. Wysocki and E. Zehnder \cite{HWZ}.
\par
We also remark that this issue is the reason  we work
with $L^2_m$ space rather than $L^p_1$ space.
The proof of Theorem \ref{exdecayT} given in this paper
does {\it not} work if we replace $L^2_m$ by $L^p_1$.
See Remark \ref{Abremark}.
\par
We bypass this problem by specifying the function spaces of maps we use
(which are Hilbert spaces) and explicitly defining the maps between the function spaces, e.g.,
the cut-paste maps entering in the gluing construction.
Keeping track of the function spaces we use at all the
stages of the construction, rather than putting them in an
abstract framework or in a black box of functional analysis,
is crucial to perform such a cut-paste process precisely in the way
that the relevant derivative estimates can be obtained.
It is also useful for the study of the properties of the Kuranishi structure
we obtain, like its relationship to the forgetful map, to the target space group action and etc.
(Note we use a Hilbert manifold $\text{\rm Map}_{L^2_{m+1}}((K_i^S,K_i^S\cap\partial \Sigma_i),(X,L))$
in Chapter \ref{subsecdecayT}. There we carefully ensure that
the domain and {codomain} of the maps do not change.)
\par
There are several appendices which collect some parts of the proofs of Chapters \ref{alternatingmethod}
and \ref{subsecdecayT}. They comprise technical details which are
based on some straightforward estimates that do not break the mainstream of the
proofs. They mainly rely on the fact that for
a given smooth map $F : M \to N$ the assignment $u \mapsto F(u)$
for $u: \Sigma \to M$ defines a smooth map $L^2_m(\Sigma, M) \to L^2_{m}(\Sigma, N)$ if $m$ is sufficiently large.
The readers who have strong background in analysis may find those appendices
something obvious or at least well-known to them.
We include the details of those proofs only for completeness' sake. 
\par\smallskip
\thanks{
This paper grows up from \cite[Part 3]{fooo:techI} (The contents of Chapter
\ref{sec:smoothness of coordinate change}
is also taken from  \cite[Part 4]{fooo:techI}.),
which is a slightly modified version of the document
which the authors uploaded to the google group Kuranishi
in 2012 June.
We would like to thank all the participants
in the discussions of the `Kuranishi' google group for motivating us to
go through this painstaking labour.
The authors also thank  M. Abouzaid for making us
important comments on \cite[Part 3]{fooo:techI}.
We thank the anonymous referees for going through the painstaking details of the paper, 
and much appreciate her/him of making a valuable assesment on the significance and 
usefulness of the result of the paper.
\section{Preliminaries}
\label{sec:prelim}

In this chapter, we collect various known results that will be
used in relation to the gluing analysis, and provide basic setting {for}
the study of gluing, and {of} exponential decay of the elements near the boundary of the moduli space of
pseudoholomorphic maps from bordered Riemann surfaces.

\subsection{Choice of Riemannian metric}
We take the following choice of Riemannian metric for convenience.
The proof of the next  lemma can be  borrowed from
\cite[page 683]{ye}. (See Remark \ref{remarkA2} for a relevant remark.)

\begin{lem}\label{lem:g0}
Let $(X,J)$ be an almost complex manifold and
$L$ a (maximally) totally real submanifold with respect to $J$.
There exists a Hermitian metric $g$ on $X$ such
that $L$ is totally geodesic and satisfies
\be\label{eq:J-perpen}
JT_p L \perp T_pL
\ee
for every $p \in L$.
\end{lem}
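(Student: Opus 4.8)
The plan is to construct $g$ by taking an arbitrary $J$-Hermitian metric on $X$, modifying it in a tubular neighborhood of $L$ so that the two required conditions hold, and then checking that the modification can be made $J$-Hermitian and $L$-totally-geodesic simultaneously. First I would start with any Riemannian metric $h$ on $X$ compatible with $J$ (i.e.\ $h(Jv,Jw)=h(v,w)$), which exists by the standard averaging construction. The issue is that neither $JT_pL \perp T_pL$ nor total geodesy of $L$ need hold for $h$. The condition \eqref{eq:J-perpen} is a pointwise condition along $L$: at each $p\in L$ we want the subspace $T_pL$ and its image $JT_pL$ to be $h$-orthogonal. Since $L$ is maximally totally real, $T_pX = T_pL \oplus JT_pL$ as a real vector space, so one can first choose, fiberwise over $L$, a new inner product on $T_pX$ that makes this a $g$-orthogonal direct sum and is still $J$-Hermitian — concretely, pick an $h$-orthonormal-type frame of $T_pL$, declare $T_pL \perp JT_pL$, and transport the metric on $T_pL$ to $JT_pL$ via $J$ so that $J$ becomes an isometry. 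This defines a $J$-Hermitian bundle metric on $TX|_L$ satisfying \eqref{eq:J-perpen}.

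Next I would extend this bundle metric off $L$. Using the exponential map of $h$ (or any auxiliary connection) to get a tubular neighborhood $U$ of $L$ and a smooth retraction $U \to L$, I would interpolate between the modified metric along $L$ and the original $h$ outside a slightly larger neighborhood by a cutoff function, taking care at each stage to average over the $J$-action (replacing $g(v,w)$ by $\tfrac12(g(v,w)+g(Jv,Jw))$) so that the result stays Hermitian; this averaging does not disturb the values along $L$ because the metric there is already Hermitian. This produces a global $J$-Hermitian metric $g$ agreeing near $L$ with a metric for which \eqref{eq:J-perpen} holds along $L$.

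The main obstacle — and the step that requires the reference to Ye, p.\ 683 — is arranging that $L$ is moreover \emph{totally geodesic} for $g$ without destroying the Hermitian property or \eqref{eq:J-perpen}. The standard trick is to work in Fermi (normal) coordinates adapted to $L$: choose coordinates $(x,y)$ on $U$ with $L=\{y=0\}$, and prescribe the metric coefficients so that the second fundamental form of $L$ vanishes, i.e.\ $\partial_{y}g_{ab}|_{y=0}=0$ for the tangential indices $a,b$, which is exactly the vanishing-of-Christoffel-symbols condition forcing geodesics tangent to $L$ at a point to stay in $L$. One builds such a $g$ by writing it as the original modified metric plus a correction supported in $U$ that is $O(|y|^2)$ in the tangential block and chosen to kill the first-order normal derivative; since the correction and its first derivative vanish along $L$, the conditions \eqref{eq:J-perpen} and Hermitian-ness along $L$ are preserved, and a final $J$-averaging (which again fixes the $\infty$-jet behavior needed at $L$ up to the relevant order, because $J$ preserves $TL$ and $JTL$ there) yields the desired $g$. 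The delicate point is checking that the $J$-averaging used to restore the Hermitian condition does not reintroduce a nonzero second fundamental form; this works because along $L$ the averaging operator is the identity and, using \eqref{eq:J-perpen} together with $J(TL)=JTL$, its first normal derivative along $L$ also respects the block structure, so the totally-geodesic condition is stable under it. I would record this as the content borrowed from \cite{ye}, and refer to Remark \ref{remarkA2} for the caveat on how much regularity/compatibility is actually needed downstream.
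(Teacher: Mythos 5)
Your first two steps — building a $J$-Hermitian bundle metric on $TX|_L$ with $TL\perp JTL$ fiberwise, extending it into a tubular neighborhood, and $J$-averaging — follow the same outline as the paper's proof. Where you diverge is the totally-geodesic step: you work in Fermi coordinates and try to kill the second fundamental form by an explicit correction term, whereas the paper invokes the cleaner structural fact that the zero section of a vector bundle is totally geodesic for any bundle metric made \emph{reflection-invariant} under the fiberwise antipodal map (a fixed-point set of an isometric involution is totally geodesic), and then enhances the already-Hermitian $\bar g$ accordingly. The reflection trick avoids any coordinate computation and is what the cited reference (Ye, p.~683) supplies.

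More seriously, your Fermi-coordinate step contains a self-contradiction as written. You say the correction to $g_{ab}$ is ``$O(|y|^2)$ in the tangential block and chosen to kill the first-order normal derivative,'' and then argue that ``since the correction and its first derivative vanish along $L$'' the Hermitian property and \eqref{eq:J-perpen} are preserved. But a correction that is $O(|y|^2)$ has vanishing first normal derivative along $L$, so it \emph{cannot} change $\partial_y g_{ab}|_{y=0}$, i.e.\ it cannot kill the second fundamental form. Conversely, the correction that actually does the job, $-y^c\,\partial_c g_{ab}(x,0)$ (times a cutoff), is $O(|y|)$ with nonvanishing first normal derivative, so the premise ``its first derivative vanishes along $L$'' is false and the downstream conclusion does not follow. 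Finally, the ``delicate point'' you flag — that the final $J$-averaging does not reintroduce a second fundamental form — is asserted but not proved, and the assertion is not obviously true: $\partial_n\bigl[g(Jv,Jw)\bigr]$ along $L$ contains terms $g\bigl((\partial_n J)v,\,Jw\bigr)$ coming from the normal variation of the almost complex structure, and these are not controlled by \eqref{eq:J-perpen} or by $J(TL)=JTL$ alone. The paper's reflection-invariance route sidesteps this particular computation, which is part of why it is adopted there.
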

\begin{proof} Let $\{e_1,\ldots, e_n\}$ be a local orthonormal frame on
$L$. Since $L$ is totally real with respect to $J$, $\{e_1,\ldots,e_n,
Je_1,\ldots, Je_n\}$ is a local frame of $TX$ on $L$. By a partition of unity
we obtain a metric $\widetilde g$ on the vector bundle
for which it satisfies $\widetilde g(e_i,Je_j) = 0$ for all $i,\, j$.
We then extend $\widetilde g$ on $TX|_N$ to a tubular neighborhood of $N$.
Then we set $\overline g = \widetilde g(J,J) + \widetilde g$.

Then we note that any metric on a vector bundle $E \to N$ (e.g., $E = TN$) can be
extended to a metric on $E$ as a manifold so that the latter becomes reflection
invariant on $E$. In particular the zero section of $E$ becomes totally geodesic.
Using this, it is easy to enhance the above metric $g$ so that $L$ becomes
totally geodesic in addition with respect to the resulting metric $g$.
\end{proof}

\subsection{Exponential map and its inverse}

Let $\nabla$ be the Levi-Civita connection of the above chosen metric $g$ in Lemma \ref{lem:g0}.
We also use an exponential map.\index{exponential map} (The same map was used in \cite[pages 410-411]{fooo:book1}.)
Denote by $\Exp: TX \to  X$ the (global) exponential map. \index[syindex]{Exp@$\Exp$} We put
\be\label{eq:Exp}
\widetilde{\Exp}(x,v) = (x,\Exp(x,v))
\ee
and denote by $\widetilde{\rm E} : U \to TX$ its inverse, \index[syindex]{Etilde@$\widetilde{\rm E}$}
defined on a neighborhood of the diagonal
$\Delta \subset X \times X$. 
We define ${\rm E} : U \to TX$ by
\be\label{eq:E}
\widetilde{\rm E}(x,y) = (x,{\rm E}(x,y))
\ee
Note $U$ can be defined to be\index[syindex]{E@${\rm E}$}
$$
U = \{(x,y)\in X \times X \mid d(x,y) < \iota_X \}
$$
where  $\iota_X$ is the injectivity radius of $(X,g)$.\index[syindex]{iota@$\iota_X$}
Here and hereafter $d(x,y)$ is the Riemannian distance between $x$ and $y$.

\begin{rem}\label{rem2222}
 When the metric is flat and in flat coordinates, the maps
introduced above also can be expressed as
\beastar
\Exp(x,v)  =  x + v, \quad {\rm E}(x,y) = y-x.
\eeastar
Readers may find it useful to compare the
invariant expression via the exponential maps in the present paper with these coordinate
calculations used in \cite{fooo:techI} to get some help for visualization of exponential maps.
Because of Lemma \ref{expest}, the difference between the two estimates
will be exponentially small for the pseudoholomorphic maps $u(\tau,t)$
in the various circumstances we are  looking at in the present paper.
\end{rem}

\subsection{Parallel transportation}

We take and fix a Riemannian metric on $X$ that satisfies the
conclusion of Lemma \ref{lem:g0}.
We denote by $\iota_X$ the injectivity radius. \index{injectivity radius} Namely
if $x,y \in X$ with $d(x,y) < \iota_X$ then there exists a
unique geodesic of length $d(x,y)$ joining $x$ and $y$.
\par
For two points $x,y \in X$ with $d(x,y) < \iota_X$ we denote by
\begin{equation}
{\rm Pal}_x^y : T_xX \to T_yX
\end{equation}
the parallel transport \index{parallel transport} along the unique minimal geodesic.
(We use the Levi-Civita connection to define a parallel
transport.)
Suppose $X$ has an almost complex structure $J$.
We then denote by \index[syindex]{Pal@${\rm Pal}_x^y$}\index[syindex]{PalJ@$({\rm Pal}_x^y)^{J}$}
\begin{equation}\label{newform2525}
({\rm Pal}_x^y)^{J} : T_xX \to T_yX,
\end{equation}
the complex linear part of ${\rm Pal}_x^y$.
Namely we decompose (uniquely) ${\rm Pal}_x^y$ to
$$
{\rm Pal}_x^y = ({\rm Pal}_x^y)^{J}
+ (({\rm Pal}_x^y)^{{J}})'
$$
such that
$$
( {\rm Pal}_x^y)^{J}\circ J_x = J_y \circ  ({\rm Pal}_x^y)^{J},
 \quad
(({\rm Pal}_x^y)^{J})'\circ J_x = -J_y \circ  (({\rm Pal}_x^y)^{J})'
$$
In other words
$$
( {\rm Pal}_x^y)^{J} = \frac{1}{2}\Big(
{\rm Pal}_x^y - J_y \circ {\rm Pal}_x^y\circ J_x
\Big).
$$
We choose and fix a constant
$\iota'_X > 0$ such that the following holds.\index[syindex]{iota'@$\iota'_X$}
\begin{conds}\label{cond23}
If $d(x,y) \le \iota'_X$ then $( {\rm Pal}_x^y)^{J}$ is a linear isomorphism.
\end{conds}

Let $\Sigma$ be a two dimensional complex manifold and $u,v : \Sigma \to X$.
We assume
$$
\sup\{ d(u(z),v(z)) \mid z \in \Sigma \} \le \iota'_X.
$$
Then using  pointwise maps $ {\rm Pal}_x^y$ and $( {\rm Pal}_x^y)^{J}$ we obtain
the maps of sections \index[syindex]{paluv@${\rm Pal}_u^v$}
\begin{equation}\label{paluv}
{\rm Pal}_u^v : \Gamma(\Sigma,u^*TX) \to \Gamma(\Sigma,v^*TX),
({\rm Pal}_u^v)^J : \Gamma(\Sigma,u^*TX) \to \Gamma(\Sigma,v^*TX).
\end{equation}
We also note that by composing with the $(0,1)$-projections with respect to
$J: TM \to TM$, $( {\rm Pal}_x^y)^{J}$ also induces the map\index[syindex]{paluv01@$({\rm Pal}_u^v)^{(0,1)}$}
$$
({\rm Pal}_{u(x)}^{v(x)})^{(0,1)} : T_{u(x)}X\otimes \Lambda_x^{0,1} \to T_{v(x)}X
\otimes \Lambda_x^{0,1}
$$
which in turn induces the map of sections
\begin{equation}\label{paluv01}
({\rm Pal}_u^v)^{(0,1)} : \Gamma(\Sigma,u^*TX\otimes \Lambda^{0,1}) \to \Gamma(\Sigma,v^*TX
\otimes \Lambda^{0,1}).
\end{equation}

\subsection{Exponential decay of pseudoholomorphic maps with small image}

We now consider a pseudoholomorphic map $u: [-S, S] \times [0,1] \to (X,L)$
$$
\frac{\del u}{\del \tau} + J\frac{\del u}{\del t} = 0
$$
with finite energy $\mathfrak E(u) < \infty$.  \index[syindex]{Eu@$\mathfrak E(u)$} We also recall the definition of
energy
$$
\mathfrak E (u) = \frac{1}{2} \int \left(\left|\dudtau\right|^2 + \left|\dudt\right|^2 \right)\, dt\, d\tau
$$
where the norm $|\cdot|$ is measured in terms of a metric $g$ that is compatible with
$\omega$, $g = \omega(\cdot, J \cdot)$ for a fixed compatible almost complex structure $J$.
We use the following well-known uniform exponential decay estimate
for pseudoholomorphic curve with small diameter.

\begin{lem}\label{lem:C1expdecay}
There exists $T_{(\ref{eq2929})} > 0$ and $\delta_1 > 0, \epsilon_{1} > 0,
C_{(\ref{eq2929})} >0$ depending only on  $X, L$,
and $\frak E_0$
with the following properties. If $T>T_{(\ref{eq2929})}$, $u:
[-T-1,T+1] \times [0,1] \to X$ is pseudoholomorphic, $\mathfrak E(u) \le \mathfrak E_0$,
$u([-T-1,T+1] \times \{0,1\}) \subset L$ and $\operatorname{Diam}({\rm Im}u) \leq \epsilon_{1}$, then
\begin{equation}\label{eq2929}
\left|\frac{\del u}{\del \tau} (\tau,t)\right| +
\left|\frac{\del u}{\del t} (\tau,t)\right| \leq C_{(\ref{eq2929})} e^{-\delta_1
d(\tau, \partial [-T-1,T+1])}
\end{equation}
for $(\tau,t) \in [-T,T] \times [0,1]$.
\end{lem}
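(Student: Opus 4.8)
The plan is to establish Lemma \ref{lem:C1expdecay} in two stages: first an $L^2$-type energy decay on cylinders, then a bootstrap to the pointwise $C^1$ (indeed $C^\infty$) bound via interior elliptic estimates. First I would note that the hypothesis $\operatorname{Diam}(\operatorname{Im}u) \le \epsilon_1$ lets me work inside a single coordinate chart of $X$ (or, invariantly, within the injectivity radius so that $\exp$ and its inverse are available), so that the Cauchy--Riemann equation $\partial u/\partial\tau + J\,\partial u/\partial t = 0$ becomes a perturbation of the standard $\overline\partial$-operator by a term controlled by $|J - J_0|$, which is $O(\epsilon_1)$. The key mechanism is the differential inequality for the local energy
\begin{equation*}
e(\tau) = \frac{1}{2}\int_0^1\left(\left|\dudtau(\tau,t)\right|^2 + \left|\dudt(\tau,t)\right|^2\right)\,dt.
\end{equation*}
For genuine $J_0$-holomorphic strips with the totally-real (here the Lagrangian/totally real) boundary condition $u([-T-1,T+1]\times\{0,1\})\subset L$, one has the well-known convexity/eigenvalue estimate $e''(\tau) \ge 4\pi^2 e(\tau)$ coming from the spectral gap of $-d^2/dt^2$ on $[0,1]$ with the appropriate (Neumann-type, after reflecting across $L$ using that $L$ is totally geodesic and $JTL\perp TL$ by Lemma \ref{lem:g0}) boundary conditions; the non-integrable correction only perturbs the constant, so that after shrinking $\epsilon_1$ one gets $e''(\tau)\ge \delta_1^2\, e(\tau)$ for some fixed $\delta_1>0$. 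Combined with $\int e(\tau)\,d\tau \le \mathfrak E_0 < \infty$, a standard ODE comparison argument (three-interval lemma, or directly integrating the differential inequality against $e^{\pm\delta_1\tau}$) yields
\begin{equation*}
e(\tau) \le C\, \mathfrak E_0\, e^{-2\delta_1\, d(\tau,\,\partial[-T-1,T+1])}.
\end{equation*}

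Next I would upgrade this integral decay to the pointwise derivative bound claimed in (\ref{eq2929}). On a unit sub-cylinder $[\tau_0-1,\tau_0+1]\times[0,1]$ the function $v = \partial u/\partial\tau$ (together with $\partial u/\partial t = -J\,\partial u/\partial\tau$) solves a linear first-order elliptic system with bounded coefficients (the coefficients involve $J$ and $du$, which are a priori bounded by the a priori $C^0$-bound on $u$ plus elliptic regularity on the unit scale), with Lagrangian/totally-real boundary conditions along $t=0,1$. Interior-plus-boundary elliptic estimates (Cauchy--Riemann / Calderón--Zygmund, $L^2\to L^2_1\to\cdots\to C^1$ by Sobolev embedding in dimension $2$) give
\begin{equation*}
\left|\dudtau(\tau_0,t)\right| + \left|\dudt(\tau_0,t)\right| \;\le\; C'\left(\int_{[\tau_0-1,\tau_0+1]\times[0,1]} |du|^2\right)^{1/2} \;=\; C'\left(\int_{\tau_0-1}^{\tau_0+1} e(\tau)\,d\tau\right)^{1/2},
\end{equation*}
and plugging in the energy decay from the previous step (noting $d(\tau,\partial[-T-1,T+1])$ changes by at most $1$ over a unit interval, which is absorbed into $C_{(\ref{eq2929})}$) produces exactly (\ref{eq2929}) on $[-T,T]\times[0,1]$, with $\delta_1$ as above and $T_{(\ref{eq2929})}$ chosen large enough that the unit sub-cylinders stay inside the domain. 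The bootstrap relies on the small-diameter hypothesis and finite energy to first get a uniform $C^0$ (hence $C^1$ on the unit scale) a priori bound on $u$, which is the standard $\epsilon$-regularity for pseudoholomorphic maps; I would cite this rather than reprove it, as the excerpt already signals that the lemma is ``well-known.''

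The main obstacle, and the only genuinely nontrivial point, is getting the differential inequality $e'' \ge \delta_1^2 e$ \emph{with the boundary condition} $u(\cdot,\{0,1\})\subset L$ in the totally-real (not necessarily Lagrangian) setting and with non-integrable $J$. For Lagrangian $L$ one reflects and reduces to the closed-strip (doubled-cylinder) case where the spectral gap is clean; for a merely totally real $L$ one instead differentiates $e(\tau)$ twice, uses the equation to convert $\tau$-derivatives into $t$-derivatives, integrates by parts in $t$, and must control the boundary terms at $t=0,1$ using the geometry of $L$ — here the choice of metric from Lemma \ref{lem:g0} ($L$ totally geodesic, $JTL\perp TL$) is precisely what makes the boundary contributions have a favorable sign up to an $O(\epsilon_1)$ error, so that shrinking $\epsilon_1$ restores the inequality. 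Everything else — the ODE comparison, the elliptic bootstrap, the extraction of uniform constants depending only on $X,L,\mathfrak E_0$ — is routine, and since the paper borrows the statement from p.~683 of \cite{ye}, I would present the proof as an adaptation of that argument and refer there for the parts that are entirely standard.
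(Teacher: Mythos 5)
The paper does not actually prove Lemma \ref{lem:C1expdecay}; immediately after the statement it says ``We refer to \cite[Lemma 11.2]{FOn} or to \cite[Lemma B.1]{oh:book} for its proof, for example.'' Your sketch is a faithful reconstruction of the standard argument one finds in those references: establish the energy convexity $e''(\tau) \geq \delta_1^2\, e(\tau)$ using the spectral gap of $-d^2/dt^2$ on $[0,1]$ with the boundary terms tamed by the choice of metric in Lemma \ref{lem:g0} (so that $L$ is totally geodesic and $JT_pL \perp T_pL$), apply the ODE comparison to get exponential $L^2$-decay of $e(\tau)$, and then bootstrap to a pointwise $C^1$ bound via interior-plus-boundary elliptic estimates on unit sub-cylinders. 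You correctly isolate the genuinely nontrivial point --- controlling the boundary contributions in the integration by parts, which is exactly where the reflected Neumann/totally-geodesic structure from Lemma \ref{lem:g0} enters --- and correctly defer the uniform $C^0$ a priori bound to standard $\epsilon$-regularity. The one small imprecision is the eigenvalue constant ($4\pi^2$ vs. $2\pi^2$ depending on normalization), but since the lemma only asserts the existence of some $\delta_1 > 0$ and the paper's later remark that one can take $\delta_1 = \pi$ is consistent with your claimed rate, this is immaterial. In short: correct, and the same route that the paper's cited references take.
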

We refer to \cite[Lemma 11.2]{FOn} or to  \cite[Lemma B.1]{oh:book} for its proof, for example.

Using this $C^1$-exponential decay,  $\epsilon$-regularity theorem and the uniform local a priori estimates, we also
obtain

\begin{lem}\label{lem:Ckexpdecay}
There exists $C_{k,(\ref{eq2930})}$ depending only on the $(X,J,g), L$,
$\frak E_0$ and $k\ge 0$
 such that if
$u: [-T-1,T+1] \times [0,1] \to X$ is as in Lemma \ref{lem:C1expdecay}  then
\begin{equation}\label{eq2930}
\left\vert\frac{\partial u}{\partial \tau}(\tau,t)\right\vert_{C^k}
\leq C_{k,(\ref{eq2930})} e^{-\delta_1 d(\tau, \partial [-T-1,T+1])}.
\end{equation}
In particular, if $u: [0, \infty)\times [0,1] \to X$ (or $u:(-\infty,0]
\times [0,1]\to X$) is pseudoholomorphic $u( [0, \infty)\times \{0,1\}) \subset L$ (or
$u((-\infty,0]\times \{0,1\}) \subset L$),
$\operatorname{Diam}({\rm Im}u) \leq \epsilon_{1}$
and $\mathfrak E(u) \le \mathfrak E_0$, and $k\ge 0$ then
we have
\begin{equation}\label{approestu}
\left\vert\frac{\partial u}{\partial \tau}(\tau,t)\right\vert_{C^k} \leq  C_{k,(\ref{eq2930})} e^{-\delta_1 |\tau|}.
\end{equation}
\end{lem}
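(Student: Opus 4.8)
The plan is a localized elliptic bootstrap, using Lemma \ref{lem:C1expdecay} as the seed and the $\epsilon$-regularity theorem together with the uniform local a priori estimates to control $u$ itself. Fix $(\tau_0,t_0)\in[-T+1,T-1]\times[0,1]$ and work on the nested rectangles $Q_r:=[\tau_0-r,\tau_0+r]\times[0,1]$ for $r$ ranging over a finite decreasing sequence $\tfrac{3}{4}=r_0>r_1>\dots>r_{k+1}=\tfrac{1}{2}$; since $\tau_0\in[-T+1,T-1]$ we have $Q_{r_0}\subset[-T,T]\times[0,1]$, so on every $Q_{r_j}$ the map $u$ is defined, the $C^1$-estimate of Lemma \ref{lem:C1expdecay} applies, and the only boundary carrying a condition for $u$ is the totally real boundary $\{t=0,1\}$. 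Because $\operatorname{Diam}(\mathrm{Im}\,u)\le\epsilon_1$, after shrinking $\epsilon_1$ the image of $Q_{3/4}$ lies in a single geodesic normal coordinate chart, which we center at a point of $L$; and because the metric of Lemma \ref{lem:g0} makes $L$ totally geodesic with $JT_pL\perp T_pL$, in this chart the boundary condition becomes the flat linear one $u(\tau,\{0,1\})\subset\R^n\times\{0\}$, the equation $\partial_\tau u+J(u)\partial_t u=0$ becomes a $C^\infty$-small perturbation of the standard Cauchy--Riemann operator with this totally real boundary condition, and all structure functions $J(\cdot),\nabla J(\cdot),\dots$ have all their $C^m$-norms bounded on the chart.

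I would then record two inputs. First, \emph{a priori $C^{k+1}$-control of $u$, with no decay}: by $\epsilon$-regularity and the uniform interior and boundary elliptic estimates for the Cauchy--Riemann equation, $\|u\|_{C^{k+1}(Q_{3/4})}\le C_k$ with $C_k$ depending only on $(X,J,g),L,\mathfrak{E}_0,k$ and not on $\tau_0$ or $T$; this uses $\mathfrak{E}(u)\le\mathfrak{E}_0$ and $\operatorname{Diam}(\mathrm{Im}\,u)\le\epsilon_1$. Second, \emph{$C^0$-decay of $w:=\partial u/\partial\tau$}: applying Lemma \ref{lem:C1expdecay} on $Q_{3/4}\subset[-T,T]\times[0,1]$ and observing that $d(\cdot,\partial[-T-1,T+1])$ varies by at most $\tfrac{3}{2}$ across $Q_{3/4}$,
\begin{equation*}
\|w\|_{C^0(Q_{3/4})}\le C_{(\ref{eq2929})}\,e^{-\delta_1\left(d(\tau_0,\,\partial[-T-1,T+1])-\frac{3}{2}\right)}\le C'\,e^{-\delta_1 d(\tau_0,\,\partial[-T-1,T+1])}.
\end{equation*}

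The core step is to differentiate the equation. Using $\partial u/\partial t=J(u)\,\partial u/\partial\tau=J(u)w$, the $\tau$-derivative of $\partial_\tau u+J(u)\partial_t u=0$ gives
\begin{equation*}
\frac{\partial w}{\partial\tau}+J(u)\,\frac{\partial w}{\partial t}=Q(u)(w,w),
\end{equation*}
where $Q(u)(\cdot,\cdot)$ is bilinear with coefficients assembled from $J$ and its first derivative at $u$, hence bounded in $C^k(Q_{3/4})$ by the first input. Fixing $p>2$ so that $W^{k+1,p}\hookrightarrow C^k$ and applying the elliptic estimate for the first-order operator $w\mapsto\partial_\tau w+J(u)\partial_t w$ (with the flat totally real boundary condition, for which the estimate up to $\{t=0,1\}$ is classical, e.g.\ by reflection) on the nested rectangles, one shows inductively that for $j=0,1,\dots,k$
\begin{equation*}
\|w\|_{W^{j+1,p}(Q_{r_{j+1}})}\le C_j\Big(\|Q(u)(w,w)\|_{W^{j,p}(Q_{r_j})}+\|w\|_{W^{j,p}(Q_{r_j})}\Big)\le C_j'\,\|w\|_{C^0(Q_{3/4})},
\end{equation*}
the nonlinear term contributing at each stage only lower-order Sobolev norms of $w$ times bounded quantities. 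With the Sobolev embedding and the second input this gives, since $(\tau_0,t_0)$ is the center of $Q_{1/2}$,
\begin{equation*}
\Big|\frac{\partial u}{\partial\tau}(\tau_0,t_0)\Big|_{C^k}\le\|w\|_{C^k(Q_{1/2})}\le C_{k}\,e^{-\delta_1 d(\tau_0,\,\partial[-T-1,T+1])},
\end{equation*}
which is \eqref{eq2930}; the half-infinite statements \eqref{approestu} follow by recentering $[0,2T+2]$ (resp.\ $[-2T-2,0]$) to $[-T-1,T+1]$ and letting $T\to\infty$, so that $d(\tau,\partial)=|\tau|$, exactly as for Lemma \ref{lem:C1expdecay}.

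I expect the one genuinely delicate point to be the \emph{uniformity of the elliptic estimates up to the totally real boundary}: one needs the boundary $W^{j,p}$-estimate for the perturbed Cauchy--Riemann operator with the totally real boundary condition to hold with constants independent of $\tau_0$ and $T$. This is precisely what the choice of metric in Lemma \ref{lem:g0} buys us --- in the normal coordinates the boundary condition is linear and the operator is a uniformly small perturbation of the standard one, so the estimate reduces by reflection across $\{t=0,1\}$ to the classical interior estimate. Everything else is routine: the estimate \eqref{eq2930} on the two remaining length-one intervals of $[-T,T]$, where the asserted bound is $O(1)$, follows by the same reasoning from $\epsilon$-regularity and the a priori estimates (using, in the generality of the intended applications, that $u$ extends a little beyond $[-T-1,T+1]$), and the shrinking rectangles, the Sobolev bookkeeping, and the mild discrepancy between $[-T,T]$ and $[-T-1,T+1]$ only affect the constants.
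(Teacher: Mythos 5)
Your proposal is correct, and it implements exactly the argument the paper indicates in the one-sentence remark preceding the lemma (namely, combine the $C^1$-decay of Lemma \ref{lem:C1expdecay} with $\epsilon$-regularity and the uniform interior/boundary a priori estimates via an elliptic bootstrap, using the metric of Lemma \ref{lem:g0} to linearize the totally real boundary condition by reflection). The only place worth polishing is the displayed chain $\|w\|_{W^{j+1,p}(Q_{r_{j+1}})}\le C_j'\|w\|_{C^0(Q_{3/4})}$: strictly speaking the bound on $\|Q(u)(w,w)\|_{W^{j,p}(Q_{r_j})}$ needs, besides the a priori $C^{k}$-bound on $w$, the inductive hypothesis on $\|w\|_{W^{j,p}(Q_{r_j})}$ together with the Sobolev embedding $W^{j,p}\hookrightarrow C^{j-1}$ on the two-dimensional domain to pull a single $\|w\|_{C^0}$ factor out of each Leibniz term of the bilinear form; you have said this in words but it is the crux and deserves to be written out.
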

\begin{rem}
Actually we can take $\delta_1 = \pi$. We do not need this explicit value in this paper.
\end{rem}
\begin{conv}
Hereafter we assume that {\it all}  pseudoholomorphic curves $u$ we consider
satisfy the inequality
$$
\frak E(u) \le \frak E_0,
$$
for some fixed $\frak E_0$. \index[syindex]{Ezero@$\frak E_0$} So the $\frak E_0$ dependence of various
constants will not be mentioned. 
\end{conv}

\begin{rem}
Since many constants appear in this paper we use the following  enumeration convention.\index{enumeration of the constants}
The constant such as, {$C_{m,(10.12)}$, for example,} is the one appearing in formula (10.12)
which may depend on $m$.
\par
For some constants appearing repeatedly, we do {\it not} apply this rule
of enumerating by the number of the associated formulae.
The list of such constants is:
$\iota'_X$ appears in Condition \ref{cond23},
$\epsilon_1$, $\delta_1$ appear in Lemma \ref{lem:C1expdecay},
$\delta$
appears in (\ref{form310310}), $T_{1,\epsilon(1)}$, $T_{2,\epsilon(1),\epsilon(2)}$
appear in Theorem \ref{gluethm1}, 
$C_{1,m}$    appear
Lemma \ref{lem18}, $\epsilon_{2}$ appears in Lemma \ref{surjlineastep11}, $C_{2,m}$ appears in Lemma \ref{lem516}, $C_{3,m}$
appears in Lemma \ref{mainestimatestep13}, $C_{4,m}$ appears in
(\ref{form0184a}), $\epsilon_{3,m}$ and $T_{3,m,\epsilon(4)}$,
$T_{4,m}$ appear
 in Proposition \ref{prop:kappatokappa+1},
 $C_{5,m}$, $C_{6,m}$, $C_{7,m}$, $C_{8,m}$, $C_{9,m}$
$T_{5,m,\epsilon(6)}$, $T_{6,m}$, $\epsilon_{4,m}$ appear in Proposition \ref{prop:inequalitieskappa},
$\epsilon_5$ appears in Proposition \ref{neckaprioridecay}.
$\delta_2$ appears in Theorem \ref{them816}, $\delta_3$ appears in Proposition \ref{prop816}
and $\delta_4$ appears in Proposition \ref{prof825},
$\epsilon_6$, $\epsilon_7$, $\nu_1$, $\nu_2$ appear
around (\ref{lastform845}),
$\epsilon_{8}$,
$\epsilon_{9}$,
$\epsilon_{10}$,
$\nu_{3,m}$,
$\nu_{4,m}$
appear in Lemma \ref{lem824new}.
\par
The positive numbers $\epsilon(1), \epsilon(2),\dots$ are {\it not} fixed but may vary.
\end{rem}
\section{Statement of the gluing theorem}
\label{sec:statement}

For simplicity and clarity of the exposition on the main analytical points,
we consider the case where we glue pseudoholomorphic maps from two stable bordered Riemann surfaces
to $(X,L)$ in the present paper.
The general case  is proved in the same way in \cite[Part IV]{fooo:techI}.

\subsection{{The weighted Sobolev spaces for the gluing analysis}}
Let $\Sigma_i$ be a bordered Riemann surface for $i=1,\,2$.
We assume that each $\Sigma_i$ has strip-like ends such that $\Sigma_i \setminus K_i$
becomes semi-infinite strip.
Recall that $\Sigma_i$ is conformally a bordered Riemann surface with one puncture.

More specifically, we fix a K\"ahler metric on $\Sigma_i$ whose end is isometric to the flat strip.
We fix  isothermal coordinates $(\tau,t)$ so that {$\Sigma_i$ is decomposed} into \index[syindex]{Sigma1@$\Sigma_1$}
\begin{equation}\label{eq:Sigma12}
\aligned
\Sigma_1 &= K_1 \cup ([-5T,\infty)\times [0,1]), \\
\Sigma_2 &= ((-\infty, 5T] \times [0,1]) \cup K_2
\endaligned
\end{equation}
where $K_i$ {is a}  \index[syindex]{Ki@$K_i$} compact subset of $\Sigma_i$ respectively.
See Figure \ref{Figure1}.
\begin{figure}
\centering
\includegraphics{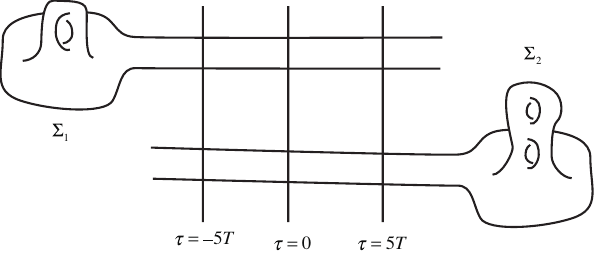}
\caption{$\Sigma_1$ and $\Sigma_2$.}
\label{Figure1}
\end{figure}

Here $(\tau,t)$ is the standard coordinates on the strip $\R \times [0,1]$ and
restricted to $[-5T, \infty) \times [0,1]$ (resp.
$(-\infty, 5T] \times [0,1]$) on $\Sigma_1 \setminus K_1$ (resp. $\Sigma_2 \setminus K_2$).
We would like to note that the coordinate $\tau$
depends on $T$. For each $T > 0$, we put \index[syindex]{SigmaT@$\Sigma_T$}
\begin{equation}\label{form32}
\Sigma_T = K_1 \cup ([-5T,5T]\times [0,1]) \cup  K_2.
\end{equation}
\begin{figure}[h]
\centering
\includegraphics[scale=0.5]{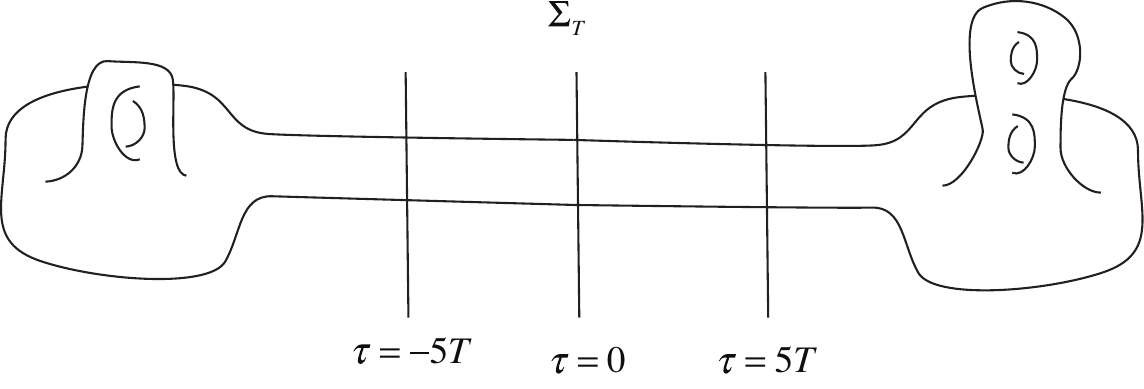}
\caption{$\Sigma_T$.}
\label{Figure2}
\end{figure}
See Figure \ref{Figure2}. \index[syindex]{SigmaT@$\Sigma_T$}

Let $X$ be a compact symplectic manifold with compatible almost complex structure
and $L$  its Lagrangian submanifold.
\par
Let
$
u_i : (\Sigma_i,\partial \Sigma_i)  \to  (X,L)$, $i=1,2$
be pseudoholomorphic maps of finite energy.
Then, by the removable singularity theorem \cite{oh:removal}, the maps $u_i$ smoothly
extend to the associated puncture of $\Sigma_i$ and so the asymptotic limits
\begin{equation}\label{tauinf}
\lim_{\tau\to \infty} u_1(\tau,t) \in L
\end{equation}
and
\begin{equation}\label{tauminf}
\lim_{\tau\to -\infty} u_2(\tau,t) \in L
\end{equation}
are uniquely determined and do not depend on $t \in [0,1]$.
\par
We will consider a pair $(u_1,u_2)$ of the maps for which the limits (\ref{tauinf}),
(\ref{tauminf}) coincide which we denote by $p_0 \in L$.
\par
\begin{conds}\label{conds:smalldiam}
We also assume that
\begin{equation}\label{form3535}
\text{Diam }  u_i(\Sigma_i \setminus K_i) \le \epsilon_{1}
\end{equation}
where $\epsilon_1$ is as in Lemma \ref{lem:C1expdecay}.
\end{conds}
Then the exponential decay (\ref{approestu}) holds by Lemma \ref{lem:Ckexpdecay}.
\begin{rem}
If $u_1$ (resp. $u_2$) satisfies (\ref{tauinf}) (resp. (\ref{tauminf})) then we may replace
$K_1$ (resp. $K_2$) by $K_1 \cup [-5T, -5T + S] \times [0,1]$
(resp. $K_2 \cup [-S + 5T,5T] \times [0,1]$)
and change the $T$ coordinate to $T - S$ (resp. $T+S$)
for an appropriate $S$ so that (\ref{form3535}) is satisfied.
In other words, we can assume (\ref{form3535}) without loss of generality.
\end{rem}
For the clarity  of our exposition, we consider the case of fixed complex structures of the
{domains} $\Sigma_i$, in Chapters \ref{sec:statement}-\ref{surjinj}.
We then include the deformation parameter of
the complex structures of $\Sigma_i$ in Chapter \ref{sec:smoothness of coordinate change}.
In order to handle the case of genus 0, we also need to stabilize and so
add marked points on the domain curve $\Sigma_i$.
We denote the ordered set of marked points by $\vec z_i = (z_{i,1}, \cdots, z_{i,k_i})$
for $i = 1, \, 2$.
For the simplicity of notations, we only consider
boundary marked points.
\par
We may add one point $\tau = \infty$ or $-\infty$ and compactify the source curve
$\Sigma_i$ and extend our map $u_i$ thereto. We regard
the added point as the $0$-th marked point $z_{i,0}$.
\begin{assump}\label{assm34}
In this paper we always assume that $(\Sigma_i,\vec z_i)$ is stable.
\par
Note there is one more marked point at infinity other than $\vec z_i$.
We also remark that here $\Sigma_i$ is assumed to have no
interior or boundary node. So if $\Sigma_i$ is not a disc then it is stable and if $\Sigma_i$ is a disc, the condition
$k_i \ge 2$ is equivalent to the stability.
\end{assump}
We will define a map: \index[syindex]{Dui@$D_{u_i}\overline{\partial}$}
\begin{equation}\label{lineeq}
D_{u_i}\overline{\partial}
:
W^2_{m+1,\delta}((\Sigma_i,\partial \Sigma_i);u_i^*TX,u_i^*TL) \to
L^2_{m,\delta}(\Sigma_i;u_i^{*}TX \otimes \Lambda^{0,1}),
\end{equation}
which is the linearization of $\delbar$ at $u_i$. Here we
denote $\overline\partial = \overline \partial_J$ by omitting $J$.
The domain, that is, the function space \index[syindex]{W2m+1@$W^2_{m+1,\delta}((\Sigma_i,\partial \Sigma_i);u_i^*TX,u_i^*TL)$}
$$
W^2_{m+1,\delta}((\Sigma_i,\partial \Sigma_i);u_i^*TX,u_i^*TL),
$$
is defined below.

We assume that the constant $\epsilon_1$ given in Condition \ref{conds:smalldiam}
is at least smaller than $\iota'_X$ given in Condition \ref{cond23}.
Then to each $v \in T_{p_0}L$, we define the section $v^{\rm Pal} \in
\Gamma([-5T,5T]\times [0,1];u^*_1TX)$ by \index[syindex]{vpal@$v^{\rm Pal}$}
\be\label{eq:rho-v}
v^{\rm Pal}(\tau,t) \equiv  \Pal_{p_0}^{u_i(\tau,t)}(v).
\ee
We note that the sections \eqref{eq:rho-v} satisfy the boundary condition at $t = 0, \,1$
because we use the Levi-Civita connection of a metric with respect to which the Lagrangian
submanifold $L$ is totally geodesic. (See Lemma \ref{lem:g0}.)
\par
More generally if $x \in X$, $v \in T_xX$ and
$u : \Sigma_0 \to X$ is a map to the $\iota_X$ neighborhood of $x$ from
a 2 dimensional manifold, we define $v^{\rm Pal}
\in \Gamma(\Sigma_0,u^*TX)$ by
\begin{equation}\label{eq:vpal}
v^{\rm Pal}(z) = \Pal_{x}^{u(z)}(v).
\end{equation}

\begin{defn}\label{defn3535}(\cite[Section 7.1.3]{fooo:book1})\footnote{In \cite{fooo:book1} $L^p_1$ space
is used in stead of $L^2_{m}$ space.} Let $p_0 \in L$ be the point given in (\ref{tauinf}) or in (\ref{tauminf}).
Denote by $L^2_{m+1,loc}((\Sigma_i,\partial \Sigma_i);u_i^*TX;u_i^*TL)$ the set of the
sections $s$ of $u_i^*TX$ which are locally of $L^2_{m+1}$-class. (Namely its differential up to order $m+1$
is locally of $L^2$-class. Here $m$ is sufficiently large, say larger than $10$.)
We also assume $s(z) \in u_i^*TL$ for $z \in \partial \Sigma_i$.
We define a Hilbert space
$$
W^2_{m+1,\delta}((\Sigma_i,\partial \Sigma_i);u_i^*TX,u_i^*TL)\\
$$
as the set of all pairs
$(s,v)$
where $s \in L^2_{m+1,loc}((\Sigma_i,\partial \Sigma_i);u_i^*TX;u_i^*TL)$,
$v  \in T_{p_0}L$ such that
\begin{equation}\label{weight1}
\sum_{k=0}^{m+1} \int_{\Sigma_i \setminus K_i}  e^{2\delta\vert \tau \pm 5T\vert}
\vert \nabla^k(s - v^{\rm Pal})\vert^2 < \infty.
\end{equation}
Here $\pm = +$ for $i=1$ and $\pm=-$ for $i=2$.
We define its norm $\Vert (s,v)\Vert_{W^2_{m+1,\delta}(\Sigma_i)}$ by
\begin{equation}\label{normWW}
\Vert (s,v)\Vert_{W^2_{m+1,\delta}(\Sigma_i)}^2
= \text{(\ref{weight1})}
+
\sum_{i=1,2} \sum_{k=0}^{m+1} \int_{K_i}\vert \nabla^ks\vert^2
+ \Vert v\Vert^2.
\end{equation}
Hereafter we omit $\Sigma_i$ from the notation
$\Vert (s,v)\Vert_{W^2_{m+1,\delta}(\Sigma_i)}$
in case the domain is obvious from the context.
We denote by 
$L^2_{m+1,\delta}((\Sigma_i,\partial \Sigma_i);u_i^*TX,u_i^*TL)$ 
the subspace of $W^2_{m+1,\delta}((\Sigma_i,\partial \Sigma_i);u_i^*TX,u_i^*TL)$
consisting of elements with $v=0$.\index[syindex]{L2m+1deltaSigma@$L^2_{m+1,\delta}((\Sigma_i,\partial \Sigma_i);u_i^*TX,u_i^*TL)$}
\par
The space $L^2_{m,\delta}(\Sigma_i;u_i^{*}TX \otimes \Lambda^{0,1})$
\index[syindex]{L2mdelta@$L^2_{m,\delta}(\Sigma_i;u_i^{*}TX \otimes \Lambda^{0,1})$}
is defined similarly without boundary condition and without $v$.
(See (\ref{normformjula52}).)
\end{defn}
\par
Since the relevant $s$
for $T = \infty$ does not decay to 0 but converges to an element $v \in T_{p_0}X$, we denote
the subscript of $\|s\|^2_{W^2_{m+1,\delta}}$ to be $W^2_{m+1,\delta}$ instead of $L^2_{m+1,\delta}$.
\par
We let $D_{u_i}\overline{\partial}$ act trivially on the $v$-component and just act on
the $s$-component of $(s,v) \in  W^2_{m+1,\delta}((\Sigma_i,\partial \Sigma_i);u_i^*TX,u_i^*TL)$.
We  choose $\delta > 0$ with
\begin{equation}\label{form310310}
\delta \leq \frac{\delta_1}{10}
\end{equation}
where $\delta_1> 0$ is the constant given in
Lemma \ref{lem:C1expdecay}.
{(We refer \eqref{form627new}, (\ref{112312}) and the discussion therearound
for the reason  we make such a choice.)}
{\it We never change this constant $\delta$ in the rest of this paper.}
Lemma \ref{lem:Ckexpdecay}
(\ref{approestu})
implies
\begin{equation}\label{form310}
\left\vert (D_{u_i}\delbar) (v^{\rm Pal})\right\vert_{C^k} < C_{k,(\ref{form310})}
C_{k,(\ref{eq2930})} e^{-\delta_1|\tau|}
\Vert v\Vert.
\end{equation}
Therefore the operator (\ref{lineeq}), especially with $W^2_{m+1,\delta}((\Sigma_i,\partial \Sigma_i);u_i^*TX,u_i^*TL)$
as its domain, is defined and bounded.
It is a standard fact (see \cite{lock-mcowen}) that this operator is also Fredholm
if we choose $\delta$ sufficiently small. In the current context, $\delta$ depends only
on the geometry of $(X,g,J)$ and $L \subset X$.
\begin{rem}
We would like to note that $W^2_{m+1,\delta}((\Sigma_i,\partial \Sigma_i);u_i^*TX,u_i^*TL)$
is a completion of the set of infinitesimal variations of
$u_i: \Sigma_i \to X$ satisfying the boundary conditions
$$
u_i(\del \Sigma_i) \subset L, \quad \lim_{\tau \to +\infty} u_{ 1}(\tau,t)
 = \lim_{\tau \to -\infty} u_{ 2}(\tau,t){= p_0}  \in L
$$
in particular allowing the point $p_0$ {to vary} inside $L$. We have an exact sequence
$$
\aligned
0 & \to L^2_{m+1,\delta}((\Sigma_i,\partial \Sigma_i);u_i^*TX,u_i^*TL)
\\
&\to  W^2_{m+1,\delta}((\Sigma_i,\partial \Sigma_i);u_i^*TX,u_i^*TL) \to
T_{p_0}L \to 0
\endaligned
$$
where the first map is the inclusion $s \mapsto (s,0)$ and the second map is the
asymptotic evaluation of the section $s$ at $\pm \infty$ for $i = 1, \, 2$ respectively.
\end{rem}
We next define our (family of) obstruction spaces $\mathcal E_i(u')$.
\par
We fix a pair of pseudoholomorphic maps $u_i^{\frak{ob}} : (\Sigma_i,\partial \Sigma_i) \to (X,L)$
for $i=1,2$. We assume
\begin{equation}\label{disococ}
d(u_i(z),u^{\frak{ob}}_i(z)) \le \frac{\iota'_X}{2}
\end{equation}
where $\iota'_X$ is as in (\ref{cond23}).
\par
We take
a finite dimensional linear subspace \index[syindex]{Eiob@$\mathcal E^{\frak{ob}}_i$}
\begin{equation}\label{Eitake}
\mathcal E_i^{{\frak{ob}}} = \mathcal E_i(u^{\frak{ob}}_i) \subset \Gamma(K_i;(u^{\frak{ob}}_i)^*TX\otimes \Lambda^{0,1}), \quad i =1, \,2
\end{equation}
which consists of smooth sections of $(u^{\frak{ob}}_i)^*TX\otimes \Lambda^{0,1}$ supported in
a compact subset
$K_i^{0}$ of ${\rm Int}\,K_i$.
\begin{rem}
When we use Theorems \ref{gluethm1} and  \ref{exdecayT} for the construction of the
Kuranishi structure on a moduli space of bordered pseudoholomorphic
curves,
we take several $u^{\frak{ob}}_i$'s and {associate $\mathcal E_i^{\frak{ob}}$ to} each of them. Then
in place of $\mathcal E_1(u') \oplus  \mathcal E_2(u')$ we take a sum of
finitely many of them.
See \cite[page 1003]{FOn} and \cite[Definition 18.12]{fooo:techI}.
For the simplicity of exposition, we focus on the case
when we have one set of $u_i^{\frak{ob}}$, $\mathcal E_i^{\frak{ob}}$
in this paper.
The case of several such  $u_i^{\frak{ob}}$'s and $\mathcal E_i^{\frak{ob}}$'s is
treated in the same way as that of Theorems \ref{gluethm1} and  \ref{exdecayT}.
\end{rem}
\subsection{{Analytic framework for gluing}}
\par
We state the first half of the main result as Theorem \ref{gluethm1} below.
The second half is Theorem \ref{exdecayT}.
Actually Theorem \ref{gluethm1} itself is {a classical result.
We provide its statement here because we write its proof in such a way that is suitable
for the proof of derivative estimates given in Theorem \ref{exdecayT}.}
Let
\begin{equation}\label{uprime}
u' : (\Sigma_T,\partial\Sigma_T) \to (X,L)
\end{equation}
be a smooth map.

We consider the following conditions for given constant $\epsilon >0$.
\begin{conds}\label{nearbyuprime}
$ $
\begin{enumerate}
\item
$u'\vert_{K_i}$ is $\epsilon$-close to $u_i\vert_{K_i}$ in {the} $C^1$ sense.
\footnote{We use a Riemannian metric on $K_i$ and $X$ to define {the} $C^1$ norm we use.}
\item
The diameter of
$u'(X \setminus (K_1 \cup K_2))$
is smaller than $\epsilon$.
\end{enumerate}
\end{conds}
\par
We define the map \index[syindex]{pal01@$(\Pal_{u_i^{\frak{ob}}}^{u'})^{(0,1)}$}
\begin{equation}\label{defpali}
(\Pal_{u_i^{\frak{ob}}}^{u'})^{(0,1)}: \Gamma(K_i;(u_i^{\frak{ob}})^*TX \otimes \Lambda^{0,1}) \to \Gamma(\Sigma_T;(u')^*TX \otimes \Lambda^{0,1})
\end{equation}
by composing the map
$
(\Pal_{u_i^{\frak{ob}}}^{u'})^{(0,1)} :
\Gamma(K_i;(u_i^{\frak{ob}})^*TX \otimes \Lambda^{0,1}) \to \Gamma(K_i;(u')^*TX \otimes \Lambda^{0,1})
$
in (\ref{paluv01})
with the map induced by the inclusion $K_i \subset \Sigma_T$.
\par
If we take $\epsilon> 0$ sufficiently small, then (\ref{disococ}) and Condition \ref{nearbyuprime} (1) imply that
this map (\ref{defpali}) is defined for such $u'$.
\par
We then define the map \index[syindex]{Iuprimei@$I_{u',i}$}
\begin{equation}\label{mapIuu}
I_{u',i} : \mathcal E_i^{\frak{ob}} \to \Gamma(\Sigma_T;(u')^*TX \otimes \Lambda^{0,1})
\end{equation}
to be the restriction of $(\Pal_{u_i^{\frak{ob}}}^{u'})^{(0,1)}$ to $\mathcal E_i^{\frak{ob}}$ and put
\begin{equation}\label{Eiuiprime}
\mathcal E_i(u') =  I_{u',i}(\mathcal E_i^{\frak{ob}}).
\end{equation}
Recall that elements of $\mathcal E_i^{\frak{ob}}$ are supported on $K_i^{0}
\subset {\rm Int}\,K_i$.
Therefore for this definition, we first take the parallel transport\footnote{more precisely 
its complex linear part. We abuse the notation sometimes in this way.} of
$\eta \in \mathcal E_i^{\frak{ob}}$ along
the shortest geodesic between $u^{\frak{ob}}_i(z)$ and $u'(z)$ for $z \in
K_i^{0} \subset \Sigma_T$ and then extend the resulting section by zero to the
rest of $\Sigma_T$.
\par
The equation we study in the rest of the paper is of the form
\begin{equation}\label{mainequation}
\overline\partial u' \equiv 0 , \quad \mod \mathcal E_1(u') \oplus  \mathcal E_2(u').
\end{equation}
\begin{defn}\label{defn310}
We denote by $\mathcal M^{\mathcal E_1\oplus \mathcal E_2}((\Sigma_T,\vec z);u_1,u_2)_{\epsilon}$\index
[syindex]{ME1E2@$\mathcal M^{\mathcal E_1\oplus \mathcal E_2}((\Sigma_T,\vec z);u_1,u_2)_{\epsilon}$}
the set of solutions of (\ref{mainequation}) satisfying the Condition \ref{nearbyuprime}.
\end{defn}

Theorem \ref{gluethm1} will describe all the solutions of (\ref{mainequation})
`sufficiently close to the (pre-)glued map $u_1\# u_2$.'
To make this statement precise, we need to prepare more notations.
\par
Let $u_i': (\Sigma_i,\partial\Sigma_i) \to (X,L)$ be any smooth map, not necessarily
pseudoholomorphic. We put the following conditions on $u'_i$.
\begin{conds}\label{uiconds}
$ $
\begin{enumerate}
\item
$u_i'\vert_{K_i}$ is $\epsilon$-close to $u_i\vert_{K_i}$ in $C^1$ sense.
\item
The diameter of $u_1'(\Sigma_1 \setminus K_1)$, (resp.  $u_2'(\Sigma_2 \setminus K_2)$)
is smaller than $\epsilon$.
\end{enumerate}
\end{conds}

Then we define \index[syindex]{Iuprimei@$I_{u'_i}$}
\begin{equation}\label{newform320}
I_{u'_i} : \mathcal E_i^{\frak{ob}} \to \Gamma(\Sigma_i;(u_i')^*TX \otimes \Lambda^{0,1})
\end{equation}
in the same way as $I_{u'}$
i.e.,
$$
I_{u'_i}(\eta) := (\Pal_{u_i^{\frak{ob}}}^{u'_i})^{(0,1)}(\eta), \quad \eta \in
\mathcal E_i^{\frak{ob}}.
$$
(This makes sense if $u'_i$ satisfies Condition \ref{uiconds} for $\epsilon < \iota'_X/2$.
In fact, then we have $d(u'_i(x),u_i^{\frak{ob}}(x)) < \iota'_X$.)
We put \index[syindex]{Ei@$\mathcal E_i(u'_i)$}
\begin{equation}\label{eq3212}
\mathcal E_i(u'_i) = I_{u'_i}(\mathcal E_i^{\frak{ob}}).
\end{equation}
So we can define the equation
\begin{equation}\label{mainequationui}
\overline\partial u_i' \equiv 0 , \quad \mod \mathcal E_i(u'_i).
\end{equation}
We write $\mathcal E_i$ in place of $\mathcal E_i(u'_i)$ sometimes.

\begin{rem}\label{remark3838}
Note that in the current situation where 
all the irreducible components of $\Sigma_i$ intersect 
with boundary and $\Sigma_i$ has nontrivial
boundary with at least one boundary marked point (that is, either $\tau =\infty$ or $-\infty$ depending on
$i=1$ or $i = 2$),
Assumption \ref{assm34} implies that
$\Sigma_i$ carries no nontrivial automorphism.
In the case when there is an irreducible component which
does not intersect boundary, the automorphism group can be nontrivial.
We can use the standard trick to add marked points and then use codimension 2 submanifold
to kill the extra freedom of moving added marked points.
(See \cite[Appendix]{FOn} or \cite[Part IV]{fooo:techI}, for example.)
\end{rem}
\begin{defn}
We denote by $\mathcal M^{\mathcal E_i}((\Sigma_i,\vec z_i); u_i)_{\epsilon}$ 
\index[syindex]{MEiuiepsilon
@$\mathcal M^{\mathcal E_i}((\Sigma_i,\vec z_i); u_i)_{\epsilon}$} 
the set of
all maps $u'_i
: (\Sigma_i,\partial\Sigma_i) \to (X,L)$
whose domain is the marked
bordered Riemann surface
$
(\Sigma_i,\vec{z_i})
$ {given}
as in (\ref{eq:Sigma12}) for $i = 1, \,2$
such that
\begin{enumerate}
\item $u'_i$ satisfies
\eqref{mainequationui}.
\item Condition \ref{uiconds} is satisfied.
\end{enumerate}
\end{defn}
When $\beta_i = [u_i] \in H_2(X,L)$ we write 
$\mathcal M^{\mathcal E_i}((\Sigma_i,\vec z_i); \beta_i)_{\epsilon}$
sometimes in place of $\mathcal M^{\mathcal E_i}((\Sigma_i,\vec z_i); u_i)_{\epsilon}$. \index[syindex]{MEiuiepsilon
beta@$\mathcal M^{\mathcal E_i}((\Sigma_i,\vec z_i); \beta_i)_{\epsilon}$}

We work under the following assumption.
This assumption is put on the pair $(u_1,u_2)$ which we glue.
\par

\begin{assump}\label{DuimodEi}
Let $u_1(\infty) = u_2(-\infty) = p_0 \in L$ and
let
$
(D_{u_i}\overline{\partial})^{-1}(\mathcal E_i(u_i))
$
be the kernel of (\ref{DuimodEi0}). We assume:
\begin{enumerate}
\item[{1.}]{(Mapping transversality)}\index{Mapping transversality}
\begin{equation}\label{DuimodEi0}
D_{u_i}\overline{\partial} :
W^2_{m+1,\delta}((\Sigma_i,\partial \Sigma_i);u_i^*TX,u_i^*TL)
\to L^2_{m,\delta}(\Sigma_i;u_i^{*}TX \otimes \Lambda^{0,1})/\mathcal E_i(u_i)
\end{equation}
is surjective.
\item[{2.}]{(Evaluation transversality)} \index{Evaluation transversality} For each $i = 1, \, 2$, define the linearized evaluation map
\begin{equation}\label{Duiev}
D{\rm ev}_{i,\infty} : W^2_{m+1,\delta}((\Sigma_i,\partial \Sigma_i);u_i^*TX,u_i^*TL)
\to  T_{p_0}L
\end{equation}
by
$$
D{\rm ev}_{i,\infty}(s,v) = v.
$$
Then
\begin{equation}\label{Duievsurj}
D{\rm ev}_{1,\infty} - D{\rm ev}_{2,\infty} : (D_{u_1}\overline{\partial})^{-1}(\mathcal E_1(u_1))
\oplus (D_{u_2}\overline{\partial})^{-1}(\mathcal E_2(u_2))
\to T_{p_0}L
\end{equation}
is surjective.
\end{enumerate}
\end{assump}

The surjectivity of (\ref{Duiev}), (\ref{Duievsurj}) and the implicit function theorem
imply that, if $\epsilon$ is {small enough}, there exists a finite dimensional vector space $\widetilde V_i$
and {a} neighborhood $V_i(\epsilon)$ of $0$ {therein}, which provides a local parametrization of
$\mathcal M^{\mathcal E_i}((\Sigma_i,\vec z_i);u_i)$,
\begin{equation}\label{eq:MMEiVi}
\mathcal M^{\mathcal E_i}((\Sigma_i,\vec z_i); u_i)_{\epsilon} \cong V_i(\epsilon)
\end{equation}
near the given solution $u_i$.
For any $\rho_i \in V_i(\epsilon)$, \index[syindex]{Viepsilon@$V_i(\epsilon)$} we denote by $u_i^{\rho_i} : (\Sigma_i,\partial\Sigma_i) \to (X,L)$
the corresponding solution of (\ref{mainequationui}).
\par
We have an asymptotic evaluation map\index[syindex]{eviinfty@$\text{\rm ev}_{i,\infty}$}
$$
\text{\rm ev}_{i,\infty} : \mathcal M^{\mathcal E_i}((\Sigma_i,\vec z_i);u_i)_{\epsilon} \to L.
$$
Namely, we define
$$
\text{\rm ev}_{i,\infty} (u'_i)  = \lim_{\tau\to \pm\infty}u'_i(\tau,t).
$$
(Here $\pm = +$ for $i=1$ and $-$ for $i=2$.).

We consider the fiber product:
\begin{equation}\label{fpmoduli}
V_1(\epsilon) \times_L V_2(\epsilon) =
\mathcal M^{\mathcal E_1}((\Sigma_1,\vec z_1);u_1)_{\epsilon} \times_L
\mathcal M^{\mathcal E_2}((\Sigma_2,\vec z_2);u_2)_{\epsilon}.
\end{equation}
The surjectivity of (\ref{Duievsurj}) implies that
the fiber product (\ref{fpmoduli})
is transversal and
so
$
V_1(\epsilon) \times_L V_2(\epsilon)
$
is a smooth manifold.
We write elements of $V_1(\epsilon) \times_L V_2(\epsilon)$  as $\rho = (\rho_1,\rho_2)$.\index[syindex]{GlueT@$\text{\rm Glue}_T$}
\begin{thm}\label{gluethm1}
For each  $\epsilon(1) > 0$, there exist
$\epsilon(2) > 0$, $T_{1,\epsilon(1)}>0$ and a map
$$
\text{\rm Glue}_T :
\mathcal M^{\mathcal E_1}((\Sigma_1,\vec z_1);u_1)_{\epsilon(2)} \times_L \mathcal M^{\mathcal E_2}
((\Sigma_2,\vec z_2);u_2)_{\epsilon(2)}
\to
\mathcal M^{\mathcal E_1\oplus \mathcal E_2}((\Sigma_T,\vec z);u_1,u_2)_{\epsilon(1)}
$$
for $T > T_{1,\epsilon(1)}${that satisfies the following. Here
$\vec z = \vec z_1 \cup \vec z_2$.
\begin{enumerate}
\item
The map $\text{\rm Glue}_T$
is a diffeomorphism onto its image.
\item
There exist $\epsilon_{\epsilon(1),\epsilon(2),\langle\!\langle
{\rm \ref{gluethm1}}\rangle\!\rangle} > 0$
and
$T_{2,\epsilon(1),\epsilon(2)} > 0$
depending on $\epsilon(1)$, $\epsilon(2)$
such that the image of $\text{\rm Glue}_T$
contains $\mathcal M^{\mathcal E_1\oplus \mathcal E_2}((\Sigma_T,\vec z);u_1,u_2)_{\epsilon_{\epsilon(1),\epsilon(2),\langle\!\langle
{\rm \ref{gluethm1}}\rangle\!\rangle}}$
if $T > T_{2,\epsilon(1),\epsilon(2)}$.
\end{enumerate}
}
\end{thm}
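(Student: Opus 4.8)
The plan is to follow the alternating method announced in the introduction: produce an explicit approximate solution on $\Sigma_T$ by cutting and pasting the given solutions on the two pieces, and then correct it to an exact solution of $(\ref{mainequation})$ by a Newton-type iteration that at each stage solves the linearized Cauchy--Riemann equation separately on $\Sigma_1$ and $\Sigma_2$ and patches the results. First I would construct the pre-glued map. For $\rho=(\rho_1,\rho_2)$ in the fiber product $(\ref{fpmoduli})$ the asymptotic limits of $u_1^{\rho_1}$ and $u_2^{\rho_2}$ agree at a point $p(\rho)\in L$ close to $p_0$; on the neck $[-5T,5T]\times[0,1]$ one interpolates between the two maps using $\Exp$ and parallel transport from $p(\rho)$, with a cutoff supported near $\tau=0$. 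The boundary condition into $L$ is preserved since $L$ is totally geodesic (Lemma \ref{lem:g0}). Denote the result $u_1^{\rho_1}\#u_2^{\rho_2}:(\Sigma_T,\partial\Sigma_T)\to(X,L)$. By the $C^k$ exponential decay of Lemma \ref{lem:Ckexpdecay} on the ends of $\Sigma_i$, this map satisfies Condition \ref{nearbyuprime} for suitable $\epsilon$, and $\overline\partial(u_1^{\rho_1}\#u_2^{\rho_2})$, reduced modulo $\mathcal E_1\oplus\mathcal E_2$, has weighted $L^2_{m,\delta}$-norm $O(e^{-\delta' T})$ for some $\delta'>0$ depending on $\delta$.

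Next I would analyze the linearized operator $D_{u'}\overline\partial$ at $u'=u_1^{\rho_1}\#u_2^{\rho_2}$ on the weighted Sobolev space over $\Sigma_T$ (now with no asymptotic-value parameter, since $\Sigma_T$ has no puncture), reduced modulo $\mathcal E_1(u')\oplus\mathcal E_2(u')$. Using the mapping transversality $(\ref{DuimodEi0})$ one fixes right inverses $Q_i$ of the operators on each $\Sigma_i$ whose images are complementary to the finite-dimensional spaces $\widetilde V_i$ parametrizing $(\ref{eq:MMEiVi})$; the evaluation transversality $(\ref{Duievsurj})$ is precisely what makes the asymptotic-value components of $Q_1$ and $Q_2$ compatible. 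Patching $Q_1$ and $Q_2$ with the same cutoffs used in the pre-gluing produces an approximate right inverse $Q_T$ of $D_{u'}\overline\partial$ on $\Sigma_T$ with operator norm bounded uniformly in $T$, and with $D_{u'}\overline\partial\circ Q_T-\mathrm{id}$ of operator norm $O(e^{-\delta' T})$, so for $T$ large $Q_T(\mathrm{id}+O(e^{-\delta' T}))^{-1}$ is a genuine uniformly bounded right inverse. This is the step I expect to be the main obstacle: one must track every weighted norm through the cut-paste operations so that the constants are truly independent of $T$, and this is exactly where working with the Hilbert spaces $L^2_m$ and with explicitly defined maps between fixed function spaces, rather than an abstract Banach-manifold setup, is needed --- and is also what later makes the $T$-derivative estimates of Theorem \ref{exdecayT} accessible.

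With a uniformly bounded right inverse in hand, the Newton/alternating iteration of Section \ref{alternatingmethod} converges: starting from $u^{(0)}=u_1^{\rho_1}\#u_2^{\rho_2}$ one inductively solves the linearized equation on each piece, glues, and feeds the result back, using that the quadratic remainder of $\overline\partial$ is Lipschitz on a fixed $L^2_m$-ball (a standard property of composition operators for $m$ large, as in the appendices). The error decreases geometrically, so the sequence converges in $W^2_{m+1,\delta}(\Sigma_T)$ to a solution of $(\ref{mainequation})$ lying within $O(e^{-\delta' T})$ of $u_1^{\rho_1}\#u_2^{\rho_2}$, which satisfies Condition \ref{nearbyuprime} with the prescribed $\epsilon(1)$ once $\epsilon(2)$ is small and $T>T_{1,\epsilon(1)}$. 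Set $\text{\rm Glue}_T(\rho)$ equal to this solution. Smoothness of $\text{\rm Glue}_T$ in $\rho$ follows because pre-gluing, the right inverses, and the iteration all depend smoothly on $\rho$; its derivative at $\rho$ is, up to exponentially small error, the glued pair of parametrizations $(\ref{eq:MMEiVi})$, hence injective, and a dimension count together with the uniqueness clause of the contraction argument shows $\text{\rm Glue}_T$ is a diffeomorphism onto its image.

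For the last assertion I would run the construction backwards. A solution $u'$ of $(\ref{mainequation})$ satisfying Condition \ref{nearbyuprime} with a small enough parameter is genuinely pseudoholomorphic on the neck $[-5T,5T]\times[0,1]$, since there $\mathcal E_1(u')\oplus\mathcal E_2(u')=0$ (the obstruction sections being supported in $\Int K_i$), so Lemma \ref{lem:C1expdecay} forces its neck derivatives to be $O(e^{-\delta_1 T})$; hence $u'$ restricted to a slight enlargement of each $K_i$ is within $O(e^{-\delta' T})$ of a map $u_i':(\Sigma_i,\partial\Sigma_i)\to(X,L)$ solving $(\ref{mainequationui})$ and satisfying Condition \ref{uiconds}, and via $(\ref{eq:MMEiVi})$ this yields $\rho=(\rho_1,\rho_2)$, which after a correction using the freedom coming from $(\ref{Duievsurj})$ can be taken in the fiber product. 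Then $\text{\rm Glue}_T(\rho)$ and $u'$ are two solutions of $(\ref{mainequation})$ within $O(e^{-\delta' T})$ of the same pre-glued map, so by the uniqueness in the contraction mapping argument they coincide once $T>T_{2,\epsilon(1),\epsilon(2)}$ and the stated radius is taken small. This gives the claimed surjectivity onto $\mathcal M^{\mathcal E_1\oplus\mathcal E_2}((\Sigma_T,\vec z);u_1,u_2)$ of sufficiently small parameter, completing the proof.
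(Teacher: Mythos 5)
Your outline follows the paper's broad strategy (pre-glue, Newton-type iteration solving the linearized problem piecewise, a cut-back argument for surjectivity), but the step you correctly single out as ``the main obstacle'' is precisely where the sketch has a genuine gap. With the cutoffs the paper uses (e.g.\ $\chi_{\mathcal A}^{\rightarrow}$, $\chi_{\mathcal B}^{\leftarrow}$ of Section \ref{subsec12}), the derivative of the cutoff has slope of order $1$ on a support of length $\sim 2$; the resulting error in $D_{u'}\overline\partial\circ Q_T-\mathrm{id}$ is \emph{not} pointwise small, unlike the situation in \cite{DKbook,McSa94} where the cutoff has slope $\sim|1/\log\delta|$. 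The paper obtains smallness only through the ``drop of weight'' mechanism (Remark \ref{rem:dropofweight}): over $\mathcal A_T$ the weight $e_{2,\delta}$ controlling $V^{\rho}_{T,2,(\kappa)}$ on $\Sigma_2$ is $\sim e^{6T\delta}$ while the weight $e_{T,\delta}$ used on $\Sigma_T$ is only $\sim e^{4T\delta}$, so the cutoff error picks up a factor $e^{-2T\delta}$; see (\ref{estimate6161})--(\ref{2ff161}). Without invoking this, the claimed bound $\|D_{u'}\overline\partial\circ Q_T-\mathrm{id}\|=O(e^{-\delta'T})$ does not follow, and the Neumann series has no reason to converge. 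Relatedly, the obstruction space $\mathcal E_i(u')$ of (\ref{Eiuiprime}) moves with $u'$ via parallel transport, so linearizing $\Pi^\perp_{\mathcal E_i(u')}\overline\partial u'=0$ produces the extra term $(D_{u'_i}\mathcal E_i)(\overline\partial u'_i,\cdot)$ in (\ref{linearized2221}), which is what the paper's operators $D^{\text{\rm app},(\kappa)}$ in (\ref{144opkappa}) and all of Appendix \ref{appendixA2bis} exist to control; your sketch linearizes only $\overline\partial$ and drops this term.

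Two further, smaller, discrepancies are worth noting. The paper never actually builds a right inverse $Q_T$ acting on sections over $\Sigma_T$: instead it inverts $\overline D_{(\kappa),\rho,T}$ on the fixed ($T,\kappa,\rho$-independent) spaces $\frak H(\mathcal E_1,\mathcal E_2)\to\mathcal E_1^{\perp}\oplus\mathcal E_2^{\perp}$ living on $\Sigma_1\sqcup\Sigma_2$, pushing all $T$-dependence into conjugating isomorphisms (Section \ref{subsec62}); this is a deliberate structural choice needed later for Theorem \ref{exdecayT}, so while your patched-$Q_T$ version could in principle prove Theorem \ref{gluethm1} in isolation, it would not serve the paper's purposes. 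And for surjectivity/injectivity the paper uses a one-parameter homotopy and an open-closed argument (Section \ref{surjinj}), together with a disc-lifting argument for injectivity, rather than appealing to uniqueness of a contraction fixed point, which the alternating scheme does not hand you for free; your version is recoverable only after supplying an explicit local uniqueness lemma, which at present is asserted rather than proved.
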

The main result about exponential decay estimate of this map will be given in Chapter \ref{subsecdecayT}.
(Theorem \ref{exdecayT}.)

\section[Cut-off functions and
weighted Sobolev norm]{Proof of the gluing theorem  : I - Cut-off functions and
weighted Sobolev norm}
\label{subsec12}

The proof of Theorem \ref{gluethm1}
was given in \cite[Section 7.1.3]{fooo:book1}.
Here we follow the scheme of the proof of \cite[Section A1.4]{fooo:book1} and
provide more of its details. As mentioned there, the scheme of the proof is
originated from Donaldson's paper \cite{Don86I},
and its Morse-Bott version in \cite{Fuk96II}.
\par
We first introduce certain cut-off functions.
First let $\mathcal A_T \subset \Sigma_T$
and $\mathcal B_T \subset \Sigma_T$ be the domains
defined by \index[syindex]{AT@$\mathcal A_T$}, \index[syindex]{BT@$\mathcal B_T$}, 
$$
\mathcal A_T = [-T-1,-T+1] \times [0,1],
\qquad
\mathcal B_T = [T-1,T+1] \times [0,1].
$$
We may regard $\mathcal A_T,\mathcal B_T$ as subsets of $\Sigma_i$.
The third domain is 
$$
\mathcal X = [-1,1] \times [0,1] \subset \Sigma_T.
$$
\begin{figure}
\centering
\includegraphics{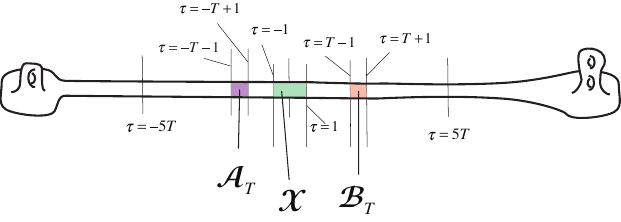}
\caption{$\mathcal A_T,\mathcal B_T$ and $\mathcal X$.}
\label{Figure2-5}
\end{figure}
See Figure \ref{Figure2-5}.
We may also regard $\mathcal X \subset \Sigma_i$. We recall readers the definitions of
$\Sigma_i$ from \eqref{eq:Sigma12}.
\par
Let $\chi_{\mathcal A}^{\leftarrow}$,
$\chi_{\mathcal A}^{\rightarrow}$ \index[syindex]{chiA@$\chi_{\mathcal A}^{\leftarrow}$} be the smooth functions on $[-5T,5T]\times [0,1]$ defined by
\index{cutoff function}
\begin{equation}\label{eq:chiA}
\chi_{\mathcal A}^{\leftarrow}(\tau,t)
= \begin{cases}
1   & \tau < -T-1 \\
0  & \tau > -T+1.
\end{cases}
\end{equation}
$$
\chi_{\mathcal A}^{\rightarrow}  = 1 - \chi_{\mathcal A}^{\leftarrow}.
$$
See Figure \ref{Figure3}.
We also define \index[syindex]{chiB@$\chi_{\mathcal B}^{\leftarrow}$}
\begin{equation}\label{eq:chiB}
\chi_{\mathcal B}^{\leftarrow}(\tau,t)
= \begin{cases}
1   & \tau <  T-1 \\
0  & \tau >  T+1.
\end{cases}
\end{equation}
$$
\chi_{\mathcal B}^{\rightarrow}  = 1 - \chi_{\mathcal B}^{\leftarrow}
$$
and \index[syindex]{chiX@$\chi_{\mathcal X}^{\leftarrow}$}
\begin{equation}\label{eq:chiX}
\chi_{\mathcal X}^{\leftarrow}(\tau,t)
= \begin{cases}
1   & \tau <  -1 \\
0  & \tau >  1.
\end{cases}
\end{equation}
$$
\chi_{\mathcal X}^{\rightarrow}  = 1 - \chi_{\mathcal X}^{\leftarrow}.
$$
We extend these functions to $\Sigma_T$ and $\Sigma_i$ ($i=1,2$) so that
they are locally constant outside $[-5T,5T]\times [0,1]$.
We denote them by the same symbol.
\begin{figure}
\centering
\includegraphics{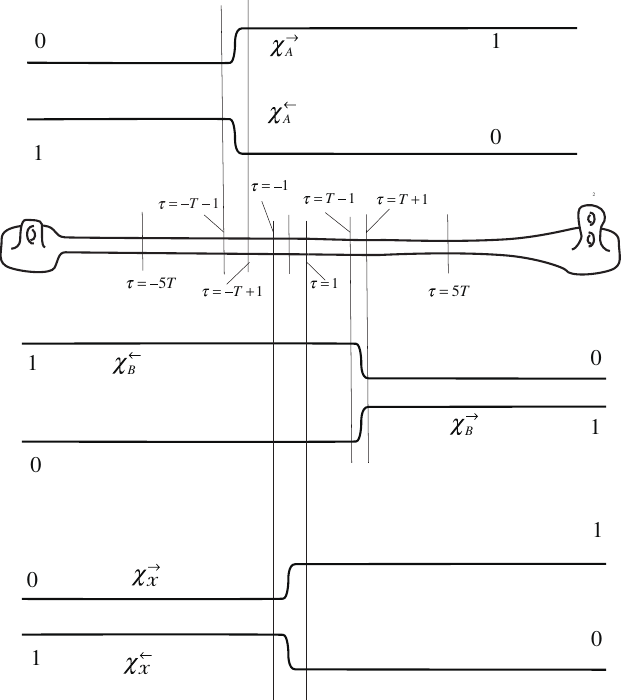}
\caption{Cut off functions.}
\label{Figure3}
\end{figure}
\par\medskip
We now introduce weighted Sobolev norms \index{weighted Sobolev norm} and their local versions for sections
on $\Sigma_T$ or $\Sigma_i$.
We first define weight functions $e_{i,\delta} : \Sigma_i \to [1,\infty)$, $i = 1,\, 2$ of $C^{\infty}$-class
as follows:\index[syindex]{e1delta@$e_{1,\delta}$}\index{weight function}
\begin{equation}\label{e1delta}
e_{1,\delta}(\tau,t)
\begin{cases}
=e^{\delta (\tau + 5T)} &\text{if $\tau > 1 -  5T$ }\\
=1   &\text{on  $K_1$} \\
\in [1,10] &\text{if $\tau < 1 -  5T$}
\end{cases}
\end{equation}
\begin{equation}\label{e2delta}
e_{2,\delta}(\tau,t)
\begin{cases}
=e^{\delta(-\tau + 5T)} &\text{if $\tau <  5T-1$ }\\
=1   &\text{on  $K_2$} \\
\in [1,10] &\text{if $\tau >  5T-1$}.
\end{cases}
\end{equation}
\index[syindex]{e2delta@$e_{2,\delta}$}

(We choose $\delta > 0$ so small that it satisfies $1 < e^\delta < 10$ above.)
We also define $e_{T,\delta} : \Sigma_T \to [1,\infty)$ as follows.
(See Figure \ref{Figure4-2}.) \index[syindex]{eTdelta@$e_{T,\delta}$}
\begin{equation}\label{e2delta2}
e_{T,\delta}(\tau,t)
\begin{cases}
=e^{\delta(-\tau + 5T)} &\text{if $1<\tau <  5T-1$ }\\
= e^{\delta(\tau + 5T)} &\text{if $-1>\tau > 1-5T$ }\\
=1   &\text{on  $K_1\cup K_2$} \\
\in [1,10] &\text{if $\vert\tau - 5T\vert < 1$ or $\vert\tau + 5T\vert < 1$} \\
\in [e^{5T\delta}/10,e^{5T\delta}] &\text{if $\vert\tau\vert < 1$}.
\end{cases}
\end{equation}
\begin{figure}
\centering
\includegraphics{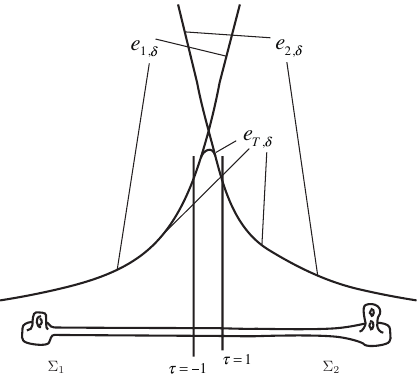}
\caption{Weight function.}
\label{Figure4-2}
\end{figure}
The weighted Sobolev norm we use for
$L^2_{m,\delta}(\Sigma_i;u_i^*TX\otimes \Lambda^{0,1})$
is given by\index[syindex]{L2mnorm@$\Vert\Vert_{L^2_{m,\delta}}$}
\begin{equation}
\| s\|^2_{L^2_{m,\delta}} = \sum_{k=0}^m \int_{\Sigma_i}
e^2_{i,\delta} \vert \nabla^k s\vert^2 \text{\rm vol}_{\Sigma_i}.
\end{equation}
For
$(s,v) \in W^2_{m+1,\delta}((\Sigma_i,\partial \Sigma_i);u_i^*TX,u_i^*TL)$
we defined in (\ref{normWW}) 
\begin{equation}\label{normformula}
\aligned
\| (s,v)\|^2_{W^2_{m+1,\delta}} = &\sum_{k=0}^{m+1} \int_{K_i}
\vert \nabla^k s\vert^2 \text{\rm vol}_{\Sigma_i}\\
&+
\sum_{k=0}^{m+1} \int_{\Sigma_i \setminus K_i}  e^2_{i,\delta}\vert \nabla^k(s - v^{\rm Pal})\vert^2 \text{\rm vol}_{\Sigma_i}
+ \| v\|^2,
\endaligned
\end{equation}
where $v^{\rm Pal}$ is defined in (\ref{eq:vpal}).\index[syindex]{W2mnorm@$\Vert\Vert_{W^2_{m+1,\delta}}$}
\par
We next define a weighted Sobolev norm for the sections on
$\Sigma_T$.
Let $s$ be a section of $u^*TX$ with $s|_{\del \Sigma_T} \in \Gamma(u^*TL)$ such that
$$
s \in L^2_{m+1}((\Sigma_T,\partial \Sigma_T);u^*TX,u^*TL).
$$
Since we take $m$ large, $s$ is continuous.
So the value $s(0,1/2) \in T_{u(0,1/2)}X$ at $(0,1/2)$ is well defined.

We define the norm of $s$ by
\begin{equation}\label{normformjula5}
\aligned
\| s\|^2_{W^2_{m+1,\delta}} = &\sum_{k=0}^{m+1} \int_{K_1}
\vert \nabla^k s\vert^2 \text{\rm vol}_{\Sigma_1} + \sum_{k=0}^{m+1} \int_{K_2}
\vert \nabla^k s\vert^2 \text{\rm vol}_{\Sigma_2}\\
&+
\sum_{k=0}^{m+1} \int_{[-5T,5T]\times [0,1]}  e^2_{T,\delta}\vert \nabla^k(s - s(0,1/2)^{\rm Pal})\vert^2 \text{\rm vol}_{\Sigma_i}
\\&+ \| s(0,1/2)\|^2.
\endaligned
\end{equation}
For
$
\eta \in L^2_{m}((\Sigma_T,\partial \Sigma_T);u^*TX\otimes \Lambda^{0,1}),
$
we define
\begin{equation}\label{normformjula52}
\| \eta \|^2_{L^2_{m,\delta}} = \sum_{k=0}^{m} \int_{\Sigma_T}
e^2_{T,\delta}\vert  \nabla^k \eta \vert^2 \text{\rm vol}_{\Sigma_1}.
\end{equation}
These norms were used in \cite[Section 7.1.3]{fooo:book1}.
\par

\begin{rem} We remark that the norm $\| s\|_{W^2_{m+1,\delta}}$ is equivalent to
the standard $L^2_{m+1}$-norm on $L^2_{m+1}((\Sigma_T,\partial \Sigma_T);u^*TX,u^*TL)$,
when $T$ is fixed. However to study its relationship with the norm \eqref{normformula} as $T \to \infty$,
making this choice of $T$-dependent norm in this way is necessary.
We remark that the ratio between the norm $\| s\|_{W^2_{m+1,\delta}}$ and the
standard $L^2_{m+1}$-norm diverges as $T$ goes to infinity.
\end{rem}
For a subset $W$ of $\Sigma_i$ or $\Sigma_T$ we define
$
\| s \|^2_{W^2_{m,\delta}(W\subset \Sigma_i)}
$,
$
\| \eta\|^2_{L^2_{m,\delta}(W\subset \Sigma_T)}
$
by restricting the domain of the integration
(\ref{normformjula5}) or (\ref{normformjula52}) to $W$.
We use this notation in order to specify the weight etc. we use.
In case there is no possibility of confusion of the domain and weight etc. we use,
we omit the domain $\Sigma_i$ etc. from the notation
$
\| s \|^2_{W^2_{m,\delta}(\Sigma_i)}
$ etc..
\par
We also define the inner product on $W^2_{m+1,\delta}((\Sigma_i,\partial \Sigma_i);u_i^*TX,u_i^*TL)$
by setting
\begin{equation}\label{innerprod1}
\aligned
\langle\!\langle (s_1,v_1),(s_2,v_2)\rangle\!\rangle_{L^2}
=
& \int_{\Sigma_i\setminus K_i}  (s_1-v_1^{\rm Pal},s_2-v^{\rm Pal}_2)
\\
&+\int_{K_i} (s_1,s_2)
+
( v_1,v_2)
\endaligned
\end{equation}
for $(s_j,v_j) \in W^2_{m+1,\delta}((\Sigma_i,\partial \Sigma_i);u_i^*TX,u_i^*TL)$
for $j=1,2$.
\section{Proof of the gluing theorem : II - Gluing by alternating method}
\label{alternatingmethod}

We motivate the gluing construction we employ in this section
by comparing it with the well-known Newton's iteration \index{Newton's iteration} scheme of solving the equation
$f(x) = 0$ for a real-valued function, starting from an approximate solution $x_0$ that is sufficiently close
to a genuine solution nearby. The iteration scheme is the inductive scheme via the
recurrence relation
\be\label{eq:Newton's}
x_{n+1} = x_n - \frac{f(x_n)}{f'(x_n)}.
\ee
Therefore
\be\label{eq:difference}
|x_{n+1} - x_n| = \left|\frac{f(x_n)}{f'(x_n)}\right|
\ee
and the expected solution is given by the infinite sum
$$
x_\infty = x_0 + \sum_{n=1}^\infty (x_{n+1} - x_n)
$$
provided the series converges. A simple way of ensuring the required convergence is to
establish existence of constant $C > 0, \, 0 \leq \mu < 1$ independent of $n$ such that
\be\label{eq:geometric}
\left|\frac{f(x_n)}{f'(x_n)}\right| \leq C \mu^n
\ee
for all $n$.
\par
\medskip
With this iteration scheme in our mind, we proceed with performing the scheme.

\subsection{{The first trial approximate solution}}
\label{subsec:1st-trial}
\smallskip
In our construction we obtain an approximate solution of the linearized equation
({whose error} corresponds to $f(x_0)/f'(x_0)$ in  (\ref{eq:Newton's}))
by an alternating method.
We start with a pair of maps
$$
u^{\rho} = (u_1^{\rho_1},u_2^{\rho_2})
\in \mathcal M^{\mathcal E_1}((\Sigma_1,\vec z_1);u_1)_{\epsilon_2}
\times_L \mathcal M^{\mathcal E_2}((\Sigma_2,\vec z_2);u_2)_{\epsilon_2}
$$
for each sufficiently small $\epsilon_2> 0$.
Here $\rho_i \in V_i$ and $u^{\rho_i}_i$ is the corresponding map $(\Sigma_i,\partial \Sigma_i) \to (X,L)$
given by \eqref{eq:MMEiVi}.
Let $\rho = (\rho_1,\rho_2)$.\index[syindex]{rho@$\rho$}
We put
$$
p^{\rho}_0 = \lim_{\tau\to \infty} u_1^{\rho_1}(\tau,t) = \lim_{\tau\to -\infty} u_2^{\rho_2}(\tau,t).
$$
\par\medskip
\noindent{\bf Pregluing}: This step corresponds to {picking} the initial trial approximate solution $x_0$ in
Newton's scheme above.

\par\medskip
\noindent{\bf Step 0-i} {\bf(Approximate solutions)}:

\begin{defn}\label{defn:uT0rho}
We define \index[syindex]{uT0@$u_{T,(0)}^{\rho}$}
\begin{equation}\label{form5656}
u_{T,(0)}^{\rho} =
\begin{cases} \Exp\Big(p_0^\rho,
\chi_{\mathcal B}^{\leftarrow} {\rm E}(p_0^{\rho},u_1^{\rho_1}) +
\chi_{\mathcal A}^{\rightarrow} {\rm E}(p_0^\rho, u_2^{\rho_2})\Big)
& \text{on $[-5T,5T] \times [0,1]$} \\
u_1^{\rho_1} & \text{on $K_1$} \\
u_2^{\rho_2} & \text{on $K_2$}.
\end{cases}
\end{equation}
\end{defn}

This pre-glued map $u_{T,(0)}^{\rho}$ plays the role of $x_0$ in the Newton scheme for
solving the equation
\be\label{eq:mainequation}
\Pi_{\mathcal E_T}^\perp \delbar u = 0
\ee
on
$
L^2_{m+1,\delta}(\Sigma_T,\del \Sigma_T; (u_{T,(0)}^{\rho})^*TX,(u_{T,(0)}^{\rho})^*TL)
$
over $\Sigma_T$, where $\Pi_{\mathcal E_T}^\perp$ is the projection to a subspace
$
\mathcal E_T^\perp.
$
We will need to take the derivative of the map $u \mapsto \Pi_{\mathcal E_T}^\perp\delbar u$
the analog to $f'(x_n)$ in the Newton's scheme. We will actually use an
approximate derivative thereof whose explanation is now in order.
\par
\par\medskip
\noindent{{\bf Step 0-ii (Approximate linearization)}}:
Note that the subspace $\mathcal E_i(u'_i)$ varies on $u'_i$. (See \eqref{Eiuiprime} for the definition.)
So to obtain a linearized equation of (\ref{eq:mainequation})
at the given map $u'_i$, we need to take this dependence into account.

Let $\Pi_{\mathcal E_i(u_i')}$ {denote} the $L^2$-orthogonal projection of $L^2_{m,\delta}(\Sigma_i;(u_i')^{*}TX \otimes \Lambda^{0,1})$
to $\mathcal E_i(u_i')$ or its associated idempotent as an endomorphism
$$
\Pi_{\mathcal E_i(u_i')}:L^2_{m,\delta}(\Sigma_i;(u_i')^{*}TX \otimes \Lambda^{0,1})
\to L^2_{m,\delta}(\Sigma_i;(u_i')^{*}TX \otimes \Lambda^{0,1}).
$$
More explicitly, we represent the projection as follows:
we choose a
basis $\{\mathbf e_{i,a}\}$ of $\mathcal E_i$ and set \index[syindex]{eprimeia@$\mathbf e'_{i,a}$}
$$
\mathbf e'_{i,a}(u'_i): = I_{u_i'}(\mathbf e_{i,a}), \quad a = 1, \dots, \dim \mathcal E_i(u_i').
$$
We apply the standard Gram-Schmidt process to ${\bf e}'_{i,a}(u'_i)$ and denote
by ${\bf e}_{i,a}(u'_i)$ the resulting orthonormal basis of $\mathcal  E_i(u_i')$.
Then,
for $A\in L^2_{m,\delta}(\Sigma_i;(u_i')^{*}TX \otimes \Lambda^{0,1})$ we put \index
[syindex]{PiEi@$\Pi_{\mathcal E_i(u_i')}$}
\begin{equation}\label{form118}
\Pi_{\mathcal E_i(u_i')}(A) = \sum_{a=1}^{\dim E_i}
\langle\!\langle A,\mathbf e_{i,a}(u'_i) \rangle\!\rangle_{L^2(K_1)}\mathbf e_{i,a}(u'_i),
\end{equation}
where $\mathbf e_{i,a}(u_i')$ are supported in $K_i$.
\par
For given $V \in \Gamma((\Sigma_i,\partial \Sigma_i),(u'_i)^*TX,(u'_i)^*TL)$, we put
\begin{equation}\label{DEidef}
(D_{u'_i}\mathcal E_i)(A,V) := \frac{d}{ds}\Big\vert_{s=0}(\Pal_{i}^{(0,1)})^{-1}
\Pi_{\mathcal E_i(\Exp (u_i',sV))}(\Pal_{i}^{(0,1)}(A)).
\end{equation}
Here $\Pal_i^{(0,1)}$ is the closure of the map \index[syindex]{DuiEi@$(D_{u'_i}\mathcal E_i)$}
$(\Pal_{u'_i}^{{\rm Exp}(u'_i,sV)})^{(0,1)}$ defined in (\ref{paluv01}).
The right hand side is well defined in view of (\ref{form118}).
We then put
$$
\Pi_{\mathcal E_i(u_i')}^{\perp}(A) = A - \Pi_{\mathcal E_i(u_i')}(A).
$$
\par
The equation (\ref{mainequationui}) is equivalent to 
$\Pi_{\mathcal E_i(u_i')}^{\perp}(\overline\partial) = 0$. By evaluating the derivative
$$
\left.\frac{\partial}{\partial s}\right\vert_{s=0} (\Pal_i^{(0,1)})^{-1}
\left(\Pi_{\mathcal E_i(\Exp(u_i',s V))}^{\perp} \overline\partial \Exp(u_i',s V)\right)
$$
we obtain the linearization operator of \eqref{mainequationui}.

{
As an operator into $L^2_{m,\delta}(\Sigma_i;(u_i')^{*}TX \otimes \Lambda^{0,1})$ it
becomes
$$
V \mapsto (\Pi^{\perp}_{\mathcal E_i(u'_i)} \circ D_{u'_i}\overline\partial)(V)
- (D_{u'_i}\mathcal E_i)(\overline\partial u'_i,V).
$$
We define:
\begin{equation}\label{linearized2221}
(D_{u'_i}\overline \partial)^{\perp}(V) =  (D_{u'_i}\overline\partial) (V) - (D_{u'_i}\mathcal E_i)(\overline\partial u'_i,V).
\end{equation}}

{
We use a variant of this operator as an approximate derivative of the map $u \mapsto \Pi_{\mathcal E_i(u)}^\perp\delbar u$.
See \eqref{144op}.
}

\par\medskip
\noindent{\bf Step 0-3 (Error estimates)}:
We recall {that} $\supp(\mathcal E_i^{\frak{ob}}) \subset \operatorname{Int} K_i$
by {the} definition of $\mathcal E_i^{\frak{ob}}$.
\begin{lem}\label{lem18}
There exists $\frak e^{\rho} _{i,T,(0)} \in \mathcal E^{{\frak{ob}}}_i$
such that
\begin{equation}\label{startingestimate}
\|\overline\partial u^{\rho} _{T,(0)} - \frak e^{\rho} _{1,T,(0)}
- \frak e^{\rho} _{2,T,(0)}\|_{L_{m,\delta}^2} < C_{1,m}e^{-\delta_1 T}.
\end{equation}
Here $C_{1,m}$ is a constant depending on $X, L, u_i$ and $m$
and $u^{\rho} _{T,(0)}$ is as in Definition \ref{defn:uT0rho}.
\par
Moreover
for any $\epsilon(4) > 0$ there exists $\epsilon_{\epsilon(4),(\ref{frakeissmall})}$
such that
\begin{equation}\label{frakeissmall}
\| \frak e^{\rho} _{i,T,(0)}\|_{L^2(K_i)} < \epsilon(4)
\end{equation}
if $\rho = (\rho_1,\rho_2) \in V_1(\epsilon) \times_{L} V_2(\epsilon)$ with
$\epsilon < \epsilon_{\epsilon(4).(\ref{frakeissmall})}$.
\par
Furthermore $\frak e^{\rho} _{i,T,(0)}$ is supported in $K_i$ and is independent of $T$
as an element of $L^2_m(K_i)$.
\end{lem}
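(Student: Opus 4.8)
The plan is to estimate $\overline\partial u^{\rho}_{T,(0)}$ piece by piece, using the explicit formula \eqref{form5656} for the pre-glued map together with the exponential decay estimate \eqref{approestu} of Lemma \ref{lem:Ckexpdecay}. On $K_1$ the pre-glued map equals $u_1^{\rho_1}$, which satisfies $\overline\partial u_1^{\rho_1} \equiv 0 \mod \mathcal E_1(u_1^{\rho_1})$, so by definition there is an element $\frak e^{\rho}_{1,T,(0)} \in \mathcal E_1(u_1^{\rho_1})$ with $\overline\partial u^{\rho}_{T,(0)} = \frak e^{\rho}_{1,T,(0)}$ there; the key point is that this element, being a section supported in $K_1^0 \subset \operatorname{Int} K_1$, does not see the parameter $T$ at all — the map $u_1^{\rho_1}$ and the space $\mathcal E_1(u_1^{\rho_1})$ are independent of $T$, which gives the ``independent of $T$'' assertion. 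Symmetrically on $K_2$ we obtain $\frak e^{\rho}_{2,T,(0)} \in \mathcal E_2(u_2^{\rho_2})$. The remaining and only substantial work is the estimate over the neck $[-5T,5T]\times[0,1]$, where both $\frak e^{\rho}_{i,T,(0)}$ vanish and so we must bound $\|\overline\partial u^{\rho}_{T,(0)}\|_{L^2_{m,\delta}}$ directly by $C_{1,m}e^{-\delta_1 T}$.

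On the neck the cut-off functions $\chi_{\mathcal A}^{\rightarrow}$ and $\chi_{\mathcal B}^{\leftarrow}$ are each locally constant except on the bands $\mathcal A_T = [-T-1,-T+1]\times[0,1]$ and $\mathcal B_T=[T-1,T+1]\times[0,1]$ respectively. So on $[-5T,5T]\setminus(\mathcal A_T\cup\mathcal B_T)$ the pre-glued map agrees with one of $u_1^{\rho_1}$, $u_2^{\rho_2}$, or the constant map $p_0^\rho$, and in each case $\overline\partial u^{\rho}_{T,(0)}$ is either identically zero (constant region) or agrees with $\overline\partial u_i^{\rho_i}$, which on that portion of the strip is $\equiv 0 \mod \mathcal E_i$, and since $\mathcal E_i$ is supported in $K_i$ it actually vanishes there. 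Thus the only contributions to $\overline\partial u^{\rho}_{T,(0)}$ on the neck come from the two bands $\mathcal A_T$ and $\mathcal B_T$. On $\mathcal B_T$, writing $u^{\rho}_{T,(0)} = \Exp(p_0^\rho, \chi_{\mathcal B}^{\leftarrow}{\rm E}(p_0^\rho,u_1^{\rho_1}))$ (the $\chi_{\mathcal A}^{\rightarrow}$ term being constantly $p_0^\rho$ there), we Taylor-expand $\overline\partial$ of this expression in the small quantity ${\rm E}(p_0^\rho,u_1^{\rho_1})$ and its derivatives up to order $m+1$, picking up terms involving $\overline\partial\chi_{\mathcal B}^{\leftarrow}$ times ${\rm E}(p_0^\rho,u_1^{\rho_1})$, plus $\chi_{\mathcal B}^{\leftarrow}\cdot\overline\partial u_1^{\rho_1}$ (which vanishes there), plus higher-order error from the nonlinearity of $\Exp$. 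By \eqref{approestu} applied to $u_1^{\rho_1}$ on the semi-infinite strip $[0,\infty)\times[0,1]$ we have $|{\rm E}(p_0^\rho,u_1^{\rho_1})(\tau,t)|_{C^{m+1}} \le C e^{-\delta_1|\tau|}$ — more precisely one first uses that $|\partial u_1^{\rho_1}/\partial\tau|_{C^k}\le Ce^{-\delta_1\tau}$ and then integrates to control ${\rm E}(p_0^\rho, u_1^{\rho_1})$ itself. On $\mathcal B_T$ we have $\tau \ge T-1$, so every such term is bounded pointwise by $C e^{-\delta_1(T-1)}$ in $C^{m+1}$; since the weight $e_{T,\delta}$ on $\mathcal B_T$ lies in $[1,10]$ and $\mathcal B_T$ has fixed finite area, the $L^2_{m,\delta}(\mathcal B_T)$-norm of $\overline\partial u^{\rho}_{T,(0)}$ is $\le C_{1,m}e^{-\delta_1 T}$. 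The band $\mathcal A_T$ is handled identically using $u_2^{\rho_2}$ and $\tau \le -T+1$.

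For \eqref{frakeissmall}, observe that when $\rho = 0$ we have $u_i^{\rho_i} = u_i$ and $\overline\partial u_i = 0$ genuinely (the $u_i$ are honest pseudoholomorphic maps, not just solutions mod $\mathcal E_i$), so $\frak e^{0}_{i,T,(0)} = 0$; since $\frak e^{\rho}_{i,T,(0)} = \Pi_{\mathcal E_i(u_i^{\rho_i})}(\overline\partial u^{\rho}_{T,(0)}|_{K_i}) = \Pi_{\mathcal E_i(u_i^{\rho_i})}(\overline\partial u_i^{\rho_i})$ depends continuously on $\rho_i$ through the map $\rho_i \mapsto u_i^{\rho_i}$ (which is continuous in $C^\infty$ by the implicit function theorem parametrization \eqref{eq:MMEiVi}) and through the continuously-varying projection $\Pi_{\mathcal E_i(u_i^{\rho_i})}$, it follows that $\|\frak e^{\rho}_{i,T,(0)}\|_{L^2(K_i)} \to 0$ as $\rho \to 0$, uniformly in $T$ by the $T$-independence just established. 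This yields the threshold $\epsilon_{\epsilon(4),(\ref{frakeissmall})}$. The main obstacle in the whole argument is purely bookkeeping: carefully expanding $\overline\partial\big(\Exp(p_0^\rho,\chi\cdot{\rm E}(p_0^\rho,u_i^{\rho_i}))\big)$ to order $m+1$ and verifying that each resulting term carries at least one factor that decays like $e^{-\delta_1\tau}$ on the cut-off band, for which the invariant exponential-map computation of Remark \ref{rem2222} and the $C^k$ decay \eqref{eq2930} are the essential inputs; no genuinely new estimate beyond Lemma \ref{lem:Ckexpdecay} is needed.
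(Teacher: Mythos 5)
Your decomposition of the pre-glued map over the neck is incorrect, and the error is fundamental rather than cosmetic. Read off formula \eqref{form5656}: $\chi_{\mathcal B}^{\leftarrow}\equiv 1$ for $\tau<T-1$ and $\chi_{\mathcal A}^{\rightarrow}\equiv 1$ for $\tau>-T+1$, so on the entire middle region $[-T+1,T-1]\times[0,1]$ one has $u^{\rho}_{T,(0)}=\Exp\bigl(p_0^\rho,\ {\rm E}(p_0^\rho,u_1^{\rho_1})+{\rm E}(p_0^\rho,u_2^{\rho_2})\bigr)$. There is no region where the pre-glued map is the constant $p_0^\rho$, and in the middle region it is neither $u_1^{\rho_1}$ nor $u_2^{\rho_2}$; it is the ``exponential sum'' of the two tails. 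Consequently $\overline\partial u^{\rho}_{T,(0)}$ does \emph{not} vanish on $[-T+1,T-1]$ --- unless the metric is flat and $J$ is integrable, $\overline\partial\Exp(p,W_1+W_2)$ is not the sum of $\overline\partial\Exp(p,W_1)$ and $\overline\partial\Exp(p,W_2)$. Your claim that ``the only contributions to $\overline\partial u^\rho_{T,(0)}$ on the neck come from the two bands $\mathcal A_T$ and $\mathcal B_T$'' therefore fails, and the plan as written leaves the whole middle region unestimated. The paper does not decompose into cut-off bands at all: it observes that the error is supported in $[-2T,2T]\times[0,1]$ (since outside $[-T-1,T+1]$ the map coincides with $u_1^{\rho_1}$ or $u_2^{\rho_2}$ and lies outside $\supp\mathcal E_i$), and then bounds $|{\rm E}(p_0^\rho,u^\rho_{T,(0)})|_{C^k}$ on that region by $C_k e^{-\delta_1 d(\tau,\partial[-5T,5T])}\le C_k e^{-3\delta_1 T}$, which controls $\overline\partial u^\rho_{T,(0)}$ everywhere it is nonzero in a single stroke.

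There is a secondary problem in your quantitative estimate on $\mathcal B_T$, which happens not to affect the final exponent but indicates a misreading of where the band sits. $\mathcal B_T=[T-1,T+1]\times[0,1]$ is at distance $\approx 6T$ from $\partial K_1$ (where $\tau=-5T$) and $\approx 4T$ from $\partial K_2$ (where $\tau=5T$); it is not near either boundary of the neck. Consequently ${\rm E}(p_0^\rho,u_1^{\rho_1})$ on $\mathcal B_T$ decays like $e^{-\delta_1(\tau+5T)}\sim e^{-6\delta_1 T}$, not $e^{-\delta_1(T-1)}$ as you wrote, and the weight there is $e_{T,\delta}\sim e^{4\delta T}$, not a bounded quantity in $[1,10]$ (the latter holds only near $|\tau|\approx 5T$). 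Your two errors --- underestimating the decay and underestimating the weight --- happen to offset and give roughly the right magnitude, but the bookkeeping should be fixed. The parts of your argument identifying $\frak e^{\rho}_{i,T,(0)}=\overline\partial u_i^{\rho_i}$, establishing the $T$-independence, and arguing \eqref{frakeissmall} by continuity in $\rho$, do match what the paper does and are fine.
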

{
In (\ref{startingestimate}) we regard  $\frak e^{\rho} _{i,T,(0)} \in {\mathcal E_i^{\frak{ob}}}$
as an element of $\mathcal E_i(u^{\rho}_{T,(0)})$ by (\ref{Eiuiprime}) with 
$u' = u^{\rho} _{T,(0)}$.}
\begin{rem}
Note throughout the discussion of Chapters \ref{sec:statement}-\ref{surjinj}
we fix $X,L,u_i$. So we do not mention dependence of constants on them hereafter.
\end{rem}
\begin{proof}
By definition, $\overline\partial u^{\rho}_i \in \mathcal E_i(u_i^\rho)$.
We put
\begin{equation}\label{eq:erhoiT(0)}
\frak e_{i,T,(0)}^\rho : = \overline\partial u^{\rho}_i \in \mathcal E_i(u^{\rho}_i)
\cong \mathcal E^{\frak ob}_i
\end{equation}
extended to be zero to $\Sigma_T \setminus K_i$.
Then, by definition, the support of
$\overline\partial u^{\rho} _{T,(0)} - \frak e^{\rho} _{1,T,(0)} - \frak e^{\rho} _{2,T,(0)}$
is contained in $[-2T,2T]\times [0,1]$.
In fact on $K_1 \cup [-5T,-2T] \times [0,1]$, $u^{\rho} _{T,(0)} = u^{\rho_1}_1$
and on $K_2 \cup [2T,5T] \times [0,1]$, $u^{\rho} _{T,(0)} = u^{\rho_2}_2$,
{so}
$\overline\partial u^{\rho} _{T,(0)} = \frak e^{\rho} _{1,T,(0)} + \frak e^{\rho} _{2,T,(0)}$,
{thereon}.
Moreover by (\ref{approestu}) and (\ref{form5656}) the $C^k$ norm of  the map
$(\tau,t) \mapsto {\rm E}(p^{\rho}_0,u^{\rho} _{T,(0)}(\tau,t)) : [-2T,2T]\times [0,1]
\to T_{p^{\rho}_0}X$ at $(\tau,t)$ is smaller than
$C_ke^{- \delta_1 d(\partial [-5T,5T],\tau)}
\le C_ke^{- 3\delta_1 T}$, for any $k$.
Therefore we obtain the estimate (\ref{startingestimate}).
\end{proof}

\begin{rem}\label{remark54}
Note (\ref{eq:erhoiT(0)}) (resp. (\ref{558atoato}), (\ref{formD9atoato}))
specifies the choice of $\frak e^{\rho} _{i,T,(0)}$
(resp. $\frak e^{\rho} _{i,T,(1)}$, $\frak e^{\rho} _{i,T,(\kappa)}$).
This is the choice taken in this proof.
\end{rem}

\par\smallskip
\noindent{\bf Step 0-4 (Separating error terms into two parts)}:
{
We denote the initial error by
\be\label{Err(0)}
{\rm Err}^{\rho}_{T,(0)}
= \overline\partial u^{\rho} _{T,(0)} - \frak e^{\rho} _{1,T,(0)} - \frak e^{\rho} _{2,T,(0)}.
\ee
}

\begin{defn}\label{deferfirst}
We put
$$
\aligned
{\rm Err}^{\rho}_{1,T,(0)}
&= 
{\chi_{\mathcal X}^{\leftarrow}({\rm Err}^{\rho}_{T,(0)})}
=
\chi_{\mathcal X}^{\leftarrow} (\overline\partial u^{\rho} _{T,(0)} - \frak e^{\rho} _{1,T,(0)}), \\
{\rm Err}^{\rho}_{2,T,(0)}
&= 
{\chi_{\mathcal X}^{\rightarrow}({\rm Err}^{\rho}_{T,(0)})}
=
\chi_{\mathcal X}^{\rightarrow} (\overline\partial u^{\rho} _{T,(0)} -  \frak e^{\rho} _{2,T,(0)}).
\endaligned$$
We regard them as elements of the weighted Sobolev spaces
$L^2_{m,\delta}(\Sigma_1;(u_1^{\rho})^*TX\otimes \Lambda^{0,1})$
and
$L^2_{m,\delta}(\Sigma_2;(u_2^{\rho})^*TX\otimes \Lambda^{0,1})$
respectively.
(We extend them by $0$ outside $\supp \chi_{\mathcal X}^\leftarrow$, 
$\supp \chi_{\mathcal X}^\rightarrow$, respectively.)
\end{defn}
\par\medskip
\subsection{{The first iteration}}
\label{bubsec:firstit}
{We now start the first step of iteration.}
\par
\smallskip

\noindent{\bf Step 1-1 (Approximate solution for linearization)}:
 This step corresponds to solving the linearized equation
\begin{equation}\label{eq:Newtonf'}
f'(x_0)(v_0) = f(x_0),
\qquad
\text{namely
$v_0 = \frac{f(x_0)}{f'(x_0)}$}
\end{equation}
in Newton's iteration scheme.

We first cut-off $u^{\rho}_{T,(0)}$ and extend it to obtain maps $\hat u^{\rho}_{i,T,(0)} :
(\Sigma_i,\partial\Sigma_i) \to (X,L)$ $(i=1,2)$ as follows.
(This map is used to set the linearized operator (\ref{lineeqstep0}).)

\begin{equation}\label{form514514}
\aligned
&\hat u^{\rho}_{1,T,(0)}(z) \\
&=
\begin{cases} \Exp\left(p_0^\rho,\chi_{\mathcal B}^{\leftarrow}(\tau-T,t)
{\rm E}(p_0^\rho,u^{\rho}_{T,(0)}(\tau,t))\right)
&\text{if $z = (\tau,t) \in [-5T,5T] \times [0,1]$} \\
 u^{\rho}_{T,(0)}(z)
&\text{if $z \in K_1$} \\
p_0^{\rho}
&\text{if $z \in [5T,\infty)\times [0,1]$}.
\end{cases} \\
&\hat u^{\rho}_{2,T,(0)}(z) \\
&=
\begin{cases} \Exp\left(p_0^\rho, \chi_{\mathcal A}^{\rightarrow}(\tau+ T,t)
{\rm E}(p_0^\rho,u^{\rho}_{T,(0)}(\tau,t))\right)
&\text{if $z = (\tau,t) \in [-5T,5T] \times [0,1]$} \\
 u^{\rho}_{T,(0)}(z)
&\text{if $z \in K_2$} \\
p_0^{\rho}
&\text{if $z \in (-\infty,-5T]\times [0,1]$}.
\end{cases}
\endaligned
\end{equation}
Now we let
\begin{equation}\label{lineeqstep0}
\aligned
D_{\hat u^{\rho}_{i,T,(0)}}\overline{\partial}
:
W^2_{m+1,\delta}\Big((\Sigma_i,\partial \Sigma_i);&(\hat u^{\rho}_{i,T,(0)})^*TX,(\hat u^{\rho}_{i,T,(0)})^*TL\Big)
\\
&\to
L^2_{m,\delta}\left(\Sigma_i;(\hat u^{\rho}_{i,T,(0)})^{*}TX \otimes \Lambda^{0,1}\right)
\endaligned
\end{equation}
be the (covariant) linearization of the Cauchy-Riemann equation at $\hat u_{i,T,(0)}^\rho$, i.e.,
\be\label{Dudelbar}
(D_{\hat u^{\rho}_{i,T,(0)}}\overline{\partial})(V) =
\left.\frac{\partial}{\partial s}\right\vert_{s=0} \left((\Pal_i^{(0,1)})^{-1}
(\overline \partial \Exp(\hat u^{\rho}_{i,T,(0)},s V))\right).
\ee
Here $\Pal_i^{(0,1)}$ is the $(0,1)$ part of the parallel translation along the
map $r \mapsto \Exp (\hat u^{\rho}_{i,T,(0)},rV)$, $r \in [0,s]$ for some $s \in [0,1]$.
\begin{lem}\label{surjlineastep11}
There exist $\epsilon_{2},  T_{(\ref{surj1step10})}$ with the following properties.
Let $\mathcal E_i^{\frak{ob}}$ be the subspace chosen in \eqref{Eitake}.
Define
\be\label{eq:Eionurho}
\mathcal E_i(\hat u^{\rho}_{i,T,(0)}): = I_{\hat u^{\rho}_{i,T,(0)}}(\mathcal E_i^{\frak{ob}})
\ee
as the subspace of $L^2_{m,\delta}\left(\Sigma_i;(\hat u^{\rho}_{i,T,(0)})^{*}TX \otimes \Lambda^{0,1}\right)$. Then
the following holds if $T > T_{(\ref{surj1step10})}$ and
$\rho \in V_1(\epsilon) \times_L V_2(\epsilon)$ with $\epsilon < \epsilon_{2}$:
\begin{equation}\label{surj1step10}
\text{\rm Im}(D_{\hat u^{\rho}_{i,T,(0)}}\overline{\partial}) + \mathcal E_i(\hat u^{\rho}_{i,T,(0)})
= L^2_{m,\delta}\left(\Sigma_i;(\hat u^{\rho}_{i,T,(0)})^{*}TX \otimes \Lambda^{0,1}\right).
\end{equation}
Moreover
\begin{equation}\label{Duievsurjstep1}
\aligned
&D{\rm ev}_{1,\infty} - D{\rm ev}_{2,\infty}v\\
& : (D_{\hat u^{\rho}_{1,T,(0)}}
\overline{\partial})^{-1}(\mathcal E_1(\hat u^{\rho}_{1,T,(0)}))
 \oplus (D_{\hat u^{\rho}_{2,T,(0)}}\overline{\partial})^{-1}(\mathcal E_2
 (\hat u^{\rho}_{2,T,(0)}))
\to T_{p^{\rho}}L
\endaligned
\end{equation}
is surjective.
\end{lem}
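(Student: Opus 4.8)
The plan is to deduce Lemma \ref{surjlineastep11} from Assumption \ref{DuimodEi} together with the exponential decay estimates of Lemma \ref{lem:Ckexpdecay}, via a standard ``gluing makes surjectivity persist'' argument. The point is that $\hat u^{\rho}_{i,T,(0)}$ is, by construction, $C^1$-close (indeed $C^k$-close for all $k$, with exponentially small error in $T$) to $u_i$ on $K_i$ and is essentially the constant map $p_0^\rho$ near the puncture, so the operators $D_{\hat u^{\rho}_{i,T,(0)}}\overline\partial$ converge to $D_{u_i}\overline\partial$ in the appropriate operator norm as $T \to \infty$ and $\epsilon \to 0$. Surjectivity being an open condition among Fredholm operators of fixed index, the hypotheses \eqref{DuimodEi0}, \eqref{Duievsurj} then pass to the perturbed operators once $T$ is large enough and $\epsilon$ small enough.

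In more detail, I would proceed in the following steps. First, record the uniform comparison: using \eqref{approestu} and the explicit formulas for $\hat u^{\rho}_{i,T,(0)}$, one shows $\|\hat u^{\rho}_{i,T,(0)} - u_i\|_{C^{m+1}(K_i)} \le \epsilon + C e^{-3\delta_1 T}$ and that outside $K_i$ the map $\hat u^{\rho}_{i,T,(0)}$ is exponentially $C^{m+1}$-close to the constant map $p_0^\rho$; hence the weighted Sobolev spaces and the operators $D_{\hat u^{\rho}_{i,T,(0)}}\overline\partial$, $\mathcal E_i(\hat u^{\rho}_{i,T,(0)}) = I_{\hat u^{\rho}_{i,T,(0)}}(\mathcal E_i^{\frak{ob}})$, and the evaluation maps $D\mathrm{ev}_{i,\infty}$ all depend continuously (with quantitative modulus) on $(\rho,1/T)$, reducing to the reference data at $(\rho,1/T)=(0,0)$, i.e.\ to $u_i$. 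This uses parallel transport to identify the various bundles $(\hat u^{\rho}_{i,T,(0)})^*TX$ with $u_i^*TX$ over $K_i$ and over the strip. Second, invoke the fact, already cited in the excerpt after \eqref{form310}, that $D_{u_i}\overline\partial$ is Fredholm on these weighted spaces for the fixed small $\delta$; combined with Assumption \ref{DuimodEi}(1) this gives a bounded right inverse to the surjection \eqref{DuimodEi0}. Third, a standard perturbation lemma: if $Q_0$ is a bounded right inverse for a surjective bounded operator $P_0$ and $\|P - P_0\|\,\|Q_0\| < 1$, then $P$ is surjective with right inverse $Q_0(\mathrm{Id} + (P-P_0)Q_0)^{-1}$; apply this with $P_0 = D_{u_i}\overline\partial \bmod \mathcal E_i(u_i)$ and $P = D_{\hat u^{\rho}_{i,T,(0)}}\overline\partial \bmod \mathcal E_i(\hat u^{\rho}_{i,T,(0)})$, which yields \eqref{surj1step10} once $T > T_{(\ref{surj1step10})}$ and $\epsilon < \epsilon_2$. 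Fourth, for \eqref{Duievsurjstep1}, note that the right inverses constructed in the previous step can be chosen to converge to those for $u_i$; therefore the kernels $(D_{\hat u^{\rho}_{i,T,(0)}}\overline\partial)^{-1}(\mathcal E_i(\hat u^{\rho}_{i,T,(0)}))$ are finite-dimensional of the same dimension as $(D_{u_i}\overline\partial)^{-1}(\mathcal E_i(u_i))$ and vary continuously (they are images of the reference kernels under near-identity isomorphisms), so the surjectivity of $D\mathrm{ev}_{1,\infty} - D\mathrm{ev}_{2,\infty}$ on their direct sum — which holds at the reference point by Assumption \ref{DuimodEi}(2) — persists under the perturbation.

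The main obstacle, and the place where care is genuinely needed, is making the ``operator norm convergence'' statement precise \emph{uniformly in $T$} despite the $T$-dependence of the weighted norms \eqref{normWW}, \eqref{normformula}: the weight $e_{i,\delta}$ involves the shift $\tau \pm 5T$, so one must check that the identifications between the $T$-dependent domain/target spaces and fixed reference spaces are uniformly bounded with uniformly bounded inverses, and that the cut-off operations defining $\hat u^{\rho}_{i,T,(0)}$ (hence the modification of $D\overline\partial$ near $\tau = \pm 5T$) contribute only an exponentially small error in the relevant operator norm — this is exactly where the weight exponent bound \eqref{form310310}, $\delta \le \delta_1/10$, is used, so that $e^{\delta|\tau|}$ times the exponentially decaying derivatives of the map stays controlled. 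Once this bookkeeping is set up, the argument is the soft perturbation argument sketched above; everything else (Gram--Schmidt continuity of the $\mathbf e_{i,a}(\hat u^\rho_{i,T,(0)})$, continuity of $I_{\hat u^\rho_{i,T,(0)}}$, etc.) is routine and can be deferred to the appendices in the spirit of the paper's stated division of labor.
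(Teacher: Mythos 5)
Your proposal is correct and is precisely the intended argument: the paper's own proof is a single sentence ("Since $\hat u^{\rho}_{i,T,(0)}$ is close to $u_i$ in exponential order, this is a consequence of Assumption \ref{DuimodEi}"), and what you write is the standard unpacking of it — exponential closeness of $\hat u^{\rho}_{i,T,(0)}$ to $u_i$ on $K_i$ and to the constant $p_0^\rho$ on the neck, uniform control of the $T$-dependent weighted-norm identifications (using $\delta \le \delta_1/10$), and persistence of surjectivity and of the transversality of the asymptotic evaluation map under small perturbations of a Fredholm operator with bounded right inverse. No gap; same route, just spelled out.
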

\begin{proof}
Since $\hat u^{\rho}_{i,T,(0)}$ is close to $u_i$ in {an} exponential order, this is a consequence of
Assumption \ref{DuimodEi}.
\end{proof}

We note that
$
\overline \partial \hat u^{\rho}_{i,T,(0)} - \frak e^{\rho} _{i,T,(0)}
$
is exponentially small. Therefore the map
$$
V \mapsto (D_{\hat u^{\rho}_{i,T,(0)}}\overline{\partial})(V)
- (D_{\hat u^{\rho}_{i,T,(0)}}\mathcal E_i)(\frak e^{\rho} _{i,T,(0)}, V),
$$
which is obtained by replacing $\overline \partial \hat u^{\rho}_{i,T,(0)}$ by $\frak e^{\rho} _{i,T,(0)}$,
is exponentially close to the linearization operator $(D_{_{\hat u^{\rho}_{i,T,(0)}}}\delbar)^{\perp}$,
defined in
\eqref{linearized2221}.
We will consider this approximate linearization instead and denote it by
\index[syindex]{DurhoiTapp@$D_{\hat u^{\rho}_{i,T,(0)}}^{{\rm app},(0)}$}
$$
D_{\hat u^{\rho}_{i,T,(0)}}^{{\rm app},(0)}:
W^2_{m+1,\delta}(\Sigma_i;(\hat u^{\rho}_{i,T,(0)})^{*}TX) \to L^2_{m,\delta}(\Sigma_i;(\hat u^{\rho}_{i,T,(0)})^{*}TX \otimes \Lambda^{0,1})
$$
with
\begin{equation}\label{144op}
D_{\hat u^{\rho}_{i,T,(0)}}^{\text{\rm app},(0)}(V): = (D_{\hat u^{\rho}_{i,T,(0)}}\overline{\partial})(V)
- (D_{\hat u^{\rho}_{i,T,(0)}}\mathcal E_i)(\frak e^{\rho} _{i,T,(0)}, V).
\end{equation}

\begin{lem}\label{lem112}
There exist $T_{(\ref{surj1step1modififed})}>0$ {and $\epsilon_{2}>0$} with the following properties.
If $T > T_{(\ref{surj1step1modififed})}$ and
$\rho \in V_1(\epsilon) \times_L V_2(\epsilon)$ with $\epsilon < \epsilon_{2}$
{then}
\begin{equation}\label{surj1step1modififed}
\text{\rm Im } D_{\hat u^{\rho}_{i,T,(0)}}^{\text{\rm app},(0)}
+ \mathcal E_i(\hat u^{\rho}_{i,T,(0)})
= L^2_{m,\delta}(\Sigma_i;(\hat u^{\rho}_{i,T,(0)})^{*}TX \otimes \Lambda^{0,1}).
\end{equation}
Moreover
\begin{equation}\label{Duievsurjstep12}
D{\rm ev}_{1,\infty} - D{\rm ev}_{2,\infty} :
(D_{\hat u^{\rho}_{1,T,(0)}}^{\text{\rm app},(0)})^{-1}(\mathcal E_1) \oplus
(D_{\hat u^{\rho}_{2,T,(0)}}^{\text{\rm app},(0)})^{-1}(\mathcal E_2)
\to T_{p^{\rho}}L
\end{equation}
is surjective. Here we put $\mathcal E_i = \mathcal E_i(\hat u^{\rho}_{i,T,(0)})$.
\end{lem}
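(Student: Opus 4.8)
The plan is to obtain Lemma \ref{lem112} from Lemma \ref{surjlineastep11} by treating $D^{\text{\rm app},(0)}_{\hat u^{\rho}_{i,T,(0)}}$ as a small perturbation of the genuine linearization $D_{\hat u^{\rho}_{i,T,(0)}}\overline{\partial}$. By \eqref{144op} the difference of the two operators is the linear map
$$
V \longmapsto (D_{\hat u^{\rho}_{i,T,(0)}}\mathcal E_i)(\frak e^{\rho}_{i,T,(0)},V),
$$
so the first thing I would do is bound the operator norm of $(D_{\hat u^{\rho}_{i,T,(0)}}\mathcal E_i)(\frak e^{\rho}_{i,T,(0)},\cdot) : W^2_{m+1,\delta} \to L^2_{m,\delta}$. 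Since $\mathcal E_i(\hat u^{\rho}_{i,T,(0)})$ is the image under parallel transport of the fixed finite family $\{\mathbf e_{i,a}\}$, all supported in the fixed compact set $K_i^{0}\subset \operatorname{Int}K_i$ on which the weight $e_{i,\delta}$ is $\equiv 1$ and on which $\hat u^{\rho}_{i,T,(0)}$ coincides with $u_i^{\rho_i}$ independently of $T$, the bilinear assignment \eqref{DEidef} obeys
$$
\big\|(D_{\hat u^{\rho}_{i,T,(0)}}\mathcal E_i)(A,V)\big\|_{L^2_{m,\delta}} \le C\,\|A\|_{L^2(K_i)}\,\|V\|_{W^2_{m+1,\delta}}
$$
with $C = C(X,L,u_i,m)$ \emph{not} depending on $T$. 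This is exactly the routine kind of estimate — differentiating the Gram--Schmidt-normalized projection \eqref{form118} in the base map — that belongs in the appendices, and I would not reproduce it in full here.

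Combining this with \eqref{frakeissmall} of Lemma \ref{lem18}, I would fix $\epsilon(4)$ small (shrinking $\epsilon_2$ accordingly from the outset, i.e. taking $\epsilon_2 \le \epsilon_{\epsilon(4),(\ref{frakeissmall})}$ in the proof of Lemma \ref{surjlineastep11}), so that $\|\frak e^{\rho}_{i,T,(0)}\|_{L^2(K_i)} < \epsilon(4)$ and hence
$$
\big\|D^{\text{\rm app},(0)}_{\hat u^{\rho}_{i,T,(0)}} - D_{\hat u^{\rho}_{i,T,(0)}}\overline{\partial}\big\|_{{\rm op}} \le C\,\epsilon(4).
$$
(Equivalently, $\frak e^{0}_{i,T,(0)} = \overline{\partial}u_i = 0$ by \eqref{eq:erhoiT(0)}, so the same bound follows by continuity in $\rho$.) On the other hand, the proof of Lemma \ref{surjlineastep11} exhibits $D_{\hat u^{\rho}_{i,T,(0)}}\overline{\partial}$, after the parallel-transport identification of bundles, as an exponentially small perturbation of the fixed Fredholm operator $D_{u_i}\overline{\partial}$; therefore \eqref{surj1step10} holds with a right inverse modulo $\mathcal E_i$ of norm $\le M$, and \eqref{Duievsurjstep1} holds with a right inverse onto $T_{p^{\rho}}L$ of norm $\le M$, where $M$ depends only on the fixed data and is independent of $T > T_{(\ref{surj1step10})}$ and of admissible $\rho$.

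With both bounds in hand I would finish by the standard Neumann-series argument. Choosing $\epsilon(4)$ so that $CM\epsilon(4) < 1/2$, and letting $Q$ be the right inverse of the induced surjection $\overline{D_{\hat u^{\rho}_{i,T,(0)}}\overline{\partial}}\colon W^2_{m+1,\delta}\to L^2_{m,\delta}/\mathcal E_i$, the composite $\overline{D^{\text{\rm app},(0)}_{\hat u^{\rho}_{i,T,(0)}}}\circ Q$ equals $\mathrm{id}+R$ with $\|R\|<1/2$, hence is invertible; this gives \eqref{surj1step1modififed}, with a right inverse of norm $\le 2M$. For \eqref{Duievsurjstep12} the point is that the extra term in \eqref{144op} still takes values in $L^2_{m,\delta}(\Sigma_i;(\hat u^{\rho}_{i,T,(0)})^{*}TX\otimes\Lambda^{0,1})$ and leaves the asymptotic evaluation $(s,v)\mapsto v$ untouched, so the finite-dimensional subspaces $(D^{\text{\rm app},(0)}_{\hat u^{\rho}_{i,T,(0)}})^{-1}(\mathcal E_i)$ — of the same dimension as $(D_{\hat u^{\rho}_{i,T,(0)}}\overline{\partial})^{-1}(\mathcal E_i)$ since $\mathcal E_i$ is finite-dimensional — form a small perturbation of the latter, and surjectivity of the restriction of the fixed continuous map $D{\rm ev}_{1,\infty}-D{\rm ev}_{2,\infty}$ onto the finite-dimensional target $T_{p^{\rho}}L$ persists. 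One then takes $T_{(\ref{surj1step1modififed})}=T_{(\ref{surj1step10})}$.

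The step I expect to be the only genuinely delicate one is securing that the constant $C$ above and the right-inverse bound $M$ are honestly $T$-independent, so that the \emph{fixed}-size perturbation $C\epsilon(4)$ really is small relative to $M^{-1}$. This rests on the fact — already exploited in Lemma \ref{surjlineastep11} — that all these operators live over the fixed surfaces $\Sigma_i$ rather than over $\Sigma_T$, that $\mathcal E_i(\hat u^{\rho}_{i,T,(0)})$ is supported in the $T$-independent region $K_i$, and that $\hat u^{\rho}_{i,T,(0)}$ converges in $C^1$ to a fixed map, exponentially fast in $T$; everything else is soft linear algebra.
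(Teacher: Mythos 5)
Your proposal is correct and follows essentially the same route as the paper's own proof, which consists of the same two observations: \eqref{frakeissmall} makes $(D_{\hat u^{\rho}_{i,T,(0)}}\mathcal E_i)(\frak e^{\rho}_{i,T,(0)},\cdot)$ small in operator norm (uniformly in $T$ because $\mathcal E_i$ is supported in the $T$-independent region $K_i^0$ where $\hat u^{\rho}_{i,T,(0)} = u_i^{\rho_i}$), and then the conclusion follows from Lemma \ref{surjlineastep11} by perturbation. The paper states this in two sentences and omits the Neumann-series and kernel-perturbation details you supply, but the underlying argument is identical.
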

\begin{proof} The inequality
(\ref{frakeissmall}) implies that $ (D_{\hat u^{\rho}_{1,T,(0)}}\mathcal E_1)(\frak e^{\rho} _{1,T,(0)}, \cdot)$ is
small in the operator norm. The lemma then follows from Lemma \ref{surjlineastep11}.
\end{proof}
\begin{rem}\label{independetmm}
{Recall that
$$
V_i (\epsilon)\subset \mathcal M^{\mathcal E_i}((\Sigma_i,\vec z_i);\beta_i)_{\epsilon_2}
$$
consists of smooth maps.}
Note that (\ref{frakeissmall})
and conclusions of Lemmata \ref{surjlineastep11} and \ref{lem112}
 are proved by taking  a small neighborhood $V_i(\epsilon)$ of $0$ (in $\tilde V_i$)
with respect to the $C^m$ norm.

However we can take $V_i(\epsilon)$ {so that it becomes independent of} $m$ and the conclusion of Lemma \ref{lem112} holds for
all $m$.
In fact the elliptic regularity implies that if the conclusion of Lemma \ref{lem112} holds for some $m$ then
it holds for all $m'>m$.
The inequality (\ref{frakeissmall}) holds for that particular $m$ only.
Only this inequality
for $m$ is used to show Lemma \ref{lem112}.
In other words, $\epsilon_2$ is independent of $m$.
\end{rem}
{
Based on this remark, we fix}  $V_1 = V_1(\epsilon_{2}), V_2 = V_2(\epsilon_{2})$ now and will never change 
them in Chapters \ref{alternatingmethod}, \ref{subsecdecayT}.
In other words, the constant $\epsilon_{2}$ is fixed at this stage.

We consider the finite dimensional subspace
\begin{equation}\label{gibker}
\text{\rm Ker}(D{\rm ev}_{1,\infty} - D{\rm ev}_{2,\infty})
\cap \left(
((D_{u_1}\delbar)^{\perp})^{-1}(\mathcal E_1) \oplus
((D_{u_2}\delbar)^{\perp})^{-1}(\mathcal E_2)\right)
\end{equation}
of
\begin{equation}\label{cocsissobolev}
\text{\rm Ker}(D{\rm ev}_{1,\infty} - D{\rm ev}_{2,\infty}) \cap
\bigoplus_{i=1}^2 W^2_{m+1,\delta}((\Sigma_i,\partial \Sigma_i);u_i^{*}TX,(u_i^{*}TL))
\end{equation}
which consists of smooth sections.  Here we recall $\mathcal E_i = \mathcal E_i(u_i)$.
Note $\overline\partial u_i =0$. Therefore
$
(D_{u_i}\overline\partial)^{\perp} = D_{u_i}\overline\partial
$
by (\ref{linearized2221}).

\begin{defn}\label{defnfrakH}
We define a linear subspace  $\frak H(\mathcal E_1,\mathcal E_2)$ of
(\ref{cocsissobolev}) by \index[syindex]{HE1E2@$\frak H(\mathcal E_1,\mathcal E_2)$}
\begin{equation}
\frak H(\mathcal E_1,\mathcal E_2) \perp {\rm (\ref{gibker})},
\qquad
\frak H(\mathcal E_1,\mathcal E_2) \oplus  {\rm (\ref{gibker})}
= {\rm (\ref{cocsissobolev})}.
\end{equation}
In other words, it is the
$L^2$ orthogonal complement of (\ref{gibker}) in (\ref{cocsissobolev}).
Here the $L^2$ inner product is defined by (\ref{innerprod1}).
\par
Note the supports of elements of $\mathcal E_i(u_i)$ are contained in $K_i$.
Therefore the projection of $C^k$ (resp. $L^2_k$) section to $\frak H(\mathcal E_1,\mathcal E_2)$
is  $C^k$ (resp. $L^2_k$).
\end{defn}
We use the parallel transport to define an isomorphism from
\begin{equation}\label{domain527}
W^2_{m+1,\delta}((\Sigma_i,\partial \Sigma_i);u_{i}^{*}TX,u_{i}^{*}TL)
\end{equation}
to
\begin{equation}\label{domain528}
W^2_{m+1,\delta}((\Sigma_i,\partial \Sigma_i);(\hat u^{\rho}_{i,T,(0)})^{*}TX,(\hat u^{\rho}_{i,T,(0)})^{*}TL),
\end{equation}
as follows.
\begin{defn}
We define the isomorphism $\Phi_{i;(0)}(\rho,T) : (\ref{domain527}) \to (\ref{domain528})$ 
{to be the closure of the map ${\rm Pal}^{\hat u^{\rho}_{i,T,(0)}}_{u_i}$
given in (\ref{paluv}). With a slight abuse of notation, we still denote}
\begin{equation}\label{form5304}
\Phi_{i;(0)}(\rho,T) = {\rm Pal}^{\hat u^{\rho}_{i,T,(0)}}_{u_i}.
\end{equation}
Here the right hand side is the closure of the map ${\rm Pal}^{\hat u^{\rho}_{i,T,(0)}}_{u_i}$ in (\ref{paluv}).
\end{defn}

\begin{lem}\label{lemma51212}
$ $
\begin{enumerate}
\item $\Phi_{i;(0)}(\rho,T)$ {is an isomorphism between (\ref{domain527}) and (\ref{domain528}).}
.
\item  The linear map
$(\Phi_{1;(0)}(\rho,T),\Phi_{2;(0)}(\rho,T))$ sends the subspace
(\ref{gibker}) into  the subspace
\begin{equation}\label{cocsissobolev2}
\aligned
& \text{\rm Ker}(D{\rm ev}_{1,\infty} - D{\rm ev}_{2,\infty}) \\
& \cap
\bigoplus_{i=1}^2 W^2_{m+1,\delta}((\Sigma_i,\partial \Sigma_i);
(\hat u^{\rho}_{i,T,(0)})^{*}TX,(\hat u^{\rho}_{i,T,(0)})^{*}TL).
\endaligned
\end{equation}
\end{enumerate}
\end{lem}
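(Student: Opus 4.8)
The two assertions are really statements about parallel transport preserving the structure of the relevant Sobolev spaces, so the plan is to unwind the definitions and check compatibility with (i) the weight functions, (ii) the boundary condition along $\partial\Sigma_i$, (iii) the asymptotic condition at the puncture, and (iv) the two linear constraints cutting out (\ref{gibker}). First I would recall that $\hat u^{\rho}_{i,T,(0)}$ is $C^\infty$-close to $u_i$ in exponential order in $T$ (this was already used in Lemma \ref{surjlineastep11}), and in particular $d(u_i(z),\hat u^{\rho}_{i,T,(0)}(z)) < \iota'_X$ for all $z$, so that $\mathrm{Pal}^{\hat u^{\rho}_{i,T,(0)}}_{u_i}$ in (\ref{paluv}) is defined pointwise and induces a bounded map on sections; moreover since $L$ is totally geodesic for the chosen metric (Lemma \ref{lem:g0}), parallel transport sends $u_i^*TL$ to $(\hat u^{\rho}_{i,T,(0)})^*TL$, so the boundary condition is preserved. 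This gives the $L^2_{m+1,loc}$-with-boundary part of part (1).

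\textbf{Part (1), the weighted and asymptotic part.} The content here is that $\Phi_{i;(0)}(\rho,T)$ respects the decomposition $(s,v)$ of Definition \ref{defn3535}. On the end $\Sigma_i\setminus K_i$, far out in the strip, $\hat u^{\rho}_{i,T,(0)}$ equals the constant map $p_0^\rho$ beyond $\pm 5T$ and is exponentially close to $u_i$ elsewhere; so I would send $(s,v)$ to $\big(\mathrm{Pal}^{\hat u^{\rho}_{i,T,(0)}}_{u_i}(s),\,\mathrm{Pal}^{p_0}_{p_0^\rho}(v)\big)$, where the second factor is parallel transport of the limiting vector along the short geodesic from $p_0$ to $p_0^\rho$. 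One then checks that $\big(\mathrm{Pal}^{\hat u^{\rho}_{i,T,(0)}}_{u_i}(s) - (\mathrm{Pal}^{p_0}_{p_0^\rho}v)^{\mathrm{Pal}}\big)$ satisfies the finiteness of (\ref{weight1}): this follows because $v^{\mathrm{Pal}}$ (built from $u_i$) and $(\mathrm{Pal}^{p_0}_{p_0^\rho}v)^{\mathrm{Pal}}$ (built from $\hat u^{\rho}_{i,T,(0)}$) differ by an amount controlled, together with its derivatives up to order $m+1$, by $d(u_i,\hat u^{\rho}_{i,T,(0)})$ in $C^{m+1}$, which decays like $e^{-\delta_1 T}$ and in particular is $L^2$ with any weight $e^{2\delta|\tau\pm5T|}$ once $\delta<\delta_1/10$ (this is exactly the estimate pattern of (\ref{form310})); and the $C^0$-bound on parallel transport together with the Leibniz rule controls the remaining derivative terms of $\mathrm{Pal}^{\hat u^{\rho}_{i,T,(0)}}_{u_i}(s)-\mathrm{Pal}^{\hat u^{\rho}_{i,T,(0)}}_{u_i}(v^{\mathrm{Pal}})$ by those of $s - v^{\mathrm{Pal}}$. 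Hence the image lies in the space (\ref{domain528}), and boundedness of $\Phi_{i;(0)}(\rho,T)$ follows from the same estimates. This proves (1).

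\textbf{Part (2), compatibility with the constraints.} There are two constraints defining (\ref{gibker}): membership in $((D_{u_i}\delbar)^\perp)^{-1}(\mathcal E_i(u_i))$ for each $i$, and lying in $\mathrm{Ker}(D\mathrm{ev}_{1,\infty}-D\mathrm{ev}_{2,\infty})$; the target (\ref{cocsissobolev2}) only asks for the evaluation constraint. So I only need to show that $(\Phi_{1;(0)}(\rho,T),\Phi_{2;(0)}(\rho,T))$ intertwines the evaluation maps $D\mathrm{ev}_{i,\infty}$ up to the common identification of the asymptotic fibers — more precisely, that $D\mathrm{ev}_{i,\infty}\circ\Phi_{i;(0)}(\rho,T)(s,v) = \mathrm{Pal}^{p_0}_{p_0^\rho}v$ for $i=1,2$, which is immediate from the definition $D\mathrm{ev}_{i,\infty}(s,v)=v$ and the description of $\Phi_{i;(0)}(\rho,T)$ on the $v$-component above. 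Since the chosen geodesic from $p_0$ to $p_0^\rho$ is the same one for both $i=1$ and $i=2$ (both $u_1$ and $u_2$ have asymptotic limit $p_0$, and $\hat u^{\rho}_{1,T,(0)}$, $\hat u^{\rho}_{2,T,(0)}$ both have asymptotic limit $p_0^\rho$), the difference $D\mathrm{ev}_{1,\infty}-D\mathrm{ev}_{2,\infty}$ is preserved: if $v_1 = v_2$ then $\mathrm{Pal}^{p_0}_{p_0^\rho}v_1 = \mathrm{Pal}^{p_0}_{p_0^\rho}v_2$. Hence $(\Phi_{1;(0)},\Phi_{2;(0)})$ carries the kernel of $D\mathrm{ev}_{1,\infty}-D\mathrm{ev}_{2,\infty}$ into the kernel of the same operator on the target, which together with part (1) applied to each summand gives (2).

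\textbf{Main obstacle.} The only nontrivial point is the weighted estimate in part (1): one must be sure that replacing the reference section $v^{\mathrm{Pal}}$ built from $u_i$ by the one built from $\hat u^{\rho}_{i,T,(0)}$ does not spoil membership in the weighted space, i.e.\ that the discrepancy between the two ``constant-at-infinity'' models decays fast enough against the exponential weight $e^{2\delta|\tau\pm5T|}$. This is where the hypothesis $\delta\le\delta_1/10$ and the exponential $C^{m+1}$-closeness of $\hat u^{\rho}_{i,T,(0)}$ to $u_i$ (themselves consequences of Lemma \ref{lem:Ckexpdecay} and the pregluing formula (\ref{form5656})) are used; everything else is the standard bound that $u\mapsto F(u)$ and parallel transport are bounded operators between the relevant $L^2_{m+1}$ spaces, of the kind relegated to the appendices.
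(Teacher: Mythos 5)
The paper does not supply a proof of this lemma (it says only ``Using Lemma \ref{lem112} the proof is easy and is omitted''), so there is no text to compare against; your detailed verification is the natural one, and the overall structure is correct: part (1) is a statement about parallel transport respecting the pair $(s,v)$ and the weighted $L^2$ condition (\ref{weight1}), and part (2) reduces to the observation that both $\Phi_{1;(0)}$ and $\Phi_{2;(0)}$ act on the asymptotic value $v$ by the \emph{same} parallel transport along the geodesic from $p_0$ to $p_0^\rho$ (since both $u_1,u_2$ limit to $p_0$ and both $\hat u^{\rho}_{1,T,(0)},\hat u^{\rho}_{2,T,(0)}$ limit to $p_0^\rho$), so the constraint $v_1=v_2$ is preserved. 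Your observation that only the evaluation constraint, not the $(D_{u_i}\delbar)^{-1}(\mathcal E_i)$-membership, needs to be checked for (2) is also right, since (\ref{cocsissobolev2}) drops the latter.

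There is one imprecision in the weighted estimate of part (1) that you should fix. You write that the discrepancy between $v^{\rm Pal}$ (built from $u_i$) and $(\mathrm{Pal}_{p_0}^{p_0^\rho}v)^{\rm Pal}$ (built from $\hat u^{\rho}_{i,T,(0)}$) is ``controlled \dots by $d(u_i,\hat u^{\rho}_{i,T,(0)})$ in $C^{m+1}$, which decays like $e^{-\delta_1 T}$.'' But $d(u_i(z),\hat u^{\rho}_{i,T,(0)}(z))$ does \emph{not} decay in $\tau$: it tends to $d(p_0,p_0^\rho)$ as $\tau\to\pm\infty$ (and on $K_i$ it is of order $|\rho|$, not $e^{-\delta_1 T}$). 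What actually gives the needed $\tau$-decay, hence integrability against $e^{2\delta|\tau\pm 5T|}$, is the exponential decay of $u_i(\tau,t)$ to $p_0$ and of $\hat u^{\rho}_{i,T,(0)}(\tau,t)$ to $p_0^\rho$ (Lemma \ref{lem:Ckexpdecay} together with the pregluing formula): the holonomy defect around the geodesic quadrilateral with vertices $p_0$, $u_i(\tau,t)$, $\hat u^{\rho}_{i,T,(0)}(\tau,t)$, $p_0^\rho$ is controlled by the two exponentially short sides $d(p_0,u_i(\tau,t))$ and $d(p_0^\rho,\hat u^{\rho}_{i,T,(0)}(\tau,t))$, both $O(e^{-\delta_1|\tau\pm 5T|})$. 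Your later reference to (\ref{form310}) already points at the right $\tau$-dependence, so the conclusion is sound; only the named controlling quantity should be replaced as indicated.
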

With Lemma \ref{lem112} the proof is easy and is omitted.
\begin{defn}
We denote by $\frak H_{(0)}(\mathcal E_1,\mathcal E_2;\rho,T)$
the image of
the restriction of the linear map $(\Phi_{1,(0)}(\rho,T),\Phi_{2,(0)}(\rho,T))$
 to  $\frak H(\mathcal E_1,\mathcal E_2)$. It
 is a subspace of (\ref{cocsissobolev2}).
\end{defn}

\begin{defn}\label{def105}
Let ${\rm Err}^{\rho}_{i,T,(0)}$ 
\index[syindex]{Erriroh@${\rm Err}^{\rho}_{i,T,(0)}$}be the functions defined in Definition \ref{deferfirst}.
We define the triple $(V^{\rho}_{T,1,(1)},V^{\rho}_{T,2,(1)},\Delta p^{\rho}_{T,(1)})$ to be the unique solution
satisfying the requirements \index[syindex]{VrhoT1(1)@$(V^{\rho}_{T,1,(1)},V^{\rho}_{T,2,(1)},\Delta p^{\rho}_{T,(1)})$}
\begin{equation}\label{144ffff}
D_{\hat u^{\rho}_{i,T,(0)}}^{\text{app},(0)}(V^{\rho}_{T,i,(1)}) + {\rm Err}^{\rho}_{i,T,(0)}
\in \mathcal E_i(\hat u^{\rho}_{i,T,(0)}),
\end{equation}
and
\begin{equation}\label{pairinfrakH}
((V^{\rho}_{T,1,(1)},\Delta p^{\rho}_{T,(1)}),(V^{\rho}_{T,2,(1)},\Delta p^{\rho}_{T,(1)})) \in
\frak H_{(0)}(\mathcal E_1,\mathcal E_2;\rho,T).
\end{equation}
\end{defn}
Lemma \ref{lemma51212} implies that such $(V^{\rho}_{T,1,(1)},V^{\rho}_{T,2,(1)},\Delta p^{\rho}_{T,(1)})$
exists and is unique
if $T > T_{(\ref{form536536})}$ with the sufficiently large
$T_{(\ref{form536536})}$ and $\rho$ is in $V_1 \times_L V_2$
chosen at Remark \ref{independetmm}.
In fact,  when we consider the subspace
\begin{equation}\label{gibker2}
\text{\rm Ker}(D{\rm ev}_{1,\infty} - D{\rm ev}_{2,\infty})
\cap
\bigoplus_{i=1}^2 (D_{\hat u^{\rho}_{i,T,(0)}}^{\text{\rm app},(0)})^{-1}(\mathcal E_i)
\end{equation}
of (\ref{cocsissobolev2}) we still have a direct sum decomposition
\begin{equation}\label{form536536}
(\ref{gibker2}) \oplus \frak H_{(0)}(\mathcal E_1,\mathcal E_2;\rho,T)
= (\ref{cocsissobolev2}).
\end{equation}
Since (\ref{form536536}) holds for $T = \infty$ by definition, so it holds
for $T > T_{(\ref{form536536})}$.
\par
The unique solution $(V^{\rho}_{T,1,(1)},V^{\rho}_{T,2,(1)},\Delta p^{\rho}_{T,(1)})$
corresponds to the solution $v_0$
to \eqref{eq:Newtonf'} in Newton's scheme. Then the following
estimate corresponds to estimating $|x_1 - x_0| = |v_0|$ for the solution $v_0$ to the linearized
equation \eqref{eq:Newtonf'} in Newton's scheme.
\begin{rem}
Instead of $\frak H_{(0)}(\mathcal E_1,\mathcal E_2;\rho,T)$ we can use an 
$L^2$ orthogonal complement of (\ref{gibker2}) in (\ref{cocsissobolev2}).
This choice looks more natural and actually works for the proof of
Theorems \ref{gluethm1} and \ref{exdecayT}. We take the current choice since
when we study $T$ and $\rho$ derivative in Section \ref{subsec62},
it makes calculation slightly shorter.
\end{rem}

\begin{lem}\label{lem516}
There exist $C_{2,m}>0, T_{m,(\ref{142form})}>0$ such that
if $T > T_{m,(\ref{142form})}$, then
\begin{equation}\label{142form}
\| (V^{\rho}_{T,i,(1)},\Delta p^{\rho}_{T,(1)})\|_{W^2_{m+1,\delta}(\Sigma_i)} \le C_{2,m}e^{-\delta_1 T}
.
\end{equation}
\end{lem}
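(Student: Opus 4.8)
The plan is to solve for $(V^{\rho}_{T,i,(1)},\Delta p^{\rho}_{T,(1)})$ by inverting the glued approximate linearization on the complementary subspace and then bounding the solution by the norm of the input error term ${\rm Err}^{\rho}_{i,T,(0)}$, to which Lemma \ref{lem18} applies directly. First I would record that, by Lemma \ref{lem112} and the direct sum decomposition \eqref{form536536}, for $T > T_{(\ref{form536536})}$ the operator
$$
\bigoplus_{i=1}^2 D_{\hat u^{\rho}_{i,T,(0)}}^{\text{\rm app},(0)}
\ \oplus\ (D{\rm ev}_{1,\infty} - D{\rm ev}_{2,\infty})
$$
restricts to a linear isomorphism from $\frak H_{(0)}(\mathcal E_1,\mathcal E_2;\rho,T)$ onto
$
L^2_{m,\delta}(\Sigma_1;\cdot)/\mathcal E_1 \oplus L^2_{m,\delta}(\Sigma_2;\cdot)/\mathcal E_2 \oplus T_{p^{\rho}}L
$ (the middle factor $T_{p^\rho}L$ being killed by the $D{\rm ev}$ constraint, so in fact onto the two quotient spaces). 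The content of Definition \ref{def105} is precisely that $(V^{\rho}_{T,1,(1)},V^{\rho}_{T,2,(1)},\Delta p^{\rho}_{T,(1)})$ is the preimage of $(-[{\rm Err}^{\rho}_{1,T,(0)}], -[{\rm Err}^{\rho}_{2,T,(0)}])$ under this isomorphism.

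The key step is therefore a uniform (in $T$, for $T$ large) bound on the operator norm of the inverse of this restricted isomorphism, measured with the weighted norms $W^2_{m+1,\delta}$ and $L^2_{m,\delta}$ of Section \ref{subsec12}. This is where the $T$-dependence of the norms matters: one argues that $D_{\hat u^{\rho}_{i,T,(0)}}^{\text{\rm app},(0)}$ converges, in the appropriate sense, to the pair $(D_{u_1}\overline\partial,D_{u_2}\overline\partial)$ as $T\to\infty$ (since $\hat u^{\rho}_{i,T,(0)}$ is exponentially $C^m$-close to $u_i$ on $K_i$, is the constant map $p_0^\rho$ far out, and the correction term $(D\mathcal E_i)(\frak e^\rho_{i,T,(0)},\cdot)$ is small in operator norm by \eqref{frakeissmall}), and the limiting operator restricted to $\frak H(\mathcal E_1,\mathcal E_2)$ is an isomorphism onto the quotient spaces by Assumption \ref{DuimodEi}. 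A standard perturbation argument then gives a lower bound $\|(D^{\text{\rm app},(0)}_{\hat u^\rho_{i,T,(0)}} V, D{\rm ev}_1 - D{\rm ev}_2)\| \ge c\,\|(V_1,V_2)\|_{W^2_{m+1,\delta}}$ for all $(V_1,V_2)$ in $\frak H_{(0)}$, with $c>0$ independent of $T$ (this is where one must be careful that the weighted norm on $\Sigma_T$ is set up so that cutting and extending across the neck does not lose a factor growing in $T$ — the exponential weight exactly compensates the length of the neck). Granting this uniform invertibility, one gets
$$
\|(V^{\rho}_{T,i,(1)},\Delta p^{\rho}_{T,(1)})\|_{W^2_{m+1,\delta}(\Sigma_i)}
\le C \bigl(\|{\rm Err}^{\rho}_{1,T,(0)}\|_{L^2_{m,\delta}} + \|{\rm Err}^{\rho}_{2,T,(0)}\|_{L^2_{m,\delta}}\bigr).
$$

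To finish, I would estimate the right-hand side. By Definition \ref{deferfirst}, ${\rm Err}^{\rho}_{i,T,(0)} = \chi_{\mathcal X}^{\leftarrow/\rightarrow}(\overline\partial u^{\rho}_{T,(0)} - \frak e^{\rho}_{i,T,(0)})$ is supported in $\mathcal X = [-1,1]\times[0,1]$, where the weight $e_{T,\delta}$ is of size $\sim e^{5T\delta}$. On $\mathcal X$, Lemma \ref{lem18} (or rather its proof, which shows the $C^k$ norm of ${\rm E}(p^\rho_0, u^\rho_{T,(0)})$ there is $\le C_k e^{-3\delta_1 T}$) gives $\|\overline\partial u^{\rho}_{T,(0)} - \frak e^{\rho}_{1,T,(0)} - \frak e^{\rho}_{2,T,(0)}\|_{C^m(\mathcal X)} \le C_m e^{-3\delta_1 T}$; since the supports of $\frak e^\rho_{i,T,(0)}$ lie in $K_i$ and hence miss $\mathcal X$, on $\mathcal X$ we have $\overline\partial u^\rho_{T,(0)} - \frak e^\rho_{i,T,(0)} = \overline\partial u^\rho_{T,(0)}$ itself there, which is also $O(e^{-3\delta_1 T})$ in $C^m$. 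Multiplying by the weight $e^{5T\delta}$ on a region of bounded area, and using $\delta \le \delta_1/10$ from \eqref{form310310} so that $5T\delta - 3\delta_1 T \le (\delta_1/2 - 3\delta_1)T = -\tfrac{5}{2}\delta_1 T \le -\delta_1 T$, yields $\|{\rm Err}^{\rho}_{i,T,(0)}\|_{L^2_{m,\delta}} \le C'_m e^{-\delta_1 T}$. Combining, $\|(V^{\rho}_{T,i,(1)},\Delta p^{\rho}_{T,(1)})\|_{W^2_{m+1,\delta}} \le C_{2,m}e^{-\delta_1 T}$, which is \eqref{142form}.

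The main obstacle I anticipate is the uniform-in-$T$ invertibility in the second paragraph: one must verify that the glued operator $\bigoplus_i D^{\text{\rm app},(0)}_{\hat u^\rho_{i,T,(0)}}$, together with the evaluation-difference constraint, is bounded below on $\frak H_{(0)}(\mathcal E_1,\mathcal E_2;\rho,T)$ with a constant independent of $T$, and this requires tracking exactly how the $T$-weighted norms on $\Sigma_i$ (decoupled pieces) relate to the cut-and-paste operations that build $\frak H_{(0)}$ from $\frak H(\mathcal E_1,\mathcal E_2)$ via the parallel-transport isomorphisms $\Phi_{i;(0)}(\rho,T)$. This is precisely the decoupled analog of the gluing estimate, and the weights in \eqref{e1delta}–\eqref{normformjula52} were designed so that it goes through; the routine but delicate point is checking that no $T$-growing factor is introduced.
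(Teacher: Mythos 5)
Your proposal follows the paper's own proof, which is a two-ingredient argument: (i) uniform boundedness (in $T$) of the right inverse of $\Pi_{\mathcal E_i}^{\perp}\circ D_{\hat u^{\rho}_{i,T,(0)}}^{\text{\rm app},(0)}$ restricted to $\frak H_{(0)}(\mathcal E_1,\mathcal E_2;\rho,T)$, and (ii) the exponential decay of the error term from Lemma \ref{lem18}. You expand both ingredients carefully and correctly identify the one genuinely delicate point, namely that the $T$-dependent parallel-transport identification $\Phi_{i;(0)}(\rho,T)$ does not introduce a $T$-growing factor. One small inaccuracy that you should fix: ${\rm Err}^{\rho}_{1,T,(0)}$ is \emph{not} supported in $\mathcal X=[-1,1]\times[0,1]$; since $\chi_{\mathcal X}^{\leftarrow}\equiv 1$ for $\tau<-1$, the section is supported in $[-2T,1]\times[0,1]$ (in $\Sigma_1$), a region of length $O(T)$. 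Your estimate only treats the part on $\mathcal X$. The conclusion survives, but one must check the contribution on $[-2T,-1]\times[0,1]$: there the pointwise decay from the proof of Lemma \ref{lem18} is $\lesssim e^{-\delta_1(\tau+5T)}$ while the weight $e_{1,\delta}$ is $e^{\delta(\tau+5T)}$, so the weighted integrand $e^{2(\delta-\delta_1)(\tau+5T)}$ decays exponentially in $\tau+5T$ and the integral over the length-$O(T)$ region is dominated by its value at $\tau=-2T$, giving $O(e^{-\frac{27}{10}\delta_1 T})$, well within $O(e^{-\delta_1 T})$. In other words, your reasoning is really a rederivation of \eqref{startingestimate}, applied to the sections ${\rm Err}^{\rho}_{i,T,(0)}$ in $L^2_{m,\delta}(\Sigma_i)$ rather than $L^2_{m,\delta}(\Sigma_T)$; once you recognize this, you can appeal to Lemma \ref{lem18} directly instead of redoing the weight bookkeeping.
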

This is immediate from construction and  uniform boundedness of the right inverse of
the operator
$$
\Pi_{\mathcal E_i}^{\perp} \circ
D_{\hat u^{\rho}_{i,T,(0)}}^{\text{\rm app},(0)}=
\Pi_{\mathcal E_i}^{\perp} \circ(D_{\hat u^{\rho}_{i,T,(0)}}\overline{\partial} -
(D_{\hat u^{\rho}_{i,T,(0)}}\mathcal E_i)(\frak e^{\rho} _{i,T,(0)}, \cdot))
$$
and the exponential decay estimates of the error term.
(Lemma \ref{lem18}.)
\par\medskip
\noindent{\bf Step 1-2 (Gluing solutions)}:
We use $(V^{\rho}_{T,1,(1)},V^{\rho}_{T,2,(1)},\Delta p^{\rho}_{T,(1)})$ to find an
approximate solution $u^{\rho}_{T,(1)}$ of the next level. This corresponds to writing down
$$
x_1 = x_0 - v_0 = x_0 -\frac{f(x_0)}{f'(x_0)}
$$
in  Newton's scheme.

\begin{defn}\label{defn518}
We define $u^{\rho}_{T,(1)}(z)$ \index[syindex]{urhoT1@$u^{\rho}_{T,(1)}(z)$} as follows.
\begin{enumerate}
\item If $z \in K_1$, we put
\begin{equation}
u^{\rho}_{T,(1)}(z)
=
\Exp (\hat u^{\rho}_{1,T,(0)}(z),V^{\rho}_{T,1,(1)}(z)).
\end{equation}
\item If $z \in K_2$, we put
\begin{equation}
u^{\rho}_{T,(1)}(z)
=
\Exp (\hat u^{\rho}_{2,T,(0)}(z),V^{\rho}_{T,2,(1)}(z)).
\end{equation}
\item
If $z  = (\tau,t) \in [-5T,5T]\times [0,1]$,
we put
\begin{equation}\label{urhoT(1)}
\aligned
u^{\rho}_{T,(1)}(\tau,t) = & \Exp \left(u_{T,(0)}^\rho(\tau,t),
\chi_{\mathcal B}^{\leftarrow}(\tau,t) (V^{\rho}_{T,1,(1)}(\tau,t) - (\Delta p^{\rho}_{T,(1)})^{\rm Pal}) \right.\\
&\qquad + \left. \chi_{\mathcal A}^{\rightarrow}(\tau,t)(V^{\rho}_{T,2,(1)}(\tau,t)-
(\Delta p^{\rho}_{T,(1)})^{\rm Pal})
+  (\Delta p^{\rho}_{T,(1)})^{\rm Pal}\right).
\endaligned
\end{equation}
\end{enumerate}
\end{defn}
We recall that $\hat u^{\rho}_{1,T,(0)}(z) = u^{\rho}_{T,(0)}(z)$ on $K_1$
and $\hat u^{\rho}_{2,T,(0)}(z) = u^{\rho}_{T,(0)}(z)$ on $K_2$.
\par\medskip
\noindent{\bf Step 1-3 (Error estimates):}
Let a constant $0 < \mu < 1$ be given, which we fix throughout the rest of the proof.
For example we can take $\mu = 1/2$.
\par
This step corresponds to establishing the inequality \eqref{eq:geometric} for $n = 1$
in Newton's iteration scheme.

\begin{prop}\label{mainestimatestep13}
There exists $C_{3,m}$ such that for any $\epsilon(4) > 0$, {we can choose $T_{m,
\epsilon(4), (\ref{frakeissmall20})}> 0$ with the following properties: there exist
 $\frak e^{\rho} _{i,T,(1)} \in \mathcal E_i$ that satisfies
\begin{equation}\label{frakeissmall20}
\|\text{\rm Err}_{T,(1)}^\rho\|_{L_{m,\delta}^2} < C_{1,m}\epsilon(4)\mu e^{-\delta_1 T}
\end{equation}
and
\begin{equation}\label{frakeissmall2}
\| \frak e^{\rho} _{i,T,(1)}\|_{L_{m}^2(K_i)} < C_{3,m}e^{-\delta_1 T}.
\end{equation}
for all $T > T_{m,(\ref{frakeissmall20})}$.
(Here $C_{1,m}$ is the constant given in Lemma \ref{lem18}.)}
\par
{In (\ref{frakeissmall20}) we denote \index[syindex]{ErrT1rho@$\text{\rm Err}_{T,(1)}^\rho$}
\begin{equation}\label{Err(1)}
\text{\rm Err}_{T,(1)}^\rho = \overline\partial u^{\rho}_{T,(1)} 
- \sum_{a=0}^{1}\frak e^{\rho}_{1,T,(a)} -\sum_{a=0}^{1}\frak e^{\rho}_{2,T,(a)}.
\end{equation}
}
\end{prop}
Note Remark \ref{remark54} applies here.
\begin{proof}
The existence of $\frak e^{\rho} _{i,T,(1)}$ satisfying
$$
\aligned
&\|\overline\partial u^{\rho} _{T,(1)} - (\frak e^{\rho} _{1,T,(0)}
+ \frak e^{\rho} _{1,T,(1)}) - (\frak e^{\rho} _{2,T,(0)}
+ \frak e^{\rho} _{2,T,(1)})\|_{L_{m,\delta}^2(K_1\cup K_2\subset \Sigma_T)} 
\\
&< C_{1,m}\epsilon(4)
\mu e^{-\delta_1 T}/10
\endaligned
$$
is a consequence of the fact that (\ref{linearized2221}) is the linearized equation of (\ref{mainequation}) and
the estimate (\ref{142form}). More explicitly, we can prove this by a standard
quadratic estimate whose details are now
in order.

We do the necessary estimates on  the regions  $K_1, \, K_2$,
$([-5T,5T] \setminus [-T-1, T+1]) \times [0,1]$ and region $[-T-1, T+1] \times [0,1]$
separately.
\par \medskip

\noindent{{\bf Step 1-3-1: Estimate on $K_i$ and Definition of $\frak e_{i,T,(1)}^\rho$}}
\smallskip

We first estimate on $K_1$.
We put $K_1^+ = K_1 \cup [-5T,-5T+1] \times [0,1]$.
\begin{rem}
The estimate on $K_1$ is rather lengthy. However it consists only of standard and straightforward
calculation. This estimate is {\it not} a part of the main idea of the proof.
The main idea of the proof of this paper {lies in} the points mentioned in
Remarks \ref{rem:dropofweight} and \ref{rem611}.
\end{rem}
During this estimate on $K_1$ and on $[-5T,-T-1] \times [0,1]$
we use the following simplified notation.
\begin{equation}\label{simplenote}
\aligned
u &= \hat u^{\rho}_{1,T,(0)}, \qquad &V=V^{\rho}_{T,1,(1)},
 \\
\mathcal P &= (\Pal_1^{(0,1)})^{-1},  \qquad &\frak e =\frak e^{\rho} _{1,T,(0)}.
\endaligned
\end{equation}
Here $\Pal_1^{(0,1)}$ is the $(0,1)$ part of the parallel translation along the
curve $s \mapsto \Exp (u,sV)$, $s \in [0,s_0]$ for  $s_0 \in [0,1]$.

By Fundamental Theorem of Calculus

\begin{equation}\label{FTE}
g(1) = g(0) + \int_0^1 g'(s)\, ds = g(0) + g'(0) +  \int_0^1 ds \left(\int_0^s g''(r)\, dr \right)
\end{equation}
applied to the function $g$ which is valued in $L_{m,\delta}^2(K_1^+;(\hat u_{1,T,(0)}^\rho)^*TX
\otimes \Lambda^{0,1})$ and is given by
$
g(s) = \mathcal P \overline\partial \left(\Exp (u,sV)\right),
$
we derive
\begin{equation}\label{151ffff}
\aligned
&\mathcal P\overline \partial (\Exp (u,V))  \\
& =
\overline\partial (\Exp (u,0)) +\int_0^1 \frac{\del}{\del s}\left(\mathcal P\overline\partial (\Exp (u,sV))\right) ds
 \\
& =  \overline\partial u
+ (D_{u}\overline\partial)(V)
 +\int_0^1 ds \int_0^s
\left(\frac{\del}{\del r}\right)^2 \left(\mathcal P \overline\partial (\Exp (u,rV)\right)dr
\endaligned
\end{equation}
on $L_{m,\delta}^2(\Sigma_T;u^*TX\otimes \Lambda^{0,1})$.

On the other hand, we have the following estimate.

\begin{lem}\label{lem:mgeq2} Let $m \geq 2$. Then
\begin{equation}\label{152ff}
\aligned
\left\|\int_0^1 ds \int_0^s
\left(\frac{\del}{\del r}\right)^2 \left(\mathcal P \overline\partial (\Exp (u,rV)\right)\, dr
\right\|_{L^2_{m}(K_1)}
&\le C_{m,(\ref{152ff})}\| V\|_{L^2_{m+1,\delta}(K_1^+)}^2 \\
&\le C'_{m,(\ref{152ff})}e^{-2 \delta_1 T}.
\endaligned
\end{equation}
\end{lem}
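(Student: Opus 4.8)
The statement is a quadratic (Taylor-remainder) estimate for the Cauchy–Riemann operator composed with the exponential map, localized to $K_1$. The essential point is that the second $r$-derivative of $r\mapsto \mathcal P\,\overline\partial(\Exp(u,rV))$ is, pointwise, a sum of terms each of which is (multi)linear in $V$, $\nabla V$ and in the derivatives of $u$, with smooth coefficients coming from the connection, the almost complex structure $J$, and the geometry of $\Exp$ and of parallel transport; in particular each term carries at least two factors that are either $V$ or $\nabla V$. Integrating twice in $r$ over $[0,1]$ does not disturb this structure since the integrand is uniformly controlled. Hence the first inequality in \eqref{152ff} is a pointwise/Leibniz estimate plus the Sobolev multiplication inequality $L^2_m\cdot L^2_m\hookrightarrow L^2_m$, valid because $m$ is large (here $m\ge 2$ suffices on a $2$-dimensional domain, but we are in the range $m>10$ anyway), applied on the fixed compact domain $K_1^+$. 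The second inequality in \eqref{152ff} is then immediate from Lemma \ref{lem516}, i.e.\ from \eqref{142form} with $i=1$, which gives $\|V\|_{L^2_{m+1,\delta}(K_1^+)}=\|V^\rho_{T,1,(1)}\|_{W^2_{m+1,\delta}}\le C_{2,m}e^{-\delta_1 T}$, so that $\|V\|^2\le C_{2,m}^2 e^{-2\delta_1 T}$.

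\textbf{Key steps, in order.} First I would write $\overline\partial(\Exp(u,rV))$ in local frames along the geodesics $s\mapsto\exp_{u(z)}(sV(z))$, expressing it through the differential of the exponential map, the parallel transport $\mathcal P$, and $J$; differentiate twice in $r$ and collect terms. Each resulting term has the schematic form $A(z,u,\nabla u, rV)\cdot B_1\cdot B_2$ where $B_1,B_2\in\{V,\nabla V\}$ (possibly with one more $\nabla V$ or a $\nabla^2 u$ factor absorbed into $A$), and $A$ is a smooth tensor-valued function bounded, together with all its derivatives up to order $m$, uniformly in $r\in[0,1]$ and in $z\in K_1^+$; the uniformity uses that $\operatorname{Im}u$ stays in a fixed compact set and $|rV|\le|V|$ is small. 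Second, I would apply the $L^2_m$ algebra/multiplication estimate on $K_1^+$ to bound the $L^2_m(K_1)$ norm of each such term by $C\|V\|_{L^2_{m+1,\delta}(K_1^+)}^2$ (the extra derivative in the Sobolev index of $V$ covers the $\nabla V$ factors), after pulling the sup-in-$r$ bound on $A$ out of the double integral $\int_0^1 ds\int_0^s(\cdot)\,dr$. Third, sum over the finitely many terms to get $C_{m,(\ref{152ff})}$, and finally substitute \eqref{142form} to obtain $C'_{m,(\ref{152ff})}e^{-2\delta_1 T}$.

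\textbf{Where the care is needed.} The routine-but-delicate part is bookkeeping the derivatives: the map $u=\hat u^\rho_{1,T,(0)}$ is only $L^2_{m+1}$, so when the double derivative in $r$ produces a $\nabla^2 u$ factor one must check it is always multiplied by \emph{two} factors drawn from $\{V,\nabla V\}$ (not one), so that after Sobolev multiplication the total still lands in $L^2_m$ and is quadratic in $\|V\|_{L^2_{m+1,\delta}}$; this is exactly what makes the estimate quadratic rather than linear, and it is the reason the Newton/alternating scheme converges. Everything else — the explicit form of the coefficient tensors, the verification that $A$ and its $z$-derivatives are uniformly bounded, the precise constant in the Sobolev multiplication inequality — is standard and can be deferred to the appendices in the style already announced in the introduction. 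The main conceptual obstacle is thus not any single hard estimate but ensuring the correct counting of $V$-factors in every term of $\big(\partial/\partial r\big)^2\big(\mathcal P\,\overline\partial(\Exp(u,rV))\big)$; once that is in hand, the two displayed inequalities follow mechanically from the $L^2_m$ algebra property and from Lemma \ref{lem516}.
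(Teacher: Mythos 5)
Your proof is correct and follows the same route the paper itself takes: in Appendix~\ref{appendixA} the paper works out precisely the schematic form you describe, showing in formula~(\ref{formA9}) that $\left(\frac{\del}{\del r}\right)^2\left(\mathcal P\,\overline\partial(\Exp(u,rV))\right)$ is a finite sum of terms of the form $V^0_iV^0_j\cdot\frak F_{ij}(\cdots)$, $V^0_i\frac{\partial V^0_j}{\partial x}\cdot\frak F^x_{ij}(\cdots)$, $V^0_i\frac{\partial V^0_j}{\partial y}\cdot\frak F^y_{ij}(\cdots)$ with smooth coefficients $\frak F$, after which the $L^2_m$ algebra estimate and (\ref{142form}) give (\ref{152ff}) exactly as you say. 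One small correction to your cautionary remark: $u=\hat u^\rho_{1,T,(0)}$ coincides with $u_1^{\rho_1}$ on $K_1^+$, and $u_1^{\rho_1}$ solves the elliptic equation (\ref{mainequationui}) modulo a space of smooth sections, hence is $C^\infty$ by elliptic regularity — so there is no $\nabla^2 u$ of limited regularity to worry about here, although your argument would go through even if there were.
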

\begin{proof} {The second inequality follows from \eqref{142form}.
On the other hand, the first one} is rather an immediate consequence of the fact that
the covariant derivative $\nabla_r^2 \mathcal P = \left(\frac{\del}{\del r}\right)^2 \mathcal P$
do not differentiate $V$. Especially this inequality is obvious for the case of
flat metric and the standard complex structure on $\C^n$. For the readers' convenience,
we provide a full explanation on why the presence of exponential maps and the parallel transports
do not hinder obtaining the required estimate in  Appendix \ref{appendixA}.
\end{proof}

We also have
\begin{equation}\label{155ff}
\aligned
& \mathcal P \circ \Pi_{\mathcal{E}_1(\Exp (u,V))}^{\perp} \circ \mathcal P^{-1}\\
& =
\Pi_{\mathcal E_1(u)}^{\perp} +
\int_0^1 \frac{d}{ds}\left(\mathcal P\circ \Pi_{\mathcal E_1(\Exp (u,sV)}^{\perp}
\circ \mathcal P^{-1}\right)\, ds
\\
& =
\Pi_{\mathcal E_1(u)}^{\perp} -
(D_{u}\mathcal E_1)(\cdot,V) +
\int_0^1 ds \int_0^s \frac{d^2}{dr^2}\left(\mathcal P\circ
\Pi_{\mathcal E_1(\Exp (u,rV))}^{\perp}\circ \mathcal P^{-1}
\right)\, dr.
\endaligned
\end{equation}
{(See \eqref{DEidef} for the definition of $D_u\mathcal E_1$.)}

We can estimate the third term of the right hand side of (\ref{155ff}) in the same way as in (\ref{152ff}) and also get
\be\label{eq:intintPiE1}
\left\|\int_0^1 ds \int_0^s \frac{d^2}{dr^2}
\left(\mathcal P  \circ \Pi_{\mathcal E_1(\Exp (u,rV))}^{\perp}\circ \mathcal P^{-1}\right)\, dr
\right\|_{L^2_{m}(K_1)} \leq C_{m,(\ref{eq:intintPiE1})}e^{-2\delta_1 T}.
\ee
Here the left hand side is the operator norm as an endomorphism of  $L^2_m(K_1)$.
\par
On the other hand, (\ref{startingestimate}), (\ref{142form}) and (\ref{151ffff}) imply 
\begin{equation}\label{156ff}
\left\|\overline\partial (\Exp (u,V))
- \mathcal P^{-1}\frak e\right\|_{L^2_{m}(K_1)}
\le C_{m,(\ref{156ff})}e^{-\delta_1 T}.
\end{equation}
(Note we regard $\frak e =\frak e^{\rho} _{1,T,(0)} \in L^2(K_1,u^*TX \otimes \Lambda^{0,1})$.)
We put
$$
Q = \overline\partial (\Exp (u,V))
- \mathcal P^{-1}\frak e.
$$
By (\ref{155ff}), (\ref{eq:intintPiE1}) and (\ref{156ff}), we have
\begin{equation}\label{156ff2}
\aligned
&\left\|\Pi_{\mathcal E_1(\Exp (u,V))}^{\perp}(Q)
- \mathcal P ^{-1}\Pi_{\mathcal E_1(u)}^{\perp}(\mathcal P Q) \right.
\left. +  \mathcal P ^{-1}(D_{u}\mathcal E_1)(\mathcal P Q,V)\right\|_{L^2_{m}(K_1)}\\
&=  \left\|\int_0^1 ds \int_0^s \frac{d^2}{dr^2}
\left(\mathcal P  \circ \Pi_{\mathcal E_1(\Exp (u,rV))}^{\perp}(Q)\right)\, dr
\right\|_{L^2_{m}(K_1)} \\
&\le C_{m,(\ref{156ff2})} e^{-3\delta_1 T}.
\endaligned
\end{equation}
By (\ref{142form}) we have $\left\|V\right\|_{L^2_{m+1}(\Sigma_i)} \le C_{2,m} e^{-\delta_1 T}$.
Therefore using (\ref{156ff}), (\ref{156ff2}) and
\begin{equation}\label{form5505550}
\left\Vert\mathcal P ^{-1}(D_{u}\mathcal E_1)(\mathcal P Q,V)\right\Vert_{L^2_{m}(K_1)}
\le
C_{m,(\ref{form5505550})}\left\|V\right\|_{L^2_{m}(K_1)}\left\|Q\right\|_{L^2_{m}(K_1)}
\end{equation}
we have
\begin{equation}\label{156ff23}
\left\|\Pi_{\mathcal E_1(\Exp (u,V))}^{\perp}(Q)
- \mathcal P ^{-1}\Pi_{\mathcal E_1(u)}^{\perp}(\mathcal P Q)\right\|_{L^2_{m}(K_1)}
\le C_{m,(\ref{156ff23})} e^{-2\delta_1 T}.
\end{equation}
Substituting $Q = \overline\partial (\Exp (u,V))
- \mathcal P^{-1}\frak e$ {into} (\ref{156ff23}) back, we obtain
\begin{equation}\label{form5500}
\aligned
&\bigg\|\Pi_{\mathcal E_1(\Exp (u,V))}^{\perp}\overline\partial (\Exp (u,V))
-
\mathcal P^{-1} \Pi_{\mathcal E_1(u)}^{\perp}(\mathcal P
\overline\partial (\Exp (u,V))\\
&\quad-
\Pi_{\mathcal E_1(\Exp (u,V))}^{\perp}\left(\mathcal P^{-1}
\frak e\right) +
\mathcal P^{-1} \Pi_{\mathcal E_1(u)}^{\perp} (\frak e)
\bigg\|_{L^2_{m}(K_1)} \le C_{m,(\ref{form5500})}e^{-2\delta_1 T}.
\endaligned\end{equation}
Therefore applying (\ref{155ff}) and \eqref{eq:intintPiE1}
to the 3rd and 4th terms of 
{the left hand side of} (\ref{form5500}), we obtain
\begin{equation}\label{160ff}
\aligned
\Big\|\Pi_{\mathcal E_1(\Exp (u,V))}^{\perp}&\overline\partial (\Exp (u,V))
-\mathcal P^{-1}
\Pi_{\mathcal E_1(u)}^{\perp}(\mathcal P \overline\partial (\Exp (u,V))\\
&\qquad\qquad+
\mathcal P^{-1}(D_{u}\mathcal E_1)\left(\frak e,V\right)\Big\|_{L^2_{m}(K_1)}
\le C_{m,(\ref{160ff})}e^{-2\delta_1 T}.
\endaligned\end{equation}
We recall  $u = \hat u^{\rho}_{1,T,(0)} = u_1^{\rho_1}$ on $K_1^+$. Therefore
we derive
\begin{equation}\label{161ff}
\overline\partial u
+ (D_{u}\overline\partial)(V)- (D_{u}\mathcal E_1)(\frak e,V)
\in  \mathcal E_1(u)
\end{equation}
on $K_1^+$ from (\ref{144op}), (\ref{144ffff}) and Definition \ref{deferfirst}.
\par
Then, (\ref{160ff}) and (\ref{161ff}) imply
\begin{equation}\label{162ff}
\aligned
&\Big\|\Pi_{\mathcal E_1(\Exp (u,V))}^{\perp}\overline\partial (\Exp (u,V))
-\mathcal P^{-1}
\Pi_{\mathcal E_1(u)}^{\perp}\mathcal P\overline\partial (\Exp (u,V))\\
&\quad+
\mathcal P^{-1}\Pi_{\mathcal E_1(u)}^{\perp}\overline\partial u+\mathcal P^{-1}
\Pi_{\mathcal E_1(u)}^{\perp}(D_{u}\overline\partial)(V)\Big\|_{L^2_{m}(K_1)} \\
&\qquad\qquad\qquad\qquad\qquad\le C_{m,(\ref{162ff})}e^{-2\delta_1 T}.
\endaligned\end{equation}
Combined with (\ref{151ffff}) and (\ref{152ff}), this implies
\begin{equation}\label{eq164}
\aligned
\left\|\Pi_{\mathcal E_1(\Exp (u,V))}^{\perp}\left(\overline\partial
(\Exp (u,V))\right)\right\|_{L^2_{m}(K_1)}&\le C_{m,(\ref{eq164})}e^{-2\delta_1 T} \\
&\le C_{1,m}e^{-\delta_1 T}\epsilon(4)\mu/10,
\endaligned
\end{equation}
for $T>T_{m,\epsilon(4),(\ref{eq164})}$ if we choose $T_{m,\epsilon(4),(\ref{eq164})}$  so that  it satisfies
\be\label{eq:mu}
e^{-\delta_1 T_{m,\epsilon(4),(\ref{eq164})}} < \frac{C_{1,m}}{10C_{m,(\ref{eq164})}}\epsilon(4)\mu.
\ee
This gain of decay rate by the order of $\mu>0$ independent of $T$ and $m$ is one of
the crucial elements in the iteration scheme.
(We refer readers to \eqref{eq181} to see how this gain
is used.)
We use (\ref{156ff}) and (\ref{eq164}) to show:
\begin{lem}\label{triangleholonomy}
There exists $C_{m,(\ref{triangleholonomy2})}$ such that:
\begin{equation}\label{triangleholonomy2}
\left\|\Pi_{\mathcal E_1(\Exp (u,V)))}(\overline\partial (\Exp (u,V))
- \frak e\right\|_{L^2_{m}(K_1)}
\le C_{m,(\ref{triangleholonomy2})}e^{-\delta_1 T}.
\end{equation}
Here we regard  $\frak e = \frak e^{\rho}_{1,T,(0)}$ as an element of
$L^2_m(K_1;({\rm Exp}_1(u,V)))^*TX \otimes \Lambda^{0,1})$.
\end{lem}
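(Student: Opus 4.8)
The plan is to decompose $\overline\partial(\Exp(u,V))$ on $K_1$ into its $\mathcal E_1(\Exp(u,V))$-component and its orthogonal complement, dispose of the complement using the estimate (\ref{eq164}) already obtained, and then measure how far the $\mathcal E_1(\Exp(u,V))$-component lies from $\frak e$ by combining (\ref{156ff}) with a comparison of two parallel transports of $\frak e$ onto the map $\Exp(u,V)$; this last comparison is governed by the holonomy of a thin geodesic triangle, which is the source of the name of the lemma.

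First I would pin down the meaning of ``$\frak e$ regarded over $\Exp(u,V)$''. On $K_1$ one has $u=u_1^{\rho_1}$ and, by (\ref{eq:erhoiT(0)}), $\frak e=\frak e^{\rho}_{1,T,(0)}=\overline\partial u_1^{\rho_1}\in\mathcal E_1(u)$, so $\frak e=I_{u}(\eta)$ for a unique $\eta\in\mathcal E_1^{\frak{ob}}$ (the maps $I_{(\cdot)}$ of (\ref{Eiuiprime})), and ``$\frak e$ over $\Exp(u,V)$'' then means $I_{\Exp(u,V)}(\eta)\in\mathcal E_1(\Exp(u,V))$. With this reading $\Pi_{\mathcal E_1(\Exp(u,V))}(\frak e)=\frak e$, hence
\[
\Pi_{\mathcal E_1(\Exp(u,V))}\big(\overline\partial(\Exp(u,V))-\frak e\big)
=\overline\partial(\Exp(u,V))-\Pi_{\mathcal E_1(\Exp(u,V))}^{\perp}\big(\overline\partial(\Exp(u,V))\big)-\frak e .
\]
The middle term has $L^2_m(K_1)$-norm $\le C_{m,(\ref{eq164})}e^{-2\delta_1 T}$ by (\ref{eq164}), so it remains to bound $\|\overline\partial(\Exp(u,V))-\frak e\|_{L^2_m(K_1)}$. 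I would insert $\mathcal P^{-1}\frak e$, the $(0,1)$-parallel transport of $\frak e$ onto $\Exp(u,V)$ along $s\mapsto\Exp(u,sV)$ (notation (\ref{simplenote})), and invoke the triangle inequality
\[
\|\overline\partial(\Exp(u,V))-\frak e\|_{L^2_m(K_1)}
\le\|\overline\partial(\Exp(u,V))-\mathcal P^{-1}\frak e\|_{L^2_m(K_1)}
+\|\mathcal P^{-1}\frak e-\frak e\|_{L^2_m(K_1)} ,
\]
whose first summand is $\le C_{m,(\ref{156ff})}e^{-\delta_1 T}$ by (\ref{156ff}).

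The second summand is the substantive one. Writing $\frak e=(\Pal_{u_1^{\frak{ob}}}^{u})^{(0,1)}(\eta)$ as above, its image over $\Exp(u,V)$ is $(\Pal_{u_1^{\frak{ob}}}^{\Exp(u,V)})^{(0,1)}(\eta)$, whereas $\mathcal P^{-1}\frak e=(\Pal_{u}^{\Exp(u,V)})^{(0,1)}\big((\Pal_{u_1^{\frak{ob}}}^{u})^{(0,1)}(\eta)\big)$; their difference is controlled by the holonomy, applied to $\eta$, around the geodesic triangle with vertices $u_1^{\frak{ob}}(z)$, $u(z)$, $\Exp(u,V)(z)$. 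Since $d(u(z),\Exp(u,V)(z))=|V(z)|$ is exponentially small while the other two sides stay within $\iota'_X$, this triangle is thin, so its holonomy differs from the identity pointwise by $O(|V(z)|)$, and the same bound persists after taking up to $m$ derivatives, because the holonomy is a smooth function of the three vertices and of the metric while $\eta$ ranges over a fixed finite-dimensional space of smooth sections with $\|\eta\|$ bounded by (\ref{frakeissmall}). This is exactly the kind of estimate packaged in Appendix \ref{appendixA}. Hence $\|\mathcal P^{-1}\frak e-\frak e\|_{L^2_m(K_1)}\le C\,\|V\|_{L^2_m(K_1)}\le C\,e^{-\delta_1 T}$ by (\ref{142form}), and adding the three contributions yields (\ref{triangleholonomy2}).

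The step I expect to be the main obstacle is this thin-triangle holonomy estimate in the $L^2_m$ norm rather than merely in $C^0$: one must differentiate the three-point holonomy map up to order $m$ and check that each resulting term is controlled either by the fixed smooth data ($u_1^{\frak{ob}}$, $u=u_1^{\rho_1}$, the metric $g$, $\eta$) or by $\|V\|_{W^2_{m+1,\delta}(\Sigma_1)}$, for which one uses the $L^2_m$ product and composition estimates (valid since $m$ is large) together with (\ref{142form}). Everything else reduces to routine bookkeeping with the uniformly bounded projections $\Pi_{\mathcal E_1(\cdot)}$ and the two exponential estimates (\ref{156ff}) and (\ref{eq164}).
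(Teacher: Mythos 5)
Your proof is correct and follows essentially the same route as the paper's: both reduce the estimate, via the triangle inequality and (\ref{156ff}), to bounding the difference between the two parallel transports of $\frak e^{\rho}_{1,T,(0)}$ onto $\Exp(u,V)$ (the ``two-step'' transport $\mathcal P^{-1}\frak e$ versus the direct one), and both control that difference in $L^2_m$ by $\|V\|$ via the smoothness of the holonomy around a thin geodesic triangle — exactly what the paper packages as the smooth section $\frak P$ of \eqref{sectionL} and Lemma \ref{lemB2}. The only cosmetic difference is that you invoke \eqref{eq164} to dispose of $\Pi^{\perp}_{\mathcal E_1(\Exp(u,V))}(\overline\partial(\Exp(u,V)))$; the paper avoids this detour by simply using that $\Pi_{\mathcal E_1(\Exp(u,V))}$ is a bounded projection onto a fixed finite-dimensional space of smooth sections, so that $\Pi(\overline\partial(\Exp(u,V)) - \frak e) = \Pi(\overline\partial(\Exp(u,V)) - \mathcal P^{-1}\frak e) + \Pi(\mathcal P^{-1}\frak e - \frak e)$ already gives the result from (\ref{156ff}) and the holonomy estimate.
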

\begin{proof}
This is a consequence of (\ref{156ff}), $\frak e \in \mathcal E_1(\Exp (u,V))$
and certain estimate of parallel transport.
We postpone the proof of this lemma
till Appendix \ref{appendixA2}.
\end{proof}
Then if we set \index[syindex]{erho1T1@$\frak e^{\rho} _{1,T,(1)}$}
\begin{equation}\label{558atoato}
\frak e^{\rho} _{1,T,(1)}
= \Pi_{\mathcal E_1(\Exp (u,V))}\left( \overline\partial
\left (\Exp (u,V)\right) - \frak e^{\rho} _{1,T,(0)}\right)
\end{equation}
\eqref{frakeissmall2} follows.
(Recall $\frak e = \frak e^{\rho}_{1,T,(0)}$.)

The estimate of the left hand side of
 (\ref{frakeissmall20}) on $K_1$ follows from  (\ref{eq164}).
We have thus finished the estimate on $K_1$.
The estimate on $K_2$ is the same.
\par\medskip
\noindent{{\bf Step 1-3-2: Estimate on the neck region}}
\smallskip

We next consider the domain $[-5T,-T-1] \times [0,1]$.
Let $S \in [-5T,-T-1]$ and
$$
\Sigma(S) = [S,S+1] \times [0,1].
$$
We  also  put $v = \Delta p_{T,(1)}^{\rho}$. \index[syindex]{SigmaS@$\Sigma(S)$}
\par
In the same way as the proof of Lemma \ref{lem:mgeq2}
we can use  (\ref{142form}) to prove
\begin{equation}\label{152ffg}
\aligned
&\left\|\int_0^1 ds \int_0^s
\left(\frac{\del}{\del r}\right)^2 \left(\mathcal P \overline\partial (\Exp (u,rV)\right)\, dr
\right\|_{L^2_{m}(\Sigma(S))} \\
&
\le C_{m,(\ref{152ffg})} \Vert V \Vert_{L^2_{m+1}(\Sigma(S))}^2
\\
&\le C_{m,(\ref{152ffg})}
\left( \| V - v^{\rm{Pal}}\|_{L^2_{m+1}(\Sigma(S))}^2
+ \Vert v^{\rm{Pal}}\Vert_{L^2_{m+1}(\Sigma(S))}^2\right)
\\
&\le C'_{m,(\ref{152ffg})}e^{-2\delta_1 T}.
\endaligned
\end{equation}
(See Appendix \ref{appendixA}.)
Here to prove the inequality of the last line we also use the boundedness of the domain
$\Sigma(S)$ and an equality
$
\Vert v^{\rm{Pal}}\Vert_{L^2_{m+1}(\Sigma(S))}
\le C \Vert v\Vert
$,
which follows from the definition \eqref{eq:rho-v} of $v^{\text{Pal}}$.

We remark that the domain $\Sigma(S)$ is disjoint from the supports of elements of
$\mathcal E_i$. Therefore (\ref{151ffff}), (\ref{152ffg}) together with
(\ref{144ffff}) imply
\begin{equation}\label{form5600}
\Vert  \overline\partial (\Exp (u,V)) \Vert_{L^2_m(\Sigma(S))}
\le C_{m,(\ref{form5600})} e^{-2\delta_1 T}.
\end{equation}
Note $e_{T,\delta} \le 10 e^{5T\delta} \le 10 e^{T\delta_1/2}$ by
(\ref{form310310}).
Hence (\ref{form5600}) implies
\begin{equation}\label{form56156}
\aligned
&\Vert \overline\partial (\Exp (u,V))\Vert^2_{L^2_{m,\delta}([-5T,-T-1]\times [0,1])} \\
&\le \sum_{S \in ([-5T-1,-T-2] \cap \Z) \cup \{-T-2\}}
\Vert \overline\partial (\Exp (u,V))\Vert^2_{L^2_{m,\delta}(\Sigma(S))} \\
&
\le
 C_{m,(\ref{form56156})} Te^{-3\delta_1T}
 \le \left(C_{1,m}e^{-\delta_1 T}\epsilon(4)\mu/10\right)^2
\endaligned
\end{equation}
if $T > T_{m,\epsilon(4),(\ref{form56156})}$.
\par
The inequality (\ref{form56156}) is the estimate of the left hand side of
 (\ref{frakeissmall20})  on $[-5T,-T-1] \times [0,1]$.
\par
The estimate on $[T+1,5T]\times [0,1]$ is the same.
(We use  the notations given in  (\ref{simplenote}) up to here.)
\par\medskip
Now we do estimate $\overline\partial u^{\rho} _{T,(1)}$ on $[-T+1,T-1]\times [0,1]$.
The inequality
$$
\left\|\overline\partial u^{\rho} _{T,(1)}\right \|_{L_{m,\delta}^2([-T+1,T-1]\times [0,1]\subset \Sigma_T)}
< C_{1,m}\mu\epsilon(4) e^{-\delta_1 T}/10
$$
is also a consequence of the fact that (\ref{linearized2221}) is the linearized equation of (\ref{mainequation}) and
of the estimate (\ref{142form}).
In fact, since the bump functions $\chi_{\mathcal B}^{\leftarrow}$
and $\chi_{\mathcal A}^{\rightarrow}$ are $\equiv 1$ there
the proof is the same as the proof of (\ref{form56156}).
\par
On $\mathcal A_T$,  by definition of $u_{T,(1)}^\rho$, we have
\begin{equation}\label{estimateatA1}
\overline\partial u^{\rho} _{T,(1)}
= \overline\partial\left(\Exp\left(u^{\rho}_{T,(0)}, \chi_{\mathcal A}^{\rightarrow}(V^{\rho}_{T,2,(1)}
-(\Delta p^{\rho}_{T,(1)})^{\rm Pal})+V^{\rho}_{T,1,(1)}\right)\right).
\end{equation}
Note
\begin{equation}\label{estimate6161}
\aligned
&\left\| \chi_{\mathcal A}^{\rightarrow}(V^{\rho}_{T,2,(1)}-
(\Delta p^{\rho}_{T,(1)})^{\rm Pal})\right\|_{L^2_{m+1}(\mathcal A_T)} \\
&\le
C_{m,(\ref{estimate6161})}e^{-6T\delta} \left \| V^{\rho}_{T,2,(1)}-(\Delta p^{\rho}_{T,(1)})^{\rm Pal}\right\|_{L^2_{m+1,\delta}(\mathcal A_T \subset \Sigma_{2})}
\\
&\le C'_{m,(\ref{estimate6161})} e^{-6\delta T-\delta_1T}.
\endaligned
\end{equation}

The first inequality follows from the fact that the weight function $e_{2,\delta}$
is around  $e^{6T\delta}$ on $\mathcal A_T$.
The second inequality follows from (\ref{142form}).
On the other hand the weight function $e_{T,\delta}$ is around $e^{4\delta T}$ at $\mathcal A_T$.
See Figure \ref{Figure5}.
Therefore

\begin{equation}\label{2ff160}
\aligned
&
\left\vert\Vert \overline\partial u^{\rho} _{T,(1)}\Vert_{L^2_{m,\delta}(\mathcal A_T\subset \Sigma_T)} -
\Vert\overline\partial(\Exp(u^{\rho}_{T,(0)},V^{\rho}_{T,1,(1)})
\Vert_{L^2_{m,\delta}(\mathcal A_T\subset \Sigma_T)}
\right\vert\\
&\le
C_{m,(\ref{2ff160})}
\| \chi_{\mathcal A}^{\rightarrow}(V^{\rho}_{T,2,(1)}-(\Delta p^{\rho}_{T,(1)})^{\rm Pal})
\|_{L^2_{m+1,\delta}(\mathcal A_T\subset \Sigma_T)} \\
&
\le
C'_{m,(\ref{2ff160})} e^{-2\delta T-\delta_1T}.
\endaligned
\end{equation}
See Appendix \ref{appendixA}
for the proof of the first inequality.

\begin{figure}
\centering
\includegraphics{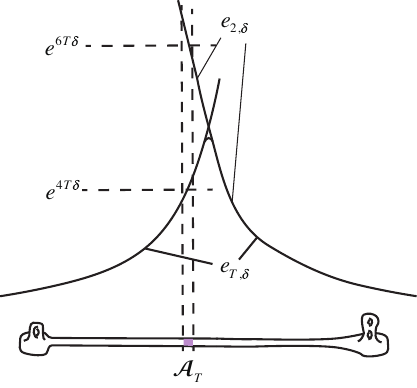}
\caption{Drop of weight.}
\label{Figure5}
\end{figure}
\begin{rem}\label{rem:dropofweight}
This drop of the weight \index{drop of the weight} is the main part of the idea. It was used in \cite[page 414]{fooo:book1}
(and also in \cite[(8.7.2)]{Fuk96II}).
\end{rem}
By Definition \ref{deferfirst}, $
{\rm Err}^{\rho}_{2,T,(0)}   = 0$
on $\mathcal A_T$. Using this in the same way as we did to obtain \eqref{form5600} we derive
\begin{equation}\label{2ff161}
\|\overline\partial(\Exp(u^{\rho}_{T,(0)},V^{\rho}_{T,1,(1)})\|_{L^2_{m,\delta}(\mathcal A_T \subset \Sigma_T)}
\le C_{1,m}e^{-\delta_1 T}\epsilon(4)\mu/20
\end{equation}
for $T > T_{m,\epsilon(4),(\ref{2ff162})}$.
\par
Therefore combining \eqref{estimateatA1}-\eqref{2ff161}, we can find a constant
$T_{m,\epsilon(4),(\ref{2ff162})}$
such that
\begin{equation}\label{2ff162}
\|\overline\partial u^{\rho} _{T,(1)} \|_{L_{m,\delta}^2(\mathcal A_T\subset \Sigma_T)} < C_{1,m}
\mu\epsilon(4) e^{-\delta_1 T}/10
\end{equation}
for all $T > T_{m,\epsilon(4),(\ref{2ff162})}$.
\par
The estimate on $\mathcal B_T$ is similar and so omitted.
The proof of Proposition \ref{mainestimatestep13} is now complete.
\end{proof}
\par\medskip

\noindent{\bf Step 1-4 (Separating error terms into two parts)}: 
{We split the error (\ref{Err(1)}) into the two parts
as follows.}
\begin{defn}\label{defn523}
We put \index[syindex]{Erriroh2@${\rm Err}^{\rho}_{i,T,(1)}$}
$$
\aligned
{\rm Err}^{\rho}_{1,T,(1)}
&  {= \chi_{\mathcal X}^{\leftarrow}\text{\rm Err}_{T,(1)}^\rho}
= \chi_{\mathcal X}^{\leftarrow} (\overline\partial u^{\rho} _{T,(1)} - (\frak e^{\rho} _{1,T,(0)}
+ \frak e^{\rho} _{1,T,(1)})) , \\
{\rm Err}^{\rho}_{2,T,(1)}
&{= \chi_{\mathcal X}^{\rightarrow}\text{\rm Err}_{T,(1)}^\rho}
= \chi_{\mathcal X}^{\rightarrow} (\overline\partial u^{\rho} _{T,(1)}  - (\frak e^{\rho} _{2,T,(0)} + \frak e^{\rho} _{2,T,(1)})).
\endaligned
$$
We regard them as elements of the weighted Sobolev spaces
$L^2_{m,\delta}(\Sigma_1;(\hat u_{1,T,(1)}^{\rho})^*TX\otimes \Lambda^{0,1})$
and
$L^2_{m,\delta}(\Sigma_2;(\hat u_{2,T,(1)}^{\rho})^*TX\otimes \Lambda^{0,1})$
respectively, by extending them to be $0$ outside the support of $\chi$.
\end{defn}
\par\medskip
We put \index[syindex]{pT1rho@$p^{\rho}_{T,(1)}$}
\begin{equation}\label{eq:prho(1)}
p^{\rho}_{T,(1)} = \Exp(p^\rho, \Delta p^\rho_{T,(1)}).
\ee
\par\medskip
\subsection{{The general $\kappa$-th iteration with $\kappa \geq 1$}}
\smallskip
We then come back to the  Step $\kappa$-1 for $\kappa=1$ to establish the following and continue
our inductive steps for $\kappa \geq 1$.
\index[syindex]{erriTkapparho@$\frak e^{\rho} _{i,T,(\kappa)}$}
\begin{eqnarray}
\left\|  (V^{\rho}_{T,i,(\kappa)},\Delta p^{\rho}_{T,(\kappa)})\right\|_{W^2_{m+1,\delta}(\Sigma_i)}
&<& C_{2,m}\mu^{\kappa-1}e^{-\delta_1 T}, \label{form0182}
\\
\left\| {\rm E}( u^{\rho}_{T,(0)}, u^{\rho}_{T,(\kappa)})  \right\|_{W^2_{m+1,\delta}(\Sigma_{T})}
&<& C_{4,m}(2-\mu^{\kappa}) e^{-\delta_1 T}, \label{form0184a}
\\
\left\| {\rm Err}^{\rho}_{i,T,(\kappa)} \right\|_{L^2_{m,\delta}(\Sigma_i)}
&<& C_{1,m}\epsilon(5)\mu^{\kappa}e^{-\delta_1 T}, \label{form0185}
\\
\left\| \frak e^{\rho} _{i,T,(\kappa)}\right\|_{L^2_{m}(K_i)}
&<& C_{3,m}\mu^{\kappa-1}e^{-\delta_1 T},
\quad \text{for $\kappa \ge 1$}. \label{form0186}
\end{eqnarray}
\par
Note we also have
\begin{equation}\label{newform570}
\overline\partial u^{\rho} _{T,(\kappa)} -
{\rm Err}^{\rho}_{1,T,(\kappa)} - {\rm Err}^{\rho}_{2,T,(\kappa)}
=
\sum_{i=1}^2\sum_{j=0}^{\kappa}\frak e^{\rho} _{i,T,(j)}.
\end{equation}
See Definitions \ref{defn523} and \ref{defn:Errrho12T(kappa)}.
\par
{Formulas (5.68)-(5.71) are proved by 
induction on $\kappa$. Since various constants appear 
during the inductive proof of (5.68)-(5.71) 
it is important to clarify the dependence between 
them.
We state the inductive scheme of the 
proof of Formulas (5.68)-(5.71) 
clarifying the dependence of the various constants
as Proposition 5.22 below.}
\par
Note we have already chosen the constants $C_{1,m}, C_{2,m}$, and $C_{3,m}$
in (\ref{startingestimate}), 
(\ref{142form}), and 
(\ref{frakeissmall2}), respectively.

\begin{prop}\label{prop:kappatokappa+1}
$ $
\begin{enumerate}
\item
For any $\epsilon(5)>0$ and $C_{4,m}$ there exists $T_{3,m,\epsilon(5)}>0$
such that
if $T > T_{3,m,\epsilon(5)}$ then
(\ref{form0182}),  (\ref{form0184a})
for $\kappa\le\kappa_0$
and (\ref{form0185}) and (\ref{form0186}) for
$\kappa \le \kappa_0-1$
imply (\ref{form0185}) and (\ref{form0186}) for $\kappa = \kappa_0$.
\item
For any $C_{4,m}$ 
there exists $\epsilon_{3,m}>0$ independent of $T,\kappa_0$ such that
if $\epsilon(5) < \epsilon_{3,m}$, then (\ref{form0185}) and (\ref{form0186})
for $\kappa\le\kappa_0-1$ imply (\ref{form0182})
for $\kappa = \kappa_0$.
\item
We can choose $C_{4,m}$ such that the following holds.
There exists $T_{4,m}>0$ such that 
if (\ref{form0182}) holds for $\kappa \le \kappa_0$, 
(\ref{form0184a}) holds for $\kappa \le \kappa_0-1$
and $T > T_{4,m}$
then (\ref{form0184a}) holds for $\kappa = \kappa_0$.
\end{enumerate}
\end{prop}
See Figure \ref{Prpp522logic}.
\begin{rem}
$ $
\begin{enumerate}
\item
The constant $T_{3,m,\epsilon(5)}$ in Proposition \ref{prop:kappatokappa+1} (1) may depend on $C_{1,m}$, $C_{2,m}$, $C_{3,m}$,
$C_{4,m}$.
\item
The constant $\epsilon_{3,m}$ 
in Proposition \ref{prop:kappatokappa+1} (2) may depend on $C_{1,m}$, $C_{2,m}$, $C_{3,m}$,
$C_{4,m}$.
\item
The constant $T_{4,m}$ 
in  Proposition \ref{prop:kappatokappa+1} (3) may depend on 
$C_{1,m}$, $C_{2,m}$, $C_{3,m}$,
$C_{4,m}$.
\item
In Proposition \ref{prop:kappatokappa+1} (3) the map 
$u^{\rho}_{T,(\kappa)}$ 
$u^{\rho}_{T,(\kappa)}$ 
for $\kappa \le\kappa_0$, 
is defined from
$u^{\rho}_{T,(\kappa-1)}$ and $(V^{\rho}_{T,i,(\kappa)},\Delta p^{\rho}_{T,(\kappa)})$ as in Definition \ref{defn:urhoT(kappa)}
and (\ref{form0184a}).
\item
It is important that the constants appearing in Proposition 
\ref{prop:kappatokappa+1} are independent of $\kappa_0$.
\end{enumerate}
\end{rem}

\begin{figure}
\centering
\includegraphics{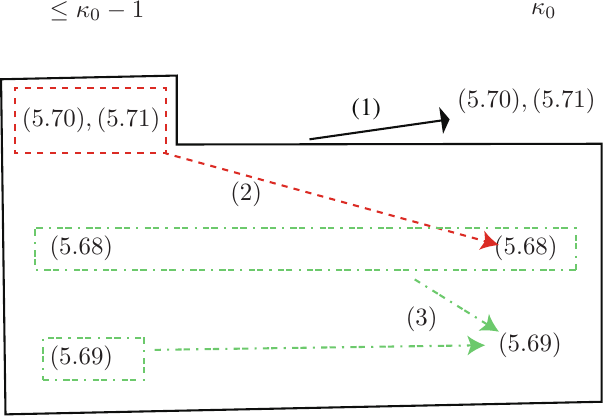}
\caption{Logical scheme in Proposition \ref{prop:kappatokappa+1}}
\label{Prpp522logic}
\end{figure}

{
We prove Formulas (5.68)-(5.71) using Proposition \ref{prop:kappatokappa+1}
as follows.
We first fix $C_{4,m}$ so that 
Proposition \ref{prop:kappatokappa+1} (3) holds.
Then $T_{3,m,\epsilon(5)}$ (depending on $\epsilon(5)$) is 
determined by Proposition \ref{prop:kappatokappa+1} (1).
$\epsilon_{3,m}$ is determined by 
Proposition \ref{prop:kappatokappa+1} (2).
$T_{4,m}$ is determined by Proposition \ref{prop:kappatokappa+1} (3).
Formulas (5.68)-(5.71) hold
for 
$
T \ge \max\{ T_{3,m,\epsilon(5)},  T_{4,m}\}
$ 
by Proposition \ref{prop:kappatokappa+1}. 
We may take $\epsilon(5) = \epsilon_{3,m}/2$ and so 
$T_{3,m,\epsilon(5)}= T_{3,m,\epsilon_{3,m}/2}$ is determined.}
\begin{proof}[Proof of Proposition \ref{prop:kappatokappa+1}] The rest of this chapter will be occupied with the proof of this proposition by
describing each of Steps $\kappa$-1,\dots,$\kappa$-4.
{For given $\kappa_0$ the Step $\kappa_0$-1 corresponds to Proposition \ref{prop:kappatokappa+1} (2).
Steps $\kappa_0$-2,$\kappa_0$-3,$\kappa_0$-4 correspond to
Proposition \ref{prop:kappatokappa+1} (1).}
The proof of
Proposition \ref{prop:kappatokappa+1} (3)
is straightforward. We provide its detail in Appendix \ref{appendixB-}
together with the versions involving $T$ (and $\rho$) derivatives.
See Proposition \ref{prop:inequalitieskappa} (3).
\par
Since Steps $\kappa$-1,\dots,$\kappa$-4 are largely repetitions of
the same kind of tedious but straightforward estimates as in $\kappa = 1$, we will only state the main definitions and
statements required to perform the inductive schemes that will be needed for the
exponential estimate of the $T$-derivatives in the next section, and leave the details of
the relevant estimates to Appendix \ref{appendixB}.
\begin{rem}
Various constants $C_{m,(*.*)}$ and $T_{m,\epsilon(5),(*.*)}$ etc. will appear in the course of the proof of
Proposition \ref{prop:kappatokappa+1} (including the appendices quoted there.)
Those constants depend on $m$ as well as the numbers explicitly appearing in the subscript and
the previously given constants $C_{1,m}$-$C_{4,m}$.
(Especially they depend on the $W_{m+1,\delta}$ norm of
$u^{\rho}_{T,(\kappa)}$ which is estimated by (\ref{form0184a}).)
The important point is that they are independent of $\kappa$ and $T$.
\par
Therefore, even though we have infinitely many steps $\kappa = 0,1,2,\dots$
to work out, we need to make choices of those constants only finitely many times.
\end{rem}
\par\medskip
\noindent{\bf Step $\kappa$-1}:
In this step we prove Proposition \ref{prop:kappatokappa+1} (2).
We first cut-off $u^{\rho}_{T,(\kappa-1)}$ and extend it by a constant 
to obtain the maps $\hat u^{\rho}_{i,T,(\kappa-1)} :
(\Sigma_i,\partial\Sigma_i) \to (X,L)$ $(i=1,2)$ as follows: We first define
$p^{\rho}_{T,(\kappa-1)}$ inductively by \index[syindex]{pTkappa-1rho@$p^{\rho}_{T,(\kappa-1)}$}
\begin{equation}\label{form572572}
p^{\rho}_{T,(\kappa-1)} = {\rm Exp}(p^{\rho}_{T,(\kappa-2)},\Delta p^{\rho}_{T,(\kappa-1)}),
\end{equation}
starting from \eqref{eq:prho(1)}. Then we put
\begin{equation}\label{form573new}
\aligned
&\hat u^{\rho}_{1,T,(\kappa-1)}(z) \\
&=
\begin{cases} \Exp\Big(p^{\rho}_{T,(\kappa-1)}, \chi_{\mathcal B}^{\leftarrow}(\tau-T,t)
&\!\!\!\!\!\!E(p^{\rho}_{T,(\kappa-1)},u^{\rho}_{T,(\kappa-1)}(\tau,t))\Big)
 \\
&{}
\text{if $z = (\tau,t) \in [-5T,5T] \times [0,1]$} \\
 u^{\rho}_{T,(\kappa-1)}(z)
&\text{if $z \in K_1$} \\
p^{\rho}_{T,(\kappa-1)}
&\text{if $z \in [5T,\infty)\times [0,1]$}.
\end{cases} \\
&\hat u^{\rho}_{2,T,(\kappa-1)}(z) \\
&=
\begin{cases}  \Exp\Big(p^{\rho}_{T,(\kappa-1)}, \chi_{\mathcal A}^{\rightarrow}(\tau+T,t)
&  \!\!\!\! \!\!E(p^{\rho}_{T,(\kappa-1)},  u^{\rho}_{T,(\kappa-1)}(\tau,t))\Big) \\
{}
&\text{if $z = (\tau,t) \in [-5T,5T] \times [0,1]$} \\
 u^{\rho}_{T,(\kappa-1)}(z)
&\text{if $z \in K_2$} \\
p^{\rho}_{T,(\kappa-1)}
&\text{if $z \in (-\infty,-5T]\times [0,1]$}.
\end{cases}
\endaligned
\end{equation}
We have the following estimates of  $\hat u^{\rho}_{i,T,(\kappa-1)}$ on the neck region.
\begin{lem}
\begin{equation}\label{form57575}
\aligned
&\left\Vert {\rm E}(p^{\rho}_0,\hat u^{\rho}_{1,T,(\kappa-1)})
\right\Vert_{L^2_{m+1}([T,9T) \times [0,1])}
\le C_{m,(\ref{form57575})} e^{-\delta_1 T},
\\
&\left\Vert {\rm E}(p^{\rho}_0,\hat u^{\rho}_{2,T,(\kappa-1)})
\right\Vert_{L^2_{m+1}((-9T,-T] \times [0,1])}
\le C_{m,(\ref{form57575})} e^{-\delta_1 T}.
\endaligned
\end{equation}
\end{lem}
We remark that in the left hand side of (\ref{form57575}) we take the 
$L^2_{m+1}$ norm {\it without} weight.
\begin{proof}
For $\kappa - 1 = 0$ this is a consequence of
Condition \ref{conds:smalldiam} and Lemma \ref{lem:Ckexpdecay}.
Then using (\ref{form0182}) we can prove the lemma by induction on $\kappa$.
Its version with $T$ and $\rho$ derivatives included, is
(\ref{form185}) proven in Appendix \ref{appendixB-}.
\end{proof}
Now we consider the linearization of (\ref{mainequation})
at $\hat u^{\rho}_{i,T,(\kappa-1)}$, which is

\begin{equation}\label{lineeqstepkappa}
\aligned
D_{\hat u^{\rho}_{i,T,(\kappa-1)}}\overline{\partial}
:
W^2_{m+1,\delta}((\Sigma_i,\partial \Sigma_i);&(\hat u^{\rho}_{i,T,(\kappa-1)})^*TX,(\hat u^{\rho}_{i,T,(\kappa-1)})^*TL)
\\
&\to
L^2_{m,\delta}(\Sigma_i;(\hat u^{\rho}_{i,T,(\kappa-1)})^{*}TX \otimes \Lambda^{0,1}).
\endaligned
\end{equation}
We denote
\begin{equation}
(\frak {se})^{\rho} _{i,T,(\kappa-1)} = \sum_{a=0}^{\kappa-1}\frak e^{\rho} _{i,T,(a)}.
\end{equation}

Similarly as for the operator $D_{\hat u^{\rho}_{i,T,(0)}}^{\text{\rm app},(0)}$ in \eqref{144op}, we define the
approximate linearization of (\ref{mainequation})
\begin{equation}\label{144opkappa}
D_{\hat u^{\rho}_{i,T,(\kappa-1)}}^{\text{\rm app},(\kappa -1 )}: = D_{\hat u^{\rho}_{i,T,(\kappa-1)}}\overline{\partial} -
(D_{\hat u^{\rho}_{i,T,(\kappa-1)}}\mathcal E_i)((\frak {se})^{\rho} _{i,T,(\kappa-1)}, \cdot)
\end{equation}
replacing $\delbar \hat u^{\rho}_{i,T,(\kappa-1)}$ by $(\frak {se})^{\rho} _{i,T,(\kappa-1)}$ in
the expression of the linearization operator
$$
(D_{\hat u^{\rho}_{i,T,(\kappa-1)}}\overline{\partial})^{\perp}
= D_{\hat u^{\rho}_{i,T,(\kappa-1)}}\overline{\partial} -
(D_{\hat u^{\rho}_{i,T,(\kappa-1)}}\mathcal E_i)(\delbar \hat u^{\rho}_{i,T,(\kappa-1)}, \cdot).
$$
\begin{lem}\label{lem112kappa}
Denote $\mathcal E_i = \mathcal E_i(\hat u^{\rho}_{i,T,(\kappa-1)})$.
There exists $T_{m,(\ref{surj1step1modififedkappa})}$ such that if $T > T_{m,(\ref{surj1step1modififedkappa})}$
then we have
\begin{equation}\label{surj1step1modififedkappa}
\text{\rm Im}(D_{\hat u^{\rho}_{i,T,(\kappa-1)}}^{\text{\rm app},(\kappa-1)})  + \mathcal E_i
= L^2_{m,\delta}(\Sigma_i;(\hat u^{\rho}_{i,T,(\kappa-1)})^{*}TX \otimes \Lambda^{0,1}).
\end{equation}
Moreover the map
\begin{equation}\label{Duievsurjstep1kappa2}
D{\rm ev}_{1,\infty} - D{\rm ev}_{2,\infty}
: (D_{\hat u^{\rho}_{1,T,(\kappa-1)}}^{\text{\rm app},(\kappa-1)})^{-1}(\mathcal E_1)
\oplus (D_{\hat u^{\rho}_{2,T,(\kappa-1)}}^{\text{\rm app},(\kappa-1)})^{-1}(\mathcal E_2)
\to T_{p^{\rho}_{T,(\kappa-1)}}L
\end{equation}
is surjective.
\end{lem}

\begin{proof}
Using the inequality
\eqref{form0186}, we estimate
\begin{equation}\label{eq181}
\left\| \frak e^{\rho} _{i,T,(0)} -
\sum_{a=0}^{\kappa-1}\frak e^{\rho} _{i,T,(a)}\right\|_{L_{m}^2(K_i)}
\leq \sum_{a = 1}^{\kappa -1} \|\frak e^{\rho} _{i,T,(a)}\|_{L_{m}^2(K_i)}
< C_{3,m}\frac{e^{-\delta_1 T}}{1-\mu}.
\end{equation}
This in particular implies that
\begin{equation}\label{eq18122}
\aligned
Y\mapsto
&(D_{\hat u^{\rho}_{1,T,(\kappa-1)}}\mathcal E_1)( (\frak {se})^{\rho} _{i,T,(\kappa-1)},
{\rm Pal}_{u_1}^{\hat u^{\rho}_{1,T,(\kappa-1)}}(Y)) \\
&-
{\rm Pal}^{\hat u^{\rho}_{1,T,(\kappa-1)}}_{\hat u^{\rho}_{1,T,(0)}}((D_{\hat u^{\rho}_{1,T,(0)}}\mathcal E_1)(\frak e^{\rho} _{1,T,(0)},
{\rm Pal}_{u_1}^{\hat u^{\rho}_{1,T,(0)}}(Y)))
\endaligned
\end{equation}
is small in the operator norm. We prove this fact in Appendix \ref{appendixA2bis}.
Then this smallness implies that  the operator norm of the difference operator
$$
D_{\hat u^{\rho}_{1,T,(\kappa-1)}}^{\text{\rm app},(\kappa-1)}
\circ {\rm Pal}_{u^{\rho}_1}^{\hat u^{\rho}_{1,T,(\kappa-1)}}-
{\rm Pal}^{\hat u^{\rho}_{1,T,(\kappa-1)}}_{\hat u^{\rho}_{1,T,(0)}} \circ
D_{\hat u^{\rho}_{1,T,(0)}}^{\text{\rm app},(0)}
\circ {\rm Pal}_{u^{\rho}_1}^{\hat u^{\rho}_{1,T,(0)}}
$$
is small.
This can be easily seen
by rewriting the value of the difference operator as
$$
\aligned
& D_{\hat u^{\rho}_{1,T,(\kappa-1)}}^{\text{\rm app},(\kappa-1)}(
{\rm Pal}_{u^{\rho}_1}^{\hat u^{\rho}_{1,T,(\kappa-1)}}(Y))-
{\rm Pal}^{\hat u^{\rho}_{1,T,(\kappa-1)}}_{\hat u^{\rho}_{1,T,(0)}}
(D_{\hat u_{1,T,(0)}}^{\text{\rm app},(0)}({\rm Pal}_{u^{\rho}_1}^{\hat u^{\rho}_{1,T,(0)}}(Y)))\\
 = & \left((D_{\hat u^{\rho}_{1,T,(\kappa-1)}}\delbar) ({\rm Pal}_{u^{\rho}_1}^{\hat u^{\rho}_{1,T,(\kappa-1)}}(Y)) -
{\rm Pal}^{\hat u^{\rho}_{1,T,(\kappa-1)}}_{\hat u^{\rho}_{1,T,(0)}} (
(D_{\hat u^{\rho}_{i,T,(0)}}\delbar)( {\rm Pal}_{u^{\rho}_1}^{\hat u^{\rho}_{1,T,(0)}}(Y)))\right)\\
& \quad  - \Big((D_{\hat u^{\rho}_{1,T,(\kappa-1)}}\mathcal E_1)( (\frak {se})^{\rho}_{i,T,(\kappa-1)},
{\rm Pal}_{u^{\rho}_1}^{\hat u^{\rho}_{1,T,(\kappa-1)}}Y) \\
&\qquad\qquad -{\rm Pal}^{\hat u^{\rho}_{1,T,(\kappa-1)}}_{\hat u^{\rho}_{1,T,(0)}} (D_{\hat u^{\rho}_{1,T,(0)}}\mathcal E_1)(\frak e^{\rho}_{1,T,(0)},
{\rm Pal}_{u^{\rho}_1}^{\hat u^{\rho}_{1,T,(0)}}(Y))\Big).
\endaligned
$$
The first summand of the right hand side is clearly small by the smallness of $L^2_{m+1,\delta}$-norm
of ${\rm E}(\hat u^{\rho}_{i,T,(\kappa-1)},\hat u^{\rho}_{i,T,(0)})$ arising from
the induction hypothesis \eqref{form0184a}.
Then the lemma follows by combining Lemma \ref{lem112} and the above mentioned smallness.
\end{proof}
Note that Remark \ref{independetmm} still applies to Lemma \ref{lem112kappa}.
\begin{defn}\label{def528}
We define a linear map \index[syindex]{Phiikappa-1@$\Phi_{i;(\kappa-1)}$}
$$
\aligned
\Phi_{i;(\kappa-1)}(\rho,T) :
&W^2_{m+1,\delta}((\Sigma_i,\partial \Sigma_i);u_{i}^{*}TX,u_{i}^{*}TL) \\
&\to
W^2_{m+1,\delta}((\Sigma_i,\partial \Sigma_i);(\hat u^{\rho}_{i,T,(\kappa-1)})^{*}TX,(\hat u^{\rho}_{i,T,(\kappa-1)})^{*}TL),
\endaligned
$$
as the closure of the map ${\rm Pal}^{\hat u^{\rho}_{i,T,(\kappa-1)}}_{u_i}$
{given} in (\ref{paluv01}).
\end{defn}
\begin{lem}
$ $
\begin{enumerate}
\item $\Phi_{i;(\kappa-1)}(\rho,T)$ sends (\ref{domain527})
to
\begin{equation}\label{domain528k}
W^2_{m+1,\delta}((\Sigma_i,\partial \Sigma_i);(\hat u^{\rho}_{i,T,(\kappa-1)})^{*}TX,(\hat u^{\rho}_{i,T,(\kappa-1)})^{*}TL).
\end{equation}
\item
The map $(\Phi_{1;(\kappa-1)}(\rho,T),\Phi_{2;(\kappa-1)}(\rho,T))$ sends the subspace
(\ref{cocsissobolev}) to the subspace
\begin{equation}\label{cocsissobolev2k}
\aligned
&\text{\rm Ker}(D{\rm ev}_{1,\infty} - D{\rm ev}_{2,\infty}) \\
&\cap \bigoplus_{i=1}^2 W^2_{m+1,\delta}((\Sigma_i,\partial \Sigma_i);(\hat u^{\rho}_{i,T,(\kappa-1)})^{*}TX,(\hat u^{\rho}_{i,T,(\kappa-1)})^{*}TL).
\endaligned
\end{equation}
\end{enumerate}
\end{lem}
The proof is easy and is omitted.
 
\begin{defn}\label{defn530}
We denote by $\frak H_{(\kappa-1)}(\mathcal E_1,\mathcal E_2;\rho,T)$
the image of the subspace \index[syindex]{Hkappa-1@$\frak H_{(\kappa-1)}(\mathcal E_1,\mathcal E_2;\rho,T)$}
$\frak H(\mathcal E_1,\mathcal E_2)$
(See Definition \ref{defnfrakH}.)  {under} the map  $(\Phi_{1;(\kappa-1)}(\rho,T),\Phi_{2;(\kappa-1)}(\rho,T))$.
It is a subspace of (\ref{cocsissobolev2k}).
\end{defn}
 \index[syindex]{VTikappa@$(V^{\rho}_{T,i,(\kappa)},\Delta p^{\rho}_{T,(\kappa)})$}
\begin{defn}\label{defn532532}
We define $(V^{\rho}_{T,1,(\kappa)},V^{\rho}_{T,2,(\kappa)},\Delta p^{\rho}_{T,(\kappa)})$ by
\begin{equation}\label{formula158}
D_{\hat u^{\rho}_{i,T,(\kappa-1)}}^{\text{\rm app},(\kappa-1)}(V^{\rho}_{T,i,(\kappa)})
+ {\rm Err}^{\rho}_{i,T,(\kappa-1)}
\in \mathcal E_i(\hat u^{\rho}_{i,T,(\kappa-1)})
\end{equation}
and
\begin{equation}\label{inHhanru}
((V^{\rho}_{T,1,(\kappa)},\Delta p^{\rho}_{T,(\kappa)}),(V^{\rho}_{T,2,(\kappa)},\Delta p^{\rho}_{T,(\kappa)}))
\in
\frak H_{(\kappa-1)}(\mathcal E_1,\mathcal E_2;\rho,T).
\end{equation}
\end{defn}
Lemma \ref{lem112kappa} implies that such $(V^{\rho}_{T,1,(\kappa)},V^{\rho}_{T,2,(\kappa)},\Delta p^{\rho}_{T,(\kappa)})$
exists and is unique if $T > T_{m,(\ref{surj1step1modififedkappa})}$.

\begin{lem}\label{estimageVkappa}
There exist $T_{m,(\ref{142form23})}$ and $\epsilon_{3,m}>0$ such that
if $\epsilon(5) < \epsilon_{3,m}$, then (\ref{form0185})
for $\kappa-1$ implies that the next inequality for  $T > T_{m,(\ref{142form23})}$.
\begin{equation}\label{142form23}
\| (V_{T,i,(\kappa)},\Delta p^{\rho}_{T,(\kappa)})\|_{W^2_{m+1,\delta}(\Sigma_i)}
\le C_{2,m}\mu^{\kappa-1}e^{-\delta_1 T}.
\end{equation}
\end{lem}
\begin{proof}
We take $T_{m,(\ref{142form23})}$ {so} large  that
the inverse of
$$
\Pi^{\perp}_{\mathcal E_i(\hat u^{\rho}_{i,T,(\kappa-1)})}
\circ D^{{\rm app},(\kappa-1)}_{\hat u^{\rho}_{i,T,(\kappa-1)}}
$$
is uniformly
bounded and choose $\epsilon_{3,m}$ {so} small  that $\epsilon_{3,m}^{-1}$
is much larger than the norm of the above mentioned inverse.
Then (\ref{142form23}) follows from uniform boundedness of the
above mentioned inverse together with
(\ref{form0185}) for $\kappa-1$.
\end{proof}
This lemma implies the inequality (\ref{form0182}).
We have finished the proof of Proposition \ref{prop:kappatokappa+1} (2).
\par\medskip
\noindent{\bf Step $\kappa$-2}:
We start the proof of Proposition \ref{prop:kappatokappa+1} (1).
\par
We use $(V^{\rho}_{T,1,(\kappa)},V^{\rho}_{T,2,(\kappa)},\Delta p^{\rho}_{T,(\kappa)})$ to find an approximate solution $u^{\rho}_{T,(\kappa)}$ of the next level.
We remark that $(V^{\rho}_{T,1,(\kappa)},V^{\rho}_{T,2,(\kappa)},\Delta p^{\rho}_{T,(\kappa)})$ is the counterpart 
of $v_\kappa$
given by $v_\kappa = \frac{f(x_\kappa)}{f'(x_\kappa)}$ and the next definition corresponds to
the next iteration $x_{\kappa + 1} = x_\kappa + v_\kappa$ in Newton's scheme.
\index[syindex]{uTkapparho@$u^{\rho}_{T,(\kappa)}(z)$}

\begin{defn}\label{defn:urhoT(kappa)}
We define $u^{\rho}_{T,(\kappa)}(z)$ as follows.
\begin{enumerate}
\item If $z \in K_1$, we put
\begin{equation}
u^{\rho}_{T,(\kappa)}(z)
=
\Exp (\hat u^{\rho}_{1,T,(\kappa-1)}(z),V^{\rho}_{T,1,(\kappa)}(z)).
\end{equation}
\item If $z \in K_2$, we put
\begin{equation}
u^{\rho}_{T,(\kappa)}(z)
=
\Exp (\hat u^{\rho}_{2,T,(\kappa-1)}(z),V^{\rho}_{T,2,(\kappa)}(z)).
\end{equation}
\item
If $z  = (\tau,t) \in [-5T,5T]\times [0,1]$,
we put
\begin{equation}\label{newform588}
\aligned
u^{\rho}_{T,(\kappa)}(\tau,t) =
&\Exp\Big(u^{\rho}_{T,(\kappa-1)}(\tau,t), \chi_{\mathcal B}^{\leftarrow}(\tau,t) (V^{\rho}_{T,1,(\kappa)}(\tau,t) - (\Delta p^{\rho}_{T,(\kappa)})
^{\rm Pal})\\
&+\chi_{\mathcal A}^{\rightarrow}(\tau,t)(V^{\rho}_{T,2,(\kappa)}(\tau,t)-(\Delta p^{\rho}_{T,(\kappa)})^{\rm Pal})
+(\Delta p^{\rho}_{T,(\kappa)})^{\Pal}\Big).
\endaligned
\end{equation}
\end{enumerate}
\end{defn}
We note that $\hat u^{\rho}_{1,T,(\kappa-1)}(z) = u^{\rho}_{T,(\kappa-1)}(z)$ on $K_1$
and $\hat u^{\rho}_{2,T,(\kappa-1)}(z) = u^{\rho}_{T,(\kappa-1)}(z)$
on $K_2$.
\par\medskip
\noindent{\bf Step $\kappa$-3}:
The proof of the following proposition is largely a duplication of
that of Proposition \ref{mainestimatestep13}
plus (\ref{form56156}) and so its details will be postponed till Appendix \ref{appendixB}.

\begin{prop}\label{mainestimatestep13kappa}
For any $\epsilon(5) > 0$, there exists $T_{m,
\epsilon(5), (\ref{frakeissmall2kappa0})}> 0$ with the following properties.
If $T > T_{m,
\epsilon(5), (\ref{frakeissmall2kappa0})}$ we can define $\frak e^{\rho} _{i,T,(\kappa)} \in \mathcal E_i$ that satisfies
\begin{equation}\label{frakeissmall2kappa0}
\left\|\overline\partial u^{\rho} _{T,(\kappa)} - \sum_{a=0}^{\kappa}\frak e^{\rho} _{1,T,(a)} -\sum_{a=0}^{\kappa}\frak e^{\rho} _{2,T,(a)}
\right\|_{L_{m,\delta}^2(\Sigma_T)} < C_{1,m}\mu^{\kappa}\epsilon(5) e^{-\delta_1 T},
\end{equation}
and
\begin{equation}\label{frakeissmall2kappaa}
\| \frak e^{\rho} _{i,T,(\kappa)}\|_{L_{m}^2(K_i)} <  C_{3,m}\mu^{\kappa-1}e^{-\delta_1 T}.
\end{equation}
\end{prop}
Note Remark \ref{remark54} applies here.
\par\medskip

\noindent{\bf Step $\kappa$-4}: Similarly as before, we introduce the following definition{s}.
\index[syindex]{ErrTkapparho@${\rm Err}^{\rho}_{T,(\kappa)}$}
{
\be\label{Err(kappa)}
{\rm Err}^{\rho}_{T,(\kappa)} = \overline\partial u^{\rho} _{T,(\kappa)}
 - \sum_{a=0}^{\kappa}\frak e^{\rho} _{1,T,(a)}- \sum_{a=0}^{\kappa}\frak e^{\rho} _{2,T,(a)}.
 \ee
}
\begin{defn}\label{defn:Errrho12T(kappa)}
We put\index[syindex]{ErriTkapparho@${\rm Err}^{\rho}_{i,T,(\kappa)}$}
$$
\aligned
{\rm Err}^{\rho}_{1,T,(\kappa)}
= {\chi_{\mathcal X}^{\leftarrow}({\rm Err}^{\rho}_{T,(\kappa)})}
&= \chi_{\mathcal X}^{\leftarrow} \left(\overline\partial u^{\rho} _{T,(\kappa)}
 - \sum_{a=0}^{\kappa}\frak e^{\rho} _{1,T,(a)}  \right), \\
{\rm Err}^{\rho}_{2,T,(\kappa)}
= {\chi_{\mathcal X}^{\rightarrow}({\rm Err}^{\rho}_{T,(\kappa)})}
&= \chi_{\mathcal X}^{\rightarrow} \left(\overline\partial u^{\rho} _{T,(\kappa)}
- \sum_{a=0}^{\kappa}\frak e^{\rho} _{2,T,(a)}\right).
\endaligned$$
We regard them as elements of the weighted Sobolev spaces
$$
L^2_{m,\delta}(\Sigma_i;(\hat u_{i,T,(\kappa)}^{\rho})^*TX\otimes \Lambda^{0,1})
$$
for $i=1, \, 2$ respectively.
(We extend them by $0$ outside a compact set.)
\end{defn}

We put $p^{\rho}_{T,(\kappa)} = \Exp(p^\rho_{T,(\kappa-1)}, \Delta p^\rho_{T,(\kappa)})$.
See (\ref{form572572}).  Then
Proposition \ref{mainestimatestep13kappa} implies (\ref{form0185}) and (\ref{form0186}).
\par
\medskip
We have thus finished the proof of Proposition \ref{prop:kappatokappa+1}
(1).
\par
The proof of Proposition \ref{prop:kappatokappa+1} is complete.
\end{proof}

Now we are ready to produce the solution to the gluing problem.
For each fixed $m$ there exists $T_m$ such that if $T > T_m$ then
$$
\lim_{\kappa \to \infty} u^{\rho} _{T,(\kappa)}
$$
converges in $L_{m+1,\delta}^2$ sense to a solution of (\ref{mainequation}) by
Proposition \ref{mainestimatestep13kappa}.
The limit map is automatically of $C^{\infty}$-class by elliptic regularity.
(\ref{form0185}) and (\ref{newform570}) imply that the limit solves the equation
(\ref{mainequation}).
We have thus constructed the gluing map used in Theorem \ref{gluethm1}.
\section{Exponential decay of $T$ derivatives}
\label{subsecdecayT}

We first state the result of this chapter which is also the main result of this paper.
We recall that for $T$ sufficiently large and $\rho = (\rho_1,\rho_2) \in V_1\times_L V_2$
we have defined $u^{\rho} _{T,(\kappa)}$ for each $\kappa = 0, \, 1, \, 2, \ldots$. We denote its limit by
\begin{equation}\label{limitkappainf}
u^{\rho} _{T} = \lim_{\kappa \to \infty} u^{\rho} _{T,(\kappa)} : (\Sigma_T,\partial\Sigma_T) \to (X,L).
\end{equation}
The main result of this chapter
is an estimate of $T$ and $\rho$ derivatives of this map.
We prepare some notations to state the result.
\par
\subsection{{Statement of the main result}}
For  $T > 0$, we defined the curve $\Sigma_T$ as the union\index[syindex]{SigmaT@$\Sigma_T$}
$$
\Sigma_T = K_1 \cup ([-5T,5T]_{\tau} \times [0,1]) \cup K_2
$$
by identifying \index[syindex]{dK1@$\del K_1$}$\del K_1 \cong \{-5T\} \times [0,1]$ and $\del K_2 \cong \{5T\} \times [0,1]$
where we denote by $(\tau,t)$ the coordinates on $[-5T,5T]_{\tau} \times [0,1]$.
{See \eqref{eq:Sigma12}.}
Therefore we have the natural inclusion
$
K_i \subset \Sigma_T.
$
\par
We introduce new coordinate{s}  $(\tau',t)$ and $(\tau'',t)$ such that
the  relationship between
the three coordinates $(\tau,t)$, $(\tau',t)$ and $(\tau'',t)$ are given by\index[syindex]{tauprime@$\tau'$}
\begin{equation}
\tau' = \tau + 5T
\end{equation}
and
\begin{equation}
\tau'' =\tau - 5T.
\end{equation}
\begin{figure}
\centering
\includegraphics{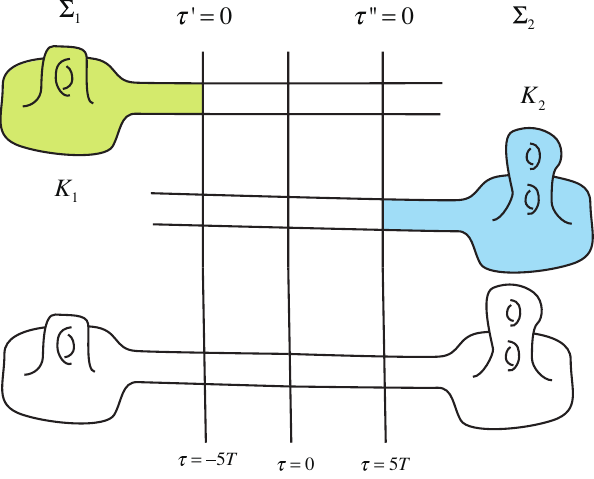}
\caption{$\tau'$ and $\tau''$.}
\label{Figure8}
\end{figure}
See Figure \ref{Figure8}.\index[syindex]{tauprimeprime@$\tau''$}
We may use either $(\tau',t)$ or $(\tau'',t)$ as the coordinate{s}  of
$
\Sigma_T \setminus (K_1\cup K_2)
$.
Namely
$$
\Sigma_T = K_1 \cup ([0,10T]_{\tau'} \times [0,1]) \cup K_2
=
K_1 \cup ([-10T,0]_{\tau''} \times [0,1]) \cup K_2
$$
\begin{rem}\label{rem6161}
Here and hereafter we write $[0,10T]_{\tau'}$, $[-10T,0]_{\tau''}$, $[-5T,5T]_{\tau}$
etc. to clarify the coordinate ($\tau'$, $\tau''$, or $\tau$) we use.
\end{rem}
We can also use {the} $(\tau',t)$ coordinates on $\Sigma_1 \setminus K_1$
and the $(\tau'',t)$ coordinates on $\Sigma_2 \setminus K_2$ so that
$$
\aligned
&\Sigma_1 = K_1 \cup ([-5T,\infty)_{\tau} \times [0,1])
=
K_1 \cup ([0,\infty)_{\tau'} \times [0,1],
\\
&\Sigma_2 = K_2 \cup (-\infty,5T]_{\tau} \times [0,1])
=
K_2 \cup (-\infty,0]_{\tau''} \times [0,1].
\endaligned$$
\begin{rem}
We remark that {the} $(\tau',t)$ coordinates of $\Sigma_1$ and
{the} $(\tau'',t)$ coordinates of $\Sigma_2$ are independent of $T$
but {the}  $(\tau,t)$ coordinates of them are $T$ dependent.
\end{rem}
We define \index[syindex]{K1S@$K_1^S$}
\begin{equation}\label{1104}
\aligned
K_1^S & = K_1 \cup ([0,S]_{\tau'}\times [0,1]), \\
K_2^S & = ([-S,0]_{\tau''}\times [0,1]) \cup K_2
\endaligned
\end{equation}
for each positive constant $S> 0$.
We have a natural embedding $K_1^S \hookrightarrow \Sigma_1$
(resp. $K_2^S \hookrightarrow \Sigma_2$) via the coordinates
$\tau'$ (resp. via the coordinates $\tau''$).
\par
If $S > 0$, $T > \frac{S}{10}$, we have embeddings of $K_1^S \to \Sigma_T$
by setting $\tau = \tau' - 5T$ and $K_2^S \to \Sigma_T$ by $\tau = \tau'' + 5T$.
\par
We then obtain a map\index[syindex]{GlueiS@$\text{\rm Glures}_{i,S}$}
$$
\text{\rm Glures}_{i,S} : [T_m,\infty) \times V_1 \times_L V_2 \to \text{\rm Map}_{L^2_{m+1}}((K_i^S,K_i^S\cap\partial \Sigma_i),(X,L))
$$
by
\begin{equation}
\begin{cases}
\text{\rm Glures}_{1,S}(T,\rho)(x)  &=  u^{\rho} _{T}(x) \qquad x \in K_1\\
\text{\rm Glures}_{1,S}(T,\rho)(\tau',t) &= u^{\rho} _{T}(\tau',t)
\,\,\, (= u^{\rho}_T(\tau+5T,t))
\end{cases}
\end{equation}
\begin{equation}
\begin{cases}
\text{\rm Glures}_{2,S}(T,\rho)(x)  &=  u^{\rho} _{T}(x) \,\,\, x \in K_2\\
\text{\rm Glures}_{2,S}(T,\rho)(\tau'',t) &= u^{\rho} _{T}(\tau'',t)
\quad (= u^{\rho}_T(\tau-5T,t)).
\end{cases}
\end{equation}
Here the map `$\text{Glures}$' stands for the phrase `gluing followed by restriction',
and $\text{\rm Map}_{L^2_{m+1}}((K_i^S,K_i^S\cap\partial \Sigma_i),(X,L))$ is the space of maps of $L^2_{m+1}$ class
($m$ is sufficiently large, say $m>10$.) It has the structure of Hilbert manifold in an obvious way.
This Hilbert manifold is independent of $T$. So we can define the $T$ derivative of a family of elements
of $\text{\rm Map}_{L^2_{m+1}}((K_i^S,K_i^S\cap\partial \Sigma_i),(X,L))$ parameterized by $T$.
\footnote{We can also use $C^k((K_i^S,K_i^S\cap\partial \Sigma_i),(X,L))$, the Banach manifold
of $C^k$ maps, in place of $\text{\rm Map}_{L^2_{m+1}}((K_i^S,K_i^S\cap\partial \Sigma_i),(X,L))$.}
\begin{rem}\label{differentm}
The domain and the {codomain} of the map $\text{\rm Glures}_{i,S}$ depend on $m$.
However its image actually lies in the set of smooth maps. Also
none of the constructions of $u^{\rho} _{T}$  depend on $m$.
(The proof of the convergence of (\ref{limitkappainf}) depends on $m$. So the numbers
$T_{3,m,\epsilon(5)}$, $T_{4,m}$ in Proposition \ref{prop:kappatokappa+1}
(and $T_{5,m,\epsilon(6)}$, $T_{6,m}$ in Proposition \ref{prop:inequalitieskappa})
 depend on $m$.)
Therefore the map $\text{\rm Glures}_{i,S}$ is {\it independent} of $m$ on the intersection of the domains.
Namely the map $\text{\rm Glures}_{i,S}$ constructed by using {the} $L^2_{m_1}$ norm
coincides with  the map $\text{\rm Glures}_{i,S}$ constructed by using the $L^2_{m_2}$ norm
{on $[T', \infty) \times V_1\times_L V_2$ where
$$
T' = \max\{T_{3,m_1,\epsilon(5)},T_{4,m_1},T_{5,m_1,\epsilon(6)},
T_{6,m_1}
T_{3,m_2,\epsilon(5)},T_{4,m_2},T_{5,m_2,\epsilon(6)},
T_{6,m_2}\}.
$$}
\end{rem}

\begin{thm}\label{exdecayT}
For each given $m$ and $S$,  there exist $T_{m,S,(\ref{form67})} > S$, $C_{m,S,(\ref{form67})}> 0$
such that
\begin{equation}\label{form67}
\left\| \nabla_{\rho}^n \frac{d^{\ell}}{dT^{\ell}} \text{\rm Glures}_{i,S}\right\|_{L^2_{m+1-\ell}}
<C_{m,S,(\ref{form67})}e^{-\delta_1 T}
\end{equation}
holds for all $T>T_{m,S,(\ref{form67})}$ and for $(n,\ell)$ with $m-2 \ge n, \ell \ge 0$ and $\ell > 0$.
Here $\nabla_{\rho}^n$ is the $n$-th derivative in the $\rho$-direction.
\end{thm}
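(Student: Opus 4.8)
The plan is to run the same inductive scheme as in Section \ref{alternatingmethod}, but this time track the $T$- and $\rho$-derivatives of all the objects $V^{\rho}_{T,i,(\kappa)}$, $\Delta p^{\rho}_{T,(\kappa)}$, $\frak e^{\rho}_{i,T,(\kappa)}$, ${\rm Err}^{\rho}_{i,T,(\kappa)}$, $\hat u^{\rho}_{i,T,(\kappa-1)}$ and $u^{\rho}_{T,(\kappa)}$ in parallel with the objects themselves. Concretely, I would set up a differentiated version of the four inequalities (\ref{form0182})--(\ref{form0186}), i.e.\ a single package of estimates of the form
\begin{equation*}
\left\| \nabla_\rho^n \frac{d^\ell}{dT^\ell}(\,\cdot\,)\right\|_{L^2_{m+1-\ell,\delta}} < C\,\mu^{\kappa}\,e^{-\delta_1 T}
\end{equation*}
for each of the objects above, with suitable powers of $\mu$, and prove them by induction on $\kappa$ exactly as Proposition \ref{prop:kappatokappa+1} was proved — this is the content of what the paper later labels Proposition \ref{prop:inequalitieskappa}. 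The key structural point that makes this work is that differentiating the defining relations (\ref{formula158}), (\ref{inHhanru}) in $T$ or $\rho$ produces, by Leibniz's rule, the \emph{same} operator $\Pi^\perp_{\mathcal E_i}\circ D^{{\rm app},(\kappa-1)}_{\hat u^{\rho}_{i,T,(\kappa-1)}}$ applied to the derivative of $V^{\rho}_{T,i,(\kappa)}$, plus lower-order terms in which the derivatives fall on coefficients (parallel transports, weight functions, cut-off functions, the obstruction projections) rather than on $V$. Since that operator has a uniformly bounded right inverse (Lemma \ref{lem112kappa}), the derivative of the solution is controlled by the derivatives of the error term, and one closes the induction precisely as before.

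The heart of the matter, and the reason the theorem is nontrivial, is the bookkeeping of where the $T$-derivative lands. First, $\frac{d}{dT}$ acting on the map $u^{\rho}_{T,(\kappa)}$ written in the $(\tau,t)$-coordinate is \emph{not} the same as the $T$-derivative of $\text{\rm Glures}_{i,S}(T,\rho)$, because the latter is computed in the $T$-independent coordinates $(\tau',t)$ on $K_1^S$ (resp.\ $(\tau'',t)$ on $K_2^S$); the two differ by a $\partial_\tau$ term coming from the relation $\tau=\tau'-5T$. This is exactly why the statement is phrased via the Hilbert manifold $\text{\rm Map}_{L^2_{m+1}}((K_i^S,K_i^S\cap\partial\Sigma_i),(X,L))$ with domain and target fixed, as emphasized in the discussion before the theorem. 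On $K_i^S$ (away from the center of the neck) one has $\partial_\tau u^{\rho}_{T,(\kappa)} = O(e^{-\delta_1 T})$ by the exponential decay built into (\ref{form0184a}) and Lemma \ref{lem:Ckexpdecay}, so converting between the two notions of $T$-derivative costs only another factor $e^{-\delta_1 T}$, which is harmless. Second, each application of $d/dT$ costs one derivative of Sobolev regularity — this is the well-known loss-of-differentiability phenomenon under neck-stretching alluded to in the introduction — which is why the left side of (\ref{form67}) is measured in $L^2_{m+1-\ell,\delta}$ and why one must carry the whole family of Sobolev indices $m$ simultaneously; the restriction $m-2\ge n,\ell$ is precisely the room needed. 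Third, differentiating the cut-off functions $\chi_{\mathcal A}^{\rightarrow},\chi_{\mathcal B}^{\leftarrow}$ in $T$ produces terms supported in the regions $\mathcal A_T,\mathcal B_T$, and there the drop of the weight function $e_{T,\delta}$ (Remark \ref{rem:dropofweight}, Figure \ref{Figure5}) again gains a factor $e^{-(\text{const})\delta T}$, so these contributions are also exponentially small.

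The key steps, in order, are: (i) establish the differentiated a priori estimates for the starting objects $\frak e^{\rho}_{i,T,(0)}$, $u^{\rho}_{T,(0)}$, $\hat u^{\rho}_{i,T,(0)}$ and the first correction $(V^{\rho}_{T,i,(1)},\Delta p^{\rho}_{T,(1)})$ — this parallels Lemmata \ref{lem18}, \ref{lem516} and uses only (\ref{approestu}) and the formula (\ref{form5656}); (ii) prove the differentiated inductive step Proposition \ref{prop:inequalitieskappa}: differentiate (\ref{DEidef}), (\ref{144opkappa}), (\ref{formula158}), (\ref{inHhanru}), (\ref{newform588}) and repeat the quadratic-estimate arguments of Proposition \ref{mainestimatestep13kappa} region by region ($K_1$, $K_2$, $[-5T,-T-1]\times[0,1]$, the neck, $\mathcal A_T$, $\mathcal B_T$), each time noting that the new terms produced by Leibniz are either lower-order-in-$V$ and hence absorbed by the bounded right inverse, or are supported where the weight drops and hence exponentially small; (iii) sum the geometric series $\sum_\kappa \mu^\kappa e^{-\delta_1 T}$ to get exponential decay of $\nabla_\rho^n \frac{d^\ell}{dT^\ell} u^{\rho}_T$ in $L^2_{m+1-\ell,\delta}$ on each compact piece; (iv) translate from the $(\tau,t)$-description to the $T$-independent coordinates on $K_i^S$, absorbing the $\partial_\tau$-correction using the exponential smallness of $\partial_\tau u^{\rho}_T$ on $K_i^S$, to obtain (\ref{form67}). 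The main obstacle — the step I expect to be genuinely delicate rather than merely tedious — is (ii): one must verify that in \emph{every} term produced by differentiating the alternating-method recursion, each extra $d/dT$ is matched by either an extra factor $e^{-(\text{const})T}$ (from neck-region smallness, weight drop, or the exponential closeness of the $\hat u$'s) or by a loss of at most one Sobolev index without loss of exponential rate, so that the induction hypothesis package is genuinely reproduced with the same $\mu$; keeping the function spaces fixed (working in $L^2_m$, never $L^p_1$, and never treating the varying-domain family as a single Banach manifold) is exactly what lets this bookkeeping go through, as the authors stress in the introduction. The routine but lengthy region-by-region quadratic estimates I would relegate to the appendices, as the paper does.
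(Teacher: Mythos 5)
Your overall plan is correct and coincides with the paper's: run the alternating scheme once more, tracking $\nabla_\rho^n\,\partial_T^\ell$ of every intermediate object along with the objects themselves, prove the differentiated package (\ref{form182})--(\ref{form186}) by induction on $\kappa$ (Proposition \ref{prop:inequalitieskappa}), and sum the geometric series. However, two technical points in your write-up diverge from what actually makes the argument close.

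First, you locate the source of the Sobolev-index loss in the relation $\tau=\tau'-5T$ and propose to absorb it at the end via the exponential smallness of $\partial_\tau u^{\rho}_T$ on $K_i^S$. This is the wrong mechanism. The paper does the entire induction \emph{already} in the $T$-independent coordinates $(\tau',t)$ and $(\tau'',t)$, so there is no coordinate conversion to perform at the end; the $5T$-shift only affects the cut-offs and weight functions, and (as you correctly note elsewhere) those contributions die by the drop-of-weight argument. The genuine loss of one index per $\partial_T$ comes from a different place: in the formula (\ref{form1106}) for ${\rm Err}^{\rho}_{1,T,(\kappa)}$ the second summand is $V^{\rho}_{T,2,(\kappa)}(\tau'-10T,t)$, which carries a $10T$-shift between the two asymptotic coordinates $\tau'$ and $\tau''$, and $\partial_T$ of this term produces $-10\,\partial_{\tau''}V^{\rho}_{T,2,(\kappa)}$ (Remark \ref{rem611}). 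That derivative is \emph{not} exponentially small; it genuinely costs one order of regularity, and the whole reason for carrying a ladder of Sobolev indices $m$ (and restricting to $m-2\ge n,\ell$) is to accommodate this internal loss at each $\kappa$, not to pay for a final change of coordinates.

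Second, your Part B argument ("differentiate the defining relations, apply Leibniz, absorb the lower-order terms by the uniformly bounded right inverse") is morally right but hides the one structural step the paper makes explicit: the operator being inverted lives on $T,\rho,\kappa$-dependent Sobolev spaces, so before one can even write $\partial_T \overline D^{-1}$ one must conjugate by the parallel-transport trivializations $I^0_{(\kappa),\rho,T}$, $I^1_{(\kappa),\rho,T}$ to get a family of operators $\overline D_{(\kappa),\rho,T}$ with \emph{fixed} domain $\frak H(\mathcal E_1,\mathcal E_2)$ and fixed target $\mathcal E_1^{\perp}\oplus\mathcal E_2^{\perp}$. Then the decisive input is Lemma \ref{lem616}: $\bigl\Vert\nabla_\rho^n\partial_T^\ell \overline D_{(\kappa),\rho,T}\bigr\Vert\le C e^{-\delta_1 T/10}$, proven piece-by-piece using (\ref{1124})--(\ref{1125}), and the inverse is differentiated term by term through the Neumann series (\ref{formnew666}). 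Your plan would need both of these in place; simply invoking "bounded right inverse plus Leibniz" leaves unaddressed exactly the point the paper's Part B is designed to settle.
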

\begin{rem}
Theorem \ref{exdecayT} is  equivalent to \cite[Lemma A1.58]{fooo:book1}.
The proof below uses the same inductive scheme as the one in \cite[page 776]{fooo:book1}.
We use {the} $L^2_m$ norm in place of {the} $L^p_1$ norm and
add thorough detail.
\end{rem}
\begin{rem}\label{rem64}
We remark that the map
$$
\mathcal M^{\mathcal E_1\oplus \mathcal E_2}((\Sigma_T,\vec z);u_1,u_2)_{\epsilon_1}
\to
\prod_{i=1}^2 \,\,\text{\rm Map}_{L^2_{m+1}}((K_i^S,K_i^S\cap\partial \Sigma_i),(X,L)),
$$
which is obtained by restricting the domain, is an embedding for each fixed $T$.
This is a consequence of unique continuation.  We note that the {codomain} of the map
does not depend on $T$.
Therefore we can use Theorem \ref{exdecayT}
to study $T$-dependence of the moduli space
$\mathcal M^{\mathcal E_1 \oplus  \mathcal E_2}((\Sigma_T,\vec z);\beta)_{\epsilon}$.
We used this fact and Theorem \ref{exdecayT}  to show
smoothness of the coordinate change of the Kuranishi structure
on the moduli space of bordered pseudoholomorphic curves in
\cite[II, Appendix A, page 764-773]{fooo:book1}.
See Chapter \ref{sec:smoothness of coordinate change} for more details.
\end{rem}

The remaining chapter will be occupied by the proof of this theorem.

\par
The construction of $u_{T,(\kappa)}^{\rho}$ was given by induction on $\kappa$.
We divide the inductive step of the construction of  $u_{T,(\kappa)}^{\rho}$
from  $u_{T,(\kappa-1)}^{\rho}$ into two:
\begin{enumerate}
\item[(Part A)]
Start from  $(V^{\rho}_{T,1,(\kappa)},V^{\rho}_{T,2,(\kappa)},\Delta p^{\rho}_{T,(\kappa)})$ and end with
${\rm Err}^{\rho}_{1,T,(\kappa)}$ and ${\rm Err}^{\rho}_{2,T,(\kappa)}$.
This is the steps $\kappa$-2,$\kappa$-3 and $\kappa$-4. This is the step of the error estimates
for the $\kappa$-th iteration map $u_{T,(\kappa )}^\rho$, which is essentially a computational step.
\item[(Part B)]
Start from
${\rm Err}^{\rho}_{1,T,(\kappa-1)}$ and ${\rm Err}^{\rho}_{2,T,(\kappa-1)}$ and end with
$(V^{\rho}_{T,1,(\kappa)},V^{\rho}_{T,2,(\kappa)},\linebreak \Delta p^{\rho}_{T,(\kappa)})$.
This is the step $\kappa$-1.  This step involves inverting the approximate right inverse of
the linearization of the equation (\ref{mainequation}) at $u_{T,(\kappa-1)}^\rho$.
\end{enumerate}
\par\medskip
We denote by \index[syindex]{Pirhoi@$\mathscr P_{i,\rho_i}$}
\begin{equation}\label{mathscrPform}
\mathscr P_{i,\rho_i} : (u_i^{\rho_i})^*TX \to u_i^*TX, 
\quad
(\text{resp. }{\mathcal P}_{p^{\rho}_0}^{p_0} : T_{p^{\rho}_0}X \to T_{p_0}X)
\end{equation}
the bundle map (resp. the map) induced by the parallel transportation along the minimal geodesic.
{The main task of this chapter is to  prove the following inequalities inductively over $\kappa \geq 0$:}
\begin{equation}
\left\| \nabla_{\rho}^n \frac{\partial^{\ell}}{\partial T^{\ell}}
(V^{\rho}_{T,i,(\kappa)},\Delta p^{\rho}_{T,(\kappa)})\right\|_{W^2_{m+1-\ell,\delta}(\Sigma_i)}
< C_{5,m}\mu^{\kappa-1}e^{-\delta_1 T}, \label{form182}
\end{equation}
\begin{equation}
\aligned
&\left\| \nabla_{\rho}^n \frac{\partial^{\ell}}{\partial T^{\ell}}
({\mathscr P_{i,\rho_i}({\rm E}(u^{\rho_i}_i,u^{\rho}_{T,(\kappa)})),{\mathcal P}_{p^{\rho}_0}^{p_0}({\rm E}(p^{\rho}_0,p^{\rho}_{T,(\kappa)})}))
\right\|_{W^2_{m+1-\ell,\delta}(K_i^{5T+1}
\subset \Sigma_i)} \\
&\hskip7cm < C_{6,m}
(2 - \mu^{\kappa})
{e^{-\delta_1 T}},
\label{form184}
\endaligned
\end{equation}
\begin{eqnarray}
\left\Vert
\nabla_{\rho}^n \frac{\partial^{\ell}}{\partial T^{\ell}}
{\rm E}(p^{\rho}_0,{u_{T,(\kappa)}^{\rho}})
\right\Vert_{L^2_{m+1-\ell}(K^{9T}_i \setminus K^T_i)}
&<&
C_{7,m}
(2 - \mu^{\kappa})
e^{-\delta_1 T}, \label{form185}
\\
\!\!\left\| \nabla_{\rho}^n \frac{\partial^{\ell}}{\partial T^{\ell}}{\rm Err}^{\rho}_{i,T,(\kappa)} \right\|_{L^2_{m-\ell,\delta}(\Sigma_i)}
&<& C_{8,m}\epsilon(6)\mu^{\kappa}e^{-\delta_1 T},
\label{form183}\\
\left\| \nabla_{\rho}^n \frac{\partial^{\ell}}{\partial T^{\ell}} \frak e^{\rho} _{i,T,(\kappa)}\right\|_{L^2_{m-\ell}(K_i^{\frak{ob}})}
&<& C_{9,m}
\mu^{\kappa-1}e^{-\delta_1 T}. \label{form186}
\end{eqnarray}
Here $0 \le \ell,n \le m-2$.

\begin{rem}
$ $
\begin{enumerate}
\item
Note we use the $T$-independent coordinates $(\tau',t)$ on $K_1^{5T+1} \setminus K_1$,
$K_1^{9T} \setminus K^T_1$
and $(\tau'',t)$ on $K_2^{5T+1} \setminus K_2$, $K_2^{9T} \setminus K^T_2$.
\item
In {(\ref{form185})}, (\ref{form183}) we use {the} $\tau'$ coordinate for $i=1$ and
{the}  $\tau''$ coordinate for $i=2$.
\item
We also remark that $\Sigma_T = K_1^{5T+1} \cup K_2^{5T+1}$.
(See the definition of $\Sigma_T$ at the beginning of this chapter
and (\ref{1104}).)
\item
{
Note (\ref{form185}) implies the same inequality with 
$u_{T,(\kappa)}^{\rho}$  replaced by $\hat u_{i,T,(\kappa)}^{\rho}$,
which is defined in (\ref{form573new}).}
\end{enumerate}
\end{rem}
{Some explanation of the derivatives appearing in \eqref{form182}-\eqref{form186}
is also needed.}
Recall from  Definition \ref{defn532532} that the pair $(V^{\rho}_{T,i,(\kappa)},\Delta p^{\rho}_{T,(\kappa)})$ appearing in (\ref{form182})
is an element of the weighted Sobolev space
$$
W^2_{m+1,\delta}((\Sigma_i,\partial \Sigma_i);(\hat u^{\rho}_{i,T,(\kappa-1)})^*TX,(\hat u^{\rho}_{i,T,(\kappa-1)})^*TL)
$$
which depends on $T$ and $\rho$.
We use the inverse of {parallel transport map}
$\Phi_{i,(\kappa-1)}(\rho,T)$ (Definition \ref{def528}), to
{make the identification}
$$
\aligned
&W^2_{m+1,\delta}((\Sigma_i,\partial \Sigma_i);(\hat u^{\rho}_{i,T,(\kappa-1)})^*TX,(\hat u^{\rho}_{i,T,(\kappa-1)})^*TL)\\
&\cong
W^2_{m+1,\delta}((\Sigma_i,\partial \Sigma_i);u_{i}^*TX,u_{i}^*TL).
\endaligned
$$
{where the latter space is $(T,\rho)$-independent.} Namely we put
\begin{equation}\label{newform612}
\Psi_{i;(\kappa-1)}(\rho,T;(s,v)) = \left((\Phi_{i;(\kappa-1)}(\rho, T)^{-1}(s),
\big({\rm Pal}^{p^{\rho}_{(\kappa-1)}}_{p_0}\big)^{-1}(v)\right).
\end{equation}
{Through this identification, the precise meaning of the formula \eqref{form182}
is the inequality:}
\begin{equation}\label{form1822}
\aligned
\left\| \nabla_{\rho}^n \frac{\partial^{\ell}}{\partial T^{\ell}}
(\Psi_{i;(\kappa-1)}(\rho, T;(V^{\rho}_{T,i,(\kappa)},\Delta p^{\rho}_{T,(\kappa)})))\right\|_{W^2_{m+1-\ell,\delta}(\Sigma_i)}& \\
&\!\!\!\!\!\!\!\!\!\!\!\!\!\!\!\!\!\!\!\!\!\!\!\!\!\!\!\!\!\!\!\!\!\!\!\!\!\
\le C_{5,m}\mu^{\kappa-1}e^{-\delta_1 T}.
\endaligned
\end{equation}
We use {the} $\tau'$ coordinate in case $i=1$.

We can make sense of (\ref{form183}) in the same way as
(\ref{form1822}).
We use the isomorphism
\begin{equation}\label{form615}
L^2_{m,\delta}(\Sigma_i;(\hat u^{\rho}_{i,T,(\kappa-1)})^*TX \otimes \Lambda^{0,1})
\cong
L^2_{m,\delta}(\Sigma_i;u_{i}^*TX\otimes \Lambda^{0,1}),
\end{equation}
{to make sense out of (\ref{form186}). The isomorphism is nothing
but the closure of}
$$
\Big(({\rm Pal}_{u_i}^{\hat u^{\rho}_{i,T,(\kappa-1)}})^{(0,1)}\Big)^{-1},
$$
where $({\rm Pal}_{u_i}^{\hat u^{\rho}_{i,T,(\kappa-1)}})^{(0,1)}$ is as in
(\ref{paluv01}),
to formulate (\ref{form186}).
\par
A similar remark applies to (\ref{form185}).
(\ref{form185}) means
\begin{equation}\label{form185real}
\left\Vert
\nabla_{\rho}^n \frac{\partial^{\ell}}{\partial T^{\ell}}
{\rm Pal}^{p_0}_{p_0^{\rho}}({\rm E}(p^{\rho}_0,{u_{T,(\kappa)}^{\rho}}))
\right\Vert_{L^2_{m+1-\ell}(K^{9T}_i \setminus K^T_i)}
\le
C_{7,m}
(2 - \mu^{\kappa})
e^{-\delta_1 T}
\end{equation}
and in  (\ref{form186}) we regard
$
\frak e^{\rho} _{i,T,(\kappa)}
\in \mathcal E_i^{\frak{ob}}.
$
\par
In this way we can safely work with \eqref{form182}-\eqref{form186}.

\begin{rem}
\par
Similar remarks also apply to
the case $i=2$ using the $\tau''$ coordinate.
\end{rem}
The inductive proof of (\ref{form182})-(\ref{form186})
is written as the proof of the next proposition.

\begin{prop}\label{prop:inequalitieskappa}
{
We can choose  $C_{5,m}$, $C_{8,m}$, $C_{9,m}$ independent of $\kappa \geq 1$
so that the following holds: 
\begin{enumerate}
\item For any $\epsilon(6) > 0$ and
$C_{6,m}$, $C_{7,m}$, there exists $T_{5,m,\epsilon(6)}>0$ such that
(\ref{form182})-(\ref{form185})
for $\kappa \le \kappa_0$ imply (\ref{form183}) and (\ref{form186})
for $\kappa \le \kappa_0$ for all
$T > T_{5,m,\epsilon(6)}$.
\item For any given $C_{6,m}$, $C_{7,m}$, we can choose $\epsilon_{4,m}$
so that if $\epsilon(6) < \epsilon_{4,m}$ then (\ref{form182})-(\ref{form186}) for $\kappa \le\kappa_0-1$ imply (\ref{form182})
for $\kappa_0$. 
\item
We can choose $C_{6,m}$, $C_{7,m}$ with the following properties:
There exists $T_{6,m}$ such that
the inequalities (\ref{form184}), (\ref{form185})
for $\kappa_0$
follow from (\ref{form182}) for $\kappa \le \kappa_0$ and (\ref{form184}), (\ref{form185})
for $\kappa\le\kappa_0-1$
if $T > T_{6,m}$.
\end{enumerate}
}
\end{prop}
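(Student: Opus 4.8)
The plan is to run the same three-part induction on $\kappa$ as in Proposition \ref{prop:kappatokappa+1}, but now carrying along the $\rho$- and $T$-derivatives $\nabla_\rho^n \partial_T^\ell$ of all the objects appearing in the alternating scheme. The key structural observation — already emphasized in Remarks \ref{rem:dropofweight} and \ref{rem611} — is that differentiating in $T$ costs one order of Sobolev regularity (hence the shift $m+1 \mapsto m+1-\ell$), but does \emph{not} destroy the exponential decay, provided we differentiate inside the $T$-independent Hilbert manifolds $\text{\rm Map}_{L^2_{m+1}}((K_i^S,K_i^S\cap\partial\Sigma_i),(X,L))$ and keep track of the weight functions $e_{i,\delta}$, $e_{T,\delta}$ carefully. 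Throughout, the transfer maps $\Psi_{i;(\kappa-1)}(\rho,T;\cdot)$ of \eqref{newform612} and the identification \eqref{form615} are used to pull everything back to the fixed spaces $W^2_{m+1,\delta}(\Sigma_i; u_i^*TX, u_i^*TL)$ and $L^2_{m,\delta}(\Sigma_i; u_i^*TX\otimes\Lambda^{0,1})$, so that ``$\partial_T$'' makes literal sense; a preliminary lemma (deferred to an appendix, in the spirit of Lemma \ref{lem112kappa}) records that $\nabla_\rho^n\partial_T^\ell$ of these transfer maps, of the cut-off functions $\chi_{\mathcal A}^{\leftarrow}$, $\chi_{\mathcal B}^{\rightarrow}$, etc., and of the Gram--Schmidt-normalized basis $\mathbf e_{i,a}(\cdot)$ are all controlled, with the $\tau$-dependent cut-offs contributing factors $\partial_T\chi$ supported where $|\tau|\sim T$, i.e. where $e_{T,\delta}$ has dropped by a factor $e^{-c\delta T}$.

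For part (1), I would start from the definitions of ${\rm Err}^\rho_{i,T,(\kappa)}$ and $\frak e^\rho_{i,T,(\kappa)}$ in Definitions \ref{defn:Errrho12T(kappa)} and the $\frak e$-analogue, apply $\nabla_\rho^n\partial_T^\ell$ to both, and expand by Leibniz. Each resulting term is either (a) a $\nabla_\rho^n\partial_T^{\ell'}$-derivative of $\overline\partial u^\rho_{T,(\kappa)}$ with $\ell'\le\ell$, estimated by differentiating the quadratic/Newton identity \eqref{151ffff} (its $\kappa$-version) and invoking the inductive hypotheses \eqref{form182}, \eqref{form184}, \eqref{form185} for $\le\kappa$ — exactly as the undifferentiated estimate in Proposition \ref{mainestimatestep13kappa} used \eqref{form0182}, \eqref{form0184a}; or (b) a term where a $T$-derivative has landed on a cut-off function $\chi$, which is supported in $\mathcal A_T\cup\mathcal B_T\cup\mathcal X$ where the weight is small, producing the extra $e^{-c\delta T}$ needed to beat the loss from differentiation, as in the passage \eqref{estimate6161}--\eqref{2ff160}; or (c) a term where $\nabla_\rho$ hits $p^\rho_{T,(\kappa-1)}$ or the parallel-transport maps, controlled by the smooth-dependence estimates of the appendix. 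Summing and using $\mu$-geometric bookkeeping (and $Te^{-c\delta_1 T}\to 0$ as in \eqref{form56156}) yields \eqref{form183} and \eqref{form186} for $\le\kappa$ once $T > T_{5,m,\epsilon(6)}$; the freedom to choose $\epsilon(6)$ small absorbs the finitely many $m$-dependent constants into $C_{8,m},C_{9,m}$. Part (2) is the linear-algebra step: $\nabla_\rho^n\partial_T^\ell(V^\rho_{T,i,(\kappa)},\Delta p^\rho_{T,(\kappa)})$ is obtained by applying $\nabla_\rho^n\partial_T^\ell$ to the defining relations \eqref{formula158}, \eqref{inHhanru}, which after Leibniz expresses it via the \emph{uniformly bounded} inverse of $\Pi^\perp_{\mathcal E_i}\circ D^{{\rm app},(\kappa-1)}_{\hat u^\rho_{i,T,(\kappa-1)}}$ (Lemma \ref{lem112kappa}, whose $\nabla_\rho^n\partial_T^\ell$-stability is the appendix lemma) applied to $\nabla_\rho^n\partial_T^\ell{\rm Err}^\rho_{i,T,(\kappa-1)}$ plus lower-order correction terms coming from $T$- and $\rho$-derivatives of $D^{{\rm app}}$ and of the projection to $\frak H_{(\kappa-1)}$; the inductive hypotheses \eqref{form182}--\eqref{form186} for $\le\kappa-1$ bound all of these, and choosing $\epsilon_{4,m}$ so that $\epsilon_{4,m}^{-1}$ dominates the operator norm of that inverse gives \eqref{form182} for $\kappa$, exactly as in Lemma \ref{estimageVkappa}. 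Part (3) is the most computational but, as noted after Proposition \ref{prop:kappatokappa+1}, the most routine: one differentiates the definition of $u^\rho_{T,(\kappa)}$ (Definition \ref{defn:urhoT(kappa)}) and of $\hat u^\rho_{i,T,(\kappa)}$ \eqref{form573new}, writes ${\rm E}(u_i,u^\rho_{T,(\kappa)})$ as a telescoping sum $\sum_{j\le\kappa}$ of increments built from $(V^\rho_{T,i,(j)},\Delta p^\rho_{T,(j)})$ composed with $\Exp$, applies $\nabla_\rho^n\partial_T^\ell$, and bounds term by term using part (2)'s output \eqref{form182} for $\le\kappa$ together with \eqref{form184}, \eqref{form185} for $\le\kappa-1$; the $(2-\mu^\kappa)$ geometric-series constant is arranged by choosing $C_{6,m},C_{7,m}$ appropriately, and the $T$-derivatives of the $T$-dependent gluing region are again harmless because they live where the weight has dropped.

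The main obstacle I anticipate is \textbf{not} any single estimate but the bookkeeping of where the loss of one Sobolev order per $T$-derivative is paid for. Every time $\partial_T$ acts, either it produces a factor $\partial_T\chi$ supported in the neck (good: the weight $e_{T,\delta}$ is exponentially small there, so we gain $e^{-c\delta_1 T}$, more than compensating the order drop), or it acts on a genuinely $T$-dependent map like $\hat u^\rho_{i,T,(\kappa-1)}$ through the transfer maps $\Phi_{i;(\kappa-1)}(\rho,T)$ — and here one must verify that $\partial_T\Phi_{i;(\kappa-1)}$ is again controlled by $e^{-\delta_1 T}$ times a bounded operator between the \emph{same} fixed spaces, which is where the insistence on working with $L^2_m$ (Hilbert) rather than $L^p_1$ (Banach) spaces is essential (Remark \ref{Abremark}): the derivative estimates for the substitution operator $u\mapsto F(u)$ and for parallel transport, $L^2_m\to L^2_{m-\ell}$, are exactly the ones available and used in the appendices, whereas the $L^p_1$ analogue would lose too much. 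So the heart of the matter is to set up, once and for all in an appendix lemma, the claim: \emph{for all $n,\ell$ with $n+\ell\le m-2$, the maps $\Psi_{i;(\kappa-1)}$, the cut-off multiplications, the idempotents $\Pi_{\mathcal E_i}$, the operators $D^{{\rm app},(\kappa-1)}$ and their right inverses depend on $(\rho,T)$ in a $C^{m-2}$ way, with $\nabla_\rho^n\partial_T^\ell$ bounded uniformly in $\kappa$ and in $T\ge T_{5,m,\epsilon(6)}$, and with the $\partial_T$-derivatives carrying a factor $e^{-\delta_1 T}$ whenever they are forced to differentiate a $T$-dependent glued object.} Granting that lemma, parts (1)--(3) are mechanical Leibniz-rule repetitions of the undifferentiated arguments of Section \ref{alternatingmethod}, and the induction closes.
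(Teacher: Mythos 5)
Your proposal is structurally aligned with the paper: same three-part induction, same Leibniz-rule bookkeeping, same appeal to the drop of the weight, and the same scheme of pulling everything back to the $T$-independent Hilbert spaces via $\Psi_{i;(\kappa-1)}$ and then controlling the operators by a Neumann-series/uniform-inverse argument. Parts (1), (2), (3) are attacked in the same order and with the same ingredients as Subsections \ref{subsec61}, \ref{subsec62} and Appendix \ref{appendixB-}, so the high-level plan is correct.

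There is, however, a concrete conceptual misattribution in your ``main obstacle'' paragraph that would derail the actual bookkeeping. You locate the loss of one Sobolev order per $T$-derivative in the terms where $\partial_T$ hits a cut-off function $\chi$, and you propose that the weight drop on the neck ``compensates the order drop.'' But $\partial_T \chi(\tau'-4T) = -4\chi'(\tau'-4T)$ is a zeroth-order multiplication by a smooth bounded function; it costs \emph{no} Sobolev regularity at all, and there is nothing for the weight to compensate. As Remark \ref{rem611} makes explicit, the source of the $m+1\mapsto m+1-\ell$ shift is the reparametrization in \eqref{form1106}: the term $V^{\rho}_{T,2,(\kappa)}(\tau'-10T,t)$, expressed in the $\tau'$-coordinate on $\Sigma_1$, converts $\partial_T$ into $-10\,\partial_{\tau''}$ acting on $V^{\rho}_{T,2,(\kappa)}$, a genuinely first-order operator. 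This is spelled out in formula \eqref{1106}, which is the workhorse of Estimates 2 and 3 (Lemmas \ref{newlem6110}, \ref{newlem611}): each $T$-derivative trades for a $\tau''$-derivative of $V^{\rho}_{T,2,(\kappa)}$ and you must then sum over $\ell_1+\ell_2=\ell$, invoking \eqref{form182} at all lower $\ell'$. The weight drop across $\mathcal A_T$ is a separate mechanism; it produces the exponential factor $e^{-\delta_1 T}$ (because $e_{2,\delta}/e_{T,\delta}\sim e^{-2\delta T}$ there), not a recovery of Sobolev order. Without cleanly separating these two effects --- shift gives the $m-\ell$, weight drop gives the $e^{-\delta_1 T}$ --- your account of the estimates \eqref{newform624} and \eqref{newform63535} would not close; in particular the transfer maps $\Phi_{i;(\kappa-1)}(\rho,T)$ are smooth in $T$ at every Sobolev order and do not contribute to the loss, contrary to what your ``obstacle'' discussion suggests.

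A smaller remark on Part (2): the paper first conjugates $D^{{\rm app},(\kappa)}$ by $I^0_{(\kappa),\rho,T}$, $I^1_{(\kappa),\rho,T}$ to get an operator $\overline D_{(\kappa),\rho,T}$ between genuinely $T$-independent spaces, and then differentiates the Neumann series \eqref{formnew666}; a key ingredient there is Lemma \ref{lemma615} controlling the $(T,\rho)$-derivatives of the Gram--Schmidt basis of $\mathcal E_{i,(\kappa),\rho,T}$, because the obstruction subspace itself moves with $(T,\rho,\kappa)$. You gesture at the right idea (``uniformly bounded inverse plus lower-order corrections''), but the need to control the moving subspace via \eqref{form6321} deserves to be named explicitly; this is what makes the passage from $D_{(\kappa),\rho,T}$ to $\overline D_{(\kappa),\rho,T}$ in \eqref{formnew653} nontrivial.
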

\begin{rem}
The constants
$T_{5,m,\epsilon(6)}$, $\epsilon_{4,m}$, $T_{6,m}$ may depend on $C_{5,m}$, $C_{6,m}$, $C_{7,m}$, $C_{8,m}$, $C_{9,m}$.
\end{rem}

The rest of this chapter will be occupied by the proof of Proposition \ref{prop:inequalitieskappa}.
(3) is elementary.
We provide its proof in Appendix \ref{appendixB-} for completeness' sake.
We will prove (1) and (2) in this chapter.
We divide our proof into two parts,
the {proofs} of (1) and of (2).
\begin{rem}
We choose the constants $C_{5,m}$, $C_{8,m}$, $C_{9,m}$ so that
(\ref{form182}), (\ref{form183}) and (\ref{form186}) hold
for $\kappa =0,1$. We do not need to change them in {the} later steps.
The constants $C_{6,m}$, $C_{7,m}$ are chosen during the proof of
 Proposition \ref{prop:inequalitieskappa} (3) given in Appendix \ref{appendixB-}.
\end{rem}
{
\begin{lem}\label{Lemma6.9}
{Let $\kappa_0 \geq 1$ be given.}
The inequalities
(\ref{form182}), (\ref{form184}), (\ref{form185}),
(\ref{form183}) and (\ref{form186})   for {$ \kappa \le \kappa_0$} imply
\begin{equation}\label{form616nwe}
\left\| \nabla_{\rho}^n\frac{\partial^{\ell}}{\partial T^{\ell}}
{\rm E}(u_i,u^{\rho}_{T,({\kappa_0})})\right\|_{W^2_{m+1-\ell,\delta}(K_i^{5T+1}
\subset \Sigma_i)}
\begin{cases} \le C_{m,(\ref{form616nwe})}
\quad &\text{$\ell = 0$} \\
\le C'_{m,(\ref{form616nwe})}e^{-T\delta_1}
\quad &\text{$\ell > 0$}
\end{cases}
\end{equation}
for $0 \le n \le m-2$, $0 \le \ell \le m-2$.
\end{lem}
\begin{proof}
The case $\ell > 0$ follows from (\ref{form184}) by using the fact $u^{\rho_i}_i$ is independent of $T$.
Using the fact that the $C^{\infty}$ norm between $u_i$ and $u^{\rho_i}_i$ is uniformly 
bounded and small as $\rho_i$ moves, we can use (\ref{form184}) to prove the case $\ell=0$ of (\ref{form616nwe}) 
by using Lemma \ref{expest2}.
\end{proof}
}
\subsection{Part A: error estimates}
\label{subsec61}

In this section we prove Proposition \ref{prop:inequalitieskappa} (1).
This section corresponds to the discussion in \cite[page 776 paragraph (A) and (B)]{fooo:book1}.

Suppose that the triple
$(V^{\rho}_{T,1,(\kappa)},V^{\rho}_{T,2,(\kappa)},\Delta p^{\rho}_{T,(\kappa)})$
satisfies (\ref{form182}).
{
We denote

\be
\text{\rm Err}_{T,(\kappa)}^\rho = \overline\partial u^{\rho} _{T,(\kappa)} - \sum_{a=0}^{\kappa}\frak e^{\rho} _{1,T,(a)} -\sum_{a=0}^{\kappa}\frak e^{\rho} _{2,T,(a)}
\ee
and
$$
\text{\rm Err}_{1,T,(\kappa)}^\rho = \chi_{{\mathcal X}}^\leftarrow \text{\rm Err}_{T,(\kappa)}^\rho, \qquad
\text{\rm Err}_{2,T,(\kappa)}^\rho = \chi_{{\mathcal X}}^\rightarrow \text{\rm Err}_{T,(\kappa)}^\rho.
$$
}
Then  noting that $\supp \frak{e}_{i,T,(\kappa)} \subset \Int K_i$, we find that
\begin{enumerate}
\item
\begin{equation}\label{1113formu}
{\rm Err}^{\rho}_{1,T,(\kappa)}(z)
=
\Pi_{\mathcal E_1(\hat u^{\rho}_{1,T,(\kappa-1)})}^{\perp}
\overline \partial\left({\rm Exp}\left(\hat u^{\rho}_{1,T,(\kappa-1)}(z),V^{\rho}_{T,1,(\kappa)}(z)\right)\right)
\end{equation}
for $z \in K_1$.
\item
\begin{equation}\label{form1106}
\aligned
&{\rm Err}^{\rho}_{1,T,(\kappa)}(\tau',t)
\\
&= (1-\chi(\tau'-5T))\times \\
&\quad \overline\partial\Big(\Exp\Big(u^{\rho}_{T,(\kappa-1)}(\tau',t),
\chi(\tau'-4T)\left(V^{\rho}_{T,2,(\kappa)}(\tau'-10T,t) - (\Delta p^{\rho}_{T,(\kappa)})^{\rm Pal}\right)
\\
&\qquad\qquad\qquad\qquad\qquad\qquad+V^{\rho}_{T,1,(\kappa)}(\tau',t)\Big)\Big)
\endaligned
\end{equation}
for $(\tau',t) \in [0,\infty)_{\tau'}\times [0,1]$. (Recall $\tau = \tau' - 5T$, $\tau' = \tau''{-}10T$
and $V^{\rho}_{T,2,(\kappa)}$ {was} defined in terms of the variable of $(\tau'',t)$.)
See Figure \ref{Figurenewnew}.
\end{enumerate}
Here $\chi : \R \to [0,1]$ is a smooth function such that \index[syindex]{chitau@$\chi(\tau)$}
\begin{eqnarray}\label{chichi}
\chi(\tau) & = &
\begin{cases}
0  & \tau < -1 \\
1  & \tau > 1
\end{cases} \nonumber \\
\chi' (\tau)&  > 0 &\, \text{ for } \, \tau \in (-1,1).
\end{eqnarray}
\begin{figure}
\centering
\includegraphics{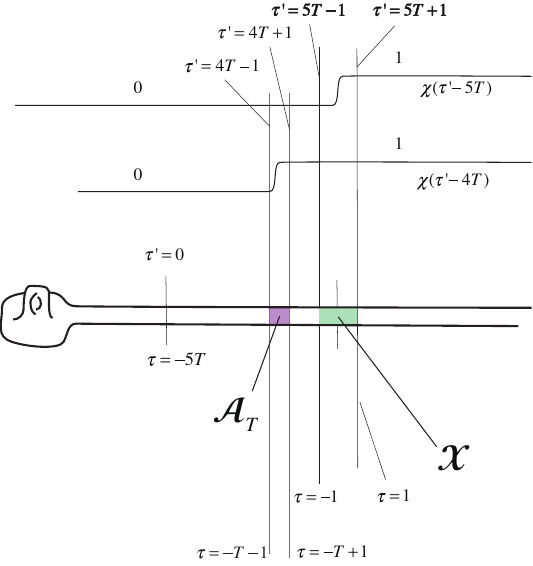}
\caption{Cut off function.}
\label{Figurenewnew}
\end{figure}
The same kind of statements also hold for $i=2$. Since the latter case
can be dealt exactly in the same way, we omit its details.
\begin{rem}\label{rem611}
Note that in Formulae (\ref{form182})-(\ref{form186}) the Sobolev norms
in the left hand side are of $W^2_{m+1-\ell,\delta}(\Sigma_i)$
etc. and are not of $W^2_{m+1,\delta}(\Sigma_i)$ etc.
The origin of this loss of differentiability (in the sense of Sobolev space)\index{loss of differentiability}
comes from the term  $V^{\rho}_{T,2,(\kappa)}(\tau'-10T)$.
In fact, we have
$$
\frac{\partial }{\partial T}V^{\rho}_{T_1,2,(\kappa)}(\tau'-10T)
= -10 \frac{\partial V^{\rho}_{T_1,2,(\kappa)}}{\partial \tau''}(\tau'-10T)
$$
for a fixed $T_1$.
Hence $\partial/\partial T$ is continuous as $L^2_{m+1} \to L^2_m$.
We remark in (\ref{form182}) for $i=2$ we use the coordinates $(\tau'',t)$
on $(-\infty,0]\times [0,1]$ to define {the} $T$ derivative of $V_{T,2,(\kappa)}^{\rho}$.
\end{rem}
\par
Taking this remark  into account, we continue with the proof.
The proof is divided into three parts.\index[syindex]{K1up4T-1@$K_1^{4T-1}$}
{
\begin{enumerate}
\item On $K_1^{4T-1}\cup K_2^{4T-1}$,
where we denote\footnote{The notation $[0,4T-1]_{\tau'}$ is introduced in Remark \ref{rem6161}.}
\begin{equation}\label{formnew623}
\aligned
K_1^{4T-1} & = K_1 \cup [0,4T-1]_{\tau'} \times [0,1], \\
K_2^{4T-1} & = K_2 \cup [-4T+1,0]_{\tau''} \times [0,1].
\endaligned
\end{equation}
\item On the central neck region containing $\mathcal X = [-1,1]_\tau \times [0,1]$.
\item On the transition region
$$
([4T-1,4T+1]_{\tau'}\times [0,1]) \cup ([-4T-1,-4T+1]_{\tau''} \times [0,1]).
$$
\end{enumerate}
}
\par\smallskip
\noindent {\bf(Estimate 1)}:
{Let $K_1^{4T-1},K_2^{4T-1}$ be as in (\ref{formnew623}).}
We remark that on $K_1^{4T-1} \setminus K_1$
the formula (\ref{form1106}) is reduced to
\begin{equation}\label{form11062200}
{\rm Err}^{\rho}_{1,T,(\kappa)}(\tau',t) = \overline\partial\Big(\Exp\Big(u^{\rho}_{T,(\kappa-1)}(\tau',t),
V^{\rho}_{T,1,(\kappa)}(\tau',t)\Big)\Big)
\end{equation}
since $1- \chi(\tau' - 5T) \equiv 1$ as $\tau' - 5T \leq -T +1 < -1$
and  $\chi(\tau' - 4T) \equiv 0$ as $\tau' - 4T \leq  -1$.
Using the fact that
$V^{\rho}_{T,2,(\kappa)}$  does not appear in (\ref{form11062200}),
we can estimate $T$ and $\rho$ derivative{s}  of ${\rm Err}^{\rho}_{1,T,(\kappa)}$
on $K^{4T-1}_1$, $K^{4T-1}_2$ by the same way as the corresponding part (that is, Formula (\ref{formB8})) of
Proposition \ref{mainestimatestep13kappa}
given in Appendix \ref{appendixB} as follows.
\begin{lem}\label{mainestimatestep13kappaT}
For any $\epsilon(7) > 0$, there exists $T_{m,
\epsilon(7), (\ref{form619form})}> 0$ with the following properties.
If $T > T_{m,
\epsilon(7), (\ref{form619form})},$
then
the element $\frak e^{\rho} _{i,T,(\kappa)} \in \mathcal E_i$
in Proposition \ref{mainestimatestep13kappa}
 satisfies the following
for $0 \le \ell,n\le m-2$.
\begin{equation}\label{form619form}
\aligned
\!\!\!\!\!
\left\|
\nabla_{\rho}^n\frac{d^{\ell}}{dT^{\ell}}
\Big(({\rm Pal}_{u_i}^{u^{\rho} _{T,(\kappa)}})^{(0,1)}\Big)^{-1}
\!\left({{\rm Err}_{T,(\kappa)}^\rho}\right)
\right\|_{L_{m-\ell,\delta}^2(K^{4T-1}_i)} &
\le C_{5,m}\mu^{\kappa}\epsilon(7) e^{-\delta_1 T},
\endaligned
\end{equation}
and
\begin{equation}\label{frakeissmall2kappa}
\left\|
\nabla_{\rho}^n\frac{d^{\ell}}{dT^{\ell}}
\left(\frak e^{\rho} _{i,T,(\kappa)}
\right)\right\|_{L_{m-\ell}^2(K^{4T-1}_i)} <\frac{C_{8,m}}{10}\mu^{\kappa-1}e^{-\delta_1 T}.
\end{equation}
\end{lem}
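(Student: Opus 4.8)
```latex
\textbf{Plan of proof.}
The claim is the ``Part A'' version with $T$ and $\rho$ derivatives of the error
estimates in Proposition \ref{mainestimatestep13kappa}, restricted to the regions
$K_i^{4T-1}$ on which the cut-off function $\chi(\tau'-4T)$ (resp.\ its mirror
for $i=2$) vanishes identically, so that (\ref{form1106}) collapses to the simple
expression (\ref{form11062200}) involving only $V^{\rho}_{T,1,(\kappa)}$.
The strategy is to differentiate, in $T$ and in $\rho$, the identity
(\ref{form11062200}) using the Fundamental-Theorem-of-Calculus expansion
(\ref{151ffff}) exactly as in the proof of Proposition \ref{mainestimatestep13kappa},
and then to invoke the already-established inductive bounds
(\ref{form182})--(\ref{form186}) for $\le \kappa$ together with the plain
($T$-derivative-free) version (\ref{frakeissmall2kappa0}) from
Proposition \ref{mainestimatestep13kappa}. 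Concretely: first I would write
$\overline\partial(\Exp(u^{\rho}_{T,(\kappa-1)},V^{\rho}_{T,1,(\kappa)}))
 = \overline\partial u^{\rho}_{T,(\kappa-1)} + (D_{u^{\rho}_{T,(\kappa-1)}}\overline\partial)(V^{\rho}_{T,1,(\kappa)}) + (\text{quadratic remainder})$,
then apply $\nabla_\rho^n (d/dT)^\ell$ to each term, using the Leibniz rule to
distribute the derivatives over the parallel transports, the exponential map, the
linearization, and $V^{\rho}_{T,1,(\kappa)}$ itself.

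The key steps, in order, are: (i) differentiate the quadratic remainder term of
(\ref{151ffff}) and bound it by (a constant times) $\|V^{\rho}_{T,1,(\kappa)}\|^2$
in the relevant weighted Sobolev norm, picking up two factors of $e^{-\delta_1 T}$
from (\ref{form182}); here one must track the loss of one Sobolev order per
$T$-derivative, which is exactly why the left side of (\ref{form619form}) is a
$L^2_{m-\ell}$ norm and the hypotheses carry $m+1-\ell$ (Remark \ref{rem611}) ---
this is handled by the auxiliary $L^2_m$-multiplication estimates collected in the
appendices. (ii) Rewrite the linear term $(D_{u^{\rho}_{T,(\kappa-1)}}\overline\partial)(V^{\rho}_{T,1,(\kappa)})$
modulo $\mathcal E_i$ using the defining relation (\ref{formula158}) and the fact
that on $K_i^{4T-1}$ we have $\overline\partial u^{\rho}_{T,(\kappa-1)} - (\frak{se})^{\rho}_{i,T,(\kappa-1)} = \mathrm{Err}^{\rho}_{i,T,(\kappa-1)}$;
then the $\nabla_\rho^n (d/dT)^\ell$-derivative of this combination is controlled by
(\ref{form183}) for $\le \kappa-1$, which gains the crucial factor $\mu^{\kappa}$.
(iii) Collect $\overline\partial u^{\rho}_{T,(\kappa)}$ from $u^{\rho}_{T,(\kappa)} = \Exp(\hat u^{\rho}_{i,T,(\kappa-1)},V^{\rho}_{T,i,(\kappa)})$
on $K_i$ and match with $\sum_{a\le\kappa}\frak e^{\rho}_{i,T,(a)}$, defining
$\frak e^{\rho}_{i,T,(\kappa)}$ by the projection formula (\ref{558atoato}) and
differentiating it --- the smoothness of $\Pi_{\mathcal E_i}$ in its parameters plus
(\ref{form182}) gives (\ref{frakeissmall2kappa}). (iv) Compare the parallel
transport base-points $(\mathrm{Pal}_{u_i}^{u^{\rho}_{T,(\kappa)}})^{(0,1)}$ used to
state (\ref{form619form}) with the intermediate ones appearing in the iteration,
using (\ref{form184}), (\ref{form185}) to show the discrepancy is exponentially small
under $\nabla_\rho^n (d/dT)^\ell$ (the appendix lemmas on derivatives of parallel
transport do this).

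The main obstacle, as flagged in Remark \ref{rem611}, is the bookkeeping forced by
the term $V^{\rho}_{T,2,(\kappa)}(\tau'-10T)$ in the full formula (\ref{form1106}):
each $T$-derivative converts a $\tau''$-translation of $V^{\rho}_{T,2,(\kappa)}$ into
a $\tau''$-derivative, so one genuinely loses one order of differentiability in the
Sobolev scale per $T$-derivative, and the chain of inductive constants
$C_{5,m},\dots,C_{9,m}$ must be threaded through consistently with the $\mu^\kappa$
gain so that the induction closes. On the region $K_i^{4T-1}$ this term is absent,
which is precisely why I restrict to that region here; but the exponential-weight
``drop of weight'' mechanism of Remark \ref{rem:dropofweight}, now applied to the
$\nabla_\rho^n (d/dT)^\ell$-differentiated quantities, is still what produces the
$e^{-\delta_1 T}$ decay in (\ref{form619form}). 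Once Lemma \ref{mainestimatestep13kappaT}
is in place, the remaining estimate of $\mathrm{Err}^{\rho}_{i,T,(\kappa)}$ on the
complementary neck region $K_i^{9T}\setminus K_i^{4T-1}$ (where the cut-off
$\chi(\tau'-4T)$ is nontrivial and $V^{\rho}_{T,2,(\kappa)}$ enters) is treated by a
separate ``drop of weight'' argument in the style of (\ref{2ff160})--(\ref{2ff162}),
completing the proof of Proposition \ref{prop:inequalitieskappa} (1).
```
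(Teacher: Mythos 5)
Your proposal identifies the correct strategy and follows the same route as the paper's proof in Appendix \ref{appendixC}: on $K_i^{4T-1}$ the cut-off $\chi(\tau'-4T)$ vanishes so (\ref{form1106}) collapses to (\ref{form11062200}), one differentiates the FTC expansion (\ref{151ffff}) in $T$ and $\rho$, uses the derivative versions of the quadratic-remainder and parallel-transport estimates (the paper's Lemma \ref{lemE2E2} and (\ref{estimateE3})), cancels the linear part via (\ref{formula158}), defines $\frak e^{\rho}_{i,T,(\kappa)}$ by the projection formula and differentiates it, and finally compensates for the change of parallel-transport base-point via (\ref{form184})--(\ref{form185}).

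The one place the sketch blurs the mechanism is step (ii). On $K_i$ the sum $\overline\partial u^{\rho}_{T,(\kappa-1)} + D^{\text{\rm app},(\kappa-1)}_{\hat u^{\rho}_{i,T,(\kappa-1)}}(V^{\rho}_{T,i,(\kappa)})$ lies \emph{exactly} in $\mathcal E_i(\hat u^{\rho}_{i,T,(\kappa-1)})$, by (\ref{formula158}) together with $\overline\partial u^{\rho}_{T,(\kappa-1)} = (\frak{se})^{\rho}_{i,T,(\kappa-1)} + \mathrm{Err}^{\rho}_{i,T,(\kappa-1)}$; after $\Pi^\perp$ it contributes nothing. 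So the residual is only the quadratic remainder and the commutation errors between $\Pi^\perp$ and the parallel transports, both of size $O(\mu^{\kappa-1}e^{-2\delta_1 T})$. The factor $\mu^{\kappa}\epsilon(7)$ in (\ref{form619form}) is then produced by trading one copy of $e^{-\delta_1 T}$ for $\mu\epsilon(7)$ via $T \geq T_{m,\epsilon(7),(\ref{form619form})}$, \emph{not} by appealing to (\ref{form183}) at level $\kappa-1$, which gives only $\mu^{\kappa-1}$. Your step (i) already records the two powers of $e^{-\delta_1 T}$ needed for this trade, so the overall plan is sound, but step (ii) as phrased is off by one power of $\mu$ and conflates ``cancels exactly modulo $\mathcal E_i$'' with ``is controlled by (\ref{form183})''.
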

We provide its proof in  Appendix \ref{appendixC} for completeness.
\par\medskip
We next study the neck region. The point explained in Remark \ref{rem611} appears here.
We will do the corresponding estimates for ${\rm Err}^{\rho}_{1,T,(\kappa)}$ given in (\ref{form1106}) by considering them
for $\tau' \in [4T+1,\infty)_{\tau'}$ and for $\tau' \in [4T-1,4T +1]_{\tau'}$, separately.
\par\smallskip
\noindent {\bf(Estimate 2)}:
We first consider the domain $(\tau',t) \in [4T+1,\infty)_{\tau'} \times [0,1]_t$.
(Note this domain contain{s}  $\frak X$.)
We put:
\begin{equation}\label{eq:x,y0}
\aligned
h^0(\tau',t) &= u^{\rho}_{T,(\kappa-1)}(\tau',t)\\
h^1(\tau',t) & = \Exp\Big(u^{\rho}_{T,(\kappa-1)}(\tau',t),
\\
&\qquad\qquad V^{\rho}_{T,2,(\kappa)}(\tau'-10T,t) - (\Delta p^{\rho}_{T,(\kappa)})^{\rm Pal}
+V^{\rho}_{T,1,(\kappa)}(\tau',t)\Big).\\
\endaligned
\end{equation}
Then for $\tau' \in [4T+1,\infty)_{\tau'}$ the formula (\ref{form1106})
is reduced to
\bea\label{form11010}
\aligned
& {\rm Err}^{\rho}_{1,T,(\kappa)}(\tau',t)  = (1-\chi(\tau'-5T)) \overline\partial h^1,
\endaligned
\eea
because $\chi(\tau' - 4T) = 1$ on $[4T +1, \infty)$ as $\tau' - 4T \geq 1$.
\par
We remark that
$ {\rm Err}^{\rho}_{1,T,(\kappa)}(\tau',t) = 0$ on  $\tau' > 5T+1$ since $1-\chi(\tau'-5T) = 0$ there.
So
we need to study only on $\tau' \in [4T+1,5T+1]_{\tau'}$.
{
To estimate this error term,
we consider a family of maps $h^r$ such that 
$r \mapsto h^r(\tau',t)$ is the minimal geodesic joining 
$h^0(\tau',t)$ and $h^1(\tau',t)$.
Namely
$$
h^r(\tau',t) = \Exp(h^0(\tau',t),r{\rm E}(h^0(\tau',t),h^1(\tau',t))).
$$
More explicitly:
\begin{equation}\label{eq:x,y}
\aligned
h^r(\tau',t) & = \Exp\Big(u^{\rho}_{T,(\kappa-1)}(\tau',t),
\\
&\qquad\qquad r(V^{\rho}_{T,2,(\kappa)}(\tau'-10T,t) - (\Delta p^{\rho}_{T,(\kappa)})^{\rm Pal}
+V^{\rho}_{T,1,(\kappa)}(\tau',t))\Big).\\
\endaligned
\end{equation}
We again apply \eqref{FTE} pointwise to the function
\beastar
g(s) = \mathcal P^{-1}(\overline\partial h^s)
\eeastar
where $\mathcal P$ is induced from the parallel transport along the curve
$
r \mapsto h^r(\tau',t)
$, 
$r \in [0,s]$ at each given point $(\tau',t)$. We also note
$$
g(1)(\tau',t) = \frac{1}{(1-\chi(\tau'-5T))} \mathcal P^{-1}({\rm Err}^{\rho}_{1,T,(\kappa)}(\tau',t)).
$$
}
We remark that
\begin{equation}\label{form630}
\aligned
g(0)(\tau',t) & = \overline\partial u^{\rho}_{T,(\kappa-1)}(\tau',t), \\
g'(0)(\tau',t) & =  \left((D_{h^0}\overline\partial)({\rm E}(h^0,h^1)\right)(\tau',t).
\endaligned
\end{equation}
 
We denote the parallel transport along the shortest path by
$$
\frak P = \left({\rm Pal}^{h^0}_{u_1}\right)^{(0,1)}
$$
on the domain $[0,5T]_{\tau'} \times [0,1]$. \index[syindex]{Pfrak@$\frak P$}
\par
We first examine $g(0)$. By (\ref{newform570}) we have
$$
\overline\partial h^0(\tau',t)
=
\overline\partial u^{\rho}_{T,(\kappa-1)}(\tau',t)
=
{\rm Err}^{\rho}_{1,T,(\kappa-1)}(\tau',t) + {\rm Err}^{\rho}_{2,T,(\kappa-1)}(\tau',t)
$$
outside the supports of $\mathcal E_1$, $\mathcal E_2$,
which contain the subset $[4T+1,5T+1]_{\tau'} \times [0,1]$ we are studying.
\par
Next we examine the term $g'(0)$. The second formula of (\ref{form630}) implies:
\begin{equation}\label{form631631}
\aligned
g'(0)(\tau',t) = (D_{u^{\rho}_{T,(\kappa-1)}}\overline\partial)\big(&V^{\rho}_{T,2,(\kappa)}(\tau'-10T,t)
\\
&+V^{\rho}_{T,1,(\kappa)}(\tau',t) - (\Delta p^{\rho}_{T,(\kappa)})^{\rm Pal}\big).
\endaligned
\end{equation}
Using {(\ref{form616nwe})} and (\ref{form183}) we obtain
\begin{equation}\label{newform6240}
\aligned
&\left\Vert \nabla_{\rho}^n\frac{d^{\ell}}{dT^{\ell}} \frak P^{-1}
(D_{u^{\rho}_{T,(\kappa-1)}}\overline\partial)\big((\Delta p^{\rho}_{T,(\kappa)})^{\rm Pal}\big)
\right\Vert_{L^2_{m-\ell}(
[T,9T]_{\tau'}\times [0,1])}
\\
&\le
\sum_{\ell' \le \ell, n'\le n}
C_{m,(\ref{newform6240})} T e^{-\delta_1T} \left\Vert
 \nabla_{\rho}^{n'}\frac{d^{\ell'}}{dT^{\ell'}} \Delta
p^{\rho}_{T,(\kappa)}\right\Vert_{L^2_{m-\ell'}
([T,9T]_{\tau'} \times [0,1])}
\\
&
\le
C'_{m,(\ref{newform6240})} T \mu^{\kappa-1} e^{-2\delta_1T}.
\endaligned
\end{equation}
To show this inequality for $\ell' = \ell$ we use
the fact that $(\Delta p_{T,(\kappa)}^{\rho})^{\rm Pal}$
is almost a `constant'  and its first derivative is small.
See the last part of Appendix \ref{appendixC}.
\par
Moreover by (\ref{formula158}) in Definition \ref{defn532532} and
Definition \ref{defn:Errrho12T(kappa)},
we have
\begin{equation}\label{newform632632}
-(D_{\hat u^{\rho}_{i,T,(\kappa-1)}}\overline\partial)\big(V^{\rho}_{T,i,(\kappa)})
=
{\rm Err}^{\rho}_{i,T,(\kappa-1)}(\tau',t)
\end{equation}
for $i = 1,2$, on $[4T+1,5T+1]_{\tau'} \times [0,1]$, which lies outside the supports of $\mathcal E_1$, $\mathcal E_2$.
Note
$\hat u^{\rho}_{i,T,(\kappa-1)} = u^{\rho}_{T,(\kappa-1)}$ on the domain
$[4T+1,5T+1]_{\tau'} \times [0,1]$,
since $\chi_{\mathcal A}$ and $\chi_{\mathcal B}$ are $1$ therein. (See (\ref{form573new}).)
\par
Finally we establish the following estimate.

\begin{lem}\label{newlem6110} Let $g$ be as above.
For any $\epsilon(8) > 0$ there exists $T_{m,
\epsilon(8), (\ref{ineq623})}$ such that
\begin{equation}\label{ineq623}
\aligned
\left\Vert
\nabla_{\rho}^n\frac{d^{\ell}}{dT^{\ell}}
\left(\int_0^1\left(\int_0^s g''(r)dr\right)ds\right) \right\Vert_{L^2_{m-\ell,\delta}
([4T+1,5T+1] \times [0,1])}
\le  \mu^{\kappa}\epsilon(8) e^{-\delta_1 T}
\endaligned
\end{equation}
holds for all
$T > T_{m,\epsilon(8), (\ref{ineq623})}$.
\end{lem}
\begin{proof}
We put
$$
V(\tau',t)  =  (\frak P')^{-1}\left((V^{\rho}_{T,2,(\kappa)}(\tau'-10T,t) - (\Delta p^{\rho}_{T,(\kappa)})^{\rm Pal})
+V^{\rho}_{T,1,(\kappa)}(\tau',t)\right),
$$
where \index[syindex]{Pprimefrak@$\frak P'$}
$$
\frak P' = {\rm Pal}_{u_1}^{h^0}.
$$
(Note  on the domain $[4T+1,5T+1]_{\tau'} \times [0,1]$, we have
$\hat u^{\rho}_{2,T,(\kappa-1)} = u^{\rho}_{T,(\kappa-1)}$.
So we may regard $V^{\rho}_{T,2,(\kappa)}$ as a section of
$(u^{\rho}_{T,(\kappa-1)})^*TX$.)

We take {the} $L^2_{m-\ell}$ norm in place of {the} $L^2_{m-\ell,\delta}$ norm of
the left hand side of (\ref{ineq623}). Then in  the same way as
in the proof of Lemma \ref{mainestimatestep13kappaT}
given in Appendix \ref{appendixC} we can estimate the norm by
\begin{equation}\label{totenew624}
\aligned
C_{m,(\ref{totenew624})}
\sum_{\ell_1+\ell_2 \le\ell
\atop n_1 + n_2 \le n}
&\left\Vert \nabla_{\rho}^{n_1}\frac{d^{\ell_1}}{dT^{\ell_1}}
V \right\Vert_{L^2_{m+1-\ell_1}([4T+1,5T+1] \times [0,1])}
\\
& \times \left\Vert \nabla_{\rho}^{n_2}\frac{d^{\ell_2}}{dT^{\ell_2}}
V \right\Vert_{L^2_{m+1-\ell_2}([4T+1,5T+1] \times [0,1])}.
\endaligned
\end{equation}
Then applying the formula
\begin{equation}\label{1106}
\aligned
&\frac{\partial^{\ell}}{\partial T^{\ell}}
(\mathfrak P')^{-1} \left(\left.
V^{\rho}_{T,2,(\kappa)}(\tau' - 10 T)
\right)\right\vert_{T=T_2} \\
& =
\sum_{\ell_1+\ell_2 = \ell} (-10)^{\ell_2}
\frac{\partial^{\ell_1}}{\partial T^{\ell_1}}\frac{\partial^{\ell_2}\big((\mathfrak P')^{-1}(V^{\rho}_{T,2,(\kappa)})\big)}{\partial \tau^{\prime\prime \ell_2}}
(\tau' - 10 T_2),
\endaligned
\end{equation}
we obtain:
\begin{equation}\label{form626new}
\aligned
&\left\Vert \nabla_{\rho}^{n_i}\frac{d^{\ell_i}}{dT^{\ell_i}} V
\right\Vert_{L^2_{m+1-\ell_i}([4T+1,5T+1]_{\tau'} \times [0,1])} \\
&\le
C_{m,(\ref{form626new})}
\left\Vert \nabla_{\rho}^{n_i}\frac{d^{\ell_i}}{dT^{\ell_i}}
(\frak P')^{-1}(V^{\rho}_{T,1,(\kappa)})
\right\Vert_{L^2_{m+1-\ell_i}([4T+1,5T+1]_{\tau'} \times [0,1])}
\\
&\quad +
C_{m,(\ref{form626new})}
\left\Vert \nabla_{\rho}^{n_i}\frac{d^{\ell_i}}{dT^{\ell_i}}
(\frak P')^{-1}(V^{\rho}_{T,2,(\kappa)})
\right\Vert_{L^2_{m+1}([-6T_1+1,-5T+1]_{\tau''} \times [0,1])} \\
&\quad +
C_{m,(\ref{form626new})}
\left\Vert \nabla_{\rho}^{n_i}\frac{d^{\ell_i}}{dT^{\ell_i}}
(\frak P')^{-1}((\Delta p^{\rho}_{T,(\kappa)})^{\rm Pal})
\right\Vert_{L^2_{m+1-\ell_i}([4T+1,5T+1]_{\tau'} \times [0,1])}
\\
&\le C'_{m,(\ref{form626new})} \mu^{\kappa-1} e^{-\delta_1 T}.
\endaligned
\end{equation}
Note the weight function on our domain $[4T+1,5T+1] \times [0,1]$ is not
greater than $10 e^{5T \delta}$.
Therefore substituting \eqref{form626new} into \eqref{totenew624}, we obtain
\begin{equation}\label{form627new}
\text{\rm LHS of (\ref{ineq623})}
\le C_{m,(\ref{form627new})} e^{5T\delta} e^{-2T\delta_1}\mu^{2\kappa-2}
\le \mu^{\kappa}\epsilon(8) e^{-\delta_1 T}
\end{equation}
by taking $T_{m,\epsilon(8), (\ref{ineq623})}$ so that
 $C_{m,(\ref{form627new})} e^{-5T_{m,\epsilon(8), (\ref{ineq623})}\delta} \mu^{\kappa-2}
\le \epsilon(8)$.
Here we also used (\ref{form310310}).
\end{proof}
\par
{
We use Lemma \ref{newlem6110} and (\ref{form631631}), (\ref{newform6240}), (\ref{newform632632}) to show
\begin{equation}\label{newform624}
\aligned
&\left\Vert \nabla_{\rho}^n\frac{d^{\ell}}{dT^{\ell}}
\frak P^{-1}{\rm Err}^{\rho}_{1,T,(\kappa)}\right\Vert_{L^2_{m-\ell,\delta}([4T+1,5T+1]_{\tau'} \times [0,1])} \\
&=
\left\Vert \nabla_{\rho}^n\frac{d^{\ell}}{dT^{\ell}}
g(1)\right\Vert_{L^2_{m-\ell,\delta}([4T+1,5T+1]_{\tau'} \times [0,1])}
\\
& \le
\left\Vert \nabla_{\rho}^n\frac{d^{\ell}}{dT^{\ell}}
(g(0) + g'(0))\right\Vert_{L^2_{m-\ell,\delta}([4T+1,5T+1]_{\tau'} \times [0,1])} + \text{\rm LHS of (\ref{ineq623})}
\\
&\le
C'_{m,(\ref{newform6240})} T \mu^{\kappa-1} e^{-2\delta_1T} + \mu^{\kappa}\epsilon(8) e^{-\delta_1 T}
\\
&\le  2\mu^{\kappa}\epsilon(8) e^{-\delta_1 T},
\endaligned
\end{equation}
for $T > T_{m,\epsilon(8), (\ref{ineq623})}$, by taking 
$C'_{m,(\ref{newform6240})} T \mu^{\kappa-1} e^{-\delta_1T} \le \mu^{\kappa}\epsilon(8)$.
}
\par\smallskip
\noindent {\bf(Estimate 3)}:
We next consider $\tau' \in [4T-1,4T+1]_{\tau'}$.
In other words we study the estimate on the domain
$\mathcal A_T$.
There the formula (\ref{form1106}) for the error term is reduced to
\begin{equation}\label{form110622}
\aligned
&{\rm Err}^{\rho}_{1,T,(\kappa)}(\tau',t) \\
= & \overline\partial\Big(\Exp\Big(u^{\rho}_{T,(\kappa-1)}(\tau',t),
V^{\rho}_{T,1,(\kappa)}(\tau',t)+
\\
&\qquad \qquad\qquad
\chi(\tau'-4T)\Big(V^{\rho}_{T,2,(\kappa)}(\tau'-10T,t) -  (\Delta p^{\rho}_{T,(\kappa)})^{\rm Pal}\Big)
\Big)\Big)
\endaligned
\end{equation}
since $1- \chi(\tau' - 5T) \equiv 1$ as $\tau' - 5T \leq -T +1 < -1$.
\par
{
We consider the {path of maps} $h^s(\tau',t)$ given by
\begin{equation}\label{eq:h}
\aligned
h^s(\tau',t) &= \Exp\Big(u^{\rho}_{T,(\kappa-1)}(\tau',t),
V^{\rho}_{T,1,(\kappa)}(\tau',t)+
\\
&\qquad \qquad\qquad
s \chi(\tau'-4T)\Big(V^{\rho}_{T,2,(\kappa)}(\tau'-10T,t) -  (\Delta p^{\rho}_{T,(\kappa)})^{\rm Pal}\Big)
\Big)
\endaligned
\end{equation}
which satisfies
\beastar
\overline\partial h^0 & = & \overline\partial \Exp\Big(u^{\rho}_{T,(\kappa-1)}(\tau',t),
V^{\rho}_{T,1,(\kappa)}(\tau',t)\Big)\\
\overline\partial h^1 & = & {\rm Err}^{\rho}_{1,T,(\kappa)}.
\eeastar

The next lemma claims that the term containing
$V^{\rho}_{T,2,(\kappa)}(\tau'-10T,t)$ in (\ref{form110622})
is small.
\par
Let $(\mathcal P')^{-1}$ denote the parallel transport along the path
$s \mapsto h^s(\tau',t)$ ($s \in [0,r]$) and $\mathcal P^{-1}$ its complex linear part.
{We denote by $\frak P'$ the
parallel transport along the minimal geodesic which maps a section of
$u_1^* TX$ to a section of $(u')^*TX$ for various $u'$ which is $C^0$ close to $u_1$}.
Let $\frak P$ be its complex linear part.
\begin{lem}\label{newlem611} Let $h^s$ be as above.
For any positive number $\epsilon(9)$, there exists $T_{m,\epsilon(9),(\ref{newform626})}$
such that the next inequality holds for $T > T_{m,\epsilon(9),(\ref{newform626})}$.
{
\begin{equation}\label{newform626}
\aligned
&\Big\Vert \nabla_{\rho}^n\frac{d^{\ell}}{dT^{\ell}}
\Big(\frak P^{-1}\left(\overline\partial h^0 - \mathcal P\left(\overline\partial h^1\right)\right)\Big)
\Big\Vert_{L^2_{m-\ell,\delta}([4T-1,4T+1]_{\tau'} \times [0,1]
\subset \Sigma_{T})} \\
&\le \mu^{\kappa}\epsilon(9) e^{-\delta_1 T}.
\endaligned
\end{equation}
}
\end{lem}
\begin{proof}
This is a consequence of `drop of the weight' we mentioned in Remark \ref{rem:dropofweight}.
\par
The left hand side of (\ref{newform626}) is the $L^2_{m-\ell,\delta}$
norm of the next formula
\begin{equation}\label{form633new}
\nabla_{\rho}^n\frac{d^{\ell}}{dT^{\ell}} \Big(\frak P^{-1}\left(\overline\partial h^0 - \mathcal P\left(\overline\partial h^1\right)\right)\Big)
= -\int_0^1 \nabla_{\rho}^n\frac{d^{\ell}}{dT^{\ell}} \frak P^{-1}
 \frac{\partial}{\partial r} \mathcal P (\overline\partial h^r) \, dr
\end{equation}
Recalling the definition \eqref{eq:h},
we compute
\beastar
\frac{\partial}{\partial r} \mathcal P \left(\overline\partial h^r\right)
& = & \mathcal P D_{h^r} \overline \partial \left(\chi(\tau'-4T)\mathcal P^{-1}\Big(V^{\rho}_{T,2,(\kappa)}(\tau'-10T,t) -  (\Delta p^{\rho}_{T,(\kappa)})^{\rm Pal}\Big)\right).
\eeastar
\par
Therefore using the product rule (\ref{1106}) as before,
we can estimate the integrand of (\ref{form633new})
\begin{equation}\label{form634newnew}
\aligned
&\left\Vert \nabla_{\rho}^n\frac{d^{\ell}}{dT^{\ell}}
\frak P^{-1} \frac{\partial}{\partial r} \mathcal P (\overline \partial h^r) 
\right\Vert_{L^2_{m-\ell}}
\\
\le&
C_{m,(\ref{form634newnew})} \sum_{\ell' \le \ell}\sum_{n' \le n} \\
&  \left\Vert
\nabla_{\rho}^{n'}\frac{\partial^{\ell'}}{\partial T^{\ell'}}
(\frak P')^{-1} \Big(
V^{\rho}_{T,2,(\kappa)}(\tau'',t)
\right.
\\
&\qquad\qquad\qquad\qquad\left.-  (\Delta p^{\rho}_{T,(\kappa)})^{\rm Pal}\Big)
\right\Vert_{L^2_{m+1-\ell'}([-6T-1,-6T+1]_{\tau''} \times [0,1])}.
\endaligned
\end{equation}
in the way similar to the proof of (\ref{2ff160}) given at the end of Appendix \ref{appendixA}.
Note the norm in (\ref{form634newnew}) is {the} $L^2_{m+1-\ell'}$ norm
without weight.

\par
By the induction hypothesis the
$L^2_{m+1-\ell'}$ norm of
$$
\nabla_{\rho}^{n'} \frac{\partial^{\ell'}}{\partial T^{\ell'}}(\frak P')^{-1}
\big(V^{\rho}_{T,2,(\kappa)}(\tau'',t) -  (\Delta p^{\rho}_{T,(\kappa)})^{\rm Pal}
\big)
$$
with weight $e_{2,\delta}$
is estimated by $C_{2,m}\mu^{\kappa-1}e^{-\delta_1 T}.$
\par
Over our domain $[4T-1,4T+1]_{\tau'} \times [0,1]$,
the  weight $e_{2,\delta}$ is around $e^{6T\delta}$.
See Figure \ref{Figure5}.
Therefore
\begin{equation}\label{form635newho}
\left\Vert \nabla_{\rho}^n\frac{d^{\ell}}{dT^{\ell}}
\frak P^{-1}\Big(\int_0^1 \frac{\partial}{\partial r} \mathcal P (\overline \partial h^r) \, dr\Big)
\right\Vert_{L^2_{m-\ell}}
\le C_{m,(\ref{form635newho})} e^{-6T\delta}\mu^{\kappa-1}e^{-\delta_1 T}.
\end{equation}
On the other hand the weight $e_{1,\delta}$,
which we use as the weight of the $L^2_{m - \ell,\delta}$
norm in the left hand side of (\ref{newform626})  is around $e^{4T\delta}$.
\par
Therefore
\begin{equation}\label{supernew626}
\text{LHS of (\ref{newform626})}
\le C_{m,(\ref{supernew626})} e^{-2T\delta}\mu^{\kappa-1}e^{-\delta_1 T}.
\end{equation}
\par
We take $T_{m,\epsilon(9),(\ref{newform626})}>0$ such that
$C_{m,(\ref{supernew626})} e^{-2\delta T_{m,\epsilon(9),(\ref{newform626})}} \le \epsilon(9) \mu$.
The lemma follows.
\end{proof}
Again applying \eqref{FTE} to the map
$s \mapsto  \overline\partial\left(\Exp(u^\rho_{T,(\kappa-1)},s V^{\rho}_{T,1,(\kappa)}(\tau',t))\right)$
as in the proof of Proposition \ref{mainestimatestep13kappa}, we calculate
\begin{equation}\label{form627}
\aligned
& \overline\partial\left(\Exp(u^\rho_{T,(\kappa-1)}, V^{\rho}_{T,1,(\kappa)}(\tau',t))\right)\\
=& \overline\partial u^\rho_{T,(\kappa-1)}(\tau',t) + (D_{u^\rho_{T,(\kappa-1)}}
\overline\partial)(V^{\rho}_{T,1,(\kappa)})(\tau',t) \\
& + \int_0^1ds \int_0^s
\left(\frac{\partial^2}{\partial r^2}\right) \mathcal P
\left(\overline\partial \left(\Exp(u^{\rho}_{T,(\kappa-1)},
rV^{\rho}_{T,1,(\kappa)}(\tau',t)\right)\right)\, dr.
\endaligned
\end{equation}
Here $\mathcal P^{-1}$ is the complex linear part of the parallel transport along the path
$s \mapsto
\Exp(u^{\rho}_{T,(\kappa-1)},sV^{\rho}_{T,1,(\kappa)}(\tau',t))
$ 
($s \in [0,r]$).
We can estimate the third term of the right hand side of (\ref{form627})
in the same way as the proof of the inequality (\ref{estimateE3})
given in Appendix \ref{appendixC}
and obtain
\begin{equation}\label{form638super}
\aligned
&\!\!\!\!\left\Vert
\nabla_{\rho}^n\frac{d^{\ell}}{dT^{\ell}}
\frak P^{-1}\text{(3rd term of (\ref{form627}))}
\right\Vert_{L^2_{m-\ell}([4T-1,4T+1]_{\tau'} \times [0,1])} \\
&\!\!\!\!\le
C_{m,(\ref{form638super})}
\sum_{\ell'\le \ell, n'\le n}
\left\Vert
\nabla_{\rho}^{n'}\frac{\partial^{\ell'}}{\partial T^{\ell'}}
(\frak P')^{-1} V^{\rho}_{T,1,(\kappa)}
\right\Vert_{L^2_{m+1-\ell'}([4T-1,4T+1]_{\tau'} \times [0,1])}^2.
\endaligned
\end{equation}
Since
\begin{equation}\label{newnew6.4040}
\aligned
&\left\Vert
\nabla_{\rho}^{n'}\frac{\partial^{\ell'}}{\partial T^{\ell'}}
(\frak P')^{-1} V^{\rho}_{T,1,(\kappa)}
\right\Vert_{L^2_{m+1-\ell'}([4T-1,4T+1]_{\tau'} \times [0,1])}
\\
&\le
\left\Vert
\nabla_{\rho}^{n'}\frac{\partial^{\ell'}}{\partial T^{\ell'}}
(\frak P')^{-1} (V^{\rho}_{T,1,(\kappa)} - (\Delta p^{\rho}_{T,(\kappa)})^{\rm Pal})
\right\Vert_{L^2_{m+1-\ell'}([4T-1,4T+1]_{\tau'} \times [0,1])} \\
&\quad +
\left\Vert
\nabla_{\rho}^{n'}\frac{\partial^{\ell'}}{\partial T^{\ell'}}
(\frak P')^{-1}(\Delta p^{\rho}_{T,(\kappa)})
\right\Vert
\\
&\le
C_{m,(\ref{newnew6.4040})} \mu^{\kappa-1}e^{-\delta_1 T}
\endaligned
\end{equation}
by (\ref{form182}) we have
\begin{equation}\label{shortform641}
\text{(\ref{form638super})} \le C_{m,(\ref{shortform641})}  \mu^{2(\kappa-1)} e^{-2\delta_1 T}.
\end{equation}
\par
Finally we observe that
\begin{equation}\label{newform634}
\overline\partial u^\rho_{T,(\kappa-1)}
+ (D_{u^\rho_{T,(\kappa-1)}}
\overline\partial)(V^{\rho}_{T,1,(\kappa)}(\tau',t)) = 0,
\end{equation}
on $[4T-1,4T+1]_{\tau'} \times [0,1]$.
This follows from (\ref{newform570}) and
 Definition \ref{defn532532} (\ref{formula158}) together with
 ${\rm Err}^{\rho}_{2,T,(\kappa)}(\tau',t) = 0$ on $[4T-1,4T+1]_{\tau'} \times [0,1]$.
\par
In sum, by Lemma \ref{newlem611} and
(\ref{form627}),
(\ref{shortform641}) and (\ref{newform634}), we obtain
\begin{equation}\label{newform63535}
\aligned
&\left\Vert \nabla_{\rho}^n\frac{d^{\ell}}{dT^{\ell}} \frak P^{-1}\mathcal P
({\rm Err}^{\rho}_{1,T,(\kappa)})\right\Vert_{L^2_{m-\ell,\delta}([4T-1,4T+1]_{\tau'} \times [0,1])} \\
&\le C_{m,(\ref{newform63535})}\mu^{\kappa}\epsilon(9) e^{-\delta_1 T},
\endaligned
\end{equation}
for $T > T_{m,\epsilon(9),(\ref{newform63535})}$.
\par
It implies 
\begin{equation}\label{newform635350}
\aligned
&\left\Vert \nabla_{\rho}^n\frac{d^{\ell}}{dT^{\ell}} \frak P^{-1}
({\rm Err}^{\rho}_{1,T,(\kappa)})\right\Vert_{L^2_{m-\ell,\delta}([4T-1,4T+1]_{\tau'} \times [0,1])} \\
&\le C_{m,(\ref{newform635350})}\mu^{\kappa}\epsilon(9) e^{-\delta_1 T}.
\endaligned
\end{equation}
In fact $\frak P^{-1}\mathcal P$ in (\ref{newform63535}) is induced by 
${\rm Pal}_{h^0}^{u_1} \circ {\rm Pal}_{h^1}^{h^0}$
and $\frak P^{-1}$ in (\ref{newform635350}) is induced by
${\rm Pal}_{h^1}^{u_1}$.
Therefore we can show that (\ref{newform63535}) implies 
(\ref{newform635350}) using (\ref{form182})-(\ref{form185}) 
and Lemma \ref{Lemma6.9} for $\kappa$, $\kappa-1$.
Namely we can use them to estimate $T$ and $\rho$ derivatives of 
$$
{\rm Pal}_{h^1}^{u_1} \circ {\rm Pal}^{h^1}_{h^0} \circ {\rm Pal}^{h^0}_{u_1}
$$
in the same way as in Appendix \ref{appendixA2}. 
\par
We can now complete the proof of Proposition \ref{prop:inequalitieskappa} (1).
We take $\epsilon(7)$, $\epsilon(8)$, $\epsilon(9)$  such that
$\epsilon(7) < C_{8,m}\epsilon(6)/10$,
$C_{m,(\ref{newform624})}\epsilon(8) < C_{8,m}\epsilon(6)/10$
and $\epsilon(9) < C_{8,m}\epsilon(6)/10$.
Then if $T > \max\{
T_{m,
\epsilon(7), (\ref{form619form})}, T_{m,
\epsilon(8), (\ref{ineq623})},T_{m,\epsilon(9),(\ref{newform626})},
T_{m,\epsilon(9),(\ref{newform63535})}\}$
Lemmata \ref{mainestimatestep13kappaT}, \ref{newlem611}
and Formulae (\ref{newform624}), (\ref{newform635350}) imply (\ref{form183}).
(\ref{form186}) then follows from Lemma \ref{mainestimatestep13kappaT} (\ref{frakeissmall2kappa}).
\qed
\par\medskip

\par\medskip
\begin{rem}\label{Abremark}
In \cite{Abexotic} Abouzaid used {the} $L^p_1$ norm for the maps $u$. He then proved that
the gluing map is continuous with respect to $T$ (that is $S$ in the notation of \cite{Abexotic})
but does not prove its differentiability with respect to $T$.
(Instead he used the technique to remove the part of the
moduli space with $T>T_0$.
This technique certainly works for the purpose of \cite{Abexotic}.)
In fact if we use {the} $L^p_1$ norm instead of {the} $L^2_m$ norm
then the left hand side of (\ref{form185}) becomes {the} $L^p_{-1}$ norm
which is hard to use.
\par
Abouzaid mentioned in  \cite[Remark 5.1]{Abexotic} that this point is related
to the fact that quotients of Sobolev spaces by the diffeomorphisms in the
source are not naturally equipped with the structure of smooth Banach manifold.
Indeed in the situation where there is an automorphism on $\Sigma_2$,
for example when $\Sigma_2$ is the disk with one boundary marked point at $\infty$,
then the $T$ parameter is killed by a part of the automorphism.
So the shift of $V^{\rho}_{T,2,(\kappa)}$ by $T$ that appears in the
second term of (\ref{form1106}) will be equivalent to the action of
the automorphism group of $\Sigma_2$ in such a situation.
The shift of $T$ causes the loss of differentiability in the sense of Sobolev space
in the formulas (\ref{form182}) -(\ref{form186}).
However at the end of the day we can still get the differentiability of $C^{\infty}$ order and its
exponential decay by using various {\it weighted} Sobolev spaces with various $m$ simultaneously
using the fact that $C^\infty$ topology is a Frech\^et topology,
as we show during the proof of Lemma \ref{lem82}.
(See Remark \ref{differentm} also.)
\end{rem}
\begin{rem}\label{rem12}
In \cite[Subsection 7.2.3]{DKbook} Donaldson-Kronheimer mentioned that there is
exactly one place where their construction of the basic package on the moduli space
of ASD-connections uses  {the}  $L^p_m$ space for $p \ne 2$ which is not conformally invariant.
It is exactly the place of gluing construction, the place similar
to what we are studying in this paper.
There Donaldson-Kronheimer used the $L^p_1$ space.
The reason  they do need {the} $L^p_m$  norm for $p\ne 2$ is that
in \cite{DKbook} Donaldson-Kronheimer do not use {\it weighted} Sobolev or Banach norm.
\par
In the  framework  of \cite{DKbook}  (which is adapted to the pseudoholomorphic curve by
\cite{McSa94}) the `neck region' of 4 manifold is regarded as $D^4(1) \setminus D^4(1/R^2)$
with the standard Riemannian metric
on $D^4(1) \setminus D^4(1/R)$
and the metric induced from {the} standard metric by $x \mapsto Rx/\vert x\vert^2$ on
$D^4(1/R) \setminus D^4(1/R^2)$.
(In \cite{McSa94} the `neck region'  of the source curve is $D^2(1) \setminus D^2(1/R^2)$
with a similar metric.)
So their metric is different from the cylindrical metric on $[-5T,5T] \times S^1$,
which is one we use in this paper.
\par
If we change the variables from $D^2(1) \setminus D^2(1/R^2)$ to $[-5T,5T] \times S^1$
(with $R = e^{5\pi T}$)
but still use  the above mentioned Riemannian metric of  $D^2(1) \setminus D^2(1/R^2)$
then it is equivalent to using the cylindrical metric of $[-5T,5T] \times S^1$
together with some weight.
Note this weight function is $1$ if and only if the Sobolev space involved is conformally
invariant. So in the situation  such as the one appearing in \cite[Subsection 7.2.3]{DKbook} this weight function
is nontrivial.
Actually one can observe that the weight function appears in that way is similar to
the weight
$e_{\delta,T}$, which
we use in this paper.
In other words, if we use  an  appropriate $L^p_1$ norm,
using the cylindrical metric with weight is not  very different from using the
the above mentioned metric on $D^2(1) \setminus D^2(1/R^2)$.
\par
However if we consider {the} $L^2_m$  norm with $m$ large enough then our weighted
norm (after changing the variables to  $D^2(1) \setminus D^2(1/R^2)$)
does not coincide with the $L^2_m$ norm with respect to the above mentioned
metric.
Thus it seems important to use  a weighted norm for the study of higher derivative{s}
with respect to the gluing parameter $T$.
\par
We remark that in  \cite{freedUhlen} Freed-Uhlenbeck worked out the gluing analysis
of ASD connections in the frame work of $L^2$ theory (that is, without using $L^p_k$ spaces
but using only $L^2_k$ spaces).
Freed-Uhlenbeck used the cylindrical metric on $\R \times S^3$.
So the method of \cite{freedUhlen} is closer to ours.
It seems that Freed-Uhlenbeck do not need to use weighted Sobolev norm since
in their case they can use the fact that their 3 manifold is $S^3$ and show exponential
decay without using  weighted Sobolev norm. Actually, in their situation, {the} Chern-Simons functional
on $S^3$ is not only a Morse-Bott function but also a Morse function.
In our situation, the non-linear Cauchy-Riemann equation on $\R \times S^1$ or
on $\R \times [0,1]$ with Lagrangian boundary condition is degenerate at infinity.
In other words we are in Morse-Bott situation. By this reason, it seems inevitable to
use weighted Sobolev norm when we work with the cylindrical metric.
\end{rem}
\begin{rem}
Another difference between our construction and the construction of
\cite{DKbook},\cite{McSa94} is the choice of cut off function.
In the formula at the end of \cite[page 172]{McSa94}
they used the cut off function $\beta$ appearing in
\cite[Lemma 7.2.10]{DKbook},
\cite[Lemma A.A.1]{McSa94}  to obtain the right inverse.
If we rewrite their formula in terms of the cylindrical coordinates
the cut off function appearing there has
mostly of constant slope $\vert 1/\log\delta\vert$
and the support of its first derivative has
length $\sim \vert \log\delta\vert$, here $\delta$
is a small number.
So the size of the error term caused by the derivative of
the cut off function is (pointwise)
$\sim \vert 1/\log\delta\vert$, which is small.
\par
Our choice of cut off function is,
for example, $\chi_{\mathcal A}^{\leftarrow}$.  In the cylindrical coordinates the size of its derivative is
$\sim 1$ and the length of the support of its first derivative is also $\sim 1$.
So the size of the error term caused by the derivative of
the cut off function is (pointwise)
$\sim 1$, which is {\it not} small.
We use the `drop of the weight argument' mentioned in Remark
\ref{rem:dropofweight} to show that this error term is small
in our {\it weighted} Sobolev norm.
See Figure \ref{newcutofffig}.
\par
Since we use the {\it weighted} Sobolev space, the estimate is
easier to carry out in case the support of the derivative
of the cut off function has bounded length.
(This is because then the ratio between maximum and minimum of the
{weight} function on the above mentioned support is bounded.)
\end{rem}
\begin{figure}
\centering
\includegraphics{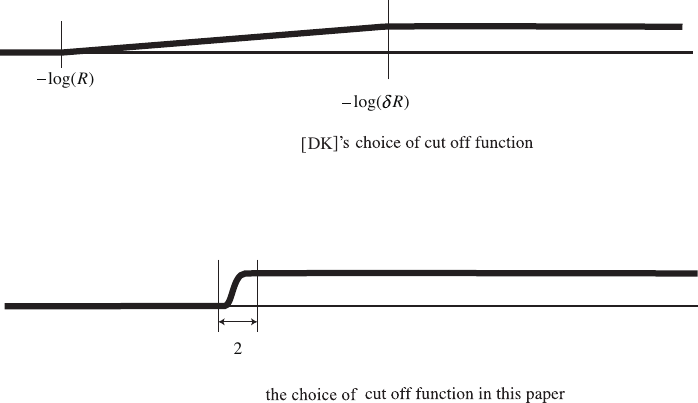}
\caption{The choices of cut off functions.}
\label{newcutofffig}
\end{figure}
\par\medskip
\subsection{Part B: estimates for the approximate inverse}
\label{subsec62}
In this section we prove Proposition \ref{prop:inequalitieskappa} (2).
This section corresponds to the discussion given in
\cite[page 776 the paragraph next to (B)]{fooo:book1}.

We assume  (\ref{form182})-(\ref{form186}) for $\kappa$ and will prove
(\ref{form182}) for $\kappa+1$.
(So we are doing Step $\kappa+1$.)
\par
This part is nontrivial only because the construction here is global.
(Solving linear equation.)
So we first review the set-up of the function space that is independent of $T,\rho,\kappa$.
\par
In Definition \ref{defn530} we defined a function space $\frak H_{(\kappa)}(\mathcal E_1,\mathcal E_2;\rho,T)$,
which is a subspace of (\ref{cocsissobolev2k}).
We solved the linearized equation on it. (See (\ref{inHhanru}).)
The space  (\ref{cocsissobolev2k}) is  $T,\rho,\kappa$-dependent. However
$\frak H_{(\kappa)}(\mathcal E_1,\mathcal E_2;\rho,T)$ is the image of
$\frak H(\mathcal E_1,\mathcal E_2)$, which is independent of $T,\rho,\kappa$,
by the map $(\Phi_{1;(\kappa)}(\rho,T), \Phi_{2;(\kappa)}(\rho,T))$. We recall
$\frak H(\mathcal E_1,\mathcal E_2)$ is defined in Definition \ref{defnfrakH}.
We put\index[syindex]{I0kapparho@$I^0_{(\kappa),\rho,T}$}
$$
I^0_{(\kappa),\rho,T} =
(\Phi_{1;(\kappa)}(\rho,T), \Phi_{2;(\kappa)}(\rho,T))
: \frak H(\mathcal E_1,\mathcal E_2) \to \frak H_{(\kappa)}(\mathcal E_1,\mathcal E_2;\rho,T).
$$
By composing it we can take the domain independent of $T,\rho,\kappa$.
\par
We next consider the {codomain of the above approximate linearization}.
Using the map in (\ref{paluv01}) we define\index[syindex]{I1kappairho@$I^1_{i,(\kappa),\rho,T}$}
$$
I^1_{i,(\kappa),\rho,T}: L^2_{m,\delta}(\Sigma_i;u_{i}^*TX \otimes \Lambda^{0,1}) \to
L^2_{m,\delta}(\Sigma_i;(\hat u^{\rho}_{i,T,(\kappa)})^{*}TX \otimes \Lambda^{0,1})
$$
as the closure of
$({\rm Pal}^{\hat u^{\rho}_{i,T,(\kappa)}}_{u_i})^{(0,1)}.
$
\par
Then we define\index[syindex]{I1kapparho@$I^1_{(\kappa),\rho,T}$}
$$
\aligned
I^1_{(\kappa),\rho,T} & :  L^2_{m,\delta}(\Sigma_1;u_{1}^*TX \otimes \Lambda^{0,1})
\oplus  L^2_{m,\delta}(\Sigma_2;u_{2}^*TX \otimes \Lambda^{0,1}) \\
& \longrightarrow
L^2_{m,\delta}(\Sigma_1;(\hat u^{\rho}_{1,T,(\kappa)})^{*}TX \otimes \Lambda^{0,1}) \oplus
L^2_{m,\delta}(\Sigma_2;(\hat u^{\rho}_{2,T,(\kappa)})^{*}TX \otimes \Lambda^{0,1})
\endaligned
$$
as the direct sum $I^1_{(\kappa),\rho,T} = I^1_{1;(\kappa),\rho,T}\oplus I^1_{2;(\kappa),T}$.
Thus the composition
$$
(I^1_{(\kappa),\rho,T})^{-1}\circ \left(D_{\hat u^{\rho}_{1,T,(\kappa)}}\overline{\partial}\oplus
D_{\hat u^{\rho}_{2,T,(\kappa)}}\overline{\partial}\right)\circ I^0_{(\kappa),\rho,T}
$$
defines an operator, which we denote by
\begin{equation}\label{DkapparhoT}
D_{(\kappa),\rho,T} :\frak H(\mathcal E_1,\mathcal E_2)
\to L^2_{m,\delta}(\Sigma_1;u_{1}^*TX \otimes \Lambda^{0,1})\oplus L^2_{m,\delta}(\Sigma_2;u_{2}^*TX \otimes \Lambda^{0,1}).
\end{equation}
{Both domain and codomain of this operator now are independent of $\kappa,\rho,T$.}
\par
We need to invert the operator $D^{{\rm app},(\kappa)}_{\hat u^{\rho}_{1,T,(\kappa)}}\overline{\partial}\oplus
D^{{\rm app},(\kappa)}_{\hat u^{\rho}_{2,T,(\kappa)}}\overline{\partial}$
 modulo  $\mathcal E_1(\hat u^{\rho}_{1,T,(\kappa)}) \oplus \mathcal E_2(\hat u^{\rho}_{2,T,(\kappa)})$.
 (See (\ref{144ffff}) and (\ref{formula158}).)
We remark the subspace
\begin{equation}\label{form625}
\mathcal E_{i,(\kappa),\rho,T} : =
(I^1_{(\kappa),\rho,T})^{-1}(\mathcal E_1(\hat u^{\rho}_{i,T,(\kappa)})
\oplus \mathcal E_2(\hat u^{\rho}_{i,T,(\kappa)}))
\end{equation}
{is different from $\mathcal E_1(u_1) \oplus \mathcal E_2(u_2)$} and is $(\kappa,\rho,T)$-dependent {in general:}
In fact,
by definition $\mathcal E_i(\hat u^{\rho}_{i,T,(\kappa)})$ is the
image of $\mathcal E^{\frak{ob}}_i$ {under} the parallel transport
$({\rm Pal}_{u_{i}^{\frak{ob}}}^{\hat u^{\rho}_{i,T,(\kappa)}})^{(0,1)}$.
(See (\ref{Eiuiprime}).)
Therefore
$$
\aligned
&(I^1_{(\kappa),\rho,T})^{-1}(\mathcal E_1(\hat u^{\rho}_{1,T,(\kappa)})
\oplus \mathcal E_2(\hat u^{\rho}_{2,T,(\kappa)}))
\\
&=
\bigoplus_{i=1}^2
\left(\left(({\rm Pal}^{\hat u^{\rho}_{i,T,(\kappa)}}_{u_i})^{(0,1)}\right)^{-1}
\circ ({\rm Pal}^{\hat u^{\rho}_{i,T,(\kappa)}}_{u_i^{\frak{ob}}})^{(0,1)}\right)
(\mathcal E^{\frak{ob}}_i)
\endaligned
$$
which is different from $\mathcal E_1(u_1) \oplus \mathcal E_2(u_2)$
since
\begin{equation}\label{form2626}
({\rm Pal}^{u_i}_{u_i^{\frak{ob}}})^{(0,1)}(\mathcal E_i^{\frak{ob}})
\ne
\left(\left(({\rm Pal}^{\hat u^{\rho}_{i,T,(\kappa)}}_{u_i})^{(0,1)}\right)^{-1}
\circ ({\rm Pal}^{\hat u^{\rho}_{i,T,(\kappa)}}_{u_i^{\frak{ob}}})^{(0,1)}\right)
(\mathcal E_i^{\frak{ob}})\end{equation}
in general.
\begin{rem}
In our situation where we consider only one $u_i^{\frak{ob}}$
we can trivialize the bundle $u' \mapsto L^2_m(\Sigma_T,(u')^*TX \otimes \Lambda^{0,1})$
{over $\{u'\}$ near $\hat u^{\rho}_{T,(\kappa)}$} by sending them to
\begin{equation}\label{differentparallel}
\bigoplus_{i=1}^2L^2_m(\Sigma_i,(u^{\frak{ob}}_i)^*TX \otimes \Lambda^{0,1})
\end{equation}
using parallel transport. Then
the image of $\mathcal E_i(u')$ in (\ref{differentparallel})
will not vary.
\par
However for our application, we need to consider
several different $u^{\frak{ob}}_i$'s, in which case there is no choice of the trivialization of
$u' \mapsto \mathcal E_i(u') \subset L^2_m(\Sigma_T,(u')^*TX \otimes \Lambda^{0,1})$
so that the direct sum of the obstruction spaces do not vary.
\par
However the way to estimate this discrepancy  in the case when
we have several $u^{\frak{ob}}_i$'s is the same as we do in this section.
\end{rem}
We can estimate the $(T,\rho)$ and $\kappa$ dependence of (\ref{form625})
and use certain elementary functional analysis to go around the problem of
this dependence as follows.
\par
We first observe the next lemma.
Let $\{{\bf e}_{i,a} \mid a =1,\dots,\dim \mathcal E^{\frak{ob}}_i\}$
be a basis of $\mathcal E^{\frak{ob}}_i$. We put\index[syindex]{eprimeiakappa@${\bf e}'_{i,a;(\kappa)}(\rho,T)$}
\begin{equation}
\aligned
{\bf e}'_{i,a;(\kappa)}(\rho,T)
&=
\left(\Big(({\rm Pal}^{\hat u^{\rho}_{i,T,(\kappa)}}_{u_i})^{(0,1)}\Big)^{-1} \circ
({\rm Pal}^{\hat u^{\rho}_{i,T,(\kappa)}}_{u^{\frak{ob}}_i})^{(0,1)}\right)({\bf e}_{i,a}) \\
&\in L^2_{m,\delta}(\Sigma_i;u_{i}^*TX \otimes \Lambda^{0,1}).
\endaligned
\end{equation}
Note $\{{\bf e}'_{i,a;(\kappa)}(\rho,T)\mid a =1,\dots,\dim \mathcal E^{\frak{ob}}_i\}$  is a basis of
$\mathcal E_{i,(\kappa),\rho,T}$.
\par
We denote by ${\bf e}_{i,a;(\kappa)}(\rho,T)$ ($a=1,\dots,\dim \mathcal E^{\frak{ob}}_i$) the basis obtained
from ${\bf e}'_{i,a;(\kappa)}(\rho,T)$ by applying the Gram-Schmidt orthogonalization to
${\bf e}'_{i,a;(\kappa)}(\rho,T)$.
(We use the $L^2$ inner product (\ref{innerprod1}) on
$L^2_{m,\delta}(\Sigma_i;u_{i}^*TX \otimes \Lambda^{0,1})$.)

\begin{lem}\label{lemma615}
There exists $C_{m,(\ref{form6321})}$ such that
\begin{equation}\label{form6321}
\left\Vert \nabla_{\rho}^n\frac{\partial^{\ell}}{\partial T^{\ell}}{\bf e}_{i,a;(\kappa)}(\rho,T)
\right\Vert_{L^2_{m+1-\ell}}
\le
C_{m,(\ref{form6321})} e^{-\delta_1 T},
\end{equation}
for {$m-2\ge n\ge 0$, $m-2\ge \ell > 0$}.
\end{lem}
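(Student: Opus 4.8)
The plan is to reduce the desired estimate on the Gram--Schmidt orthonormal basis $\{{\bf e}_{i,a;(\kappa)}(\rho,T)\}$ to the corresponding estimate on the non-orthonormalized frame $\{{\bf e}'_{i,a;(\kappa)}(\rho,T)\}$, and then to estimate the latter directly from the definition as a composition of two parallel transports. First I would observe that ${\bf e}_{i,a}$ is a fixed (in particular $T,\rho,\kappa$-independent) basis of $\mathcal E_i^{\frak{ob}}$, supported in the compact set $K_i^0 \subset \operatorname{Int} K_i$, and that
$$
{\bf e}'_{i,a;(\kappa)}(\rho,T)
= \Big(\big({\rm Pal}^{\hat u^{\rho}_{i,T,(\kappa)}}_{u_i}\big)^{(0,1)}\Big)^{-1} \circ
\big({\rm Pal}^{\hat u^{\rho}_{i,T,(\kappa)}}_{u^{\frak{ob}}_i}\big)^{(0,1)}\,({\bf e}_{i,a}).
$$
Since ${\bf e}_{i,a}$ is supported in $K_i^0$ and $\hat u^{\rho}_{i,T,(\kappa)} = u^{\rho}_{i}$ there (for $\kappa=0$), or more generally differs from $u_i^{\frak{ob}}$ and from $u_i$ only by exponentially small data on $K_i$ by the induction hypotheses \eqref{form184}, the section ${\bf e}'_{i,a;(\kappa)}(\rho,T)$ is obtained from the $T,\rho,\kappa$-independent section ${\bf e}_{i,a}$ by parallel transporting along short geodesics whose lengths, together with all their $(\nabla_\rho^n \partial_T^\ell)$-derivatives, are controlled. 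The key input is that the map $u \mapsto \big(({\rm Pal}_{u_i}^{u})^{(0,1)}\big)^{-1}\circ ({\rm Pal}_{u_i^{\frak{ob}}}^{u})^{(0,1)}$, applied to a fixed compactly supported smooth section, is a smooth function of $u$ in $L^2_{m+1}$, and that its value at $u = u_i$ is $({\rm Pal}_{u_i^{\frak{ob}}}^{u_i})^{(0,1)}({\bf e}_{i,a})$, which is $T,\rho,\kappa$-independent; hence the $T$-derivative (with $\ell>0$) of ${\bf e}'_{i,a;(\kappa)}(\rho,T)$ is governed by the $T$-derivative of $\hat u^{\rho}_{i,T,(\kappa)}$ on $K_i$, which is bounded by $C e^{-\delta_1 T}$ by \eqref{form184}. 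This is the routine chain-rule bookkeeping of the type relegated to the appendices (for a fixed smooth map $F:M\to N$, $u\mapsto F(u)$ is smooth $L^2_m(\Sigma,M)\to L^2_m(\Sigma,N)$ for $m$ large), so I would only indicate it and cite Appendix \ref{appendixA2bis}.

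Next I would transfer the estimate through the Gram--Schmidt process. The orthonormal vectors ${\bf e}_{i,a;(\kappa)}(\rho,T)$ are explicit rational (in fact polynomial-over-square-root) functions of the ${\bf e}'_{i,b;(\kappa)}(\rho,T)$ and of the inner products $\langle\!\langle {\bf e}'_{i,b;(\kappa)}(\rho,T), {\bf e}'_{i,c;(\kappa)}(\rho,T)\rangle\!\rangle_{L^2}$. As $T\to\infty$ the frame ${\bf e}'_{i,b;(\kappa)}(\rho,T)$ converges to the fixed frame $({\rm Pal}_{u_i^{\frak{ob}}}^{u_i})^{(0,1)}({\bf e}_{i,b})$ of $\mathcal E_i(u_i)$, whose Gram matrix is invertible; hence for $T$ large the Gram matrix stays uniformly invertible and the Gram--Schmidt formulas are uniformly non-degenerate. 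Applying $\nabla_\rho^n \partial_T^\ell$ to these formulas and using the Leibniz rule, every term that contains at least one $T$-derivative (which is forced since we assume $\ell>0$) picks up a factor that is the $(\nabla_\rho^{n'}\partial_T^{\ell'})$-derivative, with $\ell'>0$, of either some ${\bf e}'_{i,b;(\kappa)}(\rho,T)$ or some inner product thereof; each such factor is $O(e^{-\delta_1 T})$ by the previous paragraph, while all other factors are uniformly bounded. This gives the bound $C_{m,(\ref{form6321})} e^{-\delta_1 T}$ in the $L^2_{m+1-\ell}$ norm, and hence (since the sections are smooth and $m$ is large) in $L^2_{m+1-\ell}$ as stated.

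The main obstacle — though it is more a matter of care than of depth — is the loss-of-derivative phenomenon recorded in Remark \ref{rem611}: the map $\hat u^{\rho}_{i,T,(\kappa)}$ on the neck region involves the shifted section $V^{\rho}_{T,2,(\kappa)}(\tau'-10T)$, so each $T$-derivative costs one Sobolev order, which is precisely why the target norm in \eqref{form6321} is $L^2_{m+1-\ell}$ rather than $L^2_{m+1}$. Here, however, the sections ${\bf e}_{i,a}$ are supported in $K_i^0 \subset \operatorname{Int} K_i$, away from the neck, so on the relevant region $\hat u^{\rho}_{i,T,(\kappa)}$ coincides with $u^{\rho}_{i,T,(\kappa)}$ on $K_i$ and no shift by $T$ occurs; the $T$-derivatives of $\hat u^{\rho}_{i,T,(\kappa)}$ restricted to a neighborhood of $\operatorname{supp}({\bf e}_{i,a})$ are controlled in $L^2_{m+1-\ell}$ directly by \eqref{form184}. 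Thus the loss-of-derivative bookkeeping is benign for this particular lemma, and the proof is completed by feeding \eqref{form184} and the smoothness of the substitution map $u\mapsto ({\rm Pal})^{(0,1)}(\text{fixed section})$ into the Gram--Schmidt algebra, with the details of the substitution-map smoothness deferred to the appendices.
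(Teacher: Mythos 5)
Your proposal is correct and takes essentially the same route as the paper's proof (Appendix F, \S\ref{appendixA2bisbis}): the paper sets $W_0 = \mathrm{E}(u_1, \hat u^{\rho}_{i,T,(\kappa)})$, estimates the non-orthonormalized frame $\mathbf e'_{i,a;(\kappa)}$ as a smooth function of $W_0$ through the parallel-transport formalism of \eqref{forme'e'e'}, uses \eqref{form184} (and \eqref{form185}) for the exponential decay of the $T$-derivative of $W_0$, and then passes the estimate through the Gram--Schmidt formulas \eqref{formE92}--\eqref{formE93}, just as you outline. Your observation that the apparent $T$-shift loss-of-derivative is inherited solely from the induction hypothesis \eqref{form184} (rather than arising anew, since $\operatorname{supp}\mathbf e_{i,a}\subset K_i^{0}$ is away from the neck) is also consistent with the paper's bookkeeping.
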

We remark that since the support of ${\bf e}_{i,a;(\kappa)}(\rho,T)$
is in $K_i$ we do not need to use a weighted norm.
\begin{proof}
We prove this lemma in Appendix \ref{appendixA2bisbis}.
\end{proof}
Let $\mathcal E_{i,(\kappa),\rho,T}^{\perp}$ be the $L^2$ orthonormal complement
of $\mathcal E_{i,(\kappa),\rho,T}$ in
$L^2_{m,\delta}(\Sigma_i;u_{i}^*TX \otimes \Lambda^{0,1})$.
In other words, we define\index[syindex]{EikapparhoT@$\mathcal E_{i,(\kappa),\rho,T}^{\perp}$}
$$
\aligned
\mathcal E_{i,(\kappa),\rho,T}^{\perp}
=
\{ W &\in L^2_{m,\delta}(\Sigma_i;u_{i}^*TX \otimes \Lambda^{0,1})
\mid \\
&\langle\!
\langle W,{\bf e}_{i,a;(\kappa)}(\rho,T)
\rangle\!\rangle_{L^2} =0,\,\, a=1,\dots,\dim\mathcal E_i^{\frak{ob}}
\}.
\endaligned
$$
Here we use the $L^2$ inner product defined in
(\ref{innerprod1}).
Since elements  ${\bf e}_{i,a;(\kappa)}(\rho,T)$ are supported in $K_i$
and are smooth, we can
safely use the $L^2$ inner product, without weight.
We also define  the projections
$$
\Pi_{\mathcal E_{i,(\kappa),\rho,T}} : L^2_{m,\delta}(\Sigma_i;u_{i}^*TX \otimes \Lambda^{0,1})
\to \mathcal E_{i,(\kappa),\rho,T},
$$
by
\begin{equation}\label{formnew649}
\Pi_{\mathcal E_{i,(\kappa),\rho,T}}(W) =
\sum_{a=1}^{\dim\mathcal E_i^{\frak{ob}}}
\langle\!
\langle
W,{\bf e}_{i,a,(\kappa)}(\rho,T)
\rangle\!\rangle_{L^2}
{\bf e}_{i,a,(\kappa)}(\rho,T).
\end{equation}
We put
$$
\mathcal E_{(\kappa),\rho,T} = \bigoplus_{i=1}^2\mathcal E_{i,(\kappa),\rho,T},
\quad
\mathcal E_{(\kappa),\rho,T}^{\perp} = \bigoplus_{i=1}^2\mathcal E_{i,(\kappa),\rho,T}^{\perp}
$$
and
$
\Pi_{\mathcal E_{(\kappa),\rho,T}} =
\Pi_{\mathcal E_{1,(\kappa),\rho,T}} \oplus \Pi_{\mathcal E_{2,(\kappa),\rho,T}}.
$
The operator we need to invert is
$$
({\rm id} - \Pi_{\mathcal E_{(\kappa),\rho,T}}) \circ D_{(\kappa),\rho,T}
: \frak H(\mathcal E_1,\mathcal E_2)
\to \mathcal E^{\perp}_{(\kappa),\rho,T}.
$$
\par
Let $\mathcal E_{i}^{\perp}$ be the $L^2$ orthogonal complement of $\mathcal E_{i}
= \mathcal E_i(u_i)$ in
$L^2_{m,\delta}(\Sigma_i;u_i^{*}TX \otimes \Lambda^{0,1})$,
$\Pi_{\mathcal E_{i}}^{\perp} : L^2_{m,\delta}(\Sigma_i;u_{i}^*TX \otimes \Lambda^{0,1})
\to \mathcal E_{i}^{\perp}$ the associated $L^2$ projection.
We put  $\Pi = \Pi_{\mathcal E_{1}}^{\perp} \oplus \Pi_{\mathcal E_{2}}^{\perp}$.
By Lemma \ref{lemma615} the restriction of $\Pi$ induces an isomorphism
\begin{equation}
\Pi : \mathcal E^{\perp}_{(\kappa),\rho,T} \to \mathcal E_{1}^{\perp}
\oplus \mathcal E_{2}^{\perp}.
\end{equation}
\par
The map $D_{(\kappa),\rho,T}$ induces a map\index[syindex]{DkapparhoT@$\overline D_{(\kappa),\rho,T}$}
\begin{equation}\label{formnew653}
\overline D_{(\kappa),\rho,T} =
\Pi \circ
({\rm id} - \Pi_{\mathcal E_{(\kappa),\rho,T}}) \circ D_{(\kappa),\rho,T} :\frak H(\mathcal E_1,\mathcal E_2)
\to \mathcal E_{1}^{\perp}\oplus \mathcal E_{2}^{\perp}.
\end{equation}
Both the {domain and the codomain} of $\overline D_{(\kappa),\rho,T}$ are independent of
$\kappa, T, \rho$. We will invert this operator 
{and examine its $T, \, \rho$ dependence} by using the next lemma.
\par
Recall from Definition \ref{defnfrakH} that $ \frak H(\mathcal E_1,\mathcal E_2)$ is a subspace of
$$
\bigoplus_{i=1}^2W^2_{m+1,\delta}((\Sigma_i,\partial \Sigma_i);u_i^*TX,u_i^*TL),
$$
{consisting of} the pairs $(V_1,V_2)$ where $V_i = (s_i,v_i)$ is a pair of a section $s_i$ and its asymptotic value $v_i$.
(Note $v_1 = v_2$ for an element of $ \frak H(\mathcal E_1,\mathcal E_2)$.)

\begin{lem}\label{lem616}
For $V \in \frak H(\mathcal E_1,\mathcal E_2)$ the following holds:
\begin{enumerate}
\item
There exist $C_{m,(\ref{form626})}, C'_{m,(\ref{form626})} >0$ such that
\begin{equation}\label{form626}
C_{m,(\ref{form626})} \| V\|_{W^2_{m+1,\delta}} \le \|\overline D_{(\kappa),\rho,T}(V)\|_{L^2_{m,\delta}} \le
C'_{m,(\ref{form626})}  \| V\|_{W^2_{m+1,\delta}}.
\end{equation}
\item
There exist $C_{m,(\ref{estimateDover})}, C_{m,(\ref{11231})} >0$ such that
\begin{equation}\label{estimateDover}
\left\|{\nabla_{\rho}^n}(\overline D_{(\kappa),\rho,T}(V) -  \overline D_{(0),\rho,T}(V)) \right\|_{L^2_{m,\delta}}
\le C_{m,(\ref{estimateDover})}  e^{-\delta_1 T/10}\| V\|_{W^2_{m+1,\delta}},
\end{equation}
for $m\ge n$ and
\begin{equation}\label{11231}
\left\|\nabla_{\rho}^n \frac{\partial^{\ell}}{\partial T^{\ell}}  \overline D_{(\kappa),\rho,T}(V)
\right\|_{L^2_{m-\ell,\delta}} \le C_{m,(\ref{11231})}  e^{-\delta_1T/10}
\| V\|_{W^2_{m+1,\delta}}\end{equation}
for $m\ge n \ge 0,{m\ge \ell > 0}$.
\end{enumerate}
\end{lem}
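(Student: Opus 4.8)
The plan is to establish Lemma \ref{lem616} by reducing everything to three facts: (i) the uniform invertibility of the genuine linearized operators $D_{u_i}\overline\partial$ modulo $\mathcal E_i(u_i)$ on $\frak H(\mathcal E_1,\mathcal E_2)$, which is Assumption \ref{DuimodEi} combined with Definition \ref{defnfrakH}; (ii) the exponential closeness of $\hat u^{\rho}_{i,T,(\kappa)}$ to $u_i$ on the relevant regions, which comes from \eqref{form0184a}, \eqref{form0182}, \eqref{form0186} and Lemma \ref{Lemma6.9}; and (iii) the derivative estimates for the parallel-transport comparison maps and the projections, namely Lemma \ref{lemma615} and its analogues for $\Phi_{i;(\kappa)}(\rho,T)$ and the $(0,1)$-parallel transports. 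First I would note that $\overline D_{(\kappa),\rho,T}$ is, by its very construction in \eqref{formnew653}, a conjugate of $({\rm id}-\Pi_{\mathcal E(\hat u^{\rho}_{\cdot,T,(\kappa)})})\circ(D_{\hat u^{\rho}_{1,T,(\kappa)}}\overline\partial \oplus D_{\hat u^{\rho}_{2,T,(\kappa)}}\overline\partial)$ by the isomorphisms $I^0$, $I^1$ and $\Pi$; all of these are uniformly bounded with uniformly bounded inverses (the key input being that the relevant maps differ from the identity by exponentially small amounts on the neck region and agree with fixed maps on $K_i$).

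Next I would prove part (1). The right-hand inequality in \eqref{form626} is just uniform boundedness of all the operators involved, which follows from \eqref{form310} (boundedness of $D_{u_i}\overline\partial$ as written) together with uniform $C^1$-control on $\hat u^{\rho}_{i,T,(\kappa)}$ coming from \eqref{form0184a} and Lemma \ref{Lemma6.9}. For the left-hand inequality, the model case is $\overline D_{(0),\rho,T}$ with $\rho=0$, $T=\infty$, which is precisely $\Pi\circ({\rm id}-\Pi_{\mathcal E_i(u_i)})\circ(D_{u_1}\overline\partial\oplus D_{u_2}\overline\partial)$ restricted to $\frak H(\mathcal E_1,\mathcal E_2)$; by Definition \ref{defnfrakH} this is an isomorphism onto $\mathcal E_1^\perp\oplus\mathcal E_2^\perp$, hence bounded below. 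A standard perturbation argument — bounded-below operator stays bounded below under small perturbation — then upgrades this to all $(\kappa,\rho,T)$ with $T$ large, using part (2)'s estimate \eqref{estimateDover} to control the perturbation. So part (1) is essentially a corollary of part (2) plus Assumption \ref{DuimodEi}.

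The substantive work is part (2), and within it the main obstacle is \eqref{11231}: controlling $\nabla_\rho^n\partial_T^\ell$ of $\overline D_{(\kappa),\rho,T}(V)$ with an exponentially small bound and, crucially, with the loss of one derivative per $T$-derivative built into the target norm $L^2_{m-\ell,\delta}$. The strategy is to write $\overline D_{(\kappa),\rho,T}-\overline D_{(0),\rho,T}$ as a telescoping combination of three types of discrepancies: the difference $D_{\hat u^{\rho}_{i,T,(\kappa)}}\overline\partial - {\rm Pal}\,D_{\hat u^{\rho}_{i,T,(0)}}\overline\partial\,{\rm Pal}^{-1}$ (controlled, as in the proof of Lemma \ref{lem112kappa}, by the $W^2_{m+1,\delta}$-smallness of ${\rm E}(\hat u^{\rho}_{i,T,(\kappa)},\hat u^{\rho}_{i,T,(0)})$ from \eqref{form0184a}); the difference between the projections $\Pi_{\mathcal E_{(\kappa),\rho,T}}$ and $\Pi_{\mathcal E(u_i)}$ (controlled via Lemma \ref{lemma615}); and the difference in the $\Phi_{i;(\kappa)}$'s. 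Each discrepancy factors through quantities whose $\nabla_\rho^n\partial_T^\ell$-derivatives have already been estimated — ${\rm E}(u_i,u^{\rho}_{T,(\kappa)})$ by \eqref{form184}, $\frak e^{\rho}_{i,T,(\kappa)}$ by \eqref{form186}, the basis sections ${\bf e}_{i,a;(\kappa)}(\rho,T)$ by \eqref{form6321} — and the point is that all of these live on $K_i$ or decay like $e^{-\delta_1 T}$, so after applying the Leibniz rule and the composition/multiplication estimates for Sobolev maps (the "fairly straightforward estimates" collected in the appendices) one obtains a bound of the form $C e^{-\delta_1 T/10}\|V\|_{W^2_{m+1,\delta}}$. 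The factor $\delta_1/10$ rather than $\delta_1$ absorbs the weight-function mismatches of the kind appearing in Remark \ref{rem:dropofweight} and in \eqref{form310310}; the loss $m+1\mapsto m-\ell$ is exactly the $V^{\rho}_{T,2,(\kappa)}(\tau'-10T)$ phenomenon of Remark \ref{rem611}, since $\partial_T$ acting on a $T$-shifted section costs one $\tau$-derivative, and this is harmless because we carry all $m$ simultaneously. I expect the bookkeeping of these telescoping terms, not any single estimate, to be the real labor, and I would relegate the term-by-term verification to an appendix while stating here only the final bound.
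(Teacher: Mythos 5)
Your high-level skeleton matches the paper: \eqref{11231} is the primary estimate, \eqref{estimateDover} is obtained from it by integrating over $T$ (using $\overline D_{(\kappa),\rho,\infty}=\overline D_{(0),\rho,\infty}$), and \eqref{form626} follows from \eqref{estimateDover} together with the invertibility of $\overline D_{(0),\rho,T}$, i.e.\ Assumption \ref{DuimodEi} plus Definition \ref{defnfrakH}. You also correctly identify the two inputs — Lemma \ref{lemma615} for the projection factor $\Pi\circ({\rm id}-\Pi_{\mathcal E_{(\kappa),\rho,T}})$ and the induction hypotheses \eqref{form184}, \eqref{form185} for the underlying maps $\hat u^{\rho}_{i,T,(\kappa)}$ — and you are right that the $m+1\mapsto m-\ell$ loss is built into these estimates via the $T$-shift mechanism of Remark \ref{rem611}.

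The genuine gap is in the treatment of the asymptotic component $v_i$ of $V=(s_i,v_i)\in W^2_{m+1,\delta}$. You write that the controlling quantities ``all live on $K_i$ or decay like $e^{-\delta_1 T}$,'' but the section $v_1^{\rm Pal}$ itself does \emph{not} decay on the neck — only $s_1-v_1^{\rm Pal}$ does. If one blindly applies the per-strip estimate $\|\nabla_\rho^n\partial_T^\ell D_{(\kappa),\rho,T}(v_1^{\rm Pal})\|\lesssim e^{-\delta_1 T}\|v_1\|$ on every strip $\Sigma(a)$ and then takes the weighted $\ell^2$ sum with weight $e_{1,\delta}^2$, the sum diverges: the weight grows like $e^{2\delta\tau'}$ along an unbounded end, whereas the per-strip bound for the $v_1^{\rm Pal}$ piece has no matching decay in $\tau'$. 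The paper's proof is rescued by the crucial observation that $\hat u^{\rho}_{1,T,(\kappa)}$ is \emph{constant} on $\Sigma_1\setminus K_1^{7T+1}$, so $D_{(\kappa),\rho,T}(v_1^{\rm Pal})=0$ there, confining the $v_1^{\rm Pal}$-contribution to $\lesssim 7T$ strips with weight at most $e^{7\delta T}$; this produces the $Te^{-2\delta_1T}e^{14\delta T}\|v_1\|^2$ term in \eqref{112312}, and it is precisely here that $\delta<\delta_1/10$ from \eqref{form310310} is needed to obtain $e^{-\delta_1 T/10}$. Your proposal, which never splits $s_1$ into $(s_1-v_1^{\rm Pal})+v_1^{\rm Pal}$ and never invokes the vanishing of the operator on $v^{\rm Pal}$ outside $K_i^{7T+1}$, would collapse at exactly this point — and your attribution of the $\delta_1/10$ to the ``drop of weight'' of Remark \ref{rem:dropofweight} is the wrong mechanism (that remark is about the cut-off near $\mathcal A_T$, $\mathcal B_T$, not about $v^{\rm Pal}$). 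Secondarily, your proposed telescoping decomposition of $\overline D_{(\kappa),\rho,T}-\overline D_{(0),\rho,T}$ addresses \eqref{estimateDover} directly rather than \eqref{11231}, reversing the paper's logical order; this is not fatal in itself, but it compounds the above omission since the $v^{\rm Pal}$ issue would then have to be re-discovered inside each telescoping term.
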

\begin{proof} The inequality
(\ref{form626}) follows from (\ref{estimateDover}) and the invertibility of 
the operator $ \overline D_{(0),\rho,T}$.
\par
The proofs of (\ref{estimateDover}), \eqref{11231} will occupy the rest of the proof.
\par
First, we remark that, by Lemma \ref{lemma615} and (\ref{formnew649}) we have:
\begin{equation}\label{estimateforBBB}
\left\Vert \nabla_{\rho}^n\frac{\partial^{\ell}}{\partial T^{\ell}}
(\Pi \circ
({\rm id} -  \Pi_{\mathcal E_{(\kappa),\rho,T}}))
\right\Vert_{L^2_{m+1-\ell}}
\le
C_{m,(\ref{estimateforBBB})}e^{-\delta_1 T}.
\end{equation}
\par\smallskip
We next study $D_{(\kappa),\rho,T}$.
By {(\ref{form184}) and (\ref{form185})
(See also Lemma \ref{Lemma6.9}.)},
we obtain the next inequalities by induction hypothesis:
\begin{eqnarray}
\displaystyle
{\left\| \nabla_{\rho}^n
\mathscr P_{i,\rho_i}{\rm E}(u^{\rho}_i,\hat u^{\rho}_{i,T,(\kappa)})\right\|_{L^2_{m+1-\ell}(K^T_i)}}
&\le& C_{m,(\ref{1124-1})}{e^{-\delta_1 T}},\label{1124-1}\\
\displaystyle\left\| \nabla_{\rho}^n
{\rm Pal}_{p_0^{\rho}}^{p_0}({\rm E}(p_0^{\rho},\hat u^{\rho}_{i,T,(\kappa)}))\right\|_{L^2_{m+1-\ell}
(K_i^{9T} \setminus K_i^T)}
&\le& C_{m,(\ref{1125-1})}e^{-\delta_1 T},\label{1125-1}
\end{eqnarray}
and
\begin{eqnarray}
\displaystyle\left\| \nabla_{\rho}^n \frac{\partial^{\ell}}{\partial T^{\ell}}
{\rm E}(u_i,\hat u^{\rho}_{i,T,(\kappa)})\right\|_{L^2_{m+1-\ell}(K^T_i)}
&\le& C_{m,(\ref{1124})}e^{-\delta_1 T},\label{1124}\\
\displaystyle\left\| \nabla_{\rho}^n \frac{\partial^{\ell}}{\partial T^{\ell}}
{\rm Pal}_{p_0^{\rho}}^{p_0}
({\rm E}(p_0^{\rho},\hat u^{\rho}_{i,T,(\kappa)}))\right\|_{L^2_{m+1-\ell}(K^{9T}_i \setminus K^T_i)}
&\le& C_{m,(\ref{1125})}e^{-\delta_1 T}.\label{1125}
\end{eqnarray}
Here $m-2\ge n$ and ${m-2\ge \ell > 0}$.
Note we use $\tau'$ as the coordinate of $ [0,\infty)_{\tau'}$
and $\tau''$ as the coordinate of $(-\infty,0]_{\tau''}$ in the left hand sides of
(\ref{1125-1}) and (\ref{1125}), respectively.
Hereafter we assume $n$, $\ell$ satisfy
 $m-2\ge n$ and ${m-2\ge \ell {\ge} 0}$ until the end of the proof of Lemma \ref{lem616}.
In fact (\ref{1125-1}), (\ref{1125}) follow from (\ref{form185}).
The inequality (\ref{1124-1}) follows from (\ref{form184}).
The inequality (\ref{1124}) follows from (\ref{form184}).
\par
We also remark that
$$
\hat u^{\rho}_{i,T,(\kappa-1)} \equiv p^{\rho}_{T,(\kappa-1)}
$$
on $\Sigma_i \setminus K_i^{7T+1}$.
\par
We will derive  (\ref{estimateDover}), (\ref{11231})
with $\overline D_{(\kappa),\rho,T}$ replaced by $D_{(\kappa),\rho,T}$
from these four inequalities as follows.
\par
We put $V = (V_1,V_2)$ and $V_i = (s_i,v_i)$.
\par
We consider the case of $\Sigma_1$ only since the case of $\Sigma_2$ is the same.
We divide the domain $\Sigma_1$ into the pieces $K_{1}$ and $[k-1,k+1]_{\tau'} \times [0,1]$,
($k\in [0,\infty) \cap \Z$).
\par
We denote each piece of these domains by $\Sigma(a)$.
For each such piece $\Sigma(a)$, we define $\Sigma(a,+)$ as follows.
(We use the $\tau'$ coordinate.)
\begin{enumerate}
\item[(a)]
If $\Sigma(a) = K_1$ then $\Sigma(a,+) = K_{1} \cup [0,1]_{\tau'}\times [0,1]$.
\item[(b)]
If $\Sigma(a) = [k-1,k+1]_{\tau'} \times [0,1]$ then $\Sigma(a,+) = [k-2,k+2] _{\tau'}\times [0,1]$.
\end{enumerate}
{We recall from the definition (\ref{formnew653}) and \eqref{DkapparhoT}
that $D_{(0),\rho,T}$, $\overline D_{(0),\rho,T}$ is independent of $T$ for any $\rho
\in \overline{V_1(\epsilon_2) \times_L V_2(\epsilon_2)}$. So we denote the common operators by 
$D_{\rho}$, $\overline D_{\rho}$.}

We first consider the case when $a$ is as in (a) above.
Then from (\ref{1124-1}),(\ref{1124}) we derive
\begin{equation}\label{1123120-1}
\aligned
&\left\|\nabla_{\rho}^n \frac{\partial^{\ell}}{\partial T^{\ell}}  {(D_{(\kappa),\rho,T}- D_{\rho})}(V)  \right\|_{L^2_{m-\ell}
(\Sigma(a))} \\
&\le C_{m,(\ref{1123120-1})}  e^{-\delta_1 T}
\|  s_1\|_{L^2_{m+1}(\Sigma(a,+))}.
\endaligned
\end{equation}
(Here we use the fact that 
$V$ is  independent of $T,\rho$.)
\par
We next consider the case when $a$ is as in (b),
that is, $\Sigma(a) = [k-1,k+1]_{\tau'} \times [0,1]$.
Then
\begin{eqnarray}
&&\left\|\nabla_{\rho}^n \frac{\partial^{\ell}}{\partial T^{\ell}}  {(D_{(\kappa),\rho,T}- D_{\rho})}(V)  \right\|_{L^2_{m-\ell}
(\Sigma(a))} \nonumber\\
&\le&
\left\| \nabla_{\rho}^n \frac{\partial^{\ell}}{\partial T^{\ell}} {(D_{(\kappa),\rho,T}- D_{\rho})}(s_1 - v_1^{\rm pal})
\right\|_{L^2_{m-\ell}(\Sigma({a}))}  \nonumber\\
&&\quad+
\left\| \nabla_{\rho}^n \frac{\partial^{\ell}}{\partial T^{\ell}} {(D_{(\kappa),\rho,T}- D_{\rho})}(v_1^{\rm pal})
\right\|_{L^2_{m-\ell}(\Sigma({a}))}
\label{11231-+}\\
&
\le& C_{m,(\ref{1123120-+})}e^{-\delta_1 T}\Vert s_1 -  v_1^{\rm pal} \Vert_{L^2_{m+1}(\Sigma(a,+))}
+
C_{m,(\ref{1123120-+})}e^{-\delta_1 T} \Vert v_1 \Vert.
\label{1123120-+}\end{eqnarray}
Here we use (\ref{form182})-(\ref{form186})
 to {derive the  inequality (\ref{1123120-+})}.
\par
Moreover for those $\tau'$ with $\Sigma(a,+) \cap [0,7T+1]_{\tau'} \times [0,1] = \emptyset$ we may
improve
(\ref{1123120-+}) to be
$\le C_{m,(\ref{1123120-+})}e^{-\delta_1}\Vert s_1 -  v_1^{\rm pal} \Vert_{L^2_{m}(\Sigma(a,+))}$.
(Namely the second term of (\ref{1123120-+}) drops out.)
This is because $\hat u^{\rho}_{1,T,(\kappa)}$ is then constant on $\Sigma(a,+)$ and
therefore $D_{(\kappa),\rho,T}(v_1^{\rm pal}) = 0$ thereon.
\par
The norm
$$
\sum_a
e_{1,\delta}(p(a))^2
\left\Vert  Y \right\Vert_{L^2_{m+1}(\Sigma(a))}^2,
$$
is equivalent to the $L^2_{m+1,\delta}$ norm $\| Y\|_{L^2_{m+1,\delta}(\Sigma_1)}^2$.
(Here $e_{1,\delta}$ is the weight function in (\ref{e1delta}) and
$p(a) \in \Sigma(a)$ is any chosen point for each $a$.)
\par
Moreover the norm
$$
\sum_{a: \text{Case (a)}} \left\Vert  s_1 \right\Vert_{L^2_{m+1}(\Sigma(a))}^2
+
\sum _{a: \text{Case (b)}}e_{1,\delta}(p(a))^2  \left\Vert  s_1 - v_1^{\rm Pal} \right\Vert_{L^2_{m+1}(\Sigma(a))}^2
+ \Vert v_1\Vert^2,
$$
is equivalent to the $W^2_{m+1,\delta}$ norm $\Vert (s_1,v_1)\Vert^2_{W^2_{m+1,\delta}(\Sigma_1)}$.
\par
Therefore
taking the weighted sum
of the square of  the inequalities (\ref{1123120-1}), (\ref{1123120-+}) with weight $e_{T,\delta}(p(a))^2$ and using the above mentioned
equivalence of norms,
we derive the estimate
\begin{equation}\label{112312}
\aligned
& \left\|\nabla_{\rho}^n \frac{\partial^{\ell}}{\partial T^{\ell}}
{(D_{(\kappa),\rho,T}- D_{\rho})}(V)  \right\|^2_{L^2_{m-\ell,\delta}
(\Sigma_1)}\\
&\le C_{m,(\ref{112312})}  e^{-2\delta_1 T}
\sum_{a: \text{Case (a)}}\|  s_1\|^2_{L^2_{m+1}(\Sigma(a,+))} \\
&\quad
+C'_{m,(\ref{112312})}\sum _{a: \text{Case (b)}}e^{-2\delta_1 T}e_{1,\delta}(p(a))^2  \left\Vert  s_1 - v_1^{\rm Pal} \right\Vert_{L^2_{m+1}(\Sigma(a,+))}^2
\\
&\quad
+ T e^{-2\delta_1T} e^{14 \delta T}C''_{m,(\ref{112312})}  \Vert v_1\Vert^2.
\endaligned
\end{equation}
Here we use the fact that if $\Sigma(a,+) \cap [0,7T+1]_{\tau'} \times [0,1] \ne \emptyset$ then
$e_{1,\delta}(p(a)) \le C e^{7 \delta T}$.
Using $\delta < \delta_1/10$ from (\ref{form310310}), (\ref{112312}) implies
\begin{equation}\label{new651}
\aligned
&\left\|\nabla_{\rho}^n \frac{\partial^{\ell}}{\partial T^{\ell}}  {(D_{(\kappa),\rho,T}- D_{\rho})}(V)  \right\|_{L^2_{m-\ell,\delta}
(\Sigma_1)}\\
&\le
C_{m,(\ref{new651})}
e^{-\delta_1 T/10}\Vert (s_1,v_1)\Vert_{W^2_{m+1,\delta}(\Sigma_1)}.
\endaligned
\end{equation}
This inequality and the same inequality for $\Sigma_2$ together with
(\ref{formnew653}) and (\ref{estimateforBBB}) imply  (\ref{11231}).
\end{proof}

Since the operator $\overline D_{(\kappa),\rho,T}$  is invertible, we can expand the operator
$$
\overline D_{(\kappa),\rho,T}^{-1} :  \mathcal E_{1}^{\perp}\oplus \mathcal E_{2}^{\perp}
\to \frak H(\mathcal E_1,\mathcal E_2)
$$
into
\begin{equation}\label{formnew666}
\aligned
\overline D_{(\kappa),\rho,T}^{-1}& =
\left( ( 1 + (\overline D_{(\kappa),\rho,T} -  \overline D_{{\rho}})\overline D_{{\rho}}^{-1})\overline D_{{\rho}}\right)^{-1} \\
& = \overline D_{{\rho}}^{-1}\sum_{k=0}^{\infty} (-1)^k ((\overline D_{(\kappa),\rho,T} -  \overline D_{{\rho}})\overline D_{{\rho}}^{-1})^k.
\endaligned
\end{equation}
{It follows from \eqref{form626}, \eqref{estimateDover} that} the right hand side converges
{as an operator from $(\mathcal E_{1}^{\perp}\oplus \mathcal E_{2}^{\perp})
\cap L^2_{m,\delta}$ to $W^2_{m+1,\delta}$}  as far as $\rho$ is in a sufficiently small neighborhood
$V(\rho'_0)$ of $\rho'_0$ and $T >T_{m,(\ref{formnew666})}$
\par
Therefore by differentiating this by $T$ using the Leibnitz rule, we derive
\beastar
\frac{\partial}{\partial T} \overline D_{(\kappa),\rho,T}^{-1}
& = & \overline D_{{\rho}}^{-1}\sum_{k_1=0}^{\infty}\sum_{k_2=0}^{\infty} (-1)^{k_1+k_2+1}
\left((\overline D_{(\kappa),\rho,T} - \overline D_{{\rho}})\overline D_{{\rho}}^{-1}\right)^{k_1} \\
&{}& \quad \times
\left(\frac{\partial}{\partial T} \overline D_{(\kappa),\rho,T}\overline D_{{\rho}}^{-1}\right)
\left((\overline D_{(\kappa),\rho,T} -  \overline D_{{\rho}})\overline D_{{\rho}}^{-1}\right)^{k_2}.
\eeastar
Along the way, we lose {one degree of differentiability}.
Here we note that the operator $(\overline D_{(\kappa),\rho,T} -  \overline D_{{\rho}})\overline D_{{\rho}}^{-1}$
is uniformly bounded as $\kappa,\rho,T$ vary with $\rho \in V(\rho_0')$,
$T >T_{m,(\ref{formnew666})}$ . In the same way, we can differentiate
with respect to $\rho$ without loss of derivative.

In a similar way we  can derive
\begin{equation}\label{derivativeDest}
\left\| \nabla_{\rho}^n \frac{\partial^{\ell}}{\partial T^{\ell}}
\overline D_{(\kappa),\rho,T}^{-1}(W)\right\|_{W^2_{m+1 - \ell,\delta}}
\leq C_{m,(\ref{derivativeDest})}e^{-\delta_1 T/10} \|W\|_{L^2_{m,\delta}}
\end{equation}
for $\ell > 0$ and $0\le \ell, n \le m-2$. (Here we assume $W$ is $T,\rho$ independent.)
Note (\ref{derivativeDest}) holds at $\rho \in V(\rho'_0)$.
Covering the compact set $\overline{V_1(\epsilon_2) \times_L V_2(\epsilon_2)}$
by finitely many such open sets $V(\rho'_0)$,
the inequality (\ref{derivativeDest}) holds at any $\rho \in V_1(\epsilon_2) \times_L V_2(\epsilon_2)$, 
by increasing the constant $C_{m,(\ref{derivativeDest})}$ if necessary.
\par
By definition we have
$$
\aligned
&(V^{\rho}_{T,1,(\kappa+1)},V^{\rho}_{T,2,(\kappa+1)},\Delta p^{\rho}_{T,(\kappa+1)})
\\
&=
(I^0_{(\kappa),\rho,T}\circ \overline D_{(\kappa),\rho,T}^{-1}
\circ (I^1_{(\kappa),\rho,T})^{-1})
({\rm Err}^{\rho}_{1,T,(\kappa)},{\rm Err}^{\rho}_{2,T,(\kappa)}).
\endaligned
$$
We remark that
$(I^1_{(\kappa),\rho,T})^{-1}
({\rm Err}^{\rho}_{1,T,(\kappa)},{\rm Err}^{\rho}_{2,T,(\kappa)})$
is estimated by (\ref{form184})
since the isomorphism  (\ref{form615}) is nothing but $(I^1_{i,(\kappa),\rho,T})^{-1}$.
\par
Since $\Psi_{i;(\kappa-1)}(\rho,T)$ in (\ref{newform612}) is nothing but
$(I^0_{i,(\kappa),\rho,T})^{-1}$,
the estimate (\ref{form182}) is one for
$(I^0_{(\kappa),\rho,T})^{-1}
(V^{\rho}_{T,1,(\kappa+1)},V^{\rho}_{T,2,(\kappa+1)},\Delta p^{\rho}_{T,(\kappa+1)})$.
\par
Therefore,
using Lemma \ref{lem616}, we derive (\ref{form182})  for $\kappa +1$,
by choosing $T_{2,m,\epsilon(6)}$ such that
$C_{m,(\ref{derivativeDest})}e^{-\delta_1 T_{2,m,\epsilon(6)}/10}
\le C_{5,m}\mu$.
\par
The proof of Proposition \ref{prop:inequalitieskappa} (2) is complete.
Thus the proof of Theorem \ref{exdecayT} is now complete.
\qed
\section{Surjectivity and injectivity of the gluing map}
\label{surjinj}

In this chapter we prove surjectivity and injectivity of the map $\text{\rm Glue}_T$ in Theorem \ref{gluethm1} and
complete the proof of Theorem \ref{gluethm1}.\footnote{
Here surjectivity means the second half of the statement of Theorem \ref{gluethm1}, that is
`The image contains $\mathcal M^{\mathcal E_1\oplus \mathcal E_2}((\Sigma_T,\vec z);u_1,u_2)_{\epsilon_{\epsilon(1),\epsilon(2),\langle\!\langle 
{\rm \ref{gluethm1}}\rangle\!\rangle}}$.'}
The proof goes along the line of \cite{Don83}. (See also \cite{freedUhlen}.)
The surjectivity proof along the line of this chapter is written in  \cite[Section 14]{FOn} and
injectivity is proved in the same way.
(\cite[Section 14]{FOn} studies the case of pseudoholomorphic curve without boundary.
It however can be easily adapted to the bordered case as we mentioned in
\cite[page 417 lines 21-26]{fooo:book1}.)
Here we explain the argument in our situation in more detail.
\par
We begin with the following a priori estimate.
\begin{prop}\label{neckaprioridecay}
There exist $\epsilon_5, C_{m,(\ref{form71})}$ such that
if $u : (\Sigma_T,\partial\Sigma_T) \to (X,L)$ is an element of
$\mathcal M^{\mathcal E_1\oplus \mathcal E_2}((\Sigma_T,\vec z);\beta)_{\epsilon}$
for $0 <\epsilon<\epsilon_5$ then we have
\begin{equation}\label{form71}
\left\| \frac{\partial u}{\partial \tau}\right\|_{C^m([\tau-1,\tau+1] \times [0,1])}
\le C_{m,(\ref{form71})} e^{-\delta_1 (5T - \vert \tau\vert)}.
\end{equation}
\end{prop}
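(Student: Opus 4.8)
\textbf{Plan of proof for Proposition \ref{neckaprioridecay}.}
The statement is a global version of the interior exponential decay estimate of Lemma \ref{lem:Ckexpdecay}, now applied not to a genuine pseudoholomorphic map but to a solution $u$ of the perturbed equation $\overline\partial u \equiv 0 \mod \mathcal E_1(u)\oplus \mathcal E_2(u)$. The key point is that the obstruction bundles $\mathcal E_i(u)$ are supported in the \emph{compact} pieces $K_i^0 \subset \operatorname{Int}K_i$, so on the entire neck region $[-5T,5T]\times[0,1]$ (in fact on a slightly larger set, say outside $K_1^0\cup K_2^0$) the map $u$ satisfies the \emph{honest} Cauchy--Riemann equation $\overline\partial u = 0$ with Lagrangian boundary condition. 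The plan is therefore to reduce the problem to the a priori estimate already available for honest pseudoholomorphic strips, namely Lemmata \ref{lem:C1expdecay} and \ref{lem:Ckexpdecay}.

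First I would fix $\epsilon_5$ small enough (at least $\epsilon_5 \le \epsilon_1$, and smaller than $\iota'_X$) so that Condition \ref{nearbyuprime} forces $\operatorname{Diam}(u([-5T,5T]\times[0,1]))$ to be small, and so that the finite-energy hypothesis $\mathfrak E(u)\le \mathfrak E_0$ from our standing Convention applies. Then on the subregion $[-5T+\ell_0, 5T-\ell_0]\times[0,1]$, where $\ell_0$ is chosen so that $K_i^0$ lies inside $K_i \setminus ([-5T,-5T+\ell_0]\times[0,1])$ respectively, the map $u$ is a genuine pseudoholomorphic strip with boundary on $L$ and small diameter. Applying Lemma \ref{lem:C1expdecay} to this strip — with ``$T$'' there replaced by $5T-\ell_0-1$ and with the distance to the two ends of the strip — gives the $C^1$-decay $|\partial u/\partial\tau| + |\partial u/\partial t| \le C e^{-\delta_1 d(\tau,\partial[-5T+\ell_0,5T-\ell_0])}$, which is of the form $Ce^{-\delta_1(5T-|\tau|)}$ up to adjusting the constant by a factor $e^{\delta_1(\ell_0+1)}$. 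Then I would bootstrap via Lemma \ref{lem:Ckexpdecay} (which is exactly the $C^k$-upgrade of the $C^1$ estimate, using $\epsilon$-regularity and the uniform local a priori estimates) to obtain the $C^m$-bound \eqref{form71} on the appropriate sub-strip.

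To cover the remaining bounded collar $K_i^0 \subset \Sigma_i$, which is where the equation is genuinely perturbed by $\mathcal E_i(u)$, I would note two things: (i) this collar is a \emph{fixed compact region} (it does not grow with $T$), so on it the desired exponential factor $e^{-\delta_1(5T-|\tau|)}$ is uniformly comparable to $e^{-5\delta_1 T}$, which is no stronger than what we already have on the adjacent neck piece by the previous step; and (ii) by elliptic estimates for the operator $\overline\partial$ applied to $u$ with right-hand side in the \emph{finite-dimensional} space $\mathcal E_i(u)$ (whose elements have uniformly bounded $C^m$-norm, cf.\ \eqref{Eitake}), together with Condition \ref{nearbyuprime}(1) which pins $u|_{K_i}$ near the honest solution $u_i$, the $C^m$-norm of $\partial u/\partial\tau$ on $K_i^0$ is controlled by its value on the boundary of a slightly larger region, hence by the neck estimate. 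Patching the two regions with the enumeration conventions for constants then yields a single constant $C_{m,(\ref{form71})}$.

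\textbf{Main obstacle.} The genuinely new ingredient compared with the unperturbed case is handling the transition across $\partial K_i^0$ where the equation changes from $\overline\partial u = 0$ to $\overline\partial u \in \mathcal E_i(u)$; one has to be careful that the a priori estimate on the neck does not implicitly use the equation all the way up to $\partial\Sigma_T$ but only up to $\partial K_i^0$, and that the finite-dimensional perturbation term does not spoil the $\epsilon$-regularity argument. Since $\mathcal E_i(u)$ is spanned by finitely many smooth sections of uniformly bounded size and the perturbation is confined to a compact set, this is a routine (if slightly delicate) interior elliptic estimate rather than a conceptual difficulty — the exponential decay itself comes entirely from the unperturbed region, exactly as in \cite[Section 14]{FOn}.
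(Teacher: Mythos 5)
Your proposal is correct and takes essentially the same approach as the paper, which disposes of the proposition in one line as a direct consequence of Lemma~\ref{lem:Ckexpdecay}, the key point being exactly the one you isolate: since $\mathcal E_i(u)$ is supported in $K_i^0\subset\operatorname{Int}K_i$, disjoint from the neck, $u$ is an honest pseudoholomorphic strip on $[-5T,5T]\times[0,1]$ with small diameter by Condition~\ref{nearbyuprime}(2), and the uniform $C^k$ exponential decay then applies with ``$T$'' taken to be $5T-1$. The $\ell_0$-shortening and the collar discussion in your second paragraph are harmless but unnecessary, since $K_i^0$ already lies strictly inside $K_i$ away from the neck boundary $\{\pm 5T\}\times[0,1]$ and the estimate \eqref{form71} is stated only for sub-strips of the neck.
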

{\begin{proof} We first recall that elements in $\mathcal E_i$ are supported in $K_i$
respectively. Therefore $u \in \mathcal M^{\mathcal E_1\oplus \mathcal E_2}((\Sigma_T,\vec z);\beta)_{\epsilon}$
satisifies $\overline\partial u = 0$ on the neck region $[-5T,5T] \times [0,1]$.
Then \eqref{neckaprioridecay} follows from Lemma \ref{lem:Ckexpdecay}.
\end{proof}
}
We also have the following:
\begin{lem}\label{gluedissmoothindex}
$\mathcal M^{\mathcal E_1\oplus \mathcal E_2}((\Sigma_T,\vec z);\beta)_{\epsilon}$
is a smooth manifold of dimension $\dim V_1 + \dim V_2 - \dim L$.
\end{lem}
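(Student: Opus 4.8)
The plan is to identify $\mathcal M^{\mathcal E_1\oplus \mathcal E_2}((\Sigma_T,\vec z);\beta)_{\epsilon}$ with a finite-dimensional smooth object via the gluing map, using the parametrized transversality statements already set up. First I would recall that Theorem \ref{gluethm1} gives a diffeomorphism $\text{\rm Glue}_T$ from $\mathcal M^{\mathcal E_1}((\Sigma_1,\vec z_1);u_1)_{\epsilon(2)} \times_L \mathcal M^{\mathcal E_2}((\Sigma_2,\vec z_2);u_2)_{\epsilon(2)}$ onto its image, and that for $\epsilon$ small enough (that is, $\epsilon \le \epsilon_{\epsilon(1),\epsilon(2),\langle\!\langle {\rm \ref{gluethm1}}\rangle\!\rangle}$ and $T > T_{2,\epsilon(1),\epsilon(2)}$) this image contains $\mathcal M^{\mathcal E_1\oplus \mathcal E_2}((\Sigma_T,\vec z);u_1,u_2)_{\epsilon}$. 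Once surjectivity and injectivity of $\text{\rm Glue}_T$ are established in this section (Proposition \ref{neckaprioridecay} being the first ingredient), the moduli space $\mathcal M^{\mathcal E_1\oplus \mathcal E_2}((\Sigma_T,\vec z);\beta)_{\epsilon}$ is in bijection with an open subset of the source fiber product.

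The key steps, in order, are: (i) show the source space $V_1(\epsilon) \times_L V_2(\epsilon) = \mathcal M^{\mathcal E_1}((\Sigma_1,\vec z_1);u_1)_{\epsilon} \times_L \mathcal M^{\mathcal E_2}((\Sigma_2,\vec z_2);u_2)_{\epsilon}$ is a smooth manifold; this was already observed right before the statement of Theorem \ref{gluethm1}, as a consequence of the evaluation transversality \eqref{Duievsurj} in Assumption \ref{DuimodEi}, which makes the fiber product over $L$ transversal. (ii) Compute its dimension: each factor $\mathcal M^{\mathcal E_i}((\Sigma_i,\vec z_i);u_i)_{\epsilon} \cong V_i(\epsilon)$ is a manifold of dimension $\dim \widetilde V_i = \dim V_i$ by the implicit function theorem applied to the surjective operators in Assumption \ref{DuimodEi}, and the transversal fiber product over $L$ has dimension $\dim V_1 + \dim V_2 - \dim L$. (iii) Transport this smooth structure through $\text{\rm Glue}_T$: since $\text{\rm Glue}_T$ is a diffeomorphism onto its image and (by the surjectivity/injectivity proof of this section, using Proposition \ref{neckaprioridecay} together with the a priori decay of Lemma \ref{lem:Ckexpdecay} and the Donaldson-type continuity argument referenced to \cite{Don83}, \cite{FOn}) that image is exactly $\mathcal M^{\mathcal E_1\oplus \mathcal E_2}((\Sigma_T,\vec z);\beta)_{\epsilon}$, the latter inherits the structure of a smooth manifold of the same dimension $\dim V_1 + \dim V_2 - \dim L$.

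The main obstacle is the same one this whole section is devoted to: proving that $\text{\rm Glue}_T$ is genuinely surjective and injective onto $\mathcal M^{\mathcal E_1\oplus \mathcal E_2}((\Sigma_T,\vec z);\beta)_{\epsilon}$ for the relevant range of $\epsilon$ and $T$. The a priori estimate of Proposition \ref{neckaprioridecay} shows any solution in the moduli space has exponentially small derivative in the neck, hence is $C^1$-close along $K_1$, $K_2$ to a pair $(u'_1, u'_2)$; the iteration used in the gluing construction then has to be run backwards to show such a $u$ is in the image of $\text{\rm Glue}_T$, and uniqueness of the solution to the linearized problems forces injectivity. Granting that — which is the content of the surjectivity proof along the lines of \cite[Section 14]{FOn} adapted to the bordered case — Lemma \ref{gluedissmoothindex} follows formally by pushing the smooth structure on the transversal fiber product $V_1(\epsilon) \times_L V_2(\epsilon)$ forward along the diffeomorphism $\text{\rm Glue}_T$, so the remaining work is essentially bookkeeping of the dimension count.
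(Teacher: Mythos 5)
Your proposal is circular in the context of the paper's argument. You propose to deduce the lemma by pushing the smooth structure on the transversal fiber product $V_1(\epsilon)\times_L V_2(\epsilon)$ forward through $\text{\rm Glue}_T$, granting its surjectivity and injectivity. But look at how Lemma \ref{gluedissmoothindex} is actually \emph{used} in the surjectivity proof that immediately follows: there, the paper defines the set $A$ of $s\in[0,1]$ for which $\tilde u^s$ lies in the image of $\text{\rm Glue}_T$, and to show $A$ is open it invokes precisely Lemma \ref{gluedissmoothindex} (the glued moduli space is a smooth manifold of the same dimension as the source) together with Lemma \ref{immersion} (the gluing map is an immersion). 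An immersion between manifolds of equal dimension is a local diffeomorphism, hence has open image; that is how openness of $A$ is obtained. If you instead establish Lemma \ref{gluedissmoothindex} by already assuming $\text{\rm Glue}_T$ is onto, you have used the conclusion of the surjectivity argument to prove one of its inputs.

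The paper's own proof avoids this entirely. It works directly on $\Sigma_T$: Assumption \ref{DuimodEi} together with the Mayer--Vietoris (linear gluing) principle of \cite{Mr} shows that the linearization
$$
V \mapsto (D_u\overline\partial)(V) - (D_u\mathcal E_1)(\frak e_1,V) - (D_u\mathcal E_2)(\frak e_2,V) \mod \mathcal E_1(u)\oplus \mathcal E_2(u)
$$
is surjective from $W^2_{m+1,\delta}((\Sigma_T,\partial\Sigma_T);u^*TX,u^*TL)$ onto the quotient, so the implicit function theorem exhibits the solution set of \eqref{mainequation} as a smooth manifold whose dimension is the index, which by linear gluing is $\dim V_1+\dim V_2-\dim L$. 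No reference to $\text{\rm Glue}_T$ or to the surjectivity/injectivity statements is made, which is exactly what lets the lemma be deployed later as an ingredient of those statements. Your step (i) and the dimension count in step (ii) are fine and consistent with the paper, but step (iii) is the gap: you need a proof of smoothness of the glued moduli space that does not presuppose the gluing map hits all of it, and the Mayer--Vietoris argument is the paper's mechanism for that.
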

\begin{proof}
Assumption \ref{DuimodEi}  and the Mayer-Vietoris principle (\cite{Mr}) imply that the linearized operator:
$$
W^2_{m+1,\delta}((\Sigma_T,\partial \Sigma_T);u^*TX,u^*TL)
\to L^2_{m,\delta}(\Sigma_T;u^*TX\otimes \Lambda^{0,1})/(\mathcal E_1(u)\oplus \mathcal E_2(u))
$$
of the equation (\ref{mainequation}), which is
defined by
$$
V \mapsto (D_u\overline\partial)(V) - (D_u\mathcal E_1)(\frak e_1,V) - (D_u\mathcal E_2)
(\frak e_2,V)
\mod \mathcal E_1(u)\oplus \mathcal E_2(u)
$$
is surjective. (See \cite[Proposition 7.1.27]{fooo:book1}.)
Here $\overline{\partial} u = (\frak e_1,\frak e_2) \in \mathcal E_1(u) \oplus
\mathcal E_2(u)$.
The lemma then follows from the implicit function theorem to solve the equation
$\delbar u'  \in \mathcal E_1(u) \oplus \mathcal E_2(u)$, for $u'$.
\end{proof}
\par
\medskip
\begin{proof}[Proof of surjectivity]
Let $u :  (\Sigma_T,\partial\Sigma_T) \to (X,L)$ be an element of the moduli space
$\mathcal M^{\mathcal E_1\oplus \mathcal E_2}((\Sigma_T,\vec z);u_1,u_2)_{\epsilon}$.
The purpose here is to show that $u$ {lies} in the image of $\text{\rm Glue}_T$.
{For this purpose, we first decompose $u$ into two pieces}
by defining {two maps} $u_i' : (\Sigma_i,\partial\Sigma_i) \to (X,L)$ as follows.
We put $p^u_0 = u(0,0) \in L$.
\begin{equation}
\aligned
&u'_{1}(z) \\
&=
\begin{cases} \Exp\left(p_0^u, \chi_{\mathcal B}^{\leftarrow}(\tau-T,t) E(p^u_0, u(\tau,t))\right)
&\text{if $z = (\tau,t) \in [-5T,5T] \times [0,1]$} \\
 u(z)
&\text{if $z \in K_1$} \\
p_0^u
&\text{if $z \in [5T,\infty)\times [0,1]$}.
\end{cases} \\
&u'_2(z) \\
&=
\begin{cases} \Exp\left(p_0^u, \chi_{\mathcal A}^{\rightarrow}(\tau+ T,t) E(p^u_0, u(\tau,t))\right)
&\text{if $z = (\tau,t) \in [-5T,5T] \times [0,1]$} \\
 u(z)
&\text{if $z \in K_2$} \\
p_0^u
&\text{if $z \in (-\infty,-5T]\times [0,1]$}.
\end{cases}
\endaligned
\end{equation}
Proposition \ref{neckaprioridecay} implies
\begin{equation}\label{form73}
\|
\Pi^{\perp}_{\mathcal E_i(u'_i)}\overline\partial u'_{i}
\|_{L^2_{m,\delta}(\Sigma_i)}
\le C_{m,(\ref{form73})} e^{-\delta_1 T}.
\end{equation}
On the other hand, by assumption and elliptic regularity we have
\begin{equation}\label{form74}
\|
u'_i - u_i
\|_{W^2_{m+1,\delta}(\Sigma_i)}
\le C_{m,(\ref{form74})} \epsilon.
\end{equation}
{Here we abuse the notation} $u'_i - u_i = {\rm E}(u_i,u'_i)$ as
we mentioned in Remark \ref{rem2222}.
\begin{lem}
{
There exist $\epsilon_{m,(\ref{1121})}$
and $C_{m,(\ref{1121})}$ with the following properties:
If $u \in \mathcal M^{\mathcal E_1\oplus \mathcal E_2}((\Sigma_T,\vec z);u_1,u_2)_{\epsilon}$
with $\epsilon < \epsilon_{m,(\ref{1121})}$
then there exist $\rho_i \in V_i$ ($i=1,2$) such that}
{
\begin{equation}\label{1121}
\|
{\rm E}(u^{\rho_i}_i,u'_i)\|_{W^2_{m+1,\delta}(\Sigma_i)}
\le C_{m,(\ref{1121})} e^{-\delta_1 T},
\end{equation}
where $u_i^{\rho_i}$ is the map associated to $\rho_i$ in $W^2_{m+1,\delta}((\Sigma_i,\del \Sigma_i),(X,L))$.
}
\end{lem}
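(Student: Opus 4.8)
The plan is to apply the implicit function theorem on each piece $\Sigma_i$ separately, using the transversality Assumption \ref{DuimodEi}(1) to guarantee that the linearized operator $D_{u_i}\overline\partial$ is surjective modulo $\mathcal E_i(u_i)$, hence has a bounded right inverse on $\mathfrak H$. The input to the implicit function theorem is the cut-off map $u'_i$: by the a priori estimate (Proposition \ref{neckaprioridecay}) the map $u'_i$ satisfies $\delbar u'_i \equiv 0 \bmod \mathcal E_i(u'_i)$ up to an error of size $O(e^{-\delta_1 T})$ in $L^2_{m,\delta}$ — this is exactly \eqref{form73} — while \eqref{form74} says $u'_i$ is $C_{m}\epsilon$-close to the given solution $u_i$ in $W^2_{m+1,\delta}$. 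So $u'_i$ is an approximate solution of \eqref{mainequationui} lying in a $C_m\epsilon$-neighborhood of $u_i$.

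First I would fix $\epsilon$ small enough (depending on $m$) that the quadratic-error/Newton-iteration argument underlying the implicit function theorem converges in the ball $\|v - u_i\|_{W^2_{m+1,\delta}} < C_m\epsilon$ around $u_i$: since $D_{u_i}\overline\partial$ is surjective mod $\mathcal E_i(u_i)$ by Assumption \ref{DuimodEi}(1), the moduli space $\mathcal M^{\mathcal E_i}((\Sigma_i,\vec z_i);u_i)_\epsilon$ is cut out transversally and is parametrized by $V_i(\epsilon)$ as in \eqref{eq:MMEiVi}; the implicit function theorem provides, for any $w$ with $\|\Pi^\perp_{\mathcal E_i(w)}\delbar w\|_{L^2_{m,\delta}}$ small and $w$ close to $u_i$, a genuine solution $u_i^{\rho_i}$ with $\rho_i \in V_i$ and $\|w - u_i^{\rho_i}\|_{W^2_{m+1,\delta}} \le C_m \|\Pi^\perp_{\mathcal E_i(w)}\delbar w\|_{L^2_{m,\delta}}$. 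Applying this with $w = u'_i$ and feeding in the bound \eqref{form73} gives \eqref{1121} directly, with $C_{m,(\ref{1121})}$ absorbing the operator norm of the right inverse of $\Pi^\perp_{\mathcal E_i}\circ D_{u_i}\overline\partial$ and the constant $C_{m,(\ref{form73})}$. The threshold $\epsilon_{m,\epsilon(10),(\ref{1121})}$ is chosen so that (i) $u'_i$ lands in the domain of convergence of the iteration and (ii) $C_{m,(\ref{form74})}\epsilon$ is small enough that the error term dominates, i.e. the $O(e^{-\delta_1 T})$ bound rather than an $O(\epsilon)$ bound holds.

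The main subtlety — and where one must be slightly careful rather than merely invoke a black-box — is that the obstruction space $\mathcal E_i(w)$ varies with $w$, so "implicit function theorem mod $\mathcal E_i$" is not literally the standard statement; one linearizes the nonlinear map $w \mapsto \Pi^\perp_{\mathcal E_i(w)}\delbar w$ using the operator $(D_{u'_i}\overline\partial)^\perp$ of \eqref{linearized2221}, exactly as was done in Section \ref{alternatingmethod}, and checks the quadratic estimate for this perturbed operator (the terms $(D_{w}\mathcal E_i)(\delbar w, \cdot)$ are controlled because $\delbar w$ is itself small, of size $O(\epsilon) + O(e^{-\delta_1 T})$). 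Since the relevant quadratic estimates and the uniform invertibility of the right inverse were already established in the course of proving Theorem \ref{gluethm1}, this step is routine; I expect the only real bookkeeping to be tracking that the constant $C_{m,(\ref{1121})}$ depends only on $X,L,u_i,m$ and not on $T$ or $\epsilon$, which follows because the right inverse of $\Pi^\perp_{\mathcal E_i}\circ D_{u_i}\overline\partial$ is $T$-independent (the operator itself does not involve $T$). Finally, I would note that the $\rho_i$ produced are automatically in $V_i(\epsilon)$ with $\epsilon$ as small as we like once $\epsilon < \epsilon_{m,\epsilon(10),(\ref{1121})}$, which is what is needed for the subsequent identification of $u$ with a point in the image of $\text{\rm Glue}_T$.
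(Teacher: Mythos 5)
Your proposal is correct and takes essentially the same approach as the paper: the paper's proof of this lemma is just the single sentence ``Therefore applying an implicit function theorem we obtain the following,'' with inputs \eqref{form73} and \eqref{form74}, and you have correctly unpacked that line, including the genuine subtlety (varying obstruction space $\mathcal E_i(w)$, handled via the linearization \eqref{linearized2221}) and the observation that the constant is $T$-independent because the right inverse of $\Pi^\perp_{\mathcal E_i}\circ D_{u_i}\overline\partial$ does not involve $T$. One small caveat: your condition (ii) in the choice of $\epsilon_{m,\epsilon(10),(\ref{1121})}$ is not actually needed --- once $\epsilon$ is small enough for the Newton iteration to converge, the distance from $u'_i$ to the produced solution is bounded by the right-inverse norm times the error norm from \eqref{form73}, hence $O(e^{-\delta_1 T})$ regardless of $\epsilon$; there is no competing $O(\epsilon)$ bound.
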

\begin{proof}
{We consider the set of triples $(v_1,v_2,q)$ where
$v_i : (\Sigma_i,\partial\Sigma_i) \to (X,L)$ 
are maps of $L^2_{m+1,loc}$ class and 
$q \in L$ such that the section
${\rm E}(q,v_i)$ of $T_qX$ has finite $L^2_{m+1,\delta}$ norm 
on $[0,\infty)_{\tau'} \times [0,1]$ ($i=1$),
$(-\infty,0]_{\tau''} \times [0,1]$ ($i=2$).
}
{
The space of such $(v_1,v_2,q)$ becomes a Hilbert manifold, 
which we denote by $\mathscr L$.
}

{
Suppose $d(v_i,u_i) \le \iota'_X$ pointwise.
We define
$$
G_i(v_1,v_2,q)
= 
\mathcal P (\overline\partial v_i) 
\in 
L^2_{m,\delta}(\Sigma_i,u_i^*TX \otimes \Lambda^{0,1}),
$$
where 
$$
\mathcal P : 
L^2_{m,\delta}(\Sigma_i,v_i^*TX \otimes \Lambda^{0,1})
\to
L^2_{m,\delta}(\Sigma_i,u_i^*TX \otimes \Lambda^{0,1})
$$
is defined by using parallel transport along the minimal geodesic. Then
$$
(G_1,G_2) : \mathscr U \to 
\bigoplus_{i=1}^2 L^2_{m,\delta}(\Sigma_i,u_i^*TX \otimes \Lambda^{0,1})
$$
is a smooth map. Here $\mathscr U$ is a small neighborhood of $(u_1,u_2,p_0)$
in $\mathscr L$.}
\par
{
We also define a smooth map \index[syindex]{Eiscr@$\mathscr E_i$}
$$
\mathscr E_i : \mathscr U \times \mathcal E_i^{\frak{ob}} 
\to  L^2_{m,\delta}(\Sigma_i,u_i^*TX \otimes \Lambda^{0,1})
$$
by composing the two maps induced by the parallel transports
$$
\aligned
&L^2_{m,\delta}(\Sigma_i,(u_i^{\frak{ob}})^*TX \otimes \Lambda^{0,1})
\to 
L^2_{m,\delta}(\Sigma_i,v_i^*TX \otimes \Lambda^{0,1}),
\\
&
L^2_{m,\delta}(\Sigma_i,v_i^*TX \otimes \Lambda^{0,1})
\to 
L^2_{m,\delta}(\Sigma_i,u_i^*TX \otimes \Lambda^{0,1}).
\endaligned
$$
(The map $\mathscr E_i$ is linear in the second factor.)
By Assumption \ref{DuimodEi}
$$
(G_1,G_2) + (\mathscr E_1,\mathscr E_2)
:\mathscr U  \times \mathcal E_1^{\frak{ob}} \times \mathcal E_2^{\frak{ob}}
\to 
\bigoplus_{i=1}^2 L^2_{m,\delta}(\Sigma_i,u_i^*TX \otimes \Lambda^{0,1})
$$ 
is transversal to zero.
Moreover restriction of the projection 
$\mathscr U  \times \mathcal E_1^{\frak{ob}} \times \mathcal E_2^{\frak{ob}}
\to \mathscr U$  
induces an isomorphism of
$$
((G_1,G_2) + (\mathscr E_1,\mathscr E_2))^{-1}(0)
$$
onto  an open neighborhood of the origin in $V_1
\times_L  V_2$. (Here the origin is the point corresponding to
$(u_1,u_2,p_0)$.)}
{
Finally we note that (\ref{form74}) implies $(u'_1,u'_2,p_0^u) \in \mathscr U$
and (\ref{form73}) implies
$$
\Vert (\Pi^{\mathcal E_1^\perp\oplus \mathcal E_2^\perp}) (G(u'_1,u'_2,p_0^u))\Vert_{L^2_{m,\delta}}
\le C e^{-\delta_1 T}.
$$
Therefore there exists $\frak e'_i \in \mathcal E_i^{\frak{ob}}$ such that
$$
\Vert (G_1,G_2)(u'_1,u'_2,p_0^u)
+ (\mathscr E_1,\mathscr E_2)(\frak e'_1,\frak e'_2) \Vert
\le C e^{-\delta_1 T}.
$$
Now the lemma immediately follows from the implicit function theorem
applied to the equation 
$$ (G_1,G_2)(v_1,v_2,q)
+ (\mathscr E_1,\mathscr E_2)(\frak e_1,\frak e_2) = 0.
$$
}
\end{proof}
We put
$\rho = (\rho_1,\rho_2) \in V_1 \times_L V_2$.
Then $u \in \mathcal M^{\mathcal E_1\oplus \mathcal E_2}((\Sigma_T,\vec z);u_1,u_2)_{\epsilon}$
also implies
\begin{equation}\label{form7676}
\vert\rho_i\vert \le {C_{m,(\ref{form7676})}}\epsilon.
\end{equation}
\par
By (\ref{1121}) we have
\begin{equation}\label{1123}
\|
{{\rm E}(u^\rho_T,u)}
\|_{W^2_{m+1,\delta}(\Sigma_T)}
\le C_{m,(\ref{1123})} e^{-\delta_1 T}.
\end{equation}
Here $u^{\rho}_T =\text{\rm Glue}_T(\rho)$.
\par
We put $V =
{\rm E}(u^{\rho}_T,u) \in \Gamma((\Sigma_T,\partial \Sigma_T);(u^{\rho}_T)^*TX;(u^{\rho}_T)^*TL)$.
Then
$$
u(z) =\Exp (u^{\rho}_T(z),V(z)).
$$
For $s \in [0,1]$
we define $u^s : (\Sigma_T,\partial \Sigma_T) \to (X,L)$ by
\begin{equation}\label{form7878}
u^s(z) =\Exp (u^{\rho}_T(z),sV(z)).
\end{equation}
(\ref{1123}) implies
\begin{equation}\label{form79}
\|
\Pi^{\perp}_{(\mathcal E_1\oplus \mathcal E_2)(u^s)}\overline\partial u^s
\|_{L^2_{m,\delta}(\Sigma_T)}
\le C_{m,(\ref{form79})} e^{-\delta_1 T}
\end{equation}
and
\begin{equation}\label{form792}
\left\|
\frac{\partial}{\partial s} u^s
\right\|_{W^2_{m+1,\delta}(K_i^S)}
\le C_{m,(\ref{form792})} e^{-\delta_1 T}
\end{equation}
for each $s \in [0,1]$.
\begin{lem}\label{lem7474}
If $T$ is sufficiently large, then there exists $\tilde u^s :  (\Sigma_T,\partial \Sigma_T) \to (X,L)$ $(s \in [0,1])$
with the following properties.
\begin{enumerate}
\item
$$
\overline\partial \tilde u^s \equiv 0 \mod (\mathcal E_1\oplus \mathcal E_2)(\tilde u^s).
$$
\item
\begin{equation}\label{sderovatove}
\left\|
\frac{\partial}{\partial s} \tilde u^s
\right\|_{W^2_{m+1,\delta}(K_i^S)}
\le C_{m,S,(\ref{sderovatove})} e^{-\delta_1 T}.
\end{equation}
\item $\tilde u^s = u^s$ for $s=0,1$.
\end{enumerate}
\end{lem}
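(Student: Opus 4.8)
The plan is to obtain $\tilde u^s$ by projecting each approximate solution $u^s$ of \eqref{form7878} onto the moduli space $\mathcal M^{\mathcal E_1\oplus\mathcal E_2}((\Sigma_T,\vec z);u_1,u_2)_{\epsilon}$ by Newton's iteration, just as in the proof of Lemma \ref{gluedissmoothindex}, but now carrying along the dependence on the extra parameter $s$. First I would fix once and for all a bounded right inverse $Q_{u^\rho_T}$ of the operator $V\mapsto (D_{u^\rho_T}\overline\partial)(V) - \sum_{i=1,2}(D_{u^\rho_T}\mathcal E_i)(\frak e_i,V)\bmod \mathcal E_1(u^\rho_T)\oplus\mathcal E_2(u^\rho_T)$, together with the associated finite dimensional ``slice'' transverse to its kernel. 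The surjectivity provided by Lemma \ref{gluedissmoothindex}, combined with the exponential $W^2_{m+1,\delta}$-closeness of $u^s$ to $u^\rho_T$ that follows from \eqref{1123} and \eqref{form7878}, shows that for $T$ large the corresponding operator at each $u^s$ is still surjective with a uniformly bounded right inverse $Q_{u^s}$, obtained from $Q_{u^\rho_T}$ by parallel transport together with a Neumann-series correction. Running the Newton scheme with initial point $u^s$, whose error $\Pi^\perp_{(\mathcal E_1\oplus\mathcal E_2)(u^s)}\overline\partial u^s$ is $O(e^{-\delta_1 T})$ in $L^2_{m,\delta}(\Sigma_T)$ by \eqref{form79}, then produces a vector field $W^s$ along $u^s$, lying in the slice, with $\|W^s\|_{W^2_{m+1,\delta}(\Sigma_T)}\le C_m e^{-\delta_1 T}$, and $\tilde u^s:=\Exp(u^s,W^s)$ satisfies conclusion (1).

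Conclusion (3) is then immediate: $u^0 = u^\rho_T$ and $u^1 = u$ are genuine solutions of \eqref{mainequation}, so their errors vanish; by uniqueness of the solution in the slice $W^0 = W^1 = 0$, whence $\tilde u^0 = u^\rho_T$ and $\tilde u^1 = u$. Moreover each $\tilde u^s$ is $C_m e^{-\delta_1 T}$-close to $u^s$, which interpolates between $u^\rho_T$ and $u$, so Condition \ref{nearbyuprime} holds for $T$ large and $\epsilon$ small, and therefore $\tilde u^s\in \mathcal M^{\mathcal E_1\oplus\mathcal E_2}((\Sigma_T,\vec z);u_1,u_2)_{\epsilon}$ for all $s\in[0,1]$.

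The main work is the derivative estimate \eqref{sderovatove}, which I would obtain by differentiating in $s$ the fixed-point equation defining $W^s$. Its source term is $Q_{u^s}\big(\Pi^\perp_{(\mathcal E_1\oplus\mathcal E_2)(u^s)}\overline\partial u^s\big)$, and by the Leibniz rule $\partial_s\big(\Pi^\perp_{(\mathcal E_1\oplus\mathcal E_2)(u^s)}\overline\partial u^s\big) = \Pi^\perp_{(\mathcal E_1\oplus\mathcal E_2)(u^s)}\big((D_{u^s}\overline\partial)(\partial_s u^s)\big) + \big(\partial_s\Pi^\perp_{(\mathcal E_1\oplus\mathcal E_2)(u^s)}\big)(\overline\partial u^s)$. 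The first summand is $O(e^{-\delta_1 T})$ because $D_{u^s}\overline\partial$ is bounded $W^2_{m+1,\delta}\to L^2_{m,\delta}$ uniformly in $s$ and $\|\partial_s u^s\|_{W^2_{m+1,\delta}(\Sigma_T)}\le C_m e^{-\delta_1 T}$ by \eqref{1123} (equivalently \eqref{form792}); the second summand is $O(e^{-\delta_1 T})$ because, by \eqref{DEidef}, the operator $\partial_s\Pi^\perp_{(\mathcal E_1\oplus\mathcal E_2)(u^s)}$ equals $-(D_{u^s}\mathcal E)(\cdot,\partial_s u^s)$, whose operator norm is $O(\|\partial_s u^s\|) = O(e^{-\delta_1 T})$, while $\overline\partial u^s$ remains bounded in $L^2_{m,\delta}(\Sigma_T)$. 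Feeding this into the term-by-term differentiation of the Newton iteration — using the uniform boundedness of $Q_{u^s}$ and of $\partial_s Q_{u^s}$, the quadratic character of the nonlinear remainder, and the already established smallness of $W^s$ — gives $\|\partial_s W^s\|_{W^2_{m+1,\delta}(\Sigma_T)}\le C_m e^{-\delta_1 T}$ and hence \eqref{sderovatove}. Unlike the $T$-derivatives of Section \ref{subsecdecayT}, no loss of Sobolev differentiability occurs here, since $s$ is an honest parameter and the domain $\Sigma_T$ does not move with $s$; the only delicate point is the uniform control, in $s$, of the projections $\Pi_{(\mathcal E_1\oplus\mathcal E_2)(u^s)}$ and of the transported right inverse, which follows from the smoothness of $u\mapsto\mathcal E_i(u)$ together with the bounds on $\partial_s u^s$.
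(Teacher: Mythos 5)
Your proposal is a correct argument, but it takes a genuinely different route from the paper. The paper reruns the alternating method of Sections \ref{alternatingmethod}--\ref{subsecdecayT} with $s$ as an extra parameter: the iterates are built by solving the approximate linearized problems on $\Sigma_1$ and $\Sigma_2$ separately (with the uniform right inverses already established there), so no global right inverse on $\Sigma_T$ is ever needed, and item (3) falls out because for $s=0,1$ the errors ${\rm Err}^s_{i,T,(0)}$ vanish so the iteration is stationary. You instead run a single global Newton iteration on $\Sigma_T$, using a right inverse $Q_{u^\rho_T}$ of the linearized equation on the whole glued surface. This is cleaner conceptually, but it shifts the work to producing a $T$-uniform bound on $Q_{u^\rho_T}$ in the $W^2_{m+1,\delta}(\Sigma_T)$ norms. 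Lemma \ref{gluedissmoothindex} as stated only gives surjectivity for each fixed $T$, not the quantitative uniform bound; in the paper's logic that uniform bound is \emph{not} asserted separately but is instead the output of the alternating method, so invoking Lemma \ref{gluedissmoothindex} alone under-supports your Neumann-series step. The bound can be extracted from the same Mayer--Vietoris input, so this is a gap in citation rather than a structural flaw. Your proof of (3) via uniqueness in the slice is equivalent to the paper's observation that the error vanishes at $s=0,1$. Your derivative estimate by differentiating the fixed-point equation (using $\partial_s\Pi^\perp_{\mathcal E} = -(D_{u^s}\mathcal E)(\cdot,\partial_s u^s)$ and the $O(e^{-\delta_1 T})$ bound on $\partial_s u^s$) is in spirit what the paper means by ``proved in the same way as the estimate of $\rho$ derivatives'' and is correct; you rightly observe there is no loss of Sobolev regularity in $s$ since the domain does not move, which is the crucial simplification versus the $T$-derivative case.
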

\begin{proof}
Run the alternating method described in Chapter \ref{alternatingmethod}
in the one-parameter family version.
Since $u^s$ is already a solution for $s=0,1$, {
the Newton's iteration in Chapter \ref{alternatingmethod} 
does not change it}.
More precisely, we regard $u^s$ in
(\ref{form7878}) as
$u^{\rho}_{T,(0)} = u^{s}_{T,(0)}$ and start
our inductive construction at Step 0-3 (Lemma \ref{lem18}).
\par

Then for $s = 0,1$,
$$
{\rm Err}^{s}_{1,T,(0)}
= \chi_{\mathcal X}^{\leftarrow} (\overline\partial u^{s}_{T,(0)} - \frak e^{s}_{1,T,(0)})
= 0.
$$
We can show ${\rm Err}^{s}_{2,T,(0)} = 0$ in the same way.
(Here ${\rm Err}^{s}_{i,T,(0)}$ is as in Definition \ref{deferfirst}.)
Hence $V^{s}_{T,1,(1)} = V^{s}_{T,2,(1)} = 0$, $\Delta p^{\rho}_{T,(1)} = 0$ for $s=0,1$ by
(\ref{144ffff}).
Therefore
$u^{s}_{T,(1)} = u^{s}_{T,(0)}$ by Definition \ref{defn518}.
Thus $u^{s}_{T,(\kappa)} = u^{s}_{T,(0)} = u^s$ for all $\kappa$ and $s=0,1$.
\par
Then $\tilde u^s = \lim_{\kappa}u^{s}_{T,(\kappa)} = u^s$ for $s=0,1$ follows.
\end{proof}
\begin{lem}\label{immersion}
The map
$\text{\rm Glue}_T : V_1(\epsilon) \times_L V_2(\epsilon) \to \mathcal M^{\mathcal E_1\oplus\mathcal E_2}((\Sigma_T,\vec z);
u_1,u_2)_{\epsilon}$
is an immersion if  $\epsilon < \epsilon_2$ and  $T$ is sufficiently large.
\end{lem}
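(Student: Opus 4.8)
The plan is as follows. First, the surjectivity of \eqref{Duievsurj} makes $V_1(\epsilon)\times_L V_2(\epsilon)$ a smooth manifold of dimension $\dim V_1+\dim V_2-\dim L$, and by Lemma \ref{gluedissmoothindex} the target $\mathcal M^{\mathcal E_1\oplus\mathcal E_2}((\Sigma_T,\vec z);u_1,u_2)_{\epsilon}$ has exactly the same dimension. So it suffices to show that the differential $D\,\text{\rm Glue}_T(\rho)$ is injective at every $\rho=(\rho_1,\rho_2)$ once $T$ is large (equidimensionality then even promotes ``immersion'' to ``local diffeomorphism''). The idea is to read off this injectivity from the $T=\infty$ limit of the gluing construction, for which the error is controlled by Theorem \ref{exdecayT}.

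Fix $S>0$. I would use Theorem \ref{exdecayT} with $\ell=1$ and $n\le 1$: integrating the resulting bound on $\nabla_\rho\frac{d}{dT}\text{\rm Glures}_{i,S}$ over $[T,\infty)$ shows that $\rho\mapsto \nabla_\rho\text{\rm Glures}_{i,S}(T,\rho)$ is Cauchy in $L^2_m$ as $T\to\infty$ and that
\[
\bigl\|\nabla_\rho\text{\rm Glures}_{i,S}(T,\rho)-\nabla_\rho\bigl(\lim_{T'\to\infty}\text{\rm Glures}_{i,S}(T',\rho)\bigr)\bigr\|_{L^2_m}\le C_{m,S}\,e^{-\delta_1 T}.
\]
By \eqref{form182} (as in the proof of Lemma \ref{Lemma6.9}), together with the observation that for large $T$ the pre-glued map restricted to $K_i^S$ already coincides with $u_i^{\rho_i}|_{K_i^S}$ while all later corrections are $O(e^{-\delta_1 T})$, one identifies $\lim_{T\to\infty}\text{\rm Glures}_{i,S}(T,\rho)=u_i^{\rho_i}|_{K_i^S}$, a map that depends on $\rho=(\rho_1,\rho_2)$ only through $\rho_i$. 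Consequently, for every tangent vector $\xi=(\xi_1,\xi_2)\in T_\rho(V_1\times_L V_2)$ (and after the usual parallel transport identification of the relevant pulled-back bundles),
\[
\bigl\|D\,\text{\rm Glures}_{i,S}(T,\rho)(\xi)-D_{\rho_i}\bigl(u_i^{\rho_i}|_{K_i^S}\bigr)(\xi_i)\bigr\|_{L^2_{m+1}(K_i^S)}\le C_{m,S}\,e^{-\delta_1 T}\,\|\xi\|,
\]
so in particular the directional derivative of $\text{\rm Glures}_{i,S}$ in the ``other'' variable $\xi_{3-i}$ is exponentially small; this is precisely what decouples the two factors in the limit.

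To finish, suppose $D\,\text{\rm Glue}_T(\rho)(\xi)=0$. Since $\text{\rm Glures}_{i,S}(T,\cdot)$ is $\text{\rm Glue}_T$ followed by restriction to $K_i^S\subset\Sigma_T$, we get $D\,\text{\rm Glures}_{i,S}(T,\rho)(\xi)=0$, hence $\|D_{\rho_i}(u_i^{\rho_i}|_{K_i^S})(\xi_i)\|_{L^2_{m+1}(K_i^S)}\le C_{m,S}e^{-\delta_1 T}\|\xi\|$ for $i=1,2$. By unique continuation (cf.\ Remark \ref{rem64}), restriction to $K_i^S$ is an embedding of $\mathcal M^{\mathcal E_i}((\Sigma_i,\vec z_i);u_i)_{\epsilon_2}$, so $\rho_i\mapsto u_i^{\rho_i}|_{K_i^S}$ is an immersion; since $\overline{V_i(\epsilon)}$ is compact, there is $c>0$ with $\|D_{\rho_i}(u_i^{\rho_i}|_{K_i^S})(\xi_i)\|\ge c\|\xi_i\|$ for all $\rho_i\in\overline{V_i(\epsilon)}$. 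Thus $\|\xi_i\|\le (C_{m,S}/c)e^{-\delta_1 T}\|\xi\|$, and summing over $i=1,2$ gives $\|\xi\|\le 2(C_{m,S}/c)e^{-\delta_1 T}\|\xi\|$; taking $T$ large enough that $2(C_{m,S}/c)e^{-\delta_1 T}<1$ forces $\xi=0$, which proves injectivity of $D\,\text{\rm Glue}_T(\rho)$.

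The only genuinely nontrivial ingredient here is Theorem \ref{exdecayT}, which is already established; the remaining steps — identifying the $T\to\infty$ limit of $\text{\rm Glures}_{i,S}$ and extracting the uniform lower bound $c$ from compactness — are routine. The one point I expect to require a little care is the interchange of $\lim_{T\to\infty}$ with $\nabla_\rho$, which is justified by the uniform-in-$\rho$ exponential convergence supplied by Theorem \ref{exdecayT}.
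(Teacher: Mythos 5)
Your proof is correct and follows essentially the same route as the paper: compose $\text{\rm Glue}_T$ with restriction to $K_i^S$, note that the $T=\infty$ case is an immersion by unique continuation, and invoke Theorem~\ref{exdecayT} to carry this over to large finite $T$. The paper gives only a three-line sketch, while you fill in the details — identifying the $T\to\infty$ limit of $\text{\rm Glures}_{i,S}$ with $u_i^{\rho_i}|_{K_i^S}$, integrating the $\ell=1$ decay estimate, and extracting a uniform lower bound by compactness of $\overline{V_i(\epsilon)}$ — but the skeleton is the same.
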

\begin{proof}
We consider the composition of $\text{\rm Glue}_T$ with
$$
\mathcal M^{\mathcal E_1\oplus\mathcal E_2}((\Sigma_T,\vec z);u_1,u_2)_{\epsilon}
\to  {L^2_{m+1}}((K_i^S,K_i^S\cap\partial \Sigma_i),(X,L))
$$
defined by restriction.
In the case $T = \infty$ this composition is obtained by restriction of maps.
By the unique continuation, this is certainly an immersion for $T=\infty$.
Then the exponential decay property stated in Theorem \ref{exdecayT} implies
that it is an immersion for sufficiently large $T$.
\end{proof}
{We are now ready to prove $u$ lies}  in the image of ${\rm Glue}_T$ by showing 
that the following set
$$
A = \{s \in [0,1] \mid \tilde u^{r} \in {\text{image of $\text{\rm Glue}_T$ 
for $r \le s$}}\}
$$
is nonempty, open and closed in $[0,1]$.
Obviously $0 \in A$ since $\widetilde u^0 = u_T^\rho$ {by Lemma \ref{lem7474} (3).} 
Lemma \ref{gluedissmoothindex} implies that $\mathcal M^{\mathcal E_1\oplus\mathcal E_2}((\Sigma_T,\vec z);
u_1,u_2)_{\epsilon}$
is a smooth manifold and has the same dimension as $V_1 \times_L V_2$.
Therefore Lemma \ref{immersion} implies that $A$ is open.
The closedness of $A$ follows from 
{Lemma \ref{immersion}. In fact
we may shrink the domain of ${\rm Glue}_T$ a bit 
if necessary so that it is not only an immersion but also 
the first derivative of its local inverse is uniformly bounded.
Then if $s$ is in the closure of $A$, the path $r \mapsto u_r^\rho$
$(r \in [0,s))$ has a lift  to the domain of ${\rm Glue}_T$ 
with length $\le C e^{-\delta_1 T}$. 
(We use Lemma \ref{lem7474} (2) here.) Therefore the closure of the lift 
is contained in the domain of ${\rm Glue}_T$,
when we take the constant $\epsilon_{\epsilon(1),\epsilon(2),\langle\!\langle
{\rm \ref{gluethm1}}\rangle\!\rangle}$ in Theorem \ref{gluethm1} (2)
sufficiently small.
It implies $s \in A$}.
\par
Therefore $1\in A$. Namely $u$ {lies} in the image of $\text{\rm Glue}_T$ as required.
\end{proof}
\begin{proof}[Proof of injectivity]
Let $\rho^j = (\rho_1^j,\rho_2^j) \in V_1\times_L V_2$ for $j=0,1$. We assume
\begin{equation}
\text{\rm Glue}_T(\rho^0) = \text{\rm Glue}_T(\rho^1)
\end{equation}
and
\begin{equation}
\| \rho_i^j\| < \epsilon.
\end{equation}
We will prove that $\rho^0 = \rho^1$ if $T$ is sufficiently large and $\epsilon$ is
sufficiently small.
We may assume that $V_1\times_L V_2$ is connected and simply connected, {by
taking $\epsilon$  sufficiently small.}

Then, we {can take} a path $s \mapsto \rho^s =(\rho^s_1,\rho^s_2) \in V_1 \times_L V_2$
such that
\begin{enumerate}
\item $\rho^s = \rho^j$ for $s=j$, $j=0,1$.
\item
$$
\left\| \frac{\partial}{\partial s} \rho^s \right\| \le \Phi_1(\epsilon)
+ \Psi_1(T)
$$
where $\lim_{\epsilon \to 0}\Phi_1(\epsilon) = 0$,
$\lim_{T \to \infty}\Psi_1(T) = 0$.
\end{enumerate}
We put $V(s)
= {\rm E}(u^{\rho^0}_T,u^{\rho^s}_T) \in  \Gamma((\Sigma_T,\partial \Sigma_T);(u^{\rho^0}_T)^*TX;(u^{\rho^0}_T)^*TL)$.
Then
$$
u^{\rho^s}_T(z) =\Exp (u^{\rho^0}_T(z),V(s)(z)).
$$
(By (2) $u^{\rho^s}_T(z)$ is $C^0$-close to $u^{\rho^0}_T(z)$, as $\epsilon \to 0$. Therefore
$V(s)$ is well defined if $\epsilon$ is small.)
Note $V(1) = V(0)$ since $u^{\rho^1} = u^{\rho^0}$.
Then for $w \in D^2 = \{w \in \C \mid \vert w\vert \le 1\}$, there exists $V(w)$ such that:
\begin{enumerate}
\item
$V(s) = V(w)$ if $w = e^{2\pi\sqrt{-1}s}$.
\item
We put $w = x + \sqrt{-1}y$.
\begin{equation}
\left\|
\frac{\partial}{\partial x} V(w)
\right\| _{W^2_{m+1,\delta}(\Sigma_T)}
+
\left\|
\frac{\partial}{\partial y} V(w)
\right\| _{W^2_{m+1,\delta}(\Sigma_T)}
\le \Phi_2(\epsilon) + \Psi_2(T)
\end{equation}
where $\lim_{\epsilon \to 0}\Phi_2(\epsilon) = 0$,
$\lim_{T \to \infty}\Psi_2(T) = 0$.
\end{enumerate}
We put $u^w(z) = \Exp (u^{\rho^0}_T(z),V(w)(z))$.
\begin{lem}
If $T$ is sufficiently large and $\epsilon$ is sufficiently small
then there exists $\tilde u^w :  (\Sigma_T,\partial \Sigma_T) \to (X,L)$ $(s \in [0,1])$
with the following properties.
\begin{enumerate}
\item
$$
\overline\partial \tilde u^w \equiv 0 \mod (\mathcal E_1\oplus\mathcal E_2)(\tilde u^w).
$$
\item
\begin{equation}\label{sderovatove131}
\left\|
\frac{\partial}{\partial x} \tilde u^w
\right\|_{W^2_{m+1,\delta}(K_i^S)}
+
\left\|
\frac{\partial}{\partial y} \tilde u^w
\right\|_{W^2_{m+1,\delta}(K_i^S)}
\le \Phi_3(\epsilon) + \Psi_3(T)
\end{equation}
with $\lim_{\epsilon \to 0}\Phi_3(\epsilon) = 0$,
$\lim_{T \to \infty}\Psi_3(T) = 0$.
\item $\tilde u^w = u^w$ for $w\in \partial D^2$.
\end{enumerate}
\end{lem}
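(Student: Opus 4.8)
The plan is to run the alternating method of Sections~\ref{alternatingmethod} and~\ref{subsecdecayT} in a family version over the disk $D^2$, precisely as in the proof of Lemma~\ref{lem7474}, but with the parameter interval $[0,1]$ replaced by $D^2$ and with the role played by the $\rho$-variable in Section~\ref{subsecdecayT} now played by the two real coordinates $(x,y)$ on $D^2$. Concretely, I would take $u^w$ as the initial map, setting $u^w_{T,(0)} = u^w$, and start the inductive construction at Step~0-3 (Lemma~\ref{lem18}): this produces $\frak e^w_{i,T,(0)}\in\mathcal E_i$, then the error terms ${\rm Err}^w_{i,T,(0)}$ of Definition~\ref{deferfirst}, then the triple $(V^w_{T,i,(1)},\Delta p^w_{T,(1)})$ by solving the linearized equation~(\ref{144ffff}), and then $u^w_{T,(1)}$ as in Definition~\ref{defn518}. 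Iterating gives $u^w_{T,(\kappa)}$ for all $\kappa$, and for $T$ large the limit $\tilde u^w = \lim_{\kappa\to\infty} u^w_{T,(\kappa)}$ exists in $W^2_{m+1,\delta}$, solves $\overline\partial\tilde u^w\equiv 0\mod(\mathcal E_1\oplus\mathcal E_2)(\tilde u^w)$, and is smooth by elliptic regularity; this gives~(1), and the resulting $\tilde u^w$ stays $W^2_{m+1,\delta}$-close to $u^{\rho^0}_T$, hence lies in the relevant moduli space. The hypotheses on $V(w)$ guarantee that $V(w)$ and its first $x,y$-derivatives are small (tending to $0$ as $\epsilon\to 0$, $T\to\infty$), so the starting error $\|\overline\partial u^w_{T,(0)} - \frak e^w_{1,T,(0)} - \frak e^w_{2,T,(0)}\|_{L^2_{m,\delta}}$ is small uniformly in $w$, which is all the contraction scheme requires.

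For~(3), the key observation — exactly the one used in Lemma~\ref{lem7474} — is that for $w = e^{2\pi\sqrt{-1}s}\in\partial D^2$ the map $u^w$ coincides with the glued solution $u^{\rho^s}_T$, which already satisfies~(\ref{mainequation}). Hence on $\partial D^2$ we have $\overline\partial u^w_{T,(0)} = \frak e^w_{1,T,(0)} + \frak e^w_{2,T,(0)}$, so ${\rm Err}^w_{i,T,(0)} = 0$, so $V^w_{T,i,(1)} = 0$ and $\Delta p^w_{T,(1)} = 0$ by~(\ref{144ffff}), so $u^w_{T,(1)} = u^w_{T,(0)}$; by induction $u^w_{T,(\kappa)} = u^w$ for every $\kappa$, and therefore $\tilde u^w = u^w$ on $\partial D^2$.

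The estimate~(\ref{sderovatove131}) in~(2) is obtained by reproducing the inductive scheme of Proposition~\ref{prop:inequalitieskappa}: one proves the family analogues of~(\ref{form182})--(\ref{form186}) with $\nabla_\rho^n$ replaced by the derivatives $\frac{\partial}{\partial x}$, $\frac{\partial}{\partial y}$ and their iterates, uniformly in $w\in D^2$. The input that changes is the starting bound: in place of the exponentially small estimate on ${\rm E}(u^\rho_T,u)$ used in Lemma~\ref{lem7474}, one now feeds in the hypotheses $\|\frac{\partial}{\partial x}V(w)\|_{W^2_{m+1,\delta}} + \|\frac{\partial}{\partial y}V(w)\|_{W^2_{m+1,\delta}}\le\Phi_2(\epsilon)+\Psi_2(T)$, which is small but not exponentially small; correspondingly the conclusion comes out of the form $\Phi_3(\epsilon)+\Psi_3(T)$ with $\Phi_3,\Psi_3\to 0$ rather than of the form $Ce^{-\delta_1 T}$. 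Since the alternating scheme uses only smallness of the error terms and not a specific decay rate, the contraction estimates of Section~\ref{alternatingmethod} and the approximate-inverse estimates of Subsection~\ref{subsec62} apply verbatim, with the operator expansion~(\ref{formnew666}) again providing the uniform control of the $w$-dependence of the approximate right inverses and of the $w$-dependent obstruction-space projections.

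The main obstacle is purely bookkeeping: one must check that the family version of Proposition~\ref{prop:inequalitieskappa} holds uniformly over $w\in D^2$ — in particular that the right inverses of the approximate linearizations $D_{\hat u^w_{i,T,(\kappa)}}^{\text{\rm app},(\kappa)}$ stay uniformly bounded and that the projections $\Pi_{\mathcal E_{i,(\kappa),w,T}}$ vary in a controlled $C^\ell$ fashion in $w$, exactly as the $\rho$-dependence was controlled in Subsection~\ref{subsec62}. The loss of one derivative, visible in the norms $W^2_{m+1-\ell,\delta}$, again arises from the $T$-shift of $V^w_{T,2,(\kappa)}$ discussed in Remark~\ref{rem611}; but because $D^2$ is a fixed finite-dimensional parameter space and $m$ may be taken arbitrarily large, this causes no essential difficulty, and one obtains the asserted $C^1$-in-$w$ bound on $\tilde u^w$, which is all that is needed.
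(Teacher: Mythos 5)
Your proposal is correct and follows essentially the same route as the paper: the paper's proof is just the single sentence "Run the alternating method described in Subsection 5 in the two-parameter family version. (3) is proved in the same way as Lemma 7.4," and you have expanded that instruction into the concrete steps — starting the iteration at Step 0-3 with $u^w_{T,(0)}=u^w$, observing that for $w\in\partial D^2$ the map $u^w=u^{\rho^s}_T$ already solves (\ref{mainequation}) so the iteration is stationary and $\tilde u^w=u^w$ there, and carrying through the family versions of (\ref{form182})--(\ref{form186}) with $x,y$-derivatives in place of $\rho$-derivatives, noting that the starting smallness $\Phi_2(\epsilon)+\Psi_2(T)$ replaces the exponential bound and propagates through the contraction to give (\ref{sderovatove131}).
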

\begin{proof}
Run the alternating method described in Chapter \ref{alternatingmethod}
in the two-parameter family version.
(3) is proved in the same way as Lemma \ref{lem7474}.
\end{proof}
\begin{lem}\label{liftinglemma}
If $T$ is sufficiently large and $\epsilon$ is sufficiently small, there exists
a smooth map $F : D^2 \to V_1\times_L V_2$ such that
\begin{enumerate}
\item
$\text{\rm Glue}_T(F(w)) = \tilde u^{w}$.
\item If $s \in [0,1]$ then we have:
$$
F(e^{2\pi\sqrt{-1}s}) =\rho^s.
$$
\end{enumerate}
\end{lem}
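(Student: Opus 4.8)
\textbf{Proof proposal for Lemma \ref{liftinglemma}.}

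The plan is to construct $F$ as the composition of the restriction-then-unglue procedure already used in the surjectivity argument, applied fiberwise over $w \in D^2$, and then to verify that the resulting family is smooth in $w$ and agrees with $\rho^s$ on the boundary. First I would note that each $\tilde u^w$ is a genuine solution of $\overline\partial \tilde u^w \equiv 0 \mod (\mathcal E_1 \oplus \mathcal E_2)(\tilde u^w)$ satisfying Condition \ref{nearbyuprime} for $\epsilon(1)$ as small as we like (provided $T$ large, $\epsilon$ small), by property (2) of the previous lemma together with the fact that $u^{\rho^0}_T$ itself is such a solution. Hence $\tilde u^w \in \mathcal M^{\mathcal E_1\oplus \mathcal E_2}((\Sigma_T,\vec z);u_1,u_2)_{\epsilon_{\epsilon(1),\epsilon(2),\langle\!\langle {\rm \ref{gluethm1}}\rangle\!\rangle}}$, so by the surjectivity part of Theorem \ref{gluethm1} it lies in the image of $\text{\rm Glue}_T$; since $\text{\rm Glue}_T$ is injective (which we are in the middle of proving, but the local statement we need here is only that it is an immersion, Lemma \ref{immersion}, hence a local diffeomorphism onto its image), there is a locally unique preimage $F(w) \in V_1 \times_L V_2$ with $\text{\rm Glue}_T(F(w)) = \tilde u^w$. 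This already gives (1); the content is to check that the assignment $w \mapsto F(w)$ is well-defined globally on $D^2$ and smooth.

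Next I would establish smoothness and global well-definedness. Since $\text{\rm Glue}_T$ is an immersion between manifolds of the same dimension (Lemmata \ref{gluedissmoothindex} and \ref{immersion}), it is a local diffeomorphism onto an open subset of $\mathcal M^{\mathcal E_1\oplus \mathcal E_2}((\Sigma_T,\vec z);u_1,u_2)_{\epsilon}$; composing its local inverse with the map $w \mapsto \tilde u^w$, which is smooth into $\prod_i \text{\rm Map}_{L^2_{m+1}}((K_i^S,K_i^S\cap\partial\Sigma_i),(X,L))$ by (2) of the previous lemma, yields a smooth local lift. To globalize, observe that $D^2$ is simply connected, so any two local lifts that agree at one point agree on overlaps by the uniqueness of the immersion's local inverse; patching gives a single smooth $F : D^2 \to V_1 \times_L V_2$ once we pin the value at, say, $w = 1$. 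Here I would use the comparison embedding of Remark \ref{rem64}: restriction to $\bigsqcup_i K_i^S$ is an embedding of $\mathcal M^{\mathcal E_1\oplus \mathcal E_2}((\Sigma_T,\vec z);u_1,u_2)_{\epsilon_1}$ for each fixed $T$, so $\tilde u^w$ determines and is determined by its restrictions, making the identification with $F(w)$ canonical.

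Finally, for (2), I would check that on the boundary circle $w = e^{2\pi\sqrt{-1}s}$ the construction of $\tilde u^w$ reproduces $u^{\rho^s}_T = \text{\rm Glue}_T(\rho^s)$. Indeed, by property (3) of the previous lemma $\tilde u^w = u^w$ for $w \in \partial D^2$, and by construction $u^{e^{2\pi\sqrt{-1}s}}(z) = \Exp(u^{\rho^0}_T(z), V(s)(z)) = u^{\rho^s}_T(z)$; since $u^{\rho^s}_T = \text{\rm Glue}_T(\rho^s)$ lies in the image and $\text{\rm Glue}_T$ is injective (at least locally, and the path $\rho^s$ stays in the relevant small neighborhood), the unique preimage $F(e^{2\pi\sqrt{-1}s})$ must equal $\rho^s$. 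The main obstacle I anticipate is not any single estimate but rather ensuring the consistency of all the smallness thresholds: one must choose $T$ large and $\epsilon$ small once and for all so that (a) $\tilde u^w$ stays within the domain where $\text{\rm Glue}_T$ is an immersion with uniformly sized local inverses, (b) the estimate \eqref{sderovatove131} holds with $\Phi_3, \Psi_3$ small enough that $F$ has controlled derivatives, and (c) the path $\rho^s$ and its two-parameter extension $V(w)$ remain in the fixed neighborhoods $V_1 = V_1(\epsilon_2)$, $V_2 = V_2(\epsilon_2)$ fixed at Remark \ref{independetmm}. Once these choices are coordinated, the argument above produces $F$ with the required properties.
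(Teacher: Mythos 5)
Your proposal is pointed in the right direction (local diffeomorphism + continuity argument, boundary identification via $\tilde u^w = u^w$ and the definition $u^{e^{2\pi\sqrt{-1}s}} = u^{\rho^s}_T$), and it does name \eqref{sderovatove131} as relevant, but the globalization step is where the real content lies and where your argument currently has a gap.

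The claim that ``$D^2$ is simply connected, so any two local lifts that agree at one point agree on overlaps; patching gives a single smooth $F$'' would be valid if $\text{\rm Glue}_T$ were a \emph{covering map} onto its image, but it is only a local diffeomorphism from an open (non-complete) manifold $V_1\times_L V_2$. Local diffeomorphisms need not have the path-lifting or homotopy-lifting property: the standard counterexample is the restriction of the exponential covering $\mathbb{R}\to S^1$ to a bounded open interval, which is a surjective local diffeomorphism with discrete fibers, simply-connected target loops, and yet no global lifts because the would-be lift escapes the domain. So simply-connectedness of $D^2$, uniqueness of local lifts agreeing at a point, and surjectivity of $\text{\rm Glue}_T$ together do \emph{not} imply that a lift defined near $w=1$ can be continued to all of $D^2$. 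What the paper actually does is run a continuity (open-closed) argument over a one-parameter family of disks $D^2(r)$ pinned at a boundary point: openness of the set $A = \{r : \text{a lift exists on } D^2(r)\}$ comes from the local diffeomorphism property, while \emph{closedness} of $A$ is the non-trivial point, and that is precisely where \eqref{sderovatove131} enters---the derivative estimate ensures the lift does not run off to the edge of $V_1\times_L V_2$ as $r$ increases, so that a limiting lift exists.

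You do invoke \eqref{sderovatove131} in your final paragraph, but only to assert that ``$F$ has controlled derivatives'' as a consistency check; you need it more crucially as the non-escaping estimate that makes the continuity argument close. Without that, the ``patching'' step is unjustified. One secondary remark: your appeal to the surjectivity half of Theorem \ref{gluethm1} is legitimate here (surjectivity is proved before this lemma in the paper), but it is actually unnecessary in the paper's argument---existence of the lift at each stage comes from the open-closed argument itself, not from knowing in advance that every $\tilde u^w$ has a preimage.
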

\begin{proof}
Note that $\rho \mapsto \text{\rm Glue}_T(\rho)$ is a local diffeomorphism.
So we can apply the proof of homotopy lifting property as follows.
Let $D^2(r)
= \{z \in \C \mid \vert z - (r-1) \vert \le r\}$.
We put
$$
A = \{r \in [0,1] \mid \text{$\exists$ $F : D^2(r) \to V_1\times_L V_2$
satisfying (1) above and $F(-1) = \rho^{1/2}$}\}.
$$
Since $\text{\rm Glue}_T(\rho)$ is a local diffeomorphism,
$A$ is open. We can use (\ref{sderovatove131}) to show
closedness of $A$.
Then, since $0\in A$, it follows that $1\in A$. The proof of
Lemma \ref{liftinglemma} is complete.
\end{proof}
Lemma \ref{liftinglemma} implies {$
\rho^0 = F(e^{2\pi\sqrt{-1}\cdot 0}) = F(e^{2\pi\sqrt{-1}\cdot 1}) = \rho^1
$}.
The proof of Theorem \ref{gluethm1} is now complete.
\end{proof}
\section[Smoothness of coordinate change]{Exponential decay estimate implies smoothness of coordinate change}
\label{sec:smoothness of coordinate change}

In this chapter we demonstrate the way  we use Theorems \ref{gluethm1} and \ref{exdecayT}
to prove smoothness of coordinate change of the Kuranishi structure
of the moduli space of bordered stable maps.
Here we provide an argument in the case when we glue two
source curves which are non-singular and stable.
The proof for the general case is given in
\cite[Part IV]{fooo:techI}.
(See especially its Section 21.)
\par

\subsection{Including deformation of source curve}
\label{subsec81}
We first generalize Theorems \ref{gluethm1} and \ref{exdecayT} and include 
deformations of complex structures of the source curves
$(\Sigma_i,\vec z_i)$.
We consider the situation of Theorem \ref{gluethm1}.
Note we assumed that each $(\Sigma_i,\vec z_i)$ is stable.
We also remark that there is no automorphism
of  $(\Sigma_i,\vec z_i)$. (See Remark \ref{remark3838}.)
\par
Let $g_i$ be the genus of $\Sigma_i$.
For simplicity of the notation we assume that
the boundary of $\Sigma_i$ is connected.
Let $k_i + 1$ be the number of boundary
marked points. (For the sake of simplicity of notations we consider the case
when we have only boundary marked points.
The case when we also have interior marked points can be studied in the same way.)
We denote by \index[syindex]{Mgki@$\mathcal M_{g_i;k_i+1}$}
\begin{equation}
\mathcal M_{g_i;k_i+1}
\end{equation}
the moduli space of bordered and marked
Riemann surfaces with the same topological type
as $(\Sigma_i,\vec z_i)$.
Let
$\mathcal{CM}_{g_i;k_i+1}$ be its\index[syindex]{CMgki@$\mathcal{CM}_{g_i;k_i+1}$}
compactification which consists of stable marked bordered curves.
(See \cite[Subsection 2.1.2]{fooo:book1} for its definition.)
\par
Let  $\mathcal V'_i$ ($i=1,2$) be a neighborhood of $[\Sigma_i,\vec z_i]$
in $\mathcal M_{g_i;k_i+1}$ \index[syindex]{Vprimeical@$\mathcal V'_i$} 
\index[syindex]{Ciprimecal@$\mathcal C'_i$} 
and
\begin{equation}\label{unifamiC}
\pi_i : \mathcal C'_i \to \mathcal V'_i,  \qquad  \frak{z}_j : \mathcal V'_i \to \mathcal C'_i,
\quad (j=0,\dots,k_i)
\end{equation}
be the universal family.
Namely $\mathcal C'_i$ has a fiberwise
complex structure
such that
$(\pi_i^{-1}(\sigma),(\frak{z}_0(\sigma),\dots,\frak{z}_{k_i}(\sigma)))$
is a representative of $\sigma \in \mathcal V'_i$.
\par
We may choose $\mathcal V'_i$ so small that the bundle (\ref{unifamiC}) is
topologically trivial.
We fix a trivialization
$
\mathcal C'_i \cong \vert\Sigma_i \vert\times \mathcal V'_i
$
such that $\pi_i$ is the  projection to the second factor and that
$$
\frak{z}_j :  \mathcal V'_i \to \mathcal C'_i \to \vert\Sigma_i \vert \times \mathcal V'_i \to \vert\Sigma_i \vert
$$
are constant maps.
(Here $\vert\Sigma_i \vert$ is the bordered curve $\Sigma_i$ with
its complex structure forgotten.
Hereafter we write $\Sigma_i$ in place of $\vert\Sigma_i \vert$ by a slight abuse of notation.)
We denote\index[syindex]{Sigmaisigma@$\Sigma_i(\sigma)$}
\index[syindex]{zveci@$\vec z_i(\sigma)$}\index[syindex]{zifrak@$\frak z_i(\sigma)$}
$$
\Sigma_i(\sigma)
 =\pi_i^{-1}(\sigma),
 \quad
 \vec z_i(\sigma)
 = (
 \frak z_0(\sigma),\dots, \frak z_{k_i}(\sigma)).
 $$
\par
We will next define $\Sigma_1(\sigma_1) \#_T \Sigma_2(\sigma_2)
= \Sigma_T^{\sigma}$, \index[syindex]{SigmaTsigma@$\Sigma_T^{\sigma}$} which is obtained from
$\Sigma_1(\sigma_1)$ and $\Sigma_2(\sigma_2)$ by gluing.
To specify the way to glue we need to fix families of coordinates at the $0$-th marked
points $z_{i,0} \in \Sigma_i$.
\par
For our purpose it is useful to take an analytic family of coordinates, which we define below.
(Definition \ref{def8585}.)
To define it we start with the analogue for a closed Riemann surface.
Let $\mathcal M^{\rm cl}_{g,\ell+1}$ \index[syindex]{Mclgell@$\mathcal M^{\rm cl}_{g,\ell+1}$} 
be the moduli space of closed Riemann surface of
genus $g$ with $\ell+1$ marked points and $\mathcal{CM}^{\rm cl}_{g,\ell+1}$ 
its compactification consisting of stable curves.
(See \cite{Delignemum}.)
Let \index[syindex]{CMclgell+1@$\mathcal{CM}^{\rm cl}_{g,\ell+1}$}
\begin{equation}
\pi : \mathcal C^{\rm cl}_{g,\ell+1} \to \mathcal{CM}^{\rm cl}_{g,\ell+1}
\end{equation}
be the universal family.\index[syindex]{Cclgell@$\mathcal C^{\rm cl}_{g,\ell+1}$}
Let $[\Sigma,\vec z] \in \mathcal{CM}^{\rm cl}_{g,\ell+1}$
and $\Gamma$ be the automorphism group of $[\Sigma,\vec z]$.
Then a neighborhood of   $[\Sigma,\vec z]$ in
$\mathcal{CM}^{\rm cl}_{g,\ell+1}$ may be regarded as $\mathcal V/\Gamma$, where
$\mathcal V$ is a complex manifold.
$\pi^{-1}(\mathcal V/\Gamma)$ is identified with $\mathcal C^{\rm cl}(\mathcal V)/\Gamma$
where $\mathcal C^{\rm cl}(\mathcal V)$ is a complex manifold and
$\pi_{\mathcal V} : \mathcal C^{\rm cl}(\mathcal V) \to \mathcal V$ is a $\Gamma$ equivariant holomorphic map.
It comes with
$\Gamma$ equivariant
sections $\frak{z}_j : \mathcal V \to \mathcal C^{\rm cl}(\mathcal V)$,
$j=0,1,\dots,\ell$ such that
$$
(\Sigma(\sigma),\vec z(\sigma)) =
(\pi^{-1}(\sigma),(\frak{z}_0(\sigma),\dots,\frak{z}_{\ell}(\sigma)))
$$
is a representative of $\sigma$.
We put $z_j(\sigma) = \frak{z}_{j}(\sigma)$.
\begin{defn}\label{defn81}
A {\em complex analytic family of coordinates} \index{complex analytic family of coordinates} at 
the $j$-th marked point
on $\mathcal V$ is
a map
$\widetilde{\varphi} : D^2 \times \mathcal V \to \mathcal C^{\rm cl}(\mathcal V)$
with the following properties.
\begin{enumerate}
\item
$\widetilde{\varphi}$ is a biholomorphic map onto its image, which is an open subset.
\item
$\pi \circ \widetilde{\varphi} :  D^2 \times \mathcal V \to \mathcal V$ coincides with the projection to the
second factor.
\item
$\widetilde{\varphi}(0,\sigma) = z_j(\sigma)$.
\item
We consider the $\Gamma$ action on $T_{z_j}\Sigma \cong\C$
and use it to define a $\Gamma$ action on $D^2$.
Then $\widetilde{\varphi}$ is $\Gamma$ invariant.
\end{enumerate}
\end{defn}
Definition \ref{defn81} implies that the restriction of $\tilde\varphi$ to $D^2 \times \{\sigma\}$
becomes a complex coordinate of $\Sigma(\sigma)$ at $
\frak z_j(\sigma)$.
\par
{Existence} of a complex analytic family of coordinates is a consequence of 
{the}
$\Gamma$-equivariant version of implicit function theorem in complex analytic category
and is {well-known.} (See Appendix \ref{appendixF}.)
\par
Let $\mathcal V_i$  \index[syindex]{Vical@$\mathcal V_i$} be neighborhoods of $[\Sigma^{\rm cl}_i,\vec z_i] \in \mathcal{M}^{\rm cl}_{g_i,\ell_i+1}$, 
for $i=1,2$.
We assume that $[\Sigma^{\rm cl}_1,\vec z_1] \ne [\Sigma^{\rm cl}_2,\vec z_2]$.\footnote{In case $[\Sigma^{\rm cl}_1,\vec z_1] = [\Sigma^{\rm cl}_2,\vec z_2]$
we may have extra $\Z_2$ symmetry. Other than that the argument is the same as 
{in} the case
$[\Sigma^{\rm cl}_1,\vec z_1] \ne [\Sigma^{\rm cl}_2,\vec z_2]$.}
We put $\Gamma_i = {\rm Aut}(\Sigma^{\rm cl}_i,\vec z_i)$.
Let
$\widetilde{\varphi}_i : D^2_i \times \mathcal V_i \to \mathcal C^{\rm cl}_i(\mathcal V_i)$
be two complex analytic families of coordinates at {the} $0$-th marked points on neighborhoods {$\mathcal V_i$} of
$[\Sigma^{\rm cl}_i,\vec z_i]$.
\par
We identify $z_{1,0} \in \Sigma^{\rm cl}_1$ with
$z_{2,0} \in \Sigma^{\rm cl}_2$. Then we obtain an element
$$
[\Sigma^{\rm cl}_{\infty},\vec z_{\infty}] \in \mathcal{CM}^{\rm cl}_{g_1+g_2,\ell_1+\ell_2}.
$$
Using our complex analytic families of coordinates we define a map
\begin{equation}
{\rm Glusoc} : \mathcal V_1 \times \mathcal V_2 \times D^2(\epsilon)
\to \mathcal{CM}^{\rm cl}_{g_1+g_2,\ell_1+\ell_2}
\end{equation}
as follows.
(Here ${\rm Glusoc}$ stands for ``gluing source''.)

Let $\sigma_i \in \mathcal V_i$ and $(\Sigma_i(\sigma_i),\vec z_i(\sigma_i))$ be a
marked Riemann surface representing it.
\par
We define {a map}
$
\varphi_{i,\sigma_i} : D^2 \to \Sigma_i(\sigma_i)
$
by
\begin{equation}
\varphi_{i,\sigma_i}(z) =  \widetilde \varphi_i(z,\sigma_i).
\end{equation}
{For each given} $\frak r \in D^2(\epsilon)$.
We consider the disjoint union
$$
(\Sigma_1(\sigma_1) \setminus \varphi_{1,\sigma_1}(D^2(
\vert \frak r\vert)))
\sqcup (\Sigma_2(\sigma_2) \setminus \varphi_{2,\sigma_2}(D^2(
\vert \frak r\vert))).
$$
(Here and hereafter $D^2(r) = \{  z\in \C \mid \vert z\vert < r\}$.)
 We identify
$\varphi_{1,\sigma_1}(z) \in \Sigma_1(\sigma_1) \setminus \varphi_{1,\sigma_1}(D^2(
\vert \frak r\vert))$
with $\varphi_{2,\sigma_2}(w) \in \Sigma_2(\sigma_2) \setminus
\varphi_{2,\sigma_2}(D^2(
\vert \frak r\vert))
$
if
$$
zw = \frak r.
$$
See Figure \ref{Fig81}.
\par
By this identification we obtain a Riemann surface, which we denote
by the symbol
$\Sigma_1(\sigma_1)\#_{\frak r} \Sigma_2(\sigma_2)$.
We put\index[syindex]{Glusoc@${\rm Glusoc}$}
\begin{equation}
{\rm Glusoc}(\sigma_1,\sigma_2,\frak r)
= (\Sigma_1(\sigma_1)\#_{\frak r} \Sigma_2(\sigma_2),\vec z'_1(\sigma_1) \cup \vec z'_2(\sigma_2)),
\end{equation}
where $\vec z'_i(\sigma_i) = \vec z_i(\sigma_i) \setminus \{z_{i,0}(\sigma_i)\}$.
\begin{figure}
\centering
\includegraphics{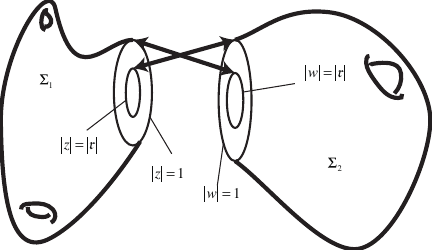}
\caption{Source curve gluing.}
\label{Fig81}
\end{figure}
\par
We define the $\Gamma_i$ action on $D^2(\epsilon)$ by identifying $D^2(\epsilon)$ with the ball of radius $\epsilon$
centered at origin in the
tangent space $T_{z_{i,0}}\Sigma_i$.
Then   ${\rm Glusoc}$ is $\Gamma_1 \times \Gamma_2$ equivariant.
\begin{lem}\label{lem820}
The map ${\rm Glusoc}$ induces a biholomorphic map
$$
\frac{\mathcal V_1 \times \mathcal V_2 \times D^2(\epsilon)}{\Gamma_1 \times \Gamma_2}
\to
\mathcal{CM}^{\rm cl}_{g_1+g_2,\ell_1+\ell_2}
$$
onto an open neighborhood of $[\Sigma^{\rm cl},\vec z]$.
\end{lem}
\begin{proof}
We define
\begin{equation}
\Omega_1 =
\mathcal V_2 \times \coprod_{\frak r \in D^2(\epsilon), \sigma_1 \in \mathcal V_1}
(\Sigma_1(\sigma_1) \setminus \varphi_{1,{\sigma_1}}(D^2(
\vert \frak r\vert))) \times \{\frak r\},
\end{equation}
\begin{equation}
\Omega_2 =
\mathcal V_1 \times \coprod_{\frak r \in D^2(\epsilon), \sigma_2 \in \mathcal V_2}
(\Sigma_2(\sigma_2) \setminus \varphi_{2,{\sigma_2}}(D^2(
\vert \frak r\vert))) \times \{\frak r\}.
\end{equation}
We regard $\Omega_1$ (resp. $\Omega_2$)
as an open subset of $\mathcal V_2 \times \mathcal C^{\rm cl}(\mathcal V_1)$
(resp. $\mathcal V_1 \times \mathcal C^{\rm cl}(\mathcal V_2)$).
So they are complex manifolds.
We also put
\begin{equation}
\Omega_3 = \{(z,w) \mid  \vert z\vert, \vert w\vert < 1,
\,\,\,  \vert zw\vert< \epsilon\}
\times \mathcal V_1 \times \mathcal V_2.
\end{equation}
We identify $(z,w,\sigma_1,\sigma_2) \in \Omega_3$
with $(\sigma_2,\varphi_1^{\sigma}(z), zw)
\in \Omega_1$
and with $(\sigma_1,\varphi_2^{\sigma}(w),zw)
\in \Omega_2$.
We obtain a complex manifold by this identification, which we denote by
$\mathcal C(\mathcal V_1,\mathcal V_2,\epsilon)$.
We define $\pi : \mathcal C(\mathcal V_1,\mathcal V_2,\epsilon)
\to \mathcal V_1 \times \mathcal V_2 \times D^2(\epsilon)$
by
\begin{equation}
\aligned
\pi(\sigma_2,\hat z,\frak r) &= (\pi_1(\hat z),\sigma_2,\frak r),
\qquad &\text{on $\Omega_1$},\\
\pi(\sigma_1,\hat w,\frak r) &= (\sigma_1,\pi_2(\hat w),\frak r),
\qquad &\text{on $\Omega_2$},
\\
\pi((z,w),\sigma_1,\sigma_2) &= (\sigma_1,\sigma_2,zw),
\qquad &\text{on $\Omega_3$}.
\endaligned
\end{equation}
Here $\hat z \in \mathcal C(\mathcal V_1)$,
$\hat w \in \mathcal C(\mathcal V_2)$, $\frak r \in D^2(\epsilon)$
and $\pi_i : \mathcal C(\mathcal V_i) \to \mathcal V_i$ is the projection.
$\pi$ is a well defined holomorphic map.
\par
Sections $\frak z_{i,j} : \mathcal V_i \to \mathcal C(\mathcal V_i)$,
$j=0,\dots,k_i$ (which give $j$-th marked point of the fiber)
induce the sections
$\frak z_j :  \mathcal V_1 \times \mathcal V_2 \times D^2(\epsilon)
\to \mathcal C(\mathcal V_1,\mathcal V_2,\epsilon)$
for $j=1,\dots,k_1+k_2$ in an obvious way.
\par
For $(\sigma_1,\sigma_2,\frak r) \in \mathcal V_1 \times \mathcal V_2 \times D^2(\epsilon)$
it is easy to see from the definition that
$$
(\pi^{-1}(\sigma_1,\sigma_2,\frak r),(\frak z_j(\sigma_1,\sigma_2,\frak r))_{j=1,\dots,k_1+k_2})
$$
is a representative of ${\rm Glusoc}(\sigma_1,\sigma_2,\frak r)$.
\par
We furthermore observe that $\pi : \mathcal C(\mathcal V_1,\mathcal V_2,\epsilon)
\to \mathcal V_1 \times \mathcal V_2 \times D^2(\epsilon)$
is $\Gamma_1 \times \Gamma_2$ equivariant.
\par
Therefore the biholomorphicity of the map ${\rm Glusoc}$ is a consequence of the
definition of the universal family.
\end{proof}
Now we go back to the case of bordered curves.
Let $[\Sigma,\vec z] \in \mathcal M_{g;k+1}$.
We take its {\it double} as in \cite[page 44]{fooo:book1} to obtain a closed Riemann surface,
$[\Sigma^{\rm cl},\vec z^{\rm cl}] \in \mathcal M^{\rm cl}_{2g;k+1}$.
There exits an anti-holomorphic involution
$
\tau : \Sigma^{\rm cl}\to \Sigma^{\rm cl}
$
such that :
\begin{enumerate}
\item[(a)]
The fixed point set $\Sigma^{\rm cl}_{\R}$ of $\tau$ is $S^1$,
which contains all the marked points.
\item[(b)]
The complement $\Sigma^{\rm cl} \setminus \Sigma^{\rm cl}_{\R}$ consists of
two connected components. The closure of one of them with
marked points is biholomorphic to $(\Sigma,\vec z)$.
\end{enumerate}
See Figure \ref{FIgurerealdouble}.
\begin{figure}
\centering
\includegraphics{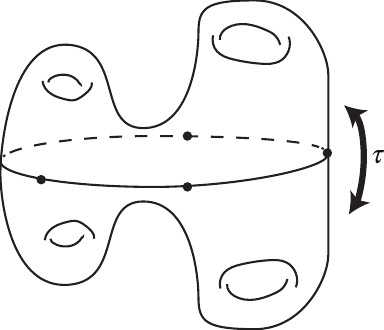}
\caption{Doubling bordered curve.}
\label{FIgurerealdouble}
\end{figure}
Let $\pi : \mathcal C(\mathcal V) \to \mathcal V$ be the universal family
in a neighborhood of $[\Sigma^{\rm cl},\vec z^{\rm cl}]$.
\begin{lem}\label{lem8383}
There exist anti-holomorphic involutions
$\tau : \mathcal C(\mathcal V) \to \mathcal C(\mathcal V)$,
$\tau : \mathcal V \to \mathcal V$ with the following properties.
\begin{enumerate}
\item $\pi \circ \tau = \tau \circ \pi$.
\item
The real dimension of the fixed point set $\mathcal V^{\R}$ of $\tau : \mathcal V \to \mathcal V$
is equal to the complex dimension of $\mathcal V$.
$\mathcal V^{\R}$ is identified with an open neighborhood of $[\Sigma,\vec z]$ in
$\mathcal M_{g;k+1}$.
\item $\sigma_0 =[\Sigma^{\rm cl},\vec z^{\rm cl}] \in \mathcal V$
is a fixed point of $\tau$. \index[syindex]{VcalR@$\mathcal V^{\R}$}
\item
The restriction of $\tau : \mathcal V \to \mathcal V$ to the fiber of $\sigma_0$ coincides with the
map $\tau : \Sigma^{\rm cl}\to \Sigma^{\rm cl}$.
\item
We restrict the universal family to the fixed point set $\mathcal V^{\R}$ of
$\tau : \mathcal V \to \mathcal V$.
We obtain a family of bordered marked Riemann surfaces in the same way as (b)
applied to each of the fiber of $\sigma \in \mathcal V^{\R}$.
This family is the universal family on
$\mathcal V^{\R} \subset \mathcal M_{g;k+1}$ of
bordered Riemann surfaces.
\end{enumerate}
\end{lem}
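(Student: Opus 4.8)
The statement to prove (Lemma \ref{lem8383}) asserts the existence of anti-holomorphic involutions on the universal family $\pi : \mathcal C(\mathcal V)\to \mathcal V$ near the doubled curve $[\Sigma^{\rm cl},\vec z^{\rm cl}]$, compatible with $\pi$, fixing $\sigma_0$, with the expected fixed-point dimension, and recovering the original bordered family over $\mathcal V^{\R}$. The plan is to obtain the involutions directly from the universal property of the Deligne--Mumford-type family, using the fact that the doubling construction already provides the anti-holomorphic involution $\tau : \Sigma^{\rm cl}\to \Sigma^{\rm cl}$ on the central fiber together with a marked-point-preserving action.

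First I would set up the conjugate family. Given $\pi : \mathcal C(\mathcal V) \to \mathcal V$ with its sections $\frak z_j$, one forms the complex-conjugate complex manifolds $\overline{\mathcal C(\mathcal V)}$ and $\overline{\mathcal V}$ (same underlying smooth manifolds, conjugated almost complex structure), with $\overline\pi : \overline{\mathcal C(\mathcal V)}\to \overline{\mathcal V}$ now holomorphic and carrying the sections $\overline{\frak z_j}$. This conjugate family is again a flat family of stable marked curves of the same topological type (genus $2g$, $k+1$ marked points). Hence by the universal property of $\pi : \mathcal C(\mathcal V)\to\mathcal V$ (after possibly shrinking $\mathcal V$), there is a unique holomorphic classifying map $f : \overline{\mathcal V}\to \mathcal V$ and a holomorphic lift $F : \overline{\mathcal C(\mathcal V)}\to \mathcal C(\mathcal V)$ covering it, matching marked points, provided we normalize by sending the central point to the point classifying $(\overline{\Sigma^{\rm cl}},\overline{\vec z^{\rm cl}})$. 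But the anti-holomorphic involution $\tau$ on $\Sigma^{\rm cl}$ is precisely a biholomorphism $\Sigma^{\rm cl}\to \overline{\Sigma^{\rm cl}}$ preserving marked points, so $(\overline{\Sigma^{\rm cl}},\overline{\vec z^{\rm cl}})$ is isomorphic to the central fiber and we may arrange $f(\overline{\sigma_0}) = \sigma_0$. Composing $f$ (resp. $F$) with the tautological conjugation $\mathcal V\to\overline{\mathcal V}$ (resp. $\mathcal C(\mathcal V)\to\overline{\mathcal C(\mathcal V)}$) yields anti-holomorphic maps $\tau : \mathcal V\to\mathcal V$ and $\tau : \mathcal C(\mathcal V)\to\mathcal C(\mathcal V)$ with $\pi\circ\tau=\tau\circ\pi$, giving (1). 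Uniqueness in the universal property forces $\tau^2 = \mathrm{id}$ on a neighborhood of $\sigma_0$ (since $\tau^2$ is a holomorphic automorphism of the family fixing $\sigma_0$ and equal to the identity on the central fiber by $\tau^2=\mathrm{id}$ on $\Sigma^{\rm cl}$), after shrinking $\mathcal V$; this also gives (3) and (4).

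Next I would analyze the fixed-point set to get (2). Since $\tau$ is an anti-holomorphic involution of the complex manifold $\mathcal V$ with a fixed point $\sigma_0$, its fixed-point set $\mathcal V^{\R}$ is a totally real submanifold of real dimension $\dim_{\C}\mathcal V$: one linearizes at $\sigma_0$, where $d\tau_{\sigma_0}$ is a conjugate-linear involution of $T_{\sigma_0}\mathcal V$, hence diagonalizable over $\R$ with $\pm 1$ eigenspaces of equal dimension, and the local structure follows from the anti-holomorphic version of the implicit function theorem (or a Bochner-type linearization). The identification of $\mathcal V^{\R}$ with a neighborhood of $[\Sigma,\vec z]$ in $\mathcal M_{g;k+1}$ follows because a point of $\mathcal V^{\R}$ is exactly a curve in the family equipped with an anti-holomorphic involution near $\tau$, whose fixed locus is a circle through the marked points separating the surface into two halves (property (a), (b) of the doubling, which persist under small deformation by openness/stability), and one takes the closure of the appropriate half; this yields (5) as well, matching the universal property of the bordered Deligne--Mumford space as in \cite[Subsection 2.1.2]{fooo:book1}.

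The main obstacle I anticipate is the careful bookkeeping of \emph{normalizations} and \emph{shrinking}: the universal property produces the classifying map only after choosing a representative of the central isomorphism class and only on a possibly smaller neighborhood, and one must check that all the compatibilities ($\pi\circ\tau=\tau\circ\pi$, $\tau^2=\mathrm{id}$, equivariance of the marked-point sections, and the restriction statement (5)) can be arranged \emph{simultaneously} on a single neighborhood. This is where the $\Gamma$-equivariance subtleties mentioned in the footnote (the case $[\Sigma^{\rm cl}_1,\vec z_1]=[\Sigma^{\rm cl}_2,\vec z_2]$) and the need to use the $\Gamma$-equivariant implicit function theorem of Appendix \ref{appendixF} enter; but since in the present bordered setting $(\Sigma,\vec z)$ has no nontrivial automorphisms (Remark \ref{remark3838}), the relevant automorphism group $\Gamma$ of the double is generated by $\tau$ itself, so the equivariance reduces to exactly the involution statement we are proving, and no extra complication arises. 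Everything else is standard deformation theory of stable curves, so I would state the argument at this level of detail and refer to \cite{Delignemum} and \cite[page 44]{fooo:book1} for the classical input.
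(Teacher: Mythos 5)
Your proposal is correct and follows essentially the same route as the paper's proof, which simply cites that $\mathcal C^{\rm cl}\to\mathcal M^{\rm cl}_{g;k+1}$ is a morphism of stacks defined over $\R$ (via Deligne--Mumford) and that $([\Sigma^{\rm cl},\vec z^{\rm cl}],\tau)$ is an $\R$-valued point. What you have done is unwind that algebro-geometric statement into the concrete local universal-property argument (classifying the conjugate family, normalizing at $\sigma_0$, using uniqueness to force $\tau^2=\mathrm{id}$ after shrinking, then linearizing at the fixed point to read off the dimension of $\mathcal V^{\R}$ and the persistence of the separating real circle); this is a more self-contained and slightly more delicate version of exactly the content that ``defined over $\R$'' supplies for free.
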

\begin{proof}
By \cite{Delignemum}, $\mathcal C^{\rm cl} \to \mathcal M^{\rm cl}_{g;k+1}$
is a morphism in the category of the stacks defined over $\R$.
The marked curve $[\Sigma^{\rm cl},\vec z^{\rm cl}]$ together with $\tau :
\Sigma^{\rm cl} \to \Sigma^{\rm cl}$
defines an $\R$-valued point of $\mathcal M^{\rm cl}_{g;k+1}$.
The lemma is a consequence of this fact.
\end{proof}
\begin{lem}\label{lem8484}
In the situation of Lemma \ref{lem8383} there exists a
complex analytic family of coordinates at the $j$-th marked point,
$\varphi : D^2 \times \mathcal V \to  C^{\rm cl}(\mathcal V)$,
in the sense of Definition \ref{defn81} such that
\begin{equation}\label{compcoordR}
\varphi(\overline z,\tau(\sigma))
=
\tau(\varphi(z,\sigma))
\end{equation}
in addition.
\end{lem}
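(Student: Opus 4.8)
The plan is to start from an arbitrary complex analytic family of coordinates at the $j$-th marked point — which exists by Definition \ref{defn81} and the $\Gamma$-equivariant holomorphic implicit function theorem recalled in Appendix \ref{appendixF} — and then to symmetrize it with respect to the anti-holomorphic involution $\tau$ produced in Lemma \ref{lem8383}. First I would observe that $\tau$ acts on the set $\mathcal F$ of $\Gamma$-equivariant complex analytic families of coordinates at the $j$-th marked point: for $\varphi\in\mathcal F$ put
\[
\varphi^{\tau}(z,\sigma) := \tau\bigl(\varphi(\overline z,\tau(\sigma))\bigr).
\]
Since $(z,\sigma)\mapsto(\overline z,\tau(\sigma))$ and $\tau\colon\mathcal C^{\rm cl}(\mathcal V)\to\mathcal C^{\rm cl}(\mathcal V)$ are both anti-holomorphic, $\varphi^{\tau}$ is holomorphic; using $\pi\circ\tau=\tau\circ\pi$, the fact that $\tau$ carries the section $\frak z_j$ to itself (the marked points lie on $\Sigma^{\rm cl}_{\R}$), and that $\tau$ normalizes $\Gamma$, one checks that $\varphi^{\tau}$ again satisfies (1)--(4) of Definition \ref{defn81} and that $\varphi\mapsto\varphi^{\tau}$ is an involution of $\mathcal F$. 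A coordinate satisfying \eqref{compcoordR} is precisely a fixed point of this involution, so the problem reduces to averaging over the group of order two generated by $\tau$.

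To perform the averaging I would use that $\mathcal F$ is a torsor: fixing $\varphi_0\in\mathcal F$, every $\varphi\in\mathcal F$ has the form $\varphi_0\cdot g$, with $(\varphi_0\cdot g)(z,\sigma)=\varphi_0\bigl(g(z,\sigma),\sigma\bigr)$, where $g$ runs over the group $\mathcal G$ of $\Gamma$-equivariant holomorphic families $g(\cdot,\sigma)$ of germs of biholomorphisms of $(D^2,0)$ fixing $0$, with group law $(g_1\ast g_2)(z,\sigma)=g_1\bigl(g_2(z,\sigma),\sigma\bigr)$. The involution descends to a group automorphism $g\mapsto g^{\tau}$, $g^{\tau}(z,\sigma)=\overline{g(\overline z,\tau(\sigma))}$, and a short computation gives $(\varphi_0\cdot g)^{\tau}=\varphi_0^{\tau}\cdot g^{\tau}$. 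Writing $\varphi_0^{\tau}=\varphi_0\cdot h$ and applying $\tau$ once more yields $h\ast h^{\tau}=\mathrm{id}$, i.e.\ $h^{\tau}=h^{-1}$; and the fixed-point equation for $\varphi_0\cdot g$ becomes $h=g\ast(g^{-1})^{\tau}$. Passing to the Lie algebra of $\mathcal G$ (families of germs of holomorphic vector fields on $(D^2,0)$ vanishing at $0$), on which $\tau$ acts $\R$-linearly by an involutive automorphism, the element $\xi:=\log h$ satisfies $\xi^{\tau}=-\xi$, so $g:=\exp(\tfrac12\xi)$ satisfies $g^{\tau}=g^{-1}$ and $g\ast(g^{-1})^{\tau}=g\ast g=h$. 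Then $\varphi:=\varphi_0\cdot g$ is $\tau$-equivariant, and it remains $\Gamma$-equivariant since $\tau$ normalizes $\Gamma$ and the whole averaging takes place inside $\mathcal G$.

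The hard part will be making the last step rigorous: one must establish $\sigma$-uniform convergence of $\log$ and $\exp$ on the infinite-dimensional group $\mathcal G$, which is why one should be prepared to shrink $D^2$ to a smaller disc $D^2(\epsilon)$ and to argue with convergent power-series expansions at $0$ with $\sigma$-uniform radii, exactly as in the estimates underlying Appendix \ref{appendixF}. An alternative that sidesteps this — and is probably the cleanest to write up — is to re-run the equivariant implicit function theorem of Appendix \ref{appendixF} with the symmetry group enlarged by the anti-holomorphic involution $\tau$: one first picks a model coordinate at $\sigma_0=[\Sigma^{\rm cl},\vec z^{\rm cl}]$ in which $\tau$ restricted to the central fibre is literally $z\mapsto\overline z$ (possible since $\frak z_j(\sigma_0)$ is a fixed point of the anti-holomorphic involution $\tau\colon\Sigma^{\rm cl}\to\Sigma^{\rm cl}$), and then the normal-form construction commutes with $\tau$ by naturality, so its unique solution automatically satisfies \eqref{compcoordR}. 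Either way, the $\Gamma$-equivariance is preserved throughout, completing the proof.
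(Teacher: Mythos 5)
Your proposal is correct, and in fact the ``alternative'' you propose at the end is precisely the paper's proof: Lemma~\ref{lem8484} is proved by replacing $\Gamma = \operatorname{Aut}(\Sigma^{\rm cl},\vec z^{\rm cl})$ with the index-two extension $\Gamma_+ = \Gamma \cup \{\tau\gamma \mid \gamma\in\Gamma\}$ and re-running the equivariant implicit function theorem of Appendix~\ref{appendixF}. Lemmas~\ref{LemF1} and~\ref{lemF2} there are written for exactly this setting (a finite group $\Gamma_+$ with index-$2$ subgroup $\Gamma$ and an order-$2$ element $\tau$ acting anti-holomorphically), and the finiteness of $\Gamma_+$ is what gives the vanishing $H^1(\Gamma_+,\operatorname{Hol})=0$ used in the linearization step. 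Your primary route is genuinely different: it treats the set $\mathcal F$ of $\Gamma$-equivariant families of coordinates as a torsor over the group $\mathcal G$ of $\Gamma$-equivariant holomorphic families of germs of reparametrizations of $(D^2,0)$, checks that $\varphi\mapsto\varphi^\tau$ descends to a compatible involution, and then solves the fixed-point equation $h = g\ast(g^{-1})^\tau$ (with $h^\tau = h^{-1}$) by $g := \exp(\tfrac12\log h)$. That argument is sound and more elementary in that it avoids group cohomology, but it transfers the difficulty to establishing $\sigma$-uniform convergence of $\log$ and $\exp$ in $\mathcal G$ (plus a normalization of $\varphi_0$ at the center so that $h$ stays near the identity after shrinking $\mathcal V$ and $D^2$), which you correctly identify as the delicate point. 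The paper's route concentrates the analytic work once, inside Lemma~\ref{LemF1}, and is indeed the cleaner write-up; both approaches yield the result.
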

\begin{proof}
In place of $\Gamma = {\rm Aut}(\Sigma^{\rm cl},\vec z^{\rm cl})$
we consider its $\Z_2$ extension $\Gamma_+ =
\Gamma\cup\{\tau\gamma \mid \gamma \in \Gamma\}$.
Then the proof is the same as the proof of existence of a complex analytic family of
coordinates, which uses an equivariant version of the implicit function theorem.
For completeness' sake, we provide the detail of the proof in Appendix \ref{appendixF}.
\end{proof}
We take a complex analytic family of coordinates
$\varphi : D^2 \times \mathcal V \to  C^{\rm cl}(\mathcal V)$
as in Lemma \ref{lem8484}.
Note $\Sigma = \Sigma(\sigma_0)$ is the closure of one of the
connected components of $\Sigma^{\rm cl} \setminus \Sigma^{\rm cl}_{\R}$.
Replacing $\varphi$ by the map 
$(z,\sigma) \mapsto \varphi(-z,\sigma)$ if necessary
we may assume
$$
\varphi (D^2_{\le 0}(1) \times \{\sigma_0\}) \subset \Sigma.
$$
(Here and hereafter we write $D^2_{\le 0}(r) = \{z \in D^2(r) \mid {\rm Im} z \le 0\}$.)
Then for any $\sigma \in \mathcal V^{\R}$ \index[syindex]{D2le0@ $D^2_{\le 0}(r)$}
we have
$$
\varphi (D^2_{\le 0}(1) \times \{\sigma\}) \subset \Sigma(\sigma).
$$
We identify the bordered Riemann surface $\Sigma(\sigma)$
as a connected component of
$\Sigma^{\rm cl}(\sigma) \setminus \Sigma^{\rm cl}_{\R}(\sigma)$
by Lemma \ref{lem8383} (5).
\par
We thus obtain
\begin{equation}\label{form8686}
\varphi^{\R} : D^2_{\le 0}(1) \times \mathcal V^{\R}
\to \mathcal C(\mathcal V^{\R}).
\end{equation}
Here $\pi : \mathcal C(\mathcal V^{\R}) \to \mathcal V^{\R}$
is the universal family of bordered Riemann surfaces.
For each $\sigma \in \mathcal V^{\R}$
the map $\varphi^{\R}$ determines a (complex) coordinate at the $j$-th marked point
of the bordered curve $\Sigma(\sigma)$.
\begin{defn}\label{def8585}
We define an {\em analytic family of coordinates} \index{analytic family of coordinates}
at the $j$-th marked point on $\mathcal V^{\R}$ to be a map $\varphi^{\R}$ as in (\ref{form8686}) obtained
from the complex analytic family of coordinates satisfying the conclusion of
Lemma \ref{lem8484}.
\end{defn}
Let $[\Sigma_i,\vec z_i] \in \mathcal M_{g_i;k_i+1}$ for $i=1,2$.
We take their doubles  $[\Sigma^{\rm cl}_i,\vec z^{\rm cl}_i] \in \mathcal M^{\rm cl}_{g_i;k_i+1}$.
Let $\mathcal C(\mathcal V_i) \to \mathcal V_i$ be the universal family on
a neighborhood $\mathcal V_i$ of $[\Sigma^{\rm cl}_i,\vec z^{\rm cl}_i]$
which satisfies the conclusion of  Lemma \ref{lem8383}. We take
$\varphi_i : D^2(1) \times \mathcal V_i \to \mathcal C(\mathcal V_i)$ which is a
complex analytic family of coordinates at the $0$-th marked points.
We assume that $\varphi_i $ satisfies the conclusion
(\ref{compcoordR}) of Lemma \ref{lem8484}.
We then obtain analytic families of coordinates at $0$-th marked points
on $\mathcal V_i^{\R}$,
\begin{equation}\label{form86862}
\varphi_i^{\R} : D^2_{\le 0}(1) \times \mathcal V_i^{\R}
\to \mathcal C(\mathcal V_i^{\R}).
\end{equation}
We define
$
\varphi_{i,\sigma_i}^{\R} : D^2_{\le 0} \to \Sigma_i(\sigma_i)
$
by
\begin{equation}
\varphi_{i,\sigma_i}^{\R}(z) = \varphi_i^{\R}(z,\sigma_i).
\end{equation}
Let $r \in [0,\epsilon)$, $(\sigma_1,\sigma_2) \in
\mathcal V_1^{\R} \times \mathcal V_2^{\R}$. We define
$\Sigma_1(\sigma_1) \#_r \Sigma_2(\sigma_2)$ as follows.
Let $z_{i,0}(\sigma_i) \in \Sigma_i(\sigma_i)$ be the $0$-th marked point.
We consider the disjoint union
$$
\left(\Sigma_1(\sigma_1) \setminus \varphi_{1,\sigma_1}^{\R}(D^2_{\le 0}(r))\right)
\sqcup
\left(\Sigma_2(\sigma_2) \setminus \varphi_{2,\sigma_2}^{\R}(D^2_{\le 0}(r))\right).
$$
We define an equivalence relation on this set such that
$
\varphi_{1,\sigma_1}^{\R}(z) \in \Sigma_1(\sigma_1) \setminus \varphi_{1,\sigma_1}^{\R}(D^2_{\le 0}(r)),
$
is equivalent to
$
\varphi_{2,\sigma_2}^{\R}(w) \in \Sigma_2(\sigma_2) \setminus \varphi_{2,\sigma_2}^{\R}(D^2_{\le 0}(r))
$
if and only if
\begin{equation}\label{form814-}
zw = -r.
\end{equation}
Let $\Sigma_1(\sigma_1) \#_r \Sigma_2(\sigma_2)$ be the set of
the equivalence classes of the above equivalence relation.
It becomes a bordered curve.
Thus we have defined\index[syindex]{GlusocR@${\rm Glusoc}^{\R}$}
\begin{equation}\label{form8140}
{\rm Glusoc}^{\R} : \mathcal V^{\R}_1 \times \mathcal V^{\R}_2 \times [0,\epsilon)
\to \mathcal{CM}_{g_1+g_2,\ell_1+\ell_2}.
\end{equation}
The next diagram commutes:
\begin{equation}\label{diag862}
\begin{CD}
\mathcal V^{\R}_1 \times \mathcal V^{\R}_2 \times [0,\epsilon)  @ >{{\rm Glusoc}^{\R}}>>
 \mathcal{CM}_{g_1+g_2,\ell_1+\ell_2}  \\
@ VV{}V @ VVV\\
\mathcal V_1 \times \mathcal V_2 \times D^2(\epsilon) @ >
{{\rm Glusoc}} >>
\mathcal{CM}^{\rm cl}_{g_1+g_2,\ell_1+\ell_2}
\end{CD}
\end{equation}
Here $[0,\epsilon) \to D^2(\epsilon)$ in the first vertical arrow is $r \mapsto -r$.
The other parts of the vertical arrows are obvious inclusions.
\par
The source curve gluing map ${\rm Glusoc}^{\R}$
can be identified with the one we described at the beginning of  Chapter \ref{sec:statement} as follows.
We put\index[syindex]{Kisigma@$K_i(\sigma_i)$}
\begin{equation}\label{form812}
\exp(-10\pi T) = r, \qquad K_i(\sigma_i) = \Sigma_i(\sigma) \setminus \varphi_{i,\sigma_i}^{\R}
(D_{\le 0}^2).
\end{equation}
We  define
$$
\Sigma_1(\sigma_1) \setminus K_1(\sigma_1) \cong [0,\infty)_{\tau'} \times [0,1],
\quad
\Sigma_2(\sigma_2) \setminus K_2(\sigma_2) \cong (-\infty,0]_{\tau''}\times [0,1],
$$
by
$$
\varphi_{1,\sigma_1}^{\R}(e^{\pi(x+\sqrt{-1}y)}) \mapsto (-x,-y)
=(\tau',t),
\,\, \varphi_{2,\sigma_2}^{\R}(e^{\pi(x+\sqrt{-1}(y+1))}) \mapsto (x,y)
=(\tau'',t).
$$
Then $\Sigma_1(\sigma_1) \#_T \Sigma_2(\sigma_2)$ as in (\ref{form32}) is isomorphic to
$\Sigma_1(\sigma_1) \#_r \Sigma_2(\sigma_2)$.
\par
In fact the variables $z,w$ appearing in (\ref{form814-}) are related to the
coordinates $\tau', \tau'', t$ of the neck region we used in Chapter \ref{subsecdecayT} by
$$
z = e^{-\pi(\tau' + \sqrt{-1}t)},
\quad
w = -e ^{\pi(\tau'' + \sqrt{-1}t)}.
$$
Note $zw = -r = -e^{-10\pi T}$ is equivalent  to $\tau'' = \tau' -10T$.
See Figure \ref{realcoorinf},
\begin{figure}
\centering
\includegraphics{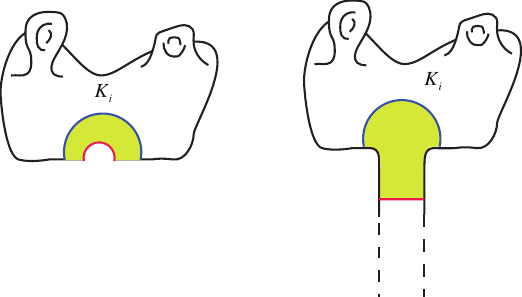}
\caption{Coordinate at infinity of bordered curve.}
\label{realcoorinf}
\end{figure}
\begin{defn}\label{data86}
For $[\Sigma_i,\vec z_i]$ in $\mathcal M_{g_i;k_i+1}$
we consider the following set $\Xi_i$ of data:\index[syindex]{Xii@$\Xi_i$}
\begin{enumerate}
\item
A neighborhood $\mathcal V^{\R}_i$ of  $[\Sigma_i,\vec z_i]$ in $\mathcal M_{g_i;k_i+1}$.
\item
An analytic family of coordinates $\varphi^{\R}_i$ at the 0-th marked points
 as in (\ref{form86862}).
\item
A trivialization of the universal family bundle $\mathcal C(\mathcal V^{\R}_i) \to \mathcal V^{\R}_i$
in the $C^{\infty}$ category.
It is a diffeomorphism $\mathcal C(\mathcal V^{\R}_i) \cong \Sigma_i \times \mathcal V^{\R}_i$
that commutes with the projections.
Moreover we require it to commute with $\varphi_i^{\R}$. Namely the composition
$$
D^2_{\le 0}(1) \times \mathcal V_i^{\R}
\to \mathcal C(\mathcal V_i^{\R})
\to \Sigma_i \times \mathcal V^{\R}_i
$$
of $\varphi_i^{\R}$ and the trivialization
is of the form $(z,\sigma) \mapsto (\phi(z),\sigma)$ where $\phi : D^2_{\le 0}(1) \to \Sigma_i$ is independent of $\sigma$.
\end{enumerate}
A {\it gluing datum} \index{gluing datum} centered at $([\Sigma_1,\vec z_1],[\Sigma_2,\vec z_2])$  is
by definition a pair $\Xi = (\Xi_1,\Xi_2)$ such that $\Xi_i$ are given as above.
\end{defn}
\begin{defnlem}\label{deflem87}
Let $\Xi = (\Xi_1,\Xi_2)$ be a gluing datum centered at $([\Sigma_1,\vec z_1],[\Sigma_2,\vec z_2])$.
\begin{enumerate}
\item
It induces the map (\ref{form8140}). We call the latter the
{\it source gluing map associated to $\Xi$} \index{source gluing map} and write ${\rm Glusoc}^{\R}_{\Xi}$.
\item
For any $\sigma=(\sigma_1,\sigma_2) \in {\mathcal V_1^{\R} \times \mathcal V_2^{\R}}$, it
induces a holomorphic embedding\index[syindex]{IXiisigma@$\tilde{\frak I}_{\Xi,i}^{\sigma}$}
$$
\tilde{\frak I}_{\Xi,i}^{\sigma} : K_i(\sigma_i) \to \Sigma_1(\sigma_1) \#_r \Sigma_2(\sigma_2).
$$
We call it the {\it canonical holomorphic embedding associated to $\Xi$}.\index{canonical holomorphic embedding}
Here the bordered curve $\Sigma_1(\sigma_1) \#_r \Sigma_2(\sigma_2)$ together with marked points
represents the element  ${\rm Glusoc}^{\R}_{\Xi}(\sigma_1,\sigma_2,r)$.
\item
For any $\sigma = (\sigma_1,\sigma_2) \in {\mathcal V}_1^{\R} \times  {\mathcal V}_2^{\R}$, it
also induces a smooth embedding
$$
{\frak I}_{\Xi,i}^{\sigma} : K_i \to \Sigma_1(\sigma_1) \#_r \Sigma_2(\sigma_2).
$$
We call it the {\it canonical  embedding associated to $\Xi$}.
Recall $K_i \subset \Sigma_i$.
\end{enumerate}
\end{defnlem}
\begin{proof}
We discussed (1) already.
Statement (2) is a consequence of (\ref{form812}).
Statement (3) follows from (2) and the trivialization of the universal family bundle
given in Definition \ref{data86} (3).
\end{proof}
\par\medskip
\par\medskip
\subsection{{Obstruction bundle and family gluing map}}

{In this section, we will formulate a {family version}
of the gluing process of pseudoholomorphic maps over the moduli space of stable
curves.}

We {first} recall the definitions of stable maps and of their moduli space.
\begin{defn}\label{isomorphicmaps} Two marked pseudoholomorphic maps $((\Sigma,\vec z),u), \,
((\Sigma',\vec z'),u')$ are said to be isomorphic, if there exists an
isomorphism $\varphi: (\Sigma,z) \to (\Sigma',\vec z')$ such that $u' = u
\circ \varphi^{-1}$. A self-isomorphism $\varphi: (\Sigma,\vec z) \to (\Sigma,\vec z)$
is called an automorphism of $(\Sigma,\vec z)$ if $u = u \circ \varphi^{-1}$. We denote
$$
\operatorname{Aut}((\Sigma,\vec z),u) = \{\varphi\in \operatorname{Aut}(\Sigma,\vec z) \mid u \circ \varphi = u\}.
$$
We call the pair $((\Sigma,\vec z) ,u)$ a \emph{stable map} if \index{stable map}
$\#\operatorname{Aut}((\Sigma,\vec z) ,u) $ is finite.
We define the moduli space of the
isomorphism classes of bordered stable maps $u : (\Sigma,\partial \Sigma) \to (X,L)$
of genus $g$ with $k+1$ boundary marked points
and homology class $\beta \in H_2(X,L;\Z)$ and denote it by\index[syindex]{MgkXLbeta@$\MM_{g,k+1} (X,L;\beta)$}
$$
\MM_{g,k+1} (X,L;\beta).
$$
See \cite[Definition 2.1.27]{fooo:book1}.
\end{defn}

We next define the obstruction spaces.
Let $(\Sigma_i^{\mathfrak{ob}},\vec z^{\frak{ob}}_i)
\in \mathcal M_{g_i,k_i+1}$,
and let $u^{\frak{ob}}_i : (\Sigma^{\frak{ob}}_i,\partial \Sigma^{\frak{ob}}_i) \to (X,L)$
be a pseudoholomorphic map.\footnote{Hereafter we abuse the notation and write an element 
of $\mathcal M_{g,k+1}$ as $(\Sigma,\vec z)$ etc. in place of $[\Sigma,\vec z]$ etc. sometimes.}

\begin{defn}\label{def88}
An {\it obstruction bundle datum} \index{obstruction bundle datum} centered at
$$
(((\Sigma_1^{\mathfrak{ob}},\vec z^{\frak{ob}}_1),u^{\frak{ob}}_1),
((\Sigma_2^{\mathfrak{ob}},\vec z^{\frak{ob}}_2),u^{\frak{ob}}_2))
$$
consists of the objects $(\Xi^{\frak{ob}},(\mathcal E^{\frak{ob}}_1,\mathcal E^{\frak{ob}}_2))
$
such that:
\begin{enumerate}
\item
$\Xi^{\frak{ob}} = (\Xi^{\frak{ob}}_1,\Xi^{\frak{ob}}_2)$ is a  gluing datum
centered at $((\Sigma_1^{\mathfrak{ob}},\vec z^{\frak{ob}}_1),
(\Sigma_2^{\mathfrak{ob}},\vec z^{\frak{ob}}_2))$.
\item
$\mathcal E^{\frak{ob}}_i$  is a finite dimensional subspace\index[syindex]{Eiob@$\mathcal E^{\frak{ob}}_i$}
of $\Gamma(\Sigma^{\frak{ob}}_i;(u^{\frak{ob}}_i)^*TX\otimes \Lambda^{0,1})$,
the space of smooth sections. We assume that the supports of the elements of $\mathcal E^{\frak{ob}}_i$
are contained in ${\rm Int}\,K^{\frak{ob}}_i$.
Here
$K^{\frak{ob}}_i = \Sigma_i^{\frak{ob}}
\setminus \varphi^{\R}_{i,\frak{ob}}(D^2_{\le 0}(1))$.
Note the map $\varphi^{\R}_{i,\frak{ob}}$ is the analytic family of coordinates at the $0$-th
marked point which is a part of $\Xi_{i}^{\frak{ob}}$.
\item The maps $u^{\frak{ob}}_1,\, u^{\frak{ob}}_2$ satisfy
Assumption \ref{DuimodEi} . Here we replace
$\Sigma_i$, $\vec z_i$, $u_i$, ${\rm ev}_{i,\infty}$ in Assumption \ref{DuimodEi}
by $\Sigma^{\frak{ob}}_i$, $\vec z^{\frak{ob}}_i$, $u^{\frak{ob}}_i$, ${\rm ev}_{i,0}^{\frak{ob}}$.
\end{enumerate}
We call
$\left(((\Sigma_1^{\mathfrak{ob}},\vec z^{\frak{ob}}_1),u^{\frak{ob}}_1),
((\Sigma_2^{\mathfrak{ob}},\vec z^{\frak{ob}}_2),u^{\frak{ob}}_2)\right)$
the {\it obstruction center}.\index{obstruction center}
\end{defn}
We define the obstruction bundle $\mathcal E_i(u')$
for an element $u' : (\Sigma',\partial \Sigma') \to (X,L)$
satisfying the next condition for $u'$ and $\nu$.
\begin{conds}\label{conds810}
\begin{enumerate}
\item $[\Sigma',\vec z']$ is an element of the image of the source
gluing map
$
{\rm Glusoc}^{\R}_{\Xi^{\frak{ob}}} : \mathcal V^{{\frak{ob}},\R}_1 \times \mathcal V^{{\frak{ob}},\R}_2
\times [0,\nu)
\to \mathcal{CM}_{g_1+g_2,\ell_1+\ell_2}
$
associated  with $\Xi^{\frak{ob}} {= (\Xi_1^{\frak{ob}}, \Xi_2^{\frak{ob}})}$. \index[syindex]{Xiob@$\Xi^{\frak{ob}}$}
\item
For $z \in K^{\frak{ob}}_i$ we have
$$
d(u_i^{\frak{ob}}(z),u'({\frak I}_{\Xi^{\frak{ob}},i}^{\sigma}(z))) \le \frac{\iota'_X}{2}.
$$
Here $\iota'_X$ is the constant defined as in Condition \ref{cond23}
and ${\frak I}_{\Xi^{\frak{ob}},i}^{\sigma}$
is the canonical  embedding associated to $\Xi^{\frak{ob}}$.
\end{enumerate}
\end{conds}
Now we define a map\index[syindex]{Iuprimei@$I_{u',i}$}
\begin{equation}\label{form816}
I_{u',i} :
\mathcal E^{\frak{ob}}_i \to C^{\infty}(\Sigma';(u')^*TX\otimes \Lambda^{0,1}(\Sigma'))
\end{equation}
as follows. (See (\ref{Eitake}).)
Let $z \in K^{\frak{ob}}_i$. We have the parallel  transport
\begin{equation}\label{form817}
({\rm Pal}_{u_i^{\frak{ob}}(z)}^{u'(z)})^J: T_{u_i^{\frak{ob}}(z)}X
\to T_{u'(z)}X,
\end{equation}
defined in (\ref{newform2525}).
On the other hand we have a projection
\begin{equation}\label{form818}
(\Lambda^{0,1}(\Sigma_i^{\frak{ob}}))_z \to (\Lambda^{0,1}(\Sigma'))_z.
\end{equation}
We remark that the map (\ref{form818}) is complex linear.
The linear map $I_{u',i}$ is induced by the tensor product
(over $\C$) of the two maps (\ref{form817}) and
(\ref{form818}).
\begin{defn}
{
Suppose $(\Sigma',\vec z')$ and $u'$ satisfy Condition \ref{conds810}. 
We define the obstruction space $\mathcal E_i(u')$ at $u'$ by \index[syindex]{Eiuprime@$\mathcal E_i(u')$}
$$
\mathcal E_i(u'): = I_{u',i}(\mathcal E_i^{\mathfrak{ob}})
$$
for the map $I_{u',i}$ in \eqref{form816}.}
\end{defn}
\begin{rem}
We use a gluing datum centered at
$(\Sigma_i^{\frak{ob}},\vec z_i^{\frak{ob}})$ to define
$I_{u',i}$ and $\mathcal E_i(u')$, 
{instead of one centered at}
$(\Sigma_i,\vec z_i)$.
This is an important point which enables us to define {the} coordinate change of
Kuranishi structure. See Remark \ref{rem818}.
\end{rem}

We now define the moduli spaces we study.
Consider an obstruction bundle datum which is centered at
$(((\Sigma_1^{\mathfrak{ob}},\vec z^{\frak{ob}}_1),u^{\frak{ob}}_1),
((\Sigma_2^{\mathfrak{ob}},\vec z^{\frak{ob}}_2),u^{\frak{ob}}_2))$, which we denote by
$(\Xi^{\frak{ob}},(\mathcal E^{\mathfrak{ob}}_1,\mathcal E^{\mathfrak{ob}}_2)$.
We also consider
$(((\Sigma_1,\vec z_1),u_1),((\Sigma_2,\vec z_2),u_2))$
and a gluing datum $\Xi$ centered at $((\Sigma_1,\vec z_1),
(\Sigma_2,\vec z_2))$.
\par
\begin{conds}\label{cond813}
We assume that the pair $(((\Sigma_1,\vec z_1),u_1),
((\Sigma_2,\vec z_2),u_2))$
is close to the obstruction center $(((\Sigma_1^{\mathfrak{ob}},\vec z^{\frak{ob}}_1),u^{\frak{ob}}_1),
((\Sigma_2^{\mathfrak{ob}},\vec z^{\frak{ob}}_2),u^{\frak{ob}}_2))$
in the following sense.
We also assume that the neighborhoods {$\mathcal V^{\R}_i$} of $(\Sigma_i,\vec z_i)$
in $\mathcal M_{g_i,k_i+1}$
which is a part of  data $\Xi_i$ is small in the following sense.
\begin{enumerate}
\item
$(\Sigma_i,\vec z_i)$ is contained in  the neighborhood $\mathcal V^{\frak{ob},\R}_i$ 
of $(\Sigma^{\frak{ob}}_i,\vec z^{\frak{ob}}_i)$
in $\mathcal M_{g_i,k_i+1}$
which is a part of $\Xi^{\frak{ob}}_i$.
Moreover {$\mathcal V^{\R}_i \subset \mathcal V^{\frak{ob},\R}_i$}
\item
Let $(\Sigma_i,\vec z_i) \cong (\Sigma^{\frak{ob}}_i(\sigma^0_i),\vec z^{\frak{ob}}_i(\sigma^0_i))$
with $\sigma^0_i \in {\mathcal V^{\frak{ob}.\R}_i}$.
The  gluing datum
$\Xi_{i}^{\frak{ob}}$ determines a diffeomorphism\index[syindex]{Ii0@$\frak I_i^0$}
$$
\frak I_i^0 :  \Sigma^{\mathfrak{ob}}_i  \to 
\Sigma^{\mathfrak{ob}}_i(\sigma_i^0) \cong 
\Sigma_i.
$$
(Here the first diffeomorphism is induced by $\Xi_{i}^{\frak{ob}}$ and the
second diffeomorphism  is the identification 
$[\Sigma_i,\vec z_i] = [\Sigma^{\mathfrak{ob}}_i(\sigma^0_i),\
\vec z^{\mathfrak{ob}}_i(\sigma^0_i)]$.)
We require
\begin{equation}\label{formula819819}
\sup \{d(u_i(\frak I_i^0(z)),u^{\frak{ob}}_i(z)) \mid z \in \Sigma^{\frak{ob}}_i, i=1,2\}
\le \frac{\iota'_X}{4}.
\end{equation}
\item
We also require
\begin{equation}
\frak I_i^0(K_i^{\frak{ob}}) \subset {\rm Int} \, K_i.
\end{equation}
\end{enumerate}
\end{conds}
We recall that $(\Sigma_{\infty},\vec z_{\infty})$ is a union of {$(\Sigma_1,\vec z_1)$ and $(\Sigma_2,\vec z_2)$}, which are glued to each
other at their
$0$-th marked points, which may also carry their marked points other than
the $0$-th ones.

\begin{defn}
We assume that $(((\Sigma_1^{\mathfrak{ob}},\vec z^{\frak{ob}}_1),u^{\frak{ob}}_1),
((\Sigma_2^{\mathfrak{ob}},\vec z^{\frak{ob}}_2),u^{\frak{ob}}_2))$, \linebreak
$(\Xi^{\frak{ob}},(\mathcal E^{\frak{ob}}_1,\mathcal E^{\frak{ob}}_2))$,
 $(((\Sigma_1,\vec z_1),u_1), ((\Sigma_2,\vec z_2),u_2))$ and $\Xi$
satisfy Condition \ref{cond813} 
and {$(\epsilon,\nu) \in \R_+^2$}.
\par
We define the moduli space
$\mathcal M_{+}^{\mathcal E_1\oplus \mathcal E_2}((\Sigma_{\infty},\vec z);u_1,u_2)_{\epsilon,\nu}$
as the set of all\index[syindex]{M+E1E2u1u2@$\mathcal M_{+}^{\mathcal E_1\oplus \mathcal E_2}((\Sigma_{\infty},\vec z);u_1,u_2)_{\epsilon,\nu}$}
the isomorphism classes of
$((\Sigma',\vec z'),u')$ such that the following three conditions are satisfied.
\begin{enumerate}
\item
$(\Sigma',\vec z')  = {\rm Glusoc}_{\Xi}^{\R}(\sigma_1,\sigma_2,T) \in
{\rm Im} ({\rm Glusoc}_{\Xi}^{\R})$.
$(\sigma_1,\sigma_2)$ is in the $\epsilon$ neighborhood of $\sigma_0$
in {$\mathcal V^{\R}_1 \times \mathcal V^{\R}_2$}.
({$\mathcal V^{\R}_1$} and {$\mathcal V^{\R}_2$} 
are parts of $\Xi$.) Here $\sigma_0$
corresponds to $(\Sigma_1,\vec z_1)$ and $(\Sigma_2,\vec z_2)$ and $T > 1/\nu$.
\item
Condition \ref{nearbyuprime} is satisfied. Namely:
\begin{enumerate}
\item
We assume
$u_i \vert_{K_i}$ is $\epsilon$ close to $u'\circ \frak I^{\sigma}_{\Xi,i}\vert_{K_i}$ in $C^1$ sense.
Here $\frak I^{\sigma}_{\Xi,i}$ is defined in Definition-Lemma \ref{deflem87}.
\item
$
{\rm Diam} \{u'(z) \mid z \in \Sigma' \setminus (K_1 \cup K_2)\} < \epsilon.
$
\end{enumerate}
\item
\begin{equation}
\overline{\partial} u' \equiv 0 \mod \mathcal E_1(u') \oplus \mathcal E_2(u').
\end{equation}
Note (1), (2) and (\ref{form816}), (\ref{formula819819}) imply that $\mathcal E_1(u')$, $\mathcal E_2(u')$ are  defined if $\epsilon$ is sufficiently
small.
\end{enumerate}
\end{defn}
Note that if $(\Sigma^{\frak{ob}}_i,\partial \Sigma^{\frak{ob}}_i)
\cong (\Sigma_i,\partial \Sigma_i)$ as bordered Riemann surfaces and $\Xi_{\frak{ob}}=\Xi$, the moduli space
$\mathcal M^{\mathcal E_1\oplus \mathcal E_2}((\Sigma_{T},\vec z);(u_1,u_2))_{\epsilon} $
we defined in Definition \ref{defn310} is a subset of the moduli space
$\mathcal M_{+}^{\mathcal E_1\oplus \mathcal E_2}((\Sigma_{\infty},\vec z_{\infty});u_1,u_2)_{\epsilon,\nu} $
we defined here, for $T > 1/\nu$.
\par
We also define
\begin{equation}\label{map821}
I_{u'_i} : \mathcal E_i^{\frak{ob}} \to \Gamma(\Sigma_i;(u_i')^*TX \otimes \Lambda^{0,1}
(\Sigma_i))
\end{equation}
{as in (\ref{newform320}).}
 Namely it is induced from the tensor product over $\C$ of
 the projection
 $
(\Lambda^{0,1}(\Sigma_i^{\frak{ob}}))_z \to (\Lambda^{0,1}(\Sigma_i(\sigma_i)))_z
$,
and the complex linear part of the parallel transport
$$
\left({\rm Pal}_{u_i^{\frak{ob}(z)}}^{u_i'(z)}\right)^J : T_{u^{\frak{ob}}(z)}X
\to T_{u_i'(z)}X.
$$
We put
\begin{equation}
\mathcal E_i(u'_i) =  I_{u'_i}(\mathcal E_i^{\frak{ob}}).
\end{equation}
So we can define an equation
\begin{equation}\label{mainequationui20}
\overline\partial u_i' \equiv 0 , \quad \mod \mathcal E_i(u'_i).
\end{equation}
\begin{defn}
$\mathcal M_+^{\mathcal E_i}((\Sigma_i,\vec z_i); u_i)_{\epsilon}$ is the set of
isomorphism classes of \index[syindex]{M+EiSigmai@$\mathcal M_+^{\mathcal E_i}((\Sigma_i,\vec z_i); u_i)_{\epsilon}$}
$((\Sigma'_i,\vec z'_i),u'_i)$ with the following properties.
\begin{enumerate}
\item
$(\Sigma'_i,\vec z'_i)$ represents an element $\sigma_i$ of $\mathcal V^{\R}_i$
and is in an $\epsilon$ neighborhood of $[\Sigma_i,\vec z_i]$.
\item
$u'_i$ is $\epsilon$ close to $u_i$ in $C^1$ topology.
\item
$u_i'$ satisfies equation (\ref{mainequationui20}).
\end{enumerate}
\end{defn}
We assumed Assumption \ref{DuimodEi} in Definition \ref{def88} (4), where
 ${\rm ev}_{i,\infty}$ in (\ref{Duiev}), (\ref{Duievsurj})
are ${\rm ev}_{0}$ here.
Then we can take $\epsilon$ small so that
$\mathcal M_+^{\mathcal E_i}((\Sigma_i,\vec z_i); u_i)_{\epsilon}$ is a smooth manifold.
Moreover the fiber product
\begin{equation}\label{fibproductbeforeglue}
\mathcal M^{\mathcal E_1}_+((\Sigma_1,\vec z_1); u_1)_{\epsilon}
\,{}_{{\rm ev}_0} \times _{{\rm ev}_0}
\mathcal M^{\mathcal E_2}_+((\Sigma_2,\vec z_2); u_2)_{\epsilon}
\end{equation}
is transversal.
(Here the fiber product is taken with respect to the evaluation maps
$\text{\rm ev}_0 = {\rm ev}_{i,\infty}$ $(i=1,2)$ given in Chapter \ref{sec:statement}.)
\par
Now Theorems \ref{gluethm1} and \ref{exdecayT} are generalized as follows.
\begin{thm}\label{gluethm12}
For any $\epsilon(11),  \nu(1) > 0$ there exist 
$\epsilon(12) >0$, $T_{7,m,\epsilon(11), \nu(1)}>0$
and a map \index[syindex]{Glue+@$\text{\rm Glue}_+$}
$$
\aligned
\text{\rm Glue}_+ :
\mathcal M^{\mathcal E_1}_+((\Sigma_1,\vec z_1); u_1)_{\epsilon(12)}
&\,\,\,{}_{{\rm ev}_0} \times _{{\rm ev}_0}
\mathcal M^{\mathcal E_2}_+((\Sigma_2,\vec z_2); u_2)_{\epsilon(12)}
\times (T_{7,m,\epsilon(11), \nu(1)},\infty]
\\
&
\to
\mathcal M_{+}^{\mathcal E_1\oplus \mathcal E_2}((\Sigma_{\infty},\vec z);u_1,u_2)_{\epsilon(11),\nu(1)}
\endaligned
$$
with the following properties.
\begin{enumerate}
\item
The map $\text{\rm Glue}_+$
is a homeomorphism onto its image.
The image contains $\mathcal M_+^{\mathcal E_1\oplus \mathcal E_2}((\Sigma_{\infty},\vec z);
u_1,u_2)_{\epsilon_{\epsilon(11),\nu(1),(\ref{CD825})}
,\nu_{\epsilon(11),\nu(1),(\ref{CD825})}}$,
where $\epsilon_{\epsilon(11),\nu(1),(\ref{CD825})}$, 
and $\nu_{\epsilon(11),\nu(1),(\ref{CD825})}$ are 
positive number{s} depending on $\epsilon(11),\nu(1)$.
\item
The next diagram commutes
\begin{equation}\label{CD825}
\begin{CD}
\mathcal M^{\mathcal E_1}_+((\Sigma_1,\vec z_1); u_1)_{\epsilon(12)}
\,\,{}_{{\rm ev}_0} \times _{{\rm ev}_0}
\atop  \mathcal M^{\mathcal E_2}_+((\Sigma_2,\vec z_2); u_2)_{\epsilon(12)} 
 \times (T_{7,m,\epsilon(11), \nu(1)},\infty]
  @ >>>
\mathcal V^{\R}_1 \times \mathcal V^{\R}_2 \times (T_{7,m,\epsilon(11), \nu(1)},\infty]
  \\
@ V{
\text{\rm Glue}_+}VV @ V{\rm Glusoc}_{\Xi}^{\R}VV\\
\mathcal M_{+}^{\mathcal E_1\oplus \mathcal E_2}((\Sigma_{\infty},\vec z_{\infty});u_1,u_2)_{\epsilon(11),\nu(1)}
 @ >  >>
\mathcal{CM}_{g_1+g_2,\ell_1+\ell_2}
\end{CD}
\end{equation}
where the horizontal arrows are defined by  forgetting the map part.
\item
The map $\text{\rm Glue}_+$  defines a fiberwise diffeomorphism 
{of $C^{m-2}$ class} onto its image.
Here fiber means the fiber of the horizontal arrows of (\ref{CD825}).
\end{enumerate}
\end{thm}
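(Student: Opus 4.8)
\textbf{Proof strategy for Theorem \ref{gluethm12}.}
The plan is to reduce Theorem \ref{gluethm12} to the fixed-source versions already established, namely Theorems \ref{gluethm1} and \ref{exdecayT}, by running the alternating construction of Sections \ref{alternatingmethod}--\ref{subsecdecayT} fiberwise over the parameter space $\mathcal V_1^{\R} \times \mathcal V_2^{\R}$ of complex structures on the source curves, and then quoting the source-gluing diffeomorphism of Lemma \ref{lem82} (and its real form via the commuting diagram \eqref{diag862}) to identify the parameter on the glued side with $\mathcal{CM}_{g_1+g_2,\ell_1+\ell_2}$. First I would fix $\sigma = (\sigma_1,\sigma_2) \in \mathcal V_1^{\R} \times \mathcal V_2^{\R}$ and apply Theorem \ref{gluethm1} with $\Sigma_i$ replaced by $\Sigma_i(\sigma_i)$ and $u_i$ replaced by a reference solution over $\Sigma_i(\sigma_i)$; the canonical embeddings $\tilde{\frak I}_{\Xi,i}^{\sigma}$ and $\frak I_{\Xi,i}^{\sigma}$ of Definition-Lemma \ref{deflem87} are precisely the data that identify the neck coordinates $(\tau',t)$, $(\tau'',t)$ on $\Sigma_1(\sigma_1)\#_r\Sigma_2(\sigma_2)$ with the model $[-5T,5T]\times[0,1]$, via \eqref{form812}. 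This produces, for each $\sigma$, a map $\text{\rm Glue}_{T,\sigma}$ whose image is the corresponding $\sigma$-slice of $\mathcal M_+^{\mathcal E_1\oplus\mathcal E_2}$; assembling these over $\sigma$ and over $T \in (T_{7,m,\epsilon(11),\nu(1)},\infty]$ (the endpoint $T=\infty$ corresponding to the fiber product \eqref{fibproductbeforeglue}) gives the map $\text{\rm Glue}_+$, and assertion (2), the commutativity of \eqref{CD825}, is immediate from the construction since the source-curve part of a glued solution is $\text{\rm Glusoc}_\Xi^{\R}(\sigma_1,\sigma_2,T)$ by definition.

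The substantive work is assertion (3): fiberwise diffeomorphism onto the image, together with the injectivity/surjectivity needed for (1). Surjectivity of each slice and injectivity follow by repeating verbatim the arguments of Section \ref{surjinj}---the a priori decay estimate of Proposition \ref{neckaprioridecay} holds uniformly in $\sigma$ because $\mathcal V_i^{\R}$ is relatively compact and the cylindrical metrics vary smoothly, so the implicit-function and continuation arguments go through for each fixed $(\sigma,T)$. For the fiberwise smoothness, the point is that within a fixed fiber of the horizontal arrows of \eqref{CD825}---i.e.\ with $(\sigma_1,\sigma_2,T)$, equivalently the source curve, held fixed---the target space does not change, so $\text{\rm Glue}_+$ restricted to that fiber is exactly the map $\text{\rm Glue}_T$ of Theorem \ref{gluethm1} (for the source $\Sigma_i(\sigma_i)$), which is a diffeomorphism onto its image. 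The $\rho$-derivatives needed here are controlled by Theorem \ref{exdecayT}, whose estimate \eqref{form67} was proved with $\rho$-derivatives ($\nabla_\rho^n$) precisely to cover this; one also needs smooth dependence of the whole package on $\sigma$, which follows because the alternating iteration is built from parallel transports, exponential maps, cut-off functions and the fixed finite-dimensional spaces $\mathcal E_i^{\frak{ob}}$, all of which depend smoothly on $\sigma$ once the trivialization of the universal family from Definition \ref{data86}(3) is used to view everything over the fixed topological surface $\Sigma_i$.

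The main obstacle will be the interaction between varying $\sigma$ and the shift-of-$T$ loss of differentiability flagged in Remark \ref{rem611}: when one differentiates in the $\mathcal V_i^{\R}$-directions one is differentiating sections over $\Sigma_i(\sigma_i)$ whose domain complex structure is moving, and, as in Remark \ref{Abremark}, the total space of the family of Sobolev spaces with moving domain is not a smooth Banach manifold. The resolution, as in the proof of Theorem \ref{exdecayT}, is to work with the fixed Hilbert manifolds $\text{\rm Map}_{L^2_{m+1}}((K_i^S,K_i^S\cap\partial\Sigma_i),(X,L))$ (which are $\sigma$-independent once the trivialization is fixed), express $\text{\rm Glue}_+$ through its restrictions $\text{\rm Glures}_{i,S}$, and absorb the loss of one Sobolev derivative per $T$- or $\sigma$-differentiation by running the estimates simultaneously for all large $m$, exploiting that $C^\infty$ is a Fréchet topology. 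Concretely I would state and prove a one-parameter-family analogue of Proposition \ref{prop:inequalitieskappa} with the index $\rho$ enlarged to include the $\sigma$-directions, check that the drop-of-weight mechanism of Remark \ref{rem:dropofweight} and the Newton-iteration bookkeeping \eqref{form0182}--\eqref{form0186} are insensitive to this enlargement, and then read off (3). The remaining assertions in (1)---homeomorphism onto the image and the lower bound on the image by $\mathcal M_+^{\mathcal E_1\oplus\mathcal E_2}(\cdots)_{\epsilon_{\epsilon(11),\nu(1),(\ref{CD825})},\nu_{\epsilon(11),\nu(1),(\ref{CD825})}}$---follow by combining the fiberwise surjectivity above with Lemma \ref{lem82}, exactly as the second half of Theorem \ref{gluethm1} follows from Section \ref{surjinj}.
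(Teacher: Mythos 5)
Your overall strategy matches the paper's: fix $\sigma=(\sigma_1,\sigma_2)$ and $T$, run the alternating method of Sections~\ref{alternatingmethod}--\ref{subsecdecayT} starting from the maps $u_i^{\sigma_i}$, treat the $\sigma$-derivatives exactly as the $\rho$-derivatives in a $\sigma$-enlarged version of Proposition~\ref{prop:inequalitieskappa}, get assertion~(2) for free because the iteration never alters the source complex structure, and finish~(1),(3) by the Section~\ref{surjinj} arguments. That much is the paper's outline verbatim.

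However, you gloss over the one piece of genuinely new analysis the paper explicitly singles out. You write that ``the fixed finite-dimensional spaces $\mathcal E_i^{\frak{ob}}$ \dots depend smoothly on $\sigma$,'' but the situation here is not the one in Sections~\ref{alternatingmethod}--\ref{subsecdecayT}: there $\mathcal E_i^{\frak{ob}}$ lives on the \emph{same} domain $\Sigma_i$ as the maps being glued, whereas in Section~\ref{subsec81} the obstruction space is defined on $\Sigma_i^{\frak{ob}}\ne\Sigma_i$ via the obstruction-center gluing data $\Xi^{\frak{ob}}$ (Definition~\ref{def88} and Remark~\ref{rem818}), not the chart-center gluing data $\Xi$ you use to run the iteration. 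Consequently the map $I_{u',i}$ of~\eqref{form816} that produces $\mathcal E_i(u')$ is the tensor product of a parallel transport in $X$ with a $\C$-linear identification of $(0,1)$-forms on two \emph{different} Riemann surfaces, and the latter involves composing the canonical embeddings $\frak I^{\sigma'}_{\Xi^{\frak{ob}},i}$ and $\frak I^{\sigma}_{\Xi,i}$ from two different gluing data. The estimates in Proposition~\ref{prop:inequalitieskappa} that involve $\Pi_{\mathcal E_i}$, $D_{u'}\mathcal E_i$ (e.g.\ the analogues of~\eqref{eq:intintPiE1},~\eqref{form5505550},~\eqref{eq18122}, and Lemmas~\ref{lemma615},~\ref{lemE2E2}) therefore do not carry over automatically; one needs the exponential-decay bound on the $T,\rho,\sigma$-derivatives of the adapted basis ${\bf e}'_{i,a;(\kappa)}(\sigma,\rho,T)$, which is exactly Proposition~\ref{prop816}. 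Its proof is nontrivial: it reduces to Proposition~\ref{prof825}, a comparison of two analytic families of coordinates at the $0$-th marked point, proved by passing to the doubled (closed) curves and exploiting holomorphicity of $\Phi^\C\times\Psi^\C$. Without this lemma your ``$\sigma$-enlarged Proposition~\ref{prop:inequalitieskappa}'' does not close, and the smoothness in~(3) and in the later coordinate-change arguments cannot be established.
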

This is the family version of Theorem \ref{gluethm1}
incorporating the variation of complex structures on the source curve
into the gluing.
A generalization of Theorem \ref{exdecayT} is the following Theorem \ref{them816}.
We recall {the definition}
$$
K_1^S = K_1 \cup ([0,S]_{\tau'} \times [0,1]),
\quad
K_2^S = K_2 \cup ([-S,0]_{\tau''} \times [0,1])
$$
{from \eqref{1104}.} We define\index[syindex]{Gluresi+S@$\text{\rm Glures}_{i,+,S}$}
$$
\aligned
\text{\rm Glures}_{i,+,S} :
&\mathcal M^{\mathcal E_1}_+((\Sigma_1,\vec z_1); u_1)_{\epsilon(12)}
\,\,\,{}_{{\rm ev}_0} \times _{{\rm ev}_0}
\mathcal M^{\mathcal E_2}_+((\Sigma_2,\vec z_2); u_2)_{\epsilon(12)} \\
&\times (T_{7,m,\epsilon(11), \nu(1)},\infty]
\to
\text{\rm Map}_{L^2_{m+1}}((K_i^S,K_i^S\cap\partial \Sigma_i),(X,L))
\endaligned
$$
as the composition of $\text{\rm Glue}_+$ with the restriction map
$$
\mathcal M_{+}^{\mathcal E_1\oplus \mathcal E_2}((\Sigma_{\infty},\vec z);u_1,u_2)_{\epsilon(11),\nu(1)}
\to
\text{\rm Map}_{L^2_{m+1}}((K_i^S,K_i^S\cap\partial \Sigma_i),(X,L)).
$$
Here we use the map  ${\frak I}_{\Xi,i}^{\sigma}$ in Lemma-Definition \ref{deflem87} to make
the identification
$$
K^S_i \cong K^S_i(\sigma_i) \subset \Sigma_1(\sigma_1) \#_T \Sigma_2(\sigma_2).
$$
Note the diffeomorphism $K^S_i \cong K^S_i(\sigma_i)$ is induced by the
gluing datum $\Xi$ centered at $((\Sigma_1,\vec z_1),(\Sigma_2,\vec z_2))$.
\par
{The following exponential estimate is the crux entering in the proof of
smoothness of {the coordinate change of the} Kuranishi chart.}
\begin{thm}\label{them816}
There exists $\delta_2>0$ with the  following property.
For each $m$ and $S$,  there exist constants $T_{m,S,(\ref{form672})} > \frac{S}{10}$
and $C_{m,S,(\ref{form672})}> 0$  such that
\begin{equation}\label{form672}
\left\| \nabla_{\sigma,\rho}^{n} \frac{d^{\ell}}{dT^{\ell}} \text{\rm Glures}_{i,{+},S}\right\|_{L^2_{m+1-\ell}}
< C_{m,S,(\ref{form672})}e^{-\delta_2 T}
\end{equation}
for all {$m \ge n \ge 0$, $m \ge \ell > 0$}.
Here $\nabla_{\sigma,\rho}$ is a differentiation with respect to $((\sigma_1,\rho_1),(\sigma_2,\rho_2))$ in
the fiber product (\ref{fibproductbeforeglue}). \index[syindex]{sigmarho@$(\sigma,\rho)$}
\footnote{$\sigma_1$ and $\sigma_2$ parameterize the source curve and
$\rho_1$ and $\rho_2$ parameterize the map.}
\end{thm}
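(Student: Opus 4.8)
The plan is to reduce Theorem \ref{them816} to the already-proven Theorem \ref{exdecayT} by factoring the new derivatives into a ``source-curve'' direction and a ``neck-length'' direction, treating the source-curve deformation parameters $\sigma_i$ exactly as additional copies of the finite-dimensional parameter $\rho$. Concretely, the map ${\rm Glusoc}^{\R}_\Xi$ shows that, after fixing the gluing data $\Xi$, a deformation $(\sigma_1,\sigma_2)$ of the source curve together with the gluing length $T$ produces a glued surface $\Sigma_1(\sigma_1)\#_T\Sigma_2(\sigma_2)$ via the holomorphic coordinates $\varphi^{\R}_{i,\sigma_i}$, which depend holomorphically (hence smoothly, with all $\sigma$-derivatives uniformly bounded on the compact parameter neighborhood) on $\sigma_i$. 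Pulling the whole alternating-method construction of Section \ref{alternatingmethod} back through the $\sigma$-dependent trivialization of the universal family, the entire inductive scheme becomes a scheme parameterized now by the enlarged parameter $\frak p = (\sigma_1,\rho_1,\sigma_2,\rho_2)$, with exactly the same structure as before. The $T$-dependence enters only through the neck region, just as in Section \ref{subsecdecayT}.

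\textbf{Key steps.} First I would make precise the statement that, for fixed $\Xi$, the bordered surface $\Sigma_T^\sigma := {\rm Glusoc}^{\R}_\Xi(\sigma_1,\sigma_2,T)$ carries, over $K_i^S$ via ${\frak I}_{\Xi,i}^{\sigma}$, a family of complex structures depending smoothly on $\sigma$ and whose restriction to the neck region $[-5T,5T]_\tau\times[0,1]$ is the standard flat one and is $T$-independent in the $(\tau',t)$ (resp. $(\tau'',t)$) coordinate; this is exactly the content of the change of variables $z=e^{-\pi(\tau'+\sqrt{-1}t)}$, $w=-e^{\pi(\tau''+\sqrt{-1}t)}$ displayed before Definition \ref{data86}. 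Second, I would re-run the construction of Sections \ref{subsec12}--\ref{alternatingmethod}: the weighted Sobolev spaces, cut-off functions, pregluing, and the alternating iteration are all defined verbatim, now with $\Sigma_i$ replaced by $\Sigma_i(\sigma_i)$, and all the $\sigma$-derivatives of the structural data (metric pulled back by $\varphi^{\R}_{i,\sigma_i}$, parallel transports, obstruction bundles $\mathcal E_i(u')$) are uniformly bounded because $\varphi^{\R}_{i,\sigma_i}$ is a smooth family of holomorphic embeddings on a compact parameter set. Third, the inductive inequalities \eqref{form182}--\eqref{form186} get replaced by the same inequalities with $\nabla_\rho$ replaced by $\nabla_{\sigma,\rho}$; the inductive proof (Proposition \ref{prop:inequalitieskappa}) goes through line by line, since at no point was the specific nature of $\rho$ (as opposed to $\sigma$) used --- only that it is a finite-dimensional smooth parameter on which the construction depends with uniformly bounded derivatives, and that it does not interact with the $T$-shift. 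Fourth, the loss-of-differentiability mechanism of Remark \ref{rem611} --- the $V^{\rho}_{T,2,(\kappa)}(\tau'-10T)$ term --- is unchanged, so I again only lose one Sobolev order per $T$-derivative and none per $\sigma$- or $\rho$-derivative; hence \eqref{form672} with $\delta_2=\delta_1$ follows from the convergence of the iteration in every $L^2_{m}$ norm, by the Fréchet-space argument used for Theorem \ref{exdecayT}. (One may take $\delta_2$ strictly smaller than $\delta_1$ if convenient to absorb polynomial-in-$T$ factors, which is why the statement introduces a new constant $\delta_2$.)

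\textbf{Main obstacle.} The genuinely new point, and the part requiring care, is verifying that introducing the source-curve parameter does not introduce a new $T$-dependent distortion in the function spaces. In Theorem \ref{exdecayT} the target Hilbert manifold $\text{\rm Map}_{L^2_{m+1}}((K_i^S,K_i^S\cap\partial\Sigma_i),(X,L))$ was literally $T$-independent; now the domain is $K_i^S(\sigma_i)$, which varies with $\sigma_i$, so one must fix, once and for all, the diffeomorphisms $K_i^S\cong K_i^S(\sigma_i)$ coming from the $C^\infty$-trivialization of the universal family that is part of the gluing data $\Xi$ (Definition \ref{data86}(3)), and check that this trivialization is compatible with $\varphi^{\R}_{i,\sigma_i}$ on the overlap (the last clause of Definition \ref{data86}(3) is exactly what guarantees this). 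Once the target is pinned down as a $\sigma$- and $T$-independent Hilbert manifold in this way, the $\sigma$-direction is harmless: the complex structure on $K_i^S$ moves smoothly, the pulled-back $\delbar$-operator moves smoothly, and differentiating the iteration in $\sigma$ costs nothing. I expect the bulk of the write-up to be bookkeeping: translating every object of Sections \ref{subsec12}--\ref{subsecdecayT} into its $\sigma$-dependent counterpart and observing that each estimate survives. The remaining subtlety --- that the obstruction spaces $\mathcal E_i(u')$ are defined using the \emph{obstruction-center} gluing data $\Xi^{\frak{ob}}$ rather than $\Xi$, so that $\mathcal E_i(u')$ depends on $\sigma$ through both the base point and the embedding $\frak I^\sigma_{\Xi^{\frak{ob}},i}$ --- is handled exactly as the $\kappa$-dependence of $\mathcal E_{i,(\kappa),\rho,T}$ was handled in Subsection \ref{subsec62}: by trivializing via parallel transport and applying the Neumann-series argument of \eqref{formnew666}, with Lemma \ref{lemma615}'s estimate replaced by its obvious $\nabla_{\sigma,\rho}$-version.
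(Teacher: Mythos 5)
Your overall strategy --- enlarging the parameter space to $\frak p = (\sigma_1,\rho_1,\sigma_2,\rho_2)$, pinning down the domain and target Hilbert manifolds via the $C^\infty$-trivialization built into $\Xi$, and re-running the iterations of Sections \ref{alternatingmethod}--\ref{subsecdecayT} --- is exactly the paper's. You also correctly isolate the one genuinely new phenomenon: $\mathcal E_i(u')$ is defined via the obstruction-center data $\Xi^{\frak{ob}}$, not $\Xi$, so the obstruction basis depends on $(\sigma,T)$ through two different embeddings, $\frak I^{\sigma}_{\Xi,i}$ and $\frak I^{\sigma'}_{\Xi^{\frak{ob}},i}$.

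However, there is a gap in how you propose to handle this. You claim the needed estimate is ``the obvious $\nabla_{\sigma,\rho}$-version'' of Lemma \ref{lemma615}, dispatched by trivializing via parallel transport and inserting into the Neumann series of \eqref{formnew666}. That indeed takes care of the $TX$-part of the map $I_{u',i}$. But $I_{u',i}$ is now a tensor product $I_X \otimes I_\Sigma$, and $I_\Sigma$ involves a pullback of $(0,1)$-forms through the $(\sigma,T)$-dependent diffeomorphism
$\Psi_{\sigma,T} = (\frak I^{\sigma}_{\Xi,i})^{-1}\circ \frak I^{\sigma'}_{\Xi^{\frak{ob}},i}: K_i^{\frak{ob}}\to K_i$.
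Parallel transport on $X$ sees only the variation of the fiber $T_{u(z)}X$; it is blind to the variation of the \emph{source} complex structure implicit in $\Psi_{\sigma,T}$. One needs a separate estimate showing that $\Psi_{\sigma,T}$ and all its $(\sigma,T)$-derivatives converge exponentially fast (in $T$) to a fixed limit map. The paper supplies this as Propositions \ref{prop816} and \ref{prof825}; the latter exploits the fact that the analytic families of coordinates double to \emph{holomorphic} families on the moduli of closed curves, so that the transition map $\Phi^{\C}\times\Psi^{\C}$ is holomorphic in the gluing parameter $\frak r = -e^{-10\pi T}$, from which the exponential decay \eqref{form84223} falls out by Taylor expansion at $\frak r = 0$. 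Your proposal omits this source-side, complex-geometric estimate entirely.

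Relatedly, your parenthetical explanation that $\delta_2<\delta_1$ is introduced merely ``to absorb polynomial-in-$T$ factors'' is not correct. The decay rate $\delta_3$ in Proposition \ref{prop816}, coming precisely from the comparison $\Psi_{\sigma,T}$ of the two gluing data, is a genuinely new constant unrelated to $\delta_1$, and the paper takes $\delta_2$ to depend on both $\delta_1$ and $\delta_3$ (and requires $\delta < \delta_3/10$ as well as $\delta < \delta_1/10$). Without the source-side estimate, you have no a priori reason to know such a $\delta_3>0$ exists at all.
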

\begin{proof}[Proof of Theorems \ref{gluethm12} and \ref{them816}]
We denote  {by $u_i^{\rho_i}: \Sigma_i(\sigma_i) \to X$} the map corresponding to
$(\sigma_i,\rho_i) \in \mathcal M^{\mathcal E_i}_+((\Sigma_i,\vec z_i); u_i)_{\epsilon(12)}$.
\par
For each $((\sigma_1,\rho_1),(\sigma_2,\rho_2)) \in \mathcal M^{\mathcal E_1}_+((\Sigma_1,\vec z_1); u_1)_{\epsilon(12)}
\,\,\,{}_{{\rm ev}_0} \times _{{\rm ev}_0}
\mathcal M^{\mathcal E_2}_+((\Sigma_2,\vec z_2); u_2)_{\epsilon(12)}$
and $T$ we run the alternating method developed in Chapters
\ref{alternatingmethod} and \ref{subsecdecayT}.
\par
Namely we start with $((\Sigma_1(\sigma_1),\vec z_1(\sigma_1)),u_1^{\rho_1}),
(\Sigma_2(\sigma_2),\vec z_1(\sigma_2)),u_2^{\rho_2})$
in place of $((\Sigma_1,\vec z_1), u_1)$, $((\Sigma_2,\vec z_2),u_2)$ and use
the coordinates at $0$-th marked points which we determined as a part of
the gluing data $\Xi$ centered at $((\Sigma_1,\vec z_1),u_1),
((\Sigma_2,\vec z_2),u_2))$.
\par
Except the point which we will explain below,
the proof  goes in the same way as before and we obtain a map
$$
V_1(\sigma_1,\epsilon) \times_L V_2(\sigma_2,\epsilon) \times (T_0,\infty] \to
\mathcal M_{+}^{\mathcal E_1\oplus \mathcal E_2}((\Sigma_{\infty},\vec z);u_1,u_2)_{\epsilon,\nu}.
$$
Here $V_i(\sigma_i,\epsilon)$ is the fiber of the map
$
\mathcal M^{\mathcal E_i}_+((\Sigma_i,\vec z_i); u_i)_{\epsilon}
\to \mathcal V_i^{{\R}}
$ at {$\sigma_i \in \mathcal V_i^{\R}$} and $T_0 > 1/\nu$.
The union of these maps over $\sigma_1$, $\sigma_2$ will be the map
${\rm Glue}_+$.
\par
The point we need to clarify in adapting the proof of Chapters  \ref{alternatingmethod} and \ref{subsecdecayT}
to our situation is the following:
The way we define $\mathcal E_i(u')$ in this chapter is slightly different from that of
Chapters
\ref{alternatingmethod} and \ref{subsecdecayT}.
Namely, {in this chapter}, we start with $\mathcal E_i^{\frak{ob}}$ defined on $(\Sigma^{\frak{ob}}_i,u^{\frak{ob}}_i)$
 with $\Sigma^{\frak{ob}}_i \ne \Sigma_i$, while
in Chapters \ref{alternatingmethod} and \ref{subsecdecayT} $\mathcal E_i^{\frak{ob}}$ was defined on
$(\Sigma_i,u^{\frak{ob}}_i)$, i.e., $\Sigma^{\frak{ob}}_i = \Sigma_i$.
\par
We use  Proposition \ref{prop816} {below} to obtain the required estimate for 
the current $\mathcal E_i(u')$.
Then the arguments in Chapters \ref{alternatingmethod} and \ref{subsecdecayT} also go through for the proofs of
Theorems \ref{gluethm12} and \ref{them816}.
\par
In fact Theorem \ref{gluethm12} (1),(3) are proved in the same way as
in Chapters
\ref{alternatingmethod}, \ref{subsecdecayT} and \ref{surjinj}.
The estimate of $\sigma$ derivative is the same as that of $\rho$ derivative.
So the proof of Theorem \ref{them816} is the same as that in Chapter \ref{subsecdecayT}.
Theorem \ref{gluethm12} (2) follows from the fact that the alternating method we use does not change the
complex structure of the source.
\par
To {formulate} Proposition \ref{prop816}, we need {some preparation}.

Let $(\Sigma',\vec z') = {\rm Glusoc}_{\Xi}^{\R}((\sigma_1,\sigma_2),T)$ and
$u' : (\Sigma',\partial \Sigma') \to (X,L)$ a smooth map satisfying the conditions
\begin{equation}
\aligned
&d(u'(z),u_i(z)) \le \epsilon, \qquad \text{for $z \in K_i$}
\\
& {\rm Diam} (u'(\Sigma' \setminus (K_1 \cup K_2))) \le \epsilon.
\endaligned
\end{equation}

We take a basis ${\bf e}_{i,a}$ $(a=1,\dots,\dim \mathcal E_i^{\frak{ob}}$)
of $\mathcal E_i^{\frak{ob}} \subset \Gamma(K^{\frak{ob}}_i;(u^{\frak{ob}}_i)^*TX\otimes \Lambda^{0,1}(\Sigma_i^{\frak{ob}}))$
and put
\begin{equation}\label{form827plus}
{\bf e}^0_{i,a}(u') = (I^{u_i}_{u_i^{{\rho}_i}} \circ I_{u'}^{u_i^{{\rho}_i}} \circ I_{u',i})({\bf e}_{i,a})
\in
\Gamma(K_i;(u_i)^*TX\otimes \Lambda^{0,1}(\Sigma_i)).
\end{equation}
Here the map $ I_{u',i}$ is (\ref{form816}) and \index[syindex]{Iuprimeuirhoi@$I_{u'}^{u_i^{\rho_i}}$}
\index[syindex]{Iuisigmaiui@$I^{u_i}_{u_i^{{\rho}_i}}$}
the maps
$$
\aligned
& I_{u'}^{u_i^{\rho_i}} : \Gamma({\frak I}_{\Xi^{\frak{ob}},i}^{\sigma'}(K^{\frak{ob}}_i) ; (u')^*TX \otimes \Lambda^{0,1}(\Sigma'))
\to
\Gamma(\Sigma_i(\sigma_i) ; (u_i^{{\rho}_i})^*TX \otimes \Lambda^{0,1}(\Sigma_i(\sigma_i)))
\\
& I^{u_i}_{u_i^{{\rho}_i}} : \Gamma(\Sigma_i(\sigma_i) ; (u_i^{{\rho}_i})^*TX \otimes \Lambda^{0,1}(\Sigma_i(\sigma_i)))
\to \Gamma(K_i;(u_i)^*TX\otimes \Lambda^{0,1}(\Sigma_i))
\endaligned
$$
are defined as follows.
Note since
$(\Sigma',\vec z') = {\rm Glusoc}_{\Xi^{\frak{ob}}}^\R((\sigma'_1,\sigma'_2),T')$
we have a smooth embedding\index[syindex]{IXiobi@${\frak I}_{\Xi^{\frak{ob}},i}^{\sigma'}$}
\begin{equation}\label{form828}
{\frak I}_{\Xi^{\frak{ob}},i}^{\sigma'}  : K^{\frak{ob}}_i \to \Sigma'
\end{equation}
by Definition-Lemma \ref{deflem87}.
Its image appears in the domain of $I_{u'}^{u_i^{{\rho}_i}}$.
\par
 We use an embedding
\begin{equation}\label{form829}
{\frak I}_{\Xi,i}^{\sigma}: K_i \to \Sigma'
\end{equation}
which is also obtained by  Definition-Lemma \ref{deflem87}
in the definition of $I_{u'}^{u_i^{{\rho}_i}}$.
\begin{rem}
We remark that in (\ref{form828}) we use the gluing data $\Xi^{\frak{ob}}$
centered at $((\Sigma^{\frak{ob}}_1,\vec z^{\frak{ob}}_1),(\Sigma^{\frak{ob}}_2,\vec z^{\frak{ob}}_2))$.
In (\ref{form829}) we use the gluing data $\Xi = (\Xi_1,\Xi_2)$
centered at $((\Sigma_1,\vec z_1),(\Sigma_2,\vec z_2))$.
See Figure \ref{figure83}.
\end{rem}
We assume the next relation:
\begin{equation}\label{form830}
{\frak I}_{\Xi^{\frak{ob}},i}^{\sigma'}(K^{\frak{ob}}_i) \subset {\frak I}_{\Xi,i}^{\sigma}(K_i)
\end{equation}
which is needed so that the above composition is defined.
By Condition \ref{cond813} we may choose {$\mathcal V^{\R}_i$}, a neighborhood of $[\Sigma_i,\vec z_i]$
in the moduli space of marked bordered curves, so small that if $(\sigma_1,\sigma_2) \in {\mathcal V^{\R}_1 
\times \mathcal V^{\R}_2}$
then (\ref{form830}) is satisfied.
\begin{figure}
\centering
\includegraphics{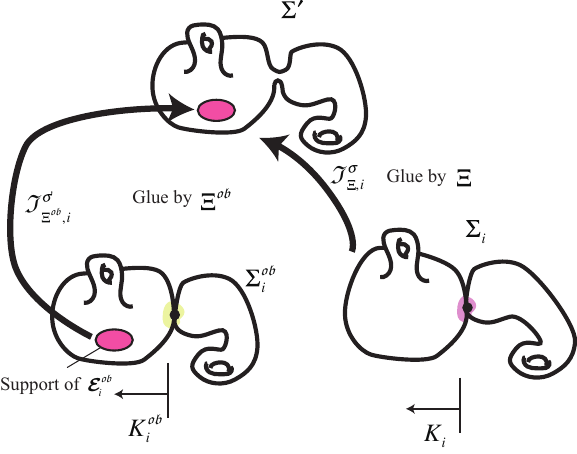}
\caption{Two gluing data give different embeddings.}
\label{figure83}
\end{figure}
\par
The smooth embedding ${\frak I}_{\Xi,i}^{\sigma}$ induces a complex linear map
\begin{equation}\label{map828}
\Lambda^{0,1}_z(\Sigma') \to \Lambda^{1}_{{\frak I}_{\Xi,i}^{\sigma}(z)}(\Sigma_i(\sigma_i))
\to  \Lambda^{0,1}_{{\frak I}_{\Xi,i}^{\sigma}(z)}(\Sigma_i(\sigma_i)).
\end{equation}
Here  the second map is the projection and the first map is
the complex linear part of the map induced by ${\frak I}_{\Xi,i}^{\sigma}$.
Namely (\ref{map828}) is given by
$$
\frac{1}{2}\left(({({\frak I}_{\Xi,i}^{\sigma})^{-1}})^*  - j_{\Sigma_i(\sigma_i)}
\circ (({\frak I}_{\Xi,i}^{\sigma})^{-1})^* \circ j_{\Sigma^{\frak{ob}}} \right).
$$
\par
We also consider the map:
\begin{equation}\label{map8290}
\left(\left({\rm Pal}^{u'(z)}_{u_i^{{\rho}_i}(z)}\right)^J\right)^{-1} :
T_{u'(z)}X\to T_{u_i^{{\rho}_i}(z)}X.
\end{equation}
Then $ I_{u'}^{u^{{\rho}_i}}$ is obtained from the tensor product of
(\ref{map828}) and (\ref{map8290}).
\par
{Similarly}
$I^{u_i}_{u_i^{{\rho}_i}}$ is induced by the tensor product of the projection
\begin{equation}\label{map835}
\Lambda^{0,1}_z(\Sigma_i(\sigma_i))
\to  \Lambda^{0,1}_{z}(\Sigma_i)
\end{equation}
and a complex linear part of the parallel transformation
\begin{equation}\label{map829}
\left(\left({\rm Pal}_{u_i(z)}^{u_i^{{\rho}_i}(z)}\right)^J\right)^{-1} :
T_{u_i^{{\rho}_i}(z)}X \to  T_{u_i(z)}X.
\end{equation}
{This finishes the explanation of the maps \eqref{form827plus}.}
\par
Let 
$$
(\sigma,\rho) = ((\sigma_1,\rho_1),(\sigma_2,\rho_2))
\in {\mathcal M^{\mathcal E_1}_+((\Sigma_1,\vec z_1); u_1)_{\epsilon} \times_L  \mathcal M^{\mathcal E_2}_+((\Sigma_2,\vec z_2); u_2)_{\epsilon}}
$$
and $(\Sigma^{{\sigma}}_T,\vec z^{{\sigma}}_T) =
{\rm Glusoc}^{\R}(\sigma_1,\sigma_2,T)$. 
\par
We denote by
$$
\hat u^{\sigma,\rho}_{i,T,(\kappa)} : \Sigma^{\sigma}_i \cong \Sigma_i
\to X
$$
the map obtained at the $\kappa$-th inductive step of our 
{inductive construction} starting from
$((\Sigma_1(\sigma_1),\vec z_1(\sigma_1)),u_1^{{\rho_1}})$, $((\Sigma_2(\sigma_2),\vec z_2(\sigma_2)),u_2^{{\rho_2}})$.
The diffeomorphism $\Sigma^{\sigma}_i \cong \Sigma_i$
is a part of datum $\Xi_i$.
\par
In other words $\hat u^{\sigma,\rho}_{i,T,(\kappa)}$
corresponds to the map $\hat u^{\rho}_{i,T,(\kappa)}$
in (\ref{form573new}).
We put
\begin{equation}\label{newname840}
{\bf e}'_{i,a;(\kappa)}(\sigma,\rho,T)
=
{\bf e}^0_{i,a}(u^{\sigma,\rho}_{T,(\kappa)})
\in
\Gamma(K_i; u_i^*TX\otimes \Lambda^{0,1}(\Sigma_i)).
\end{equation}
Here ${\bf e}^0_{i,a}(u^{\sigma,\rho}_{T,(\kappa)}) $ is {the map defined as} in (\ref{form827plus}), 
and the map $u^{\sigma,\rho}_{T,(\kappa)}$
is the $\rho$-parametrized version of
the map 
$u^{\sigma}_{T,(\kappa)}$
in Definition 5.33.
We remark that 
{the set} ${\{}{\bf e}'_{i,a;(\kappa)}(\sigma,\rho,T){\}}$ 
forms a basis of 
the subspace
$$
\Big(I^{u_i}_{u_i^{{\rho}_i}} \circ I_{\hat u^{\sigma,\rho}_{i,T,(\kappa)}}^{u_i^{{\rho}_i}}
\Big)
(\mathcal E_i(\hat u^{\sigma,\rho}_{i,T,(\kappa)})).
$$
Moreover there exists a canonical isomorphism
\begin{equation}\label{newnewform843}
\mathcal E_i(\hat u^{\sigma,\rho}_{i,T,(\kappa)})
\cong
\mathcal E_i(u^{\sigma,\rho}_{T,(\kappa)}).
\end{equation}
(\ref{newnewform843}) is a consequence of the equality
$\hat u^{\sigma,\rho}_{i,T,(\kappa)} = u^{\sigma,\rho}_{T,(\kappa)}$ on
$K_i \subset \Sigma_i(\sigma_i)$ which we regard as a subset $K_i \subset \Sigma^{\sigma}_T$
by the canonical embedding  ${\frak I}_{\Xi,i}^{\sigma}$.
This equality follows from the definition (\ref{form573new}).
\begin{prop}\label{prop816}
There exists $\delta_3>0$ such that we can estimate ${\bf e}'_{i,a;(\kappa)}({\sigma,\rho},T)$
and its $T$, $\rho$, $\sigma$ derivatives as
\begin{equation}\label{prop816sformula}
\left\Vert \nabla_{\sigma,\rho}^n\frac{\partial^{\ell}}{\partial T^{\ell}}{\bf e}'_{i,a;
(\kappa)}(\sigma,\rho,T)
\right\Vert_{L^2_{m+1-\ell}}
\le
C_{m,(\ref{prop816sformula})} e^{-\delta_3 T}
\end{equation}
for $m-2 \ge n \ge 0$, {$m-2 \ge \ell > 0$}, if
$T>T_{m,(\ref{prop816sformula})}$.
\end{prop}
This proposition is the family version of Lemma \ref{lemma615}.\footnote{Actually Lemma \ref{lemma615} gives an estimate of the
orthonormal frame obtained from ${\bf e}'_{i,a;(\kappa)}(\rho,T)$
by the Gram-Schmidt process. However the argument to study the
Gram-Schmidt process in our situation is the same as the one in the proof of Lemma \ref{lemma615}.}
The proof will be given in Section \ref{appendixG}.
\par
We can use Proposition \ref{prop816} to control the obstruction bundle $\mathcal E_i(u')$
together with its derivatives.
\par
More explicitly: we can use the proposition to show (\ref{eq:intintPiE1}): we use it to show
a version of Lemma \ref{lemE2E2},
where $\sigma$ derivative as well as $\rho$ and $T$ derivatives is included and
the definition of $\mathcal E_i(u')$ is slightly modified
as we mentioned at the beginning the proof of Theorem \ref{them816}.
\par
We can use  Proposition \ref{prop816} also in all the other places where we need to
control $\mathcal E_i(u')$.
Note we take $\delta>0$ in Definition \ref{defn3535} so that not only $\delta < \delta_1/10$ but also
$\delta < \delta_3/10$ holds.
The constant $\delta_2$ in Theorem \ref{them816} is determined by $\delta_1$ and
$\delta_3$.
\par
The proofs of Theorems \ref{gluethm12} and \ref{them816} are now complete except the
proof of Proposition \ref{prop816}.
\end{proof}

\subsection{Smoothness of coordinate change}
\label{subsec82}
We now use Theorem{s} \ref{gluethm12} {and \ref{them816}} to prove  smoothness of 
the coordinate change.
We begin with explaining the situation where we study the coordinate change.
\begin{shitu}\label{shitu820}
Let $(\Sigma_i^{\mathfrak{ob}},\vec z^{\frak{ob}}_i) \in \mathcal M_{g_i,k_i+1}$ for $i=1,2$.
We take
$\Xi^{\frak{ob}} = (\Xi^{\frak{ob}}_1,\Xi^{\frak {ob}}_2)$ a gluing datum
centered at  $((\Sigma_1^{\mathfrak{ob}},\vec z^{\frak{ob}}_1),
(\Sigma_2^{\mathfrak{ob}},\vec z^{\frak{ob}}_2))$.
We consider $u^{\frak{ob}}_i : (\Sigma_i^{\mathfrak{ob}},\partial \Sigma_i^{\mathfrak{ob}})\to
(X,L)$, a pseudoholomorphic map of homology class $\beta_i$.
For $j=1,2$, let $(\Xi^{\frak{ob}},(\mathcal E^{(j)}_1,\mathcal E^{(j)}_2))$
be an obstruction bundle datum which is centered at
$(((\Sigma_1^{\mathfrak{ob}},\vec z^{\frak{ob}}_1),u^{\frak{ob}}_1),
((\Sigma_2^{\mathfrak{ob}},\vec z^{\frak{ob}}_2),u^{\frak{ob}}_2))$.
Note we use the same gluing datum $\Xi^{\frak{ob}}$ for $j=1,2$.
\par
We assume that the next inclusion holds for $i=1,2$.
\begin{equation}\label{form844}
\mathcal E^{(1)}_i \subseteq \mathcal E^{(2)}_i.
\end{equation}
\end{shitu}
In sum we take two obstruction bundle data with the same gluing dataum 
that
satisfies (\ref{form844}).
\begin{shitu}\label{shitu821}
We next consider
$((\Sigma_i^{(j)},\vec z_i^{(j)}),u^{(j)}_{i})$
for $i,j =1,2$.
Here:
\begin{enumerate}
\item
$(\Sigma_i^{(j)},\vec z_i^{(j)}) \in \mathcal M_{g_i,k_i+1}$.
\item For each $i=1,\,2$, $j=1,\,2$,
$u^{(j)}_{i}: (\Sigma_i^{(j)},\partial \Sigma_i^{(j)}) \to (X,L)$
is a pseudoholomorphic map of homology class $\beta_i$.
\end{enumerate}
We call {such a pair} $((\Sigma_1^{(j)},\vec z_1^{(j)}),u^{(j)}_{1})),((\Sigma_2^{(j)},\vec z_2^{(j)}),u^{(j)}_{2}))$
for $j=1$ or $2$, a {\it chart center}. \index{chart center}
\par
Let $\Xi^{(j)} = (\Xi^{(j)}_1,\Xi^{(j)}_2)$ be a gluing datum
centered at $(\Sigma_i^{(j)},\vec z_i^{(j)})$.
We assume
\begin{equation}\label{form845}
\mathcal V^{(1),\R}_{i} \subset \mathcal V^{(2),\R}_{i}
\end{equation}
where $\mathcal V^{(j),\R}_{i}$ is a neighborhood of
$(\Sigma_i^{(j)},\vec z_i^{(j)})$ in $\mathcal M_{g_i,k_i+1}$
which is a part of $\Xi^{(j)}_i$.
Note (\ref{form845}) implies $(\Sigma_i^{(1)},\vec z_i^{(1)}) \in  \mathcal V^{(2),\R}_{i}$.
\par
We also assume Conditions \ref{cond822} and \ref{cond823} below.
\end{shitu}
When $u_i : (\Sigma_i,\partial \Sigma_i) \to (X,L)$ is given
and satisfies ${\rm ev}_{1,\infty}(u_1) = {\rm ev}_{1,\infty}(u_2)$,
we denote by $u_{\infty} : (\Sigma_{\infty},\partial \Sigma_{\infty}) \to (X,L)$
the map {that restricts} to $u_i$ on $\Sigma_i$.
\par
We take and fix $\epsilon_6, \nu_1 >0$.
We put $\epsilon(11) = \epsilon_6$ and $\nu(1) = \nu_1$ in Theorem \ref{gluethm12}.
(Here we will apply Theorem \ref{gluethm12} 
to $\text{\rm Glue}^{(2)}_{+}$ in (\ref{diag86}).)
We put
\begin{equation}\label{lastform845}
\epsilon_7 = \epsilon_{\epsilon(11),\nu(1),(\ref{CD825})}, 
\qquad
\nu_2 = \nu_{\epsilon(11),\nu(1),(\ref{CD825})},
\end{equation}
where the right hand sides are obtained in Theorem \ref{gluethm12}.
\begin{conds}\label{cond822}
We identify $(\Sigma_1^{(j)},\vec z_1^{(j)})$ with
$(\Sigma_2^{(j)},\vec z_2^{(j)})$ at their $0$-th marked points
to obtain $(\Sigma_{\infty}^{(j)},\vec z^{(j)})$.
Together with $u_{1}^{(j)}$ and $u_{2}^{(j)}$
it gives an element
of $\mathcal M_{g_1+g_2,k_1+k_2}(X,L;\beta)$ which we denote by
${[}(\Sigma_{\infty}^{(j)},\vec z^{(j)}),u_{\infty}^{(j)}{]}$.
We require
\begin{equation}
((\Sigma_{\infty}^{(1)},\vec z^{(1)}),u_{\infty}^{(1)})
\in
\mathcal M_+^{\mathcal E_1^{(2)}\oplus \mathcal E_2^{(2)}}((\Sigma^{(2)}_{\infty},\vec z^{(2)});
u_{1}^{(2)},u_{2}^{(2)})_{\epsilon_{7}/2,
\nu_{2}/2}.
\end{equation}
\end{conds}
Roughly speaking Condition \ref{cond822} means that $u_{i}^{(1)}$ is close to $u_{i}^{(2)}$.
\begin{conds}\label{cond823}
Let $\sigma = (\sigma_1,\sigma_2) \in \mathcal V_{1}^{(1),\R} \times \mathcal V_{2}^{(1),\R}$.
By (\ref{form845})
$\sigma \in \mathcal V_{1}^{(2),\R} \times \mathcal V_{2}^{(2),\R}$.
Using the gluing data $\Xi^{(1)}$, $\Xi^{(2)}$ taken in Situation \ref{shitu821}, 
we obtain a
$\sigma_i$ dependent family of embeddings
\begin{equation}\label{form848}
\psi_{\sigma_i} : 
K_{i}^{(2)} \subset \Sigma_{i}^{(2)} \cong \Sigma_{i}^{(2)}(\sigma_i)
\cong \Sigma_{i}^{(1)}(\sigma_i),
\end{equation}
for $i=1,2$, 
where the first inclusion is given by definition,
the first $\cong$ is the $\sigma_i$ dependent 
family of diffeomorphisms which is a part of the datum
$\Xi^{(2)}$ and the second $\cong$ is the unique biholomorphic map.
\par
We require
\begin{equation}\label{newform849}
\psi_{\sigma_i}(K_{i}^{(2)}) \subseteq
{\rm Int}\,K_{i}^{(1)}
\end{equation}
for $\sigma_i \in \mathcal V_{i}^{(2),\R}$.
\end{conds}
Note that for given $\Xi^{(1)}$, $\Xi^{(2)}$ we can always modify the analytic
family $\varphi_{i,(2)}^{\R}$ of coordinates, which is a part of $\Xi^{(2)}$
so that Condition \ref{cond823} is satisfied.
In fact we may replace $\varphi_{i,(2)}^{\R}$ by its conformal change
$z \mapsto \varphi_{i,(2)}^{\R}(\epsilon z)$ for sufficiently small $\epsilon$.
In other words we may assume Condition \ref{cond823} without loss of generality.
\par
Theorem \ref{them81}, the main result of this chapter, concerns the first vertical arrow
of the next Diagram
(\ref{diag86}).
\begin{lem}\label{lem824new}
There exist
constants
$\epsilon_{8}$,
$\epsilon_{9}$,
$\epsilon_{10,m}$,
$\nu_{3,m}$,
$\nu_{4,m}$,
and
$T_{m,(\ref{diag86})}$
such that
we have the following commutative diagram for any
positive numbers
$\epsilon \le \epsilon_{10,m}$  and
$T(0)\ge T_{m,(\ref{diag86})}$.

\begin{equation}\label{diag86}
\!\!\!\!\!\!\!\!\!\!\!\!\!\!\!\!\!\!
\begin{CD}
\mathcal M^{\mathcal E^{(1)}_1}_+((\Sigma^{(1)}_1,\vec z^{(1)}_1); u_1^{(1)})_{\epsilon}
\,\,{}_{{\rm ev}_0} \times _{{\rm ev}_0}
\atop \qquad \mathcal M^{\mathcal E^{(1)}_2}_+((\Sigma^{(1)}_2,\vec z^{(1)}_2);  u_2^{(1)})_{\epsilon} \times (T(0),\infty]  @ >{
\text{\rm Glue}^{(1)}_{+}}>>
\mathcal M_{+}^{\mathcal E^{(1)}_1\oplus \mathcal E^{(1)}_2}((\Sigma^{(1)}_{\infty},\vec z^{(1)});u^{(1)}_{1},u^{(1)}_{2})
_{\epsilon_{9},\nu_{3,m}}  \\
@ VV{\frak F}V @ VVV\\
\mathcal M^{\mathcal E_1^{(2)}}_+((\Sigma^{(2)}_1,\vec z^{(2)}_1); u_1^{(2)})
_{\epsilon_{8}}
\,\,{}_{{\rm ev}_0} \times _{{\rm ev}_0}
\atop \qquad \mathcal M^{\mathcal E^{(2)}_2}_+((\Sigma^{(2)}_2,\vec z^{(2)}_2); u_2^{(2)};
\beta_2)_{\epsilon_{8}} \times (1/\nu_{4,m},\infty] @ >
{\text{\rm Glue}^{(2)}_{+}} >>
\mathcal M_{+}^{\mathcal E^{(2)}_1\oplus \mathcal E^{(2)}_2}((\Sigma^{(2)}_{\infty},\vec z^{(2)});
u^{(2)}_{1},u^{(2)}_{2})
_{\epsilon_{6},
\nu_{1}}
\end{CD}
\end{equation}
\end{lem}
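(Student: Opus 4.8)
The plan is to construct the horizontal maps $\text{\rm Glue}^{(1)}_+$ and $\text{\rm Glue}^{(2)}_+$ by applying Theorem \ref{gluethm12} to each chart center separately, and then to build the vertical map $\frak F$ on the left by restricting the glued solutions and re-running the alternating method associated to the second (larger) obstruction bundle data. Concretely, first I would invoke Theorem \ref{gluethm12} for the obstruction bundle data $(\Xi^{\frak{ob}},(\mathcal E^{(1)}_1,\mathcal E^{(1)}_2))$ and chart center $((\Sigma^{(1)}_i,\vec z^{(1)}_i),u^{(1)}_i)$ to get $\text{\rm Glue}^{(1)}_+$, defined for $T > T_{7,m,\epsilon(11),\nu(1)}$ with suitable $\epsilon(11),\nu(1)$; I would then set $\epsilon_9$ and $\nu_{3,m}$ to be the output constants $\epsilon_{\epsilon(11),\nu(1),(\ref{CD825})}$, $\nu_{\epsilon(11),\nu(1),(\ref{CD825})}$ so that the top right corner of (\ref{diag86}) is exactly the surjectivity range guaranteed by Theorem \ref{gluethm12}(1). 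Similarly $\text{\rm Glue}^{(2)}_+$ comes from applying the theorem to $(\Xi^{\frak{ob}},(\mathcal E^{(2)}_1,\mathcal E^{(2)}_2))$ and chart center $((\Sigma^{(2)}_i,\vec z^{(2)}_i),u^{(2)}_i)$, whose output range is $\mathcal M_+^{\mathcal E^{(2)}_1\oplus\mathcal E^{(2)}_2}((\Sigma^{(2)}_\infty,\vec z^{(2)});u^{(2)}_1,u^{(2)}_2)_{\epsilon_6,\nu_1}$ as in the bottom right corner.

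The key construction is the right vertical arrow, which should be the obvious inclusion-type map: given a solution $u'$ of $\overline\partial u' \equiv 0 \bmod \mathcal E^{(1)}_1(u')\oplus\mathcal E^{(1)}_2(u')$ whose source is ${\rm Glusoc}^{\R}_{\Xi^{(1)}}(\sigma_1,\sigma_2,T)$, Condition \ref{cond822} places $((\Sigma^{(1)}_\infty,\vec z^{(1)}),u^{(1)}_\infty)$ inside the $(\epsilon_7/2,\nu_2/2)$-moduli space built on the second chart center, and Condition \ref{cond823} (the nesting $K^{(1)}_i \subseteq \Int K^{(2)}_i$) guarantees that $u'$ also satisfies the conditions defining $\mathcal M_+^{\mathcal E^{(1)}_1\oplus\mathcal E^{(1)}_2}$ with respect to the second gluing data. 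Since $\mathcal E^{(1)}_i \subseteq \mathcal E^{(2)}_i$ by (\ref{form844}), any solution modulo the smaller space is tautologically a solution modulo the larger space, so $u'$ defines an element of $\mathcal M_+^{\mathcal E^{(2)}_1\oplus\mathcal E^{(2)}_2}((\Sigma^{(2)}_\infty,\vec z^{(2)});u^{(2)}_1,u^{(2)}_2)_{\epsilon_6,\nu_1}$ provided we choose $\epsilon_8,\epsilon_9,\nu_{3,m},\nu_{4,m}$ small enough relative to $\epsilon_6,\nu_1,\epsilon_7,\nu_2$ — this is where the constants in the statement get fixed. The left vertical arrow $\frak F$ is then forced by commutativity: it is $(\text{\rm Glue}^{(2)}_+)^{-1}$ composed with the right vertical arrow composed with $\text{\rm Glue}^{(1)}_+$, which makes sense on the appropriate subdomain because $\text{\rm Glue}^{(2)}_+$ is a homeomorphism onto its image and the image of the composition lands inside that image once the constants are shrunk appropriately. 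The source-curve gluing maps are compatible because both use the same trivializations along the neck (both factor through ${\rm Glusoc}^{\R}$ with the conformal matching $zw=-r$), so the source components of $\frak F$ are just the inclusions $\mathcal V^{(1),\R}_i \hookrightarrow \mathcal V^{(2),\R}_i$ of (\ref{form845}) times the identity on the $(T_0,\infty]$-factor, rescaled if the neck length coordinates differ by the conformal change used to arrange Condition \ref{cond823}.

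The main obstacle will be verifying that the domains line up so that the composition $(\text{\rm Glue}^{(2)}_+)^{-1}\circ(\text{right vertical})\circ\text{\rm Glue}^{(1)}_+$ is actually defined on the stated domain, i.e. that an element coming out of $\text{\rm Glue}^{(1)}_+$ over the range $(\epsilon, T(0),\infty]$ lands, after restriction, genuinely inside the image of $\text{\rm Glue}^{(2)}_+$ and in the range where $\text{\rm Glue}^{(2)}_+$ is invertible. This requires a careful bookkeeping of the hierarchy of constants: one chooses $\epsilon_{10,m}$ and $T_{m,(\ref{diag86})}$ last, after $\epsilon_6,\nu_1$ fix $\epsilon_7,\nu_2$ via Theorem \ref{gluethm12}(1), then $\epsilon_9,\nu_{3,m}$ are fixed, then $\epsilon_8 = $ the $\epsilon(12)$ output of Theorem \ref{gluethm12} applied to $\text{\rm Glue}^{(2)}_+$ (with $m$-dependence entering through $T_{7,m,\epsilon(11),\nu(1)}$, hence the $m$-subscripts on $\nu_{3,m},\nu_{4,m}$), and finally $\epsilon_{10,m}$ is taken small enough and $T_{m,(\ref{diag86})}$ large enough that both halves of the diagram are simultaneously valid. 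Diagram (\ref{diag86}) then commutes essentially by construction: the bottom-horizontal and right-vertical composite equals the top-horizontal followed by the right-vertical by definition of $\frak F$, and the source-curve part commutes by the compatibility of the two gluing data along the neck noted above. No genuinely new estimate is needed here beyond Theorems \ref{gluethm12} and \ref{them816}; the content is the organizational lemma that the inclusions (\ref{form844}), (\ref{form845}), (\ref{newform849}) and the closeness Condition \ref{cond822} are exactly what is required to make both gluing constructions compatible.
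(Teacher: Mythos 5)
Your argument matches the paper's proof in its essential structure: both obtain the two horizontal gluing maps from Theorem \ref{gluethm12}, use (\ref{form844}), Condition \ref{cond822}, and the shrinking of constants from Theorem \ref{gluethm12}\,(1) to build the right vertical arrow as a natural inclusion, and then define $\frak F = (\text{\rm Glue}^{(2)}_+)^{-1}\circ(\text{right vertical arrow})\circ\text{\rm Glue}^{(1)}_+$ on a suitably shrunken domain where the surjectivity/injectivity statements of Theorem \ref{gluethm12}\,(1) guarantee this composition is well-defined. The hierarchy of constants you lay out ($\epsilon_6,\nu_1$ fix $\epsilon_7,\nu_2$; then $\epsilon_9,\nu_{3,m}$; then $\epsilon_8,\nu_{4,m}$; finally $\epsilon_{10},T_{m,(\ref{diag86})}$) is the same as the paper's.

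Two minor corrections, though neither invalidates the proof of this lemma. First, your opening sentence about \emph{re-running the alternating method} to build $\frak F$ is a red herring that contradicts what you actually do in the rest of the argument (compose inclusions with the inverse of $\text{\rm Glue}^{(2)}_+$); the alternating method is not re-run. Second, the parenthetical claim that the source-curve component of $\frak F$ is ``just the inclusions $\mathcal V^{(1),\R}_i \hookrightarrow \mathcal V^{(2),\R}_i$ times the identity on the $T$-factor, rescaled by a conformal change'' is not accurate: the two gluing data $\Xi^{(1)},\Xi^{(2)}$ use in general \emph{different} analytic families of coordinates at the $0$-th marked points, so the resulting change-of-parameter map $\Phi$ (the content of Proposition \ref{prop819}) is a genuine nonlinear change of variables, not a rescaled inclusion. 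For the existence and commutativity claimed in this lemma this imprecision is harmless, since the compatibility as set maps only uses that the alternating method leaves the source complex structure unchanged; but when you go on to prove Theorem \ref{them81} you will need the full exponential decay estimate (\ref{form842}) on $\Phi$, not a rescaling argument.
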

\begin{proof}
Theorem \ref{gluethm12} implies that  
we can choose $\epsilon_{8}$, $\nu_{4,m}$ so that  the lower horizontal arrow $\text{\rm Glue}^{(2)}_{+}$
exists for given $\epsilon_6$ and $\nu_1$.
Here we use the $W^2_{m+1,\delta}$ space to work out the gluing analysis.
Note the gluing map itself is independent of the Sobolev exponent but the domain of its 
convergence depends on it.
\par
Since $((\Sigma^{(1)}_{\infty},\vec z^{(1)}),u^{(1)}_{\infty})$
is an element of
$\mathcal M_{+}^{\mathcal E^{(2)}_1\oplus \mathcal E^{(2)}_2}((\Sigma^{(2)}_{\infty},\vec z^{(2)}_{\infty});
u^{(2)}_{1},u^{(2)}_{2})
_{\epsilon_{7}/2,
\nu_{2}/2}$
by  Condition \ref{cond822}, we use (\ref{form844})
and Theorem \ref{gluethm12} (1)
to show that we may take the constants
$\epsilon_{9}$,
$\nu_{3,m}$ so small that 
the natural inclusion induces the right vertical arrow of 
(\ref{diag86}) and
$$
\aligned
&{\rm Im}
\Big(
\mathcal M_{+}^{\mathcal E^{(1)}_1\oplus \mathcal E^{(1)}_2}((\Sigma^{(1)}_{\infty},\vec z^{(1)});u^{(1)}_{1},u^{(1)}_{2})
_{\epsilon_{9},\nu_{3,m}}
\Big)
\\
&
\subset
{\rm Im}
\left(
\mathcal M^{\mathcal E_1^{(2)}}_+((\Sigma^{(2)}_1,\vec z^{(2)}_1); u_1^{(2)})
_{\epsilon_{8}}
\,\,{}_{{\rm ev}_0} \times _{{\rm ev}_0}
\atop  \mathcal M^{\mathcal E^{(2)}_2}_+((\Sigma^{(2)}_2,\vec z^{(2)}_2); u_2^{(2)};
\beta_2)_{\epsilon_{8}} \times (1/\nu_{4,m},\infty]
\right)
\endaligned
$$
in 
$\mathcal M_{+}^{\mathcal E^{(2)}_1\oplus \mathcal E^{(2)}_2}((\Sigma^{(2)}_{\infty},\vec z^{(2)});
u^{(2)}_{1},u^{(2)}_{2})
_{\epsilon_{6},
\nu_{1}}
$.
\par
Then, using  Theorem \ref{gluethm12} (1)
(the surjectivity and injectivity of the gluing map),
we may take $\epsilon_{10,m}$
so small and $T_{m,(\ref{diag86})}$ so large that there exists  a unique map
$\frak F$ so that Diagram (\ref{diag86}) commutes.
\end{proof}
{We take 
$m_0 > 10$ and fix it. In the next theorem we take $\epsilon \le \epsilon_{10,m_0}$ 
and consider the map $\frak F$ as in (\ref{diag86}) for $m=m_0$.}
\begin{thm}\label{them81}
There exists $T_{{m_0},\langle\!\langle\ref{them81}\rangle\!\rangle} > 0$ with the following property.
\par
We identify $(T(1),\infty]$  
with
$[0,1/T(1))$ under the map 
$T \mapsto s = 1/T$.
\par
Then the map $\frak F$ in (\ref{diag86}) is a smooth embedding
on
$$
\mathcal M^{\mathcal E^{(1)}_1}_+((\Sigma^{(1)}_1,\vec z^{(1)}_1); u_1^{(1)})_{\epsilon}
\,\,{}_{{\rm ev}_0} \times _{{\rm ev}_0}
\mathcal M^{\mathcal E^{(1)}_2}_+((\Sigma^{(1)}_2,\vec z^{(1)}_2);  u_2^{(1)})_{\epsilon} \times (T(1),\infty] 
$$
if $T(1) > T_{{m_0},\langle\!\langle\ref{them81}\rangle\!\rangle}$.
\end{thm}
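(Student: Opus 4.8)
The plan is to show that $\frak F$ is smooth (and an embedding) by composing the two gluing maps and using the exponential decay estimates of Theorem \ref{them816}. First I would recall that, by the commutativity of Diagram (\ref{diag86}) and the injectivity of $\text{\rm Glue}^{(2)}_{+}$ (Theorem \ref{gluethm12} (1)), the map $\frak F$ is uniquely characterized by
$$
\text{\rm Glue}^{(2)}_{+} \circ \frak F = \text{\rm Glue}^{(1)}_{+}.
$$
So it suffices to prove that $\frak F$, which a priori is only a continuous map between the parameter spaces, is of $C^\infty$ class, and then that its differential is injective everywhere. The key point is that the parameter $T$ on both sides is the same (the source-curve gluing parameter), so passing through $s = 1/T$ the issue is smoothness up to and including $s=0$, i.e.\ $T=\infty$.

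The main technical step is to express $\frak F$ through the restriction maps $\text{\rm Glures}_{i,+,S}$ and apply Theorem \ref{them816}. Concretely, both $\text{\rm Glue}^{(1)}_{+}$ and $\text{\rm Glue}^{(2)}_{+}$ become, after restricting the glued curve to the fixed pieces $K_i^S$, maps into the $T$-independent Hilbert manifold $\text{\rm Map}_{L^2_{m+1}}((K_i^S,K_i^S\cap\partial\Sigma_i),(X,L))$, and Theorem \ref{them816} gives that each $\text{\rm Glures}_{i,+,S}$ is $C^\infty$ in $(\sigma,\rho)$ and in $d/dT$, with all mixed derivatives $\nabla_{\sigma,\rho}^n (d/dT)^\ell$ bounded (in fact exponentially small when $\ell>0$). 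Since $\text{\rm Glue}^{(2)}_{+}$ is, fiberwise over $\mathcal V_1^{(2)}\times_L\mathcal V_2^{(2)}$, a diffeomorphism onto its image whose inverse can be read off from a restriction to some $K_i^S$ (here one uses Remark \ref{rem64}: the restriction map $\mathcal M^{\mathcal E_1\oplus\mathcal E_2}(\dots)_\epsilon \to \prod_i \text{\rm Map}_{L^2_{m+1}}(K_i^S,\dots)$ is an embedding by unique continuation, and $\text{\rm Glures}_{i,+,S}$ is an immersion for $T$ large by the exponential decay), one can write $\frak F$ as a composition of $C^\infty$ maps: the source-gluing identification is holomorphic hence smooth (Lemma \ref{lem82}), the map $\text{\rm Glures}_{i,+,S}$ for the $(1)$-data is $C^\infty$ by Theorem \ref{them816}, and the local inverse of $\text{\rm Glue}^{(2)}_{+}$ is $C^\infty$ by the inverse function theorem applied to the restriction, again with the $T$-variable kept manifest as $s=1/T$. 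Doing this for each $m$ and using that the constructed maps are independent of $m$ on the overlap of their domains (Remark \ref{differentm}) and that the $C^\infty$ topology is Fr\'echet, one gets genuine $C^\infty$ smoothness including at $s=0$.

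To finish, I would establish that $\frak F$ is an embedding. Injectivity of $\frak F$ on the parameter space follows from injectivity of $\text{\rm Glue}^{(1)}_{+}$ (Theorem \ref{gluethm12} (1)) together with the relation $\text{\rm Glue}^{(2)}_{+}\circ\frak F=\text{\rm Glue}^{(1)}_{+}$. That $d\frak F$ is injective: differentiate the relation to get $d\text{\rm Glue}^{(2)}_{+}\circ d\frak F = d\text{\rm Glue}^{(1)}_{+}$; since $d\text{\rm Glue}^{(1)}_{+}$ is injective (the immersion property, Lemma \ref{immersion}, in its family version) and $d\text{\rm Glue}^{(2)}_{+}$ is injective as well, $d\frak F$ is injective. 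The $T$-direction needs a separate look: at $s=0$ one checks that $\partial\frak F/\partial s\neq 0$ by comparing leading-order behavior of the two gluing constructions in $s$, which is controlled by Theorem \ref{them816} (the $\ell=1$ derivative); since the source-gluing parameter is literally shared, the $s$-component of $\frak F$ is, to leading order, $s\mapsto s$ plus exponentially small corrections, hence a diffeomorphism near $0$. Properness onto the image over a compact parameter region then upgrades the injective immersion to an embedding.

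The hard part will be bookkeeping the identification of the two gluing constructions so that Theorem \ref{them816} can be applied verbatim: the $(1)$-gluing and the $(2)$-gluing use different gluing data $\Xi^{(1)}, \Xi^{(2)}$ and different obstruction-bundle data, so one must verify that the element $\text{\rm Glue}^{(1)}_{+}(\rho^{(1)},\sigma^{(1)},T)$ genuinely lies in the domain of the inverse of $\text{\rm Glue}^{(2)}_{+}$ (this is Lemma \ref{lem824new}) and that the change-of-$\mathcal E$ discrepancy, i.e.\ the difference between $\mathcal E_i^{(1)}(u')$ and $\mathcal E_i^{(2)}(u')$ and between the two embeddings $\frak I^\sigma_{\Xi^{(1)},i}$ and $\frak I^\sigma_{\Xi^{(2)},i}$, is itself smooth with bounded derivatives — which is exactly what Proposition \ref{prop816} provides. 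Once these compatibility estimates are in place, the smoothness of $\frak F$ is a formal consequence of the smoothness of its two factors, and the embedding property is elementary.
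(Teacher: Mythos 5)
Your proposal is essentially the paper's proof: both factor $\frak F$ through the restriction maps $\text{\rm Resfor}^{(j)}\circ\text{\rm Glue}^{(j)}_{+}$ into the $T$-independent Hilbert manifold $\prod_i\text{\rm Map}_{L^2_{m+1}}(K_i^{(j),S},\dots)$, invoke the exponential decay of Theorem \ref{them816} (via the computation in Sublemma \ref{sublem829}) to get all $s=1/T$ derivatives to vanish at $s=0$, use unique continuation at $T=\infty$ plus $C^1$-convergence for large $T$ to get the embedding, and rely on Proposition \ref{prop819} (and Proposition \ref{prop816} behind it) to control the discrepancy between the two gluing data $\Xi^{(1)},\Xi^{(2)}$. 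One small bookkeeping remark: the comparison of the $T$-parameter between the two chart centers — ``$T'(\sigma;T)-T$ exponentially small'' — is the content of Proposition \ref{prop819}, not of Proposition \ref{prop816}; the latter controls the obstruction-space basis $\mathbf e'_{i,a;(\kappa)}$ within one gluing scheme, while the former handles the source-curve identification between the two chart centers, which is what makes ``$s\mapsto s$ plus exponentially small'' a theorem rather than an assumption. With that caveat, your plan and the paper's argument coincide.
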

\begin{rem}\label{rem818}
We remark that the obstruction bundle data include the data of analytic family of  
coordinates at the
$0$-th marked point, which are used
to define the obstruction bundle $\mathcal E^{(j)}_i(u')$.
This way of defining the obstruction bundle is essential for the right vertical arrow of (\ref{diag86}) to exist.
{In this way the obstruction bundle $\mathcal E^{(j)}_i(u')$ can be made to depend}
only on $u'$, its source (marked bordered) curve, and the obstruction bundle data.
In particular it is independent of the
analytic family of coordinates $\varphi_i^{(j),\R}$, which we use to perform the inductive
construction of the gluing map by the  alternating method.
\par
On the other hand the analytic families of  coordinates at two  chart centers are in general
different from each other. Proposition \ref{prop819}
 below is used to estimate the discrepancy between these two {different} choices.
\end{rem}
\begin{proof}
We observe that a neighborhood of
$(\Sigma^{(j)}_{\infty},\vec z^{(j)}_{\infty}) \in \mathcal{M}_{g_1+g_2,k_1+k_2}$
is parameterized by the map (\ref{form8140}), that is,
\begin{equation}\label{form81420}
{\rm Glusoc}^{(j),\R} : \mathcal V^{(j),\R}_{1} \times \mathcal V^{(j),\R}_{2} \times (T,\infty]
\to \mathcal{CM}_{g_1+g_2,k_1+k_2}.
\end{equation}
So we obtain a map
\begin{equation}\label{form840}
\mathcal M_{+}^{\mathcal E^{(j)}_1\oplus \mathcal E^{(j)}_2}((\Sigma^{(j)}_{\infty},\vec z^{(j)}_{\infty});u^{(j)}_{1},
u^{(j)}_{2})
_{\epsilon,\nu}
\to
\mathcal V^{(j),\R}_{1} \times \mathcal V^{(j),\R}_{2} \times (1/\nu,\infty]
\end{equation}
by forgetting the map part of the stable map and compose it with the inverse of (\ref{form81420}), for $j=1,2$.
\begin{prop}\label{prop819}
There exist $\delta_3>0$,
and a (strata-wise) smooth map
$$
\Phi:
\mathcal V^{(1),\R}_{1} \times \mathcal V^{(1),\R}_{2} \times (1/\nu_{3,m},\infty]
\to
\mathcal V^{(2),\R}_{1} \times \mathcal V^{(2),\R}_{2} \times (1/\nu_{1},\infty]
$$
such that:
\begin{enumerate}
\item
The next diagram commutes.
\begin{equation}\label{diag841}
\begin{CD}
\mathcal M_{+}^{\mathcal E^{(1)}_1\oplus \mathcal E^{(1)}_2}((\Sigma^{(1)}_{\infty},\vec z^{(1)}_{\infty});
u^{(1)}_{1},u^{(1)}_{2})
_{\epsilon_{9},\nu_{3,m}}
 @ >{}>>
\mathcal V^{(1),\R}_{1} \times \mathcal V^{(1),\R}_{2} \times (1/\nu_{3,m},\infty]
  \\
@ VVV @ V{\Phi}VV\\
\mathcal M_{+}^{\mathcal E^{(2)}_1\oplus \mathcal E^{(2)}_2}((\Sigma^{(2)}_{\infty},\vec z^{(2)}_{\infty});
u^{(2)}_{1},u^{(2)}_{2})
_{\epsilon_6,
\nu_1} @ >>>
\mathcal V^{(2),\R}_{1} \times \mathcal V^{(2),\R}_{2} \times (1/\nu_{1},\infty]
\end{CD}
\end{equation}
Here the horizontal arrows are  {the} forgetful maps (\ref{form840}),
the left vertical arrow is the right vertical arrow of Diagram (\ref{diag86}).
\item
We put
$\Phi(\sigma;T) = (\sigma'(\sigma;T),T'(\sigma;T))$.
Then we have the following estimate.
\begin{equation}\label{form842}
\aligned
&\left\vert \nabla_{\sigma}^n \frac{d^{\ell}}{dT^{\ell}} \sigma'(\sigma;T) \right\vert
\le C_{m,(\ref{form842})} e^{-\delta_3 T}, \\
&\left\vert \nabla_{\sigma}^n \frac{d^{\ell}}{dT^{\ell}}
(T'(\sigma;T) - T)
\right\vert
\le C_{m,(\ref{form842})} e^{-\delta_3 T}
\endaligned
\end{equation}
for $m-2 \ge n$, $m-2 \ge \ell  >0$.
\end{enumerate}
\end{prop}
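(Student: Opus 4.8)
The strategy is to compare the two source-gluing parametrizations \eqref{form81420} for $j=1,2$ directly. Both families ${\rm Glusoc}^{(j),\R}$ parametrize a neighborhood of $[\Sigma^{(j)}_\infty,\vec z^{(j)}_\infty]$ inside $\mathcal{CM}_{g_1+g_2,k_1+k_2}$, so $\Phi$ is forced to be the composite of (the inverse of) ${\rm Glusoc}^{(1),\R}$ with ${\rm Glusoc}^{(2),\R}$; commutativity of \eqref{diag841} is then automatic once we check that this composite is well-defined on the range $(1/\nu_{3,m},\infty]$, which follows from Condition \ref{cond823} (i.e. $K_i^{(1)} \subset {\rm Int}\,K_i^{(2)}$) together with the inclusions \eqref{form845}. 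The real content is therefore the estimate \eqref{form842}, and the cleanest way to get it is to work at the level of the closed-curve gluing map ${\rm Glusoc}$ of Lemma \ref{lem82}, where everything is \emph{holomorphic}: the composite $({\rm Glusoc}^{(1)})^{-1}\circ{\rm Glusoc}^{(2)}$ is a biholomorphism between neighborhoods of $[\Sigma^{\rm cl,(j)},\vec z^{\rm cl,(j)}]\times D^2(\epsilon)$ in $(\mathcal V^{\rm cl}_1\times\mathcal V^{\rm cl}_2\times D^2(\epsilon))$, and then the real version $\Phi$ is obtained by restricting to the fixed locus of the anti-holomorphic involution, via the commuting square \eqref{diag862}.

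\textbf{Key steps, in order.} First I would set up $\Phi$ as the unique map making \eqref{diag841} commute, using biholomorphicity of both ${\rm Glusoc}^{(j)}$ (Lemma \ref{lem82}) in the closed-curve picture and Lemma \ref{lem8383}/\eqref{diag862} to pass to the bordered picture; this gives strata-wise smoothness of $\Phi$ in the variables $(\sigma_1,\sigma_2,\frak r)$ with $\frak r=-e^{-10\pi T}$. Second, I would write $\Phi$ explicitly: the two analytic families of coordinates $\varphi^{(1),\R}_i$ and $\varphi^{(2),\R}_i$ at the $0$-th marked points differ by a composition with a holomorphic reparametrization germ $h_i$ fixing $0$, i.e. $\varphi^{(1),\R}_{i,\sigma_i} = \varphi^{(2),\R}_{i,\sigma_i}\circ h_{i,\sigma_i}$ with $h_{i,\sigma_i}(z)= a_i(\sigma_i)z + O(z^2)$, $a_i$ holomorphic and nonvanishing; substituting into the plumbing relation $zw=\frak r^{(1)}$ versus $zw=\frak r^{(2)}$ shows $\frak r^{(2)} = a_1(\sigma_1)a_2(\sigma_2)\,\frak r^{(1)}\cdot(1+ \text{higher order in }\frak r^{(1)})$ and that the source-curve modulus changes by an amount controlled by the higher-order terms, hence by a power of $\frak r^{(1)}$. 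Third, I translate back: $\frak r^{(1)} = -e^{-10\pi T}$, so $\sigma'(\sigma;T)-(\text{leading part})$ and $T'(\sigma;T)-T$ both pick up factors $e^{-c T}$ for some $c>0$. Fourth, I differentiate the (strata-wise smooth) germ $\Phi$ in $\sigma$ and $T$; because in the interior the gluing variable is an \emph{honest holomorphic coordinate} $\frak r$, each $T$-derivative brings down a factor $\partial\frak r/\partial T = 10\pi e^{-10\pi T}$ times a bounded quantity, and $\sigma$-derivatives are bounded, which yields \eqref{form842} with $\delta_3$ any constant less than $10\pi$ (and in any case this is the same $\delta_3$ that appears in Proposition \ref{prop816}). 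Having Proposition \ref{prop819}, the proof of Theorem \ref{them81} is concluded by combining it with Theorem \ref{them816}: the map $\frak F$ of \eqref{diag86} is the composite $({\rm Glue}^{(2)}_+)^{-1}\circ(\text{inclusion})\circ{\rm Glue}^{(1)}_+$, and in the coordinate $s=1/T$ its components are, on one side the fibrewise-restriction data $\text{\rm Glures}_{i,+,S}$ which are smooth in $(\sigma,\rho,s)$ up to $s=0$ with exponentially small $s$-derivatives by Theorem \ref{them816} (rewriting $\frac{d}{dT}=-s^2\frac{d}{ds}$ and using $e^{-\delta_2 T}=e^{-\delta_2/s}$ to kill all negative powers of $s$), and on the other side the source-curve change $\Phi$, which is smooth in $s$ up to $s=0$ by Proposition \ref{prop819} in exactly the same way; embedding-ness follows from the immersion statement (Lemma \ref{immersion}) and injectivity of ${\rm Glue}^{(2)}_+$ for large $T$.

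\textbf{Main obstacle.} The essential difficulty is Proposition \ref{prop819}, specifically the exponential (rather than merely smooth or polynomial) decay of the $T$- and $\sigma$-derivatives of $\Phi$, uniformly in $m$ up to order $m-2$. The point is that a priori $\Phi$ is only strata-wise smooth on a manifold-with-boundary whose boundary coordinate is $\frak r\sim e^{-10\pi T}$, and one must check that all derivatives in the \emph{redundant} description by $T$ actually see the holomorphic variable $\frak r$ and hence inherit its exponential smallness — this is where the holomorphicity of the closed-curve gluing in Lemma \ref{lem82} and the commuting square \eqref{diag862} do the real work, and where one must be careful that the passage to the fixed locus of the anti-holomorphic involution (Lemma \ref{lem8383}) does not spoil the estimate. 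A secondary technical point is that the two analytic families of coordinates $\varphi^{(j),\R}_i$ are genuinely different (Remark \ref{rem818}, Figure \ref{figure83}), so one cannot simply identify $\frak r^{(1)}$ with $\frak r^{(2)}$; the reparametrization germs $h_{i,\sigma_i}$ must be tracked, and it is their dependence on $\sigma_i$ that produces the $\sigma$-derivative terms in \eqref{form842}, whose exponential smallness again comes only after substituting $\frak r^{(1)} = -e^{-10\pi T}$. I expect the bookkeeping of these reparametrizations, and verifying that the constant $\delta_3$ can be chosen compatibly with the one already fixed in Proposition \ref{prop816} and with $\delta < \delta_3/10$, to be the most delicate part of writing out the argument in full.
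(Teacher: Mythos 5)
Your proposal is correct and takes essentially the same route as the paper: the paper likewise reduces to the doubled, complex-analytic picture where both $\mathrm{Glusoc}^{\C}_{\Xi^{(j)}}$ are biholomorphic by Lemma \ref{lem82}, writes the transition map in the holomorphic gluing parameter $\frak{r}$ (in the Taylor-expanded form $\Phi^{\C}(\sigma,\frak r)=(\mathcal F(\sigma),0)+\frak r\,\mathcal G(\sigma,\frak r)$ rather than via explicit reparametrization germs $h_i$, but these are the same content), and obtains the exponential estimate by substituting $\frak r=-e^{-10\pi T}$ and restricting to the real locus via Lemma \ref{lem8383} and \eqref{diag862}. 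The only organizational difference is that the paper factors this through the intermediate, slightly more general Proposition \ref{prof825} and then checks its hypotheses \eqref{asform864}, \eqref{assform868} from \eqref{form845} and \eqref{newform849}.
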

Note $\delta_3$ in this proposition is the same constant as in
Proposition \ref{prop816}.
\begin{proof}
Existence of the map $\Phi$ satisfying (1) is an immediate consequence of the fact
that during the inductive step of the construction of gluing map
in Chapters \ref{alternatingmethod} and \ref{subsecdecayT},
we never change the complex structure of the source.
The estimate (\ref{form842}) will be proved in Section \ref{appendixG}.
\end{proof}
We define\index[syindex]{Resforj@${\rm Resfor}^{(j)}$}
\begin{equation}\label{newnew855}
\aligned
{\rm Resfor}^{(j)}:\,\,
&\mathcal M_{+}^{\mathcal E^{(j)}_1\oplus \mathcal E^{(j)}_2}((\Sigma_{\infty},\vec z_{\infty})
;u_1^{(j)},u_2^{(j)})
_{\epsilon,\nu}
\\
&\to
\mathcal V^{(j),\R}_{1} \times \mathcal V^{(j),\R}_{2} \times (1/\nu,\infty] \\
&\qquad \times
\prod_{i=1}^2\text{\rm Map}_{L^2_{m+1}}((K_i^{(j),S},K_i^{(j),S}\cap\partial \Sigma_i),(X,L))
\endaligned
\end{equation}
where the
$\mathcal V^{(j),\R}_{1} \times \mathcal V^{(j),\R}_{2} \times (1/\nu,\infty]$
component of ${\rm Resfor}^{(j)}$ is the map
(\ref{form840}) and its component of the factor in the third line is the restriction map
and
\begin{equation}
K_i^{(j),S} = \Sigma^{(j)}_i \setminus \varphi_i^{(j), \R}(D^2_{\le 0}(1)).
\end{equation}
We put
$$
\aligned
&\mathcal U^{(j)}_{\epsilon^{(j)},\nu^{(j)},T(2)} \\
&=
(\text{\rm Glue}^{(j)}_{+})^{-1}
\left(\mathcal M_{+}^{\mathcal E^{(j)}_1\oplus \mathcal E^{(j)}_2}((\Sigma
^{(j)}_{\infty},\vec z^{(j)}_{\infty})
;u_1^{(j)},u_2^{(j)})_{\epsilon^{(j)},\nu^{(j)}}
\right)
\cap \pi_3^{-1}((T(2),\infty])
\\
&\subset
\mathcal M^{\mathcal E^{(j)}_1}_+((\Sigma^{(j)}_1,\vec z^{(j)}_1); u_1^{(j)})_{\epsilon^{\prime (j)}}
\,\,{}_{{\rm ev}_0} \times _{{\rm ev}_0}
\mathcal M^{\mathcal E^{(j)}_2}_+((\Sigma^{(j)}_2,\vec z^{(j)}_2);
u_2^{(j)})_{\epsilon^{\prime (j)}} \times (T(2),\infty],
\endaligned
$$
where $(\epsilon^{(1)},\nu^{(1)},\epsilon^{\prime (1)}) = (\epsilon_{9},\nu_{3,m},\epsilon)$
with $\epsilon < \epsilon_{10,m}$,
and $(\epsilon^{(2)},\nu^{(2)},\epsilon^{\prime (2)}) = (\epsilon_{6},\nu_{1},\epsilon_{8})$
and $\pi_3$ is the projection to the $(1/\nu ,\infty]$ factor.
\begin{lem}\label{lem82}
{Let $m' \ge m + n, m\ge 2$.}
We embed
$(T(2),\infty]$  to
$[0,\nu^{(j)})$  by
$T \mapsto s = 1/T$.
Then
\begin{equation}\label{newform856}
{\rm Resfor}^{(j)} \circ \text{\rm Glue}^{(j)}_{+}
: \mathcal U^{(j)}_{\epsilon^{(j)},\nu^{(j)},T(2)}
\to \text{\rm (RHS of (\ref{newnew855}))}
\end{equation}
is {an embedding of $C^{n}$ class} if $T(2) > T_{(\ref{newform856}),{m'}}$.
\end{lem}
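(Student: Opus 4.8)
The plan is to reduce Lemma \ref{lem82} to the two exponential-decay estimates already at our disposal, namely Theorem \ref{them816} (exponential decay of the $T$, $\sigma$, $\rho$ derivatives of $\text{\rm Glures}_{i,S}$ for the $j$-th gluing construction) and Proposition \ref{prop819} (exponential decay of the derivatives of the discrepancy map $\Phi$ between the two source-gluing parametrizations). The key point is that ${\rm Resfor}^{(j)} \circ \text{\rm Glue}^{(j)}_{+}$ has three components: the first is the source-forgetting map (\ref{form840}) followed by the inverse of ${\rm Glusoc}^{(j),\R}$ landing in $\mathcal V^{(j),\R}_{1} \times \mathcal V^{(j),\R}_{2} \times (1/\nu,\infty]$; the second and third are the restriction maps $\text{\rm Glures}_{i,+,S}$ into $\text{\rm Map}_{L^2_{m+1}}((K_i^{(j),S},K_i^{(j),S}\cap\partial \Sigma_i),(X,L))$ for $i=1,2$. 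First I would observe that the first component is, by construction of the alternating method, nothing but the coordinate $((\sigma_1,\rho_1),(\sigma_2,\rho_2),T)\mapsto ((\sigma_1,\sigma_2),T)$ in the natural coordinates on the fiber product domain (recall Theorem \ref{gluethm12} (2): the alternating method never changes the complex structure of the source), hence it is a submersion and in particular smooth; and the second and third components are smooth by Theorem \ref{them816}, which gives that $\text{\rm Glures}_{i,+,S}$ is a $C^\infty$ map in the $s=1/T$ coordinate (the exponential factor $e^{-\delta_2 T}$ converts to flat-to-all-orders vanishing at $s=0$, which is exactly $C^\infty$ up to the boundary of $[0,\nu^{(j)})$).

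The second step is to check that the map is an \emph{embedding}, not merely smooth. For $T<\infty$ this is the content of Remark \ref{rem64} / Lemma \ref{immersion}: the restriction of an element of the moduli space to $K_i^S$ is injective by unique continuation, and $\text{\rm Glue}^{(j)}_{+}$ is a diffeomorphism onto its image by Theorem \ref{gluethm12} (1), (3); so the composite separates points and is an immersion on each $T$-slice. To upgrade this to an embedding of the total space including $T=\infty$, I would argue as in the proof of surjectivity/injectivity in Section \ref{surjinj}: at $s=0$ the first component of ${\rm Resfor}^{(j)}\circ\text{\rm Glue}^{(j)}_{+}$ is already an embedding (it is just the source-forgetting map on the nodal stratum, which is injective because $(\Sigma_i,\vec z_i)$ carries no automorphism, cf. Remark \ref{remark3838}), and the restriction components at $s=0$ recover $u_i^{\sigma_i}$, so the differential in the $\rho$-directions is injective there by the implicit function theorem parametrization (\ref{eq:MMEiVi}). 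Then the exponential decay of the $s$-derivative of the $\text{\rm Glures}_{i,+,S}$ components (Theorem \ref{them816}) shows that the total differential remains injective for $s$ small, i.e. $T>T_{(\ref{newform856})}$ large; combined with the slice-wise embedding statement and properness of the restriction map on a neighborhood, one concludes that ${\rm Resfor}^{(j)}\circ\text{\rm Glue}^{(j)}_{+}$ is a homeomorphism onto its image and an immersion, hence an embedding.

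I would carry this out in the order: (1) unwind the definitions of ${\rm Resfor}^{(j)}$ and $\text{\rm Glue}^{(j)}_{+}$ to display the three components explicitly and identify the first with the coordinate projection $((\sigma,\rho),T)\mapsto((\sigma),T)$ composed with $({\rm Glusoc}^{(j),\R})^{-1}\circ{\rm Glusoc}^{(j),\R}=\mathrm{id}$; (2) invoke Theorem \ref{them816} to get $C^\infty$-regularity of each $\text{\rm Glures}_{i,+,S}$ component in the $s=1/T$ chart, with all derivatives $O(e^{-\delta_2 T})$, so that they extend smoothly and flatly to $s=0$; (3) check the differential is injective at $s=0$ using the implicit-function-theorem parametrizations of $\mathcal M^{\mathcal E_i^{(j)}}_+((\Sigma^{(j)}_i,\vec z^{(j)}_i);u_i^{(j)})_{\epsilon}$ and of $\mathcal V^{(j),\R}_i$, together with unique continuation; (4) use the exponential decay of step (2) to propagate injectivity of the differential to $s\in[0,1/T_{(\ref{newform856})})$; (5) conclude the embedding property globally by combining with the slice-wise embedding (Lemma \ref{immersion}, Remark \ref{rem64}) and the compactness of the relevant parameter sets (finite cover argument as in the passage after (\ref{derivativeDest})). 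The main obstacle I anticipate is step (3)–(4): checking that the $\rho$-differential of the restriction components, \emph{together with} the $\sigma$- and $T$-differentials, is jointly injective up to the boundary $s=0$ — this requires care because, as emphasized in Remark \ref{rem611} and Remark \ref{Abremark}, the $T$-derivative loses one order of Sobolev differentiability, so one must work with a range of $m$ simultaneously and use that $C^\infty$ is a Fréchet topology, precisely as is done in the proof of Theorem \ref{exdecayT}; the transversality of the fiber product (\ref{fibproductbeforeglue}) and the injectivity on $T$-slices are exactly what make the joint injectivity at the boundary go through.
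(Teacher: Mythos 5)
Your proposal follows the same line as the paper: decompose $\mathrm{Resfor}^{(j)}\circ\mathrm{Glue}^{(j)}_{+}$ into the source-projection factor (identified with the identity projection via Theorem \ref{gluethm12} (2)) and the two restriction-to-$K_i^S$ factors, use Theorem \ref{them816} to convert the $e^{-\delta_2 T}$ bound into flat $C^\infty$ extension in $s=1/T$ at $s=0$ (this is exactly the paper's Sublemma \ref{sublem829}), verify the embedding property fiberwise at $T=\infty$ by unique continuation, and propagate to large $T$ by the resulting $C^1$-convergence. The one small inaccuracy is the opening invocation of Proposition \ref{prop819}, which is not needed for Lemma \ref{lem82} itself (it enters only later, in the proof of Theorem \ref{them81}, to compare the two charts $j=1,2$); your five-step outline never actually uses it, so this is cosmetic rather than a gap.
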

\begin{proof}
The composition of ${\rm Resfor}^{(j)} \circ {\text{\rm Glue}^{(j)}_{+}}$ with the
projection to the factor
$\mathcal V^{(j),\R}_{1} \times \mathcal V^{(j),\R}_{2} \times (1/\nu^{(j)},\infty]$  is smooth since it coincides with the
 projection to this factor.
 In other words the next diagram commutes.
 
\begin{equation}\label{diagram88}
\begin{CD}
\mathcal U^{(j)}_{\epsilon^{(j)},\nu^{(j)},T(2)}  @ >>>
\mathcal V^{(j),\R}_{1} \times \mathcal V^{(j),\R}_{2} \times (T(2),\infty]   \\
@ VV{
{{\rm Resfor}^{(j)}} \circ {\rm Glue}^{(j)}_{+}}V @ VVV\\
\displaystyle \mathcal V^{(j),\R}_{1} \times \mathcal V^{(j),\R}_{2}
\times (T(2),\infty]  \times
\atop\displaystyle \times\prod_{i=1}^2\text{\rm Map}_{L^2_{m+1}}((K_i^{(j),S},K_i^{(j),S}\cap\partial \Sigma_i),(X,L)) @ >>>
\mathcal V^{(j),\R}_{1} \times \mathcal V^{(j),\R}_{2} \times (T(2),\infty] ,
\end{CD}
\end{equation}
\par\smallskip
\noindent
where the second vertical arrow is the identity map and
the horizontal arrows are projections.
The commutativity of Diagram (\ref{diagram88}) follows from the fact
that the construction of our map ${\rm Glue}^{(j)}_{+}$ does not
change the complex structure of the source curve.
\par
{We define  the map ${\rm Glue}^{(j)}_{+}$ 
using the $W^2_{m'+1,\delta}$ space.
Its domain is given by $\infty \ge T > T_{m'}$.}
We consider the composition of  ${\rm Resfor}^{(j)}\circ {\text{\rm Glue}^{(j)}_{+}}$
with the projection to
the factor in the third line of the right hand side of (\ref{newnew855}).
It becomes a map
\begin{equation}\label{map857}
\mathcal U^{(j)}_{\epsilon^{(j)},\nu^{(j)},T(2)}\to
\prod_{i=1}^2\text{\rm Map}_{L^2_{m+1}}((K_i^{(j),S},K_i^{(j),S}\cap\partial \Sigma_i),(X,L)).
\end{equation}
{
Then by Theorem \ref{them816} we can show that this map
${\rm Resfor}^{(j)} \circ \text{\rm Glue}^{(j)}_{+}$ is of $C^{n}$ class.
Here we use our assumption $m' \ge m+n$ on the parts $T<\infty$ and $T=\infty$.}
\par
{We next discuss differentiability at $T = \infty$.}
Here we use $s=1/T$ instead of $T$ for the coordinate of the $[0,\nu^{(j)})$ factor.
\begin{sublem}\label{sublem829}
We have
$$
\lim_{s_0 \to 0}
\left\Vert
\left.
\nabla^n_{\sigma,\rho} \frac{d^{\ell}}{ds^{\ell}} (\ref{map857})
\right\vert_{s= s_0}
\right\Vert_{L^2_{m+1}}
= 0,
$$
for {$m' \ge m+n$, $m \ge 2$, $m'-m \ge \ell > 0$}.
\end{sublem}
\begin{proof}
Note
$$
\frac{d^{\ell} }{ds^{\ell} }= \frac{1}{s^{2\ell}}\mathcal Q\left(s,\frac{d}{dT}\right)\frac{d}{dT,}
$$
where
$
\mathcal Q(x,\xi) = \sum_{i=0}^{\ell-1} Q_i(x) \xi^{i}
$
and $Q_i$ are polynomials. Therefore by the exponential decay provided 
in Theorem \ref{them816} we derive
\begin{equation}\label{shinform858}
\left\Vert \left.
\nabla^n_{\sigma,\rho} \frac{d^{\ell}}{ds^{\ell}} (\ref{map857})\right\vert_{s= s_0}
\right\Vert_{L^2_{m+1}}
\le C_{m,(\ref{shinform858})}s_0^{-2\ell} e^{-\delta_2/s_0}.
\end{equation}
The sublemma follows.
\end{proof}
\par
Thus the map (\ref{newform856}) {is of $C^{n}$ class.}
\par
To prove that it is {an} {\it embedding},
we use Diagram (\ref{diagram88}).
In view of its commutativity,
it suffices to show that ${\rm pr} \circ {\rm Resfor}^{(j)} \circ {\rm Glue}^{(j)}_{+}$ is a smooth embedding
when we restrict it to the fiber of an arbitrary point $(\sigma_1,\sigma_2,T)$ in $
 \mathcal V^{(j),\R}_{1} \times  \mathcal V^{(j),\R}_{2} \times (1/\nu^{(j)},\infty]$.
 Here ${\rm pr}$ is the projection to the factor in the third line of (\ref{newnew855}).
 \par
At $T=\infty$ (or $s=0$), the map ${\rm pr} \circ {\rm Resfor}^{(j)} \circ {\rm Glue}^{(j)}_{+}$ is actually
the restriction map, since, there, ${\rm Glue}^{(j)}_{+}$ is obtained by identifying
two stable maps at the $0$-th marked points.
Therefore it is an embedding by unique continuation of
pseudoholomorphic curve.
 \par
On the other hand, (\ref{shinform858})  implies that the restriction of the map
${\rm Resfor}^{(j)} \circ {\rm Glue}^{(j)}_{+}$
to the fiber of $(\sigma_1,\sigma_2,T)$ converges
to its restriction to $(\sigma_1,\sigma_2,\infty)$ in the $C^{1}$ sense as $T$ goes to infinity.
Therefore we obtain {the} required properties for
sufficiently large $T(2)$.
\end{proof}
Now we are in the position to complete the proof of Theorem \ref{them81}.

{
{Let $m_0 \ge 10$ and $m' \ge m_1+n$, $m_1 \ge m_0+n$.}
We consider the composition of the following three maps. The first map to be composed is
$$
\aligned
{\rm Glue}^{(1)}_{+} :~ 
\mathcal M^{\mathcal E^{(1)}_1}_+((\Sigma^{(1)}_1,\vec z^{(1)}_1); u^{(1)}_1)_{\epsilon}
\,\,& {}_{{\rm ev}_0} \times _{{\rm ev}_0} 
\mathcal M^{\mathcal E^{(1)}_2}_+((\Sigma^{(1)}_2,\vec z^{(1)}_2); u^{(1)}_2)_{\epsilon} \times
(T(3),\infty] \\
\to 
& \mathcal M_{+}^{\mathcal E^{(1)}_1\oplus \mathcal E^{(1)}_2}((\Sigma^{(1)}_{\infty},\vec z^{(1)}_{\infty});
u^{(1)}_1,u^{(1)}_2)
_{\epsilon_{9},\nu_{3,{m'}}},
\endaligned
$$
{where $\epsilon < \epsilon_{10,m'}$.}
The second map is the inclusion
$$
\mathcal M_{+}^{\mathcal E^{(1)}_1\oplus \mathcal E^{(1)}_2}((\Sigma^{(1)}_{\infty},\vec z^{(1)}_{\infty});
u^{(1)}_1,u^{(1)}_2)
_{\epsilon_{9},\nu_{3,{m'}}}
\to
\mathcal M_{+}^{\mathcal E^{(2)}_1\oplus \mathcal E^{(2)}_2}
((\Sigma^{(2)}_{\infty},\vec z^{(2)}_{\infty});
u^{(2)}_1,u^{(2)}_2)_{\epsilon_6,\nu_1}.
$$
The third map is
$$
\aligned
{\rm Pr} \circ {\rm Resfor}^{(2)}&:
\mathcal M_{+}^{\mathcal E^{(2)}_1\oplus \mathcal E^{(2)}_2}
((\Sigma^{(2)}_{\infty},\vec z^{(2)}_{\infty});
u^{(2)}_1,u^{(2)}_2)_{\epsilon_6,\nu_1} \\
&\to
\mathcal V^{(2),\R}_{1} \times \mathcal V^{(2),\R}_{2} \times (1/\nu_1,\infty] \\
&\qquad \times
\prod_{i=1}^2\text{\rm Map}_{L^2_{{m_0}+1}}((K_i^{{(2)}},K_i^{{(2)},S}\cap\partial \Sigma_i),(X,L))
\\
&\to
\prod_{i=1}^2\text{\rm Map}_{L^2_{{m_0}+1}}((K_i^{{(2)}},K_i^{{(2)},S}\cap\partial \Sigma_i),(X,L)).
\endaligned
$$
}
\begin{lem}\label{Lemma831}
{
{The composition of the above three maps is of $C^n$ class.}
Its restriction to the fiber of the map
$$
\aligned
&\mathcal M^{\mathcal E^{(1)}_1}_+((\Sigma^{(1)}_1,\vec z^{(1)}_1); u^{(1)}_1)_{\epsilon}
\,\,{}_{{\rm ev}_0} \times _{{\rm ev}_0}
\mathcal M^{\mathcal E^{(1)}_2}_+((\Sigma^{(1)}_2,\vec z^{(1)}_2); u^{(1)}_2)_{\epsilon} \times (T(3),\infty]
\\
&
\to \mathcal V^{(1),\R}_{1} \times \mathcal V^{(1),\R}_{2} \times (1/\nu_1,\infty],
\endaligned
$$
is an embedding
of $C^{n}$-class}
for all $T(3) \ge T_{{m'},\langle\!\langle \ref{Lemma831}\rangle \!\rangle}$.
\end{lem}
{
To prove the lemma we extend the map $\psi_{\sigma_i}$ in (\ref{form848})
including the $T$ parameter.
Let $(\sigma,T) = ((\sigma_1,\sigma_2),T) \in \mathcal V^{(1),\R}_{1} \times \mathcal V^{(1),\R}_{2} \times (1/\nu_1,\infty]$.
We obtain 
$
\Phi(\sigma,T) = (\sigma'(\sigma;T),T'(\sigma;T))
$
as in Proposition \ref{prop819}.
We have
$$
\Sigma_1^{(1),\sigma_1} \#_{T} \Sigma_2^{(1),\sigma_2}
\cong
\Sigma_1^{(2),\sigma'_1} \#_{T'} \Sigma_2^{(2),\sigma'_2}.
$$
Here we use the gluing datum $\Xi^{(1)}$ (resp. $\Xi^{(2)}$) to define 
the left hand side (resp. right hand side).
We consider the composition
$$
\widehat\Psi_{i,\sigma,T} : K^{(2)}_i \subset \Sigma_1^{(2),\sigma'_1} \#_{T'} \Sigma_2^{(2),\sigma'_2}
\cong
\Sigma_1^{(1),\sigma_1} \#_{T} \Sigma_2^{(1),\sigma_2}
$$
where the first map is the canonical embedding.
By definition $\widehat\Psi_{i,\sigma,\infty}$ coincides with the composition 
of 
$\psi_{\sigma_i}$ and the canonical embedding
$: \Sigma_i^{(2)} \to \Sigma_1^{(2),\sigma'_1} \#_{\infty} \Sigma_2^{(2),\sigma'_2}$.
Therefore by Condition \ref{cond823} we may assume that there exists 
a map 
$$
\Psi_{i,\sigma,T} : K^{(2)}_i \to K^{(1)}_i 
$$
such that $\widehat\Psi_{i,\sigma,T}$ is the composition of $\Psi_{i,\sigma,T}$ 
and the canonical embedding 
$K^{(1)}_i \to \Sigma_1^{(1),\sigma_1} \#_{T} \Sigma_2^{(1),\sigma_2}$.}
\par
{
This map is the special case of the map (\ref{map872}) and  the estimate (\ref{form84223})
in Proposition \ref{prof825} holds.
Namely we have
\begin{equation}\label{form8600}
\left\Vert \nabla_{\sigma}^n \frac{d^{\ell}}{dT^{\ell}} \Psi_{i,\sigma,T} \right\Vert_{C^{\ell}
(K_i^{(a)},K_i^{(b)})}
\le C_{n,\ell,(\ref{form8600})} e^{-\delta_4 T}
\end{equation}
for $\ell > 0$.}
\begin{proof}[Proof of Lemma \ref{Lemma831}]
{The composition of three maps is nothing but the composition of the following two maps
(\ref{map860}) and (\ref{map861}).
The first map is the map 
\begin{equation}\label{map860}
\aligned
& {\rm Resfor}^{(1)} \circ \text{\rm Glue}^{(1)}_{+} 
: \\
& \mathcal M^{\mathcal E^{(1)}_1}_+((\Sigma^{(1)}_1,\vec z^{(1)}_1); u^{(1)}_1)_{\epsilon} 
{}_{{\rm ev}_0} \times _{{\rm ev}_0}
\mathcal M^{\mathcal E^{(1)}_2}_+((\Sigma^{(1)}_2,\vec z^{(1)}_2); u^{(1)}_2)_{\epsilon} \times (T(3),\infty]
\\
\to &
\mathcal V^{(1),\R}_{1} \times \mathcal V^{(1),\R}_{2} \times (1/\nu_1,\infty] 
\times
\prod_{i=1}^2\text{\rm Map}_{L^2_{{m_1}+1}}((K_i^{{(1)}},K_i^{{(1)}}\cap\partial \Sigma_i),(X,L)).
\endaligned
\end{equation}

The second map is 
\begin{equation}\label{map861}
\aligned 
\mathcal V^{(1),\R}_{1} \times \mathcal V^{(1),\R}_{2} \times (1/\nu_1,\infty] 
\times&
\prod_{i=1}^2\text{\rm Map}_{L^2_{{m_1}+1}}((K_i^{{(1)}},K_i^{{(1)}}\cap\partial \Sigma_i),(X,L))
\\
\to & 
\prod_{i=1}^2\text{\rm Map}_{L^2_{{m_0}+1}}((K_i^{{(2)}},K_i^{{(2)}}\cap\partial \Sigma_i),(X,L))
\endaligned
\end{equation}
which is defined by
$$
((\sigma_1,\sigma_2,T),(v_1,v_2)) 
\mapsto
(v_1\circ\Psi_{1,\sigma,T},v_2\circ\Psi_{2,\sigma,T}).
$$
Lemma \ref{lem82} in the case $j=1$ implies that the first map (\ref{map860}) is of $C^n$ class.
We can use  (\ref{form8600}) and  $m_1 \ge m_0+n$
to show the second map (\ref{map861}) is also  of $C^n$ class.
Thus the composition of the three maps is of $C^n$ class.}
\par
The fact that it is an embedding on the fiber follows from Lemma \ref{lem82}.
\end{proof}
{Using Proposition \ref{prop819} (especially the estimate (\ref{form842})) and
Lemma \ref{lem82} applied to ${\rm Resfor}^{(2)}\circ {\text{\rm Glue}^{(2)}_{+}}$,
Lemma \ref{Lemma831} imply $\frak F$ in (\ref{diag86}) is of $C^{{n}}$ class.}
\par
{We finally prove that $\frak F$ is of $C^{\infty}$ class.
This part of the proof is a bit sketchy. We refer the reader to
\cite[Section 26]{fooo:techI} or \cite[Section 12]{foooconst1} for detail.
We fix $m_0$.
For $j=1,2$, we take a $C^{\infty}$ structure of 
\begin{equation}\label{from860}
\aligned
&\mathcal M^{\mathcal E^{(j)}_1}_+((\Sigma^{(i)}_1,\vec z^{(j)}_1); u^{(i)}_1)_{\epsilon}
\,\,{}_{{\rm ev}_0} \times _{{\rm ev}_0}
\mathcal M^{\mathcal E^{(j)}_2}_+((\Sigma^{(i)}_2,\vec z^{(j)}_2); u^{(i)}_2)_{\epsilon} \times (T(3),\infty]
\endaligned
\end{equation}
as follows. (Here $\epsilon \le \epsilon_{10,m_0}$.)
We consider the composition 
$$
\aligned
{\rm Resfor}^{(j)} \circ {\rm Glue}^{(j)}_{+} :
(\ref{from860}) 
\to &\mathcal V^{(j),\R}_{1} \times \mathcal V^{(j),\R}_{2} \times (1/\nu_1,\infty] \\
&\times
\prod_{i=1}^2\text{\rm Map}_{L^2_{m_0+1}}((K_i^{(j),S},K_i^{(j),S}\cap\partial \Sigma_i),(X,L)). 
\endaligned
$$
We claim that the image of this map is a submanifold of $C^{\infty}$ class 
of the codomain. 
This is an immediate consequence of elliptic regularity 
on the part $T \ne \infty$. }
\par
{At $T = \infty$ we use Lemma \ref{lem82} for various $m' > m = m_0$ to prove the image of 
${\rm Resfor}^{(j)} \circ {\rm Glue}^{(j)}_{+}$ is smooth there.}
\par
Now we {\it define} the $C^{\infty}$ structure of $(\ref{from860})$ so that 
${\rm Resfor}^{(j)} \circ {\rm Glue}^{(j)}_{+}$ is a 
$C^{\infty}$  embedding.
Note that the smooth structure on $(1/\nu_1, \infty]$ is given so that
$T \in (1/\nu_1, \infty] \mapsto 1/T \in [0, \nu_1)$ is a diffeomorphism.
\par
{Then using this new $C^{\infty}$ structure the composition 
in Lemma \ref{Lemma831} for various $m', m_1$ we can prove that 
$\frak F$ is an embedding of $C^{\infty}$ class as follows.
On the part $T \ne \infty$ it is a consequence of elliptic regularity.
At $T=\infty$ it suffices to show that it is of $C^{n}$ class for any $n$ at an arbitrary point with $T = \infty$.
We take $m_1 = m_0 +n$ and $m' = m_1 + n$ for various $n$, and apply 
Lemma \ref{Lemma831} to show that $\frak F$ is of $C^n$ class there.
The proof of Theorem \ref{them81} is now complete.}
\end{proof}
In a similar way we can prove the smoothness of the Kuranishi map as follows.
\par
Hereafter we write
\begin{equation}
V^{(j)} =
\mathcal M_+^{\mathcal E^{(j)}_1}((\Sigma^{(j)}_1,\vec z^{(j)}_1); u^{(j)}_1)_{\epsilon^{(j)},\nu^{(j)}}
\times_L
\mathcal M_+^{\mathcal E^{(j)}_2}((\Sigma^{(j)}_2,\vec z^{(j)}_2);  u^{(j)}_2)_{\epsilon^{(j)},\nu^{(j)}},
\end{equation}
where $(\epsilon^{(1)},\nu^{(1)},\epsilon^{\prime (1)}) = (\epsilon_{9},\nu_{3,m},\epsilon)$
with $\epsilon < \epsilon_{10}$,
and $(\epsilon^{(2)},\nu^{(2)},\epsilon^{\prime (2)}) = (\epsilon_{6},\nu_{1},\epsilon_{8})$.
\par
We consider the space
$\mathcal M_{+}^{\mathcal E^{(j)}_1\oplus \mathcal E^{(j)}_2}((\Sigma^{(1)}_{\infty},\vec z^{(j)}_{\infty});u^{(j)}_{1},u^{(j)}_{2})
_{\epsilon^{\prime (j)},\nu^{(j)}}$.
We regard the image of
$$
\text{\rm Glue}^{(j)}_{+} : V^{(j)} \times [0,\nu^{(j)}) \to \mathcal M_{+}^{\mathcal E^{(j)}_1\oplus \mathcal E^{(j)}_2}((\Sigma^{(j)}_{\infty},\vec z^{(j)}_{\infty});u^{(j)}_{1},u^{(j)}_{2})
_{\epsilon^{\prime (j)},\nu^{(j)}}
$$
as a smooth manifold so that $\text{\rm Glue}^{(j)}_{+}$is a diffeomorphism.
\par
Let $\frak x = ((\Sigma'_T, \vec z),u') \in \mathcal M_{+}^{\mathcal E^{(j)}_1\oplus \mathcal E^{(j)}_2}((\Sigma^{(j)}_{\infty},\vec z^{(j)}_{\infty});u^{(j)}_{1},u^{(j)}_{2})
_{\epsilon^{\prime (j)},\nu^{(j)}}.
$
The definition of the moduli space implies:
\begin{equation}\label{kuramap}
\overline\partial u' \equiv 0 , \quad \mod \mathcal E_1^{(j)}(u') \oplus  \mathcal E^{(j)}_2(u').
\end{equation}
We write the left hand side as
$\widetilde{\frak s}^{(j)}(\frak x)$.
By the inverse of the map $I_{u',i}$
in (\ref{form816})
we obtain
$$
{\frak s}^{(j)}(\frak x) = (I_{u',1} \oplus I_{u',2})^{-1}(\widetilde{\frak s}^{(j)}(\frak x))
\in \mathcal E^{(j)}_1 \oplus \mathcal E^{(j)}_2.
$$
Note $\mathcal E^{(j)}_1 \oplus \mathcal E^{(j)}_2$ is independent of
$\frak x$.
\par
We thus obtain a map
\begin{equation}\label{map88}
\frak s^{(j)} \circ
\text{\rm Glue}^{(j)}_{+} : V^{(j)} \times [0,\nu^{(j)})
\to \mathcal E^{(j)}_1\oplus \mathcal E^{(j)}_2.
\end{equation}
This is by definition the {\em Kuranishi map}. \index{Kuranishi map}
\begin{prop}\label{propo8484}
The map $\frak s^{(j)}$ in (\ref{map88}) is smooth.
\end{prop}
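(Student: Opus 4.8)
\textbf{Proof proposal for Proposition \ref{propo8484}.}
The plan is to reduce the smoothness of the Kuranishi map $\frak s^{(j)}$ to the exponential decay estimate of Theorem \ref{them816}, exactly in the spirit of the proof of Theorem \ref{them81}. First I would unwind the definition: by construction $\frak s^{(j)}(\frak x) = (I_{u',1}\oplus I_{u',2})^{-1}(\overline\partial u')$, where $\frak x = \text{\rm Glue}^{(j)}_{+}(\rho,s)$ with $s = 1/T$, and where the obstruction bundle $\mathcal E_i^{(j)}(u')$ and the maps $I_{u',i}$ of \eqref{form816} are defined using parallel transport and the analytic family of coordinates carried by the obstruction bundle data. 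Crucially, by Remark \ref{rem818} the spaces $\mathcal E_1^{(j)}\oplus\mathcal E_2^{(j)}$ and the cut-paste maps $I_{u',i}$ depend only on $u'$, on its source curve, and on the (fixed) obstruction bundle data; they do not depend on the analytic families of coordinates used to run the alternating method. Hence, once $\text{\rm Glue}^{(j)}_{+}$ is known to be smooth as a map into an appropriate restriction target, $\frak s^{(j)}$ is obtained by postcomposing with the fixed, smooth (in fact real-analytic in the relevant fiber variables) assignment $u'\mapsto (I_{u',1}\oplus I_{u',2})^{-1}(\overline\partial u')$, which is a smooth map of Hilbert manifolds because $u\mapsto \overline\partial u$ and $u\mapsto I_{u,i}$ are smooth maps between the relevant $L^2_m$-type spaces once $m$ is large (this is the standard fact about $u\mapsto F(u)$ quoted in the introduction).

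Next I would make the smoothness in the gluing parameter precise. As in the proof of Lemma \ref{lem82}, the issue is only at $s = 0$ (i.e.\ $T = \infty$); away from $s=0$ stratawise smoothness is elliptic regularity, and the $\mathcal V^{(j),\R}_1\times\mathcal V^{(j),\R}_2$-direction is handled by $\nabla_{\sigma,\rho}$. To see smoothness across $s = 0$ I would restrict the construction to the compact pieces $K_i^{(j),S}$ via the embeddings $\frak I^{\sigma}_{\Xi,i}$, so that $\overline\partial u'$ is read off from $\text{\rm Glures}_{i,S}$; since $\supp(\mathcal E_i^{\frak{ob}})\subset \operatorname{Int}K_i^{\frak{ob}}$, the value $\frak s^{(j)}(\frak x)$ depends only on $u'|_{K_1^{(j),S}\cup K_2^{(j),S}}$ for $S$ large enough. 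Then, exactly as in Sublemma \ref{sublem829}, I would write
$$
\frac{d^{\ell}}{ds^{\ell}} = \frac{1}{s^{2\ell}}\,\mathcal Q\!\left(s,\frac{d}{dT}\right)\frac{d}{dT},
$$
with $\mathcal Q$ polynomial, and combine it with the exponential decay
$$
\left\| \nabla_{\sigma,\rho}^{n} \frac{d^{\ell}}{dT^{\ell}} \text{\rm Glures}_{i,S}\right\|_{L^2_{m+1-\ell}}
< C_{m,S,(\ref{form672})}\, e^{-\delta_2 T}
$$
of Theorem \ref{them816} to conclude that all one-sided derivatives $\nabla^n_{\sigma,\rho}\frac{d^{\ell}}{ds^{\ell}}\text{\rm Glures}_{i,S}$ tend to $0$ as $s\to 0^+$, hence extend continuously by $0$ across $s=0$; a Taylor-with-remainder argument then upgrades this to $C^{\infty}$-smoothness at $s=0$ for every $\ell\le m-2$. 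Since $m$ is arbitrary, $\text{\rm Glures}_{i,S}$ is of $C^{\infty}$ class as a map into $\text{\rm Map}_{L^2_{m+1}}((K_i^{(j),S},K_i^{(j),S}\cap\partial\Sigma_i),(X,L))$, and therefore so is $s\mapsto \overline\partial u'|_{K_i^{(j),S}}$, whence $\frak s^{(j)}$ is smooth by postcomposition with the fixed smooth map described above.

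Finally I would deal with the bookkeeping that makes the two reductions compatible: the target $\mathcal E_1^{(j)}\oplus\mathcal E_2^{(j)}$ is a fixed finite-dimensional space, so the map $\frak s^{(j)}$ factors through finitely many coordinates $\langle\!\langle \overline\partial u', \mathbf e_{i,a}(u')\rangle\!\rangle_{L^2}$; I would check that the frames $\mathbf e_{i,a}(u')$ (obtained by Gram--Schmidt from the parallel transports $I_{u',i}(\mathbf e_{i,a})$, cf.\ \eqref{form118}) vary smoothly with $\frak x$ and satisfy derivative bounds of the type of Lemma \ref{lemma615} and Proposition \ref{prop816}, so that pairing against them preserves smoothness and does not reintroduce $s$-singularities. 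The main obstacle is precisely the point already isolated in Remark \ref{rem611} and Remark \ref{Abremark}: the shift $V^{\rho}_{T,2,(\kappa)}(\tau' - 10T)$ costs one Sobolev derivative per $T$-derivative, so one cannot work in a single $L^2_{m+1}$ and must run the estimate simultaneously for all $m$ and invoke that the $C^{\infty}$-topology is Fr\'echet; but this is exactly what Theorem \ref{them816} already encodes, so once that theorem is in hand the remaining work is the routine postcomposition and Taylor-remainder argument sketched above.
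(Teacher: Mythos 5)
Your proposal is correct and follows essentially the same route as the paper's proof: observe that since the supports of $\mathcal E_i^{(j)}(u')$ lie in the image of $K_i^{\frak{ob}}$, $\frak s^{(j)}$ factors through restriction to compact pieces, then invoke the exponential decay estimate \eqref{form672} together with the $d^{\ell}/ds^{\ell}$-via-$d/dT$ computation of Sublemma \ref{sublem829} to get vanishing of one-sided $s$-derivatives at $s=0$, and finally note stratawise smoothness from elliptic regularity and $C^m$-convergence as $s_0 \to 0$. Your additional remarks on the postcomposition with $(I_{u',1}\oplus I_{u',2})^{-1}\circ\overline\partial$ and on the smooth dependence of the orthonormal frames are consistent with, and fill in, what the paper leaves implicit.
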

\begin{proof}
The smoothness at each of the stratum ($T=\infty$ and $T\ne \infty$)
follows from elliptic regularity.
Note the supports of elements of $\mathcal E_i^{(j)}(u')$ are in
the image of $K_i^{\frak{ob}}$.
Therefore we can use  (\ref{form672}) to show that all the derivatives
of $\frak s_i$ including at least one $s = 1/T$ derivatives vanishes at $s=0$
in the same way as in the proof of Sublemma \ref{sublem829}.
Furthermore (\ref{form672}) implies that the restriction of  $\frak s^{(j)}$ to
$s = s_0$ converges to its restriction to $s = 0$ in the $C^{{n}}$ sense for any ${n}$ as
$s_0$ goes to $0$.
The proof of Proposition \ref{propo8484} is complete.
\end{proof}
{We are finally ready to provide a construction of the Kuranishi neighborhoods,
the associated Kuranishi maps and parametrization maps, and to wrap-up the proof of smoothness of
coordinate change maps.}
We put
$\hat V^{(j)} =
V^{(j)} \times [0,\nu^{(j)})
$
and $\widehat{\mathcal E}^{(j)} = \mathcal E^{(j)}_1\oplus \mathcal E^{(j)}_2$.
Then we can find $\psi^{(j)}$ so that the {quintuple}
$
(\hat V^{(j)},\widehat{\mathcal E}^{(j)} ,\{1\},\psi^{(j)},\frak s^{(j)} )
$
becomes a Kuranishi neighborhood of the moduli space of stable bordered maps
$\mathcal M_{g_1+g_2,k_1+k_2}(X,L;\beta_1+\beta_2)$
in the sense of \cite[Definition A1.1]{fooo:book1}.
\par
In fact, the moduli space $\mathcal M_{g_1+g_2,k_1+k_2}(X,L;\beta_1+\beta_2)$
(See Definition \ref{isomorphicmaps})
is locally identified with the zero set of
$\frak s^{(j)} $.
This fact follows from the `injectivity' and `surjectivity' we proved
in Chapter \ref{surjinj}.  Therefore we obtain our parametrization map
$$
\psi^{(j)} : (\frak s^{(j)})^{-1}(0) \to \mathcal M_{g_1+g_2,k_1+k_2}(X,L;\beta_1+\beta_2).
$$
Moreover
the group of automorphisms of the objects in our neighborhood
in the moduli space $\mathcal M_{g_1+g_2,k_1+k_2}(X,L;\beta_1+\beta_2)$ is trivial.
Hence we can put $\Gamma =\{1\}$.
\par
Now we prove:
\begin{thm}\label{thm822}
There exists  a
{\it smooth} coordinate change
$$
(\phi_{21},\hat{\phi}_{21},{\rm id}) : (\hat V^{(1)},\widehat{\mathcal E}^{(1)},\{1\},\psi^{(1)},\frak s^{(1)}) \to (\hat V^{(2)},\widehat{\mathcal E}^{(2)},\{1\},\psi^{(2)},\frak s^{(2)})
$$
of Kuranishi neighborhoods in the sense of \cite[Definition A.1.3]{fooo:book1}.
\end{thm}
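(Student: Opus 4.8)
The plan is to construct the coordinate change $(\phi_{21},\hat\phi_{21},\mathrm{id})$ out of the three ingredients already assembled in this section, namely the commutative square of Lemma~\ref{lem824new}, the smooth embedding property of $\frak F$ from Theorem~\ref{them81}, and the smoothness of the Kuranishi maps $\frak s^{(j)}$ from Proposition~\ref{propo8484}. First I would \emph{define} $\phi_{21}$ to be the map induced by $\frak F$ under the identifications $\hat V^{(j)} = V^{(j)} \times [0,\nu^{(j)})$, $T\mapsto s = 1/T$; concretely, $\frak F$ maps the domain product $\times\, (T(0),\infty]$ to the product $\times\, (1/\nu_{4,m},\infty]$, so re-parametrising both interval factors by $s=1/T$ yields $\phi_{21}: \hat V^{(1)} \to \hat V^{(2)}$ (after shrinking $[0,\nu^{(1)})$ if necessary so that its image lands in the domain chart of $(\hat V^{(2)},\dots)$, which is possible by the containment statements in Lemma~\ref{lem824new}). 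That $\phi_{21}$ is a smooth embedding is precisely Theorem~\ref{them81}. That it is an open embedding onto a relatively open neighbourhood of $(\frak s^{(1)})^{-1}(0)$ inside the preimage under $\frak s^{(2)}$ of a neighbourhood of the zero-section follows because, at $s=0$, $\frak F$ restricts to the identity on the stable-map side (both gluing maps are the identification of stable maps at the $0$-th marked points there) composed with the inclusion $\mathcal E^{(1)}_i \subseteq \mathcal E^{(2)}_i$ of Situation~\ref{shitu820}, which is an open embedding on the relevant moduli strata by the surjectivity/injectivity of $\mathrm{Glue}^{(j)}_+$ from Theorem~\ref{gluethm12}(1).

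Next I would define the bundle map $\hat\phi_{21}: \widehat{\mathcal E}^{(1)} \to \widehat{\mathcal E}^{(2)}$. Since $\widehat{\mathcal E}^{(j)} = \mathcal E^{(j)}_1 \oplus \mathcal E^{(j)}_2$ is the \emph{fixed} finite-dimensional obstruction space attached to the obstruction center (it is independent of $\frak x$, and the \emph{same} gluing data $\Xi^{\frak{ob}}$ is used for $j=1,2$ by Situation~\ref{shitu820}), the inclusion $\mathcal E^{(1)}_i \subseteq \mathcal E^{(2)}_i$ of (\ref{form844}) gives a constant (hence trivially smooth) fibrewise-linear bundle embedding $\hat\phi_{21}$ covering $\phi_{21}$. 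The key compatibility to check is the tautology $\hat\phi_{21}\circ \frak s^{(1)} = \frak s^{(2)}\circ \phi_{21}$: for $\frak x = ((\Sigma'_T,\vec z),u')$ in the source chart, $\widetilde{\frak s}^{(1)}(\frak x) = \overline\partial u' \bmod (\mathcal E^{(1)}_1(u')\oplus\mathcal E^{(1)}_2(u'))$ and $\widetilde{\frak s}^{(2)}(\phi_{21}(\frak x))$ is $\overline\partial u''\bmod (\mathcal E^{(2)}_1(u'')\oplus\mathcal E^{(2)}_2(u''))$ where $u'' = \frak F(\frak x)$; but the alternating-method construction only changes the \emph{obstruction-component} of $\overline\partial$ and leaves the map fixed as an element of the moduli space, so up to the trivialisation $I_{u',i}$ of (\ref{form816}) these two sections are literally the same element of the fixed space $\mathcal E^{(2)}_1 \oplus \mathcal E^{(2)}_2$ once we remember $\mathcal E^{(1)}_i(u')$ sits inside $\mathcal E^{(2)}_i(u')$ via the parallel-transport definition and the inclusion (\ref{form844}). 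Finally $\psi^{(1)} = \psi^{(2)}\circ\phi_{21}$ on $(\frak s^{(1)})^{-1}(0)$ holds because both parametrise the same piece of $\mathcal M_{g_1+g_2,k_1+k_2}(X,L;\beta_1+\beta_2)$ and $\frak F$ is the identity on the underlying stable maps (it only repackages how the glued curve is expressed relative to the two families of coordinates at the $0$-th marked point).

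The three axioms of \cite[Definition A.1.3]{fooo:book1} that remain are then routine: (i) $\phi_{21}$ smooth embedding --- Theorem~\ref{them81}; (ii) $\hat\phi_{21}$ smooth fibrewise injective bundle map covering $\phi_{21}$ --- it is constant; (iii) the diagram with $\frak s^{(j)}$ and the one with $\psi^{(j)}$ commute --- the paragraph above, using that the $\Gamma$-factor is $\{1\}$ on both sides so the equivariance condition is vacuous. I would organise the proof as: recall the set-up of $(\hat V^{(j)},\widehat{\mathcal E}^{(j)},\{1\},\psi^{(j)},\frak s^{(j)})$; set $\phi_{21} := \frak F$ under $s=1/T$ and invoke Theorem~\ref{them81} for smoothness and embedding; set $\hat\phi_{21}$ to be the inclusion (\ref{form844}), constant in the base; verify $\hat\phi_{21}\circ\frak s^{(1)} = \frak s^{(2)}\circ\phi_{21}$ by unwinding (\ref{kuramap}) and (\ref{form816}) together with the fact that the alternating method fixes the source complex structure and the map; verify $\psi^{(1)} = \psi^{(2)}\circ\phi_{21}$; and conclude.

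The main obstacle I anticipate is not any of these individually but the bookkeeping needed to make the identity $\hat\phi_{21}\circ\frak s^{(1)} = \frak s^{(2)}\circ\phi_{21}$ genuinely tautological rather than merely approximate. One has to be careful that the trivialisations $I_{u',i}$ used to pull $\widetilde{\frak s}^{(j)}$ down to the \emph{fixed} space $\mathcal E^{(j)}_1 \oplus \mathcal E^{(j)}_2$ --- which involve the parallel transport $(\mathrm{Pal}_{u_i^{\frak{ob}}}^{u'})^J$ and the projection on $\Lambda^{0,1}$, both depending on $u'$ and on the \emph{same} gluing data $\Xi^{\frak{ob}}$ --- are strictly compatible with the inclusion (\ref{form844}), i.e. that restricting the rank-$(2)$ trivialisation to the rank-$(1)$ subbundle recovers the rank-$(1)$ trivialisation. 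This is where Remark~\ref{rem818} is essential: because the obstruction bundle $\mathcal E^{(j)}_i(u')$ was \emph{defined using the obstruction-center gluing data $\Xi^{\frak{ob}}$ rather than the chart-center data $\Xi^{(j)}$}, it depends only on $u'$, its source curve, and the obstruction bundle data, so the two trivialisations over the overlap region do agree and the equality of Kuranishi maps is exact. Getting that paragraph airtight — and spelling out precisely which neighbourhoods must be shrunk so that all the compositions are defined, using the constants $\epsilon_8,\epsilon_9,\epsilon_{10},\nu_{3,m},\nu_{4,m},T_{m,(\ref{diag86})}$ of Lemma~\ref{lem824new} — is the real work.
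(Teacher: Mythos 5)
Your proposal is correct and takes essentially the same route as the paper: set $\phi_{21}=\frak F$ and cite Theorem~\ref{them81}; build $\hat\phi_{21}$ from the inclusion~(\ref{form844}); verify $\hat\phi_{21}\circ\frak s^{(1)}=\frak s^{(2)}\circ\phi_{21}$ from the commutativity of~(\ref{diag86}) and the fact that the map part $u'$ is the same on both sides; verify $\psi^{(1)}=\psi^{(2)}\circ\phi_{21}$ from the fact that both parametrise the same piece of the moduli space. Your emphasis on Remark~\ref{rem818} — that $\mathcal E_i^{(j)}(u')$ is built from the obstruction-center data $\Xi^{\frak{ob}}$ and $u_i^{\frak{ob}}$, the same for $j=1,2$, so the two trivialisations $I_{u',i}$ literally coincide and the composed bundle map is the constant inclusion $\mathcal E_i^{(1)}\hookrightarrow\mathcal E_i^{(2)}$ — is the right key observation, and it is the same one the paper is implicitly using.

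The one place you diverge slightly in emphasis is the smoothness of $\hat\phi_{21}$: you dismiss it as ``constant, hence trivially smooth,'' whereas the paper says ``the smoothness of $\hat\phi_{21}$ is proved in the same way as Theorem~\ref{them81} and Proposition~\ref{propo8484} using the fact that the support of elements of $\mathcal E_i^{\frak{ob}}$ is in $K_i^{\frak{ob}}$.'' These are consistent: once you observe the map is constant in the trivialisation, smoothness is indeed automatic, and the exponential-decay machinery the paper cites is exactly what shows that the trivialisation $\frak x \mapsto I_{u'(\frak x),i}$ is itself smooth (which one must know in order to say the bundle is honestly a smooth trivial bundle over $\hat V^{(j)}$ with the Kuranishi map $\frak s^{(j)}$ smooth). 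So your observation is a clean clarification of why the paper's invocation of Theorem~\ref{them81}/Proposition~\ref{propo8484} does the job. I would only add that when the construction is run with several obstruction centers (as remarked after~(\ref{Eiuiprime})), the simple ``constant in the trivialisation'' picture no longer holds and the exponential-decay estimate becomes essential; the paper's phrasing is written with that more general situation in mind.
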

\begin{proof}
The map $\phi_{21} : \hat V^{(1)} \to \hat V^{(2)}$ is {nothing but} 
the embedding $\frak F$
in Diagram (\ref{diag86}).
This map is a smooth embedding by Theorem \ref{them81}.
\par
The bundle map $\hat{\phi}_{21} : \mathcal E^{(1)} \to \mathcal E^{(2)}$ is obtained
from (\ref{form844}), that is,
$ \mathcal E^{(1)}_1\oplus \mathcal E^{(1)}_2
\subset  \mathcal E^{(2)}_1\oplus \mathcal E^{(2)}_2$, as follows.
\par
If ${\frak x} \in \hat V^{(1)}$ and
$u'$ is the map part of 
$$
\text{\rm Glue}^{(1)}_{+}({\frak x}) \in \mathcal M_{+}^{\mathcal E^{(1)}_1\oplus \mathcal E^{(1)}_2}((\Sigma^{(1)}_{\infty},\vec z^{(1)}_{\infty});u^{(1)}_1,u^{(1)}_2)_{{\epsilon,\nu}}
$$
then
$$
\mathcal E_1({\frak x})
= \mathcal E_1^{(1)}(u') \oplus \mathcal E_2^{(1)}(u')
\subset
\mathcal E_1^{(2)}(u') \oplus \mathcal E_2^{(2)}(u').
$$
Note since $u'$ is also the map part of $\text{\rm Glue}^{(2)}_{+}(\phi_{21}({\frak x}))$, we have
$$
\mathcal E_2 (\text{\rm Glue}^{(2)}_{+}(\phi_{21}({\frak x}))) =
\mathcal E_1^{(2)}(u') \oplus \mathcal E_2^{(2)}(u').
$$
The fiber of $\hat{\phi}_{21}$ at ${\frak x}$ is the
obvious inclusion
$\mathcal E_i^{(1)}({\frak x})
\subset \mathcal E_i^{(2)} (\text{\rm Glue}^{(2)}_{+}(\phi_{21}({\frak x})))$.
\par
The smoothness of $\hat\phi_{21}$ is proved in the same way as
in Theorem \ref{them81} and Proposition \ref{propo8484}
using the fact that the supports of elements of $\mathcal E_i^{\frak{ob}}$ are
in $K^{\frak{ob}}_i$.
\par
The group homomorphism ${\rm id} : \{1\} \to \{1\}$ is the identity map.
Various commutativities of compositions of maps required in \cite[Definition A.1.3]{fooo:book1} are trivial
to check in our case.
\end{proof}
\begin{rem}
In this paper we assume that $(\Sigma_i,\vec z_i)$ is stable.
For the unstable case we need to add marked points
$\vec z_i^+$
and use the slices of codimension 2 submanifolds of $X$ to reduce the construction
of the Kuranishi chart to the case when the source is stable.
(We use the slices that are transversal to the map
$u_i : \Sigma_i \to X$ at $\vec z_i^+$, to cut down the moduli space
to one of correct dimension. See \cite[Appendix]{FOn} and \cite[Part IV]{fooo:techI}.)
\par
We also need to show that the change of the choices of extra marked points
and codimension 2 submanifolds, induces a {\it smooth}  coordinate change.
Once the
estimates (\ref{form672}) and (\ref{form842})
are established the rest of the
proof of this statement is rather geometric than analytic.
So we do not discuss this point in this paper, whose focus lies in the
analytic part of the story. See \cite[page 772]{fooo:book1},
\cite[Part IV]{fooo:techI}  and {\cite[Sections 10,12]{foooconst1}} for the argument on this point.
\end{rem}

\subsection[Comparison between two analytic families of coordinates]{Comparison between two choices of analytic families of coordinates}
\label{appendixG}
In this section we prove Propositions \ref{prop816} and \ref{prop819} (2).
\par
The main part of the proof is  Proposition \ref{prof825} below.
We first need to set up notations to state it.
Let $(\Sigma_i^{(j)},\vec z_i^{(j)}) \in \mathcal M_{g_i,k_i+1}$, $i =1,2$, $j =a,b$.
We choose gluing data $\Xi^{(j)} = (\Xi^{(j)}_1,\Xi^{(j)}_2)$
centered at
$([\Sigma_1^{(j)},\vec z_1^{(j)}],[\Sigma_2^{(j)},\vec z_2^{(j)}])$
for $j=a$ and $j=b$.
They induce the source gluing maps:
\begin{equation}\label{form8142}
\aligned
&{\rm Glusoc}^{\R}_{\Xi^{(a)}} : \mathcal V^{(a,+),\R}_{1} \times \mathcal V^{(a,+),\R}_{2} \times (T_0,\infty]
\to \mathcal{CM}_{g_1+g_2,k_1+k_2}, 
\\
&{\rm Glusoc}^{\R}_{\Xi^{(b)}} : \mathcal V^{(b),\R}_{1} \times \mathcal V^{(b),\R}_{2} \times (T_0,\infty]
\to \mathcal{CM}_{g_1+g_2,k_1+k_2}
\endaligned
\end{equation}
by Definition-Lemma \ref{deflem87}.
Here $\mathcal V^{(a,+),\R}_{i}$, $\mathcal V^{(b),\R}_{i}$ are open neighborhoods of
$[\Sigma_i^{(a)},\vec z_i^{(a)}]$, $[\Sigma_i^{(b)},\vec z_i^{(b)}]$ in $\mathcal M_{g_i,k_i+1}$, which are parts
of $\Xi^{(a)}_i$, $\Xi^{(b)}_i$, respectively.
\begin{rem}
In  (\ref{form8140}) the third factor of the domain is $[0,\epsilon)$.
It is related to $(T_0,\infty]$ by $T \mapsto r = e^{-10\pi T}$.
See (\ref{form812}).
\end{rem}
We put:
\begin{equation}\label{asform864}
\mathcal V^{(a),\R}_{i} = \mathcal V^{(a,+),\R}_{i} \cap \mathcal V^{(b),\R}_{i},
\end{equation}
and regard it as an open subset of $\mathcal V^{(a,+),\R}_{i}$.
Then we may choose $T_j$ and a map $\Phi$ so that the next diagram commutes.
\begin{equation}\label{diag841222}
\begin{CD}
\mathcal V^{(a),\R}_{1} \times \mathcal V^{(a),\R}_{2} \times (T_{a},\infty]
 @ >{{\rm Glusoc}_{\Xi^{(a)}}^{\R}}>>
\mathcal{CM}_{g_1+g_2,k_1+k_2}  \\
@ V{\Phi}VV @ V{\rm id}VV\\
\mathcal V^{(b),\R}_{1} \times \mathcal V^{(b),\R}_{2} \times (T_{b},\infty]
@ >{{\rm Glusoc}_{\Xi^{(b)}}^{\R}}>>
\mathcal{CM}_{g_1+g_2,k_1+k_2}
\end{CD}
\end{equation}
Let $K_i^{(j)} = \Sigma_i^{(j)} \setminus {\rm Im}(\varphi^{(j),\R}_i)$.
For $\sigma \in \mathcal V^{(a),\R}_{i}$
the canonical embedding associated to $\Xi^{(a)}$ induces
\begin{equation}
\frak I^{\sigma}_{\Xi^{(a)},i} : K_i^{(a)} \to \Sigma_1^{(a),\sigma} \#_{T} \Sigma_2^{(a),\sigma}.
\end{equation}
On the other hand, the canonical embedding associated to $\Xi^{(b)}$ induces
\begin{equation}
\frak I^{\sigma'}_{\Xi^{(b)},i} :  K_i^{(b)} \subset \Sigma_1^{(b),\sigma'} \#_{T} 
\Sigma_2^{(b),\sigma'}.
\end{equation}
Let $(\sigma',T') = \Phi(\sigma,T)$,
that is
$$
\Sigma_1^{(a),\sigma} \#_{T} \Sigma_2^{(a),\sigma}
\cong
\Sigma_1^{(b),\sigma'} \#_{T'} \Sigma_2^{(b),\sigma'}.
$$
{Next} we assume {the inclusion}
\begin{equation}\label{assform868}
\frak I^{\sigma}_{\Xi^{(a)},i}(K_i^{(a)}) \subset \frak I^{\sigma'}_{\Xi^{(b)},i}(K_i^{(b)}).
\end{equation}
We then obtain a smooth embedding
\begin{equation}\label{map872}
\Psi_{{i},\sigma,T} = (\frak I^{\sigma'}_{\Xi^{(b)},i})^{-1}\circ
 \frak I^{\sigma}_{\Xi^{(a)},i}: K_i^{(a)} \to K_i^{(b)}.
\end{equation}
This defines a $(\sigma,T)$-family of maps which we regard as the map
\begin{equation}\label{map854}
\Psi_i :
\mathcal V^{(a),\R}_{1} \times \mathcal V^{(a),\R}_{2} \times (T_a,\infty]
\times K_i^{(a)} \to K_i^{(b)}.
\end{equation}
\begin{prop}\label{prof825}
There exists $\delta_4 > 0$ with the following properties.
\begin{enumerate}
\item
Consider the map $\Phi$ defined by
$\Phi(\sigma;T) = (\sigma'(\sigma;T),T'(\sigma;T))$.
Then we have the following estimate.
\begin{equation}\label{form8422}
\aligned
&\left\vert \nabla_{\sigma}^n \frac{d^{\ell}}{dT^{\ell}} \sigma'(\sigma;T) \right\vert
\le C_{{n},\ell,(\ref{form8422})} e^{-\delta_4 T}, \\
&\left\vert \nabla_{\sigma}^n \frac{d^{\ell}}{dT^{\ell}}
(T'(\sigma;T) - T)\right\vert
\le C_{{n},\ell,(\ref{form8422})} e^{-\delta_4 T}
\endaligned
\end{equation}
for  $\ell  >0$.
\item
We assume (\ref{assform868}).
Let $\Psi_{{i}}$ be the map given in \eqref{map854}. Then
\begin{equation}\label{form84223}
\left\Vert \nabla_{\sigma}^n \frac{d^{\ell}}{dT^{\ell}} \Psi_{{i}} \right\Vert_{C^{\ell}
(K_i^{(a)},K_i^{(b)})}
\le C_{{n},\ell,(\ref{form84223})} e^{-\delta_4 T}
\end{equation}
for $\ell  >0$.
\end{enumerate}
\end{prop}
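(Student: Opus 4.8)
The statement to prove is Proposition \ref{prof825}, which compares two choices of analytic families of coordinates near the nodal curve, producing the estimate on the comparison map $\Phi$ between the two gluing parameter spaces, and on the induced embedding $\Psi$ of the pieces $K_i^{(a)}$ into $K_i^{(b)}$. The key observation is that both source gluing maps ${\rm Glusoc}^{\R}_{\Xi^{(a)}}$ and ${\rm Glusoc}^{\R}_{\Xi^{(b)}}$ are expressed, via Lemma \ref{lem82} and the complex analytic families of coordinates of Definitions \ref{defn81} and \ref{def8585}, in terms of the \emph{holomorphic} plumbing construction $zw = \mathfrak r$ with $\mathfrak r = -e^{-10\pi T}$. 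So $\Phi$ is (the real locus of) the transition map between two holomorphic coordinate systems on a fixed neighborhood of $[\Sigma^{\rm cl}_\infty,\vec z_\infty]$ in $\mathcal{CM}^{\rm cl}_{g_1+g_2,\ell_1+\ell_2}$. The whole point is that this transition map is holomorphic, hence in particular real-analytic, and the only nontrivial content is how the $\mathfrak r$-coordinate (equivalently $T$) transforms.

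\textbf{Step 1: reduce to comparing two families of coordinates at the node.} First I would set up both families of coordinates $\varphi_i^{(j),\R}$ ($j = a,b$), each with domain $D^2_{\le 0}(1)\times \mathcal V_i^{(j),\R}$, centered at the $0$-th marked point. On the overlap region $\mathcal V_i^{(a),\R}\subset \mathcal V_i^{(b),\R}$ the composition $(\varphi_i^{(b),\R})^{-1}\circ \varphi_i^{(a),\R}$ is a family of biholomorphisms of a disk fixing the origin, depending holomorphically (strata-wise, via the underlying complex analytic families of Lemma \ref{lem8484}) on the moduli parameter. Write its linear term as $\lambda_i(\sigma_i)\cdot z + O(z^2)$ with $\lambda_i(\sigma_i)\in\mathbb C^\times$ a real-analytic (indeed holomorphic in the complex parameter) nowhere-zero function. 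The gluing condition $zw = \mathfrak r_a$ in the $\Xi^{(a)}$ coordinates becomes, after substitution, $z'w' = \lambda_1(\sigma_1)\lambda_2(\sigma_2)\,\mathfrak r_a + (\text{higher order in }z',w')$ in the $\Xi^{(b)}$ coordinates. On the plumbing locus $|z'|,|w'|$ are bounded by $e^{-\pi\tau'}$, $e^{\pi\tau''}$ respectively, and with $\tau'' = \tau' - 10T$ both are $\le e^{-\pi\cdot(\text{distance to the node})}$; so the higher-order corrections to the product $z'w'$ are $O(e^{-c T})$ for some $c > 0$ (take $c$ comparable to $\delta_1$ say, but any fixed positive constant works — call it $2\delta_4$). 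Hence $\mathfrak r_b = \lambda_1(\sigma_1)\lambda_2(\sigma_2)\mathfrak r_a\bigl(1 + O(e^{-2\delta_4 T})\bigr)$, and taking $-\tfrac{1}{10\pi}\log$ of modulus gives $T'(\sigma;T) = T + (\text{real-analytic function of }\sigma) + O(e^{-2\delta_4 T})$. Strictly speaking the $\sigma$-dependent shift also modifies $\sigma_b$ relative to $\sigma_a$ only through the identification of the underlying curves, which is itself $O(e^{-c T})$-close to the identity because away from the neck the two plumbed surfaces agree on $K_i$; this gives $\sigma'(\sigma;T) = \sigma + O(e^{-2\delta_4 T})$ in any chart on $\mathcal V_i^{(b),\R}$.

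\textbf{Step 2: differentiate.} For the derivative estimates \eqref{form8422} I would differentiate the implicit relations of Step 1. The relation $\mathfrak r_b = \Lambda(\sigma_a)\,\mathfrak r_a\,(1+E(\sigma_a,\mathfrak r_a))$ with $E = O(|\mathfrak r_a|^{c/(10\pi)})$ holomorphic in its arguments and $\Lambda = \lambda_1\lambda_2$ shows that $\partial/\partial\mathfrak r$ acting on $E$ produces another term of the same exponential size (up to a factor $\mathfrak r^{-1} = e^{10\pi T}$, but each $\mathfrak r$-derivative is accompanied by the chain-rule factor $d\mathfrak r/dT = -10\pi\,\mathfrak r$, which cancels it), so each $T$-derivative of $T'(\sigma;T) - T$ and of $\sigma'(\sigma;T)$ stays $O(e^{-\delta_4 T})$; likewise each $\sigma$-derivative acts on the real-analytic functions $\Lambda, E$ without changing the exponential order. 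This is exactly the estimate one needs for \eqref{form8422}. For \eqref{form84223}, $\Psi_{\sigma,T} = (\frak I^{\sigma_b}_{\Xi^{(b)},i})^{-1}\circ \frak I^{\sigma_a}_{\Xi^{(a)},i}$ is the composite of two holomorphic embeddings of $K_i$ into the plumbed surface; on the compact piece $K_i$ (away from the node), the embeddings differ only by the change of trivialization, which is real-analytic in $\sigma$; on the part of $K_i^{(a)}$ inside the neck, $\Psi_{\sigma,T}$ is $e^{-\delta_4 T}$-close to the standard identification by the same argument as above. Covering $K_i^{(a)}$ by finitely many coordinate charts and using that $C^\ell$ norms on a compact manifold with corners are controlled by finitely many local derivatives then yields \eqref{form84223}.

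\textbf{Main obstacle.} The genuinely delicate point is \emph{not} the analytic estimate — once one accepts that the plumbing coordinate is holomorphic in the moduli parameter, everything is elementary — but rather keeping the bookkeeping of the several identifications straight: the passage between the bordered surface and its double (Lemmata \ref{lem8383}, \ref{lem8484}), the reality constraint \eqref{compcoordR}, the translation between $\mathfrak r$ and $T$ via $\mathfrak r = -e^{-10\pi T}$ and the neck coordinates $(\tau',t)$, $(\tau'',t)$, and the relation $\tau'' = \tau' - 10T$, so that the statement "the correction to $z'w'$ is $O(e^{-cT})$" is justified uniformly. I would therefore spend most of the write-up carefully pinning down, in the $\tau'$ coordinate, the region where $\Psi_{\sigma,T}$ differs from the identity and why the difference there decays like $e^{-\delta_4 T}$; the derivative estimates then follow by the chain-rule cancellation of the $e^{10\pi T}$ factors noted in Step 2. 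Finally, Propositions \ref{prop816} and \ref{prop819} follow from Proposition \ref{prof825}: Proposition \ref{prop819} is the special case $j=a \leftrightarrow (1)$, $j=b\leftrightarrow (2)$ combined with the fact (already used throughout Sections \ref{alternatingmethod}, \ref{subsecdecayT}) that the alternating method never changes the complex structure of the source, so the transition $\Phi$ on the glued moduli spaces is exactly the source-level transition of \eqref{diag841222}; and Proposition \ref{prop816} follows because the basis vectors ${\bf e}'_{i,a;(\kappa)}(\sigma,\rho,T)$ in \eqref{form827plus} are obtained by composing the fixed sections ${\bf e}_{i,a}\in\mathcal E_i^{\frak ob}$ with parallel transports along maps that are, by Theorem \ref{them816} and the identification $\hat u^{\sigma,\rho}_{i,T,(\kappa)} = u^{\sigma,\rho}_{T,(\kappa)}$ on $K_i$, exponentially $C^m$-close to $u_i$, together with the change-of-coordinates maps $\frak I^{\sigma'}_{\Xi^{\frak ob},i}$ versus $\frak I^\sigma_{\Xi,i}$ whose discrepancy is controlled by Proposition \ref{prof825}; the estimate \eqref{prop816sformula} is then the same Leibniz-rule-plus-chain-rule bookkeeping as in the proof of Lemma \ref{lemma615}, with $\delta_3 = \delta_4$.
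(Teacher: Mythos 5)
Your central mechanism is the same as the paper's: exploit the fact that the two source gluing maps are (real loci of) holomorphic families in the plumbing parameter $\mathfrak r$, so that the transition map is holomorphic in $\mathfrak r$; Taylor-expand at $\mathfrak r = 0$; and use that every $T$-derivative introduces a compensating factor $d\mathfrak r/dT = -10\pi\mathfrak r$ from the chain rule, so the Taylor remainder $O(\mathfrak r)$ survives differentiation. That is exactly the content of the paper's proof, and your Step 2 chain-rule observation is correct for all orders.

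Where you diverge is in how you establish the holomorphicity. You attempt an explicit computation of the transition map from the local coordinate change at the node, writing $z' = \lambda_i(\sigma_i) z + O(z^2)$ and deriving $\mathfrak r_b = \lambda_1\lambda_2\mathfrak r_a(1 + O(e^{-cT}))$ from first principles of plumbing. This is true (and is a classical fact about how the gluing parameter transforms), but it involves a delicate uniform estimate of the ``higher-order in $z', w'$'' term on the gluing annulus, and it is not literally an equation for $\mathfrak r_b$ since $\mathfrak r_b$ is defined through the identification of the abstract plumbed surfaces, not through matching individual points. The paper sidesteps this entirely: by Lemma~\ref{lem82} both ${\rm Glusoc}^{\C}_{\Xi^{(j)}}$ are biholomorphisms onto open sets in $\mathcal{CM}^{\rm cl}_{g_1+g_2,k_1+k_2}$, so their transition $\Phi^{\C}$ is holomorphic and preserves the divisor $\mathfrak r=0$, giving $\Phi^{\C}(\sigma,\mathfrak r) = (\mathcal F(\sigma),0) + \mathfrak r\,\mathcal G(\sigma,\mathfrak r)$ with $\mathcal G_2(\sigma,0)\ne 0$ for free, from which the explicit formulas for $\sigma'(\sigma,T)$ and $T'(\sigma,T)$ fall out. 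No estimate of $\lambda_i$ or of the remainder on the annulus is needed.

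For part (2), your argument is thinner than what is actually required. You observe that $\Psi_{\sigma,T}$ is a composition of embeddings that are close to the standard identification, but note that $\frak I^{\sigma}_{\Xi,i}$ (unlike $\tilde{\frak I}^\sigma_{\Xi,i}$) is merely smooth, so $\Psi$ itself is not holomorphic in $z$ and the exponential decay in $T$ does not follow simply from analyticity in $z$. The paper instead equips $\mathcal V_i^{(j)}\times K_i^{(j)}$ with the complex structure pulled back from the universal family $\mathcal C(\mathcal V_i^{(j)})$ via the trivialization, so that $\Phi^{\C}\times\Psi^{\C}$ is holomorphic as a map between open sets of universal families, and then composes with the merely-smooth projection ${\rm Pr}$ to $K_i^{(b)}$; the exponential decay in $T$ again comes from the $\mathfrak r$-variable inside a holomorphic map, combined with smoothness of ${\rm Pr}$. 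Your write-up should at least flag that the last projection is only smooth and explain why that does not spoil the estimate (it does not, because all $T$-dependence lives in the $\mathfrak r$-slot of the holomorphic factor).

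In sum: correct key idea, slightly different implementation that is more explicit but more fragile, and a genuine gap in part (2) that the paper's extra construction (the complex structure on $\mathcal V_i^{(j)}\times K_i^{(j)}$) is there precisely to close.
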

\begin{proof}
We first prove (\ref{form8422}).
By taking {the} double of (\ref{diag841222}) we obtain the next diagram.
\begin{equation}\label{diag841222c}
\begin{CD}
\mathcal V^{(a)}_{1} \times \mathcal V^{(a)}_{2} \times D^2(\epsilon)
 @ >{{\rm Glusoc}^{\C}_{\Xi^{(a)}}}>>
\mathcal{CM}^{\rm cl}_{g_1+g_2,k_1+k_2}  \\
@ V{\Phi^{\C}}VV @ V{\rm id}VV\\
\mathcal V^{(b)}_{1} \times \mathcal V^{(b)}_{2} \times D^2(\epsilon)
@ >{{\rm Glusoc}^{\C}_{\Xi^{(b)}}}>>
\mathcal{CM}_{g_1+g_2,k_1+k_2}
\end{CD}
\end{equation}
Since $\varphi_{i}^{(j),\R}$ are analytic coordinates
${\rm Glusoc}^{\C}_{\Xi^{(a)}}$ and ${\rm Glusoc}^{\C}_{\Xi^{(b)}}$
are holomorphic.
Therefore $\Phi^{\C}$ is also holomorphic.
We remark that
$$
\Phi^{\C}(\mathcal V^{(a)}_{1} \times \mathcal V^{(a)}_{2} \times \{0\})
\subseteq
(\mathcal V^{(b)}_{1} \times \mathcal V^{(b)}_{2} \times \{0\}).
$$
\par
Therefore there exist holomorphic maps
$\mathcal F  : \mathcal V^{(a)}_{1} \times \mathcal V^{(a)}_{2}
\to \mathcal V^{(b)}_{1} \times \mathcal V^{(b)}_{2} $
and
$\mathcal G = (\mathcal G_1,\mathcal G_2) : \mathcal V^{(a)}_{1} \times \mathcal V^{(a)}_{2}
\times D^2(1)
\to \C^{\dim \mathcal V^{(b)}_{1} + \dim \mathcal V^{(b)}_{2}} \times \C
$
such that
$$
\Phi^{\C}(\sigma_1,\sigma_2,\frak r)
=
(\mathcal F(\sigma_1,\sigma_2),0) + \frak r\mathcal G(\sigma_1,\sigma_2,\frak r)
$$
Moreover $\mathcal G_2(\sigma_1,\sigma_2,0) \ne 0$.
Here we regard $\mathcal V^{(j)}_{i}$ as an open set of $\C^{\dim \mathcal V^{(j)}_{i}}$.
\par
Note $\Phi$ is a restriction of $\Phi^{\C}$ where the 
$T$ coordinate is identified  with a part of the standard coordinate $\frak r$ of $D^2({\epsilon})$
by the map $T \mapsto \frak r = -e^{-10\pi T} \in D^2({\epsilon})$.
(See Diagram (\ref{diag862}) and Formula (\ref{form812}).)
Therefore
$$
\aligned
\sigma'(\sigma,T) &= \mathcal F(\sigma) + e^{-10\pi T}\mathcal G_1(\sigma,-e^{-10\pi T}),
\\
T'(\sigma,T) &=
T - \log \mathcal G_2(\sigma,-e^{-10\pi T}) / 10\pi.
\endaligned
$$
Here $\sigma = (\sigma_1,\sigma_2)$.
Now, {the estimate (\ref{form8422}) follows from the holomorphicity of $\mathcal F$ and $\mathcal G_1,\mathcal G_2$}.
\par
We next prove (2). By taking the double of (\ref{map854}) we obtain a map:
\begin{equation}\label{map854c}
\Psi^{\C}_{{i}} :
\mathcal V^{(a)}_1 \times \mathcal V^{(a)}_2 \times D^2({\epsilon})
\times K_i^{(a)} \to K_i^{(b)}.
\end{equation}
Using the trivialization of the universal bundle which is a part of the data
$\Xi^{(j)}_i$ ($j=a,b$) we identify
$$
\mathcal V^{(j)}_i \times K_i^{(j)} \subset
\mathcal C(\mathcal V^{(j)}_i).
$$
We use this embedding to define a complex structure of
$\mathcal V^{(j)}_1 \times \mathcal V^{(j)}_2 \times K_i^{(j)}$.
\par
We consider the map
$$
\Phi^{\C} \times \Psi^{\C}_{{i}} :
\mathcal V^{(a)}_1 \times \mathcal V^{(a)}_2 \times D^2({\epsilon})
\times K_i^{(a)}
\to
\mathcal V^{(b)}_1 \times \mathcal V^{(b)}_2 \times D^2({\epsilon})
\times K_i^{(b)}.
$$
The map $\Phi^{\C} \times \Psi^{\C}_{{i}}$ is holomorphic
with respect to the above complex structures because
the left (resp. right) hand side is an open set of
the universal family $\mathcal C(\mathcal V^{(a)}_1 \times \mathcal V^{(a)}_2 \times D^2({\epsilon}))$
(resp. $\mathcal C(\mathcal V^{(b)}_1 \times \mathcal V^{(b)}_2 \times D^2({\epsilon}))$).
Note neither the left nor the right hand side is the direct product
as a complex manifold with respect to this complex structure.
\par
On the other hand, the projection
$$
{\rm Pr} :
\mathcal V^{(b)}_1 \times \mathcal V^{(b)}_2 \times D^2({\epsilon})
\times K_i^{(b)}
\to K_i^{(b)}
$$
is certainly of $C^{\infty}$ class. Now we have
\begin{equation}
\Psi_{{i}}(\sigma,T,z)
=
{\rm Pr}(\Phi^{\C}(\sigma,
-e^{-10\pi T})
, \Psi_{{i}}^{\C}(\sigma,
-e^{-10\pi T},z)
).
\end{equation}
Using this holomorphicity of $(\Phi^{\C},\Psi_{{i}}^{\C})$ and smoothness of
${\rm Pr}$
we obtain the estimate (\ref{form84223}).
\end{proof}
\begin{proof}[Proof of Proposition \ref{prop819}]
Statement (1) is already proved.
Replacing $a,b$ by $1,2$ we apply Proposition \ref{prof825} (1).\footnote{During the proof of (\ref{form8600})
we apply Proposition \ref{prof825} (2)
replacing $a$,$b$ by 2,1. We then change the variables by the map obtained from 
 Proposition \ref{prof825} (1)
replacing $a$,$b$ by 1,2.}
Note we use Proposition \ref{prof825} (1) only here. So (\ref{assform868}) is not 
necessary. Then (\ref{form842}) follows from (\ref{form8422}).
\end{proof}
\begin{proof}[Proof of Proposition \ref{prop816}]
The map
$I^{u_i}_{u_i^{\sigma_i}} \circ I_{u'}^{u_i^{\sigma_i}} \circ I_{u',i}$
(see (\ref{form827plus})), which we use to define
${\bf e}'_{i,a,(\kappa)}(\sigma,\rho,T)$ in (\ref{newname840}),
is a tensor product of two $\C$-linear maps:
one is a map between $\Lambda^{0,1}$ bundles:
the other is a map between sections of the pull back bundles of $TX$.
We denote the former by $I_{\Sigma}$ and the latter by $I_{X}$.
\par
$I_{\Sigma}$ is a composition of (\ref{form818}),
(\ref{map828}) and (\ref{map835}).
Note this map does not depend on $u'$ but depends only on
$\sigma$ and $T$, (which determine the source curve of $u'$).
(\ref{form818}) and (\ref{map835}) are independent of $T$ and depend smoothly
on $\sigma$.
\par
We apply Proposition \ref{prof825} to
$\Sigma_i^{(a)} = \Sigma_i^{\frak{ob}}$, etc.
and $\Sigma_i^{(b)} = \Sigma_i$ etc..
We can then apply (\ref{form84223})
to estimate the map $\Psi_{{i},\rho,T}$ below.
$$
\Psi_{{i},\rho,T} = (\frak I^{\sigma}_{\Xi_i})^{-1} \circ \frak I^{\sigma'}_{\Xi^{\frak{ob}}_i}:
K_i^{\frak{ob}} \to K_i.
$$
We then obtain
\begin{equation}\label{form861}
\left\Vert \nabla_{\sigma,\rho}^n\frac{\partial^{\ell}}{\partial T^{\ell}}\Psi_{{i},\rho,T}
\right\Vert_{C^m}
\le
C_{m,(\ref{form861})} e^{-\delta_4 T},
\end{equation}
for $m \ge n$, $m \ge \ell  >0$.
Note (\ref{assform868}) follows from
Condition \ref{cond813} (3).
\par
We remark that in the definition of (\ref{map828}) the process of pulling back the differential form
by $\Psi_{\rho,T}$ is included. This is usually a difficult process to study in Sobolev spaces.
However in our situation we apply it to smooth forms ${\bf e}_{i,a}$ which are
fixed during the construction. So we can use (\ref{form861}) to deduce the next
inequality
\begin{equation}\label{form862}
\left\Vert \nabla_{\sigma,\rho}^n\frac{\partial^{\ell}}{\partial T^{\ell}}I_{\Sigma}({\bf e}_{i,a})
\right\Vert_{C^m}
\le
C_{m,(\ref{form862})} e^{-\delta_4 T}.
\end{equation}
\par
We next discuss $I_{X}$. By definition $I_X$ is induced by the composition
of the parallel transportations (\ref{form817}), (\ref{map8290})
and (\ref{map829}).
In the case when $u' =u^{\sigma,\rho}_{T,(\kappa)}$
we can estimate it by using  induction hypothesis
(that is, the version of (\ref{form184}) including $\sigma$ derivatives).
We obtain this estimate in the same way as in the proof of
Lemma \ref{lemma615} given in Appendix \ref{appendixA2bisbis}.
\par
Thus together with (\ref{form862}) we obtain the required estimate
(\ref{prop816sformula}).
\end{proof}
\begin{rem}
Proposition \ref{prof825} (1) is a {variation} of \cite[Proposition 16.11]{fooo:techI}
and
Proposition \ref{prof825} (2) is a {variation} of \cite[Proposition 16.15]{fooo:techI}.
Actually the assumption{s} of \cite[Proposition 16.11]{fooo:techI}
and of \cite[Proposition 16.15]{fooo:techI} are weaker than
that of Proposition \ref{prof825}.
Namely in \cite{fooo:techI} we studied a {\it smooth} family of
coordinates at the 0-th marked point.
Here we consider an {\it analytic} family of
coordinates at the 0-th marked point.
\par
The proof of \cite[Propositions 16.11 and 16.15]{fooo:techI}
is given in \cite[Section 25]{fooo:techI} and uses a method similar to the
proof of Theorems \ref{gluethm1} and \ref{exdecayT} of this paper to
find a biholomorphic map between Riemann surfaces with appropriate estimate.
In other words it is based on the study of non-linear partial differential
equation.
The proof of Proposition \ref{prof825} we provide in this chapter is
based on the complex geometry and is shorter than \cite[Section 25]{fooo:techI}.
\par
In case when the almost complex structure of the target space $X$ is integrable,
we may prove a similar estimate as Theorem \ref{exdecayT}  for the
moduli space of stable maps without boundary, by using the complex
geometry in a similar way as the proof of Proposition \ref{prof825}.
Namely we may use the existence of the universal family of stable maps
(with obstruction bundles) in the complex analytic category and
translate the complex analyticity of this moduli space into an
exponential decay estimate.\footnote{Because we need to
study the case when obstruction bundle is present, it is nontrivial
to work out the proof of exponential decay in this way.}
\par
In the situation of Proposition \ref{prof825}, the target space is not
involved. So all the complex structures involved are integrable.
This is the reason  we can use the complex geometry to find a shorter proof.
Since for the purpose of this paper (and all the applications we can see at this
stage), we can always restrict ourselves to an analytic family of coordinates,
we provide this shorter proof in this paper.
\par
The existence of smooth coordinate change between Kuranishi charts in this chapter
and those in \cite{fooo:techI}  can be proved by using
\cite[Propositions 16.11 and 16.15]{fooo:techI}.
\par
In the genus $0$ case, a result corresponding to Proposition \ref{prof825} is proved in
\cite[Lemma A1.59]{fooo:book1}. The proof there uses hyperbolic geometry
and is different from both of this paper and \cite{fooo:techI}.
\end{rem}
\begin{rem}
As we mentioned already the proof of Proposition \ref{prop816} is the
main extra point in the proof of Theorem \ref{gluethm12}
other than those appearing in the proof of Theorems \ref{gluethm1} and \ref{exdecayT}.
In \cite{fooo:techI}, this point was discussed in detail as the proof of  \cite[Lemma 19.14]{fooo:techI}.
\par
The contents of this chapter are taken from \cite[Part 4]{fooo:techI}.
The contents of other chapters of this paper are taken from \cite[Part 3]{fooo:techI}.
\end{rem}
\appendix

\section{Error term estimate of non-linear Cauchy-Riemann equation I}
\label{appendixA}

Let $\Omega$ be an open subset of a bordered Riemann surface $\Sigma$
and $u_1 : (\Omega,\Omega\cap \partial \Sigma) \to (X,L)$  a
pseudoholomorphic map.
Consider two smooth sections
$V^0,W^0 \in \Gamma(\Omega,u_1^*TX)$ such that
their restrictions to $\Omega\cap \partial \Sigma$
are in $\Gamma(\Omega\cap \partial \Sigma,u_1^*TX)$.
We study the maps
$$
u(z) = {\rm Exp}(u_1(z),W^0(z))
$$
and
\begin{equation}\label{A1000}
v_r(z) = {\rm Exp}(u(z),r{\Pal}_{u_1}^u(V^0)(z)).
\end{equation}
\par
We take a trivialization
of $u_1^*TX$ on $\Omega$
and identify
$$
u_1^*TX \cong \Omega \times \R^n.
$$
We write an element (of the total space of) $u_1^*TX$ as $(z,(\xi_1,\dots,\xi_n))$.
So $V^0$, $W^0$ are regarded as $(V^0_j)_{j=1}^n : \Omega \to \R^n$,  $(W^0_j)_{j=1}^n : \Omega \to \R^n$.
Let $z = x+ \sqrt{-1}y$ be a complex coordinate of $\Omega$.
\par
Let $R>0$ be a number smaller than $\iota'_X/10$.
We denote by $D^n(R)$ the ball of radius $R$ centered at $0$ in $\R^n$.
\par
We define $\hat F : \Omega \times  D^n(R) \times  D^n(R) \to X$ by
$$
\hat F(z,\frak v,\frak w)
= {\rm Exp}\left({\rm Exp}(u_1(z),\frak w),{\rm Pal}_{u_1(z)}^{{\rm Exp}(u_1(z),\frak w)}(\frak v)\right)
$$
where $\Omega \times \R^n \times \R^n \supset \Omega \times  D^n(R) \times  D^n(R)$
is identified with the total space of
the direct sum bundle $u_1^*(TX \oplus TX)$.
See Figure \ref{FigureinA}.
\par
We also define $F : \Omega \times  D^n(R) \times  D^n(R) \to \R^n$ by
$$
F(z,\frak v,\frak w) = E(u_1(z),\hat F(z,\frak v,\frak w)).
$$
\begin{figure}
\centering
\includegraphics{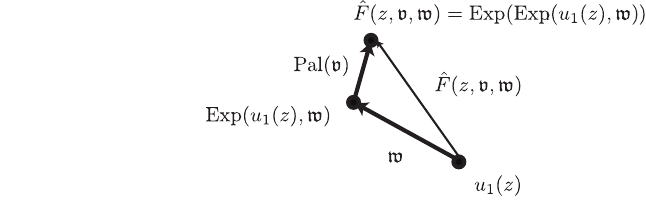}
\caption{$F(z,\frak v,\frak w)$ and $\hat F(z,\frak v,\frak w)$}
\label{FigureinA}
\end{figure}

We denote
$$
\aligned
\mathcal{PP}(z,\frak v,\frak w) =
(\left({\rm Pal}^{{\rm Exp}(u_1(z),\frak w)}_{u_1(z)}\right)^{(0,1)})^{-1}
\circ
&\left(\left({\rm Pal}^{\hat F(z,\frak v,\frak w)}_{{\rm Exp}(u_1(z),\frak w)}\right)^{(0,1)}\right)^{-1}
\\
&:
T_{\hat F(z,\frak v,\frak w)}X\otimes\Lambda^{0,1} \Omega \to T_{u_1(z)}X
\otimes\Lambda^{0,1} \Omega.
\endaligned
$$
We remark
$$
v_r(z) = \hat F(z,rV^0(z),W^0(z)).
$$
We study
\begin{equation}\label{formA4}
\mathcal{PP}(z,rV^0(z),W^0(z))(\overline{\partial} v_r).
\end{equation}

\begin{lem}\label{lemA1}
There exist smooth maps
$$
\aligned
&\frak G : \Omega \times D^n(R) \times D^n(R) \to \R^n,
\\
&\frak H^x_{\frak v,j}, \frak H^x_{\frak w,j},\frak H^y_{\frak v,j}, \frak H^y_{\frak w,j}
 : \Omega \times D^n(R)\times D^n(R) \to \R^n
 \endaligned
$$
$j=1,\dots,n$ such that
\begin{equation}\label{maineqationddd}
\aligned
({\rm \ref{formA4}})
= \frak G(z,rV^0(z),W^0(z)) &+ r \sum_j \frac{\partial V^0_j}{\partial x}(z) \frak H^x_{\frak v,j}(z,rV^0(z),W^0)
\\&+
\sum_j \frac{\partial W^0_j}{\partial x}(z) \frak H^x_{\frak w,j}(z,rV^0(z),W^0(z))
\\
 &+ r \sum_j \frac{\partial V^0_j}{\partial y}(z) \frak H^y_{\frak v,j}(z,rV^0(z),W^0(z))
\\
&+
\sum_j \frac{\partial W^0_j}{\partial y}(z) \frak H^y_{\frak w,j}(z,rV^0(z),W^0(z)).
\endaligned
\end{equation}
\end{lem}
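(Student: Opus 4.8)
The plan is to expand the non-linear Cauchy--Riemann operator applied to $v_r$ in the chosen trivialization and to isolate, term by term, the dependence on the first derivatives $\partial V^0/\partial x$, $\partial V^0/\partial y$, $\partial W^0/\partial x$, $\partial W^0/\partial y$. First I would write $\overline{\partial} v_r = \frac12\left(\frac{\partial v_r}{\partial x} + J(v_r)\frac{\partial v_r}{\partial y}\right)$ (up to the standard conventions) where $v_r(z) = \hat F(z, rV^0(z), W^0(z))$. By the chain rule,
\[
\frac{\partial v_r}{\partial x}
= \frac{\partial \hat F}{\partial z^x}(z,rV^0,W^0)
 + r\sum_j \frac{\partial V^0_j}{\partial x}\,\frac{\partial \hat F}{\partial \frak v_j}(z,rV^0,W^0)
 + \sum_j \frac{\partial W^0_j}{\partial x}\,\frac{\partial \hat F}{\partial \frak w_j}(z,rV^0,W^0),
\]
and similarly for $\frac{\partial v_r}{\partial y}$ with $x$ replaced by $y$ throughout. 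Here $\frac{\partial \hat F}{\partial z^x}$, $\frac{\partial \hat F}{\partial z^y}$ denote the partial derivatives of $\hat F$ with respect to the base variable $z$ (treating $\frak v,\frak w$ as frozen), which are smooth $TX$-valued functions on $\Omega \times D^n(R) \times D^n(R)$ because $\hat F$ is smooth and $u_1$ is smooth.

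Next I would apply the bundle map $\mathcal{PP}(z,rV^0(z),W^0(z))$, which for each fixed $(z,\frak v,\frak w)$ is a (real-)linear map $T_{\hat F(z,\frak v,\frak w)}X \otimes \Lambda^{0,1}\Omega \to T_{u_1(z)}X \otimes \Lambda^{0,1}\Omega$ depending smoothly on all arguments. Linearity in the $TX$-factor lets me push $\mathcal{PP}$ through the sum above; the coefficients $\frac{\partial V^0_j}{\partial x}$ etc. are scalars and come straight out. After composing, every summand that carries a factor $\frac{\partial V^0_j}{\partial x}$ has the form $r\,\frac{\partial V^0_j}{\partial x}(z)\cdot \frak H^x_{\frak v,j}(z,rV^0(z),W^0(z))$ with
\[
\frak H^x_{\frak v,j}(z,\frak v,\frak w)
= \mathcal{PP}(z,\frak v,\frak w)\Bigl(\tfrac12\bigl(\tfrac{\partial \hat F}{\partial \frak v_j} + J(\hat F)\,(\text{the }y\text{-analog is zero here})\bigr)\Bigr),
\]
i.e.\ more precisely $\frak H^x_{\frak v,j}$ collects the $dx$-component contribution of $\frac{\partial \hat F}{\partial \frak v_j}$ together with the $J$-twist, and $\frak H^y_{\frak v,j}$ collects the $dy$-component contribution; the same for the $\frak w$-variables. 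The remaining summands — those coming from $\frac{\partial \hat F}{\partial z^x}$ and $\frac{\partial \hat F}{\partial z^y}$, with no derivative of $V^0$ or $W^0$ — are bundled into $\frak G(z,\frak v,\frak w) := \mathcal{PP}(z,\frak v,\frak w)\bigl(\tfrac12(\tfrac{\partial \hat F}{\partial z^x} + J(\hat F)\tfrac{\partial \hat F}{\partial z^y})\bigr)$. Each of $\frak G, \frak H^x_{\frak v,j}, \frak H^x_{\frak w,j}, \frak H^y_{\frak v,j}, \frak H^y_{\frak w,j}$ is then manifestly a composition of smooth maps on $\Omega \times D^n(R) \times D^n(R)$, hence smooth, and substituting $\frak v = rV^0(z)$, $\frak w = W^0(z)$ gives exactly \eqref{maineqationddd}.

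The only genuinely delicate point is bookkeeping: one must be careful that $\mathcal{PP}$ is only $\R$-linear (not $\C$-linear) in the $TX$-direction, so the splitting into $x$- and $y$-parts must be done before composing with $\mathcal{PP}$, and the almost complex structure $J$ appearing in $\overline\partial$ is evaluated at the moving point $\hat F(z,\frak v,\frak w)$, which is itself a smooth function of $(z,\frak v,\frak w)$ — this is why $J(\hat F)$ can be absorbed into the smooth functions $\frak H^\bullet_\bullet$ rather than spoiling smoothness. No new analytic input is needed; everything follows from the smoothness of $\hat F$, of the exponential map and parallel transport (Lemma~\ref{lem:g0} and the surrounding discussion), and the chain rule. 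I would relegate the explicit formulas for the $\frak H$'s to a displayed computation and simply remark that smoothness is immediate, since the point of the lemma is the \emph{structure} of the right-hand side of \eqref{maineqationddd} (namely that derivatives of $V^0$ appear only with an explicit factor $r$ and linearly), which is what the subsequent quadratic estimates will exploit.
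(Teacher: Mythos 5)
Your proof is correct and follows essentially the same route as the paper's: expand $\overline\partial v_r$ by the chain rule in the trivialization, observe that the derivatives of $V^0$ and $W^0$ appear with scalar coefficients, push the family of linear maps $\mathcal{PP}(z,\cdot,\cdot)$ through by linearity, and read off $\frak G$, $\frak H^\bullet_\bullet$ as smooth compositions. The paper packages the smoothness bookkeeping through the auxiliary map $F=E(u_1,\hat F)$ and the matrix $H$ of \eqref{formA6} (which is what makes the $\R^n$-valuedness fully explicit via the fixed trivialization of $u_1^*TX$), but your direct argument reaches the same conclusion. One small misstatement: $\mathcal{PP}$ restricted to the $TX$-factor \emph{is} complex linear, since it is a composition of inverses of the operators $(\Pal)^J$ which by definition intertwine the almost complex structures; this is precisely what underlies the paper's introduction of $\frak J$ in \eqref{eq:J-perpen}ff. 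This does not affect the validity of your argument (you apply $\mathcal{PP}$ termwise to the already-split $x$- and $y$-parts), but it means the caution you raise is unnecessary.
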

We remark that the maps $\frak G$,
$\frak H^x_{\frak v,j}, \frak H^x_{\frak w,j},\frak H^y_{\frak v,j}, \frak H^y_{\frak w,j}$
depend only on $\Omega$, $u_1$, $X$  and are independent of $V^0$, $W^0$.
\begin{proof}
We emphasize that we do not need to obtain an explicit form of the smooth maps appearing
in (\ref{maineqationddd}).
\par
We first observe that there exists an $n\times n$ matrix valued smooth map
$H$ on $\Omega \times  D^n(R) \times  D^n(R)$
such that
\begin{equation}\label{formA6}
\aligned
\frac{\partial F(z,\frak v,\frak w)}{\partial x}
&=
H(z,\frak v,\frak w) \left(
\mathcal{PP}(z,\frak v,\frak w)\Big(\frac{\partial \hat F(z,\frak v,\frak w)}{\partial x}\Big)\right) \\
&\quad + \left.\frac{\partial}{\partial x}{\rm E}(u_1(z),\xi)\right\vert_{\xi = \hat F(z,\frak v,\frak w)} \\
\frac{\partial F(z,\frak v,\frak w)}{\partial y}
&=
H(z,\frak v,\frak w) \left(
\mathcal{PP}(z,\frak v,\frak w)\Big(\frac{\partial \hat F(z,\frak v,\frak w)}{\partial y}\Big)\right)\\
&\quad + \left.\frac{\partial}{\partial y}{\rm E}(u_1(z),\xi)\right\vert_{\xi = \hat F(z,\frak v,\frak w)} \\
\\
\frac{\partial F(z,\frak v,\frak w)}{\partial \frak v_j}
&=
H(z,\frak v,\frak w) \left(
\mathcal{PP}(z,\frak v,\frak w)\Big(\frac{\partial \hat F(z,\frak v,\frak w)}{\partial \frak v_j}\Big)\right) \\
\frac{\partial F(z,\frak v,\frak w)}{\partial \frak w_j}
&=
H(z,\frak v,\frak w) \left(
\mathcal{PP}(z,\frak v,\frak w)\Big(\frac{\partial \hat F(z,\frak v,\frak w)}{\partial \frak w_j}\Big)\right).
\endaligned
\end{equation}
In fact we  have
$$
H(z,\frak v,\frak w) =  (D_{\hat F(z,\frak v,\frak w).}{\rm E}(u_1(z),\cdot))^{-1}\circ \mathcal{PP}(z,\frak v,\frak w)^{-1}.
$$
Here $(D_{\hat F(z,\frak v,\frak w)}{\rm E})(u_1(z),\cdot)$ is the differential of the map
$\xi \mapsto {\rm E}(u_1(z),\xi)$ at $\xi = \hat F(z,\frak v,\frak w) \in X$.
\par
We next observe that there exists a matrix valued smooth function $\frak J$ on
$\Omega \times  D^n(R) \times  D^n(R)$
such that
\begin{equation}
\frak J(z,\frak v,\frak w)(\mathcal{PP}(z,\frak v,\frak w)(\xi))
=
\mathcal{PP}(z,\frak v,\frak w)(J_{\hat F(z,\frak v,\frak w)}(\xi))
\end{equation}
for any $\xi \in T_{\hat F(z,\frak v,\frak w)}X$. Here
$J_{\hat F(z,\frak v,\frak w)}$
$: T_{\hat F(z,\frak v,\frak w)}X \to T_{\hat F(z,\frak v,\frak w)}X$  is the almost complex structure of $X$.
\par
By definition
$$
\overline{\partial} v_r
=
\frac{\partial}{\partial x}\hat F(z,rV^0(z),W^0(z))
+ J_{\hat F(z,rV^0(z),W^0(z))}
\left(\frac{\partial}{\partial y}\hat F(z,rV^0(z),W^0(z))\right).
$$
Note
$$
\aligned
\frac{\partial}{\partial x}\hat F(z,rV^0(z),W^0(z))
&=
\frac{\partial \hat F}{\partial x}
+\sum_jr \frac{\partial V^0_j}{\partial x}\frac{\partial \hat F}{\partial \frak v_j}
+
\sum_j \frac{\partial W^0_j}{\partial x} \frac{\partial \hat F}{\partial \frak w_j}
\\
\frac{\partial}{\partial y}\hat F(z,rV^0(z),W^0(z))
&=
\frac{\partial \hat F}{\partial y}
+\sum_jr\frac{\partial V^0_j}{\partial y}\frac{\partial \hat F}{\partial \frak v_j}
+
\sum_j \frac{\partial W^0_j}{\partial y}\frac{\partial \hat F}{\partial \frak w_j}.
\endaligned
$$
Now it is fairly obvious that we can find
$\frak G, \frak H^x_{\frak v,j}, \frak H^x_{\frak w,j},\frak H^y_{\frak v,j}, \frak H^y_{\frak w,j}$ from
$H$, $\frak J$, the $x$,$y$ derivatives of
${\rm E}(u_1(z),\xi)$  with $\xi = \hat F$, and
\begin{equation}\label{formA8}
\aligned
&\mathcal{PP}(z,\frak v,\frak w)\left(\frac{\partial \hat F}{\partial x}\right),
&\mathcal{PP}(z,\frak v,\frak w)\left(\frac{\partial \hat F}{\partial y}\right),
\\
&\mathcal{PP}(z,\frak v,\frak w)\left(\frac{\partial \hat F}{\partial \frak v_j}(z,\frak v,\frak w)\right),
&\mathcal{PP}(z,\frak v,\frak w)\left(\frac{\partial \hat F}{\partial \frak w_j}(z,\frak v,\frak w)\right).
\endaligned
\end{equation}
(\ref{formA8}) are also $\R^n$ valued smooth functions of $(z,\frak v,\frak w)$
because of (\ref{formA6}).
\par
The proof of Lemma \ref{lemA1} is complete.
\end{proof}
By (\ref{maineqationddd}) we have
\begin{equation}\label{formA9}
\aligned
\frac{d^2}{dr^2}({\rm \ref{formA4}})
= &\sum_{ij}V^0_iV^0_j \frak F_{ij}\left(r,z,W^0,V^0,\frac{\partial W^0}{\partial x},\frac{\partial W^0}{\partial y}
,\frac{\partial V^0}{\partial x},\frac{\partial V^0}{\partial y}\right) \\
&+
\sum_{ij}V^0_i\frac{\partial V^0_j}{\partial x} \frak F^x_{ij}\left(r,z,W^0,V^0\right) +
\sum_{ij}V^0_i\frac{\partial V^0_j}{\partial y} \frak F^y_{ij}\left(r,z,W^0,V^0\right)
\endaligned
\end{equation}
where $\frak F_{ij}$, $ \frak F^x_{ij}$, $ \frak F^y_{ij}$ are smooth maps independent of $V^0$,$W^0$.
\par
\begin{proof}[Proof of Lemma \ref{lem:mgeq2}]
We take:
$\Sigma = \Sigma_1$, and $\Omega_{\alpha}$
($\alpha = 1,\dots,\frak A$) open subsets of $\Sigma_1$ such that
$\bigcup_{\alpha = 1}^{\frak A}\Omega_{\alpha} \supset K_1$.
We also put
$u_1 = u_1$ and
\begin{equation}\label{formulaA2}
\aligned
W^0(z) &= {\rm E}(u_1(z), u(z)) \in T_{u_1(z)}(X),\\
V^0(z)  &= {\rm Pal}_{u(z)} ^{u_1(z)}(V(z)) \in T_{u_1(z)}(X).
\endaligned
\nonumber
\end{equation}
Then by (\ref{formA9}), we obtain:
\begin{equation}\label{formA10}
\left\Vert \int_0^1 ds \int_0^s \frac{d^2}{dr^2}({\rm \ref{formA4}}) \,dr
\right\Vert_{L^2_m(\Omega_{\alpha})}
\le
C_{({\rm\ref{formA10}})}\left\Vert V^0 \right\Vert^2_{L^2_{m+1}(\Omega_{\alpha})}
\end{equation}
where $C_{({\rm\ref{formA10}})}$ depends on $\max\{\left\Vert V^0 \right\Vert_{L^2_{m+1}(\Omega_{\alpha})},
\left\Vert W^0 \right\Vert_{L^2_{m+1}(\Omega_{\alpha})}\}$.
\par
By taking the sum over $\alpha$, we obtain the inequality (\ref{152ff}) to be proven.
\end{proof}
\begin{proof}[Proof of (\ref{152ffg})]
By putting
$\Sigma = \Sigma(S)$ the proof is the same as the proof of Lemma \ref{lem:mgeq2}.
\end{proof}
\begin{proof}[Proof of the first inequality of (\ref{2ff160})]
We put
$\Sigma = \Sigma_1$, $\Omega$ a neighborhood of $\mathcal A_T$,
$u_1 =u_1$,
$$
\aligned
&u(z) = {\rm Exp}(u^{\rho}_{T,0}(z),V_{T,1,(1)}^{\rho}), \\
&
v_r(z) = {\rm Exp}(u^{\rho}_{T,0}(z),V_{T,1,(1)}^{\rho} + r
 \chi_{\mathcal A}^{\rightarrow}(V^{\rho}_{T,2,(1)}
-(\Delta p^{\rho}_{T,(1)})^{\rm Pal})).
\endaligned
$$
We then define
\begin{equation}
\aligned
W^0(z) &= E(u_1(z),u(z)) \in T_{u_1(z)}(X), \\
V^0(z)  &= {\rm Pal}_{u(z)} ^{u_1(z)}( \chi_{\mathcal A}^{\rightarrow}(V^{\rho}_{T,2,(1)}
-(\Delta p^{\rho}_{T,(1)})^{\rm Pal})) \in T_{u_1(z)}(X).
\endaligned
\nonumber
\end{equation}
We have
\begin{equation}\label{formulaA22}
\mathcal P \overline{\partial} v_{r_0} - \overline{\partial} u
=
\int_{0}^{r_0} \left(\frac{\partial}{\partial r}\right) \mathcal P  (\overline{\partial} v_{r}) dr
\end{equation}
Here $ \mathcal P$ is the inverse of the complex linear part of the
parallel transport along the path $r \mapsto v_r(z)$.
This path is not a geodesic. However we can apply the same argument as the proof of
Lemma \ref{lemA1} to obtain
\begin{equation}\label{formulaA223}
\aligned
\frak P\left(\frac{\partial}{\partial r}\right) \mathcal P  (\overline{\partial} v_{r})
= &\sum_{i}V^0_i \frak F_{i}\left(r,z,W^0,V^0,\frac{\partial W^0}{\partial x},\frac{\partial W^0}{\partial y}
,\frac{\partial V^0}{\partial x},\frac{\partial V^0}{\partial y}\right) \\
&+
\sum_{i}\frac{\partial V^0_i}{\partial x} \frak F^x_{i}\left(r,z,W^0,V^0\right)
\\
&+
\sum_{i}\frac{\partial V^0_i}{\partial y} \frak F^y_{i}\left(r,z,W^0,V^0\right),
\endaligned
\end{equation}
where $\frak F_{i}$, $\frak F^x_{i}$, $\frak F^y_{i}$ are smooth maps.
Here $\frak P = (({\rm Pal}_{u_1}^u)^{(0,1)})^{-1}$.
The first inequality of (\ref{2ff160}) follows from (\ref{formulaA22})
and (\ref{formulaA223}).
\end{proof}
\begin{rem}\label{remarkA2}
In the argument of this appendix or anywhere in this paper
we {\it never} use the fact that we take the parallel transport
with respect to the Levi-Civita connection.
We can actually use any linear connection which preserves
$TL \subset TX$ in place of {the} Levi-Civita connection
of the metric given in Lemma  \ref{lem:g0}, as we did in
\cite[Chapter 7]{fooo:book1}.
\par
In the case when there are several (finitely many) Lagrangian submanifolds $L_{\alpha}$
so that they have mutually clean intersection,
we cannot generalize Lemma  \ref{lem:g0} to find a Hermitian metric
such that all the $L_{\alpha}$ are totally geodesic.
However it is easy to see that there exists a linear connection which
preserves all the subspaces $TL_{\alpha} \subset TX$.
\end{rem}

\section{Estimate of Parallel transport 1}
\label{appendixA2}
Let $X$ be a Riemannian manifold.
We put
\begin{equation}\label{formA11}
\frak U
=
\{ (x,y,\frak v,\frak w) \mid
x,y \in X, d(x,y) \le \iota'_X/2, \frak v,\frak w \in T_xX,
\vert \frak v\vert, \vert \frak w\vert < \iota'_X/10\},
\end{equation}
which is a smooth manifold.
We consider a smooth vector bundle
$\frak L$ on $\frak U$ whose fiber at
$(x,y,\frak v,\frak w)$ is
\begin{equation}\label{formA12}
\frak L_{(x,y,\frak v,\frak w)}
= {\rm Hom}_{\R} (T_yX,T_xX).
\end{equation}
We define its smooth section $\frak P$ whose
value at $(x,y,\frak v,\frak w)$  is as follows.
We put
$$
\frak p = {\rm Exp}(x,\frak w),
\qquad
\frak q = {\rm Exp}(\frak p,{\rm Pal}_{x}^{\frak p}(\frak v)),
$$
then
\begin{equation}\label{sectionL}
\frak P(x,y,\frak v,\frak w)
=
(({\rm Pal}_{x}^{\frak p})^J)^{-1}
\circ (({\rm Pal}_{\frak p}^{\frak q})^J)^{-1}
\circ
({\rm Pal}_{y}^{\frak q})^J.
\end{equation}
The smoothness of $\frak P$ is obvious from the definition.
\begin{lem}\label{lemB2}
Let $\mathscr S$ be a 2 dimensional manifold and
$\frak v(s)$ and $\frak w(s)$ be an $s \in \mathcal S$ parameterized family of smooth maps
$\mathscr S \to T_xX$ with $\vert \frak v(s)\vert, \vert \frak w(s)\vert \le i'_X/10$.
If
$$
\aligned
&\left\Vert \frac{\partial^{n_1}}{\partial s^{n_1}}\frak v\right\Vert_{L^2_m(\mathscr S)}
\le 1
,
\qquad
\left\Vert \frac{\partial^{n_2}}{\partial s^{n_2}}\frak w\right\Vert_{L^2_m(\mathscr S)}
\le 1
\endaligned
$$
for $n_1,n_2 \le n$,
then
\begin{equation}\label{lemBB}
\aligned
&\left\Vert \frac{\partial^{n}}{\partial s^n}
\Big(\frak P(x,y,\frak v,\frak w) - \frak P(x,y,\frak v,0) \Big)\right\Vert_{L^2_m(\mathscr S)}
\\
&\le
C_{m,{\rm(\ref{lemBB})}} \sum_{n'\le n}
\left\Vert \frac{\partial^{n'}}{\partial s^{n'}}\frak w\right\Vert_{L^2_m(\mathscr S)}.
\endaligned
\end{equation}
\end{lem}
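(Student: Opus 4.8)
The statement to be proved is Lemma \ref{lemB2}: the difference $\frak P(x,y,\frak v,\frak w) - \frak P(x,y,\frak v,0)$ and its $s$-derivatives, measured in $L^2_m$ along the auxiliary surface $\mathscr S$, are controlled by the $L^2_m$ norm of $\frak w$ and its $s$-derivatives, with a constant independent of the family. The key structural observation is that $\frak P$ is a \emph{fixed} smooth section of the smooth finite-rank bundle $\frak L \to \frak U$ built in \eqref{formA11}--\eqref{sectionL}; nothing about the particular family $(\frak v(s),\frak w(s))$ enters except through composition. So the first step is to record that $\frak P$ restricted to the slice $\{\frak w = 0\}$ equals $\frak P(x,y,\frak v,0)$, and hence by the fundamental theorem of calculus in the $\frak w$-variable,
\begin{equation}\label{eq:plan-FTC}
\frak P(x,y,\frak v,\frak w) - \frak P(x,y,\frak v,0)
= \sum_{j=1}^n \frak w_j \,\int_0^1 \frac{\partial \frak P}{\partial \frak w_j}(x,y,\frak v,\theta\frak w)\, d\theta,
\end{equation}
where $\frak w_j$ are the components of $\frak w$ in a local trivialization of $T_xX$ near the relevant compact set, and $\partial \frak P/\partial \frak w_j$ is again a smooth section of a smooth bundle over $\frak U$. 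This exhibits the difference as a sum of terms each of which carries an explicit factor of a component of $\frak w$, which is precisely the source of the gain on the right-hand side of \eqref{lemBB}.

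\textbf{Second step: reduce to a composition-with-smooth-map estimate.} Both $x$ and $y$ are fixed points here (they do not depend on $s$), so along the family the only $s$-dependence sits in $\frak v(s)$ and $\frak w(s)$, which take values in a fixed ball $D^n(\iota_X'/10)$. The map $(\frak v,\frak w)\mapsto \int_0^1 (\partial\frak P/\partial\frak w_j)(x,y,\frak v,\theta\frak w)\,d\theta$ is a smooth map from $D^n(\iota'_X/10)\times D^n(\iota'_X/10)$ into the fixed finite-dimensional vector space $\mathrm{Hom}_\R(T_yX,T_xX)$ (differentiation under the integral sign is legitimate since the integrand is smooth on a compact parameter range). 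Therefore the assertion follows from the standard fact, quoted in the introduction of the paper and elaborated in Appendix \ref{appendixA}, that for a smooth map $F$ between manifolds the assignment $u\mapsto F\circ u$ sends $L^2_m$ into $L^2_m$ continuously, together with the Sobolev multiplication $L^2_m \cdot L^2_m \hookrightarrow L^2_m$ (valid since $m$ is large, in particular $m>\dim\mathscr S/2=1$). Concretely, I would apply $\partial^n/\partial s^n$ to \eqref{eq:plan-FTC}, distribute by the Leibniz rule into a finite sum of products $\bigl(\partial^{n'}\frak w_j/\partial s^{n'}\bigr)\cdot \partial^{n''}_s\bigl[\int_0^1(\partial\frak P/\partial\frak w_j)(x,y,\frak v,\theta\frak w)d\theta\bigr]$ with $n'+n''\le n$, bound each $L^2_m$-norm of a product by the product of $L^2_m$-norms, bound the $\frak P$-factor using the chain rule for $u\mapsto F\circ u$ applied to $(\frak v,\frak w)$ (which is $\le$ a constant depending only on $\sup_{n''\le n}\|\partial^{n''}_s\frak v\|_{L^2_m},\|\partial^{n''}_s\frak w\|_{L^2_m}\le 1$ by hypothesis), and absorb everything into $C_{m,\eqref{lemBB}}$. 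What survives uncanceled is exactly a sum of the $\|\partial^{n'}_s\frak w\|_{L^2_m(\mathscr S)}$, which is the desired bound.

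\textbf{Expected main obstacle.} There is no deep obstacle; the content is entirely bookkeeping of the Leibniz expansion and invoking the $L^2_m$-composition lemma. The one point that needs care is making the trivialization in which $\frak w_j$ are defined independent of $s$ and global enough over the relevant compact region of $X$ containing all the points $x$, $y$, $\frak p$, $\frak q$ at distance $\le\iota'_X$ from each other — this is where the hypothesis $\vert\frak v\vert,\vert\frak w\vert<\iota'_X/10$ and $d(x,y)\le\iota'_X/2$ enter, to keep the arguments of all the parallel transports and exponential maps inside the domain $\frak U$ of \eqref{formA11} where $\frak P$ is genuinely smooth; once that domain is fixed the section $\frak P$ and all its $\frak w$-derivatives have uniformly bounded $C^k$ norms, so the composition estimates are uniform. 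A secondary nuisance is simply that \eqref{lemBB} as stated sums only over $n'\le n$ on the right but the Leibniz expansion also produces factors of $\frak v$ and its $s$-derivatives and of intermediate $s$-derivatives of $\frak w$; all of these are $\le 1$ by assumption, so they are harmlessly folded into the constant, and only the one ``outermost'' factor $\partial^{n'}_s\frak w_j$ is left explicit. I would present the proof in the three steps above: (i) the FTC identity \eqref{eq:plan-FTC}; (ii) smoothness of the integral kernel as a map on $D^n\times D^n$ together with differentiation under the integral; (iii) the Leibniz expansion plus the $u\mapsto F\circ u$ Sobolev estimate and Sobolev multiplication, concluding \eqref{lemBB}.
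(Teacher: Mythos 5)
Your proof is correct, and it fills in the details behind what the paper records with the single sentence ``The lemma is an immediate consequence of the smoothness of $\frak P$.'' The structure you use — extracting an explicit factor of $\frak w$ via the fundamental theorem of calculus in the $\frak w$-variable, then invoking the smooth-composition stability of $L^2_m$ together with Sobolev multiplication — is exactly the mechanism the paper relies on throughout its appendices (compare \eqref{FTE}, \eqref{newE3new}--\eqref{newnewE4}), so this is the same approach made explicit rather than a genuinely different route.
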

\begin{proof}
The lemma is an immediate consequence of the smoothness of $\frak P$.
\end{proof}
\begin{proof}[Proof of Lemma \ref{triangleholonomy}]
We consider $\frak e = \frak e^{\rho}_{1,T,(0)} \in \mathcal E_1$.
When we defined $\mathcal E_1$ it was a subset $\mathcal E_1 \subset L^2_m(\Sigma_1;(
u_1^{\frak{ob}})^*TX \otimes \Lambda^{0,1})$.
We used the parallel transport to regard
$$
\frak e \in L^2_m(\Sigma_1;(\hat u^{\rho}_{1,T,(0)})^*TX \otimes \Lambda^{0,1})
= L^2_m(\Sigma_1;u^*TX \otimes \Lambda^{0,1}).
$$
It appears in (\ref{156ff}). So more precisely $\frak e$ in  (\ref{156ff}) is:
$$
z \mapsto  ({\rm Pal}_{u_1^{\frak{ob}}(z)}^{\hat u^{\rho}_{1,T,(0)}(z)})^{(0,1)}( \frak e^{\rho}_{1,T,(0)}(z)).
$$
(Here $z \in K_1^{\frak{ob}}$.)
The expression $\mathcal P^{-1} \frak e$ in (\ref{156ff}) is then equal to
\begin{equation}\label{formulaB5}
z \mapsto
\big({\rm Pal}^{\Exp (\hat u^{\rho}_{1,T,(0)}(z),V^{\rho}_{T,1,(1)}(z))}_{\hat u^{\rho}_{1,T,(0)}(z)}\big)^{(0,1)}
\circ
\big({\rm Pal}_{u_1^{\frak{ob}}(z)}^{\hat u^{\rho}_{1,T,(0)}(z)}\big)^{(0,1)}( \frak e^{\rho}_{1,T,(0)}(z)).
\end{equation}
On the other hand
$\frak e \in L^2_m(\Sigma_i;\Exp (\hat u^{\rho}_{1,T,(0)},V^{\rho}_{T,1,(1)})^*TX \otimes \Lambda^{0,1})$
appearing in Lemma \ref{triangleholonomy} is, by definition,
\begin{equation}\label{formulaB6}
z \mapsto ({\rm Pal}_{u_1^{\frak{ob}}(z)}^{\Exp (\hat u^{\rho}_{{T,1,(0)}}(z),V^{\rho}_{T,1,(1)}(z)})^{(0,1)}
( \frak e^{\rho}_{1,T,(0)}(z)).
\end{equation}
\par
Using the section $\frak P$ in (\ref{sectionL}),  the sections
(\ref{formulaB5}) and (\ref{formulaB6}) are written as follows.
$$
\aligned
{\rm (\ref{formulaB5})}
&=
\frak P(\hat u^{\rho}_{{T,1,(0)}}(z),u_i^{\frak{ob}}(z),0,V^{\rho}_{T,1,(1)}(z)))
(\frak e^{\rho}_{1,T,(0)}(z)),
\\
{\rm (\ref{formulaB6})}
&=
\frak P(\hat u^{\rho}_{{T,1,(0)}}(z),u_i^{\frak{ob}}(z),0,0))
(\frak e^{\rho}_{1,T,(0)}(z)).
\endaligned
$$
\par
Therefore \eqref{formulaB5}-\eqref{formulaB6} are estimated by $Ce^{-\delta_1 T}$ by Lemma \ref{lemB2}.
Then the estimate (\ref{triangleholonomy2}) easily follows from this.
\end{proof}

\section{Error term estimate of non-linear Cauchy-Riemann equation II}
\label{appendixB}
In this appendix, we give a proof of Proposition \ref{mainestimatestep13kappa}.
Certain estimates of the parallel transport are  postponed till the next appendix.

\begin{proof}[Proof of Proposition \ref{mainestimatestep13kappa}]
The proof is similar to that of Proposition \ref{mainestimatestep13} and proceeds as follows.

\par
We first perform the estimates on $K_1$.
We use the simplified notation:
\begin{equation}\label{simplenote2}
\aligned
u &= \hat u^{\rho}_{1,T,(\kappa-1)}, \qquad &V=V^{\rho}_{T,1,(\kappa)}, \\
\mathcal P &= (\Pal_1^{(0,1)})^{-1},  \qquad &
{\frak e =\sum_{a=0}^{\kappa-1}\frak e^{\rho} _{1,T,(a)}}.
\endaligned
\end{equation}
\par
Here $\Pal_1^{(0,1)}$ is the $(0,1)$ part of the parallel translation along the
map $s \mapsto \Exp (u,sV)$, $s \in [0,s_0]$ for some $s_0 \in [0,1]$.
\par
We obtain the same formula as (\ref{151ffff})
\begin{equation}\label{2151ffff}
\aligned
&\mathcal P\overline \partial (\Exp (u,V))  \\
& =
\overline\partial (\Exp (u,0)) +\int_0^1 \frac{\del}{\del s}\left(\mathcal P\overline\partial (\Exp (u,sV))\right) ds
 \\
& =  \overline\partial (\Exp (u,0))
+ (D_{u}\overline\partial)(V)
 +\int_0^1 ds \int_0^s
\left(\frac{\del}{\del r}\right)^2 \left(\mathcal P \overline\partial (\Exp (u,rV)\right)dr.
\endaligned
\end{equation}
Then we also obtain
\begin{equation}\label{2152ff}
\aligned
&\left\|\int_0^1 ds \int_0^s
\left(\frac{\del}{\del r}\right)^2 \left(\mathcal P \overline\partial (\Exp (u,rV)\right)\, dr
\right\|_{L^2_m(K_1)}\\
&\qquad\le C_{m,{\rm(\ref{2152ff}})} \| V\|_{L^2_{m+1,\delta}}^2 \le C'_{m,{\rm(\ref{2152ff})}}e^{-2\delta_1 T}\mu^{2(\kappa-1)}
\endaligned
\end{equation}
provided $m \geq 2$.
One can prove (\ref{2152ff}) by applying Lemma \ref{lemA1} to
\begin{equation}\label{formD4}
W^0(z) = {\rm E}(u_1(z),\hat u^{\rho}_{1,T,(\kappa-1)}(z)), \quad
V^0(z) = {\rm Pal}_{\hat u^{\rho}_{1,T,(\kappa-1)}(z)}^{u_1(z)}
\left(V^{\rho}_{T,1,(\kappa)}(z)
\right)
\end{equation}
and applying the Sobolev inequality etc. to the right hand side of (\ref{formA9}).
\begin{rem}\label{remD1D1}
We remark that we are working by induction on $\kappa$ and
take infinitely many steps $\kappa = 1,2,\dots$.
For this proof to work we need the constants $C_{m,{\rm(\ref{2152ff})}}$
and $C'_{m,{\rm(\ref{2152ff})}}$ in (\ref{2152ff}) to be independent of
$\kappa$.
\par
The reason  we can take $C_{m,{\rm(\ref{2152ff})}}$
and $C'_{m,{\rm(\ref{2152ff})}}$ to be independent of $\kappa$ is as follows.
As we mentioned right after (\ref{formA10}) the constant
$C_{{\rm(\ref{formA10})}}$ there depends only on $\max\{\left\Vert V^0 \right\Vert_{L^2_{m+1}(\Omega)},
\left\Vert W^0 \right\Vert_{L^2_{m+1}(\Omega)}\}$.
In our situation where $W^0$ and $V^0$ are given by (\ref{formD4}),
their local $L^2_{m+1}$ norms are bounded by the induction hypothesis
((\ref{form182}) and (\ref{form184})) and Lemma \ref{Lemma6.9}, by a number independent of $\kappa$.
\par
Therefore $C_{m,{\rm(\ref{2152ff})}}$
and $C'_{m,{\rm(\ref{2152ff})}}$ can be taken to be independent of $\kappa$.
It can be taken to be independent of $T$ since we are working on $K_1$ which is
independent of $T$.
\par
Independence of the constants of $\kappa$ or $T$ appears
in other part of the proof, which can be proved in the same way.
So we do not mention it usually.
\end{rem}

Next we have
\begin{equation}\label{2155ff}
\aligned
& \mathcal P \circ \Pi_{\mathcal E_1(\Exp (u,V))}^{\perp} \circ \mathcal P^{-1}\\
& =
\Pi_{\mathcal E_1(u)}^{\perp} +
\int_0^1 \frac{d}{ds}\left(\mathcal P\circ \Pi_{\mathcal E_1(\Exp (u,sV)}^{\perp}
\circ \mathcal P^{-1}\right)\, ds
\\
& =
\Pi_{\mathcal E_1(u)}^{\perp} -
(D_{u}\mathcal E_1)(\cdot,V) +
\int_0^1 ds \int_0^s \frac{d^2}{dr^2}\left(\mathcal P\circ
\Pi_{\mathcal E_1(\Exp (u,rV))}^{\perp}\circ \mathcal P^{-1}
\right)\, dr,
\endaligned\end{equation}
in which we can estimate the third term in the same way as (\ref{2152ff}).
(See Appendix \ref{appendixA2bis}.)
\par
On the other hand, (\ref{frakeissmall2kappa0}) for $\le \kappa-1$, (\ref{form0182})
for $\le \kappa$ and (\ref{2151ffff}), (\ref{2152ff}) imply 
\begin{equation}\label{2156ff}
\left\|\overline\partial (\Exp (u,V))
- \mathcal P^{-1}\frak{e}\right\|_{L^2_{m}(K_1)}
\le C_{m,{\rm(\ref{2156ff})}}e^{-\delta_1 T}\mu^{\kappa-1}.
\end{equation}
We also use the next inequality
\begin{equation}\label{form5505550ap}
\left\Vert\mathcal P ^{-1}(D_{u}\mathcal E_1)(\mathcal P Q,V)\right\Vert_{L^2_{m}(K_1)}
\le
C_{m,{\rm(\ref{form5505550ap})}}\left\|Q\right\|_{L^2_{m}(K_1)}\left\| V\right\|_{L^2_{m}(K_1)}.
\end{equation}
Here $Q$ is a section of $u^*TX \otimes \Lambda^{0,1}$
of $L^2_{m}$ class.
(\ref{form5505550ap}) is a version of (\ref{form5505550}) and is proved in
Appendix \ref{appendixA2bis}.
Using (\ref{2156ff})  and (\ref{form5505550ap}) we can show
\begin{equation}\label{A122form}
\aligned
\Big\|\Pi_{\mathcal E_1(\Exp (u,V))}^{\perp}&\overline\partial (\Exp (u,V))
-\mathcal P^{-1}
\Pi_{\mathcal E_1(u)}^{\perp}(\mathcal P \overline\partial (\Exp (u,V))\\
&+
\mathcal P^{-1}(D_{u}\mathcal E_1)\left(\frak e,V\right)\Big\|_{L^2_{m}(K_1^+)}\le
C_{m,{\rm(\ref{A122form})}}e^{-2\delta_1 T}\mu^{\kappa-1},
\endaligned\end{equation}
in the same way as (\ref{160ff}).
\par
By (\ref{formula158}) we have:
\begin{equation}
\overline\partial u
+ (D_{u}\overline\partial)(V)- (D_{u}\mathcal E_1)(\frak{e},V)
\in  \mathcal E_1(u)
\end{equation}
on $K_1$.
\par
Summing up we have derived
\begin{equation}\label{formB8}
\aligned
\|\Pi_{\mathcal E_1(\Exp (u,V))}^{\perp}(\overline\partial
(\Exp (u,V)))\|_{L^2_m(K_1)}
&\le C_{m,(\ref{formB8})}e^{-2\delta_1 T}\mu^{\kappa-1} \\
&\le C_{1,m}e^{-\delta_1 T}\epsilon(5)\mu^{\kappa}/10
\endaligned
\end{equation}
for $T>T_{m,\epsilon(5),({\rm\ref{formB8})}}$, as long as we choose
$T_{m,\epsilon(5),({\rm\ref{formB8}})}$ so that
$$
C_{m,({\rm\ref{formB8}})}e^{-\delta_1  T_{m,\epsilon(5),({\rm\ref{formB8}})} }
\le C_{1,m}\epsilon(5) \mu/10.
$$
We emphasize that this choice of $ T_{m,\epsilon(5),(\ref{formB8})}$ does not depend on $\kappa$ but depends only on $m$, $\epsilon(5)$ and $\mu$.
\par
It follows from (\ref{2156ff}) that
\begin{equation}\label{formB822}
\|\Pi_{\mathcal E_1(\Exp (u,V))}\left(\overline\partial (\Exp (u,V))\right)
 - \frak{e}\|_{L^2_{m}(K_1)}
\le  C_{m,{\rm(\ref{formB822})}}e^{-\delta T}\mu^{\kappa-1},
\end{equation}
in the same way as the proof of Lemma \ref{triangleholonomy} given in Appendix
\ref{appendixA2}.
\par
Then we can prove (\ref{frakeissmall2kappaa}) by putting
\begin{equation}\label{formD9atoato}
\aligned
\frak e^{\rho}_{1,T,(\kappa)}
&=
\Pi_{\mathcal E_1(\Exp (u,V))}(\overline\partial (\Exp (u,V))
- \frak{e}\in \mathcal E_1(\Exp (u,V))
\cong \mathcal E^{\frak{ob}}_1.
\endaligned
\end{equation}
\par
We can perform the estimate on $[-5T,-T-1] \times [0,1]$
modifying the proof of (\ref{form56156})
in the same way as follows.
For $[S,S+1] \times [0,1] \subset [-5T,-T-1]_{\tau}\times [0,1]$
the inequality
\begin{equation}\label{newformA260}
\left\|\overline\partial u^{\rho} _{T,(\kappa)} \right\|_{L_{m}^2([S,S+1]\times [0,1]\subset \Sigma_T)}
< C_{m,({\rm\ref{newformA260}})}
\mu^{\kappa-1} e^{-2 \delta_1 T}
\end{equation}
can be proved in the same way as
(\ref{formB8}).
Therefore
\begin{equation}\label{newformA2602}
\aligned
\left\|\overline\partial u^{\rho} _{T,(\kappa)} \right\|_{L_{m,\delta}^2( [-5T,-T-1]\times [0,1]\subset \Sigma_T)}
&\le 40T e^{4T\delta}C_{m,({\rm\ref{newformA260}})}
\mu^{\kappa-1} e^{-2 \delta_1 T} \\
&\le C_{1,m}e^{-\delta_1 T}\epsilon(5)\mu^{\kappa}/10,
\endaligned
\end{equation}
for $T > T_{m,\epsilon(5),({\rm\ref{newformA2602})}}$.
(Here we use the fact that the weight function $e_{T,\delta}$
is smaller than $10 e^{4T\delta}$ on our domain.)
\par
The estimates on $K_2$ and $[T+1,5T] \times [0,1]$
are the same.
Notation (\ref{simplenote2}) is used up to here.
\par\medskip
The estimate $\overline\partial u^{\rho} _{T,(\kappa)}$ on $[-T+1,T-1]\times [0,1]$
is as follows.
Note the bump functions $\chi_{\mathcal B}^{\leftarrow}$ and $\chi_{\mathcal A}^{\rightarrow}$ are $\equiv 1$ there and
$$
\overline\partial u^{\rho} _{T,(\kappa-1)}
+
(D_{u^{\rho} _{T,(\kappa-1)}}\overline\partial)
(V^{\rho}_{T,1,(\kappa)}+V^{\rho}_{T,2,(\kappa)})
= 0.
$$
Therefore the inequality
\begin{equation}\label{newformA26}
\left\|\overline\partial u^{\rho} _{T,(\kappa)} \right\|_{L_{m}^2([-T+1,T-1]\times [0,1]\subset \Sigma_T)}
< C_{m,({\rm\ref{newformA26}})}e^{-2\delta_1 T}\mu^{\kappa-1}
\end{equation}
can be proved in the same way as
(\ref{formB8}).
Since $e_{T,\delta} \le 10 e^{5\delta T} \le 10 e^{\delta_1 T/2}$,  it implies
\begin{equation}\label{newformA262}
\left\|\overline\partial u^{\rho} _{T,(\kappa)} \right\|_{L_{m,\delta}^2([-T+1,T-1]\times [0,1]\subset \Sigma_T)}
< e^{-\delta_1 T} \epsilon(5) \mu^{\kappa}/10
\end{equation}
for $T > T_{m,\epsilon(5),({\rm\ref{newformA262}})}$.

On $\mathcal A_T$ we have
\begin{equation}\label{2estimateatA1}
u^{\rho} _{T,(\kappa)}
=
{\rm Exp}(u^{\rho}_{T,(\kappa-1)},\chi_{\mathcal A}^{\rightarrow}
(V^{\rho}_{T,2,(\kappa)}-\Delta p^{\rho}_{T,(\kappa)})+V^{\rho}_{T,1,(\kappa)}).
\end{equation}
Note
\begin{equation}\label{newformA28}
\aligned
&\big\| \chi_{\mathcal A}^{\rightarrow}(V^{\rho}_{T,2,(\kappa)}-\Delta p^{\rho}_{T,(\kappa)})
\big\|_{L^2_{m+1}(\mathcal A_T)} \\
&\le
C_{m,({\rm\ref{newformA28}})}e^{-6T\delta}\big\| V^{\rho}_{T,2,(\kappa)}-\Delta p^{\rho}_{T,(\kappa)}
\big\|_{L^2_{m+1,\delta}(\mathcal A_T \subset \Sigma_{2})}
\\
&\le C'_{m,({\rm\ref{newformA28}})} e^{-6T\delta - T\delta_1}\mu^{\kappa-1}.
\endaligned
\end{equation}
The first inequality follows from  the fact that the weight function $e_{2,\delta}$
is around $e^{6T\delta}$ on $\mathcal A_T$. The second inequality follows from (\ref{142form23}).
On the other hand the weight function $e_{T,\delta}$ is around $e^{4T\delta}$ at $\mathcal A_T$.
(\ref{newformA28}) implies
\begin{equation}\label{ff160}
\big\| \chi_{\mathcal A}^{\rightarrow}(V^{\rho}_{T,2,(\kappa)}-\Delta p^{\rho}_{T,(\kappa)})
\big\|_{L^2_{m+1,\delta}(\mathcal A_T\subset \Sigma_T)}
\le
C_{m,({\rm\ref{ff160})}} e^{-2T\delta - T\delta_1}\mu^{\kappa-1}.
\end{equation}
Therefore in the same way as the proof of (\ref{2ff160})
given at the end of  Appendix \ref{appendixA},
we obtain
\begin{equation}\label{2ff1602}
\aligned
&
\left\vert\Vert \overline\partial u^{\rho} _{T,(\kappa-1)}\Vert_{L^2_{m,\delta}(\mathcal A_T\subset \Sigma_T)} -
\Vert\overline\partial(\Exp(u^{\rho}_{T,(\kappa-1)},V^{\rho}_{T,1,(\kappa)})
\Vert_{L^2_{m,\delta}(\mathcal A_T\subset \Sigma_T)}
\right\vert
\\
&\qquad\le
C_{m,({\rm\ref{2ff1602}})} e^{-2\delta T-\delta_1T}
\le
C_{1,m}\epsilon(5) e^{-\delta_1 T}\mu^{\kappa}/20
\endaligned
\end{equation}
for $T > T_{m,\epsilon(5),({\rm\ref{2ff1602}})}$.
\par
Since
$
{\rm Err}^{\rho}_{2,T,(\kappa-1)}   = 0
$
on $\mathcal A_T$
we have
\begin{equation}
\overline\partial u^{\rho}_{T,(\kappa-1)} + (D_{u^{\rho}_{T,(\kappa-1)}}\overline\partial)(V^{\rho}_{T,1,(\kappa)}) = 0.
\end{equation}
Therefore in the same way as we did on $K_1$ we can show
\begin{equation}\label{ff161}
\aligned
\big\|\overline\partial( {\rm Exp}(u^{\rho}_{T,(\kappa-1)},V^{\rho}_{T,1,(\kappa)}
)\big\|_{L^2_{m,\delta}(\mathcal A_T \subset \Sigma_T)}
&\le
C_{m,({\rm\ref{ff161}})}\mu^{2(\kappa-1)}e^{4\delta T} e^{-2\delta_1T} \\
&\le C_{1,m}\epsilon(5) e^{-\delta_1 T}\mu^{\kappa}/20
\endaligned
\end{equation}
for $T > T_{m,\epsilon(5),(\ref{ff161})}$.
(Here we use the fact that $e_{T,\delta} \sim e^{4T\delta}$
on $\mathcal A_T$.)
\par
(\ref{2ff1602}) and (\ref{ff161}) imply
\begin{equation}\label{ff162}
\big\|\overline\partial u^{\rho} _{T,(\kappa)} \big\|_{L_{m}^2(\mathcal A_T\subset \Sigma_T)} <
C_{1,m}\epsilon(5) e^{-\delta_1 T}\mu^{\kappa}/10
\end{equation}
\par
The estimate on $\mathcal B_T$ is similar.
The proof of Proposition \ref{mainestimatestep13kappa} is complete
except the estimate of the parallel transport given in the next appendix.
\end{proof}

\section{Estimate of Parallel transport 2}
\label{appendixA2bis}

\begin{proof}
[Proof of (\ref{form5505550ap})]
Let $\Omega$ {be} a small neighborhood of $K_1$.
We consider the vector bundle $\frak L$ on $\frak U$
defined by (\ref{formA11}) and (\ref{formA12}).
We put
$$
\frak V =
\{(z,\frak v,\frak w) \mid z \in \Omega,
\, \frak v,\frak w \in T_{u_1(z)}X, \,  \vert\frak v\vert, \vert\frak w\vert < R\},
$$
with $R < \iota'_X/10$.
We pull $\frak L$ back
by the map
$
\frak V\to \frak U
$
defined by
$$
(z,\frak v,\frak w) \mapsto (u_1(z),u_1^{\frak{ob}}(z),\frak v,\frak w)
$$
to obtain a vector bundle $\hat{\frak L}$ on $\frak V$.
\par
We pull back the section $\frak P$ in
(\ref{sectionL}) by this map
and tensor it with the identity in ${\rm End}(\Lambda^{0,1}(\Omega))$.
We then obtain a section
$P$ of $\hat{\frak L} \otimes {\rm End}(\Lambda^{0,1}(\Omega))$. $P$ is written as:
$$
P(z,\frak v,\frak w) =
\Big(({\rm Pal}^{{\rm Exp}(u_1(z),\frak w)}_{u_1(z)})^{(0,1)}\Big)^{-1}
\circ
\Big(({\rm Pal}_{{\rm Exp}(u_1(z),\frak w)}^{\hat v(z,\frak v,\frak w)})^{(0,1)}\Big)^{-1}
\circ
({\rm Pal}_{u_1^{\frak{ob}}(z)}^{\hat v(z,\frak v,\frak w)})^{(0,1)}
$$
where
\begin{equation}\label{defofvinE}
\hat v(z,\frak v,\frak w) =  {\rm Exp}({\rm Exp}(u_1(z),\frak w),
{\rm Pal}_{u_1(z)}^{{\rm Exp}(u_1(z),\frak w)}(\frak v)).
\end{equation}
See Figure \ref{FigureappE0}.
\begin{figure}
\centering
\includegraphics{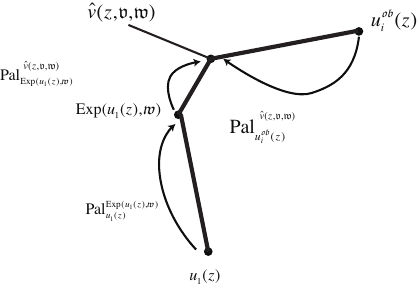}
\caption{Map $P$}
\label{FigureappE0}
\end{figure}
\par
Let $\hat{\bf e}_{i}(z)$  ($i=1,\dots,\dim \mathcal E_1$)
be a basis of $\mathcal E_1 = \mathcal E_1(u_1^{\frak{ob}})$.
We define
$$
\hat{\bf e}_{i}(z,\frak v,\frak w) = P(z,\frak v,\frak w)(\hat{\bf e}_{i}(z)),
$$
which is a smooth section of the bundle
$u_1^*TX \otimes \Lambda^{0,1}(\Omega)$ on $\frak V$.
(Here we denote the map
of $(z,\frak v,\frak w) \mapsto u_1(z)$ by $u_1$
by a slight abuse of notation.)
We consider
\begin{equation}\label{forme'e'e'}
\aligned
{\bf e}'_{i}(z,r)
&= P(z,rV^0(z),W^0(z))(\hat{\bf e}_{i}(z)) \\
& =\hat{\bf e}_{i}(z,rV^0(z),W^0(z))
 \in T_{u_1(z)}X\otimes \Lambda^{0,1}(\Omega).
 \endaligned
\end{equation}
The sections
${\bf e}'_{i} \in \Gamma(u_1^*TX\otimes \Lambda^{0,1}(\Omega))$,
$(i=1,\dots,\dim\mathcal E_1)$  form a basis of
$
P(\mathcal E_1).
$
\par
In the next step, we will use the Gram-Schmidt process to modify it to an
orthonormal basis with respect to the $L^2$ metric which is induced
from one on $v_r^*TX$, where
\begin{equation}\label{vrzvrz}
v_r(z) = \hat v(z,rV_0(z),W_0(z)).
\end{equation}
Let ${\rm Quad}_{+}$ be {an open subset of a vector bundle over $\frak V$
such that the fiber of ${\rm Quad}_{+}$} at $(z,\frak v,\frak w)$ is the
positive definite quadratic form on $T_{u_1(z)}X$.
We define its section
$g$
as follows.
We  define
$
\mathcal{PP} : T_{u_1(z)}X \otimes \Lambda^{0,1}_z \to T_{\hat v(z,\frak v,\frak w)}X\otimes \Lambda^{0,1}_z
$
by
$$
\mathcal{PP} =
\left({\rm Pal}^{\hat v(z,\frak v,\frak w)}_{{\rm Exp}(u_1(z),\frak w)}\right)^{(0,1)}
\circ
\left({\rm Pal}_{u_1(z)}^{{\rm Exp}(u_1(z),\frak w)}\right)^{(0,1)}.
$$
Then
$$
g(z,\frak v,\frak w)(\vec v,\vec w)
=
g_{\hat v(z,\frak v,\frak w)}(\mathcal{PP}(\vec v),\mathcal{PP}(\vec w))).
$$
Here $\hat v$ is as in  (\ref{defofvinE}) and $g_{\hat v(z,\frak v,\frak w)}$ is the Riemann metric tensor at $\hat v(z,\frak v,\frak w) \in X$.
It is also obvious that $g$ is a smooth section.
\par
We define ${\bf e}_{i}(z,r)$ by induction on $i$ as follows.
Suppose ${\bf e}_{j}(z,r)$ is defined for $j<i$.
We put
\begin{equation}\label{formE92}
\aligned
{\bf e}''_{i}(z,r)
=
&{\bf e}'_{i}(z,r) \\
&- \sum_{j=1}^{i-1}\int_{z \in \Omega}
g(z,rV^0(z),W^0(z))
({\bf e}'_{i}(z,r),{\bf e}_{j}(z,r)) d{\rm vol}
\times {\bf e}_{j}(z,r),
\endaligned
\end{equation}
and
\begin{equation}\label{formE93}
{\bf e}_{i}(z,r)
= \frac{{\bf e}''_{i}(z,r)}
{\left(\int_{z \in \Omega} g(z,rV^0(z),W^0(z))
({\bf e}''_{i}(z,r),{\bf e}''_{i}(z,r))d{\rm vol}\right)^{1/2}}.
\end{equation}
Then ${\bf e}_{i}(z,r)$ ($i=1,\dots,\dim \mathcal E_1$) is the
orthonormal basis we look for.
\begin{rem}
Since {the} $L^2$ norm is not defined pointwise,
we can not write ${\bf e}_{i}(z,r)$ in a form
$\widehat{\bf e}_i(z,rV^0(z),W^0(z))$ with some smooth map
$\widehat{\bf e}_i(z,\frak v,\frak w)$.
In fact (\ref{formE92}) and (\ref{formE93}) involve
integration.
On the other hand, as far as smoothness in the sense of Sobolev space
concerns, integration behaves nicely.
\end{rem}
Now
the linear map
\begin{equation}
\aligned
&(({\rm Pal}^{u}_{u_1})^{(0,1)})^{-1}
\circ
(({\rm Pal}_{u}^{v_r})^{(0,1)})^{-1}
\circ \Pi^{\perp}_{\mathcal E_1(v_r)}  \circ ({\rm Pal}_{u}^{v_r})^{(0,1)}
\circ ({\rm Pal}_{u_1}^{u})^{(0,1)}
\\
&\qquad: \Gamma(\Omega;u_1^*TX \otimes \Lambda^{0,1})
\to \Gamma(\Omega;u_1^*TX \otimes \Lambda^{0,1})
\endaligned
\end{equation}
is written as
\begin{equation}\label{formnewQ12}
\aligned
\mathcal Q
\mapsto
\mathcal Q
-
\sum_{i=1}^{\dim \mathcal E_1}
\int_{z \in \Omega}
g(z,rV^0(z),W^0(z))
(\mathcal Q(z),{\bf e}_{i}(z,r)) d{\rm vol}\times {\bf e}_{i}(\cdot,r).
\endaligned
\end{equation}
We write the right hand side of (\ref{formnewQ12}) by
$$
H(r,\mathcal Q)(\cdot)
$$
where $\cdot \in \Omega$. Namely $(r,w) \mapsto H(r,\mathcal Q)(w)$ is a
section of the pullback of $u_1^*TX \otimes \Lambda^{0,1}$  to
$[0,\epsilon) \times \Omega$.
\par
By definition of $D_{u}\mathcal E_1$ in (\ref{DEidef}), we have
\begin{equation}\label{formE12}
\mathcal P ^{-1}(D_{u}\mathcal E_1)(\mathcal P Q,V)(w)
=
\left.\frac{d}{dr}\right\vert_{r=0}H(r,\overline Q)(w),
\end{equation}
where
$$
\overline Q = \left( ({\rm Pal}_{u_1}^u)^{(0,1)}\right)^{-1}(Q).
$$
(Note $Q(z) \in T_{u(z)}X \otimes \Lambda^{0,1}_z$.)
\par
Using (\ref{formE12}) we can show (\ref{form5505550ap}) as follows.
\par
Smoothness of ${\bf e}'_{i}$ and (\ref{forme'e'e'}) imply
\begin{equation}\label{formA45}
\left\Vert \frac{d}{dr}{\bf e}'_{i} \right\Vert_{L^2_{m}(\Omega)}
\le
C_{m,{\rm(\ref{formA45})}}\Vert  V^0\Vert_{L^2_{m}}.
\end{equation}
Then using (\ref{formE92}) and (\ref{formE93})
we can prove
\begin{equation}\label{formA46}
\left\Vert \frac{d}{dr}{\bf e}_{i} \right\Vert_{L^2_{m}(\Omega)}
\le
C_{m,{\rm(\ref{formA46})}}\Vert  V^0\Vert_{L^2_{m}}.
\end{equation}
Then using also (\ref{formnewQ12}) and (\ref{formE12})
we obtain (\ref{form5505550ap}).
\end{proof}
\begin{proof}[Estimate of the third term of (\ref{2155ff})]
Using \eqref{forme'e'e'}, \eqref{formE92}, \eqref{formE93} in the same way, we obtain

$$
\left\Vert \frac{d^2}{dr^2}{\bf e}_{i} \right\Vert_{L^2_{m}(\Omega)}
\le
C \Vert  V^0\Vert^2_{L^2_{m}}.
$$
Then using (\ref{formA46}) and (\ref{formnewQ12}) also we estimate
\begin{equation}\label{formA477}
\left\Vert\left.\frac{d^2}{dr^2}\right\vert_{r=r_0}H(r,\mathcal Q)
\right\Vert_{L^2_{m}}\le C_{m,{\rm(\ref{formA477})}}\Vert  V^0\Vert^2_{L^2_{m}}\Vert  \mathcal Q\Vert_{L^2_{m}}
\end{equation}
for any $\mathcal Q \in L^2_{m}(\Omega;u_1^*TX \otimes \Lambda^{0,1})$.
This gives the required estimate of the third term of (\ref{2155ff})
\end{proof}
\begin{proof}[Estimate of (\ref{eq18122})]
We put
$$
W_0 = {\rm E}(u_1,\hat u^{\rho}_{1,T,(0)}).
$$
Then using (\ref{formA46}) and (\ref{formE12})
we obtain
\begin{equation}\label{newnewA50}
\Vert (D_{\hat u^{\rho}_{1,T,(0)}}\mathcal E_1)(V,\mathcal Q)\Vert _{L^2_m}
\le
C_{m,{\rm (\ref{newnewA50})}} \Vert V \Vert_{L^2_m} \Vert\mathcal Q\Vert_{L^2_m}
\end{equation}
for any $V, \mathcal Q \in L^2_{m}(\Omega;(\hat u^{\rho}_{1,T,(0)})^*TX \otimes \Lambda^{01})$.

Using (\ref{eq181})
also we can estimate
\begin{equation}\label{eq1812200}
\aligned
\Big\Vert&(D_{\hat u^{\rho}_{1,T,(0)}}\mathcal E_1)
(({\rm Pal}_{u_1^{\frak{ob}}}
^{\hat u^{\rho}_{1,T,(0)}})^{0,1}((\frak {se})^{\rho} _{i,T,(\kappa-1)}),\mathcal Q)\\
&-
(D_{\hat u^{\rho}_{1,T,(0)}}\mathcal E_1)(
(({\rm Pal}_{u_1^{\frak{ob}}}
^{\hat u^{\rho}_{1,T,(0)}})^{(0,1)}(\frak e^{\rho} _{1,T,(0)}),\mathcal Q)
\Big\Vert_{L^2_{m}}
\le
C_{3,m}\frac{e^{-\delta_1 T}}{1-\mu}\Vert \mathcal Q\Vert_{L^2_{m}}
\endaligned
\end{equation}
for $L^2_{m}$ section $\mathcal Q$ of $(\hat u^{\rho}_{1,T,(0)})^*TX
\otimes \Lambda^{0,1}(\Omega)$.
\par
We put
$({\rm Pal}_{u_1^{\frak{ob}}}
^{\hat u^{\rho}_{1,T,(0)}})^{(0,1)}((\frak {se})^{\rho} _{i,T,(\kappa-1)})
= \frak {se}$.
\par
Then using (\ref{formA46}) to
$V_0 = {\rm E}(\hat u^{\rho}_{1,T,(0)},\hat u^{\rho}_{1,T,(\kappa-1)})$
and integrating along the curve
$r \mapsto {\rm Exp}(\hat u^{\rho}_{1,T,(0)},rV_0)$
we obtain
\begin{equation}\label{formA50}
\aligned
&
\Big\Vert
({\rm Pal}_{\hat u^{\rho}_{1,T,(0)}}^{\hat u^{\rho}_{1,T,(\kappa-1)}})^{(0,1)}
(D_{\hat u^{\rho}_{1,T,(0)}}\mathcal E_1)
(\frak {se},\mathcal Q)
 \\
&-
(D_{\hat u^{\rho}_{1,T,(\kappa-1)}}\mathcal E_1)
(({\rm Pal}_{\hat u^{\rho}_{1,T,(0)}}^{\hat u^{\rho}_{1,T,(\kappa-1)}})^{(0,1)}(\frak {se}),
(({\rm Pal}_{\hat u^{\rho}_{1,T,(0)}}^{\hat u^{\rho}_{1,T,(\kappa-1)}})^{(0,1)}(\mathcal Q))
\Big\Vert_{L^2_{m}}
\\
&
\le C_{m,{\rm(\ref{formA50})}}\Vert  V^0\Vert_{L^2_{m}}\Vert  \mathcal Q\Vert_{L^2_{m}}
\le
C'_{m,{\rm(\ref{formA50})}}e^{-\delta_1 T}\Vert  \mathcal Q\Vert_{L^2_{m}}.
\endaligned
\end{equation}
Note we use {the} induction hypothesis (\ref{form0184a}) here.
\par
In the same way as the proof of Lemma \ref{triangleholonomy}
we can show
\begin{equation}\label{formA51}
\aligned
&\Vert
(({\rm Pal}_{\hat u^{\rho}_{1,T,(0)}}^{\hat u^{\rho}_{1,T,(\kappa-1)}})^{(0,1)}
\circ
({\rm Pal}_{u_1^{\frak{ob}}}
^{\hat u^{\rho}_{1,T,(0)}})^{(0,1)}
-
({\rm Pal}_{u_1^{\frak{ob}}}
^{\hat u^{\rho}_{1,T,(\kappa-1)}})^{(0,1)})(\frak f)
\Vert_{L^2_m}\\
&\le
C_{m,{\rm(\ref{formA51})}}\Vert {\rm E}(\hat u^{\rho}_{1,T,(0)},\hat u^{\rho}_{1,T,(\kappa-1)})\Vert_{L^2_m}
\Vert \frak f\Vert_{L^2_m}
\le
C'_{m,{\rm(\ref{formA51})}} e^{-\delta_1 T}\Vert \frak f\Vert_{L^2_m}
\endaligned
\end{equation}
and
\begin{equation}\label{formA52}
\aligned
&\Vert
(({\rm Pal}_{\hat u^{\rho}_{1,T,(0)}}^{\hat u^{\rho}_{1,T,(\kappa-1)}})^{(0,1)}
\circ
({\rm Pal}_{u_1}
^{\hat u^{\rho}_{1,T,(0)}})^{(0,1)}
-
({\rm Pal}_{u_1}
^{\hat u^{\rho}_{1,T,(\kappa-1)}}))^{(0,1)}(Y)
\Vert_{L^2_m}\\
&\le
C_{m,{\rm(\ref{formA52})}}\Vert {\rm E}(\hat u^{\rho}_{1,T,(0)},\hat u^{\rho}_{1,T,(\kappa-1)})\Vert_{L^2_m}
\Vert Y\Vert_{L^2_m}
\le
C'_{m,{\rm(\ref{formA52})}} e^{-\delta_1 T}\Vert  Y\Vert_{L^2_m}.
\endaligned
\end{equation}
Combining (\ref{newnewA50})-(\ref{formA52})
we obtain the required estimate:
\begin{equation}\label{formA53}
\aligned
\Big\Vert&(D_{\hat u^{\rho}_{1,T,(\kappa-1)}}\mathcal E_1)
(({\rm Pal}_{u_1^{\frak{ob}}}
^{\hat u^{\rho}_{1,T,(\kappa-1)}})^{(0,1)}((\frak {se})^{\rho} _{i,T,(\kappa-1)}),
(({\rm Pal}_{u_1}
^{\hat u^{\rho}_{1,T,(\kappa-1)}})^{(0,1)}(Y))\\
&-
({\rm Pal}_{\hat u^{\rho}_{1,T,(0)}}^{\hat u^{\rho}_{1,T,(\kappa-1)}})^{(0,1)}(D_{\hat u^{\rho}_{1,T,(0)}}\mathcal E_1)(
({\rm Pal}_{u_1^{\frak{ob}}}
^{\hat u^{\rho}_{1,T,(0)}}(\frak e^{\rho} _{1,T,(0)}),(({\rm Pal}_{u_1}
^{\hat u^{\rho}_{1,T,(0)}})^{0,1}(Y))
\Big\Vert_{L^2_{m}}\\
&\le
C_{m,{\rm(\ref{formA53})}}e^{-\delta_1 T}\Vert Y\Vert_{L^2_{m}}.
\endaligned
\end{equation}
\end{proof}

\section{Estimate of the non-linearity of Exponential map}
\label{appendixB-}

We use the next lemma for the proof of Proposition \ref{prop:inequalitieskappa} (3).

\begin{lem}\label{expest}
Let $\mathscr S$ be a 2 dimensional manifold and $u : \mathscr S \to X$ a smooth map.
Let $v : \mathscr S \to X$ be a map of  $L^2_{m+1}$ class with $m > 2$.
We assume $d(u(z),v(z))$ is smaller than $\iota_X/2$ for all $z \in \mathscr S$.
We define a map
$$
\mathcal{EP}_v : L^2_{m+1}(\mathscr S,u^*TX) \to L^2_{m+1}(\mathscr S,u^*TX)
$$
by the next formula
\begin{equation}
\mathcal{EP}_v(V)(z) = {\rm E}\left(u(z),{\rm Exp}(v(z),{\rm Pal}_{u(z)}^{v(z)}(V(z)))\right).
\end{equation}
We assume
$
\Vert V\Vert_{L^2_{m+1}},  \Vert {\rm E}(u,v)\Vert_{L^2_{m+1}} \le 1.
$
Here ${\rm E}(u,v)(z) = {\rm E}(u(z),v(z))$.
Then
\begin{equation}\label{formA554}
\Vert\mathcal{EP}_v(V)  - {\rm E}(u,v) - V \Vert_{L^2_{m+1}}\le
C_{m,{\rm(\ref{formA554})}}\Vert {\rm E}(u,v)\Vert_{L^2_{m+1}}\Vert V\Vert_{L^2_{m+1}}.
\end{equation}
\end{lem}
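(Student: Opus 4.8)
The plan is to reduce everything to a pointwise statement about the smooth map
$$
F(z,\frak v,\frak w) = {\rm E}\left(u(z),{\rm Exp}(v(z),{\rm Pal}_{u(z)}^{v(z)}(\frak v))\right)
$$
and then apply the standard fact, recalled in the introduction of the excerpt, that composition with a fixed smooth fibre map induces a smooth (in particular Lipschitz on bounded sets) map on $L^2_{m+1}$ for $m$ large. First I would fix a local trivialisation of $u^*TX$ over a coordinate patch $\Omega \subset \mathscr S$, as in Appendix \ref{appendixA}, so that $V$, ${\rm E}(u,v)$ become $\R^n$-valued functions $\frak v = \frak v(z)$, $\frak w = \frak w(z)$, and $\mathcal{EP}_v(V)(z)$ becomes $F(z,\frak v(z),\frak w(z))$ for a fixed smooth $F$ defined on $\Omega \times D^n(R) \times D^n(R)$ (here $\frak w$ encodes $v$ through ${\rm E}(u,v)$, exactly the role of $W^0$ in Lemma \ref{lemA1}). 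The key structural observation is that $F(z,\frak v,0) = \frak v$ and $F(z,0,\frak w) = \frak w$: the first because ${\rm Pal}_{u(z)}^{v(z)}$ followed by ${\rm E}(u(z),\cdot)$ when $v=u$ is the identity composed trivially — more precisely when $\frak w = 0$ we have $v(z)=u(z)$ and ${\rm Exp}(u(z),{\rm Pal}_{u(z)}^{u(z)}\frak v) = {\rm Exp}(u(z),\frak v)$ so $F = \frak v$; the second because $\frak v = 0$ gives ${\rm Exp}(v(z),0)=v(z)$ so $F = {\rm E}(u,v) = \frak w$.

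Next I would Taylor-expand $F$ in $(\frak v,\frak w)$ around $(0,0)$. Writing $G(z,\frak v,\frak w) = F(z,\frak v,\frak w) - \frak v - \frak w$, the two vanishing properties give $G(z,\frak v,0) = 0$ and $G(z,0,\frak w) = 0$ identically in $z,\frak v,\frak w$. Hence $G$ vanishes on the union of the two coordinate hyperplanes $\{\frak v = 0\}$ and $\{\frak w = 0\}$, and by a standard application of Hadamard's lemma (twice) there is a smooth $\R^n$-valued function $\frak R(z,\frak v,\frak w)$, depending only on $\Omega$, $u$, $v$, and $X$, with
$$
G(z,\frak v,\frak w) = \sum_{i,j} \frak v_i \frak w_j\, \frak R_{ij}(z,\frak v,\frak w).
$$
This is the exact analogue of the factorisations in Lemmas \ref{lemA1} and \ref{lemB2}: the error is genuinely \emph{bilinear} in the two small quantities. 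Then I would plug in $\frak v = V(z)$, $\frak w = {\rm E}(u,v)(z)$ and estimate the $L^2_{m+1}$ norm of $\sum_{ij} V_i\, {\rm E}(u,v)_j\, \frak R_{ij}(\cdot,V,{\rm E}(u,v))$ using: (i) the multiplicative property of $L^2_{m+1}$ for $m>2$ (Sobolev, so $L^2_{m+1}$ is a Banach algebra on a surface), (ii) the fact that $\frak R$ composed with the bounded maps $V$, ${\rm E}(u,v)$ lands in a bounded set of $L^2_{m+1}$ with norm controlled by the hypothesis $\Vert V\Vert_{L^2_{m+1}},\Vert {\rm E}(u,v)\Vert_{L^2_{m+1}}\le 1$, and (iii) the algebra norm estimate $\Vert fgh\Vert \le C\Vert f\Vert\Vert g\Vert\Vert h\Vert$. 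Summing over the finitely many patches $\Omega$ covering $\supp$ of the relevant data (or over a fixed finite atlas of $\mathscr S$ if $\mathscr S$ is compact; the statement is local so one just works patch by patch) yields (\ref{formA554}) with $C_{m,(\ref{formA554})}$ depending on $m$, the geometry of $(X,g,J)$, and the chosen atlas.

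The routine parts are the Sobolev multiplication estimates and the bookkeeping of the patching, which I would not write out in detail — they are exactly of the type handled in Appendices \ref{appendixA} and \ref{appendixA2}. The one point that needs genuine care, and which I would regard as the main obstacle, is verifying cleanly that $G$ vanishes on \emph{both} hyperplanes $\{\frak v=0\}$ and $\{\frak w=0\}$ and that the Hadamard factorisation can be organised so that the remainder $\frak R$ is \emph{smooth} (not merely continuous) and independent of the particular functions $V,v$ — i.e. that it depends only on the ambient geometry. This requires being slightly careful that $v$ enters only through its encoding as $\frak w = {\rm E}(u,v)$, so that the fibre map $F$ itself does not depend on $v$; once that is set up correctly, smoothness of $F$ follows from smoothness of ${\rm Exp}$, ${\rm E}$, and parallel transport on the compact region where all distances are below $\iota_X/2$, and the factorisation is automatic. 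I expect the whole proof to be about one page.
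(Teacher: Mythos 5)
Your proposal is correct and takes essentially the same route as the paper's proof: the paper also reduces to the fixed smooth fibre map $F(V,W)={\rm E}(x,{\rm Exp}({\rm Exp}(x,W),{\rm Pal}_x^{{\rm Exp}(x,W)}(V)))$, observes $F(V,0)=V$ and $F(0,W)=W$, and extracts the bilinear remainder via the double Fundamental-Theorem-of-Calculus identity $F(V,W)-F(V,0)-F(0,W)=\int_0^1\!\int_0^1\partial_r\partial_t F(tV,rW)\,dr\,dt$, which is precisely your ``Hadamard twice'' factorization in explicit form. The only cosmetic difference is that the paper defines $F$ globally as a bundle map on a neighborhood of the zero section of $TX\oplus TX$ rather than patching local trivializations; everything else, including the Sobolev-algebra estimate for $m>2$, matches.
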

\begin{rem}
In the simplified notation mentioned in Remark \ref{rem2222}, we have
$
\mathcal{EP}_v(V) = (v-u)+V.
$
This way of writing is a bit confusing here since
(\ref{formA554}) implies that
this `equality' holds modulo quadratic order term.
\end{rem}
\begin{proof}
Let
$$
\Omega = \{(V,W) \in TX\oplus TX \mid \vert V\vert, \vert W\vert \le \iota'_X/10 \}.
$$
We define a map $F : \Omega \to TX$ as follows. Let $(V,W) \in T_xX \oplus T_xX$.
We put $y = {\rm Exp}(x,W)$ and
$$
F(V,W) = {\rm E}\left(x,{\rm Exp}(y,{\rm Pal}_{x}^{y}(V))\right).
$$
This map is obviously smooth. So by the compactness of $\Omega$ its
$C^k$ norm is uniformly bounded for any $k$.
Moreover we have
$$
F(0,0) = 0, \qquad F(V,0) = V, \qquad F(0,W) = W.
$$
We observe
$$
\mathcal{EP}_v(V)(z) = F(V(z),{\rm E}(u(z),v(z))).
$$
Therefore the lemma follows from the standard fact that the left composition with smooth map
defines a smooth map between $L^2_{m+1}$ spaces.
In fact
\begin{equation}\label{newE3new}
\aligned
\mathcal{EP}_v(V) - {\rm E}(u,v) -V
&=
F(V,{\rm E}(u,v)) - F(V,0) - F(0,{\rm E}(u,v))\\
&=
\int_{0}^1 \frac{\partial}{\partial r} (F(V,r{\rm E}(u,v)) - F(0,r{\rm E}(u,v)))dr
\\
&=
\int_{0}^1\int_0^1 \frac{\partial^2}{\partial r\partial t} F(tV,r{\rm E}(u,v)) drdt.
\endaligned
\end{equation}
Therefore we can estimate
$\mathcal{EP}_v(V) - {\rm E}(u,v) -V$ as (\ref{formA554}),
by using
\begin{equation}\label{newnewE4}
\frac{\partial^2}{\partial r\partial t} (F(tV,r{\rm E}(u,v))
=
\sum_{a,b} V_a {\rm E}_b(u,v)
G_{a,b}(r,t,tV,r{\rm E}(u,v)),
\end{equation}
where $V_a$, ${\rm E}_b(u,v)$ are components of $V$, ${\rm E}(u,v)$,
respectively, 
and $G_{a,b}$ are smooth.
\end{proof}
The following variant of Lemma \ref{expest} can be proved in the same way.
\begin{lem}\label{expest2}
In the situation of Lemma \ref{expest} we assume $v= v(s)$, $V = V(s)$ are  families of $C^{\ell}$ class parameterized by
$s \in \mathcal S$ for some parameter space $\mathcal S \subset \R^N$.
We assume
$
\left\Vert \frac{d^{n'}}{ds^{n'}}V \right\Vert_{L^2_{m+1}},
\left\Vert \frac{d^{n'}}{ds^{n'}}{\rm E}(u,v) \right\Vert_{L^2_{m+1}} \le 1
$
for $n'\le n$.
Then
\begin{equation}\label{A56form}
\aligned
&\left\Vert \frac{d^n}{ds^{n}} \left(\mathcal{EP}_{v(s)}(V(s))- {\rm E}(u,v(s))
- V(s) \right)\right\Vert_{L^2_{m+1}}
\\
&\le
C_{m,{\rm(\ref{A56form})}} \sum_{k_1+k_2\le n}
\left\Vert \frac{d^{k_1}}{ds^{k_1}}V(s)\right\Vert_{L^2_{m+1}}
\left\Vert \frac{d^{k_2}}{ds^{k_2}}{\rm E}(u,v(s))\right\Vert_{L^2_{m+1}}.
\endaligned
\end{equation}
\end{lem}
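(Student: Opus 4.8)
The plan is to run the argument of Lemma~\ref{expest} with one extra layer of the Leibniz and chain rules to absorb the parameter $s$. First I would recall from the proof of Lemma~\ref{expest} the smooth map $F : \Omega \to TX$, where $\Omega = \{(V,W) \in TX\oplus TX \mid |V|,|W| \le \iota'_X/10\}$, together with the vanishing relations $F(0,0)=0$, $F(V,0)=V$, $F(0,W)=W$ and the pointwise identity
\begin{equation}
\mathcal{EP}_{v(s)}(V(s))(z) = F\bigl(V(s)(z),\,{\rm E}(u(z),v(s)(z))\bigr).
\end{equation}
Writing $W(s) = {\rm E}(u,v(s))$ for brevity, the double-integral identity \eqref{newE3new} of that proof gives, for each fixed $s$,
\begin{equation}
\mathcal{EP}_{v(s)}(V(s)) - W(s) - V(s) = \int_0^1 \int_0^1 \frac{\partial^2}{\partial r\,\partial t} F\bigl(tV(s),\,rW(s)\bigr)\,dr\,dt,
\end{equation}
and by \eqref{newnewE4} the integrand equals $\sum_{a,b} (V(s))_a\,(W(s))_b\,G_{a,b}\bigl(r,t,tV(s),rW(s)\bigr)$ with the $G_{a,b}$ smooth, the subscripts denoting components in the fixed trivialization of $u^*TX$.

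Next I would differentiate this identity $n$ times in $s$ under the $r,t$ integrals and expand by the Leibniz rule, so that each $s$-derivative lands on $(V(s))_a$, on $(W(s))_b$, or on the composite $G_{a,b}(r,t,tV(s),rW(s))$. For the last of these, Fa\`a di Bruno expresses the $s$-derivatives of order $\le n$ as universal polynomials in $r,t$ and in the derivatives $\frac{d^{j}}{ds^{j}}V(s)$, $\frac{d^{j}}{ds^{j}}W(s)$ with $j\le n$, with coefficients given by partial derivatives of $G_{a,b}$ evaluated along $(r,t,tV(s),rW(s))$. Since $\Omega$ is compact, the $G_{a,b}$ and all their partial derivatives are uniformly bounded smooth functions on $\Omega$, so composition with them, together with pointwise products, is a bounded operation on $L^2_{m+1}(\mathscr S)$: here $\dim\mathscr S = 2$ and $m>2$, so $L^2_{m+1}$ is a Banach algebra acting on itself. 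Hence every monomial in the expansion is estimated in $L^2_{m+1}$ by a product of $L^2_{m+1}$-norms of $\frac{d^{k}}{ds^{k}}V(s)$ and $\frac{d^{k}}{ds^{k}}W(s)$ with $k\le n$; by the normalization hypotheses all such norms are $\le 1$, so each monomial is bounded by the two distinguished factors it must contain, namely one $\frac{d^{k_1}}{ds^{k_1}}V(s)$ descending from $(V(s))_a$ and one $\frac{d^{k_2}}{ds^{k_2}}W(s)$ descending from $(W(s))_b$, with $k_1+k_2\le n$. Summing the finitely many terms yields \eqref{A56form}.

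The only nontrivial point, and the one I would spell out with care, is the bookkeeping: one must check that no $s$-regularity is lost (the expression being differentiated depends smoothly on the finitely many $s$-derivatives of $V$ and $W$ of order $\le n$, so $\frac{d^n}{ds^n}$ can produce only such derivatives), and that in every term of the Leibniz/Fa\`a di Bruno expansion one can always pull out one $V$-derivative factor and one $W$-derivative factor with orders summing to at most $n$, which is exactly what produces the product structure on the right-hand side of \eqref{A56form}. This extraction works for the same reason as in Lemma~\ref{expest}: $G_{a,b}$ carries no constant obstruction of its own, so only the vanishing of $F$ on the two coordinate axes is used, and that has already been built into the integrand above.
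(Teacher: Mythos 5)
Your proof is correct and is precisely the argument the paper intends: the paper's own proof of Lemma \ref{expest2} is the one-line remark that one should take $s$-derivatives of the identities \eqref{newE3new} and \eqref{newnewE4}, and you have simply spelled out the Leibniz/Fa\`a di Bruno bookkeeping that makes this work, including the key observation that every term in the expansion of $\frac{d^n}{ds^n}\bigl(V_a W_b G_{a,b}\bigr)$ retains a distinguished $V$-derivative factor and a distinguished $W$-derivative factor with orders summing to at most $n$, while all further factors produced by the chain rule are absorbed using the normalization hypothesis and the Banach-algebra property of $L^2_{m+1}$ in dimension two.
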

\begin{proof}
The lemma follows easily by taking $s$ differentiation of (\ref{newE3new}) and (\ref{newnewE4}).
\end{proof}

We can prove Proposition  \ref{prop:kappatokappa+1} (3) by using Lemma \ref{expest2}.
Actually Proposition  \ref{prop:kappatokappa+1} (3) follows from (\ref{form184})
in the same way as the proof of Lemma \ref{Lemma6.9}, that is, by integration over $T$.

\begin{proof}[Proof of Proposition \ref{prop:inequalitieskappa} (3)]
Using the simplified notation as in Remark \ref{rem2222}
we have
$$
u^{\rho}_{T,(\kappa)}
``=\text{''}
u^{\rho_1} + \sum_{j=0}^{\kappa} V_{T,1,(j)}^{\rho}
$$
on $K_1$.
Therefore
$$
\aligned
\Vert u^{\rho}_{T,(\kappa)} - u^{\rho}_1\Vert_{L^2_{m+1}}
&``\le\text{''}
\sum_{j=0}^{\kappa} \Vert V_{T,1,(j)}^{\rho} \Vert_{L^2_{m+1}}
\\
&``\le\text{''}
C_{5,m} \sum_{j=0}^{\kappa} \mu^{j-1} e^{-\delta_1 T}
\le
C_{5,m}  \mu^{-1}\frac{1 - \mu^{\kappa+1}}{1-\mu} e^{-\delta_1 T}.
\endaligned
$$
This would prove (\ref{form184}) on $K_1$.
However here the above notation $+$ is rather imprecise
and we need to use exponential map and parallel transport.  In other
words we
need to estimate the effect of the nonlinearity of the exponential map to
work out the above inequality.
We use Lemma \ref{expest2} for this purpose.
The above inequality suggests that $C_{6,m}$
depends on $C_{5,m}$.
\par
In the proof of  Proposition \ref{prop:inequalitieskappa} (3),
we need to decide $C_{6,m}$ and $C_{7,m}$
depending on $C_{5,m}$, $C_{8,m}$, $C_{9,m}$.
So the dependence of the constants appearing
during the proof on $C_{5,m}$, $C_{6,m}$, $C_{7,m}$, $C_{8,m}$, $C_{9,m}$
need to be carefully examined.
\par
Hereafter during the proof of Proposition \ref{prop:inequalitieskappa} (3)
we use the notation of the constants $C_{m,*}$
and $D_{m,*}$ as follows. (Here $*$ is the number of the formula where
the constants appear.)
$C_{m,*}$ is a constant depending on all of
$C_{5,m}$, $C_{6,m}$, $C_{7,m}$, $C_{8,m}$, $C_{9,m}$ and on $m$.
$D_{m,*}$ is a constant depending $C_{5,m}$, $C_{8,m}$, $C_{9,m}$, $m$
{\it but is independent of $C_{6,m}$ and $C_{7,m}$}.
$T_{m,*}$ is a constant depending on all of
$C_{5,m}$, $C_{6,m}$, $C_{7,m}$, $C_{8,m}$, $C_{9,m}$ and on $m$.
\par\smallskip
We now start the proof.
{During the proof we assume $m-2 \ge n,\ell \ge 0$. 
.}
\par
On $K_1$ we have
$
u^{\rho}_{T,(\kappa)} = \Exp(u^{\rho}_{T,(\kappa-1)},V^{\rho}_{T,1,(\kappa)}).
$
Therefore by (\ref{form182}) for $\le \kappa$, (\ref{form184})
for $\le \kappa -1$,
and Lemma \ref{expest2} we have
\begin{equation}\label{eqE41}
\aligned
&\left\| \nabla_{\rho}^n \frac{\partial^{\ell}}{\partial T^{\ell}}
{\mathscr P_{1,\rho_1}{\rm E}(u^{\rho}_1,u^{\rho}_{T,(\kappa)})}\right\|_{L^2_{m+1-\ell,\delta}(K_1)}\\
&<
\left\| \nabla_{\rho}^n \frac{\partial^{\ell}}{\partial T^{\ell}}
{\mathscr P_{1,\rho_1}{\rm E}(u^{\rho}_1,u^{\rho}_{T,(\kappa-1)})}\right\|_{L^2_{m+1-\ell,\delta}(K_1)}\\
&\quad + C_{5,m}\mu^{\kappa-1}e^{-\delta_1 T} +
C_{m,{\rm(\ref{eqE41})}}\mu^{\kappa-1}e^{-2\delta_1 T}.
\endaligned
\end{equation}
Taking $T_{m,{\rm(\ref{form1842})}}$ so that $C_{m,{\rm(\ref{eqE41})}}e^{-\delta_1
T_{m,{\rm(\ref{form1842})}}} \le 1$
we have
\begin{equation}\label{form1842}
\aligned
&\text{LHS of (\ref{eqE41})} \\
&\le \left\| \nabla_{\rho}^n \frac{\partial^{\ell}}{\partial T^{\ell}}
{\mathscr P_{1,\rho_i}{\rm E}(u^{\rho}_1,u^{\rho}_{T,(\kappa-1)})}\right\|_{L^2_{m+1-\ell,\delta}(K_1)}
+ D_{m,{\rm(\ref{form1842})}}\mu^{\kappa-1}e^{-\delta_1 T}
\endaligned
\end{equation}
for $T \ge T_{m,{\rm(\ref{form1842})}}$.
{Here $\mathscr P_{1,\rho_i}$ is defined in (\ref{mathscrPform}).}
\par
We next study at the neck region.
We consider $m -2\ge n,\ell \ge 0$.
(The case $\ell = 0$ is included.)
We put
\begin{equation}
U^{\rho}_{T,i,(\kappa)}(\tau,t)
= V^{\rho}_{T,i,(\kappa)}(\tau,t) -  (\Delta p^{\rho}_{T,(\kappa)})^{\rm Pal}
\end{equation}
and
\begin{equation}
U(\tau',t) = U^{\rho}_{T,1,(\kappa)}(\tau',t)  + \chi(\tau'-4T)
U^{\rho}_{T,2,(\kappa)}(\tau'-10T,t).
\end{equation}
On $(\tau',t) \in [0,5T+1]_{\tau'} \times [0,1]$, we have
\begin{equation}\label{formA60}
\aligned
u^{\rho}_{T,(\kappa)}(\tau',t)
&=\Exp\Big(u^{\rho}_{T,(\kappa-1)}(\tau',t), V^{\rho}_{T,1,(\kappa)}(\tau',t) \\
&\qquad\quad+ \chi(\tau'-4T)(U^{\rho}_{T,2,(\kappa)}(\tau'-10T,t)\Big) \\
&=\Exp\Big(u^{\rho}_{T,(\kappa-1)}(\tau',t), U(\tau',t)+ (\Delta p^{\rho}_{T,(\kappa)})^{\rm Pal}).
\endaligned
\end{equation}

Suppose $[S',S'+1]_{\tau'} \subseteq  [0,5T+1]_{\tau'} \times [0,1]$.
We put
$$
\frak P_i
= \Big({\rm Pal}_{u_i}^{u^{\rho}_{T,(\kappa-1)}}\Big)^{-1}
= {\rm Pal}^{u_i}_{u^{\rho}_{T,(\kappa-1)}}
$$
and
$$
\Sigma(S') = [S',S'+1]_{\tau'} \times [0,1] \subset [0,5T+1]_{\tau'} \times [0,1].
$$
When we use {the} $\tau'$ (resp. $\tau''$) coordinate
we write $\Sigma(S')_{\tau'}$ (resp. $\Sigma(S')_{\tau''}$).
Using (\ref{1106}) we can estimate
\begin{equation}\label{newD5form}
\aligned
 &\left\Vert
 \nabla_{\rho}^n \frac{\partial^{\ell}}{\partial T^{\ell}}
 \frak P_1  U(\tau',t)\right\Vert_{L^2_{m+1-\ell}(\Sigma(S')_{\tau'})}
\\
&\le
\sum_{l=1}^{\ell}\sum_{n'=0}^{n}
D_{m,{\rm(\ref{newD5form})}} \left(\left\Vert
 \left(
 \nabla_{\rho}^{n'} \frac{\partial^{l}}{\partial T^{l}}\right)(\frak P_1 U^{\rho}_{T,1,(\kappa)}(\tau',t))
 \right\Vert_{L^2_{m-l+1}(\Sigma(S')_{\tau'})}
 \right.
 \\
& \qquad\qquad\qquad\qquad+
\left.
\left\Vert
 \left(
 \nabla_{\rho}^{n'} \frac{\partial^{l}}{\partial T^{l}}\right)(\frak P_1 U^{\rho}_{T,2,(\kappa)}(\tau'',t))
 \right\Vert_{L^2_{m+1}(\Sigma(S')_{\tau''})}
 \right)
\endaligned
\end{equation}
\par
We remark that the second term of (\ref{newD5form})
drops if $S' \le T$. (This is because $\chi(\tau'-4T) = 0$ on $\Sigma(S')_{\tau'}$ in that case.)
\par
By (\ref{form182}) we have
\begin{equation}\label{formE9E9}
\sum_{S'=-5T}^0\left\Vert\left(
 \nabla_{\rho}^{n'} \frac{\partial^{l}}{\partial T^{l}}\right)(\frak P_i U^{\rho}_{T,i,(\kappa)}(\tau',t))
\right\Vert_{L^2_{m-l+1,\delta}(\Sigma(S')_{\tau'})} \le
C_{5,m}  \mu^{\kappa-1} e^{-\delta_1 T},
\end{equation}
for $i=1,2$.
In case $i=2$ we need to replace $\frak P_2$ by $\frak P_1$ to apply (\ref{formE9E9})
to estimate (\ref{newD5form}).
\begin{sublem}\label{sublemE4}
We have
\begin{equation}\label{formE1121}
\aligned
&
\left\Vert
 \left(
 \nabla_{\rho}^{n'} \frac{\partial^{l}}{\partial T^{l}}\right)(\frak P_1 U^{\rho}_{T,2,(\kappa)}(\tau'',t))
 \right\Vert_{L^2_{m+1-l}(\Sigma(S')_{\tau''})} \\
&\le
D_{m,{\rm(\ref{formE1121})}}  \sum_{l'\le l}\sum_{n''\le n'}
\left\Vert
 \left(
 \nabla_{\rho}^{n''} \frac{\partial^{l'}}{\partial T^{l'}}\right)(\frak P_2 U^{\rho}_{T,2,(\kappa)}(\tau'',t))
 \right\Vert_{L^2_{m+1-l'}(\Sigma(S')_{\tau''})}
\endaligned
\end{equation}
if $T > T_{m,{\rm(\ref{formE1121})}}$ and $S' \ge T$.
\end{sublem}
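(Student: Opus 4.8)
The plan is to reduce \eqref{formE1121} to a comparison between the two parallel transports $\frak P_1={\rm Pal}^{u_1}_{u^{\rho}_{T,(\kappa-1)}}$ and $\frak P_2={\rm Pal}^{u_2}_{u^{\rho}_{T,(\kappa-1)}}$ on the region $\Sigma(S')_{\tau''}$ where the estimate is claimed. First I would record the geometry there: when $S'\ge T$, in the $\tau''$-coordinate this piece lies in $\tau''\in[-9T,-5T+1]$, so it sits inside the neck $\Sigma_1\setminus K_1$ (at $\tau'=S'\in[T,5T+1]$) and simultaneously inside the neck $\Sigma_2\setminus K_2$; hence all three maps $u_1|$, $u_2|$ and $u^{\rho}_{T,(\kappa-1)}|$ are defined there. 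By Lemma \ref{lem:Ckexpdecay}, Lemma \ref{Lemma6.9} and \eqref{form0184a} these maps are pairwise within distance $\iota'_X/2$ and are uniformly $C^m$-bounded (for the fixed maps $u_1,u_2$ this is trivial; for $u^{\rho}_{T,(\kappa-1)}$ it is \eqref{form616nwe}, which compares it with the fixed limit curve $u_1\cup u_2$), and all of their $\nabla_\rho^{n'}\partial_T^{l'}$-derivatives with $n'\le n$, $l'\le l$ are bounded in $C^m$ — for $u^{\rho}_{T,(\kappa-1)}$ by the induction hypotheses \eqref{form184}, \eqref{form185} written in the $\tau''$-coordinate, where, crucially, no loss of differentiability occurs because the $T$-shift $\tau'\mapsto\tau'-10T$ has already been extracted in \eqref{newD5form} via \eqref{1106} and on the $\tau''$-side only $u^{\rho}_{T,(\kappa-1)}$ carries $T$-dependence.

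Next I would introduce the comparison operator $\frak Q:=\frak P_1\circ\frak P_2^{-1}={\rm Pal}^{u_1}_{u^{\rho}_{T,(\kappa-1)}}\circ{\rm Pal}^{u^{\rho}_{T,(\kappa-1)}}_{u_2}$, a bundle isomorphism $u_2^*TX\to u_1^*TX$ over $\Sigma(S')_{\tau''}$, so that $\frak P_1 U^{\rho}_{T,2,(\kappa)}=\frak Q\bigl(\frak P_2 U^{\rho}_{T,2,(\kappa)}\bigr)$. Pointwise $\frak Q$ is obtained by evaluating the smooth parallel-transport section $\frak P$ of the bundle $\frak L$ from Appendix \ref{appendixA2} at the triple of values of $(u_1,u_2,u^{\rho}_{T,(\kappa-1)})$; combining the smoothness of the assignment $u\mapsto F(u)$ between Sobolev spaces (as used throughout the paper) with the $C^m$- and derivative-bounds of the previous paragraph, one gets that $\frak Q$ is of class $C^m$ and $\bigl\|\nabla_\rho^{n'}\partial_T^{l'}\frak Q\bigr\|_{C^m}\le D_m$ uniformly in $T$ (large), $\rho$, $\kappa$, $S'$, for $n'\le n$, $l'\le l$; the constant is of $D$-type because the only non-fixed input, $u^{\rho}_{T,(\kappa-1)}$, enters $\frak Q$ with $C^m$-norm bounded by an absolute constant and with higher $(T,\rho)$-derivatives that are already exponentially small, so the level-$(\kappa-1)$ constants $C_{6,m},C_{7,m}$ can be absorbed. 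Finally I would expand $\nabla_\rho^{n'}\partial_T^{l}\bigl(\frak Q\,(\frak P_2 U^{\rho}_{T,2,(\kappa)})\bigr)$ by the Leibniz rule: each summand is a product of a factor $\nabla_\rho^{n_1}\partial_T^{l_1}\frak Q$ (with $l_1\le l$, $n_1\le n'$), which by the above is bounded in $C^m$ and hence acts as a bounded multiplication operator on $L^2_{k}$ for every $k\le m$, in particular on $L^2_{m+1-l'}$ with $l'=l-l_1$, and a factor $\nabla_\rho^{n_2}\partial_T^{l'}\bigl(\frak P_2 U^{\rho}_{T,2,(\kappa)}\bigr)$; summing over the finitely many multi-indices produces exactly the right-hand side of \eqref{formE1121}.

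The main obstacle will be the bookkeeping around $\frak Q$: one must verify cleanly that $\frak Q$ and all its $(T,\rho)$-derivatives on this region — which grows in length like $T$ while shrinking in ratio — are bounded by $D$-type constants, i.e. independent of $C_{6,m}$ and $C_{7,m}$. This is where one combines the exponential-decay control of $u^{\rho}_{T,(\kappa-1)}$ (and its derivatives) with composition estimates for exponential maps and parallel transports, exactly in the spirit of Lemma \ref{lemB2} in Appendix \ref{appendixA2}; everything else is routine Leibniz-rule and Sobolev-multiplication bookkeeping. Once Sublemma \ref{sublemE4} is in hand, it feeds back into \eqref{newD5form} to yield \eqref{form184} (and, by the analogous argument, \eqref{form185}) for $\kappa$, completing Proposition \ref{prop:inequalitieskappa}(3).
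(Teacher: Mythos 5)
Your proposal is essentially the same strategy as the paper's: write $\frak P_1 U = \frak Q\,(\frak P_2 U)$ with the comparison transport $\frak Q = \frak P_1\circ\frak P_2^{-1}$, apply the Leibniz rule, and use that $(T,\rho)$-derivatives of $\frak Q$ are exponentially small (so that level-$(\kappa-1)$ constants $C_{6,m},C_{7,m}$ can be absorbed by the choice of $T_m$). The one point where your sketch is less precise than the paper is exactly the one you flag as ``the main obstacle'': you assert that $\|\frak Q\|_{C^m}$ is bounded by an absolute ($D$-type) constant, citing Lemma \ref{Lemma6.9}/\eqref{form616nwe}; but the constant in \eqref{form616nwe} is produced by integrating the $\ell=1$ case of \eqref{form184}, whose coefficient is $C_{6,m}$, so naively it is $C$-type, and if that $C$-type factor multiplies the zeroth-order Leibniz term (the one with $n_1=l_1=0$) you would end up with a $C$-type constant in the conclusion, defeating the purpose. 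The paper resolves this by not estimating $\frak Q$ directly: it fixes $\rho_0'$ and introduces the genuinely $T,\rho$-independent transports $\frak P''_i = {\rm Pal}^{u_i}_{p^{\rho_0'}}$, then writes $\frak Q = \frak P''_1(\frak P''_2)^{-1} + (\frak Q-\frak P''_1(\frak P''_2)^{-1})$. The first summand trivially has a $D$-type bound and commutes with $\nabla_\rho^n\partial_T^\ell$ (inequality \eqref{newformE19}), and the second is $\le C_m e^{-\delta_1 T}$ by Lemma \ref{lemB2} applied to the exponentially-small displacements ${\rm E}(p_0^{\rho_0'},u^{\rho}_{T,(\kappa-1)})$ (inequalities \eqref{formE18}, \eqref{formE20221}). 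Taking $T_m$ so that $C_m e^{-\delta_1 T_m}\le 1$ then converts the whole bound into a $D$-type one, which is \eqref{formE11210}. So your outline is correct, but to close it rigorously you must perform a decomposition of $\frak Q$ of exactly this kind rather than appeal to \eqref{form616nwe} alone.
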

Postponing the proof of the sublemma we continue the proof.
Note the ratio between the weights $e_{1,\delta}$ and $e_{T,\delta}$ are bounded on  $\tau' \in [0,5T]_{\tau'} \times [0,1]$
and the weight $e_{2,\delta}$ is larger than $e_{T,\delta}$.
Therefore taking the weighted sum of the square of (\ref{newD5form}) for
$S'=0,\dots,[5T],5T$ (where $[5T]$ is the largest integer with $\le 5T$)
and using (\ref{formE9E9}), (\ref{formE1121}) we obtain
\begin{equation}\label{newD5form2}
\aligned
 &\left\Vert
 \nabla_{\rho}^n \frac{\partial^{\ell}}{\partial T^{\ell}}
\frak P_1 U(\tau',t)\right\Vert_{L^2_{m+1-\ell,\delta}(K_1^{5T+1}
\subset \Sigma_T)}
\le
D_{m,{\rm(\ref{newD5form2})}} \mu^{\kappa-1} e^{-\delta_1 T}.
\endaligned
\end{equation}
We put
$
q = u_1(0,1/2)
$
and
$$
\frak P'_1
= {\rm Pal}^{u_1}_{q}
\circ {\rm Pal}^{q}_{u^{\rho}_{T,(\kappa-1)}}
$$

\begin{sublem}\label{sublemE5}
We have
\begin{equation}\label{newform71newmae}
\aligned
&\Big\Vert
\nabla_{\rho}^{n} \frac{\partial^{\ell}}{\partial T^{\ell}}
\Big(\frak P_1 \big((\Delta p^{\rho}_{T,(\kappa)})^{\rm Pal}\big)
\\
&\qquad\qquad- \frak P'_1\big((\Delta p^{\rho}_{T,(\kappa)})^{\rm Pal}\big)
\Big)
\Big\Vert_{L^2_{m+1-\ell,\delta}([0,5T+1]_{\tau'} \times [0,1])}\\
&\le
D_{m,{\rm(\ref{newform71newmae})}} \mu^{\kappa-1} e^{-\delta_1 T}.
\endaligned
\end{equation}
\end{sublem}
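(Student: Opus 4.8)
The plan is to exhibit $\frak P_1-\frak P'_1$ as the pointwise difference of two parallel transports around the geodesic triangle whose vertices are $A(z)=u^{\rho}_{T,(\kappa-1)}(z)$, $B=q=u_1(0,1/2)$, and $C(z)=u_1(z)$, and then to dominate this difference by the length of its one short side $A(z)C(z)$, i.e.\ by $\vert {\rm E}(u_1,u^{\rho}_{T,(\kappa-1)})(z)\vert$. For $z$ in the neck $[0,5T+1]_{\tau'}\times[0,1]$ all three points $A(z),B,C(z)$ lie within distance $\iota'_X$ of one another, by Condition \ref{conds:smalldiam} and the induction hypothesis (\ref{form184}) for $\kappa-1$, so both $\frak P_1$ and $\frak P'_1$ are defined there and the triangle makes sense; and pointwise $\frak P_1(z)-\frak P'_1(z)={\rm Pal}_{A(z)}^{C(z)}-{\rm Pal}_{B}^{C(z)}\circ{\rm Pal}_{A(z)}^{B}$.

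First I would set up, as in Appendix \ref{appendixA2}, a smooth section $\frak Q$ of the bundle over $\{(a,b,c)\in X^3\mid d(a,b),\,d(b,c),\,d(c,a)<\iota'_X\}$ with fibre ${\rm Hom}_{\R}(T_aX,T_cX)$ and value $\frak Q(a,b,c)={\rm Pal}_a^c-{\rm Pal}_b^c\circ{\rm Pal}_a^b$. Since transporting from $c$ to $q$ and then from $q$ back to $c$ along the common minimizing geodesic is the identity, i.e.\ ${\rm Pal}_q^c\circ{\rm Pal}_c^q={\rm id}$, the section $\frak Q$ vanishes identically on $\{a=c\}$; hence by Taylor expansion in the $a$-direction one gets $\frak Q(a,b,c)=\widetilde{\frak Q}(a,b,c)\big({\rm E}(c,a)\big)$ for a smooth bundle map $\widetilde{\frak Q}$, and in particular $\vert\frak Q\vert\le C\,d(a,c)$ together with the analogous bounds for all derivatives. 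Composing with $z\mapsto(u^{\rho}_{T,(\kappa-1)}(z),q,u_1(z))$, using the standard fact that post-composition with a fixed smooth fibre-preserving map is a bounded operation on $L^2_k$ sections for $k$ large (the mechanism of Appendices \ref{appendixA} and \ref{appendixA2bis}), and using that the $C^{m+1}$ norms of $u^{\rho}_{T,(\kappa-1)}$ and $u_1$ on the neck are bounded uniformly in $\kappa,T$ (as in Lemma \ref{Lemma6.9} and Remark \ref{remD1D1}), I would obtain, on each unit square $\Sigma(S')_{\tau'}\subset[0,5T+1]_{\tau'}\times[0,1]$,
\[
\Big\Vert \nabla_{\rho}^{n}\frac{\partial^{\ell}}{\partial T^{\ell}}(\frak P_1-\frak P'_1)\Big\Vert_{C^{m+1-\ell}(\Sigma(S')_{\tau'})}\le C\sum_{n'\le n,\ \ell'\le\ell}\Big\Vert \nabla_{\rho}^{n'}\frac{\partial^{\ell'}}{\partial T^{\ell'}}{\rm E}(u_1,u^{\rho}_{T,(\kappa-1)})\Big\Vert_{C^{m+1-\ell'}(\Sigma(S')_{\tau'})},
\]
with $C$ depending only on $X,L,m$.

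Next I would estimate $(\frak P_1-\frak P'_1)\big((\Delta p^{\rho}_{T,(\kappa)})^{\rm Pal}\big)$ over the neck by the Leibniz rule, distributing $\nabla_\rho^n\partial_T^\ell$ over the two factors. The essential point is to route all of the exponential weight $e_{1,\delta}$ onto the factor ${\rm E}(u_1,u^{\rho}_{T,(\kappa-1)})$ --- whose \emph{weighted} $W^2_{m+1-\ell,\delta}(K_1^{5T+1}\subset\Sigma_1)$ norm, and those of its $\rho$- and $T$-derivatives, is bounded by the induction hypothesis (\ref{form184}) for $\le\kappa-1$, namely by $C_{6,m}(2-\mu^{\kappa-1})e^{-\delta_1 T}<2C_{6,m}e^{-\delta_1 T}$, with no factor $e^{5T\delta}$ appearing --- and to measure the second factor in the \emph{plain} $C^{m+1-\ell}$ norm, which is $\le C_{5,m}\mu^{\kappa-1}e^{-\delta_1 T}$ because $(\Delta p^{\rho}_{T,(\kappa)})^{\rm Pal}$ is the parallel transport of the essentially constant vector $\Delta p^{\rho}_{T,(\kappa)}$ along $\hat u^{\rho}_{1,T,(\kappa-1)}$, so that all its spatial and parametric derivatives are controlled by (\ref{form182}) together with (\ref{form185}) (``$(\Delta p^{\rho}_{T,(\kappa)})^{\rm Pal}$ is almost a constant and its first derivative is small''). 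Combining the first step with these two bounds and summing the squares over the unit squares gives
\[
\text{LHS of (\ref{newform71newmae})}\le 2C\,C_{5,m}C_{6,m}\,\mu^{\kappa-1}e^{-2\delta_1 T}.
\]

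Finally, taking $T_{m,(\ref{newform71newmae})}$ so large that $2C\,C_{6,m}e^{-\delta_1 T}\le 1$ for $T\ge T_{m,(\ref{newform71newmae})}$, the right-hand side is $\le C_{5,m}\mu^{\kappa-1}e^{-\delta_1 T}$, which is (\ref{newform71newmae}) with $D_{m,(\ref{newform71newmae})}=C_{5,m}$; this depends only on $C_{5,m}$ and $m$, hence is independent of $C_{6,m}$ and $C_{7,m}$ as the bookkeeping of this subsection requires. The main obstacle is exactly this balancing: one must put the weight onto the genuinely exponentially-small factor ${\rm E}(u_1,u^{\rho}_{T,(\kappa-1)})$ so that no $e^{5T\delta}$ survives, and harvest from it a second full factor $e^{-\delta_1 T}$ with which to absorb $C_{6,m}$. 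The apparent loss of one derivative when $\partial_T$ lands on the argument is harmless, since every term on the right is already estimated with the shifted Sobolev index $m+1-\ell$; and the holonomy identity ${\rm Pal}_q^c\circ{\rm Pal}_c^q={\rm id}$ --- which makes $\frak Q$ vanish to first order along $\{a=c\}$ --- is precisely what converts the difference of two parallel transports into a quantity of first order in ${\rm E}(u_1,u^{\rho}_{T,(\kappa-1)})$.
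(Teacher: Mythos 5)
Your identification of $\frak P_1-\frak P'_1$ as a geodesic-triangle holonomy difference and the observation that $\frak Q(a,b,c)$ vanishes on $\{a=c\}$ are correct, but the resulting pointwise bound $\vert\frak P_1(z)-\frak P'_1(z)\vert\le C\,\vert{\rm E}(u_1(z),u^\rho_{T,(\kappa-1)}(z))\vert$ is too weak to close the weighted estimate, and the step where you ``route all of the exponential weight $e_{1,\delta}$ onto the factor ${\rm E}(u_1,u^{\rho}_{T,(\kappa-1)})$'' has a genuine gap. The induction hypothesis (\ref{form184}) and Lemma \ref{Lemma6.9} only control the $W^2_{m+1-\ell,\delta}$-norm of ${\rm E}(u_1,u^\rho_{T,(\kappa-1)})$, and that norm (see (\ref{normformula})) subtracts the constant tail $({\rm E}(p_0,p^\rho_{T,(\kappa-1)}))^{\rm Pal}$ before applying the weight. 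The full weighted $L^2_{m+1,\delta}$-norm of ${\rm E}(u_1,u^\rho_{T,(\kappa-1)})$ itself is \emph{not} controlled: as $\tau'\to\infty$ one has $\vert{\rm E}(u_1,u^\rho_{T,(\kappa-1)})(\tau',t)\vert\to\vert{\rm E}(p_0,p^\rho_{T,(\kappa-1)})\vert=O(\epsilon_2)$, a nonvanishing constant, while $e_{1,\delta}$ grows like $e^{\delta\tau'}$, so this weighted norm is of order $\epsilon_2\,e^{5\delta T}$, which diverges. Equivalently, if you route the weight onto $(\Delta p^\rho_{T,(\kappa)})^{\rm Pal}$ instead, its weighted norm is $\sim\vert\Delta p\vert\,e^{5\delta T}$ and you again lose a factor $e^{5\delta T}\sim e^{\delta_1 T/2}$, ending up at $e^{-\delta_1 T/2}$ rather than $e^{-\delta_1 T}$. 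So the final line ``$\le 2C\,C_{5,m}C_{6,m}\,\mu^{\kappa-1}e^{-2\delta_1 T}$'' cannot be reached by your bookkeeping.

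The missing ingredient is exactly what the paper's proof supplies. The paper does \emph{not} bound $\frak P_1-\frak P'_1$ by the side $d(u^\rho_{T,(\kappa-1)}(z),u_1(z))$ of your triangle, because that side is uniformly of size $\epsilon_2$ and does not decay in $\tau'$. Instead it first fixes a reference $\rho_0'$ with $d(\rho,\rho_0')\le e^{-\delta_1 T}$ and writes the difference as a sum of two contributions (formulae (\ref{newformE24}), (\ref{newformE25}), (\ref{newformE27}), (\ref{formnewE29})): (i) the replacement of $u^\rho_{T,(\kappa-1)}$ and $p^\rho_{T,(\kappa-1)}$ by the fixed $u_1^{\rho_0'}$ and $p_0^{\rho_0'}$, controlled by $C\,e^{-\delta_1 T}$ uniformly in $\tau'$; and (ii) the fixed-reference discrepancy ${\rm Pal}^{u_1}_{u_1^{\rho_0'}}-{\rm Pal}^{u_1}_q\circ{\rm Pal}^q_{u_1^{\rho_0'}}$. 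For (ii) the triangle is $(u_1^{\rho_0'}(z),q,u_1(z))$, and the relevant short side is $d(q,u_1(z))$, which by (\ref{formnewE28}) decays like $e^{-\delta_1 S'}$ --- because \emph{both} endpoints tend to $p_0$ along the end, unlike $u^\rho_{T,(\kappa-1)}(z)$ which tends to the displaced point $p^\rho_{T,(\kappa-1)}$. This produces the two-term bound (\ref{aaaE25}), $(C\,e^{-\delta_1 T}+D\,e^{-\delta_1 S'})\,\mu^{\kappa-1}e^{-\delta_1 T}$, whose $e^{-\delta_1 S'}$-piece is what absorbs the growing weight $e^{\delta S'}$ in the subsequent sum over $S'$ without giving away powers of $e^{\delta_1 T}$. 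To repair your argument you need this finer decomposition (or something equivalent that produces $\tau'$-decay in the operator-norm factor); the vanishing of your $\frak Q$ on $\{a=c\}$ alone does not suffice.
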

Here we regard $(\Delta p^{\rho}_{T,(\kappa)})^{\rm Pal}$ as a section on
$(u^{\rho}_{T,(\kappa-1)})^*TX$ (as we do in (\ref{formA60})).
So  (\ref{newform71newmae}) is
\begin{equation}\label{formformE14}
\aligned
&\left\Vert
\nabla_{\rho}^{n} \frac{\partial^{\ell}}{\partial T^{\ell}}
\Big(({\rm Pal}^{u_1}_{u^{\rho}_{T,(\kappa-1)}} \circ
{\rm Pal}_{p_{T,(\kappa-1)}^{\rho}}^{u^{\rho}_{T,(\kappa-1)}})
(\Delta p^{\rho}_{T,(\kappa)})
\right. \\
&- ({\rm Pal}^{u_1}_{q}
\circ {\rm Pal}^{q}_{u^{\rho}_{T,(\kappa-1)}}
\circ
{\rm Pal}_{p_{T,(\kappa-1)}^{\rho}}^{u^{\rho}_{T,(\kappa-1)}})(\Delta p^{\rho}_{T,(\kappa)})
\Big)
\Big\Vert_{L^2_{m+1-\ell}([0,5T+1]_{\tau'} \times [0,1])} \\
&\le
D_{m,{\rm(\ref{newform71newmae})}} \mu^{\kappa-1} e^{-\delta_1 T}.
\endaligned
\end{equation}
Postponing the proof of the sublemma we continue the proof.
\par
Lemma \ref{expest2} and (\ref{formA60}),
(\ref{newD5form2}), (\ref{newform71newmae}),
imply
\begin{equation}\label{formA462}
\aligned
&\left\Vert
 \nabla_{\rho}^n \frac{\partial^{\ell}}{\partial T^{\ell}}
 \Big(
\frak P_1 {\rm E}(u^{\rho}_{T,(\kappa-1)},u^{\rho}_{T,(\kappa)})
\right.\\
&\qquad\qquad
\left.
-  \frak P'_1\Big((\Delta p^{\rho}_{T,(\kappa)})^{\rm Pal}\Big)
\Big)
\right\Vert_{L^2_{m+1-\ell,\delta}([0,5T+1]_{\tau'} \times [0,1]
\subset \Sigma_T)}
\\
&\le
D_{m,{\rm(\ref{formA462})}} \mu^{\kappa-1} e^{-\delta_1 T}.
\endaligned
\end{equation}
Using also Lemma \ref{expest2}
we can show the next sublemma.
\begin{sublem}\label{SublemE6}
The next inequality holds.
\begin{equation}\label{formA63}
\aligned
&\left\Vert
 \nabla_{\rho}^n \frac{\partial^{\ell}}{\partial T^{\ell}}
{\mathscr P_{1,\rho_1} {\rm E}(u^{\rho}_1,u^{\rho}_{T,(\kappa)})}
 \right\Vert_{W^2_{m+1-\ell,\delta}([0,5T+1]_{\tau'} \times [0,1]
\subset \Sigma_T)}
\\
&
\le
\left\Vert
 \nabla_{\rho}^{n} \frac{\partial^{\ell}}{\partial T^{\ell}}
{\mathscr P_{1,\rho_1}{\rm E}(u^{\rho}_1,u^{\rho}_{T,(\kappa-1)})}
 \right\Vert_{W^2_{m+1-\ell,\delta}([0,5T+1]_{\tau'} \times [0,1]
\subset \Sigma_T)}
\\
&
\quad+ D_{m,{\rm(\ref{formA63})}} \mu^{\kappa-1} e^{-\delta_1 T}.
\endaligned
\end{equation}
\end{sublem}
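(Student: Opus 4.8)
The plan is to obtain Sublemma \ref{SublemE6} from the estimate (\ref{formA462}) already in hand, together with the nonlinearity estimate Lemma \ref{expest2}. On $[0,5T+1]_{\tau'}\times[0,1]$ one has, by Definition \ref{defn:urhoT(kappa)} and the reduction (\ref{formA60}), the identity
\[
u^\rho_{T,(\kappa)}(z)=\Exp\!\Big(u^\rho_{T,(\kappa-1)}(z),\ {\rm Pal}_{u_1(z)}^{u^\rho_{T,(\kappa-1)}(z)}(V(z))\Big),\qquad V:=\frak P_1\,{\rm E}(u^\rho_{T,(\kappa-1)},u^\rho_{T,(\kappa)}),
\]
so that ${\rm E}(u_1,u^\rho_{T,(\kappa)})=\mathcal{EP}_{v}(V)$ with $u=u_1$, $v=u^\rho_{T,(\kappa-1)}$ in the notation of Lemma \ref{expest}. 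First I would apply (the weighted, derivative-loss-aware analogue of) Lemma \ref{expest2} with the combined parameter $s=(T,\rho)$ — the passage to the weighted norm being the patch-wise argument of Appendix \ref{appendixA}, and the fact that each $\partial_T$ costs one Sobolev degree being exactly the phenomenon flagged in Remark \ref{rem611}. This bounds the $W^2_{m+1-\ell,\delta}$ norm of $\nabla_\rho^n\partial_T^\ell({\rm E}(u_1,u^\rho_{T,(\kappa)})-{\rm E}(u_1,u^\rho_{T,(\kappa-1)})-V)$ by $C_m\sum\|\nabla_\rho^{n_1}\partial_T^{\ell_1}V\|\,\|\nabla_\rho^{n_2}\partial_T^{\ell_2}{\rm E}(u_1,u^\rho_{T,(\kappa-1)})\|$, the sum over $n_1+n_2\le n$, $\ell_1+\ell_2\le\ell$. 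Combined with the triangle inequality $\|{\rm E}(u_1,u^\rho_{T,(\kappa)})\|\le\|{\rm E}(u_1,u^\rho_{T,(\kappa-1)})\|+\|V\|+\|\text{error}\|$, the sublemma reduces to bounding $\|\nabla_\rho^n\partial_T^\ell V\|_{W^2_{m+1-\ell,\delta}}$ and the quadratic errors by $D_m\mu^{\kappa-1}e^{-\delta_1 T}$ with $D_m$ \emph{independent} of $C_{6,m}$ and $C_{7,m}$.

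For the linear term I would invoke (\ref{formA462}): every $\nabla_\rho^n\partial_T^\ell V$ agrees with $\nabla_\rho^n\partial_T^\ell\frak P'_1((\Delta p^\rho_{T,(\kappa)})^{\rm Pal})$ up to an $L^2_{m+1-\ell,\delta}$-error of size $D_m\mu^{\kappa-1}e^{-\delta_1 T}$; converting this into a $W^2_{m+1-\ell,\delta}$-error of the same quantity is elementary, because near $\tau=0$ the weight $e_{T,\delta}$ is $\sim e^{5T\delta}$ on a region of bounded length, so the base-point value $\Phi(0,1/2)$ and the term $\nabla^k(\Phi(0,1/2)^{\rm Pal})$ contribute quantities comparable to the $L^2_{m+1-\ell,\delta}$-norm (using $\delta<\delta_1/10$). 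The section $\frak P'_1((\Delta p^\rho_{T,(\kappa)})^{\rm Pal})$ is \emph{almost constant} on the neck: its base value contributes $\|\Delta p^\rho_{T,(\kappa)}\|\le C_{5,m}\mu^{\kappa-1}e^{-\delta_1 T}$ (and its $T$-, $\rho$-derivatives likewise) by (\ref{form182}), while its fluctuation part has weighted $W^2$-norm $\lesssim\|\Delta p^\rho_{T,(\kappa)}\|$ by the drop-of-weight mechanism (cf.\ the last part of Section \ref{appendixC} and Remark \ref{rem:dropofweight}, the relevant geometric sums converging since $\delta<\delta_1$). Hence $\|\nabla_\rho^n\partial_T^\ell V\|_{W^2_{m+1-\ell,\delta}}\le D_m\mu^{\kappa-1}e^{-\delta_1 T}$. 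For the quadratic errors: whenever the undifferentiated factor ${\rm E}(u_1,u^\rho_{T,(\kappa-1)})$ (i.e.\ $\ell_2=0$) appears I would bound it by Lemma \ref{Lemma6.9} for $\kappa-1$, whose constant is independent of $C_{6,m}$ once $T$ is large (its proof only integrates the $\ell=1$ case of (\ref{form184}) for $\kappa-1$ over $[T,\infty)$, which is $O(C_{6,m}e^{-\delta_1 T})\le 1$); whenever $\ell_2>0$, (\ref{form184}) for $\kappa-1$ supplies a factor $C_{6,m}e^{-\delta_1 T}$, so the product is $\le C_{5,m}C_{6,m}\mu^{\kappa-1}e^{-2\delta_1 T}\le\mu^{\kappa-1}e^{-\delta_1 T}$ once $T>T_{6,m}$, where $T_{6,m}$ is permitted to depend on all the constants. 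Assembling these yields (\ref{formA63}).

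The main obstacle will be the bookkeeping of constants rather than any fresh analytic input: one must check that the error constant $D_{m,(\ref{formA63})}$ genuinely depends only on $C_{5,m}$ (and the fixed geometric data of $X$, $L$, $u_i$), not on $C_{6,m}$ or $C_{7,m}$ — this is what makes it possible, in the proof of Proposition \ref{prop:inequalitieskappa}(3), to afterwards choose $C_{6,m}>D_{m,(\ref{formA63})}/(1-\mu)$ and close the induction, while the threshold $T_{6,m}$ is allowed to swell with all the constants. Concretely this dictates the discipline of invoking Lemma \ref{Lemma6.9} rather than the crude bound $2C_{6,m}$ wherever an undifferentiated copy of ${\rm E}(u_1,u^\rho_{T,(\kappa-1)})$ turns up, and of absorbing any product of two of the large constants into the gained factor $e^{-\delta_1 T}$ at the cost of enlarging $T_{6,m}$. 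The weighted-norm, derivative-loss form of Lemma \ref{expest2} on the neck, and the $L^2$-to-$W^2$ conversion of (\ref{formA462}), are both routine given the shape of $e_{T,\delta}$, and I would relegate them to the patch-wise machinery of Appendix \ref{appendixA}.
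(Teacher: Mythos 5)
Your approach is essentially the same as the paper's and is correct. Both proofs rest on the same three pillars: the nonlinearity estimate Lemma \ref{expest2} applied to $\mathcal{EP}_v(V)$ with $u=u_1$, $v=u^\rho_{T,(\kappa-1)}$; the already-established estimate (\ref{formA462}) identifying the correction term $V=\frak P_1\,{\rm E}(u^\rho_{T,(\kappa-1)},u^\rho_{T,(\kappa)})$ with $\frak P'_1((\Delta p^\rho_{T,(\kappa)})^{\rm Pal})$ up to an $L^2_{m+1-\ell,\delta}$-error $D_m\mu^{\kappa-1}e^{-\delta_1 T}$; and the constant discipline (bounding undifferentiated ${\rm E}(u_1,u^\rho_{T,(\kappa-1)})$ via Lemma \ref{Lemma6.9} so that the error constant stays independent of $C_{6,m},C_{7,m}$, absorbing products of large constants into $T_{6,m}$). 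The only genuine difference is one of arrangement: the paper introduces the fixed vector $\Delta q^\rho_{T,(\kappa)}\in T_qX$ (where $q=u_1(0,1/2)$), establishes (\ref{newnewE38}) and the base-point estimate (\ref{aaaE3234}), and then lets the base-point subtraction in the $W^2_{m+1,\delta}$-norm cancel $\Delta q$ exactly; you instead bound $\|V\|_{W^2_{m+1-\ell,\delta}}$ directly and close with the triangle inequality. These are the same mechanism seen from two angles, and both give the needed $D_m\mu^{\kappa-1}e^{-\delta_1 T}$ bound.

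One piece of wording deserves sharpening. You say that the base-point value $\Phi(0,1/2)$ and $\nabla^k(\Phi(0,1/2)^{\rm Pal})$ ``contribute quantities comparable to the $L^2_{m+1-\ell,\delta}$-norm.'' What is true and what you actually need is the one-sided inequality $\Vert s\Vert_{W^2_{m+1,\delta}}\lesssim\Vert s\Vert_{L^2_{m+1,\delta}}$: by Sobolev embedding on the unit square centred at $(0,1/2)$, $\vert s(0,1/2)\vert\lesssim e^{-5T\delta}\Vert s\Vert_{L^2_{m+1,\delta}}$ (so the base-point value is in fact much \emph{smaller}, not merely comparable), and since $\Vert s(0,1/2)^{\rm Pal}\Vert_{L^2_{m+1,\delta}}\sim\vert s(0,1/2)\vert e^{5T\delta}$ the two factors of $e^{\pm 5T\delta}$ cancel and one gets $\Vert s-s(0,1/2)^{\rm Pal}\Vert_{L^2_{m+1,\delta}}\lesssim\Vert s\Vert_{L^2_{m+1,\delta}}$. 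This is exactly what converts the $L^2_\delta$-error of (\ref{formA462}) into a $W^2_\delta$-error of the same size, so the conclusion you draw is correct; only the stated justification was a little loose. Likewise, your assertion that the fluctuation of $\frak P'_1((\Delta p^\rho_{T,(\kappa)})^{\rm Pal})$ away from its base-point value has $W^2_\delta$-norm $\lesssim\vert\Delta p^\rho_{T,(\kappa)}\vert$ is correct but not entirely ``elementary''; it is precisely the content of the paper's estimates (\ref{aaaE31})--(\ref{aaaE31222}) and (\ref{aaaE323}), which rely on the exponential decay (\ref{aaaE3100}) of ${\rm E}(p_0^{\rho'_0},u_1^{\rho'_0})$ in $S'$ beating the weight's growth $e^{\delta S'}$ (this is where $\delta<\delta_1/10$ enters).
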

Note the term corresponding to
$ \frak P'_1((\Delta p^{\rho}_{T,(\kappa)})^{\rm Pal})
= ({\rm Pal}^{u_1}_{q}
\circ {\rm Pal}^{q}_{u^{\rho}_{T,(\kappa-1)}})((\Delta p^{\rho}_{T,(\kappa)})^{\rm Pal})$
disappears in (\ref{formA63}) since we take $W^{2}_{m+1-\ell',\delta}$
norm in place of $L^{2}_{m+1-\ell',\delta}$ norm.
See (\ref{normformjula5}) and we recall $q = u_1(0,1/2)$.
Postponing the detail of the proof of the sublemma we continue the proof.
\par
We choose $C_{6,m}$ such that
$$
D_{m,{\rm(\ref{formA63})}}
 \le C_{6,m}(1-\mu)/10,
\quad
D_{m,{\rm(\ref{form1842})}}
\le C_{6,m}(1-\mu)/10.
$$
Then (\ref{form1842}), (\ref{formA63})
and the {$(\kappa-1)$-th} case of {(\ref{form185})} implies that for $\ell > 0$ we have
\begin{equation}
\aligned
&\left\Vert
 \nabla_{\rho}^n \frac{\partial^{\ell}}{\partial T^{\ell}}
 {\mathscr P_{1,\rho_1}{\rm E}(u^{\rho}_1,u^{\rho}_{T,(\kappa)})}
 \right\Vert_{W^2_{m+1-\ell,\delta}(K_1^{5T+1})} \\
 &
 \le
\left\Vert
 \nabla_{\rho}^n \frac{\partial^{\ell}}{\partial T^{\ell}}
 {\mathscr P_{1,\rho_1}{\rm E}(u^{\rho}_1,u^{\rho}_{T,(\kappa-1)})}
 \right\Vert_{W^2_{m+1-\ell,\delta}(K_1^{5T+1})}
 \\
 &\quad +
D_{m,{\rm(\ref{formA63})}} \mu^{\kappa-1} e^{-\delta_1 T}
 +
 D_{m,{\rm(\ref{form1842})}} \mu^{\kappa-1} e^{-\delta_1 T}
 \\
 &\le
 C_{6,m} (2 - \mu^{\kappa-1})e^{-\delta_1 T}
 + \frac{C_{6,m}(\mu^{\kappa-1}-\mu^{\kappa})e^{-\delta_1 T}}{5}
 \\
& \le
 C_{6,m} (2 - \mu^{\kappa})e^{-\delta_1 T}.
 \endaligned
\end{equation}
We thus proved {the $\kappa$-th case} of (\ref{form184}).

We next prove (\ref{form185}). We consider $m -2\ge n,\ell \ge 0$.
(The case $\ell = 0$ is included.)
We denote
$$
d_{\ell,n}(u,v)
=
\sum_{n'\le n
\atop \ell' \le \ell}
\left\Vert \nabla_{\rho}^{n'} \frac{\partial^{\ell'}}{\partial T^{\ell'}}
{\rm Pal}^{u_1}_u {\rm E}(u,v)
\right\Vert
_{L^2_{m+1-\ell'}(K_1^{9T} \setminus K_1^T)}.
$$
Here $u,v$ are $(T,\rho)$ dependent family of maps on $\Sigma_1$.
(We assume $d(u_1(z),u(z)) \le \iota_X$ etc. so that
${\rm Pal}_{u_1}^u$ etc. is defined.)
We remark that we take $L^2_{m+1-\ell'}$ norm without weight
in the right hand side.
\par
Note $d_{\ell,n}$ is not a metric. In particular it may not satisfy triangle inequality.
However Lemma \ref{expest2} implies
\begin{equation}\label{weaktriangleeq}
d_{\ell,n}(v_1,v_3) \le d_{\ell,n}(v_1,v_2) + d_{\ell,n}(v_2,v_3) +
C_{m,\rm(\ref{weaktriangleeq})}d_{\ell,n}(v_1,v_2)d_{\ell,n}(v_2,v_3)
\end{equation}
if
$$
\left\Vert \nabla_{\rho}^{n'} \frac{\partial^{\ell'}}{\partial T^{\ell'}} {\rm E}(u_1,v_i)
\right\Vert_{L^2_{m+1-\ell'}(K_1^{9T} \setminus K_1^T)}
\le 1
$$
for $i=1,2,3$, $n' \le n$, $\ell' \le \ell$.
\par
Note (\ref{formA462}) holds when we replace $\frak P'_1$ there by $\frak P_1$.
It then implies
\begin{equation}\label{formA666}
d_{\ell,n}(u^{\rho}_{T,(\kappa)},{\rm Exp}(u^{\rho}_{T,(\kappa-1)},(\Delta p^{\rho}_{T,(\kappa)})^{\rm Pal}))
\le
D_{m,{\rm(\ref{formA666})}} \mu^{\kappa-1} e^{- \delta_1 T}.
\end{equation}
We put
$$
\square_{\kappa-1} = {\rm E}(p_{0}^{\rho},u^{\rho}_{T,(\kappa-1)}).
$$
The induction hypothesis, which is $\kappa-1$ case of (\ref{form185}), implies
\begin{equation}\label{formA67-}
\left\Vert\nabla_{\rho}^{n} \frac{\partial^{\ell}}{\partial T^{\ell}}
{\rm Pal}^{p_0}_{p_0^{\rho}}
(\square_{\kappa-1})
\right\Vert
_{L^2_{m+1-\ell}(K_1^{9T} \setminus K_1^T)}
\le C_{7,m} (2- \mu^{\kappa-1})e^{-\delta_1 T}.
\end{equation}
Note (\ref{form185}) is actually (\ref{form185real}).
\par
Then Lemma \ref{expest2} implies
\begin{equation}\label{formA67}
\aligned
&d_{\ell,n}({\rm Exp}(u^{\rho}_{T,(\kappa-1)},(\Delta p^{\rho}_{T,(\kappa)})^{\rm Pal}),
{\rm Exp}(p_{0}^{\rho},\square_{\kappa-1}+{\rm Pal}^{p_0^{\rho}}_{p^{\rho}_{T,(\kappa-1)}}(\Delta p^{\rho}_{T,(\kappa)})) \\
&\le
C_{m,{\rm(\ref{formA67})}} \mu^{\kappa-1} e^{-2 \delta_1 T}.
\endaligned
\end{equation}
On the other hand, when we regard
$(\Delta p^{\rho}_{T,(\kappa)})^{\rm Pal}$ as an element of
$T_{p_{0}^{\rho}}X$ we have
\begin{equation}\label{newform71new}
\aligned
&\left\Vert
\nabla_{\rho}^{n} \frac{\partial^{\ell}}{\partial T^{\ell}}
{\rm Pal}^{p_0}_{p_0^{\rho}}({\rm Pal}^{p_0^{\rho}}_{p^{\rho}_{T,(\kappa-1)}}(\Delta p^{\rho}_{T,(\kappa)}))
\right\Vert_{L^2_{m+1-\ell}(K_1^{9T} \setminus K_1^T)}
\\
&\le
D_{m,{\rm(\ref{newform71new})}} \mu^{\kappa-1} e^{-\delta_1 T}.
\endaligned
\end{equation}
This follows from (\ref{form182}), Lemma \ref{lemB2} and
\begin{equation}\label{newnewnewAE}
\Big\Vert
\nabla_{\rho}^{n} \frac{\partial^{\ell}}{\partial T^{\ell}}
{\rm Pal}_{p_0^{\rho}}^{p_0}({\rm E}(p_0^{\rho},p^{\rho}_{T,(\kappa)}))
\Big\Vert_{L^2_{m+1-\ell}(K_1^{9T} \setminus K_1^T)}
\le C_{m,{\rm(\ref{newnewnewAE})}} \mu^{\kappa-1} e^{-\delta_1T}.
\end{equation}
\par
(\ref{formA666}), (\ref{formA67})
and (\ref{weaktriangleeq})  imply
\begin{equation}\label{formA71}
d_{\ell,n}(u^{\rho}_{T,(\kappa)},
{\rm Exp}(p_{0}^{ \rho},\square_{\kappa-1}+{\rm Pal}^{p_0^{\rho}}_{p^{\rho}_{T,(\kappa-1)}}(\Delta p^{\rho}_{T,(\kappa)}))
\le D_{m,\rm{(\ref{formA71})}} \mu^{\kappa-1} e^{-\delta_1 T}
\end{equation}
if $T > T_{m,{\rm(\ref{formA71})}}$.
We use the fac{t} that the exponent of $e$ in
(\ref{formA67}) is $-2\delta_1 T$,
to change  $C_{m,{\rm(\ref{formA67})}}$ there to
$ D_{m,\rm{(\ref{formA71})}}$.
The third term of (\ref{weaktriangleeq}) is estimated by
$C \mu^{2(\kappa-1)}e^{-2\delta_1 T}$ in our case. So
it is estimated by $D \mu^{\kappa-1}e^{-\delta_1 T}$
if $T$ is sufficiently large.
\par
Using Lemma \ref{expest2}, (\ref{newform71new}),
(\ref{formA67-}),
(\ref{formA71}) imply
\begin{equation}\label{formA699}
\aligned
&\left\Vert\nabla_{\rho}^{n} \frac{\partial^{\ell}}{\partial T^{\ell}}
{\rm Pal}^{p_0}_{p_0^{\rho}}({\rm E}(p_{0}^{\rho},u^{\rho}_{T,(\kappa)}))
\right\Vert
_{L^2_{m+1-\ell}(K_1^{9T} \setminus K_1^T)}
\\
&\le C_{7,m} (2- \mu^{\kappa-1})e^{-\delta_1 T}
+
D_{m,\rm(\ref{formA699})} \mu^{\kappa-1} e^{ - \delta_1 T}.
\endaligned
\end{equation}
We choose $C_{7,m}$ such that
$$
C_{7,m}(1 - \mu)
>
D_{m,\rm(\ref{formA699})}.
$$
Then
\begin{equation}\label{formA6990}
\aligned
&\left\Vert\nabla_{\rho}^{n} \frac{\partial^{\ell}}{\partial T^{\ell}}
{\rm Pal}^{p_0}_{p_0^{\rho}}{\rm E}(p_{0}^{\rho},u^{\rho}_{T,(\kappa)})
\right\Vert
_{L^2_{m+1-\ell}(K_1^{9T} \setminus K_1^T)}
\le C_{7,m}(2-\mu^{\kappa})e^{-\delta_1 T}.
\endaligned
\end{equation}
\begin{rem}
We remark that when we consider the domain $K^{9T}_1 \setminus K_1$ in place of
 $K^{9T}_1 \setminus K_1^T$ and $\ell = 0$ then
 (\ref{formA6990}) fails in the first step, that is, $\kappa = 0$.
\end{rem}
\par
To complete the proof of  Proposition \ref{prop:inequalitieskappa} (3) it remains to prove
sublemmata.
\end{proof}

\begin{proof}[Proof of Sublemma \ref{sublemE4}]
We take and fix $\rho_0'$ and prove this estimate under
the assumption that $d(\rho,\rho_0') \le e^{-\delta_1 T}$.
As usual we use {the} $\tau'$ (resp $\tau''$) coordinate in the
definition of {the} $L^2_{m+1}$ norm for $i=1$ (resp. $i=2$).
We consider maps
$$
\frak P''_i = {\rm Pal}^{u_i}_{p^{\rho_0'}} : \Gamma(\Sigma(S');T_{p^{\rho_0'}}X)
\to \Gamma(\Sigma(S');u_i^*TX).
$$
Let $W$ be a $(T,\rho)$ parameterized family of sections of
$\Gamma(\Sigma(S');T_{p^{\rho_0'}}X)$.
By using $(T,\rho)$ independence of $\frak P''_i$  we can prove
\begin{equation}\label{newformE19}
\aligned
&D^{-1}_{m,{\rm (\ref{newformE19})}}\left\Vert\nabla_{\rho}^n
\frac{\partial^{l}}{\partial T^{l}} \frak P''_i (W)\right\Vert_{L^2_{m+1-l}(\Sigma(S'))}\\
&\le
\left\Vert\nabla_{\rho}^n \frac{\partial^{l}}{\partial T^{l}} W
\right\Vert_{L^2_{m+1-l}(\Sigma(S'))}
\\
&\le
D_{m,{\rm (\ref{newformE19})}}
\left\Vert\nabla_{\rho}^n \frac{\partial^{l}}{\partial T^{l}} \frak P''_i (W)
\right\Vert_{L^2_{m+1-l}(\Sigma(S'))}
\endaligned
\end{equation}
for $i=1,2$.
\par
Since $S' \ge T$ we have
\begin{equation}\label{formE18}
\Vert {\rm E}(p_0^{\rho_0'},u^{\rho}_{T,(\kappa-1)})\Vert_{L^2_{m+1}([S',S'+1]_{\tau'} \times [0,1])}
\le C_{m,{\rm (\ref{formE18})}} e^{-\delta_1 T}.
\end{equation}
Here $C_{m,{\rm (\ref{formE18})}}$ depends on $C_{7,m}$.
\par
We put:
$$
\frak P'''
= {\rm Pal}_{p_0^{\rho_0'}}^{u^{\rho}_{T,(\kappa-1)}} : \Gamma(\Sigma(S');T_{p_0^{\rho_0'}}X)
\to \Gamma(\Sigma(S');(u_{T,(\kappa-1)}^{\rho})^*TX).
$$
If $\hat W$ is a $(T,\rho)$ independent section of
$u_2^*TX$ on $\Sigma(S')$, Lemma \ref{lemB2} implies
\begin{equation}\label{formE20221}
\aligned
&\Vert
\nabla_{\rho}^n \frac{\partial^{l}}{\partial T^{l}}(\frak P''_1 \circ (\frak P''_2)
^{-1} - \frak P_1 \circ (\frak P_2)
^{-1})
(\hat W)
\Vert_{L^2_{m+1-l}(\Sigma(S'))}
\\
&\le
\Vert
\nabla_{\rho}^n \frac{\partial^{l}}{\partial T^{l}}(\frak P''_1 \circ (\frak P''_2)
^{-1} - \frak P_1 \circ \frak P''' \circ (\frak P''_2)
^{-1})
(\hat W)
\Vert_{L^2_{m+1-l}(\Sigma(S'))}
\\
&\quad+
\Vert
\nabla_{\rho}^n \frac{\partial^{l}}{\partial T^{l}}(
 \frak P_1 \circ \frak P''' \circ (\frak P''_2)
^{-1} - (\frak P_1 \circ (\frak P_2)
^{-1})
(\hat W)
\Vert_{L^2_{m+1-l}(\Sigma(S'))}
\\
&\le
C_{m,{\rm(\ref{formE20221})}} e^{-\delta_1 T} \Vert \hat W \Vert_{L^2_{m+1-l}(\Sigma(S'))}
\endaligned
\end{equation}
See Figure \ref{FigureinE1}.
\begin{figure}
\centering
\includegraphics{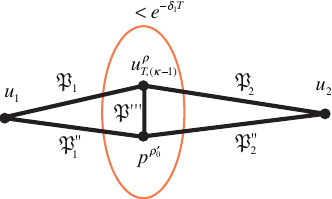}
\caption{$\frak P'''$, $\frak P''_i$ and $\frak P_i$}
\label{FigureinE1}
\end{figure}
\par
We put
$$
\hat W(T,\rho) = \frak P_2(U^{\rho}_{T,2,(\kappa)}).
$$
Then
using (\ref{formE9E9}), (\ref{formE20221})
we can derive
\begin{equation}\label{formE112100}
\aligned
&\left\Vert\nabla_{\rho}^{n'} \frac{\partial^{l}}{\partial T^{l}}
\frak P_1(U^{\rho}_{T,2,(\kappa)}) - \frak P''_1(\frak P''_2)^{-1}(\hat W(T,\rho)))
\right\Vert_{L^2_{m+1-l}(\Sigma(S')_{\tau''})}
\\
&=
\left\Vert\nabla_{\rho}^{n'} \frac{\partial^{l}}{\partial T^{l}}
(\frak P_1\circ \frak P_2^{-1} - \frak P''_1\circ (\frak P''_2)^{-1}) (\hat W(T,\rho))
\right\Vert_{L^2_{m+1-l}(\Sigma(S')_{\tau''})}
\\
&
\le  \sum_{n''\le n'}\sum_{l'\le l}
C_{m,{\rm(\ref{formE112100})}} e^{-\delta_1 T} \left\Vert
\nabla_{\rho}^{n''} \frac{\partial^{l'}}{\partial T^{l'}} \hat W(T,\rho)
\right\Vert_{L^2_{m+1-l'}(\Sigma(S')_{\tau''})}.
\endaligned
\end{equation}
Therefore using (\ref{newformE19}) also we have
\begin{equation}\label{formE11210}
\aligned
&\left\Vert
 \left(
 \nabla_{\rho}^{n'} \frac{\partial^{l}}{\partial T^{l}}\right)(\frak P_1 U^{\rho}_{T,2,(\kappa)}(\tau'',t))
 \right\Vert_{L^2_{m+1-l}(\Sigma(S')_{\tau''})} \\
&\le
(D_{m,{\rm(\ref{formE11210})}}
+  C_{m,{\rm(\ref{formE11210})}}e^{-\delta_1 T}) \\
&\qquad \sum_{n''\le n'}\sum_{l'\le l}\left\Vert
\nabla_{\rho}^{n''} \frac{\partial^{l'}}{\partial T^{l'}} \hat W(T,\rho)
\right\Vert_{L^2_{m+1-l'}(\Sigma(S')_{\tau''})}.
\endaligned
\end{equation}
\par
We take $T_{m,{\rm(\ref{formE1121})}}$ such that
$e^{-\delta_1 T}C_{m,{\rm(\ref{formE11210})}} \le 1$
if $T > T_{m,{\rm(\ref{formE1121})}}$.
The sublemma is proved.
\end{proof}
\begin{proof}[Proof of Sublemma \ref{sublemE5}]
The proof is similar to the proof of Sublemma \ref{sublemE4}.
We take and fix $\rho_0'$ and prove this estimate under
the assumption that $d(\rho,\rho_0') \le e^{-\delta_1 T}$.
\par
We observe
\begin{equation}\label{newformE23}
\aligned
\left\Vert
\nabla_{\rho}^n \frac{\partial^{\ell}}{\partial T^{\ell}}
{\rm E}(u^{\rho_0'}_1,u_{T,(\kappa-1)}^{\rho})
\right\Vert_{L^2_{m+1-\ell}(\Sigma(S')_{\tau'})}
&\le
C_{m,{\rm(\ref{newformE23})}} e^{-\delta_1 T},
\\
\left\vert
\nabla_{\rho}^n \frac{\partial^{\ell}}{\partial T^{\ell}}
{\rm E}(p^{\rho_0'}_0,p_{T,(\kappa-1)}^{\rho})
\right\vert
&\le
C'_{m,{\rm(\ref{newformE23})}} e^{-\delta_1 T}.
\endaligned
\end{equation}
Therefore using an obvious variant of Lemma \ref{lemB2} we can prove
\begin{equation}\label{newformE24}
\aligned
&\left\Vert
\nabla_{\rho}^n \frac{\partial^{\ell}}{\partial T^{\ell}}
\Big(
\frak P_1 \circ {\rm Pal}_{p_{T,(\kappa-1)}^{\rho}}^{u_{T,(\kappa-1)}^{\rho}}
\circ
{\rm Pal}^{p_{T,(\kappa-1)}^{\rho}}_{p_0}
\right.
\\
&\qquad\qquad\quad
\left.-
{\rm Pal}^{u_1}_{u_1^{\rho'_0}}
\circ
{\rm Pal}^{u_1^{\rho'_0}}_{p^{\rho'_0}_0}
\circ
{\rm Pal}_{p_0}^{p^{\rho'_0}_0}
\Big)(W)
\right\Vert_{L^2_{m+1-\ell}(\Sigma(S')_{\tau'})}
\\
&\le
C_{m,{\rm(\ref{newformE24})}} e^{-\delta_1 T}\Vert W\Vert
\endaligned
\end{equation}
and
\begin{equation}\label{newformE25}
\aligned
&\left\Vert
\nabla_{\rho}^n \frac{\partial^{\ell}}{\partial T^{\ell}}
\Big(
\frak P'_1 \circ {\rm Pal}_{p_{T,(\kappa-1)}^{\rho}}^{u_{T,(\kappa-1)}^{\rho}}
\circ
{\rm Pal}^{p_{T,(\kappa-1)}^{\rho}}_{p_0}
\right.
\\
&\qquad\qquad\quad
\left.-
{\rm Pal}^{u_1}_{q}
\circ
{\rm Pal}^{q}_{u_1^{\rho'_0}}
\circ
{\rm Pal}^{u_1^{\rho'_0}}_{p^{\rho'_0}_0}
\circ
{\rm Pal}_{p_0}^{p^{\rho'_0}_0}
\Big)(W)
\right\Vert_{L^2_{m+1-\ell}(\Sigma(S')_{\tau'})}
\\
&\le
C_{m,{\rm(\ref{newformE25})}} e^{-\delta_1 T}\Vert W\Vert
\endaligned
\end{equation}
for $W \in T_{p_0}X$.
See Figure \ref{FigureinE2}.
\begin{figure}
\centering
\includegraphics{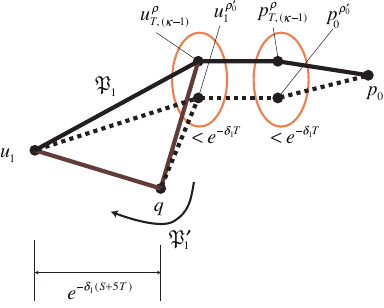}
\caption{Parallel transportations in the proof of
Sublemma \ref{sublemE5}}
\label{FigureinE2}
\end{figure}
\par
Note (\ref{form182}) and (\ref{form185real}) imply
\begin{equation}\label{newformE26}
\left\Vert
\nabla_{\rho}^n \frac{\partial^{\ell}}{\partial T^{\ell}}
\Big(
{\rm Pal}_{p_{T,(\kappa-1)}^{\rho}}^{p_0}
(\Delta p^{\rho}_{T,(\kappa)})
\Big)
\right\Vert_{L^2_{m+1-\ell}(\Sigma(S')_{\tau'})}
\le
C_{m,{\rm(\ref{newformE26})}} \mu^{\kappa-1}e^{-\delta_1 T}.
\end{equation}
On the other hand a similar formula as
(\ref{newformE19}) holds with
$\frak P''_i$ replaced by
${\rm Pal}_{p_0}^{p^{\rho'_0}_0}$ or by
${\rm Pal}^{u^{\rho'_0}_i}_{p^{\rho'_0}_0}$.
Therefore we obtain
\begin{equation}\label{newformE27}
\aligned
&\left\Vert
\nabla_{\rho}^n \frac{\partial^{\ell}}{\partial T^{\ell}}
\Big(
({\rm Pal}^{u_1^{\rho'_0}}_{p^{\rho'_0}_0}\circ
{\rm Pal}_{p_0}^{p^{\rho'_0}_0}\circ
{\rm Pal}_{p_{T,(\kappa-1)}^{\rho}}^{p_0})
(\Delta p^{\rho}_{T,(\kappa)})
\Big)
\right\Vert_{L^2_{m+1-\ell}(\Sigma(S'))}
\\
&\le
C_{m,{\rm(\ref{newformE27})}} \mu^{\kappa-1}e^{-\delta_1 T}.
\endaligned
\end{equation}
By Lemma \ref{lem:Ckexpdecay}
and Condition \ref{conds:smalldiam}, we derive
\begin{equation}\label{formnewE28}
\Vert
{\rm E}(q,u_1)
\Vert_{L^2_{m+1-\ell}(\Sigma(S')_{\tau'})}
\le D_{m,{\rm(\ref{formnewE28})}} e^{-\delta_1 S'}.
\end{equation}
Therefore by Lemma \ref{lemB2}
\begin{equation}\label{formnewE29}
\aligned
&\left\Vert
\Big(
{\rm Pal}_{u_1^{\rho'_0}}^{u_1}
-
{\rm Pal}_q^{u_1}
\circ
{\rm Pal}^q_{u_1^{\rho'_0}}
\Big)
(W)
\right\Vert_{L^2_{m+1-\ell}(\Sigma(S')_{\tau'})}
\\
&\le
D_{m,{\rm(\ref{formnewE29})}} e^{-\delta_1S'}
\Vert W\Vert_{L^2_{m+1-\ell}(\Sigma(S')_{\tau'})}
\endaligned
\end{equation}
for $W \in \Gamma(\Sigma(S'),({u_1^{\rho'_0}})^*TX)$.
\par
Now  (\ref{newformE24}), (\ref{newformE25}), (\ref{newformE27}),
(\ref{formnewE29})
imply
\begin{equation}\label{aaaE25}
\aligned
&\left\Vert
\nabla_{\rho}^n \frac{\partial^{\ell}}{\partial T^{\ell}}
\Big(\frak P_1 \big((\Delta p^{\rho}_{T,(\kappa)})^{\rm Pal}\big)
- \frak P'_1((\Delta p^{\rho}_{T,(\kappa)})^{\rm Pal})
\Big)
\right\Vert_{L^2_{m+1-\ell}(\Sigma(S')_{\tau'})}
\\
&
\le (C_{m,{\rm(\ref{aaaE25})}} e^{-\delta_1 T} + D_{m,{\rm(\ref{aaaE25})}} e^{-\delta_1 S'} )
\mu^{\kappa-1} e^{-\delta_1 T}.
\endaligned
\end{equation}
By taking weighted sum of the square of (\ref{aaaE25}),
Sublemma \ref{sublemE5} follows easily.
\end{proof}
\begin{proof}[Proof of Sublemma \ref{SublemE6}]
We take and fix $\rho_0'$ and prove this estimate under
the assumption that $d(\rho,\rho_0') \le e^{-\delta_1 T}$.
\par
We put
$$
(\Delta q_{T,(\kappa)}^{\rho})'
=
({\rm Pal}_{u_1^{\rho'_0}}^q
\circ
{\rm Pal}^{u_1^{\rho'_0}}_{p^{\rho'_0}_0}
\circ
{\rm Pal}^{p^{\rho'_0}_0}_{p_0}
\circ
{\rm Pal}_{p_{T,(\kappa-1)}^{\rho}}^{p_0})
(\Delta p_{T,(\kappa)}^{\rho})
\in \Gamma(\Sigma(S'),T_qX)
$$
and
$$
\Delta q_{T,(\kappa)}^{\rho}
=
({\rm Pal}_{p^{\rho'_0}_0}^q
\circ
{\rm Pal}^{p^{\rho'_0}_0}_{p_0}
\circ
{\rm Pal}_{p_{T,(\kappa-1)}^{\rho}}^{p_0})
(\Delta p_{T,(\kappa)}^{\rho})
\in T_qX.
$$
(See Figure \ref{FigureinE2}.)
\par
Note $(\Delta q_{T,(\kappa)}^{\rho})'$ is $z\in \Sigma(S')$ dependent
but $\Delta q_{T,(\kappa)}^{\rho}$ is a $(T,\rho)$ dependent
family of constant maps on  $\Sigma(S')$.
\par
By (\ref{newformE25}) we obtain:
\begin{equation}\label{aaaE31}
\aligned
&\Big\Vert
\nabla_{\rho}^n \frac{\partial^{\ell}}{\partial T^{\ell}}
\Big(
({\rm Pal}_{u_{T,(\kappa-1)}^{\rho}}^{q}
\circ {\rm Pal}^{u_{T,(\kappa-1)}^{\rho}}_{p_{T,(\kappa-1)}^{\rho}})
(\Delta p_{T,(\kappa)}^{\rho}))
-
(\Delta q_{T,(\kappa)}^{\rho})'
\Big)
\Big\Vert_{L^2_{m+1-\ell}(\Sigma(S'))}
\\
&\le C_{m,{\rm(\ref{aaaE31})}} \mu^{\kappa-1}e^{-2\delta_1  T}.
\endaligned
\end{equation}
We use Lemma \ref{lemB2} and
\begin{equation}\label{aaaE3100}
\Big\Vert
{\rm E}(p^{\rho'_0}_0,u_1^{\rho'_0})
\Big\Vert_{L^2_{m+1-\ell}(\Sigma(S'))}
\le D_{m,{\rm(\ref{aaaE3100})}} e^{-\delta_1 S'}
\end{equation}
to derive
\begin{equation}\label{aaaE31222}
\aligned
&\Big\Vert
\nabla_{\rho}^n \frac{\partial^{\ell}}{\partial T^{\ell}}
\Big(
\Delta q_{T,(\kappa)}^{\rho}
-
(\Delta q_{T,(\kappa)}^{\rho})'
\Big)
\Big\Vert_{L^2_{m+1-\ell}(\Sigma(S'))}
\\
&\le D_{m,{\rm(\ref{aaaE31222})}} \mu^{\kappa-1}e^{-\delta_1 (T + S')}.
\endaligned
\end{equation}
On the other hand,  (\ref{formA462}) and Lemma \ref{expest2} imply

\begin{equation}\label{aaaE32}
\aligned
&
\Big\Vert
\nabla_{\rho}^n \frac{\partial^{\ell}}{\partial T^{\ell}}
\Big(
{\rm E}(u_1,u^{\rho}_{T,(\kappa)})
-
{\rm E}(u_1,u^{\rho}_{T,(\kappa-1)})
\\
&-
({\rm Pal}^{u_1}_{q}
\circ
{\rm Pal}_{u_{T,(\kappa-1)}^{\rho}}^{q}
\circ {\rm Pal}^{u_{T,(\kappa-1)}^{\rho}}_{p_{T,(\kappa-1)}^{\rho}})
(\Delta p_{T,(\kappa)}^{\rho})
\Big)\Big\Vert_{L^2_{m+1-\ell,\delta}([0,5T+1]_{\tau'} \times [0,1]
\subset \Sigma_T)}
\\
&\le D_{m,{\rm(\ref{aaaE32})}} \mu^{\kappa-1}e^{-\delta_1 T}.
\endaligned
\end{equation}
By (\ref{aaaE31}), (\ref{aaaE31222}) and (\ref{aaaE32}) we obtain
\begin{equation}\label{newnewE38}
\aligned
&
\Big\Vert
\nabla_{\rho}^n \frac{\partial^{\ell}}{\partial T^{\ell}}
\Big(
{\rm E}(u_1,u^{\rho}_{T,(\kappa)})
-
{\rm E}(u_1,u^{\rho}_{T,(\kappa-1)})
\\
&\qquad\qquad-
{\rm Pal}^{u_1}_{q}
(\Delta q_{T,(\kappa)}^{\rho})\Big)
\Big\Vert_{L^2_{m+1-\ell,\delta}([0,5T+1]_{\tau'} \times [0,1]
\subset \Sigma_T)}
\\
&\le D_{m,{\rm(\ref{newnewE38})}} \mu^{\kappa-1}e^{-\delta_1 T}.
\endaligned
\end{equation}
Using Lemma \ref{expest2} we can also show
\begin{equation}\label{aaaE323}
\aligned
&\Big\Vert
\nabla_{\rho}^n \frac{\partial^{\ell}}{\partial T^{\ell}}
\Big(
{\rm E}(u_1,u^{\rho}_{T,(\kappa)})
-
{\rm E}(u_1,u^{\rho}_{T,(\kappa-1)})
\\
&\qquad-
({\rm Pal}^{u_1}_{q}
\circ
{\rm Pal}_{u_{T,(\kappa-1)}^{\rho}}^{q}
\circ {\rm Pal}^{u_{T,(\kappa-1)}^{\rho}}_{p_{T,(\kappa-1)}^{\rho}})
(\Delta p_{T,(\kappa)}^{\rho})\Big)
\Big\Vert_{L^2_{m+1-\ell,\delta}(\Sigma(S')_{\tau'}
\subset \Sigma_T)}
\\
&\le C_{m,{\rm(\ref{aaaE323})}} \mu^{\kappa-1}e^{-2\delta_1 T}.
\endaligned
\end{equation}
for $S' \ge T$.
In fact $u^{\rho}_{T,(\kappa-1)}$, $u^{\rho}_{T,(\kappa)}$,
$p^{\rho}_{T,(\kappa-1)}$ are all close to each other by the order of
$e^{-\delta_1 T}$ there, including their $(T,\rho)$ derivatives.
Also $u_1$ and $q$ are close by the order of
$e^{-\delta_1 T}$, there.
Therefore all the error terms
appearing while applying Lemma \ref{expest2} are of the order $ \mu^{\kappa-1}e^{-2\delta_1 T}$.
\par
(\ref{aaaE31}), (\ref{aaaE31222}) and (\ref{aaaE323}) imply
\begin{equation}\label{aaaE3234}
\aligned
&\Big\Vert
\nabla_{\rho}^n \frac{\partial^{\ell}}{\partial T^{\ell}}
\Big(
{\rm E}(u_1,u^{\rho}_{T,(\kappa)})(0,1/2)
-
{\rm E}(u_1,u^{\rho}_{T,(\kappa-1)})(0,1/2)
\\
&\qquad\qquad\qquad\qquad\qquad\qquad\,\,\,\,\,\,-
\Delta q_{T,(\kappa)}^{\rho}\Big)
\Big\Vert_{L^2_{m+1-\ell,\delta}(\Sigma(S')_{\tau'}
\subset \Sigma_T)}
\\
&\le C_{m,{\rm(\ref{aaaE3234})}} \mu^{\kappa-1}e^{-2\delta_1 T}.
\endaligned
\end{equation}

(\ref{formA63}) follows easily from  (\ref{newnewE38}) and (\ref{aaaE3234}).
\end{proof}

\section{Estimate of Parallel transport 3}
\label{appendixA2bisbis}

\begin{proof}[Proof of Lemma \ref{lemma615}]
The proof is similar to the argument of  Appendix \ref{appendixA2bis}.
We put
$$
W_0(z) = {\rm E}(u_1(z), \hat u^{\rho}_{1,T,(\kappa)})
$$
then we estimate  ${\bf e}'_{i}(z,0)$ in (\ref{forme'e'e'}) and obtain
\begin{equation}\label{formA77}
\aligned
&\left\Vert
\nabla_{\rho}^n \left(\frac{\partial^{\ell}}{\partial T^{\ell}}\right) {\bf e}'_{i}(z,0)
\right\Vert_{L^2_{m+1-\ell}} \\
&\le
C_{m,{\rm(\ref{formA77})}} \left\Vert \nabla_{\rho}^n \left(\frac{\partial^{\ell}}{\partial T^{\ell}}\right)(W_0)
\right\Vert_{L^2_{m+1-\ell}}
\le C'_{m,{\rm(\ref{formA77})}} e^{- \delta_1 T}.
\endaligned
\end{equation}
Here the second inequality follows from (\ref{form184})
and (\ref{form185}).
\par
Then (\ref{formE92}) and (\ref{formE93}) imply the required inequality
(\ref{form6321}).
\end{proof}

\section[Estimate of $T$ derivative of the error term]{Estimate of $T$ derivative of the error term of
non-linear Cauchy-Riemann equation}
\label{appendixC}
\begin{proof}
[Proof of Lemma \ref{mainestimatestep13kappaT}]
We discuss estimate on $K_1$ only.
Estimate on $K_1^{4T-1} \setminus K_1$
is similar.
We put $K_1^+ = K_1^1 = K_1 \cup [-5T,-5T+1]_{\tau}\times [0,1]$.
\par
We use the simplified notation:
\begin{equation}\label{simplenote2onrji}
\aligned
u &= u^{\rho}_{1,T,(\kappa-1)}, \qquad &V=V^{\rho}_{T,1,(\kappa)}, \\
\mathcal P &= (\Pal_1^{(0,1)})^{-1},  \qquad &\frak e =
{\sum_{a=0}^{\kappa-1}\frak e^{\rho} _{1,T,(a)}}.
\endaligned
\end{equation}
where $\Pal_1^{(0,1)}$ is $0,1$ part of the parallel transport
$r \to {\rm Exp}(u(z),rV)$.
\par
We apply Lemma \ref{lemA1}
to
\begin{equation}\label{VandWappendixE}
W^0(z) = {\rm E}(u_1(z),u(z)), \qquad
V^0(z) = {\rm Pal}_{u(z)}^{u_1(z)}
\left(V(z)
\right)
\end{equation}
and use induction hypothesis.
We remark that $V^0$, $W^0$, $\frak e$ are $T$ dependent.
We divide $K_1 \subset \bigcup \Omega_a$ such that
$u_1^*TX$ etc. are trivial on $\Omega_a$.
(See the beginning of Appendix \ref{appendixA}.)
\par
We put
$$
\frak P = ({\rm Pal}_{u_1}^{u})^{(0,1)}.
$$
We  then can show:
\begin{lem}
\begin{equation}\label{estimateE3}
\aligned
&\left\| \nabla_{\rho}^n\frac{d^{\ell}}{dT^{\ell}} \frak P^{-1}\int_{0}^1
 ds\int_0^s
\frac{d^2}{dr^2} \left(\mathcal P \overline\partial (\Exp (u,rV)\right)dr
\right\|_{L^2_{m-\ell}(K_1)}
\\
& \le C_{m,{\rm(\ref{estimateE3})}} \left(
\sum_{n'=0}^n\sum_{j=0}^{\ell}\left\| \nabla_{\rho}^{n'}\frac{d^j}{dT^j}V\right\|_{L^2_{m-j+1}(K_1^+)}\right)^2
\\
&\le C'_{m,{\rm(\ref{estimateE3})}}e^{-2\delta_1 T}\mu^{2(\kappa-1)}.
\endaligned
\end{equation}
\end{lem}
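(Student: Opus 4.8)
The plan is to reduce the section under the norm to the explicit algebraic expression furnished by Lemma~\ref{lemA1} and then to differentiate in $\rho$ and $T$ using the Leibniz and chain rules, controlling every resulting term by Sobolev multiplication. This is precisely the argument used in the proof of Lemma~\ref{lem:mgeq2}; the only new feature is the bookkeeping of the extra $\nabla_\rho$ and $d/dT$ derivatives, together with the observation that all constants remain uniform in $\kappa$ and $T$.

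First I would set $W^0(z) = {\rm E}(u_1(z),u(z))$ and $V^0(z) = {\rm Pal}_{u(z)}^{u_1(z)}(V(z))$ as in \eqref{VandWappendixE}, so that $\Exp(u,rV)$ coincides with the map $v_r$ of Appendix~\ref{appendixA}. Since the $(0,1)$-part of the parallel transport $\frak P = ({\rm Pal}_{u_1}^{u})^{(0,1)}$ is independent of $r$, it commutes with $d^2/dr^2$ and with the iterated $r,s$ integrals, and the composition $\frak P^{-1}\mathcal P$ agrees with the operator $\mathcal{PP}(z,rV^0(z),W^0(z))$ of Appendix~\ref{appendixA}. Hence the section under the norm equals $\int_0^1 ds\int_0^s \frac{d^2}{dr^2}\bigl(\mathcal{PP}(z,rV^0(z),W^0(z))(\overline\partial v_r)\bigr)\,dr$, and by \eqref{formA9} its integrand is a finite sum of terms $V^0_iV^0_j\,\frak F_{ij}$, $V^0_i(\partial_xV^0_j)\,\frak F^x_{ij}$ and $V^0_i(\partial_yV^0_j)\,\frak F^y_{ij}$, where the $\frak F$'s are fixed smooth functions of $(r,z,W^0,V^0,\partial W^0,\partial V^0)$ that do not depend on $V^0$ or $W^0$.

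Next I would apply $\nabla_\rho^n\,d^\ell/dT^\ell$ and distribute it by the Leibniz rule. A derivative landing on a displayed factor $V^0$ or $\partial V^0$ produces $\nabla_\rho^{n'}(d^j/dT^j)V^0$; a derivative landing on a coefficient function produces, via the chain rule and the standard fact that $w\mapsto F(w)$ is smooth from $L^2_k$ to $L^2_k$ for $k$ large, a smooth function of $W^0,V^0$ multiplied by spatial and $\rho,T$ derivatives of $W^0$ and $V^0$. Because $m$ is large, $L^2_{m-\ell}$ is a Banach algebra and is stable under such compositions, so each term is bounded in $L^2_{m-\ell}(K_1)$ by a product of two factors of the form $\|\nabla_\rho^{n'}(d^j/dT^j)V^0\|_{L^2_{m-j+1}(K_1^+)}$ (one for each of the two displayed $V^0$-factors, absorbing the single spatial derivative into the loss of one Sobolev order) together with finitely many factors $\|\nabla_\rho^{n''}(d^{j'}/dT^{j'})W^0\|_{L^2_{m+1}(K_1^+)}$. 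The latter are bounded by constants independent of $\kappa$ and $T$: since $W^0={\rm E}(u_1,\hat u^\rho_{1,T,(\kappa-1)})$ equals ${\rm E}(u_1,u^\rho_{T,(\kappa-1)})$ on the bounded, $T$-independent domain $K_1^+$, its $\rho,T$ derivatives there are controlled by \eqref{form184}, \eqref{form185} and Lemma~\ref{Lemma6.9}. As parallel transport is composition with a smooth map determined by $W^0$, one also gets $\|\nabla_\rho^{n'}(d^j/dT^j)V^0\|_{L^2_{m-j+1}(K_1^+)}\le C\sum\|\nabla_\rho^{n'}(d^j/dT^j)V\|_{L^2_{m-j+1}(K_1^+)}$, which gives the first inequality in \eqref{estimateE3}; the second then follows by substituting the induction hypothesis \eqref{form182} and squaring.

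The one point requiring care — and the main obstacle — is the uniformity of all constants in $\kappa$ and $T$, exactly the issue raised in Remark~\ref{remD1D1}. The constants produced by the Sobolev estimates depend only on the maxima of the $L^2_{m+1}$-type norms of $W^0$ and $V^0$ over $K_1^+$, and these are bounded independently of $\kappa$ and $T$ by the induction hypotheses \eqref{form182}, \eqref{form184}, \eqref{form185} and Lemma~\ref{Lemma6.9}; moreover $K_1^+$ is $T$-independent in the $\tau'$ coordinate, so no hidden $T$-dependence enters through the domain. With this in mind, the estimate on $K_1^{4T-1}\setminus K_1$ is obtained in the identical way, and in the proof of Lemma~\ref{mainestimatestep13kappaT} the remaining estimates on the neck region $[4T-1,4T+1]_{\tau'}\times[0,1]$ and on $[4T+1,5T+1]_{\tau'}\times[0,1]$ proceed by the same computation, the only difference being the extra use of the `drop of the weight' described in Remark~\ref{rem:dropofweight}.
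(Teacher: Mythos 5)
Your argument is correct and follows essentially the same route as the paper's: reduce the integrand to the explicit quadratic expression furnished by Lemma~\ref{lemA1} (formula~\eqref{formA9}), distribute $\nabla_\rho^n\,d^\ell/dT^\ell$ by the Leibniz rule, bound the resulting smooth-coefficient factors using the induction hypotheses \eqref{form182}, \eqref{form184}, \eqref{form185} and Lemma~\ref{Lemma6.9}, and conclude via the $T$- and $\kappa$-uniformity discussed in Remark~\ref{remD1D1}. The one thing you make a bit more explicit than the paper is the passage from $V^0$ to $V$ in the bound via parallel transport; otherwise the two proofs coincide.
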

\begin{proof}
For simplicity of notation we consider the case $n=0$.
In the case $n\ne 0$ (that is, the case when the $\rho$ derivative is included)
the proof is the same.
\par
We take $\ell$-th $T$ derivative of (\ref{formA9}) and find that
$$
\aligned
&
\frac{d^{\ell}}{dT^{\ell}} {\rm (\ref{formA9})} \\
&=
\sum_{ij,\ell_1,\ell_2, \ell_3
\atop \ell_1+\ell_2+\ell_3 = \ell}\frac{\partial^{\ell_1} V^0_i}{\partial T^{\ell_1}}\frac{\partial^{\ell_2} V^0_j}{\partial T^{\ell_2}}\frak G^{\ell_1,\ell_2,\ell_3}_{ij}
\Big(r,z,W^0,\dots,\frac{\partial^{\ell_3} W^0}{\partial T^{\ell_3}},
V^0,\dots,\frac{\partial^{\ell_3} V^0}{\partial T^{\ell_3}},
\\
&\qquad\qquad\qquad\qquad\qquad\qquad\qquad\qquad\frac{\partial W^0}{\partial x},\dots, \frac{\partial^{\ell_3+1} W^0}
{\partial x\partial T^{\ell_3}},\frac{\partial W^0}{\partial y},\dots, \frac{\partial^{\ell_3+1} W^0}
{\partial y\partial T^{\ell_3}},
\\
&\qquad\qquad\qquad\qquad\qquad\qquad\qquad\qquad\frac{\partial V^0}{\partial x},\dots, \frac{\partial^{\ell_3+1} V^0}
{\partial x\partial T^{\ell_3}},\frac{\partial V^0}{\partial y},\dots, \frac{\partial^{\ell_3+1} V^0}
{\partial y\partial T^{\ell_3}}\Big) \\
&\quad+
\sum_{ij,\ell_1,\ell_2, \ell_3
\atop \ell_1+\ell_2+\ell_3 = \ell}\frac{\partial^{\ell_1} V^0_i}{\partial T^{\ell_1}}
\frac{\partial^{\ell_2+1} V^0_j}{\partial x\partial T^{\ell_2}} \frak G^{x,\ell_1,\ell_2,\ell_3}_{ij}\left(r,z,
W^0,\dots,\frac{\partial^{\ell_3} W^0}{\partial T^{\ell_3}},
V^0,\dots,\frac{\partial^{\ell_3} V^0}{\partial T^{\ell_3}}\right)
\\
&\quad+
\sum_{ij,\ell_1,\ell_2, \ell_3
\atop \ell_1+\ell_2+\ell_3 = \ell}\frac{\partial^{\ell_1} V^0_i}{\partial T^{\ell_1}}
\frac{\partial^{\ell_2+1} V^0_j}{\partial y\partial T^{\ell_2}} \frak G^{y,\ell_1,\ell_2,\ell_3}_{ij}\left(r,z,
W^0,\dots,\frac{\partial^{\ell_3} W^0}{\partial T^{\ell_3}},
V^0,\dots,\frac{\partial^{\ell_3} V^0}{\partial T^{\ell_3}}\right),
\endaligned
$$
where $\frak G^{\ell_1,\ell_2,\ell_3}_{ij}$, $\frak G^{x,\ell_1,\ell_2,\ell_3}_{ij}$
and $\frak G^{y,\ell_1,\ell_2,\ell_3}_{ij}$ are smooth maps of the variables in the
parentheses.
Note $V^0$ there, for example,  means $V^0(z) \in \R^n$.
\par
By induction hypothesis (on $\ell$), that is an estimate of
$\frac{\partial V^0_i}{\partial x}$, $\frac{\partial V^0_i}{\partial y}$,
$\frac{\partial W^0_i}{\partial x}$, $\frac{\partial W^0_i}{\partial y}$
and their $\ell'$-derivatives (for $\ell' \le \ell$) with respect to $T$,
we can show that  $L^2_{m-\ell_3}(\Omega_a)$ norms of the maps $\frak G^{\ell_1,\ell_2,\ell_3}_{ij}
(r,z,\dots,\frac{\partial^{\ell_3+1} V^0}
{\partial y\partial T^{\ell_3}})$, $\frak G^{x,\ell_1,\ell_2,\ell_3}_{ij}(r,z,\dots,\frac{\partial^{\ell_3} V^0}{\partial T^{\ell_3}})$
and 
\linebreak
$\frak G^{y,\ell_1,\ell_2,\ell_3}_{ij}(r,z,\dots,\frac{\partial^{\ell_3} V^0}{\partial T^{\ell_3}})$ are bounded.
\par
Therefore
\begin{equation}\label{newforma81}
\left\Vert \frac{d^{\ell}}{dT^{\ell}} {\rm (\ref{formA9})} \right\Vert_{L^2_{m-\ell}(\Omega_a)}
\le C_{m,{\rm(\ref{newforma81})}} \left( \sum_{j=0}^{\ell}\left\| \frac{d^j}{dT^j}V\right\|_{L^2_{m-j+1}(K_1^+)}\right)^2.
\end{equation}
Taking the sum over $a$,
we obtain the first inequality.
\par
The second inequality is the consequence of induction hypothesis.
\end{proof}
\begin{lem}\label{lemE2E2}
We have the next two inequalities.
\begin{equation}\label{formE6}
\aligned
&\left\|
\nabla_{\rho}^n\frac{d^{\ell}}{dT^{\ell}}\frak P^{-1}(D_{u}\mathcal E_1)(\mathcal P Q,V)\right\|_{L^2_{m-\ell}(K_1)}\\
&\le C_{m,{\rm(\ref{formE6})}}
\left(
\sum_{n'=0}^{n}\sum_{\ell'=0}^{\ell}\left\|  \nabla_{\rho}^{n'}\frac{d^{\ell'}}{dT^{\ell'}}Q \right\|_{L^2_{m-\ell'}(K_1^+)}\right)
\\
&\qquad\qquad \times
\left(
\sum_{n'=0}^{n}\sum_{\ell'=0}^{\ell}\left\|  \nabla_{\rho}^{n'}\frac{d^{\ell'}}{dT^{\ell'}}V \right\|_{L^2_{m-\ell'}(K_1^+)}
\right).
\endaligned
\end{equation}
\begin{equation}\label{2152ffT}
\aligned
&\left\| \nabla_{\rho}^n\frac{d^{\ell}}{dT^{\ell}}
\frak P^{-1}\int_0^1 ds \int_0^s
\left(\frac{\del}{\del r}\right)^2 \left({\mathcal P\circ \Pi_{\mathcal E_1(\Exp (u,sV))}^{\perp}
\circ \mathcal P^{-1}}\right)\, dr
\right\|_{L^2_m(K_1)}\\
&\le C_{m,{\rm(\ref{2152ffT})}}
\left(
\sum_{n'=0}^{n}\sum_{\ell'=0}^{\ell}\left\|  \nabla_{\rho}^{n'}\frac{d^{\ell'}}{dT^{\ell'}}V \right\|_{L^2_{m-\ell'}(K_1^+)}\right)^2.
\endaligned
\end{equation}
{Here $Q,V$ are $T,\rho$ dependent sections of $u^*TX \times \Lambda^{0,1}$, $u^*TX$,
respectively.}
\end{lem}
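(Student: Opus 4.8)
The plan is to obtain both inequalities by differentiating, in the parameters $T$ and $\rho$, the explicit ``structure formulas'' of Appendices \ref{appendixA} and \ref{appendixA2bis} that were used to prove the corresponding $r$-only estimates — namely (\ref{form5505550ap}) for (\ref{formE6}), and (\ref{estimateE3}) (equivalently (\ref{2152ff})) for (\ref{2152ffT}). The key structural fact I would exploit is that throughout those appendices the nonlinear ingredients (the exponential map, the parallel transports, the almost complex structure, the Gram--Schmidt orthonormalization) were packaged as smooth maps, or smooth sections of vector bundles, over \emph{fixed} finite-dimensional manifolds ($\frak U$, $\frak V$, and the balls $D^n(R)$), which do not move with $T$, $\rho$, or $\kappa$. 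Applying $\nabla_\rho^n (d/dT)^\ell$ and the Leibniz rule to such expressions produces only terms in which these fixed structure maps are evaluated at $T$- and $\rho$-differentiated copies of the argument functions $W^0$, $V^0$ (and $Q$, $\frak e$) — never differentiating them through the domain — and the ensuing products are then controlled by the Sobolev multiplication inequality on the compact piece $K_1$, which is legitimate since $m$ is large.

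Concretely, for (\ref{2152ffT}) I would start from Lemma \ref{lemA1} and the identity (\ref{formA9}) for $(\partial/\partial r)^2$ of the quantity, apply $\nabla_\rho^n (d/dT)^\ell$ to (\ref{formA9}) — which is precisely the computation already carried out just above the statement of Lemma \ref{lemE2E2}, yielding (\ref{newforma81}) — and then sum over the trivializing patches $\Omega_a$ covering $K_1$, with the choices $W^0(z) = {\rm E}(u_1(z), \hat u^\rho_{1,T,(\kappa-1)}(z))$ and $V^0(z) = {\rm Pal}_{\hat u^\rho_{1,T,(\kappa-1)}(z)}^{u_1(z)}(V^\rho_{T,1,(\kappa)}(z))$ as in (\ref{VandWappendixE}). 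This gives the first inequality of (\ref{2152ffT}); when the exponential bound is wanted one feeds in the induction hypotheses (\ref{form182}).

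For (\ref{formE6}) I would first differentiate in $T$ and $\rho$ the construction of the $L^2$-orthonormal frame of $\mathcal E_1(v_r)$, i.e.\ the formulas (\ref{forme'e'e'}), (\ref{formE92}), (\ref{formE93}), obtaining $(T,\rho)$-derivative versions of the smoothness estimates (\ref{formA45})–(\ref{formA46}) (proved by the identical argument, now applied to $\partial_T^\ell \nabla_\rho^n W^0$ and $\partial_T^\ell \nabla_\rho^n V^0$). Then, using the expression (\ref{formE12}) of $\frak P^{-1}(D_u\mathcal E_1)(\mathcal P Q, V)$ as $(d/dr)|_{r=0} H(r,\overline Q)$ together with the explicit projection formula (\ref{formnewQ12}), differentiating once more in $T$ and $\rho$ and applying Sobolev multiplication yields the bilinear bound in (\ref{formE6}), the two factors being the full $(T,\rho)$-derivative norms of $Q$ and $V$ over $K_1^+$.

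The hard part will be, as flagged in Remark \ref{remD1D1}, making sure that \emph{all} the constants $C_{m,(\cdot)}$ arising in the Leibniz expansions are independent of $\kappa$ and of $T$ — which is indispensable since the iteration runs through infinitely many steps $\kappa = 0, 1, 2, \dots$. This I would handle exactly as in Remark \ref{remD1D1}: the structure maps depend only on $\Omega$, $u_1$ and $X$, and the Sobolev-multiplication constants depend only on an upper bound for the $L^2_{m+1}$-norms of $W^0$, $V^0$ and their $T$- and $\rho$-derivatives, which are bounded uniformly in $\kappa$ by the induction hypotheses (\ref{form182}), (\ref{form184}) and Lemma \ref{Lemma6.9}. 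I do not expect the derivative-loss subtlety of Remark \ref{rem611} to intervene here, since the present lemma is localized on the compact, $T$-independent region $K_1$ (and $K_1^+$), where no $T$-shift of the arguments occurs.
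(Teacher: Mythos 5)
Your proposal is correct and follows essentially the same route as the paper: cover $K_1$ by trivializing patches $\Omega_a$, feed the $(T,\rho)$-dependent data into the fixed smooth structure maps of Appendices \ref{appendixA} and \ref{appendixA2bis}, differentiate the Gram--Schmidt formulas (\ref{forme'e'e'}), (\ref{formE92}), (\ref{formE93}) to get the $T$- and $\rho$-derivative versions of (\ref{formA45})--(\ref{formA46}) (these are exactly the paper's (\ref{newformA84})--(\ref{newformA86})), and then conclude (\ref{formE6}) from (\ref{formnewQ12}) and (\ref{formE12}) together with Sobolev multiplication on the compact, $T$-independent region, with (\ref{2152ffT}) handled by the same differentiation of (\ref{formA9}) that gives (\ref{newforma81}). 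Your observations about the $\kappa$- and $T$-independence of the constants and the absence of a $T$-shift on $K_1$ also match the paper's rationale (Remarks \ref{remD1D1} and \ref{rem611}).
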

\begin{proof}
We prove this lemma in a similar way to (\ref{estimateE3}) and also to
Lemma \ref{expest2} as follows.
For simplicity of formula we consider the case $n=0$.
\par
We divide $K_1 \subset \bigcup \Omega_a$ as above.
We define {$W^0$, $V^0$} as in (\ref{VandWappendixE}) and apply the calculation
in Appendix \ref{appendixA2bis}
to ${\bf e}'_{i}(z,r) $ and ${\bf e}_{i}(z,r) $.
\par
By the smoothness of  $\hat{\bf e}_{i}$ and (\ref{forme'e'e'})
the next inequality holds for $\ell > 0$.
\begin{equation}\label{newformA84}
\aligned
&\left\Vert \left(\frac{d}{dT}\right)^{\ell}{\bf e}'_{i} \right\Vert_{L^2_{m-\ell}(\Omega_a)} \\
&\le
C_{m,{\rm(\ref{newformA84})}}\sum_{k=1}^{\ell}\left(\left\Vert \frac{d^{k}V^0}{dT^{k}} \right\Vert_{L^2_{m-k}(K_1^+)}
+ \left\Vert \frac{d^{k}W^0}{dT^{k}} \right\Vert_{L^2_{m-k}(K_1^+)}\right)
\\
& \le C'_{m,{\rm(\ref{newformA84})}} e^{-\delta_1 T}.
\endaligned
\end{equation}
We also have the next formula for $\ell \ge 0$.
\begin{equation}\label{newformA85}
\left\Vert \left(\frac{d}{dT}\right)^{\ell}\frac{d}{dr}{\bf e}'_{i} \right\Vert_{L^2_{m-\ell}(\Omega_a)}
\le C_{m,{\rm(\ref{newformA85})}}\left(\sum_{k=0}^{\ell}\left\Vert \left(\frac{d}{dT}\right)^{k} V^0\right\Vert_{L^2_{m-k}(K_1^+)}\right).
\end{equation}
Then using (\ref{formE92}) and (\ref{formE93})
we can prove
\begin{equation}\label{newformA86}
\aligned
&\left\Vert \left(\frac{d}{dT}\right)^{\ell}{\bf e}_{i} \right\Vert_{L^2_{m-\ell}(\Omega_a)}
\le C_{m,{\rm(\ref{newformA86})}}e^{-\delta_1 T}, \\
&\left\Vert \left(\frac{d}{dT}\right)^{\ell}\frac{d}{dr}{\bf e}_{i} \right\Vert_{L^2_{m-\ell}(\Omega_a)}
\le C'_{m,{\rm(\ref{newformA86})}}\left(\sum_{k=0}^{\ell}\left\Vert \left(\frac{d}{dT}\right)^{k} V^0\right\Vert_{L^2_{m-k}(K_1^+)}\right),
\endaligned
\end{equation}
by induction on $i$.
Using this formula and (\ref{formnewQ12}), (\ref{formE12})
it is easy to show  (\ref{formE6}).
The proof of (\ref{2152ffT}) is similar.
\end{proof}
We  use the inequalities (\ref{estimateE3}), (\ref{formE6}) in place of (\ref{2152ff}), (\ref{form5505550ap}),
respectively, for our estimate of $T$-derivatives.
We use (\ref{2152ffT}) to estimate the $T$ derivatives of the third term of (\ref{2155ff}).
We can then prove
\begin{equation}\label{2156ffT}
\left\|
\frac{d^{\ell}}{dT^{\ell}}(\frak P')^{-1}
\left(\overline\partial (\Exp (u,V))
- \mathcal P^{-1}\frak{e}
\right)\right\|_{L^2_{m}(K_1)} \\
\le C_{m,{\rm(\ref{2156ffT})}}e^{-\delta T}\mu^{\kappa-1}
\end{equation}
in the sam{e} way as (\ref{2156ff}).
(Here we put
$
\frak P' = ({\rm Pal}_{u_1}^{\Exp (u,V)})^{(0,1)}
$.)
\par
We then deduce
\begin{equation}\label{formA8888}
\aligned
\Big\|\frac{d^{\ell}}{dT^{\ell}}(\frak P')^{-1}\Big(\Pi_{
\mathcal E_1(\Exp (u,V))}^{\perp}&\overline\partial (\Exp (u,V))
-\mathcal P^{-1}
\Pi_{\mathcal E_1(u)}^{\perp}(\mathcal P \overline\partial (\Exp (u,V))\\
&+
\mathcal P^{-1}(D_{u}\mathcal E_1)\left(\mathcal P \frak e,V\right)\Big)\Big\|_{L^2_{m}(K_1^+)}\\
&\le C_{m,{\rm(\ref{formA8888})}}e^{-2\delta T}\mu^{\kappa-1},
\endaligned\end{equation}
in a similar way as ({\ref{form5500}}).

The rest of the proof of Lemma \ref{mainestimatestep13kappaT} is a straightforward modification of the proof of
(\ref{formB8}) in the same way as above and so is omitted.
\end{proof}
\begin{proof}
[Proof of (\ref{newform6240})]
We take $\rho_0'$ and prove the inequality for $\rho$ with 
$d(\rho,\rho'_0) \le e^{-\delta_1 T}$.
Let $\Omega = [S',S'+1]_{\tau'} \times [0,1] \subset [T,9T]_{\tau'}\times [0,1]$.
We consider
$u'_1= p^{\rho_0'}_0$ (constant map)
and
\begin{equation}\label{simplenote2onrji223}
\aligned
&W^{0}(z) ={\rm E}(p^{\rho_0'}_0,u^{\rho}_{T,(\kappa-1)}(z)),  \\
&V^{0}(z) = \Big({\rm Pal}_{u^{\rho}_{T,(\kappa-1)}(z))}^{p^{\rho_0'}_0}
\circ {\rm Pal}^{u^{\rho}_{T,(\kappa-1)}(z))}_{p^{\rho}_{T,(\kappa-1)}}\Big)(\Delta p^{\rho}_{T,(\kappa)}).
\endaligned
\end{equation}
We then apply Lemma \ref{lemA1}.
Note
$$
v_r =  {\rm Exp}(u^{\rho}_{T,(\kappa-1)},
r(\Delta p^{\rho}_{T,(\kappa)})^{\rm Pal}).
$$
We put
$$
\mathcal{P'P}
= (({\rm Pal}_{p_0^{\rho_0'}}^{v_0})^{(0,1)})^{-1}
\circ (({\rm Pal}_{v_0}^{v_r})^{(0,1)})^{-1}.
$$
\begin{lem}
\begin{equation}\label{A91form}
\Big\Vert
\nabla_{\rho}^n\frac{\partial^{\ell}}{\partial T^{\ell}}
\left.\frac{\partial}{\partial r}\right\vert_{r=0}
\mathcal{P'P}\overline{\partial}v_r
\Big\Vert_{L^2_{m-\ell}(\Omega)}
\le C_{m,{\rm{(\ref{A91form})}}}\mu^{\kappa-1}e^{-2T\delta_1}.
\end{equation}
\end{lem}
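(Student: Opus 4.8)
The plan is to prove the bound on $\left.\frac{\partial}{\partial r}\right|_{r=0}\mathcal{P'P}\,\overline{\partial}v_r$ by reducing it, via Lemma \ref{lemA1}, to a controlled expression in the building blocks $V^0$, $W^0$ and their first derivatives, and then to invoke the induction hypotheses \eqref{form182}, \eqref{form184}, \eqref{form185} to estimate each of those blocks on the piece $\Omega = [S',S'+1]_{\tau'} \times [0,1]$. First I would observe that $v_0 = u^{\rho}_{T,(\kappa-1)}$, so $g(0) = \overline{\partial} u^{\rho}_{T,(\kappa-1)}$ and $g'(0) = (D_{u^{\rho}_{T,(\kappa-1)}}\overline{\partial})\big((\Delta p^{\rho}_{T,(\kappa)})^{\rm Pal}\big)$, which is essentially the quantity appearing on the left side of \eqref{newform6240} after applying $\frak P^{-1}$; since we are on $[T,9T]_{\tau'}\times[0,1]$, we are away from the support of $\mathcal E_i$, so all the projection idempotents act trivially and the parallel transports $\mathcal{P'P}$ reduce to the complex-linear parts of ordinary geodesic parallel transports.

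The key computation is to apply Lemma \ref{lemA1} with $u_1 = p_0^{\rho_0'}$ (a constant map), $W^0$ and $V^0$ as in \eqref{simplenote2onrji223}, and extract $\frac{\partial}{\partial r}\big|_{r=0}$ of the expression \eqref{maineqationddd}. Differentiating \eqref{maineqationddd} in $r$ at $r=0$ produces a sum of terms each of which is linear in $V^0$, $\frac{\partial V^0}{\partial x}$, $\frac{\partial V^0}{\partial y}$ multiplied by smooth bounded functions of $(z, 0, W^0(z))$ — this is exactly the structure of \eqref{formulaA223} in Appendix \ref{appendixA}. So the $L^2_{m-\ell}$ norm of $\nabla_\rho^n\frac{\partial^\ell}{\partial T^\ell}$ of this is bounded, by Sobolev multiplication and the $T$-differentiation bookkeeping already carried out in the proof of Lemma \ref{mainestimatestep13kappaT} (Appendix \ref{appendixC}), by a constant times $\sum_{n'\le n}\sum_{\ell'\le \ell}\|\nabla_\rho^{n'}\frac{\partial^{\ell'}}{\partial T^{\ell'}} V^0\|_{L^2_{m+1-\ell'}(\Omega)}$. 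Here the constant depends on the $W^0$-norms, which are uniformly bounded on $\Omega$ by \eqref{form184} and \eqref{form185} (via the decomposition $\hat u^{\rho}_{i,T,(\kappa-1)} \equiv p^{\rho}_{T,(\kappa-1)}$ outside $K_i^{7T+1}$ and Lemma \ref{expest2}), independently of $\kappa$ and $T$ by the remark following \eqref{formA10} and Remark \ref{remD1D1}.

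Then I would estimate the $V^0$-norm itself. By \eqref{simplenote2onrji223}, $V^0$ is the parallel transport of $\Delta p^{\rho}_{T,(\kappa)}$, so using Lemma \ref{lemB2} together with \eqref{form182} (which controls $\nabla_\rho^{n}\frac{\partial^\ell}{\partial T^\ell}\Delta p^{\rho}_{T,(\kappa)}$ by $C_{5,m}\mu^{\kappa-1}e^{-\delta_1 T}$) and the exponential smallness of ${\rm E}(p_0^{\rho_0'}, u^{\rho}_{T,(\kappa-1)})$ on $\Omega \subset [T,9T]_{\tau'}$ (again from \eqref{form184}, \eqref{form185}), we get $\|\nabla_\rho^{n'}\frac{\partial^{\ell'}}{\partial T^{\ell'}} V^0\|_{L^2_{m+1-\ell'}(\Omega)} \le C\mu^{\kappa-1}e^{-\delta_1 T}$; the point that $\Delta p^{\rho}_{T,(\kappa)}$ is almost a constant section (its $\tau'$-derivative is small) means its $L^2_{m+1}$ norm is controlled by its value, as is used at the end of Appendix \ref{appendixC}. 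Combining, the left side of \eqref{A91form} is bounded by $C\mu^{\kappa-1}e^{-2\delta_1 T}$, which is the claim.

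The main obstacle I anticipate is the bookkeeping needed to push all the $T$ and $\rho$ derivatives through Lemma \ref{lemA1} uniformly in $\kappa$: one must verify, as in the proof of Lemma \ref{mainestimatestep13kappaT}, that after applying $\frac{d^\ell}{dT^\ell}$ the smooth coefficient functions $\frak G$, $\frak H$ evaluated at $(z, rV^0, W^0)$ still have $L^2_{m-\ell}$ norms bounded by a constant depending only on lower-order Sobolev norms of $W^0$ and $V^0$ — which are controlled by the induction hypothesis and Lemma \ref{Lemma6.9} — and in particular not on $\kappa$ or $T$. This is routine but tedious; the substantive geometric input, that the presence of exponential maps and parallel transports does not spoil the quadratic gain, is exactly Lemma \ref{lemA1} and the structure \eqref{formulaA223}, which we may cite. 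No new idea beyond what is already deployed in Appendices \ref{appendixA}, \ref{appendixB-}, \ref{appendixC} is required.
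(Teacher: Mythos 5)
Your proposal correctly sets up the reduction to the structure of Lemma \ref{lemA1}/\eqref{formulaA223} and correctly controls $\|V^0\|_{L^2_{m+1}(\Omega)}\le C\mu^{\kappa-1}e^{-\delta_1 T}$ via \eqref{form182}, but the final jump to the bound $C\mu^{\kappa-1}e^{-2\delta_1 T}$ does not follow from what you have written. You describe the coefficient functions in the $r$-differentiated expression as ``smooth bounded functions of $(z,0,W^0(z))$'' and then multiply a bounded constant by $\|V^0\|\lesssim \mu^{\kappa-1}e^{-\delta_1 T}$. That yields only $C\mu^{\kappa-1}e^{-\delta_1 T}$; you are short a factor of $e^{-\delta_1 T}$.

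The missing ingredient, which is the substantive point of the lemma, is the observation that \emph{the $r$-derivative in \eqref{formulaA2232} vanishes identically when $W^0\equiv 0$}. Indeed, if $W^0=0$ then $u^{\rho}_{T,(\kappa-1)}$ is the constant map $p_0^{\rho'_0}$, and since $V^0$ is then a constant vector, $v_r(z)=\Exp(p_0^{\rho'_0},r\,V^0)$ is a constant map in $z$, so $\overline\partial v_r\equiv 0$. This has two concrete consequences that the required $e^{-2\delta_1 T}$ rests on. First, the coefficient $\frak F_i(z,W^0,\partial W^0)$ of $V^0_i$ in the first summand of \eqref{formulaA2232} vanishes at $W^0=0$; it is therefore $O(W^0,\partial W^0)$, not merely bounded. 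Second, because $V^0$ is the parallel transport of the \emph{constant} vector ${\rm Pal}_{p^{\rho}_{T,(\kappa-1)}}^{p_0^{\rho'_0}}(\Delta p^{\rho}_{T,(\kappa)})$ along paths determined by $W^0$, Lemma \ref{lemB2} shows that $\partial V^0/\partial x$ and $\partial V^0/\partial y$ carry an extra factor of $\partial W^0$; so the second and third summands are also $O(W^0)\cdot|\Delta p^{\rho}_{T,(\kappa)}|$. Since $\|W^0\|_{L^2_{m+1}(\Omega)}\le Ce^{-\delta_1 T}$ on $\Omega\subset[T,9T]_{\tau'}\times[0,1]$ (by \eqref{form184}, \eqref{form185}, together with \eqref{newA930}), each piece gains the extra $e^{-\delta_1 T}$, and the claimed $e^{-2\delta_1 T}$ follows. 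Without identifying this vanishing at $W^0=0$ your estimate saturates at $e^{-\delta_1 T}$ and the lemma is not proved.
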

\begin{proof}
By differentiating (\ref{maineqationddd}) once by $r$ we obtain
\begin{equation}\label{formulaA2232}
\aligned
\left.\frac{\partial}{\partial r}\right\vert_{r=0}
\mathcal{P'P}\overline{\partial}v_r
&= \sum_{i}V^0_i \frak F_{i}\left(z,W^0,\frac{\partial W^0}{\partial x},\frac{\partial W^0}{\partial y}\right) \\
&+
\sum_{i}\frac{\partial V^0_i}{\partial x} \frak F^x_{i}\left(z,W^0\right) +
\sum_{i}\frac{\partial V^0_i}{\partial y} \frak F^y_{i}\left(z,W^0\right).
\endaligned
\end{equation}
Note
\begin{equation}\label{newA930}
\aligned
&\Big\Vert
\nabla_{\rho}^n\frac{\partial^{\ell}}{\partial T^{\ell}}
\Big({\rm Pal}_{u^{\rho}_{T,(\kappa-1)}(z))}^{p^{\rho_0'}_0}
\circ {\rm Pal}^{u^{\rho}_{T,(\kappa-1)}(z))}_{p^{\rho}_{T,(\kappa-1)}}-{\rm Pal}_{p^{\rho}_{T,(\kappa-1)}}^{p^{\rho_0'}_0}\Big)
\Big\Vert_{L^2_{m-\ell}(\Omega)} \\
&\le C_{m,{\rm(\ref{newA930})}}\mu^{\kappa-1}e^{-\delta_1 T}
\endaligned
\end{equation}
follows from (\ref{form185}) and (\ref{form182})
in the same way as in Appendix \ref{appendixA2}.
Using also the fact that
${\rm Pal}_{p^{\rho}_{T,(\kappa-1)}}^{p^{\rho_0'}_0}(\Delta p^{\rho}_{T,(\kappa)})$
is constant on $\Omega$, we can use Lemma \ref{lemB2}  to estimate the 2nd and 3rd terms of
(\ref{formulaA2232}) by
$C \mu^{\kappa-1} e^{-2\delta_1 T}$.
\par
We also remark that when we substitute $W^0 = 0$
then (\ref{formulaA2232}) vanishes. In fact
if $W^0 = 0$ then $v_r(z) = {\rm Exp}(p_0^{\rho_0'},r{\rm Pal}_{p^{\rho}_{T,(\kappa-1)}}^{p^{\rho_0'}_0}(\Delta p^{\rho}_{T,(\kappa)}))$,
which is a constant map. So $\overline{\partial}v_r = 0$.
\par
Therefore
\begin{equation}\label{newformulaA2232}
\aligned
&\Big\Vert
\left.\frac{\partial}{\partial r}\right\vert_{r=0}
\mathcal{P'P}\overline{\partial}v_r
\Big\Vert_{L^2_{m}(\Omega)}
-  C_{m,{\rm(\ref{newA930})}}\mu^{\kappa-1}e^{-2\delta_1 T}\\
&\le
C_{m,{\rm(\ref{newformulaA2232})}}
\Vert V_0 \Vert_{L^2_m(\Omega)}
\left( \Vert W_0 \Vert_{L^2_m(\Omega)}
+ \Big\Vert \frac{\partial W^0}{\partial x} \Big\Vert_{L^2_m(\Omega)} +
\Big\Vert \frac{\partial W^0}{\partial y} \Big\Vert_{L^2_m(\Omega)} \right)
\\
&\le
C'_{m,{\rm(\ref{newformulaA2232})}}
\Vert V_0 \Vert_{L^2_m(\Omega)}
\Vert W_0 \Vert_{L^2_{m+1}(\Omega)}
\\
&\le C''_{m,{\rm(\ref{newformulaA2232})}}
\mu^{\kappa-1}e^{-2T\delta_1}.
\endaligned
\end{equation}
Here the last inequality follows from (\ref{form182}) and (\ref{form185}).
(Note we are working on $[T,9T]_{\tau'} \times [0,1]$.)
\par
The case when $T$ and $\rho$ derivatives are included is similar.
\end{proof}
We put
$$
\mathcal P (\mathcal P^{\prime})^{-1}
=
(({\rm Pal}_{u_1}^{v_0})^{(0,1)})^{-1}\circ
({\rm Pal}_{p_0^{\rho_0'}}^{v_0})^{(0,1)}.
$$
Here $u_1 : \Sigma_1 \to X$ is the map we start with.
(It is different from $u'_1$.)
$v_0 = u^{\rho}_{T,(\kappa-1)}$.
This is a $(\rho, T)$ parameterized family of
sections of the bundle ${\rm Hom}(T_{p_0^{\rho_0'}}X,u_1^*TX)
{\otimes \Lambda^{01}}$
on our domain $\Omega$.
\par
Let $W \in T_{p_0^{\rho_0'}}X{\otimes \Lambda^{01}}$.
Then, by (\ref{form185}) we can show the inequality
\begin{equation}\label{newA93}
\aligned
&\Vert
\nabla_{\rho}^n\frac{\partial^{\ell}}{\partial T^{\ell}}
\big(\mathcal P (\mathcal P^{\prime})^{-1}
- (({\rm Pal}^{p_0^{\rho_0'}}_{u_1})^{(0,1)})^{-1}
\big)(W)
\Vert_{L^2_{m-\ell}(\Omega)}
\\
&\le C_{m,{\rm(\ref{newA93})}}e^{-\delta_1 T}
\vert W\vert
\endaligned
\end{equation}
by using Lemma \ref{lemB2}.
\par
Note $(({\rm Pal}^{p_0^{\rho_0'}}_{u_1})^{(0,1)})^{-1}$
is $(T,\rho)$ independent and is bounded.
\par
Therefore by putting $W = \mathcal{P'P}\overline{\partial}v_r$,
the formulae
$\mathcal P (\mathcal P^{\prime})^{-1}\circ \mathcal{P'P}
= \mathcal{PP}$, (\ref{A91form}) and (\ref{newA93}) imply
\begin{equation}\label{newformulaA22322}
\Big\Vert
\left.
\nabla_{\rho}^n\frac{\partial^{\ell}}{\partial T^{\ell}}\frac{\partial}{\partial r}\right\vert_{r=0}
\mathcal{PP}\overline{\partial}v_r
\Big\Vert_{L^2_{m}(\Omega)} \le C_{m,{\rm(\ref{newformulaA22322})}}\mu^{\kappa-1}e^{-2T\delta_1}.
\end{equation}
where
$$
\mathcal{PP}
= (({\rm Pal}_{u_1}^{v_0})^{(0,1)})^{-1}
\circ (({\rm Pal}_{v_0}^{v_r})^{(0,1)})^{-1}.
$$
(\ref{newform6240}) follows immediately from this formula.
In fact
$$
\left.\frac{\partial}{\partial r}\right\vert_{r=0}\mathcal{PP}\overline{\partial}v_r
=
(({\rm Pal}_{u_1}^{u^{\rho}_{T,(\kappa-1)}})^{(0,1)})^{-1}(D_{u^{\rho}_{T,(\kappa-1)}}
\overline{\partial}) ((\Delta^{\rho}_{T,(\kappa)})^{\rm Pal}).
$$
\end{proof}

\section[Proof of Lemma 8.4]
{Proof of Lemma \ref{lem8484}}
\label{appendixF}

Let $\Gamma_+$ be a finite group and $\Gamma$ be its index $2$ subgroup.
We assume that there exists $\tau \in \Gamma_+$ of order 2
such that $\Gamma_+ = \Gamma_ \cup \tau \Gamma$.
\par
A smooth action of $\Gamma_+$ to a complex manifold $M$ is said to be {\it holomorphic}
if all the action of elements of $\Gamma$ is  holomorphic and $\tau$'s action is by an anti-holomorphic involution.
\begin{lem}\label{LemF1}
Let $p \in M$ be a fixed point of a holomorphic action of $\Gamma_+$
then there exists a complex coordinate of $M$ at $p$ such that the
$\Gamma_+$ action is linear in this coordinate.
\end{lem}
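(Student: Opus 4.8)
The plan is to prove Lemma \ref{LemF1} by an averaging trick (the classical Bochner linearization argument), adapted to the $\Z_2$-extension $\Gamma_+$ so that both the holomorphic part $\Gamma$ and the anti-holomorphic involution $\tau$ are respected. First I would choose an arbitrary holomorphic coordinate chart $\psi_0 : U \to \C^n$ centered at $p$, defined on a $\Gamma_+$-invariant neighborhood $U$ of $p$ (such a $U$ exists by taking the intersection of the $\gamma$-translates of a given small chart, or rather the preimage under all $\gamma$ of a common small ball). The action of each $\gamma \in \Gamma$ in the coordinate $\psi_0$ is a holomorphic map fixing $0$; the action of $\tau$ (and of any $\tau\gamma$) is an anti-holomorphic map fixing $0$.

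Next I would define a new candidate coordinate by averaging. For $\gamma \in \Gamma$ write $\rho(\gamma) = (d(\psi_0 \circ \gamma \circ \psi_0^{-1}))_0 \in GL(n,\C)$ for the complex-linear differential at $0$, and for $\tau\gamma$ note that its differential at $0$ is complex-antilinear, so $\overline{(d(\psi_0 \circ \tau\gamma \circ \psi_0^{-1}))_0}$ (post-composed with complex conjugation $c : \C^n \to \C^n$) is complex-linear; call this $\rho(\tau\gamma)$. The map $\rho : \Gamma_+ \to GL(n,\C) \rtimes \langle c\rangle$ — more precisely, sending $\gamma \in \Gamma$ to $(\rho(\gamma), \mathrm{id})$ and $\tau\gamma$ to $(\rho(\tau\gamma), c)$ — is a group homomorphism (this follows from the chain rule, being careful with the conjugations coming from the antilinear elements). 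Then I would set
\[
\psi(x) = \frac{1}{|\Gamma_+|}\left( \sum_{\gamma \in \Gamma} \rho(\gamma)^{-1}\big(\psi_0(\gamma \cdot x)\big) + \sum_{\gamma \in \Gamma} \rho(\tau\gamma)^{-1}\Big( c\big(\overline{\psi_0(\tau\gamma \cdot x)}\big)\Big)\right),
\]
where $\overline{(\cdot)}$ denotes coordinatewise complex conjugation. The two sums are built so that every summand is a holomorphic function of $x$: the $\Gamma$-terms are compositions of holomorphic maps, and the $\tau\gamma$-terms are compositions of the anti-holomorphic map $\tau\gamma$ followed by conjugation, hence holomorphic. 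Thus $\psi$ is holomorphic, $\psi(p) = 0$, and its differential at $0$ is the identity (each summand contributes $\tfrac{1}{|\Gamma_+|}\,\mathrm{id}$), so $\psi$ is a biholomorphism from a possibly smaller $\Gamma_+$-invariant neighborhood of $p$ onto a neighborhood of $0$.

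Finally I would check equivariance: for $\gamma_0 \in \Gamma$ one computes $\psi(\gamma_0 \cdot x) = \rho(\gamma_0)\,\psi(x)$ by reindexing the sum (using that $\rho$ is a homomorphism and that $\Gamma_+$ is a group, so $\gamma \mapsto \gamma\gamma_0$ permutes $\Gamma_+$), and similarly $\psi(\tau \cdot x) = \rho(\tau)\big(c(\overline{\psi(x)})\big) = c(\rho(\tau)\,\overline{\psi(x)})$, i.e. $\tau$ acts by the fixed complex-antilinear map $v \mapsto \rho(\tau)\,\bar v$, which is in particular $\R$-linear. Hence in the coordinate $\psi$ all of $\Gamma_+$ acts linearly, with $\Gamma$ acting $\C$-linearly and $\tau\gamma$ acting $\C$-antilinearly; this is exactly the required conclusion, and in the application the action on $T_p\Sigma \cong \C$ referred to in Definition \ref{defn81}(4) and in condition \eqref{compcoordR} is precisely this linearized action. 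The only genuine point requiring care — the "main obstacle," though it is more bookkeeping than difficulty — is tracking the complex conjugations through the cocycle identity so that $\rho$ is verified to be an honest homomorphism into the semidirect product $GL(n,\C) \rtimes \Z_2$, and correspondingly checking that the averaged $\psi$ transforms with the right (possibly antilinear) factor; everything else is the standard Bochner argument. Lemma \ref{lem8484} then follows by applying this local linearization inside the $\Gamma_+$-equivariant implicit function theorem exactly as in the proof of existence of a complex analytic family of coordinates (Definition \ref{defn81}), now carrying the reality constraint \eqref{compcoordR} along at every step.
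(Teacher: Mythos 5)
Your proposal is correct, and it takes a genuinely different route from the paper. The paper proves Lemma \ref{LemF1} by a Moser-type deformation: it introduces the one-parameter family $\rho_t(g,x)=\rho(g,tx)/t$ connecting the given action ($t=1$) to its linearization ($t=0$), observes that the $t$-derivative defines a $1$-cocycle with values in holomorphic vector fields, invokes the vanishing of $H^1\bigl(\Gamma_+,\operatorname{Hol}(U,T\C^n)\bigr)$ (which follows from finiteness of $\Gamma_+$) to produce a primitive $V_t$, and integrates the resulting time-dependent vector field to obtain the linearizing biholomorphism. You instead use the classical Bochner averaging argument: take an arbitrary chart $\psi_0$, form the average of $\psi_0\circ g$ twisted back by the linear part of $g$ over all $g\in\Gamma_+$, and observe that the result is a holomorphic chart which intertwines the action with its linearization. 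For a finite group this is simpler and more elementary; the deformation-plus-cohomology route is heavier machinery but would adapt to compact Lie groups (where one replaces the sum by a Haar integral, and indeed it is the proof pattern the authors use repeatedly elsewhere in the paper). Both handle the antiholomorphic involution $\tau$ correctly once one is careful that the $\tau\gamma$-summands are rendered holomorphic.

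There is one notational slip you should fix: in your averaged chart the term $c\bigl(\overline{\psi_0(\tau\gamma\cdot x)}\bigr)$ is the identity applied to $\psi_0(\tau\gamma\cdot x)$, since $c$ and the overline both denote coordinatewise conjugation; as written that summand is \emph{anti}-holomorphic and the formula does not produce a holomorphic $\psi$. The intended expression is a single conjugation. The cleanest way to sidestep this bookkeeping entirely is to work directly with the real-linear differentials $L(g):=\bigl(d(\psi_0\circ g\circ\psi_0^{-1})\bigr)_0$ without factoring out $c$: then $g\mapsto L(g)$ is a homomorphism into $GL(2n,\R)$ by the chain rule, each map $x\mapsto L(g)^{-1}\bigl(\psi_0(g\cdot x)\bigr)$ is holomorphic (for $g\in\Gamma$ it is a composition of holomorphic maps; for $g\in\tau\Gamma$ it is an anti-holomorphic map post-composed with the $\C$-antilinear $L(g)^{-1}$, hence holomorphic), and
\[
\psi(x)=\frac{1}{|\Gamma_+|}\sum_{g\in\Gamma_+}L(g)^{-1}\bigl(\psi_0(g\cdot x)\bigr)
\]
has identity differential at $0$ and satisfies $\psi(g_0\cdot x)=L(g_0)\,\psi(x)$ for all $g_0\in\Gamma_+$ by the standard reindexing. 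This gives $\C$-linear action for $\Gamma$ and $\C$-antilinear action for $\tau\Gamma$, which is exactly the required conclusion and matches the way the linearized $\Gamma_+$-action is used in Definition \ref{defn81}(4) and in the constraint \eqref{compcoordR}.
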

\begin{proof}
We fix a coordinate of $M$ at $p$.
Then $\Gamma_+$ induces a holomorphic action $\rho$ on $U \subset \C^n$
such that $0$ is a fixed point.
We consider the $\Gamma_+$ action on the tangent space {at}  $0$ which we denote by
$\rho_0$.
For $t \in (0,1]$ we define the action $\rho_t$ by
$$
\rho_t(g,x) = \rho(g,tx)/t.
$$
Together with $\rho_0$ it defines a smooth one parameter family of holomorphic actions.
Consider the map
$$
g \mapsto \left.\frac{d}{dt}\right\vert_{t=t_0} \rho_t(g,
{\rho_{t_0}(g^{-1},x)})\in {\rm Hol}(U,T\C^n).
$$
Here ${\rm Hol}(U,TX)$ is the vector space of holomorphic vector fields on $U$.
For each $t_0$ the map defines a 1 cycle of the complex
$C^*(\Gamma_+,{\rm Hol}(U,T\C^n))$ which calculates the group cohomology with local coefficient{s}  
induced by the
conjugate action associated to $\rho_{t_0}$.
(Note the conjugate action by the anti-holomorphic involution $\tau$ preserves the set of
holomorphic vector fields.)
\par
Using the fact that $\Gamma_+$ is a finite group we can  show that the first cohomology group
vanishes.
In fact the Eilenberg-MacLane space $K(\Gamma_+,1)$  has {a} finite cover $
\widetilde{K(\Gamma_+,1)}$ which is contractible.
All the local systems on $K(\Gamma_+,1)$ pulled back to $\widetilde{K(\Gamma_+,1)}$ are trivial and the first
cohomology groups of the pulled back are trivial. Since $\Q$ is contained in the coefficient ring of our
local system, {the} Gysin map induces {an} isomorphism between cohomologi{es}  {of}  
$K({\Gamma_+},1)$ and {of}  $\widetilde{K(\Gamma_+,1)}$.
\par
So there exists {a} one parameter family of holomorphic vector fields $V_t$
such that
$$
 \left.\frac{d}{dt}\right\vert_{t=t_0} \rho_t(g,{\rho_{t_0}(g^{-1},x)})
 = \rho_{t_0}(g,\cdot)_* V_t - V_t.
$$
By integrating $V_t$ we obtain a biholomorphic map
which interpolates $\rho_0$ and $\rho$.
The lemma follows.
\end{proof}
\begin{lem}\label{lemF2}
Let $M$, $N$ be complex manifolds on which $\Gamma_+$ has holomorphic actions.
Let $p\in M$ and $q \in N$ be fixed points of $\Gamma_+$.
Assume $F : M \to N$ is a $\Gamma_+$ equivariant holomorphic map such that $F(p) = q$ and
$d_pF:T_{{p}}M \to T_{{q}}N$ is surjective.
\par
We decompose
$T_pM = T_qN \oplus V$
as a $\Gamma_+$ complex vector space such that $d_pF$ is identity on $T_qN$
and is $0$ on $V$.
\par
Then there exists a $\Gamma_+$ equivariant biholomorphic map
$$
\varphi : B_{\epsilon}(T_qN) \times B_{\epsilon}(V) \to M
$$
sending $(0,0)$ to $p$ such that $F \circ \varphi $ is constant in {the} $V$ direction.
\end{lem}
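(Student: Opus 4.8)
The plan is to reduce Lemma \ref{lemF2} to Lemma \ref{LemF1} (linearization of a holomorphic $\Gamma_+$-action near a fixed point) together with a $\Gamma_+$-equivariant holomorphic implicit function theorem. First I would apply Lemma \ref{LemF1} to both $M$ at $p$ and $N$ at $q$ to obtain $\Gamma_+$-equivariant complex coordinates in which the actions are linear; after this step we may assume $M \subset T_pM$ and $N \subset T_qN$ are $\Gamma_+$-invariant neighborhoods of the origin, with linear $\Gamma_+$-action, and $p = 0$, $q = 0$. The decomposition $T_pM = T_qN \oplus V$ as $\Gamma_+$-representations is then available by complete reducibility (again using that $\Gamma_+$ is finite and $\Q$ — hence $\R$, $\C$ — lies in the coefficients, exactly as invoked in the proof of Lemma \ref{LemF1}); write points of $M$ as $(w,v)$ with $w \in B_\epsilon(T_qN)$, $v \in B_\epsilon(V)$.

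Next I would straighten the fibers of $F$. In the linearized coordinates the differential $d_0 F$ is the projection $(w,v) \mapsto w$ on $T_0 M = T_qN \oplus V$. Consider the holomorphic map $G : B_\epsilon(T_qN) \times B_\epsilon(V) \to T_qN \times V$ defined by $G(w,v) = (F(w,v), v)$. Then $d_0 G = \mathrm{id}$, so by the holomorphic inverse function theorem $G$ is a biholomorphism of a (possibly smaller) neighborhood of the origin onto its image. Moreover $G$ is $\Gamma_+$-equivariant for the given linear actions on source and target: for $g \in \Gamma$ this is immediate from holomorphicity of $F$ and linearity of the action on $V$, and for $\tau$ (anti-holomorphic) one checks $G(\tau \cdot (w,v)) = (F(\tau \cdot (w,v)), \tau \cdot v) = \tau \cdot G(w,v)$ since $F$ intertwines the $\tau$-actions on $M$ and $N$. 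By averaging the inverse, or by the equivariant version of the inverse function theorem (which follows from the ordinary one by the same averaging trick used throughout this kind of argument), $G^{-1}$ is again $\Gamma_+$-equivariant. Now set $\varphi = G^{-1}$, restricted to a $\Gamma_+$-invariant polydisc $B_\epsilon(T_qN) \times B_\epsilon(V)$ inside the image of $G$ (such an invariant polydisc exists because the $\Gamma_+$-action is linear and unitary after averaging the Hermitian metric, so balls are invariant). Then $\varphi$ is $\Gamma_+$-equivariant biholomorphic, sends $(0,0)$ to $p$, and $F \circ \varphi (w,v) = \mathrm{pr}_{T_qN} \circ G \circ G^{-1}(w,v) = w$, which is constant in the $V$ direction, as required. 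Finally one transports everything back through the coordinate charts of Lemma \ref{LemF1}; since those charts are themselves $\Gamma_+$-equivariant biholomorphisms, the composite retains all the asserted properties.

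The main obstacle I anticipate is purely bookkeeping rather than conceptual: making sure that all the neighborhoods can be chosen simultaneously $\Gamma_+$-invariant and small enough that every map in sight is a genuine biholomorphism onto its image, while keeping the equivariance of the inverse maps. The only substantive input beyond linear algebra and the classical holomorphic inverse function theorem is the equivariance of the linearizing coordinates, which is exactly the content of Lemma \ref{LemF1}, and the complete reducibility of finite-group representations over $\C$ used for the splitting $T_pM = T_qN \oplus V$; both are already in place in the excerpt. No hard estimates or PDE arguments are needed here, in contrast to the gluing analysis of the earlier sections.
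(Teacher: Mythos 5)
Your proposal is correct and follows essentially the same route as the paper: linearize both actions via Lemma \ref{LemF1}, augment $F$ with the projection onto the kernel complement to get a map $G$ with invertible differential at the origin, invert $G$ by the holomorphic inverse function theorem, and observe that equivariance of $G$ (and hence of $G^{-1}$) is automatic from the construction. The only cosmetic difference is that the paper writes $G(x+y)=(x,F(x+y))$ with the $V$-component first while you write $G(w,v)=(F(w,v),v)$, and you spell out the equivariance and the choice of invariant polydisc a bit more explicitly than the paper does.
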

\begin{proof}
Using Lemma \ref{LemF1} we may assume that $M$, $N$ are open subsets ${W_1}$, ${W_2}$ of $\C^M$, $\C^N$ and
{the} $\Gamma_+$ action is linear. We may assume $p=0$, $q=0$.
We take two $\Gamma_+$ invariant complex linear subspaces $U_1,U_2$ of $\C^M$ such that
$U_1 \oplus U_2 = \C^M$, $(d_0F)(U_1) = 0$ and $d_0F$ induce{s}  an isomorphism between $U_2$ and $\C^N$.
We define $G : {W_1} \to U_1 \times {W_2}$ by $G(x+y)= (x,F(x+y))$, where $x \in U_1, y\in U_2$.
By the inverse mapping theorem $G$ has a local inverse $\varphi $.
Then $F  \circ \varphi$ is the projection to the $W_2$ factor. Therefore $\varphi $ gives the
required coordinate.
(The $\Gamma_+$ equivariance and holomorphicity of $\varphi $ are obvious from construction.)
\end{proof}
\begin{proof}[Proof of Lemma \ref{lem8484}]
Applying Lemma \ref{lemF2} to
$\mathcal C^{\rm cl}(\mathcal V) \to \mathcal V$
we can prove Lemma \ref{lem8484} easily.
\end{proof}

\printindex[syindex]
\printindex
\bibliographystyle{amsalpha}

\providecommand{\bysame}{\leavevmode\hbox to3em{\hrulefill}\thinspace}
\providecommand{\MR}{\relax\ifhmode\unskip\space\fi MR }
\providecommand{\MRhref}[2]{%
  \href{http://www.ams.org/mathscinet-getitem?mr=#1}{#2}
}
\providecommand{\href}[2]{#2}
\begin{thebibliography}{}

\end{thebibliography}


\begin{thebibliography}{FOOOXX}
\par
\bibitem[Ab]{Abexotic}
M. Abouzaid,
{\em Framed bordism and Lagrangian embedding of exotic spheres},
Ann. of Math. 175 (2012) 71--185,
arXiv:0812.4781v2.

\bibitem[BF]{Bismut-Freed}  J.-M. Bismut and D. Freed, {\em The Analysis of Elliptic Families. I. Metrics and
Connections on Determinant Bundles}, Communications in Mathematical Physics, 106 (1986), 159-176.

\bibitem[DM]{Delignemum}
P. Deligne and D. Mumford,
{\em
The irreducibility of the space of curves of given genus},
Publ. I.H.E.S. 36 (1969) 75-109.

\bibitem[D1]{Don83}
S.K. Donaldson,
{\em An application of gauge theory to four-dimensional topology},
J. Differential Geom. 18 (1983), no. 2, 279--315.

\bibitem[D2]{Don86I}
S.K. Donaldson,
{\em Connection, cohomology and intersection forms of
$4$ manifolds},
 J. Differential Geometry
24
(1986)
275 -- 341.

\bibitem[DK]{DKbook}
S. K. Donaldson and P. B. Kronheimer,
The Geometry of Four-Manifolds,
Oxford University Press
(1990).

\bibitem[FU] {freedUhlen}
D. Freed and K. Uhlenbeck,
Instantons and Four-manifolds,
Mathematical Sciences Research Institute Publications, 1. Springer-Verlag, New York, 1984. viii+232

\bibitem[Fu]{Fuk96II}
K. Fukaya,
{\em Floer homology of connected sum of homology 3-spheres},
Topology 35
(1996), 89--136

\bibitem[FOOO1]{fooo:book1} K. Fukaya, Y.-G. Oh, H. Ohta and K. Ono,
Lagrangian intersection Floer theory-anomaly and
obstruction I - II, AMS/IP Studies in Advanced Mathematics, vol {\bf 46},
Amer. Math. Soc./International Press, 2009.

\bibitem[FOOO2]{fooo:techI} K. Fukaya, Y.-G. Oh, H. Ohta and K. Ono,
{\em Technical details on Kuranishi structure and virtual fundamental chain},
preprint 2013, arXiv:1209.4410.


\bibitem[FOOO3]{foooconst1}
K. Fukaya, Y.-G. Oh, H. Ohta and K. Ono,
{\em Construction of Kuranishi structures on the moduli spaces of pseudo holomorphic disks: I},
Surveys in Differential Geom. 22 (2018) 133--190, arXiv:1710.01459.

\bibitem[FOOO4]{fooonewbook}
K. Fukaya, Y.-G. Oh, H. Ohta and K. Ono,
{\em Kuranishi structures,
and Virtual fundamental chains},
Springer Monograph in Math. 2020 Springer Singapore.


\bibitem[FOn]{FOn}
K. Fukaya and K. Ono,
{\em  Arnold conjecture and Gromov-Witten invariant},
Topology 38 (1999), no. 5, 933--1048.

\bibitem[HWZ]{HWZ}
H. Hofer, K. Wysocki and E. Zehnder,
{\em A General Fredholm Theory and Applications, Current Developments in Mathematics},
Ed. B. Mazur, S.T. Yau, D. Jerison, T. Mrowka, R. Stanley,
2006, International Press.

\bibitem[Kl]{Klingenberg}  W. Klingenberg, Lectures on Closed Geodesics, Grundlehren der Mathematischen Wissenshaften,
230, Springer-Verlag, Berlin-New York, 1978.

\bibitem[McSa]{McSa94}
D. McDuff and D. Salamon
$J$-Holomorphic Curves and Quantum Cohomology,
Amer. Math. Soc., Providence, RI,
University Lecture Seires, 6, 1994

\bibitem[Mr]{Mr}
T. Mrowka,
{\em A local Mayer-Vietris principle for Yang-Mills moduli spaces},
Thesis, University of California Berkeley (1989).

\bibitem[Oh1]{oh:removal} Y.-G. Oh, {\em Removal of boundary
singularities of pseudo-holomorphic curves with Lagrangian boundary
conditions}, Comm. Pure Appl. Math. 45 (1992), no. 1, 121--139.

\bibitem[Oh2]{oh:book}  Y.-G. Oh, Symplectic Topology and Floer Homology I,
New Mathematical Monographs, Cambridge University Press, 2015.


\bibitem[Q]{Quillen}  D. Quillen, {\em Determinant of Cauchy-Riemann operators on Riemann surfaces},
Functional Analysis and its Applications, 19 (1985), 31-34.
\bibitem[LM]{lock-mcowen}  R. Lockhard and  R. McOwen, {\em Elliptic differential operators on
noncompact manifolds}, Ann. Scuola Norm. Sup. Pisa Cl. Sci. (4) 12 (1985), no. 3, 409--447.

\bibitem[Ye]{ye} R. Ye, {\em Gromov's compactness theorem for
pseudo holomorphic curves,} Trans. Amer. Math. Soc. 342 (1994),
671--694.


\end{thebibliography}

\end{document}